\titleformat{\section}[block]{\filcenter\normalfont\bfseries\large}{\thesection.}{.5em}{}\titlespacing*{\section}{0pt}{2\baselineskip}{1\baselineskip}
\titleformat{\subsection}[runin]{\normalfont\bfseries}{\thesubsection.}{.4em}{}[.]\titlespacing{\subsection}{0pt}{2ex plus .1ex minus .2ex}{.8em}
\titleformat{\subsubsection}[runin]{\normalfont\itshape}{\thesubsubsection.}{.3em}{}[.]\titlespacing{\subsubsection}{0pt}{1ex plus .1ex minus .2ex}{.5em}
\titleformat{\paragraph}[runin]{\normalfont\itshape}{\theparagraph.}{.3em}{}[.]\titlespacing{\paragraph}{0pt}{1ex plus .1ex minus .2ex}{.5em}
\let\originalleft\left
\let\originalright\right
\renewcommand{\left}{\mathopen{}\mathclose\bgroup\originalleft}
\renewcommand{\right}{\aftergroup\egroup\originalright}
\definecolor{darkred}{rgb}{0.9,0,0.3}
\definecolor{darkblue}{rgb}{0,0.3,0.9}
\definecolor{purple}{rgb}{0.6,0,0.7}
\definecolor{vdarkred}{rgb}{0.6,0,0.2}
\definecolor{vdarkblue}{rgb}{0,0.2,0.6}
\numberwithin{equation}{section}
\numberwithin{figure}{section}
\theoremstyle{plain} 
\newtheorem{theorem}{Theorem}[section]
\newtheorem*{theorem*}{Theorem}
\newtheorem{lemma}[theorem]{Lemma}
\newtheorem*{lemma*}{Lemma}
\newtheorem{corollary}[theorem]{Corollary}
\newtheorem*{corollary*}{Corollary}
\newtheorem{proposition}[theorem]{Proposition}
\newtheorem*{proposition*}{Proposition}
\newtheorem*{conjecture*}{Conjecture}
\theoremstyle{definition} 
\newtheorem{definition}[theorem]{Definition}
\newtheorem*{definition*}{Definition}
\newtheorem*{example*}{Example}
\newtheorem{remark}[theorem]{Remark}
\newtheorem*{remark*}{Remark}
\newtheorem*{assumption*}{Assumption}
\newtheorem*{convention*}{Convention}
\newcommand{\f}{\mathbf} 
\renewcommand{\r}{\mathrm}  
\newcommand{\bb}{\mathbb} 
\renewcommand{\cal}{\mathcal} 
\newcommand{\fra}{\mathfrak} 
\newcommand{\ul}[1]{\underline{#1} \!\,} 
\newcommand{\ol}[1]{\overline{#1} \!\,} 
\newcommand{\wh}{\widehat}
\newcommand{\wt}{\widetilde}
\newcommand{\op}{\operatorname}
\renewcommand{\P}{\mathbb{P}}
\newcommand{\E}{\mathbb{E}}
\newcommand{\R}{\mathbb{R}}
\newcommand{\N}{\mathbb{N}}
\newcommand{\Z}{\mathbb{Z}}
\newcommand{\ee}{\mathrm{e}}
\newcommand{\dd}{\mathrm{d}}
\newcommand{\col}{\mathrel{\vcenter{\baselineskip0.75ex \lineskiplimit0pt \hbox{.}\hbox{.}}}}
\newcommand*{\deq}{\mathrel{\vcenter{\baselineskip0.5ex \lineskiplimit0pt\hbox{\scriptsize.}\hbox{\scriptsize.}}}=}
\newcommand*{\eqd}{=\mathrel{\vcenter{\baselineskip0.5ex \lineskiplimit0pt\hbox{\scriptsize.}\hbox{\scriptsize.}}}}
\newcommand{\eqdist}{\overset{\r d}{=}}
\renewcommand{\leq}{\leqslant}
\renewcommand{\geq}{\geqslant}
\renewcommand{\epsilon}{\varepsilon}
\newcommand{\floor}[1] {\lfloor #1 \rfloor}
\newcommand{\ceil}[1]  {\lceil  #1 \rceil}
\newcommand{\qq}[1]{[\![#1]\!]}
\newcommand{\ind}[1]{\mathbbm 1_{#1}}
\newcommand{\p}[1]{(#1)}
\newcommand{\pb}[1]{\bigl(#1\bigr)}
\newcommand{\pB}[1]{\Bigl(#1\Bigr)}
\newcommand{\pbb}[1]{\biggl(#1\biggr)}
\newcommand{\pBB}[1]{\Biggl(#1\Biggr)}
\newcommand{\qb}[1]{\bigl[#1\bigr]}
\newcommand{\qB}[1]{\Bigl[#1\Bigr]}
\newcommand{\qbb}[1]{\biggl[#1\biggr]}
\newcommand{\qBB}[1]{\Biggl[#1\Biggr]}
\newcommand{\qa}[1]{\left[#1\right]}
\newcommand{\h}[1]{\{#1\}}
\newcommand{\hb}[1]{\bigl\{#1\bigr\}}
\newcommand{\hB}[1]{\Bigl\{#1\Bigr\}}
\newcommand{\abs}[1]{\lvert #1 \rvert}
\newcommand{\absb}[1]{\bigl\lvert #1 \bigr\rvert}
\newcommand{\absB}[1]{\Bigl\lvert #1 \Bigr\rvert}
\newcommand{\absbb}[1]{\biggl\lvert #1 \biggr\rvert}
\newcommand{\absBB}[1]{\Biggl\lvert #1 \Biggr\rvert}
\newcommand{\norm}[1]{\lVert #1 \rVert}
\newcommand{\normb}[1]{\bigl\lVert #1 \bigr\rVert}
\newcommand{\normbb}[1]{\biggl\lVert #1 \biggr\rVert}
\newcommand{\ang}[1]{\langle #1 \rangle}
\newcommand{\scalar}[2]{\langle#1 \mspace{2mu}, #2\rangle}
\newcommand{\scalarbb}[2]{\biggl\langle#1 \,\mspace{2mu},\, #2\biggr\rangle}
\DeclareMathOperator{\diag}{diag}
\DeclareMathOperator{\supp}{supp}
\DeclareMathOperator{\dist}{dist}
\DeclareMathOperator{\spec}{spec}
\newcommand{\eps}{\varepsilon}
\newcommand*{\defeq}{\mathrel{\vcenter{\baselineskip0.5ex \lineskiplimit0pt\hbox{\scriptsize.}\hbox{\scriptsize.}}}=}
\newcommand{\id}{I} 					
\newcommand{\am}{\ensuremath{\mathfrak{a}}}
\title{Poisson statistics and localization at the spectral edge of sparse Erd\H{o}s--R\'enyi graphs}
\author{Johannes Alt \and Raphael Ducatez \and Antti Knowles}
\begin{document}

\maketitle

\begin{abstract}
We consider the adjacency matrix $A$ of the Erd{\H o}s--R\'enyi graph on $N$ vertices with edge probability $d/N$. 
For $(\log \log N)^4 \ll d \lesssim \log N$, we prove that the eigenvalues near the spectral edge form asymptotically a Poisson point process and the associated eigenvectors are exponentially localized. As a corollary, at the critical scale $d \asymp \log N$, the limiting distribution of the largest nontrivial eigenvalue does not match with any previously known distribution. Together with \cite{ADK20}, our result establishes the coexistence of a fully delocalized phase and a fully localized phase in the spectrum of $A$. The proof relies on a three-scale rigidity argument, which characterizes the fluctuations of the eigenvalues in terms of the fluctuations of sizes of spheres of radius 1 and 2 around vertices of large degree.
\end{abstract}

\section{Introduction}

\subsection{Overview}

This paper is about the eigenvalue fluctuations of the Erd\H{o}s--R\'enyi graph near the spectral edge.
In spectral graph theory, obtaining precise information on the spectral edge is of fundamental importance and has attracted much attention in the past thirty years. See for instance \cite{Chu, HLW06, Alo98} for reviews. The Erd\H{o}s--R\'enyi graph $ \mathbb{G} \equiv \mathbb{G}(N,d/N)$ is the simplest model of a random graph, where each edge of the complete simple\footnote{By \emph{simple} we mean that the graph is undirected and has no loops or multiple edges.} graph on $N$ vertices is kept independently with probability $d/N$, with $0 < d < N$. Here, $d \equiv d_N$ is a parameter whose interpretation is the expected degree of a vertex. The adjacency matrix of $\bb G$, denoted by $A = (A_{xy})_{x,y \in [N]}$, is the archetypal sparse random matrix, and its eigenvalue fluctuations have been extensively studied in the random matrix theory literature.

If $d \asymp N$ then the graph $\bb G$ is dense, and the matrix $H \deq d^{-1/2} A$ is (up to a centring) a Wigner matrix with Bernoulli entries. In that regime, the fluctuations of the extreme eigenvalues of $H$ have been analysed in great detail \cite{EYY3, Sosh1, lee2014necessary, TV2}, and are known to be governed by the universal Tracy-Widom distribution of random matrix theory.  In \cite{EKYY1, EKYY2} it is proved that the Tracy-Widom distribution persists down to $d \gg N^{2/3}$, and this result is further extended to $d \gg N^{1/3}$ in \cite{lee2018local}. A crossover appears at the scale $d \asymp N^{1/3}$, where the Tracy-Widom fluctuations for $d \gg N^{1/3}$ give way to Gaussian fluctuations for $d \ll N^{1/3}$. This phenomenon is identified in \cite{huang2020transition}, where Gaussian fluctuations are proved for $N^{2/9} \ll d \ll N^{1/3}$. In \cite{he2020fluctuations}, Gaussian fluctuations are established in the full polynomial regime $N^{o(1)} \leq d \ll N^{1/3}$.

The goal of this paper is to analyse the regime $d \lesssim \log N$. The scale $d \asymp \log N$ is well known to be critical for the Erd\H{o}s--R\'enyi graph $ \mathbb{G}$: if $d \gg \log N$ the graph is with high probability homogeneous, in the sense that its vertices all have comparable degrees, and if $d \ll \log N$ the graph is with high probability inhomogeneous, in the sense that its vertices have wildly differing degrees, which leads to the proliferation of isolated vertices, leaves, and hubs. The most famous manifestation of this transition is the well-known connectivity transition for $\bb G$ around $d = \log N$ \cite{ER59}. On the spectral side, it is proved in \cite{BBK1} that  if $d \gg \log N$ then the extreme eigenvalues of $H$ converge to the boundary of the asymptotic support $[-2,2]$ of the spectrum, while in \cite{BBK2} it is proved that if $d \ll \log N$ then they do not. The critical scale $d \asymp \log N$ is analysed in \cite{ADK19,tikhomirov2021outliers}. There, it is proved that the behaviour established in \cite{BBK1} persists down to $d > b_* \log N$, where $b_* \deq \frac{1}{\log 4 - 1} \approx 2.59$. If $1 < d < b_* \log N$, then the extreme eigenvalues of $H$ are determined by the largest degrees of $\bb G$: with high probability, each vertex $x$ with normalized degree $\alpha_x \deq \frac{1}{d} \sum_{y} A_{xy}$ greater than $2$ gives rise to two eigenvalues near $\pm \frac{\alpha_x}{\sqrt{\alpha_x - 1}}$. In other words, with high probability, there is an approximate bijection between vertices of normalized degree greater than $2$ and eigenvalues larger than $2$.

The works \cite{BBK2, ADK19, tikhomirov2021outliers}, as well as the somewhat improved bounds from \cite{ADK20}, only give weak estimates on the locations of the eigenvalues. In particular, they are far from describing the microscopic fluctuations of the eigenvalues. A particularly striking manifestation of this observation is the well-known fact \cite{Bol01} that if $d \ll \log N$ then with high probability the largest normalized degree $\max_x \alpha_x$ of $\bb G$ is deterministic, so that the approximation $\max_x \frac{\alpha_x}{\sqrt{\alpha_x - 1}}$ derived in \cite{ADK19,tikhomirov2021outliers} is also deterministic.

In this paper we derive the full joint fluctuations of the eigenvalues near the spectral edges in the regime $(\log \log N)^4 \ll d < b_* \log N$. We prove that they form asymptotically a Poisson point process with an explicit intensity. This intensity does in general not have a limit as $N \to \infty$, and its form depends strongly on the scale $d$. We refer to Figure \ref{fig:intensity} below for an illustration. In particular, for $d \ll \log N$, we show that the largest eigenvalue of $H$ has asymptotically Gumbel fluctuations, provided that $d$ stays away from a set of resonant densities. At the critical scale $d \asymp \log N$, we identify the asymptotic distribution of the largest eigenvalue of $H$, which we find to be a new law that does not satisfy 
the conclusion of the Fisher--Tippett--Gnedenko theorem.

An important observation of our proof is that the fluctuations of the extreme eigenvalues are determined not just by the degrees of the large degree vertices, but also by the sizes of the spheres of radius two around them. Following this observation, we establish very precise rigidity bounds on the extreme eigenvalues, comparing the location of each eigenvalue with an explicit function of the spheres of radii 1 and 2 around some vertex, with error bounds that are much smaller than the magnitude of the fluctuations. To that end, we develop a three-level rigidity argument, which balances the precision of the estimates with the number of vertices to which they apply.  The finest rigidity estimate is the most involved analytical part of our proof. It relies on an approximate tridiagonalization argument. Its starting point is the construction of a trial basis that strikes a delicate balance between, on the one hand, being explicit enough to yield very precise estimates on the tridiagonal, and, on the other hand, ensuring that the off-tridiagonal part is small enough. This allows us to compare the neighbourhoods of large degree vertices with rooted regular trees, where the degree of a vertex depends only on its distance to the root, and whose spectra can be analysed explicitly. It turns out, however, that the errors made in this comparison are larger than the scale of the fluctuations, and a further important ingredient of our proof is to account for deviations arising from the irregularities in the neighbourhoods of vertices of large degree. We remark that all of these steps, required to reach a sufficient degree of precision, are qualitatively novel and represent a major departure from \cite{ADK19, ADK20}. Many difficulties of the proof stem from justifying the heuristic that the independent random variables associated with the edges of the complete graph determine both the neighbourhoods of the large degree vertices as well as the random geometry connecting them; this collective contribution of the independent random variables precludes any simple structuring or splitting of the randomness. We refer to Section \ref{sec:proof_overview} for an overview of the proof.
The methods developed in this paper also apply to sparse Wigner matrices, as defined e.g.\ in \cite{ADK19, ADK20, tikhomirov2021outliers}; the details will appear elsewhere.

Another important motivation for our work is the general universality conjecture for disordered quantum systems. A \emph{disordered quantum system} is described by its Hamiltonian $H$, a self-adjoint matrix acting on a typically high-dimensional space. The eigenvalues of $H$ represent the system's energy levels, and the corresponding eigenvectors its stationary states. Disorder, arising for instance from impurities and irregularities in a medium, is mathematically modelled by randomness in $H$. The general universality conjecture for disordered quantum systems states that the spectrum of $H$ can consist of two distinct phases:
(i) the \emph{localized (or insulating) phase}, where the local eigenvalue statistics are \emph{Poisson} and the associated eigenvectors are \emph{localized};
and (ii) the \emph{delocalized (or metallic) phase}, where the local eigenvalue statistics are governed by \emph{random matrix theory} and the associated eigenvectors are \emph{delocalized}.
The archetypal model expected to exhibit both phases is the Anderson model \cite{anderson1958absence}, for which Anderson famously conjectured that in dimensions $d > 2$ and for small enough disorder, the spectrum splits into a delocalized phase in the bulk of the spectrum and a localized phase near the edges of the spectrum (for large disorder, the delocalized phase disappears). Much progress has been achieved in the localized phase \cite{FroSpe, AizMol, goldsheid1977random, Min}, but the delocalized phase remains wide open.

Transitions in the localization behaviour of eigenvectors have also been analysed in several models of Wigner matrices. In \cite{LS1, LS2} the authors consider the sum of a Wigner matrix and a diagonal matrix with independent random entries with a large enough variance. They show that the eigenvectors in the bulk are delocalized while near the edge they are partially localized at a single site. Heavy-tailed Wigner matrices, or Lévy matrices, whose entries have $\alpha$-stable laws for $0 < \alpha < 2$, were proposed in \cite{CB1} as a simple model that exhibits a transition in the localization of its eigenvectors. They have been extensively studied in the physics and mathematics literature; we refer to \cite{ALY1} for a summary of the predictions from \cite{CB1, TBT1}. In \cite{BG1, BG2} it is proved that for energies in a compact interval around the origin, eigenvectors are weakly delocalized, and for $0 < \alpha < 2/3$ for energies far enough from the origin, eigenvectors are weakly localized. In \cite{ALY1}, full delocalization is proved in a compact interval around the origin, and the authors even establish GOE local eigenvalue statistics in the same spectral region. In \cite{ALM1}, the law of the eigenvector components of Lévy matrices is computed. Moreover, the fluctuations of the extreme eigenvalues are determined in \cite{ABP, SoshPoi}, where they are shown to form asymptotically a Poisson process with power law intensity measure.

In \cite{ADK20}, we proposed the Erd\H{o}s-R\'enyi graph at and below criticality, $d < b_* \log N$, as a natural and attractive new model on which to analyse the phase coexistence stipulated by the above universality conjecture. To the best of our knowledge, the coexistence of phases at different energies in this model had not been previously analysed even in the physics literature.  It has two features that make it particularly appealing: its graph structure provides an intrinsic and nontrivial notion of distance, and it is amenable to rigorous analysis, including a proof of phase coexistence. It is proved in \cite{ADK20} that for $\sqrt{\log N} \ll d < b_* \log N$ the spectrum of $H$ splits into two phases: a \emph{delocalized phase} in the bulk of the spectrum and a \emph{semilocalized phase} in its complement. The delocalized phase is characterized by completely delocalized eigenvectors $\f w$, in the sense that $\norm{\f w}^2_\infty / \norm{\f w}^2_2 \leq N^{-1 + o(1)}$. The semilocalized phase is characterized by eigenvectors satisfying $\norm{\f w}^2_\infty / \norm{\f w}^2_2 \geq N^{-\gamma(\f w) + o(1)}$ for some (explicit) $\gamma(\f w) < 1$.

In this paper we prove the existence of a fully \emph{localized phase} phase near the spectral edge, by establishing both hallmarks given above -- Poisson statistics and eigenvector localization. The localization holds in a strong sense: exponential decay around a unique vertex. We also show that each localized eigenvector is approximately radial.
Together with \cite{ADK20}, we have therefore rigorously established the coexistence of a fully delocalized phase and a fully localized phase in the spectrum of $H$.

\paragraph{Conventions}
Every quantity that is not explicitly \emph{constant} depends on $N$. We omit this dependence in our notation. We use $C,c$ to denote generic positive constants, which may change from step to step. We write $X \lesssim Y$ or $X = O(Y)$ to mean 
$X \leq C Y$. We write $X \asymp Y$ to mean 
$X \lesssim Y$ and $Y \lesssim X$. Moreover, we write $X \ll Y$ or $X = o(Y)$ to mean $X/Y \to 0$ as $N \to \infty$. We say that an event $\Omega$ holds with \emph{high probability} if $\P(\Omega) \to 1$ as $N \to \infty$.

\subsection{Results}
In order to describe the appropriate rescaling of the eigenvalue process, we need a few definitions. Define the function $f \equiv f_d$ on $[1,\infty)$ through
\begin{equation} \label{def_f}
f(u) \deq u \log u - (u  - 1) + \frac{1}{d} \log \sqrt{2 \pi d u}\,.
\end{equation}
Clearly, $f$ is increasing and $f(2) = \frac{1}{b_*} + O(\frac{\log d}{d})$, where we recall the definition $b_* \deq \frac{1}{\log 4 - 1}$.
 This function is well known, with the following interpretation: if $\cal P_d$ is a Poisson random variable with expectation $d$ then by Stirling's approximation we have
\begin{equation} \label{Poisson_f}
\P(\cal P_d = k) = \ee^{-df(k/d)} \pbb{1 + O\pbb{\frac{1}{k}}}
\end{equation}
for $k \in \N$. We denote by $\fra u \equiv \fra u(d,N) > 2$ the unique solution of the equation
\begin{equation} \label{def_u}
f(\fra u) = \frac{\log N}{d}\,.
\end{equation}
The number $d \fra u$ has the interpretation of the typical maximal degree of the graph, since the distribution of any degree of the graph is approximately $\cal P_d$.

We introduce the parameters 
\begin{equation} \label{def_tau_etc}
\tau(\fra u) \deq \frac{2(\fra u - 1)^{5/2}}{\fra u^{1/2} (\fra u - 2)}\,, \qquad
\theta(\fra u) \deq \frac{\fra u - 1}{\fra u^{1/2}}\,, \qquad 
\sigma(\fra u) \deq \sqrt{\fra d} \Lambda\p{\fra u / \fra d, 1/\fra d}\,, \qquad \fra d \deq 1 + \frac{1}{d}\,,
\end{equation}
where, for any $\alpha \geq 2$ and $\beta \geq 2(\sqrt 2 - 1)$, we set 
\begin{equation} \label{eq:def_Lambda} 
\Lambda (\alpha,\beta) \deq \alpha\pbb{\alpha-\frac{\beta}{2}(\alpha+\beta)+\frac{\beta}{2}\sqrt{(\alpha+\beta)^{2}-4\alpha}}^{-1/2}\,. 
\end{equation}
The function $\Lambda$ can be naturally interpreted in terms of the largest eigenvalue of the infinite $(p,q,s)$-regular tree (see Appendix \ref{app:spectral_analysis}), or, alternatively, of the infinite tridiagonal matrix $Z_1(\alpha,\beta)$ defined in \eqref{eq:def_Zd} below. We refer to Appendix \ref{app:spectral_analysis} for details and further properties of $\Lambda(\alpha, \beta)$. The parameter $\sigma(\fra u)$ represents the typical location of the largest nontrivial eigenvalue of $H$, while $\frac{1}{d \tau(\fra u)}$ represents the typical eigenvalue spacing of $H$ near the spectral edge. Thus, we shall rescale the eigenvalue process according to the following definition.

\begin{definition}[Eigenvalue process near right edge] \label{def:def_H_and_Phi_process_near_edge}
We define the rescaled eigenvalue process of $H \deq A / \sqrt{d}$ near the right edge as
\begin{equation*}
\Phi \deq \sum_{\lambda \in \spec(H)} \delta_{d\tau(\fra u) (\lambda - \sigma(\fra u))} \,.
\end{equation*}
\end{definition}

Denote by $g(s) \deq \frac{1}{\sqrt{2 \pi}} \ee^{-\frac{1}{2} s^2}$ the density of the standard Gaussian.

\begin{definition}[Poisson reference process]
Define the intensity measure $\rho$ on $\R$ through
\begin{equation} \label{def_rho}
\rho(\dd s) \deq \sum_{\ell \in \Z} \fra u^{d \fra u + \ell} \, g \pb{s + \theta(\fra u) (d \fra u + \ell)} \, \dd s\,,
\end{equation}
and denote by $\Psi$ the Poisson point process on $\R$ with intensity measure $\rho$.
\end{definition}

Our first main result states that $\Phi$ is close to $\Psi$ as $N \to \infty$. The convergence holds in the region $[-\kappa, \infty)$ containing an expected number $\cal K$ of rescaled eigenvalues. 
Thus, for given $\cal K$ we define
\begin{equation}  \label{def_kappa}
\kappa \deq - \inf \{ s \in \R \col \rho([s,\infty)) \leq \cal K\}\,.
\end{equation}
An elementary argument shows that for $\cal K$ large enough we always have $\kappa > 0$, uniformly in $N$ and $1 \leq d \leq 3 \log N$.

In general neither process $\Phi$ or $\Psi$ has a limit as $N \to \infty$, and we establish asymptotic closeness with respect to the standard metric $\cal D_\kappa$ on the space of point processes on $[-\kappa, \infty)$ defined through
\begin{equation*}
\cal D_\kappa(\Phi, \Psi) \deq \sum_{n \in \N^*} 2^{-n} \,  \sup_{s_1, \dots, s_n \geq -\kappa} \; \sup_{k_1, \dots, k_n \in \N} \, \absBB{\P\pBB{\bigcap_{i \in [n]} \{\Phi([s_i,\infty)) \geq k_i\} } - \P\pBB{\bigcap_{i \in [n]} \{\Psi([s_i,\infty)) \geq k_i\} }} \,.
\end{equation*}

\begin{theorem}[Poisson statistics] \label{thm:point_process}
For any constant $\zeta > 4$ and any small enough constant $\xi > 0$ the following holds. Suppose that
\begin{equation} \label{d_assumptions}
(\log \log N)^{\zeta} \leq d \leq (b_* - (\log N)^{-\xi}) \log N\,.
\end{equation}
Define
\begin{equation} \label{calK_choice}
\cal K \deq d^{1/2 - 2 / \zeta - 16 \xi}
\end{equation}
and let $\kappa$ be as in \eqref{def_kappa}. Then there exists an eigenvalue $\nu \in \spec(H)$ such that
\begin{equation*}
\cal D_\kappa\pb{\Phi - \delta_{d\tau(\fra u) (\nu - \sigma(\fra u))} , \Psi} \longrightarrow 0
\end{equation*}
as $N \to \infty$.
\end{theorem}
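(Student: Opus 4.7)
The plan is to reduce the Poisson statement to a limit theorem for the point process of local geometric data $(|S_1(x)|, |S_2(x)|)$ at the high-degree vertices of the graph, and then transfer that limit to the eigenvalues through a quantitative rigidity. The shape of the target intensity $\rho$ in \eqref{def_rho} is the guide. The sum over $\ell \in \Z$ indexes integer values of the degree $|S_1(x)|$ near its typical maximum $d\fra u$, with weight $\fra u^{\ang{d\fra u}+\ell}$ equal, up to subleading corrections, to $N\,\P(\cal P_d = \lfloor d\fra u\rfloor + \ell)$ by the Poisson tail formula \eqref{Poisson_f} together with the defining equation \eqref{def_u} of $\fra u$. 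The Gaussian density $g$ in $\rho$ arises from a local CLT for $|S_2(x)|$ given $|S_1(x)|$, whose standard deviation is of order $\sqrt{d}$ on the natural scale of $|S_2(x)|$.

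The first and main step is the quantitative rigidity: for every vertex $x$ of sufficiently large normalised degree, there is an eigenvalue $\lambda_x$ of $H$ with
\begin{equation*}
\lambda_x = F\pb{|S_1(x)|,\, |S_2(x)|} + o\pa{1/(d\tau(\fra u))},
\end{equation*}
where $F$ is the explicit function built from the top eigenvalue of the tridiagonal operator $Z_1(\alpha,\beta)$ of \eqref{eq:def_Zd} evaluated on the rescaled local geometry at $x$, and which at the typical point equals $\sigma(\fra u) = \sqrt{\fra d}\,\Lambda(\fra u/\fra d, 1/\fra d)$. This is the three-level rigidity announced in the introduction. At the coarsest level one recovers the approximate bijection between high-degree vertices and extreme eigenvalues of \cite{ADK19,ADK20}. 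At the intermediate level one improves the error below the typical fluctuation scale of $|S_2(x)|$. At the finest level, which applies only to the $\cal K$ top vertices contributing to the window $[-\kappa,\infty)$, one reaches an error well below the eigenvalue spacing $1/(d\tau(\fra u))$ by an approximate tridiagonalisation about an explicit trial basis adapted to the radius-$2$ neighbourhood of $x$, together with a perturbative bound on the off-tridiagonal residue.

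The second step is a first-order Taylor expansion of $F$ around its typical argument, whose value there is $\sigma(\fra u)$. After rescaling by $d\tau(\fra u)$ and centring at $\sigma(\fra u)$ as in Definition \ref{def:def_H_and_Phi_process_near_edge}, the two partial derivatives of $F$ turn a unit integer shift of $|S_1(x)|$ into a rescaled displacement $\theta(\fra u)$, and a standard-deviation fluctuation of $|S_2(x)|/d$ into an $O(1)$ rescaled fluctuation, precisely matching the structure of $\rho$. The third step is the Poisson convergence of the planar point process $\sum_x \delta_{(|S_1(x)|, |S_2(x)|)}$ over the high-degree vertices. The Poisson marginal in $|S_1(x)|$ follows from a Chen--Stein bound applied to the rare events $\{|S_1(x)| = \lfloor d\fra u\rfloor + \ell\}$ for $\ell$ in a bounded band, and the conditional Gaussian factor for $|S_2(x)|$ follows from a local CLT applied to the sum of $|S_1(x)|$ nearly independent binomial contributions from the neighbours of $x$. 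One then sets aside a single distinguished eigenvalue $\nu$ whose rescaled position lies inside $[-\kappa,\infty)$ but outside the Poisson pattern of $\rho$ — essentially the Perron eigenvalue of $H$, which sits near $\sqrt{d}$ and thus above the bulk of the Poisson points; this is the origin of the subtracted $\delta$-mass in the statement.

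The hard part will be the fine rigidity of the first step. The precision demanded — errors of $o(1/(d\tau(\fra u)))$, polynomially finer than the bounds in \cite{ADK19,ADK20} — forces an explicit, constructive choice of trial basis that captures the first-order dependence of $\lambda_x$ on both $|S_1(x)|$ and $|S_2(x)|$, while being simple enough that the off-tridiagonal part of $H$ in this basis can be absorbed by perturbation theory. A second, intertwined difficulty is that the radius-$2$ neighbourhoods of the $\cal K \asymp d^{1/2 - o(1)}$ high-degree vertices can overlap and share edges, so the asymptotic independence required by the Poisson comparison must be extracted from the ``collective'' structure of the underlying independent edge indicators rather than from any clean splitting of the randomness. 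Propagating this control through the fine rigidity so that it survives aggregation over all high-degree vertices is the other principal source of difficulty.
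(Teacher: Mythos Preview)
Your high-level architecture matches the paper's: fine rigidity pins each extreme eigenvalue to $\Lambda_{\fra d}(\alpha_x,\beta_x)$ up to $o(1/(d\tau(\fra u)))$ (Corollary~\ref{Cor:SpecMax}), a Taylor expansion of $\Lambda_{\fra d}$ near $(\fra u,1)$ linearises to a process in $(d\alpha_x - d\fra u,\, d\sqrt{\alpha_x}(\beta_x-1))$, and a Poisson limit for this two-dimensional data closes the argument after subtracting the stray eigenvalue. Two points diverge from the paper and are worth correcting.

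First, the paper does not use Chen--Stein. Instead it proves directly that the $\ell$-point correlation measures of the rescaled process factorise (Lemma~\ref{lem:correlation_factorized}) via an explicit decorrelation estimate for the joint law of $(\alpha_i,\beta_i)_{i\in[k]}$ (Proposition~\ref{prop:poisson-normal}), and then feeds this into a truncated inclusion--exclusion formula (Lemma~\ref{lem:incl_excl}) to recover the finite-dimensional Poisson distributions. The decorrelation proof peels off the short-geodesic events $\Xi_1,\Xi_2,\Xi_3$ (geodesics of length $1,2,3$ between the marked vertices) one at a time and bounds their contribution, which is closer in spirit to a moment method than to coupling. Chen--Stein could plausibly be made to work, but you would still need essentially the same geodesic estimates to control the dependency neighbourhood.

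Second, your description of the fine rigidity as a ``trial basis adapted to the radius-$2$ neighbourhood of $x$'' is off. The approximate eigenvalue $\Lambda_{\fra d}(\alpha_x,\beta_x)$ is indeed a function of radius-$2$ data, but the trial basis $(\f f_i)_{i=0}^r$ of Section~\ref{sec:fine_rigidity} and the approximate eigenvector $\f w(x)$ live on a ball of radius $r_{\cal W}$ of order $(\log d)/(\log\fra u)\cdot \fra u^2/(\fra u-2)^2$, which must diverge to force the eigenvector tail below $(d\fra u)^{-10}$. The purpose of $(\f f_i)$ is not to capture the $(|S_1|,|S_2|)$-dependence --- the tridiagonal entries $M_{01}=\sqrt{\alpha_x}$, $M_{12}=\sqrt{\beta_x}$ are already exact in the simpler basis $(\f 1_{S_i(x)})$ --- but to push the off-tridiagonal residue from $O(d^{-1/2})$ down to $O(d^{-1-c})$, which requires encoding one layer of degree irregularities $N_z-F_{|z|}$ into the basis vectors at \emph{every} radius $i\le r$, not just $i\le 2$.

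Relatedly, your concern about overlapping radius-$2$ neighbourhoods of the $\cal K$ top vertices is largely a non-issue: Proposition~\ref{pro:graphProperty}\,\ref{item:disjoint_balls} shows that balls of the much larger radius $r_{\cal W}$ around vertices in $\cal W$ are disjoint with high probability, so the $\f w(x)$ are orthogonal by disjoint support. The genuinely awkward overlaps occur only in the rough-rigidity layer $\cal U\setminus\cal V$, handled via the pruned graph of \cite{ADK19} (or, for $d\le(\log N)^{3/4}$, the direct construction of Proposition~\ref{prop:RoughCandidatEigenvector}).
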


\begin{remark}[Stray eigenvalue] \label{rem:stray} 
We call the eigenvalue $\nu$ from Theorem \ref{thm:point_process} the \emph{stray eigenvalue}. It is approximately equal to $d^{1/2}$ with high probability; see Corollary~\ref{Cor:SpecMax} below for a precise statement. For $d^2 \gg \frac{\log N}{\log \log N}$, it is the largest eigenvalue, an outlier separated from the other eigenvalues, and coincides with the well-known outlier eigenvalue of the dense $\bb G(N,d/N)$. For $d^2 \lesssim \frac{\log N}{\log \log N}$, it is near or inside the main spectrum, and it is no longer the largest eigenvalue. 
All of these claims follow easily from the asymptotics $\sigma(\fra u) \asymp \sqrt{\fra u}$ for $\fra u \gg 1$ combined with $\fra u \asymp \frac{\log N}{d \log \log N}$ for $d \ll \log N$, as follows from Lemma \ref{lem:u_a} below.

In fact, in the regime where the stray eigenvalue is in the region $[-\kappa, \infty)$, it fluctuates on a much smaller scale than the other eigenvalues. In particular, its fluctuations are negligible compared to the scale $1 / d \tau(\fra u)$ of the rescaled eigenvalue process $\Phi$. This observation follows from Corollary \ref{Cor:SpecMax}, combined with the behaviour of $\Lambda$ from Lemma \ref{lem:properties_Lambda} below and the bound on $\kappa$ from Lemma \ref{lem:kappa_condition} below, which imply that if $d < (\log N)^{1/2 - c}$ for some constant $c > 0$ then the stray eigenvalue is outside of the region $[-\kappa, \infty)$.

Thus, Theorem \ref{thm:point_process} combined with Corollary \ref{Cor:SpecMax} give a detailed picture of the transition at the scale $d^2 \asymp \frac{\log N}{\log \log N}$, where the stray eigenvalue enters the main spectrum.  Our result also gives a precise description of the crossover between the two regimes for the largest eigenvalue of $H$ first discussed in \cite{krivelevich2003largest}.

Finally, a straightforward extension of our analysis in Sections \ref{sec:block_diagonal_approximation} and \ref{sec:proof_thm_localization} shows that, with high probability, the eigenvector associated with the stray eigenvalue is delocalized and close (in the Euclidean) norm to the flat vector $N^{-1/2} (1, \dots, 1)$. This is in stark contrast to all other eigenvectors near the spectral edge, which are localized by Theorem \ref{thm:localisation} below. We omit further details.
\end{remark}

\begin{figure}[!hb]
\begin{center}
{\small 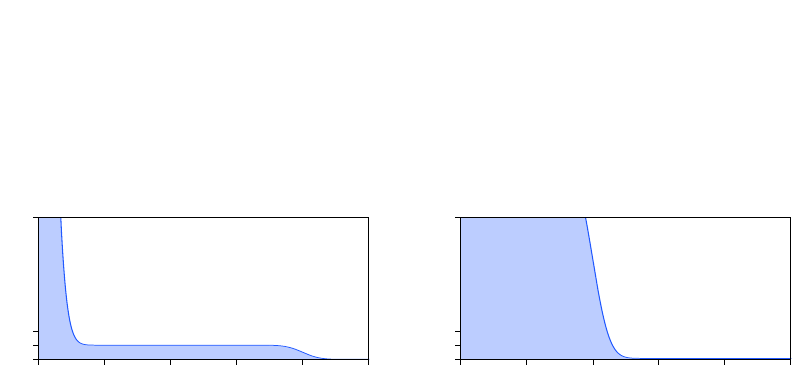}
\end{center}
\caption{
An illustration of the tail distribution function $s \mapsto \rho([s,\infty))$ of the intensity measure $\rho$ defined in \eqref{def_rho} and discussed in Remark \ref{rem:extreme_evs}. We plot the three cases: (a) the critical regime, $d \asymp \log N$; (b) the subcritical resonant regime, $d \ll \log N$ and $\ang{d \fra u}= 0$; (c) the subcritical nonresonant regime, $d \ll \log N$ and $\abs{\ang{d \fra u}} \geq c$ for some constant $c > 0$.
\label{fig:intensity}}
\end{figure}

\begin{remark}[Fluctuations of extreme eigenvalues] \label{rem:extreme_evs}
An immediate corollary of Theorem \ref{thm:point_process} is the convergence of the joint law of any bounded number of eigenvalues with index up to $\cal K / 2$. Denote by $\lambda_1 \geq \lambda_2 \geq \cdots \geq \lambda_{N-1}$ the eigenvalues in $\spec(H) \setminus \{\nu\}$, and for simplicity consider only the distribution of a single eigenvalue. We have, uniformly for $k \in \N^*$ and $s \geq -\kappa$,
\begin{align*}
\P(d \tau(\fra u)(\lambda_k - \sigma(\fra u)) \geq s) &= \P(\Phi([s, \infty)) \geq k) = \P(\Psi([s, \infty)) \geq k) + o(1)
\\
&= \ee^{-\rho([s,\infty))} \sum_{l \geq k} \frac{1}{l!} \rho([s,\infty))^l + o(1)\,.
\end{align*}
Since the right-hand side is $1 - o(1)$ for $s = -\kappa$ and $k \leq \cal K / 2$ by Lemma \ref{lem:Bennett} below (applied to a Poisson random variable with expectation $\rho([s,\infty))$), we therefore obtain the fluctuations of $\lambda_k$ for any $k \leq \cal K/2$.

We discuss the distribution of the rescaled top eigenvalue $X_1 \deq d \tau(\fra u)(\lambda_1 - \sigma(\fra u)) $ in more detail. Its distribution function is $\P(X_1 \leq s) = \ee^{-\rho([s,\infty))} + o(1)$. 
To analyse the right-hand side, we denote by $\ang{x} \in [-1/2, 1/2)$ the $1$-periodic representative of $x \in \R$, and distinguish three cases: (a) the critical regime, $d \asymp \log N$; (b) the subcritical resonant regime, $d \ll \log N$ and $\ang{d \fra u}= 0$; (c) the subcritical nonresonant regime, $d \ll \log N$ and $\abs{\ang{d \fra u}} \geq c$ for some constant $c > 0$. The tail distribution function $s \mapsto \rho([s,\infty))$ in each of the three cases is illustrated in Figure~\ref{fig:intensity}.
In the critical regime (a), where $\fra u \asymp 1$, we suppose that $\fra u \to \bar {\fra u}$ and $\ang{d \fra u} \to h$. Then we find that, for all $s \in \R$,
\begin{equation} \label{critical_distribution}
\lim_{N \to \infty} \P \pb{X_1 \leq s} = \exp\pbb{-
\sum_{\ell \in \Z} \bar {\fra u}^{h + \ell} \, G \pb{s + \theta(\bar {\fra u}) (h + \ell)}}\,,
\end{equation}
where $G (s) \deq \int_s^\infty \dd t \, g(t)$ is the Gaussian tail distribution function. The right-hand side of \eqref{critical_distribution} is a distribution function on $\R$ that seems not to have appeared previously in the literature. In particular, $X_1$ does not satisfy the conclusion of the Fisher--Tippett--Gnedenko theorem of classical extreme value theory.
In the subcritical resonant regime (b), where $\fra u \to \infty$, one easily finds that $\rho([s,\infty)) = \fra u G(s + \theta(\fra u)) + G(s) + o(1)$ provided that $s$ is chosen so that this expression is $O(1)$. We conclude that the distribution of $X_1$ does not have a limit. Instead, it is a mixture of two distributions on different scales: asymptotically, with probability $1 - 1/\ee$, the variable $X_1$ has a standard normal distribution, and with probability $1/\ee$ it has a Gumbel distribution on the scale $1 / \sqrt{\log \fra u}$ around $-\theta(\fra u) + O(\sqrt{\log \fra u})$.
Finally, in the subcritical nonresonant regime (c), it is easy to see that, after a suitable affine rescaling, $X_1$ has asymptotically a Gumbel distribution.

\end{remark}

\begin{remark}[The left edge]  \label{rem:left_edge_intro}
An analogous result to Theorem \ref{thm:point_process} holds at the left edge of the spectrum. In fact, near the spectral edges the spectrum of $H$ is with high probability approximately symmetric: in the notation of Remark \ref{rem:extreme_evs}, $\abs{\lambda_k  + \lambda_{N - k}}$ is with high probability much less than the scale of the fluctuations of $\lambda_k$ for $k \leq \cal K/2$. In particular, the correlation coefficient of $\lambda_k$ and $-\lambda_{N - k}$ is $1 - o(1)$ for any $k \leq \cal K/2$. A more precise formulation may be found in Corollary \ref{Cor:SpecMax} and Remark \ref{rem:left_edge} below. As a consequence, the point process near the left edge,
$\Phi_- \deq \sum_{\lambda \in \spec(H)} \delta_{d\tau(\fra u) (-\lambda - \sigma(\fra u))}$,
satisfies $\cal D_\kappa(\Phi_-,\Psi) \to 0$ under the assumptions of Theorem \ref{thm:point_process}.
\end{remark}

We now move on to the eigenvectors of $H$. We denote by $B_i(x)$ the ball of radius $i$ around $x$, and by $S_i(x)$ the sphere of radius $i$ around $x$. We denote by $\f w |_X$ the restriction of the vector $\f w$ to the set $X$. 

\begin{theorem}[Localization] \label{thm:localisation}
For any constant $\zeta > 4$ and any small enough constant $\xi > 0$ the following holds with high probability.
Suppose that \eqref{d_assumptions} holds.
Let $\f w$ be the $\ell^2$-normalized eigenvector associated with any of the  largest $d^{1/4 - 1 / \zeta - 8 \xi}$ eigenvalues of $H$ except $\nu$.
Then $\f w$ is exponentially localized around some vertex $x$ in the sense that for all $i \geq 0$ we have
\begin{equation*}
\norm{\f w |_{B_i(x)^c}} \lesssim \frac{q^i}{(1 - q)^2}
+ \frac{1}{(d \fra u)^{8}} \,, \qquad q = \frac{2 + O(d^{-1/3})}{\sigma(\fra u)}\,.
\end{equation*}
\end{theorem}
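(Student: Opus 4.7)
The plan is to identify, for each eigenvalue $\lambda$ in the statement, a unique localization center $x$ and an explicitly constructed trial vector $\tilde{\f w}$ supported on a ball $B_r(x)$ with intrinsic geometric decay inside the ball. The two terms in the stated bound then correspond to (i) the decay of $\tilde{\f w}$ and (ii) the perturbation error $\|\f w - \tilde{\f w}\|$.

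First I invoke the three-scale rigidity developed for Theorem \ref{thm:point_process}. Applied to the top $d^{1/4-1/\zeta-8\xi}$ eigenvalues $\lambda \neq \nu$, it produces with high probability a unique vertex $x = x(\lambda)$ of anomalously large normalized degree, the approximation $\lambda = \sqrt d\, \Lambda(\alpha_x/\fra d, 1/\fra d) + o(1/(d\tau(\fra u)))$, and an explicit trial radial vector. Concretely, $\tilde{\f w}$ is obtained by extending the principal eigenvector of the tridiagonal matrix $Z_1(\alpha,\beta)$ from \eqref{eq:def_Zd} radially onto $B_r(x)$, using the $(p,q,s)$-regular-tree approximation of the neighbourhood of $x$. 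Since the top eigenvalue of $Z_1$ is $\sigma(\fra u) + O(d^{-1/3})$ and the entries of $Z_1$ away from the first rows match those of a regular tree, a standard analysis of the characteristic equation for the radial profile yields geometric decay at the rate $q_* = (\sigma - \sqrt{\sigma^2-4})/2 + O(d^{-1/3}) \leq 2/\sigma(\fra u) + O(d^{-1/3})$. Summing $\|\tilde{\f w}|_{S_i(x)}\|^2 \lesssim q^{2(i-1)}\|\tilde{\f w}|_{S_1(x)}\|^2$ and accounting for the first-row correction and spherical volumes delivers $\|\tilde{\f w}|_{B_i(x)^c}\| \lesssim q^i/(1-q)^2$.

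Next I compare $\f w$ with $\tilde{\f w}$ via standard eigenvector perturbation. Because Theorem \ref{thm:point_process} implies that $\lambda$ is at distance $\geq c/(d\tau(\fra u))$ from the rest of $\spec(H)$ with high probability, one has $\|\f w - \tilde{\f w}\| \lesssim d\tau(\fra u)\cdot\|(H - \lambda)\tilde{\f w}\|$. The key analytic estimate is
\[
\|(H-\lambda)\tilde{\f w}\| \;\lesssim\; \frac{1}{d\tau(\fra u)}\cdot\frac{1}{(d\fra u)^{8}} \, ,
\]
which, combined with the previous inequality, yields the second term $(d\fra u)^{-8}$. For $i > r$, where $r$ is chosen so that $q^r \lesssim (d\fra u)^{-8}$, the first term is absorbed into the second, so the bound holds for all $i \geq 0$. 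The uniqueness of $x$, and hence the statement of localization around \emph{a single} vertex, follows from the same rigidity, since the high-degree vertices indexing distinct eigenvalues are pairwise at graph distance tending to infinity with high probability.

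The main obstacle is precisely the bound on $\|(H-\lambda)\tilde{\f w}\|$, which must hold at a scale much finer than the eigenvalue spacing $1/(d\tau(\fra u))$. The neighbourhood of $x$ is only approximately $(p,q,s)$-regular: deviations in the sphere sizes $|S_1(x)|, |S_2(x)|, \ldots$, short cycles in $B_r(x)$, and edges between different branches generate residual contributions that are, a priori, far too large. Overcoming this requires the three-level rigidity scheme announced in Section \ref{sec:proof_overview}: leading-order deviations are absorbed into the parameters $(\alpha,\beta)$ of $Z_1$, intermediate deviations are cancelled by corrective modifications to $\tilde{\f w}$, and the remaining tail contributions are dominated by the exponential decay of $\tilde{\f w}$ itself. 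This finest rigidity estimate is the most delicate analytic step of the paper; once it is in place, the present theorem follows immediately by combining the perturbation inequality with the intrinsic decay of $\tilde{\f w}$.
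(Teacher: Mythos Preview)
Your outline has the right architecture (localization center, decay of a local vector, perturbation), but two of your key analytic claims do not hold as stated and are not how the paper proceeds.

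First, the bound $\|(H-\lambda)\tilde{\f w}\|\lesssim (d\tau(\fra u))^{-1}(d\fra u)^{-8}$ for an \emph{explicit radial} trial vector is much stronger than anything the paper proves for such vectors, and the three-scale rigidity does not deliver it. Even the most refined explicit trial vector in the paper, built from the approximate tridiagonal basis $(\f f_i)$, only achieves $\|(H-\Lambda_{\fra d})\f v(x)\|\lesssim r\,d^{-1+2\gamma}/\sqrt{\am}$ (Proposition~\ref{prop:eigenError}); the cruder radial vector does worse (Proposition~\ref{pro:H_minus_Lambda_v_norm_quadratic_form_intermediate}). The paper avoids this obstruction by taking the comparison vector to be the \emph{exact} top eigenvector $\f w_+(x)$ of the restricted operator $H|_{B_{r}(x)}$. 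For that choice, $(H-\lambda')\f w_+(x)$ is a pure boundary term, bounded via $\|\f w_+(x)|_{S_{r+1}(x)}\|$, and Proposition~\ref{pro:extreme_in_a_ball}~\ref{item:eigenvector_ball_decay} gives the exponential decay $\lesssim (d\am)^{-10}$ once $r$ is chosen as in \eqref{eq:choice_r}. This is encoded in the block-diagonal bound $\|E_{\cal W}\|\lesssim (d\am)^{-10}$ of Proposition~\ref{prop:blockMatrix}, and is the actual source of the $(d\fra u)^{-8}$ term. The geometric decay $q^i/(1-q)^2$ then comes from the resolvent expansion \eqref{eq:decay_eigenvector_in_ball} for $\f w_+(x)$, not from the $Z_1$-eigenvector profile.

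Second, your use of the spectral gap is too optimistic. Theorem~\ref{thm:point_process} gives Poisson statistics, so gaps of order $1/(d\tau(\fra u))$ are typical but not uniform; arbitrarily small gaps occur. The paper instead proves a dedicated level-spacing estimate (Lemma~\ref{lem:level_spacing}) via the two-point correlation measure and the bounded density of $\rho$ (Lemma~\ref{lem:rho_Lipschitz}), yielding with high probability a gap $\gtrsim \eta/(d\tau(\fra u))$ with $\eta$ from \eqref{def_eta}. Making the right-hand side of \eqref{level_repulsion} $o(1)$ forces the stronger constraint \eqref{calK_condition2} on $\cal K$, which is precisely why Theorem~\ref{thm:localisation} covers only $\sqrt{\cal K}\asymp d^{1/4-1/\zeta-8\xi}$ eigenvalues rather than the $\cal K$ eigenvalues of Theorem~\ref{thm:point_process}. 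Your argument, which assumes a gap $\asymp 1/(d\tau(\fra u))$ directly from Poisson statistics, does not explain this loss and would in fact fail for a positive fraction of the eigenvalues without the level-spacing input.
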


Note that in Theorem \ref{thm:localisation} we have $q < 1$ since $\sigma(\fra u) -2 \gtrsim d^{-2\xi}$, as follows from the definition of $\sigma(\fra u)$, \eqref{Lambda_geq_2_est} below, and \eqref{a_2_lower_bound} below.

\begin{remark}[Approximate structure of $\f w$] \label{rem:localization}
The eigenvector $\f w$ in Theorem \ref{thm:localisation} can be very precisely approximated by the eigenvector $\f w(x)$ of the largest eigenvalue of the matrix obtained by restricting $H$ to the ball $B_r(x)$ for some suitably chosen radius $r$: $\norm{\f w - \f w(x)} \leq (d \fra u)^{-8}$. Moreover, $\f w(x)$ is approximately radial and exponentially decaying, in the sense that
\begin{equation*}
\normbb{\f w(x) - \sum_{i = 0}^r u_i(x) \f s_i(x)} \lesssim  \frac{ d^{-1/2 +  3 \xi}}{\sqrt{\fra u}} 
 +  \frac{1}{d} \,, \qquad
\f s_i(x) \deq \frac{\f 1_{S_i(x)}}{\norm{\f 1_{S_i(x)}}}\,,
\end{equation*}
where the coefficients $(u_i(x))_{i \geq 0}$ are determined by
\begin{equation*}
u_1(x) = \frac{\sigma(\fra u)}{\sqrt{\alpha_x}}\, u_0(x) \,,  \qquad \qquad u_k(x) = \pbb{\frac{2}{\sigma(\fra u) + \sqrt{\sigma(\fra u)^2 - 4}}}^{k - 1} \, u_1(x) \quad (k \geq 2)\,,
\end{equation*}
and $\f 1_{S_i(x)}$ is the vector equal to $1$ on $S_i(x)$ and $0$ elsewhere.
\end{remark}

\begin{remark}
Finally, we comment on some straightforward extensions of Theorem \ref{thm:localisation}.
\begin{enumerate}[label=(\roman*)]
\item
The error bound $(d \fra u)^{-8}$ in Theorem \ref{thm:localisation} and Remark \ref{rem:localization} can be easily improved to any constant power of $(d \fra u)^{-1}$, by the same proof.
\item
Theorem \ref{thm:localisation} also applies to the $d^{1/4 - 1 / \zeta - 5 \xi}$ smallest eigenvalues of $H$, with minor modifications that we omit; see also Remark \ref{rem:left_edge_intro}.
\item
The number of eigenvalues covered by Theorem \ref{thm:localisation} is $\sqrt{\cal K}$, where $\cal K$ is the number of eigenvalues covered by Theorem \ref{thm:point_process}.
This stronger constraint arises from a union bound needed for the simultaneous statement of Theorem \ref{thm:localisation}. By the same proof, we also find that localization holds for all of the largest $\cal K$ eigenvalues contained in any deterministic interval $I$ satisfying $\rho(I) = O(1)$.
\end{enumerate}
\end{remark}

We conclude this section with an overview of the structure of the paper. In Section \ref{sec:proof_overview}, we introduce basic definitions used throughout the paper and give an overview of the proof. Section \ref{sec:block_diagonal} contains the statement of our main rigidity bounds, which are proved in Sections \ref{sec:IntermediateRigidity}--\ref{sec:block_diagonal_approximation}. Theorem~\ref{thm:point_process} is proved in Section \ref{sec:ev_process}, and Theorem~\ref{thm:localisation} in Section \ref{sec:proof_thm_localization}. Section \ref{sec:graph_structure} establishes some properties of  the graph $\bb G$ which are used throughout Sections \ref{sec:IntermediateRigidity} and \ref{sec:fine_rigidity}.

\section{Basic definitions and overview of proof} \label{sec:proof_overview}

In this preliminary section we introduce some basic notations and definitions that are used throughout the paper, and give an overview of the proofs of Theorems \ref{thm:point_process} and \ref{thm:localisation}.

\subsection{Basic definitions} \label{sec:basic_definitions} 

We write $\N = \{0,1,2,\dots\}$ and $\N^* = \{1,2,3, \dots\}$. We set $[n] \defeq \{1, \ldots, n\}$
for any $n \in \N^*$ and $[0] \defeq \emptyset$.
We write $\abs{X}$ for the cardinality of a finite set $X$. 
We use $\ind{\Omega}$ as symbol for the indicator function of the event $\Omega$. 

Vectors in $\R^N$ are denoted by boldface lowercase Latin letters such as $\f u$, $\f v$ and $\f w$. We use the notation $\f v = (v_x)_{x \in [N]} \in \R^N$ for the entries of a vector. We denote by $\supp \f v \deq \{x \in [N] \col v_x \neq 0\}$ the support of a vector $\f v$. We denote by $\scalar{\f v}{\f w} = \sum_{x \in [N]} v_x w_x$ the Euclidean scalar product on $\R^N$ and by $\norm{\f v} = \sqrt{\scalar{\f v}{\f v}}$ the induced Euclidean norm. 
For a matrix $M \in \R^{N \times N}$, $\norm{M}$ is its operator norm induced by the Euclidean norm on $\R^N$.
For any $x \in [N]$, we define the standard basis vector $\f 1_x \defeq (\delta_{xy})_{y \in [N]} \in \R^N$.
To any subset $S \subset [N]$ we assign the vector $\f 1_S\in \R^N$ given by $\f 1_S \defeq \sum_{x \in S} \f 1_x$. 
In particular, $\f 1_{\{ x\}} = \f 1_x$.

For $r \in \N$ and $x \in [N]$, we denote by $B_r(x)$ the ball of radius $r$ around $x$, i.e.\ the set of vertices whose graph distance from $x$ in $\bb G$ is at most $r$. For $r \in \N^*$, we denote by $S_r(x) \deq B_r(x) \setminus B_{r-1}(x)$ the sphere of radius $r$  around $x$.

For $X \subset [N]$, we denote by $\bb G |_X$ the restriction of the graph $\bb G$ to the vertex set $X$, i.e.\ the set of edges of $\bb G$ whose incident vertices are both in $X$. Similarly, we denote by the $H |_X$ the restriction of $H$ to the set $X$, so that $(H|_X)_{xy} = H_{xy} \ind{x \in X} \ind{y \in X}$.
 
The degree of a vertex $x \in [N]$ is $D_x \deq \abs{S_1(x)}$. We also define
\begin{equation} \label{eq:def_alpha_x_beta_x} 
\alpha_x \deq \frac{\abs{S_1(x)}}{d}\,, \qquad \beta_x \deq \frac{\abs{S_2(x)}}{d \abs{S_1(x)}}\,,
\end{equation}
which have the interpretation of the normalized degree of $x$ and the normalized size of the $2$-sphere, $S_2(x)$, 
around $x$, respectively.

\subsection{Overview of proof} \label{sec:overview_proof_subsect}
Throughout this subsection, we use $c > 0$ to denote a small positive constant.
The central phenomenon underlying our proof is that in the regime $d\leq (b_* - c) \log N$ the extreme eigenvalues of $H$ are associated with distinct neighbourhoods of vertices of large degree. Thus, we identify a family of random variables, which approximate the extreme eigenvalues of $H$ with a better precision than the scale of the eigenvalue fluctuations, and  whose joint distribution can be explicitly identified. In fact, variables of this family can be regarded as the approximate top and bottom eigenvalues of the graph restricted to small balls around the vertices of large degree.

A much simpler and less precise instance of this phenomenon was identified in \cite{ADK19,ADK20, tikhomirov2021outliers} (see also \cite{BBK2}), where it was proved that each extreme eigenvalue $\lambda$ of $H$ satisfies a rigidity estimate of the form $\lambda=\Lambda(\alpha_{x})+\epsilon_{x}$ for some vertex $x \in [N]$ of large normalized degree $\alpha_x$, with $\abs{\epsilon_{x}}\ll 1$ and $\Lambda(\alpha) = \frac{\alpha}{\sqrt{\alpha -1}}$. However, the estimates obtained in \cite{ADK19,ADK20,tikhomirov2021outliers} are far from being able to capture the actual fluctuations of the eigenvalues. This is made particularly obvious in the case $d \ll \log N$, where it is well known \cite{Bol01} that the top degree is typically almost surely deterministic.

Hence, in order to determine the fluctuations of the extreme eigenvalues, we need a much more refined analysis, which entails identifying a suitable local function of the neighbourhoods of vertices of large degree, and establishing rigidity bounds on a scale much smaller than the eigenvalue fluctuations. It turns out that, in order to reach the required precision, we have to establish rigidity bounds on three different scales, which we call \emph{fine}, \emph{intermediate}, and \emph{rough}, in decreasing order of precision and increasing number of vertices to which they apply.

The three-scale approach is required because of a competition between the precision of an estimate and the strength of the corresponding probability bounds. The latter prevents a simultaneous statement for many vertices. Thus, the fine rigidity bounds are sufficiently precise to identify the fluctuations, but only hold with weak probability bounds. They can hence only be applied to a rather small set of vertices. To ensure that the remaining eigenvalues do not lie in the region of the spectrum near the largest eigenvalue, we need less precise rigidity estimates on the complementary set of vertices. These estimates are not precise enough to identify the fluctuations, but they hold with stronger probability bounds. It turns out that such rigidity estimates have to be established on two levels, the weakest of which, rough rigidity, is essentially on the level of \cite{ADK20} and holds with very high probability.

In the following we give more details on this argument, focusing for simplicity on the largest scale $d \asymp \log N$, where the scale of the fluctuations of the extreme eigenvalues of $H$  is of order $d^{-1}$ (by Theorem \ref{thm:point_process}). How to deal with sparser graphs is remarked on at the end of this subsection. Moreover, as a further simplification, throughout this subsection we ignore the stray eigenvalue (see Remark~\ref{rem:stray}).

\paragraph{Fine rigidity}
The fine rigidity estimates are established on the set of vertices of large degree
\[
\cal W = \{ x \in [N] \colon \alpha_x \geq \fra u - \eta_{\cal W}\}\,,
\] 
where $\fra u$ is the typical normalized maximal degree defined in \eqref{def_u}, and $\eta_{\cal W}$ is an appropriately chosen cutoff parameter\footnote{The actual choice of $\eta_{\cal W}$ will be made precise in the precise definition of $\cal W$; see \eqref{eq:def_cal_W} below.}. The main work is to construct, for each $x \in \cal W$, a random variable $\Lambda_x$ and a normalized vector $\f w(x)$ supported in a small neighbourhood of $x$ such that, with high probability, $\norm{(H-\Lambda_x) \f w(x)} = O( d^{-1-c})$ for each $x \in \cal W$.

Our construction of $\Lambda_x$ and $\f w(x)$ proceeds by an analysis of the graph $\mathbb{G} |_{B_r(x)}$, the restriction of $\bb G$ to the ball $B_r(x)$ of an appropriately chosen radius $r$ around a vertex $x\in \cal W$. The guiding principle of our estimates is to approximate $\mathbb{G} |_{B_r(x)}$ with a rooted tree that is \emph{regular} in the sense that the degree of a vertex depends only on its distance to the root. As we shall see, however, this approximation is too crude to capture the correct fluctuations, and an important element of our proof is a precise analysis of the deviations of $\mathbb{G} |_{B_r(x)}$ from such a regular tree.

We start by establishing some graph-theoretic properties of $\mathbb{G} |_{B_r(x)}$, which are collected in Proposition~\ref{pro:graphProperty} below. We show that with high probability, $\mathbb{G} |_{B_r(x)}$ is a tree satisfying a host of estimates stating, informally, that all vertices except the root have approximately degree $d$.  These properties hold with high probability simultaneously for all $x \in \cal W$. We note that this latter fact is crucial for our argument, and its validity determines the maximal size of $\cal W$ through $\eta_{\cal W}$. Our choice for the size of $\cal W$ is essentially optimal, as the concentration bounds in Proposition~\ref{pro:graphProperty} rely on near-sharp large deviation estimates and the maximal size of $\cal W$ is obtained from a union bound for almost independent events.

Using this information about the structure of $\mathbb{G} |_{B_r(x)}$, we analyse the spectrum of $M \deq H|_{B_r(x)}$ for $x \in \cal W$. The starting point of this analysis is the tridiagonalization of $M$ around the vertex $x$ (see e.g.\ \cite[Appendix A]{ADK19}). This amounts to writing $M$ in the basis $\f h_0, \f h_1, \f h_2, \dots$ obtained from orthogonalizing the sequence $\f 1_x, M \f 1_x, M^2 \f 1_x, \dots$. Unfortunately, this simpleminded approach, as used e.g.\ in \cite{ADK19}, faces a major obstacle arising from the irregularity of the tree $\mathbb{G} |_{B_r(x)}$. To understand it, we note that if $\mathbb{G} |_{B_r(x)}$ were a regular tree, where the degree of a vertex depends only on its distance to the root, then we would simply have $\f h_i = \f 1_{S_i(x)}$. However, owing to the irregularity of $\mathbb{G} |_{B_r(x)}$, this is not true for $\mathbb{G} |_{B_r(x)}$ and in fact the basis $(\f h_i)$ is very complicated. Its unwieldy form makes it very difficult to obtain precise enough estimates on the spectrum of $M$ from its tridiagonal form.

A possible way to overcome this issue is to simply ignore the irregularity $\mathbb{G} |_{B_r(x)}$ by writing $M$ in the basis $(\f 1_{S_i(x)})$. In this basis, $M$ is no longer tridiagonal, but it is almost tridiagonal, in the sense that its off-tridiagonal entries are small. Unfortunately, it turns out that these entries are too large to actually obtain the eigenvalue fluctuations.

An important ingredient of our proof, therefore, is an \emph{approximate tridiagonalization} of $M$, through the construction of a suitable basis that is in some sense intermediate between the simple basis $(\f 1_{S_i(x)})$ and the basis $(\f h_i)$ in which $M$ is tridiagonal. These basis vectors are denoted by $\f f_i$, and they are explicit enough to admit precise error estimates, while at the same time being close enough to $\f h_i$ to ensure that the off-tridiagonal entries are small enough. To explain their construction, we note that each basis vector $M^i \f 1_x$ can be naturally decomposed into a sum indexed by simple walks on $\N$ of length $i$ starting at $0$, whereby a step of the walk to the left/right indicates that we keep only the terms of $M$ that decrease/increase the distance from the root by one (recall that $\mathbb{G} |_{B_r(x)}$ is a tree). Keeping the contribution of all walks results in the basis $(\f h_i)$, while only keeping walks with no steps to the left results in the basis $(\f 1_{S_i(x)})$. By definition, the basis $(\f f_i)$ is obtained by keeping the contribution of all walks \emph{with at most one step to the left}. It is sufficiently explicit to be amenable to a detailed analysis, and close enough to $(\f h_i)$ to yield an accurate enough tridiagonal approximation. For instance,
\begin{equation*}
\f f_3 = \f 1_{S_3(x)} + \sum_{y \in S_1(x)} (D_y - F) \f 1_y\,,
\end{equation*}
where $F$ is some constant determined by the orthogonality of $(\f f_i)$. The higher-order vectors $\f f_i$, $i > 3$, are constructed analogously, with values on $S_{i-2}(x)$ determined by the average degree of the vertices on the geodesic back to the root $x$.

The most involved analytical part of our proof is to show that, in the basis $(\f f_i)$, the matrix $M$ has an approximate tridiagonal form such that the off-tridiagonal matrix has a suitable structure and operator norm bounded by $O(d^{-1 - c})$, thus yielding estimates of sufficient precision to capture the eigenvalue fluctuations. We compare the upper-left $(r+1) \times (r+1)$ block of the tridiagonal part with that of
the matrix $Z_{\fra d}(\alpha_x,\beta_x)$, where $\fra d = 1 + \frac{1}{d}$ (as in \eqref{def_tau_etc}) and
\begin{equation} \label{eq:def_Zd} 
Z_{\omega}(\alpha, \beta) \deq
\begin{pmatrix}
0 & \sqrt{\alpha} 
\\
\sqrt{\alpha} & 0  & \sqrt{\beta}
\\
& \sqrt{\beta} & 0 & \sqrt{\omega}
\\
&& \sqrt{\omega} & 0 & \sqrt{\omega}
\\
&&& \sqrt{\omega} & 0 & \ddots
\\
&&&& \ddots & \ddots
\end{pmatrix}\,.
\end{equation}
The matrix $Z_{\fra d}(\alpha,\beta)$ has a simple interpretation, which also gives a heuristic explanation of its role in our analysis. Let $\bb T_{p,q,s}$ be the infinite $(p,q,s)$-regular rooted tree, whose root has $p$ children, which themselves each have $q$ children, and all other vertices have $s$ children. Then it is easy to see that the tridiagonalization 
of the adjacency matrix of 
$\bb T_{d\alpha,d\beta,d+1}$ around the root is $\sqrt{d} Z_{\fra d}(\alpha,\beta)$. See Appendix \ref{app:spectral_analysis} below. Hence, somewhat surprisingly, we have to compare the graph $\mathbb{G} |_{B_r(x)}$ to the regular tree $\bb T_{d \alpha_x, d \beta_x, d+1}$, where vertices further than $2$ from the root have $d+1$ children instead of the expected $d$. This slight excess degree is a trace of the irregularity of $\mathbb{G} |_{B_r(x)}$ in its approximation by a regular tree.

The largest eigenvalue of $Z_{\fra d}(\alpha,\beta)$ can be explicitly computed as
\begin{equation} \label{eq:def_Lambda_d} 
\Lambda_{\fra d}(\alpha,\beta) \deq \sqrt{\fra d} \Lambda \big( {\alpha}/{\fra d}, {\beta}/{\fra d} \big) 
\end{equation} 
with $\Lambda$ defined in \eqref{eq:def_Lambda} (see Corollary~\ref{cor:Lambda_expansion} below). 
Therefore, we choose the approximate eigenvalue $\Lambda_x$ from above as $\Lambda_{\fra d}(\alpha_{x},\beta_{x})$. 
We construct a corresponding approximate eigenvector $\f v(x)$ for $M$ out of the top eigenvector $(u_i)_{i \in \N}$ of $Z_{\fra d}(\alpha_x,\beta_x)$ (see Corollary~\ref{cor:Lambda_expansion} below) and the orthonormal family $(\f f_i)_i$ by setting $\f v(x) \deq \sum_{i=0}^r u_i \frac{\f f_i}{\|\f f_i\|}$.
The proof of the closeness of the largest eigenvalue of $M$ and $\Lambda_{\fra d}(\alpha_x,\beta_x)$ 
is based on a perturbation theory argument summarized in Lemma~\ref{lem:perturbationEV}. 
This requires precise bounds on $\scalar{\f v(x)}{(M - \Lambda_{\fra d}(\alpha_x,\beta_x))\f v(x)}$
and $\norm{(M - \Lambda_{\fra d}(\alpha_x,\beta_x))\f v(x)}$ as well as control on the  
spectral gap of $M$ (see the explanations in the proof of Proposition~\ref{pro:fine_rigidity} 
and the proof of Proposition~\ref{pro:extreme_in_a_ball}). As a conclusion, we obtain that the largest eigenvalue of $M$ is $\Lambda_{\fra d}(\alpha_x,\beta_x) + O(d^{-1 - c})$.

Finally, we let $\f w(x)$ be the top eigenvector of $M$. We establish exponential decay of $\f w(x)$, using resolvent estimates and a spectral gap for $M$, which follows from the local tree approximation of $\bb G \vert_{B_r(x)}$ and concentration estimates of the degrees in $B_r(x) \setminus \{x\}$.
Then we deduce, embedding $\f w(x)$ in the original graph $\bb G$, that if $r$ is chosen large enough then $\norm{(H-\Lambda_{\fra d}(\alpha_x,\beta_x))\f w(x)} = O(d^{-1 - c})$, as desired. 
Since the balls $(B_r(x))_{x \in \cal W}$ are disjoint with high probability for small enough $r$ (see Proposition~\ref{pro:graphProperty}), the vectors $(\f w(x))_{x \in \cal W}$ are orthogonal. 
We conclude that there are $(\epsilon_{x})_{x\in{\cal W}}$ such that $\max_{x\in{\cal W}}|\epsilon_{x}| = O(d^{-1-c})$
and we have the inclusion
\begin{equation} \label{eq:W_eigenvalues_subset_spec_H} 
\{\Lambda_{\fra d}(\alpha_{x},\beta_{x})+\epsilon_{x} \colon x\in{\cal W}\}\cap [\sigma(\fra u) - \chi, \infty) \subset\spec(H)\cap [\sigma(\fra u) - \chi, \infty)\,,
\end{equation}
where $\sigma(\fra u)$, defined in \eqref{def_tau_etc}, is the typical value of the largest eigenvalue of $H$ 
and $\chi$ is a parameter satisfying $d^{-1} \ll \chi \ll \eta_{\cal W}$.

\paragraph{Intermediate and rough rigidity}
The next step of the proof is to show that \eqref{eq:W_eigenvalues_subset_spec_H} is not merely an inclusion but an equality. This entails showing that the only eigenvalues in the interval $[\sigma(\fra u) - \chi, \infty)$ are precisely the ones arising from vertices $x \in \cal W$ described above: each eigenvalue in $[\sigma(\fra u)- \chi, \infty)$ can be written as $\Lambda_{\fra d}(\alpha_x,\beta_x) + \eps_x$ for some $x \in \cal W$ and $\eps_x =O(d^{-1 - c})$.

To exclude other eigenvalues of $H$ in $[\sigma(\fra u)-\chi, \infty)$, we first consider eigenvalues of the form $\Lambda_{\fra d}(\alpha_x,\beta_x) + \eps_x$ for $x \notin \cal W$ and $\alpha_x \geq 2 + o(1)$. 
In this case, we do not obtain the precision $\eps_x  = O(d^{-1-c})$ as above, but we also do not need it. All that we need is to ensure that such eigenvalues cannot pollute the interval $[\sigma(\fra u) - \chi, \infty)$. Essentially, we prove that
\begin{equation} \label{eq:upper_bound_Lambda_d} 
\Lambda_{\fra d}(\alpha_x, \beta_x) + \eps_x < \sigma(\fra u) - \chi
\end{equation}
for all $x \notin \cal W$ and $\alpha_x \geq 2 + o(1)$. 

As outlined above, this step has to be split into two scales, since the precision of the rigidity estimates on $\epsilon_x$ that are valid simultaneously for all vertices $x$ satisfying $\alpha_x \geq 2 + o(1)$ is not sufficient to ensure \eqref{eq:upper_bound_Lambda_d} if $\alpha_x$ is below but close to the threshold $\fra u - \eta_{\cal W}$. Thus, we introduce the sets 
\[ 
\cal V = \{ x \colon \alpha_x \geq \fra u - \eta_{\cal V} \}, \qquad \qquad 
\cal U = \{ x \colon \alpha_x \geq 2 + o(1) \} 
\] 
with cutoff parameters $\eta_{\cal V}$ and $2 + o(1)$ satisfying $\eta_{\cal W} \ll \eta_{\cal V} \ll \fra u -2 - o(1)$. 
Thus, we obtain the three-scale hierarchy $\cal W \subset \cal V \subset \cal U$. In the largest set $\cal U$ we prove the rough rigidity result $\epsilon_x = O(d^{- 1/2 + c})$, while in the intermediate set $\cal V$ we prove the intermediate rigidity result $\epsilon_x = O(d^{- 1 + c})$.

Owing to the monotonicity of $\Lambda_{\fra d}$, the high-probability estimate $\beta_x \approx 1$, and $\sigma(\fra u) = \Lambda_{\fra d}(\fra u,1)$, by linearization of $\Lambda_{\fra d}$ in $\alpha$, we find that the bound \eqref{eq:upper_bound_Lambda_d} essentially reduces to showing $\epsilon_x < c \eta_{\cal W} - \chi$ for all $x \in \cal V \setminus \cal W$, and similarly $\epsilon_x < c \eta_{\cal V} - \chi$ for all $x \in \cal U \setminus \cal V$, where $c > 0$ is some positive constant. At this point the need for a three-scale approach is apparent: for the fine rigidity step, we have to choose $d^{-1} \ll \eta_{\cal W} \ll d^{-1/2}$, so that the condition $\epsilon_x < c \eta_{\cal W} - \chi$ is clearly never going to be satisfied with the rough rigidity estimate $\epsilon_x = O(d^{- 1/2 + c})$. We refer to Figure \ref{fig:scales} for an illustration of the three-scale rigidity structure.

\begin{figure}[!ht]
\begin{center}
{\small 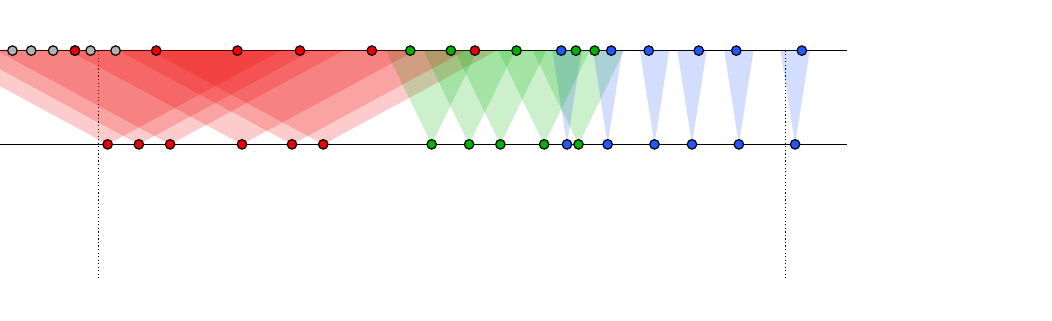}
\end{center}
\caption{A schematic illustration of the three-scale rigidity estimate. Eigenvalues of $H$ are plotted on the top line, while the variables $\Lambda_{\fra d}(\alpha_x, \beta_x)$ with $\alpha_x \geq 2 + o(1)$ are plotted on the bottom line. The approximate bijection between these sets is indicated with the coloured triangular regions. We use blue for points associated with $x \in \cal W$, green for points associated with $x \in \cal V \setminus \cal W$, red for points associated with $x \in \cal U \setminus \cal V$, and grey for all other eigenvalues. The precision of the rigidity estimates is indicated above the diagram: $d^{-1-c}$ for $x \in \cal W$ (fine rigidity), $d^{-1 + c}$ for $x \in \cal V \setminus \cal W$ (intermediate rigidity), and $d^{-1/2 + c}$ for $x \in \cal U \setminus \cal V$ (rough rigidity). The extent of the blue and green regions is indicated below the diagram: $\eta_{\cal W} \asymp d^{-1 + 2c}$ for the region of fine rigidity, and $\eta_{\cal V} \asymp d^{-1/4 + c}$ for the region of intermediate rigidity. See Proposition \ref{prop:blockMatrix} below for a detailed statement.\label{fig:scales}}
\end{figure}

The intermediate rigidity estimate $\epsilon_x = O(d^{- 1 + c})$ for $x \in \cal V \setminus \cal W$ follows analogously to the fine rigidity argument sketched above, except that we require less precision but stronger high-probability bounds to accommodate the larger set $\cal V$. In particular, it suffices to perform an approximate tridiagonalization in terms of the simple basis $(\f 1_{S_i(x)})$ instead of $(\f f_i)$; see Proposition~\ref{pro:intermediate_rigidity} as well as its proof. Finally, the rough rigidity estimate $\epsilon_x = O(d^{- 1/2 + c})$ for $x \in \cal U \setminus \cal V$ follows similarly to the rough bounds from \cite{ADK19,ADK20}. In particular, we also need to understand the graph structure of $\mathbb{G}$ in the vicinity of any vertex $x \in \cal U\setminus \cal W$ to obtain an approximate eigenvector 
$\f w(x)$  
for $H$ with approximate eigenvalue $\Lambda_{\fra d}(\alpha_x,\beta_x)$; see Proposition~\ref{prop:RoughCandidatEigenvector} as well as its proof.

We remark that it is crucial that all the approximate eigenvectors $\f w(x)$ constructed above, for $x \in \cal W$, for $x \in \cal V \setminus \cal W$, and for $x \in \cal U \setminus \cal V$, are orthogonal. This is ensured by requiring that their supports be disjoint. It turns out that, with high probability, this is true for $x \in \cal W$ and for $x \in \cal V \setminus \cal W$, but not for $x \in \cal U \setminus \cal V$. Indeed, the set $\cal U \setminus \cal V$ is large enough that with high probability the balls of even a small radius $r$ around its vertices overlap. This problem was remedied in \cite{ADK19} by introducing the pruned graph, obtained from $\bb G$ by removing a small number of edges. We use this construction in the proof of the rough rigidity in the neighbourhood of vertices in $\cal U \setminus \cal V$. It is essential, however, that the neighbourhoods of all vertices in $\cal W$ and $\cal V \setminus \cal W$ be left intact, as in general the removal of even a single edge gives rise to a shift of order $d^{-1/2}$ in the eigenvalues of $H$ (which is also the precision of the rough rigidity estimate). Our argument ensures that, owing to the structure of the eigenvectors, the shift in the extreme eigenvalues caused by a pruning near the vertices of $\cal U \setminus \cal V$ is smaller than $d^{-1-c}$.

\paragraph{Block diagonal matrix}

The three-scale rigidity result outlined above is best formulated in terms of a block diagonal matrix, which also incorporates information about the approximate eigenvectors of $H$. We note that for each $x \in \cal U$, the arguments sketched above also yield an approximate eigenvector $\f w_-(x)$ 
of $H$ with approximate eigenvalue $-\Lambda_{\fra d}(\alpha_x,\beta_x)$. 
We set $\f w_+(x) \deq \f w(x)$, note that $\f w_-(x) \perp \f w_+(x)$, and extend $(\f w_\sigma(x))_{x \in \cal U, \, \sigma = \pm}$ to an orthonormal basis constituting the columns of an orthogonal matrix $U$. Then we have
\begin{equation} \label{sketch_block}
U^{-1}HU=\begin{pmatrix}\text{diag}((\sigma\Lambda_{\fra d}(\alpha_{x},\beta_{x}))_{x\in{\cal U},\sigma=\pm}) & 0\\
0 & X
\end{pmatrix}+E\,, 
\end{equation}
where $E$ is a matrix containing all error terms, and whose individual blocks are controlled precisely using the above three-scale rigidity estimates and the fact that all balls of radius $r+1$ around vertices of $x \in \cal U$ can be chosen to be disjoint. See Proposition \ref{prop:blockMatrix} below. Moreover, by adapting an estimate on the spectral radius of the non-backtracking matrix of $H$ and Ihara-Bass-type formulas from \cite{ADK19,ADK20,BBK1} we show that the lower-right block  $X$ satisfies $\norm{X}\leq 2  + o(1)$.

Together with the results sketched above, these bounds on $X$ and $E$ imply in particular that 
all eigenvalues of $H$ in $[\sigma(\fra u) - \chi, \infty)$ are of the form $\Lambda_{\fra d}(\alpha_x,\beta_x) + \eps_x$ 
for some $x \in \cal W$ and $\eps_x = O(d^{-1 - c})$. That is, the inclusion in \eqref{eq:W_eigenvalues_subset_spec_H} is an equality.  This concludes the sketch of the proof of the rigidity estimates, which are summarized in Corollary \ref{Cor:SpecMax} below.

\paragraph{Convergence to a Poisson point process}
The rigidity result establishes an approximate bijection between the extreme eigenvalues of $H$ and random variables $(\Lambda_{\fra d}(\alpha_x, \beta_x))_{x \in \cal W}$. By standard extreme value theory, the eigenvalue point process near the edge would be Poisson if the random variables $\Lambda_{\fra d}(\alpha_x, \beta_x)$, $x \in \cal W$, were independent. 
In our case, therefore, the main work to prove Poisson statistics, Theorem \ref{thm:point_process}, is a decorrelation result, stating that the correlation functions of the appropriately rescaled correlation measures of the eigenvalue process factorize asymptotically. First we prove that the finite-dimensional joint distributions of $(\alpha_x, \beta_x)_{x \in \cal W}$ factorize asymptotically. To that end, we use that $(\alpha_x, \beta_x)$ is a function of $H$ restricted to the ball $B_2(x)$, and estimate the correlations by bounding the contribution of geodesics of length shorter than $4$ connecting vertices of $\cal W$. Then we deduce a factorization result for the correlation measures using a truncated inclusion-exclusion formula. We remark that the intensity measure $\rho$ from \eqref{def_rho} arises from an asymptotic analysis of the quantity $N \P(\Lambda_{\fra d}(\alpha_x, \beta_x) \geq s)$ in the regime of $s$ where this quantity is of order one. Essentially, this analysis amounts to an expansion of $f$ from \eqref{def_f} around $\fra u$ and of $\Lambda_{\fra d}$ around $(\fra u, 1)$, combined with an extreme value analysis of $\alpha_x$ and a Gaussian approximation of $\beta_x - 1$. This analysis also determines the parameters $\sigma(\fra u)$ and $\tau(\fra u)$.

\paragraph{Localization}

The proof of eigenvector localization, Theorem~\ref{thm:localisation}, is relatively straightforward using two main ingredients: Poisson eigenvalue statistics with an intensity that has a bounded density, and exponential localization of the vectors $\f w(x)$, $x \in \cal W$, used to construct the block diagonal approximation. Indeed, using that the two-point correlation measure of the eigenvalue process near the edge is approximately a product of the intensity $\rho$ with a bounded density, we deduce a uniform spectral gap between all extreme eigenvalues of $H$. This allows us to conclude that for $x \in \cal W$, $\|(H-\Lambda_{\fra d}(\alpha_{x},\beta_{x}))\f w(x)\|$ is much smaller than $\min_{x\neq y\in{\cal W}}|\Lambda_{\fra d}(\alpha_{x},\beta_{x})-\Lambda_{\fra d}(\alpha_{y},\beta_{y})|$. Hence, we can apply perturbation theory to the eigenvectors of $H$ in terms of the eigenvectors $\f w(x)$ of $H - UEU^{-1}$ (in the notation of \eqref{sketch_block}). Together with the exponential localization of the top eigenvectors $\f w(x)$ of $H |_{B_r(x)}$, we conclude the proof.

\paragraph{Extension to very sparse graphs} 

As advertised at the beginning of this subsection, the preceding discussion focused on the critical regime $d \asymp \log N$. Our results are in fact valid for much sparser graphs, down to the scale $d \gg (\log \log N)^4$. The arguments sketched above carry over with a few complications. Now one has to keep track of the $\fra u$-dependence of the errors, as $\fra u \gg 1$ if $d \ll \log N$. With some additional care, we obtain the error bound $\epsilon_x = O(d^{-1 - c} / \fra u)$, with optimal $\fra u$-dependence. The main new ingredient is a new proof of the rough rigidity estimate, since we cannot rely on the estimates from \cite{ADK19,ADK20}, which were restricted to $d \gg \sqrt{\log N}$. The key observation here is that, in the very sparse regime $d \lesssim \sqrt{\log N}$, the eigenvectors $\f w(x)$ have very fast exponential decay, and hence for the rough rigidity estimates it suffices to consider balls of radius $r = 2$.

\section{Block diagonal approximation} \label{sec:block_diagonal}

In this section we state the three-scale rigidity estimates and the block diagonal approximation of $H$.
First, we reformulate our upper bound on $d$ from \eqref{d_assumptions} in terms of the expected location of
the largest normalized degree. In many arguments it is convenient to replace $\fra u$ from \eqref{def_u} with an approximation, which is denoted by $\am$ and defined through
\begin{equation} \label{eq:def_alpha_max} 
h(\am-1) =  \frac{\log N}{d}\,, 
\end{equation}
where the function $h\colon [0,\infty) \to [0,\infty)$ is defined as
\begin{equation} \label{eq:def_h} 
 h(a) \deq (1 + a) \log (1 + a) - a\,. 
\end{equation}
We note that $\fra a$ is well approximated by $\fra u$; see Lemma \ref{lem:u_a} below for a precise estimate. Moreover, $\fra a \gg 1$ if $d \ll \log N$. 

Throughout the remainder of this section, we shall assume that 
\begin{equation} \label{eq:ConditionAlphaMaxMin}
 \am -2 \geq (\log d) \Big( d^{-\frac{1}{20} + \frac{3}{2}\gamma} \vee d^{-\frac{2}{3}\gamma} \Big)\,.
\end{equation}

Here, and throughout the proof, $\gamma$ is a constant exponent assumed to lie in $(0,1/6)$. At the end of the proof (see Section \ref{sec:proof_thm_poisson_statistics}), it will be chosen depending on the exponents $\zeta$ and $\xi$ from Theorem \ref{thm:point_process}.

The three-scale rigidity estimates are formulated in terms of the three sets of vertices
\begin{align} 
\label{eq:def_cal_W} 
 \cal W & \defeq \biggl\{ x \in [N] \colon \alpha_x \geq \am - \frac{c_* d^{2\gamma-1}}{ \log \am} \biggr\}\,, 
\\ 
\label{eq:VsetDef}
\mathcal{V}& \deq \begin{cases} 
\Bigl\{x \in [N] \colon \alpha_x \geq \am  - c_* \frac{\am^{3/2}}{\am - 2} \frac{(\log N)^{1/4}}{\sqrt{d}} \log d \Bigr\} 
& \text{ if } d > (\log N)^{3/4} \\
\bigl\{x \in [N] \colon \alpha_x \geq \am  -c_*\am^{1/2} \bigr\} 
& \text{ if } d \leq (\log N)^{3/4}\,, 
\end{cases} 
\\ 
\label{eq:UsetDef}
\cal U & \deq \begin{cases} 
\bigl\{x \in [N] \colon  \alpha_x \geq 2 +  (\frac{(\log N)^{1/2}\log d}{d})^{1/4}  \bigr\} & \text{ if }d> (\log N)^{3/4}
\\ 
\bigl\{x \in [N] \colon \alpha_x \geq \frac{\am }{5} \bigr\} & \text{ if }d\leq (\log N)^{3/4}\,. 
\end{cases}
\end{align} 
Here, $\gamma >0$ and 
the constant $c_*>0$ in the definitions of $\cal W$ and $\cal V$ will be chosen in the following results. 
We remark that, owing to \eqref{eq:ConditionAlphaMaxMin}, we have $\cal W \subset \cal V \subset \cal U$.

\begin{proposition}[Block diagonal approximation and rigidity estimates] \label{prop:blockMatrix}
Let $\gamma \in (0,1/6)$ be a constant. 
Let $d$ be such that \eqref{eq:ConditionAlphaMaxMin} and 
\begin{equation} \label{eq:condition_d_blockMatrix} 
d^{2\gamma} \geq K \log \log N 
\end{equation} 
with a large enough $K$ are satisfied. 
Then there is a constant $c_*>0$ such that, with high probability, the following holds. 
There exists an orthogonal matrix $U$ such that 
\begin{equation}
\label{eq:LargeBlock} U^{-1}  H U  =
\begin{pmatrix}\nu_{\r s} & 0 &0 &0 & E^*_{\r s} \\ 0
& \mathcal{D}_{\cal W} & 0 & 0 & E_{\cal W}^* 
\\ 0 & 0 & \mathcal{D}_{ \cal V\setminus \cal W} & 0 & E_{\cal V\setminus \cal W}^* 
\\ 0 & 0 & 0 &  \mathcal{D}_{ \cal U \setminus \cal V} + \cal E_{\cal U\setminus \cal V}& E_{\cal U \setminus \cal V}^*
\\E_{\r s} & E_{\cal W} & E_{\cal V \setminus \cal W} & E_{\cal U \setminus \cal V} &  X
\end{pmatrix}\,.   
\end{equation}
Here, the upper left blocks satisfy 
\begin{equation} \label{eq:cal_D_equal_Lambda_plus_eps} 
\cal D_{\#} = \diag(\sigma \Lambda_{\fra d}(\alpha_x,\beta_x)+\epsilon_{x,\sigma})_{x\in \# ,\sigma=\pm},\qquad \qquad \nu_{\r s} \in \mathbb{R}\,,
\end{equation}
for $\# = \cal W, \cal V \setminus \cal W, \cal U \setminus \cal V$, 
and the error terms as well as the other blocks satisfy the estimates
\begin{enumerate}[label=(\roman*)] 
\item 
\label{Item:WsetBound}
$\max_{x\in \cal W, \, \sigma = \pm}|\epsilon_{x,\sigma}| \lesssim \frac{d^{- \frac 1 2 + 3 \gamma}}{d \am} 
\Big( 1 + \Big( \frac{\log d}{\log \am} \Big)^2 \frac{\am^4}{(\am - 2)^4} \Big)$ and $\|E_{\cal W}\| \lesssim (d \am)^{-10}$,
\item \label{Item:VsetBound}
$\max_{x\in \cal V\setminus \cal W, \, \sigma = \pm}|\epsilon_{x,\sigma}| \lesssim  \frac{1}{d \am} \bigg( 1 + \frac{\am^{3/2}\log d }{(\am - 2)^2} \bigg)$
and $\|E_{\cal V \setminus \cal W}\| \lesssim (d \am)^{-10}$,
\item \label{Item:UsetBound}
$\max_{x\in \cal U\setminus \cal V, \, \sigma = \pm}|\epsilon_{x,\sigma}|+\|\cal E_{\cal U \setminus \cal V}\|+\|E_{\cal U \setminus \cal V}\| \lesssim 
\begin{cases} \frac{(\log N)^{1/4} \sqrt{\log d}}{\sqrt{d}} & \text{ if } d>(\log N)^{3/4} \\ 
  \frac{1}{\sqrt{\am}} + \frac{\log \log N}{\sqrt{d}} & \text{ if }  
{d \leq (\log N)^{3/4}},
\end{cases}$ 
\item $\|E_{\r s}\| + |\nu_{\r s} -(d^{1/2}+d^{-1/2}+d^{-3/2})|\lesssim d^{-5/2} + d^{1/2}\ind{d < (\log N)^{1/4}}$,
\label{Item:StrayEstimate}
\item 
\label{Item:bulkBound}
$\|X\|\leq  \begin{cases} 
  2 + O \Big( \frac{(\log N)^{1/4} \sqrt{\log d}}{\sqrt d} \frac{\am^2}{(\am-2)^2} \Big) & \text{ if }d > (\log N)^{3/4} \\ 
\eta \am^{1/2}  & \text{ if } d\leq (\log N)^{3/4},  
 \end{cases}\quad $ 
for any $\eta \in (\sqrt{\frac{4}{5}},1)$.
\end{enumerate}
 \end{proposition}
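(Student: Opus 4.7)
The plan is to assemble the three standalone rigidity results -- fine rigidity on $\cal W$, intermediate rigidity on $\cal V \setminus \cal W$, and rough rigidity on $\cal U \setminus \cal V$, proved in Sections \ref{sec:IntermediateRigidity}--\ref{sec:fine_rigidity} -- into an orthonormal family of approximate eigenvectors, and then close it up to an orthonormal basis of $\R^N$ that will make up the columns of $U$. For each $x \in \cal U$ and each sign $\sigma = \pm$, the corresponding rigidity argument produces a normalized vector $\f w_\sigma(x)$ supported in a small ball $B_r(x)$, together with an approximate eigenvalue $\sigma \Lambda_{\fra d}(\alpha_x,\beta_x) + \epsilon_{x,\sigma}$, where the precision of $\epsilon_{x,\sigma}$ and of the residual $\|(H - \sigma \Lambda_{\fra d}(\alpha_x,\beta_x) - \epsilon_{x,\sigma}) \f w_\sigma(x)\|$ is dictated by which of the three sets $x$ belongs to. The graph-structure estimates of Section \ref{sec:graph_structure} guarantee that the supports can be chosen disjoint with high probability -- modulo the pruned-graph construction on $\cal U \setminus \cal V$ whose overlapping balls are handled by removing $O(1)$ edges per ball -- so the family $(\f w_\sigma(x))_{x \in \cal U, \sigma = \pm}$ is orthonormal.

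For the stray direction I produce a unit vector $\f w_{\r s}$ obtained by Gram--Schmidt orthogonalization, against the $\f w_\sigma(x)$, of the flat vector $N^{-1/2}(1,\dots,1)$. The latter is known to give, via the standard Perron-eigenvector analysis of $A$, a Rayleigh quotient equal to $d^{1/2} + d^{-1/2} + d^{-3/2} + O(d^{-5/2})$ through concentration of the row sums of $A$; the additive $d^{1/2} \ind{d < (\log N)^{1/4}}$ in \ref{Item:StrayEstimate} is the slack allowed in the very sparse regime where the stray eigenvalue may enter the bulk and is no longer isolated. Completing $\{\f w_{\r s}\} \cup \{\f w_\sigma(x)\}_{x \in \cal U, \sigma = \pm}$ to an orthonormal basis and arranging its vectors in the order of \eqref{eq:LargeBlock} defines $U$; expanding $H$ in this basis gives the block structure \eqref{eq:LargeBlock} directly. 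Each block entry of $E_\cal W$, $E_{\cal V \setminus \cal W}$, $E_{\cal U \setminus \cal V}$, $E_{\r s}$ is exactly the component of a residual $(H - \sigma \Lambda_{\fra d}(\alpha_x,\beta_x) - \epsilon_{x,\sigma}) \f w_\sigma(x)$ orthogonal to the approximate-eigenvector family, so the bounds $\|E_\cal W\|, \|E_{\cal V \setminus \cal W}\| \lesssim (d\am)^{-10}$ follow from choosing the radius $r$ in the fine and intermediate rigidity constructions large enough that the exponential tail of $\f w_\sigma(x)$ outside $B_r(x)$ is that small, using that the decay rate $\sim 2/\sigma(\fra u) < 1$ is strictly less than one by \eqref{eq:ConditionAlphaMaxMin}. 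The rougher error in \ref{Item:UsetBound} reflects the correspondingly shorter balls used in $\cal U \setminus \cal V$; the extra diagonal perturbation $\cal E_{\cal U \setminus \cal V}$ absorbs off-diagonal couplings within that block that cannot be made smaller than the rough precision itself.

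To bound $\|X\|$ in \ref{Item:bulkBound} I adapt the Ihara--Bass-type argument from \cite{BBK1, ADK19, ADK20}, which controls the spectral radius of $H$ projected onto the orthogonal complement of the flat direction and of the approximate eigenvectors supported near the large-degree vertices, via the non-backtracking matrix of $\bb G$ restricted to $[N] \setminus \cal U$. In the regime $d > (\log N)^{3/4}$, the maximum normalized degree outside $\cal U$ is $2 + o(1)$, and the same type of fluctuation analysis that yields the intermediate rigidity shows that removing vertices of $\cal U$ leaves a graph whose adjacency matrix has operator norm $2 + O(\frac{(\log N)^{1/4} \sqrt{\log d}}{\sqrt{d}} \frac{\am^2}{(\am - 2)^2})$. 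In the sparse regime $d \leq (\log N)^{3/4}$, the threshold in \eqref{eq:UsetDef} is larger and $\|X\|$ is merely compared with $\am^{1/2}$, which is the trivial spectral radius bound available once the vertices of normalized degree at least $\am / 5$ have been removed.

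The hard analytical work is absorbed into the fine rigidity proposition, whose precision $\frac{d^{-1 + 3\gamma}}{d \am}$ on $\cal W$ requires the approximate tridiagonalization in the basis $(\f f_i)$ sketched in Section \ref{sec:proof_overview}. Given that input, the main subtle point in the present section is to ensure that the edge-pruning used to control vertices in $\cal U \setminus \cal V$ does not degrade the $\cal W$-rigidity beyond its advertised precision; this is where the exponential localization of $\f w_\sigma(x)$ for $x \in \cal W$ is used to show that the corresponding eigenvalues of $H$ and of the pruned matrix differ by $O((d\am)^{-10})$, which in turn justifies that the error bounds in \ref{Item:WsetBound}--\ref{Item:VsetBound} survive the pruning. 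A secondary bookkeeping point is that the zero off-diagonal blocks between $\cal W$, $\cal V \setminus \cal W$, and $\cal V \setminus \cal U$ claimed in \eqref{eq:LargeBlock} are literally zero because the supports of the corresponding approximate eigenvectors are disjoint; any non-zero contribution from $H$ applied across two such supports contributes to $E_\#$ rather than to the off-diagonal blocks of the upper-left part.
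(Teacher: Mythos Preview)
Your overall architecture matches the paper's: assemble the approximate eigenvectors from the three rigidity levels into an orthonormal family with disjoint supports, add a vector for the stray direction, complete to a basis, and read off the block form. The justification of the exact zeros among the upper-left blocks and the bounds on $E_{\cal W}$, $E_{\cal V\setminus\cal W}$ from the residuals is correct.

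There is, however, a genuine gap in your treatment of the stray direction. Gram--Schmidt of the flat vector $\f e$ against the $\f w_\sigma(x)$ gives a vector $\f w_{\r s}$ with $\|\f w_{\r s}-\f e\|=o(1)$, but the Rayleigh quotient $\langle \f e, H\f e\rangle$ equals $d^{1/2}+O(d^{-1/2})$, not $d^{1/2}+d^{-1/2}+d^{-3/2}+O(d^{-5/2})$; moreover $\|(H-d^{1/2})\f e\|\asymp 1$, so neither the claimed precision on $\nu_{\r s}$ nor the bound $\|E_{\r s}\|\lesssim d^{-5/2}$ follows when $d\geq(\log N)^{1/4}$. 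The paper instead restricts $H$ to the complement of the balls $\bigcup_{x\in\cal U}B_{r+1}(x)$, applies a finite power iteration $\tilde{\f e}_k=\tilde H^k\hat{\f e}_k$ to the (restricted) flat vector, and computes the resulting Rayleigh quotients $\nu_k=\tilde a_{2k+1}/\tilde a_{2k}$ combinatorially to the required order in $d^{-1}$ (Proposition~\ref{prop:stray} and Lemmas~\ref{lem:StrayCandidate}--\ref{lem:path_computation}); this is what produces both the $d^{-5/2}$ precision and a $\f q$ whose support is \emph{disjoint} from all the $\f w_\sigma(x)$, which in turn makes the zeros in the first row and column of \eqref{eq:LargeBlock} exact.

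Two smaller points. First, your description of $\|X\|$ for $d\leq(\log N)^{3/4}$ as ``the trivial spectral radius bound once vertices of degree $\geq\am/5$ are removed'' is not enough: removing those vertices does not by itself give $\|X\|\leq\eta\am^{1/2}$ with $\eta<1$. The paper combines an Ihara--Bass quadratic-form inequality $|H-\E H|\leq t+t^{-1}Q+o(1)$ (Proposition~\ref{pro:ihara_bass_bound}) with a delocalization estimate $\sum_{x\in\cal U}\alpha_x|w_x|^2\lesssim\|\f w\|^2$ for any $\f w\perp\{\f w_\sigma(x)\}$ (Proposition~\ref{prop:rought_delocalization}), the latter coming from the explicit radial structure of $\f w_\pm(x)$. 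Second, your concern that pruning might degrade the $\cal W$-rigidity is misplaced: the pruning is used only to \emph{construct} the vectors $\f w_\sigma(x)$ for $x\in\cal U\setminus\cal V$, while $H$ itself is never pruned; the disjoint-balls lemma (Lemma~\ref{lem:Disjointfamily}) guarantees that the pruned edges lie outside all balls around $\cal V$, so no comparison of eigenvalues of $H$ with a pruned matrix is needed.
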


The proof of Proposition~\ref{prop:blockMatrix} is given in Section~\ref{sec:proof_prop_block_matrix} below.

\begin{remark}[Choice of $U$]  
In the proof of Proposition~\ref{prop:blockMatrix}, the $(x,\sigma)$--column of $U$, for any $x \in \mathcal V$ and $\sigma \in \{ \pm\}$, is chosen as the eigenvector of $H|_{B_r(x)}$ corresponding to its largest ($\sigma = +$) or smallest ($\sigma = - $) eigenvalue, respectively, with an appropriately chosen, large $r$. 
\end{remark} 

The following corollary of Proposition~\ref{prop:blockMatrix} states that the eigenvalue process of $H$ 
and the process of the expected locations $(\Lambda_{\fra d}(\alpha_x,\beta_x))_{x \in \cal W}$ 
(with the possible exception of an eigenvalue close to $d^{1/2} + d^{-1/2} + d^{-3/2}$) 
coincide approximately for all sufficiently large energies. 

\begin{corollary} \label{Cor:SpecMax}
Let $\gamma \in (0,1/6)$ be a constant. 
If $d$ is chosen such that \eqref{eq:ConditionAlphaMaxMin} and 
\begin{equation} \label{eq:condition_d_SpecMax} 
d^{2\gamma} \geq K_* \gamma^{-1} (\log \log N)(\log \log \log N) 
\end{equation} 
with a sufficiently large $K_*$ are satisfied then,   
 with high probability there exist a constant $c>0$, an eigenvalue $\nu$ of $H$, and error terms $\epsilon_x$, $x \in [N]$, such that
\begin{equation*}
\abs{\nu - (d^{1/2} + d^{-1/2} + d^{-3/2})} \lesssim 
d^{-5/2} + d^{1/2}\ind{d < (\log N)^{1/4}}\,,
\qquad
\abs{\epsilon_x}\lesssim \frac{d^{- \frac 1 2 + 3 \gamma}}{d \fra u} 
\bigg( 1 + \bigg( \frac{\log d}{\log \fra u} \bigg)^2 \frac{\fra u^4}{(\fra u - 2)^4} \bigg)\,,
\end{equation*} 
and the processes
\begin{equation*}
\sum_{\lambda \in \spec(H) \setminus \{\nu\}} \delta_{\lambda}, \qquad \text{ and } \qquad 
\sum_{x \in \cal W} \delta_{\Lambda_{\fra d}(\alpha_x, \beta_x) + \epsilon_x} 
\end{equation*}
coincide in the window
$ [\sigma(\fra u)- c \chi ,\infty)$, where $\chi \deq \frac{(\fra u - 2)d^{2\gamma -1}}{\fra u^{3/2} \log \fra u}$. 
\end{corollary}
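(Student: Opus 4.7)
The proof strategy is to derive the corollary directly from Proposition \ref{prop:blockMatrix} by a perturbative eigenvalue matching argument. After applying Proposition \ref{prop:blockMatrix} to obtain the decomposition \eqref{eq:LargeBlock}, I would set $\nu \deq \nu_{\rm s}$; the bound $\abs{\nu - (d^{1/2} + d^{-1/2} + d^{-3/2})} \lesssim d^{-5/2} + d^{1/2}\ind{d < (\log N)^{1/4}}$ is then immediate from item \ref{Item:StrayEstimate}. The claimed bound on $\epsilon_x$ for $x \in \cal W$ is exactly item \ref{Item:WsetBound}. The remaining and substantive task is to establish, in the window $[\sigma(\fra u) - c \chi, \infty)$, a bijection between the eigenvalues of $H \setminus \h{\nu}$ and the points $\Lambda_{\fra d}(\alpha_x,\beta_x) + \epsilon_x$ for $x \in \cal W$.

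The plan is to view $U^{-1} H U$ as the sum of a block-diagonal matrix $D$ (whose blocks are $\nu_{\rm s}$, $\cal D_{\cal W}$, $\cal D_{\cal V \setminus \cal W}$, $\cal D_{\cal U \setminus \cal V} + \cal E_{\cal U \setminus \cal V}$, $X$) and an off-block-diagonal perturbation $P$ assembled from the $E_\#$ blocks. By items \ref{Item:WsetBound}, \ref{Item:VsetBound}, \ref{Item:UsetBound}, \ref{Item:StrayEstimate}, the operator norm of $P$ is controlled by the largest $\norm{E_\#}$, which in all cases is much smaller than $\chi$ under the assumption \eqref{eq:condition_d_SpecMax}. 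Applying Weyl's inequality (or, more sharply, a resolvent argument using the block structure — noting that above the spectral threshold $\sigma(\fra u) - c\chi$ the only block of $D$ still producing eigenvalues is $\cal D_{\cal W}$ and $\nu_{\rm s}$) reduces the matching problem to showing that, with high probability, every eigenvalue of the block $\cal D_{\cal V \setminus \cal W}$, every eigenvalue of $\cal D_{\cal U \setminus \cal V} + \cal E_{\cal U \setminus \cal V}$, and the whole spectrum of $X$, lies strictly below $\sigma(\fra u) - c\chi$ for an appropriate $c>0$.

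To verify this dominance, I would use monotonicity of $\Lambda_{\fra d}(\alpha,\beta)$ in both arguments together with the high-probability estimate $\beta_x = 1 + o(1)$ uniformly in $\cal U$, which follows from the graph-structural statements in Section \ref{sec:graph_structure}. Since $\sigma(\fra u) = \Lambda_{\fra d}(\fra u,1)$ and $\fra u \approx \am$ (Lemma \ref{lem:u_a}), a first-order expansion of $\Lambda_{\fra d}(\cdot,\cdot)$ at $(\fra u,1)$ with derivatives extracted from Appendix \ref{app:spectral_analysis} yields
\begin{equation*}
\sigma(\fra u) - \Lambda_{\fra d}(\alpha_x,\beta_x) \gtrsim \frac{\am - \alpha_x}{\sqrt{\fra u}} \cdot \frac{\fra u - 2}{\fra u - 1} - o(\chi)\,.
\end{equation*}
For $x \in \cal V \setminus \cal W$, the definition \eqref{eq:def_cal_W} gives $\am - \alpha_x \geq c_* d^{2\gamma - 1}/\log\am$, which translates into a gap of order $\chi$; combined with the rigidity error from item \ref{Item:VsetBound}, which is smaller than this gap under \eqref{eq:condition_d_SpecMax}, the desired strict inequality $\Lambda_{\fra d}(\alpha_x,\beta_x) + \epsilon_{x,\sigma} < \sigma(\fra u) - c\chi$ follows. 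An analogous but easier computation handles $\cal U \setminus \cal V$ using item \ref{Item:UsetBound}, where the larger rigidity error is more than compensated by the much larger gap $\am - \alpha_x$ inherited from \eqref{eq:VsetDef}--\eqref{eq:UsetDef}. For the bulk block, item \ref{Item:bulkBound} gives $\norm{X} \leq 2 + o(1)$ (or $\eta \am^{1/2}$ for very sparse graphs), which lies well below $\sigma(\fra u)$ by \eqref{eq:def_Lambda_d} and the lower bound $\sigma(\fra u) - 2 \gtrsim d^{-2\xi}$ mentioned after Theorem \ref{thm:localisation}.

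The step I expect to be the main obstacle is the quantitative tracking of constants in the critical regime $\fra u - 2 \to 0$, where the derivatives of $\Lambda_{\fra d}$ degenerate. The definitions \eqref{eq:def_cal_W}--\eqref{eq:VsetDef} and the rigidity bounds \ref{Item:WsetBound}--\ref{Item:VsetBound} have been calibrated with matching $\frac{\fra u}{\fra u - 2}$ factors precisely to accommodate this degeneration, and one must verify carefully that the hypothesis \eqref{eq:condition_d_SpecMax} (strictly stronger than \eqref{eq:condition_d_blockMatrix} by the $\log\log\log N$ factor) guarantees that the gap $c\chi$ dominates all rigidity errors with room to spare, yielding a margin of at least a factor $(\log\log N)^c$. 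Once this book-keeping is done, assembling the above yields the coincidence of the two point processes on $[\sigma(\fra u) - c\chi, \infty)$.
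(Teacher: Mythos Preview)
Your overall strategy matches the paper's, but there is a genuine gap in the treatment of $\cal V \setminus \cal W$. You assert that the $\beta_x$-fluctuation contributes only $o(\chi)$, but this does not follow from a single application of the concentration estimate. The only bound on $\beta_x$ that holds simultaneously for \emph{all} $x \in \cal V$ is $|\beta_x - 1| \lesssim (\delta_\star/(d\am))^{1/2}$ (Corollary~\ref{cor:concentration_beta_x} with $\delta = \delta_\star$ from \eqref{eq:deltaStarDef}). In the critical regime $d \asymp \log N$, $\am \asymp 1$, this yields $|\beta_x - 1| \lesssim d^{-5/8}(\log d)^{1/2}$, and the resulting contribution $\partial_\beta \Lambda_{\fra d} \cdot (\beta_x - 1) \asymp d^{-5/8}(\log d)^{1/2}$ is much larger than $\chi \asymp d^{2\gamma - 1}$ for every $\gamma < 3/16$, hence in particular for every $\gamma \in (0,1/6)$. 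So for $x$ just outside $\cal W$ (where $\am - \alpha_x \sim d^{2\gamma-1}/\log\am$) the $\beta_x$-term can swamp the gap and your inequality $\Lambda_{\fra d}(\alpha_x,\beta_x) + \epsilon_{x,\sigma} < \sigma(\fra u) - c\chi$ need not hold.

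The paper closes this gap via a dyadic bootstrap (Lemma~\ref{lem:BoundVeigenvalue}): one introduces intermediate thresholds $\delta_i \asymp d^{2^i\gamma - 1}$ for $i = 0, \dots, k+1$ with $\delta_0 = d^{2\gamma-1}$ and $\delta_{k+1} = \delta_\star$, and decomposes $\cal V \setminus \cal W = \bigcup_i (\cal V_{\delta_{i+1}} \setminus \cal V_{\delta_i})$. On each annulus one uses the finer concentration $|\beta_x - 1| \lesssim (\delta_{i+1}/(d\am))^{1/2}$, valid on the smaller set $\cal V_{\delta_{i+1}}$, against the gap $\am - \alpha_x \geq c_*\delta_i/\log\am$; the choice $\delta_{i+1} \ll d\am\,\delta_i^2/(\log\am)^2$ makes the $\beta$-fluctuation negligible compared to the gap at each step. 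This multi-scale refinement is the missing ingredient in your argument. A secondary point: for $d < (\log N)^{1/4}$ the paper does \emph{not} set $\nu = \nu_{\rm s}$ but instead absorbs the stray block into the bulk (see \eqref{eq:SpecMax_small_d}), since $\nu_{\rm s} \lesssim d^{1/2} \ll \am^{1/2} \asymp \sigma(\fra u)$ in that regime and hence $\nu_{\rm s}$ already lies far below the window.
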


Corollary~\ref{Cor:SpecMax} is proved in Section~\ref{sec:proof_cor_spec_max} below.

\begin{remark} \label{rem:left_edge}
The same proof at the left spectral edge shows that Corollary \ref{Cor:SpecMax} holds also if one replaces the point process $\sum_{ \lambda \in \spec(H) \setminus \{\nu\}} \delta_{\lambda}$ with $\sum_{ \lambda \in \spec(H)} \delta_{-\lambda}$. In fact, using the notation of Remark \ref{rem:extreme_evs}, with high probability we have the much stronger bound $\abs{\lambda_k - \lambda_{N-k}} \leq (d \fra u)^{-10}$ for all $k$ such that $\lambda_k \in [\sigma(\fra u)- c \chi ,\infty)$. This can be read off from our proof in Section \ref{sec:fine_rigidity}, using, in its notation, that the graph restricted to $B_{r+1}(x)$ is a tree, in particular bipartite, and hence has a symmetric spectrum.
\end{remark}

\begin{remark}
By a trivial extension of our proof (see Section \ref{sec:StrayEigenvalue}), we obtain an asymptotic expansion for $\nu$ in $d^{-1}$ up to any constant power $d^{-k}$; we omit the details as we do not need it here.
\end{remark}

For future use, we note that the definition of $\cal V$ in \eqref{eq:VsetDef} and the lower bound on $\am - 2$ from \eqref{eq:ConditionAlphaMaxMin} imply 
\begin{equation} \label{eq:lower_bound_alpha_x_minus_2_am_minus_2} 
\alpha_x - 2 \gtrsim \am - 2 \asymp \fra u -2
\end{equation} 
for all $x \in \cal V$; and, in particular, for all $x \in \cal W$. 
In the last step we also used 
the following lemma 
whose proof is given in Appendix~\ref{app:proof_auxiliary_estimates} below. 
\begin{lemma} \label{lem:u_a}
For $1 \ll d \leq 3 \log N$ we have
\begin{equation} \label{eq:scaling_alpha_max} 
\am \asymp \frac{\log N}{d \log \frac{4\log N}{d}} \gtrsim 1\,, \qquad \fra u - \fra a = O \pbb{\frac{\log d}{d}}\,.
\end{equation}
\end{lemma}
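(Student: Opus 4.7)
The plan is to reduce both statements to elementary inversion estimates for the strictly increasing functions $h$ and $f$, linked by the identity
\begin{equation*}
f(u) - h(u-1) \;=\; \frac{1}{2d}\log(2\pi d u)\,,
\end{equation*}
which follows immediately from \eqref{def_f} and \eqref{eq:def_h}. Throughout, write $L \deq \log N/d$, so the hypothesis $d \leq 3\log N$ gives $L \geq 1/3$.

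For the scaling $\am \asymp L/\log(4L) \gtrsim 1$: I would use the Taylor expansion $h(b) = b\log b + \log b - b + \frac{1}{2} + O(1/b)$ as $b \to \infty$ together with monotonicity of $h$. Substituting $b_\pm \deq c_\pm L/\log(4L)$ into $h$ and comparing to $L$, one verifies $h(b_-) \leq L \leq h(b_+)$ for suitable constants $c_- < c_+$ whenever $L$ is sufficiently large; this gives $h^{-1}(L) \asymp L/\log(4L)$. On the compact range $L \in [1/3, L_0]$, continuity of $h^{-1}$ together with the lower bound $h^{-1}(1/3) > 0$ yields $\am \asymp 1 \asymp L/\log(4L)$. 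The shift $+1$ in $\am = 1 + h^{-1}(L)$ and the factor $4$ inside the logarithm are precisely what ensure the lower bound $\am \gtrsim 1$ uniformly in $L$.

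For $\fra u - \fra a = O(\log d/d)$: the displayed identity, together with positivity of $\frac{1}{2d}\log(2\pi du)$ on $\{u \geq 1\}$ and monotonicity of $f$, shows $\fra u < \fra a$. The mean value theorem then gives
\begin{equation*}
\fra a - \fra u \;=\; \frac{f(\fra a) - f(\fra u)}{f'(\xi)} \;=\; \frac{(2d)^{-1}\log(2\pi d \, \fra a)}{\log \xi + (2d\xi)^{-1}}
\end{equation*}
for some $\xi \in (\fra u, \fra a)$, where I used $f'(u) = \log u + 1/(2du)$. The numerator is $O\pb{(\log d + \log \am)/d}$. To make the denominator match, I would re-run the inversion argument of the first part for $f$ in place of $u \mapsto h(u-1)$: the two functions differ only by the negligible additive term $\frac{1}{2d}\log(2\pi du)$, so the same two-regime analysis yields $\fra u \asymp L/\log(4L) \asymp \am$. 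In particular $\log \xi \geq \log \fra u \gtrsim \log(1 + \am)$, which cancels the $\log \am$ in the numerator and gives the claimed bound $O(\log d/d)$.

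The only substantive step is the inversion of $h$ (and then of $f$) with implicit constants uniform over the full range $L \in [1/3, \infty)$; this is the main obstacle, and is handled by the two-regime split of bounded $L$ versus large $L$ described above. Once this scaling is in hand, the estimate on $\fra u - \fra a$ reduces to the routine mean-value computation displayed above.
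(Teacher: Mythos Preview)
Your proposal is correct and follows essentially the same route as the paper: a two-regime split (bounded $L$ versus large $L$) to invert $h$ and obtain $\am \asymp L/\log(4L)$, followed by a comparison of $f$ and $h$ via their additive difference $\frac{1}{2d}\log(2\pi d u)$ to bound $\fra a - \fra u$. The paper phrases the second step as ``a Taylor expansion of $f$'' while you use the mean value theorem, but these are equivalent here; your argument is in fact more fleshed out than the paper's one-line justification.
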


Moreover, we note that under the assumption \eqref{d_assumptions} we have
\begin{equation} \label{a_2_lower_bound}
\fra u - 2 \gtrsim d^{-\xi}\,,
\end{equation}
as follows by Taylor expansion of \eqref{def_f}.

\section{Intermediate rigidity} \label{sec:IntermediateRigidity}

The main result of this section is the next proposition, which establishes intermediate rigidity for 
eigenvalues induced by large degree vertices. 
We note that its condition \eqref{eq:condition_alpha_x_intermediate_rigidity} is 
satisfied for $x \in \cal V$, as explained in Remark~\ref{rem:relation_cal_V_cal_V_delta} below.

\begin{proposition}[Intermediate rigidity for extreme eigenvalues] \label{pro:intermediate_rigidity} 
Let $C >0$ be an arbitrary constant. Then there are positive constants $K$, $c_*$ and $C_*$ such that 
if $d$ and $\delta$ satisfy 
\begin{equation} \label{eq:condition_delta} 
K \log \log N \leq d \leq 3 \log N, \qquad \qquad  K \frac{(\log d)^{4/3}}{d^{2/3}} \leq  \delta \leq C \am
\end{equation} 
then the following holds with high probability. 
There is $r_\delta \in \N$ such that $r_\delta \ll \frac{d}{\log \log N}$ and, for each $x \in [N]$ satisfying 
\begin{equation} \label{eq:condition_alpha_x_intermediate_rigidity} 
 \alpha_x \geq \bigg( \am - \frac{c_* \delta}{\log \am} \bigg) \vee \big(2 + C_* \delta^{1/4} \big)\,, 
\end{equation}
and each $\ell^2$-normalized eigenvectors $\f w_+(x)$ and $\f w_-(x)$ of $H|_{B_{r_\delta}(x)}$ corresponding to its largest and smallest eigenvalue, respectively, 
we have 
\begin{align} 
\norm{(H - \Lambda_{\fra d}(\alpha_x,\beta_x)) \f w_+(x)}  + \norm{(H + \Lambda_{\fra d}(\alpha_x,\beta_x)) \f w_-(x)} & \lesssim \frac{1}{d \am} \bigg( 1 + \frac{\am^{3/2}}{(\alpha_x - 2)^2}\log d  \bigg)\,, \nonumber \\ 
 \norm{(H - \scalar{\f w_+(x)}{H\f w_+(x)}) \f w_+(x)} + \normb{\big(H-\scalar{\f w_-(x)}{H\f w_-(x)}\big) \f w_-(x)} & \lesssim \frac{1}{(d\am)^{10}}\,.  
\label{eq:bound_H_minus_eigenvalue_ball_intermediate} 
\end{align}  
In particular, 
$\f w_+ (x) \perp \f w_-(x)$, $\supp \f w_\pm(x) \subset B_{r_\delta}(x)$ and $\f w_+(x) \perp H \f w_-(x)$. 
\end{proposition}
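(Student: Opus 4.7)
The plan is to implement, at a coarser level of precision than in the fine rigidity step, the approximate tridiagonalization strategy outlined in Section \ref{sec:overview_proof_subsect}. Fix $x$ satisfying \eqref{eq:condition_alpha_x_intermediate_rigidity} and set $M \deq H\vert_{B_{r_\delta}(x)}$. The radius $r_\delta$ will be chosen so that, on the one hand, the rooted ball $B_{r_\delta}(x)$ is a tree with well-controlled sphere sizes, and on the other hand the top eigenvector of $M$ decays exponentially enough that truncating to $B_{r_\delta}(x)$ incurs an error of order $(d\fra a)^{-10}$.

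First I would invoke the graph-theoretic estimates (the analogue of Proposition \ref{pro:graphProperty} announced in Section~\ref{sec:graph_structure}) to establish, with high probability simultaneously for all $x$ satisfying \eqref{eq:condition_alpha_x_intermediate_rigidity}, that $\bb G\vert_{B_{r_\delta+1}(x)}$ is a tree, that $\abs{S_1(x)} = d\alpha_x$, $\abs{S_2(x)} = d\abs{S_1(x)}\beta_x$ with $\beta_x = 1 + O(\delta^{1/2})$, and that for $i \geq 3$ every vertex of $S_{i-1}(x)$ has $(1+O(\delta^{1/2}))d$ children in $S_i(x)$. These are the bounds governing the size of $\cal V$ through the parameter $\delta$: the choice of $\delta$ in \eqref{eq:condition_delta} is exactly the tightest threshold at which such concentration holds uniformly in $x$ via the union bound. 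This concentration justifies working in the \emph{simple} spherical basis $\f s_i(x) \deq \f 1_{S_i(x)}/\norm{\f 1_{S_i(x)}}$ rather than the more elaborate basis $(\f f_i)$ from the fine rigidity step.

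Next, I would decompose $M = T + O$, where $T$ is the tridiagonal part in the basis $(\f s_i(x))_{i=0}^{r_\delta}$ and $O$ is the off-tridiagonal remainder. The tridiagonal entries are determined by sphere sizes and their ratios, and a direct computation using the concentration estimates above shows that the upper-left $(r_\delta+1) \times (r_\delta+1)$ block of $T$ coincides with the matrix $\sqrt{d}\, Z_{\fra d}(\alpha_x,\beta_x)$ (truncated) up to an error of operator norm $O\pb{\delta/(d\fra a)^{1/2}}$. The off-tridiagonal $O$ measures the irregularity of $\bb G\vert_{B_{r_\delta}(x)}$ relative to the $(d\alpha_x,d\beta_x,d+1)$-regular tree; its norm is controlled row by row by the deviations of the degrees inside each sphere, yielding $\norm{O} \lesssim \delta^{1/2}$. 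Next I would introduce the candidate eigenvector $\f v(x) \deq \sum_{i=0}^{r_\delta} u_i \f s_i(x)$, where $(u_i)_{i\in\N}$ is the top $\ell^2$-normalized eigenvector of $Z_{\fra d}(\alpha_x,\beta_x)$ (Corollary~\ref{cor:Lambda_expansion}). Using the explicit exponential decay $u_k \lesssim (2/(\sigma(\fra u)+\sqrt{\sigma(\fra u)^2-4}))^{k-1} u_1$ and the bounds on $T$ and $O$, I would obtain
\begin{equation*}
\scalar{\f v(x)}{(M-\sqrt{d}\,\Lambda_{\fra d}(\alpha_x,\beta_x))\f v(x)} \lesssim \frac{1}{d\fra a}\,\pbb{1+\frac{\fra a^{3/2}}{(\alpha_x-2)^2}\log d}\,,
\end{equation*}
and a matching bound on $\norm{(M-\sqrt{d}\,\Lambda_{\fra d}(\alpha_x,\beta_x))\f v(x)}$. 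The aggregation of the off-diagonal contributions through the geometric weights $u_i^2$ is what produces the factor $\fra a^{3/2}/(\alpha_x-2)^2$, which is the precise $\fra a$-dependence claimed in \ref{Item:VsetBound}.

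Then I would apply the perturbation principle of Lemma~\ref{lem:perturbationEV}: since $\f v(x)$ is a near-eigenvector, the top eigenvalue of $M$ differs from $\sqrt{d}\,\Lambda_{\fra d}(\alpha_x,\beta_x)$ by at most the previous quantity, provided there is a spectral gap between $\sqrt{d}\,\Lambda_{\fra d}(\alpha_x,\beta_x)$ and the rest of the spectrum of $M$. This gap comes from the explicit spectral analysis of the $(d\alpha_x,d\beta_x,d+1)$-regular tree in Appendix~\ref{app:spectral_analysis}: the largest eigenvalue is isolated by a distance $\gtrsim \alpha_x-2$ from the rest of the spectrum, which survives the $\delta^{1/2}$ perturbation because of \eqref{eq:condition_alpha_x_intermediate_rigidity}. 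This yields the first displayed bound in \eqref{eq:bound_H_minus_eigenvalue_ball_intermediate} for $\f w_+(x)$. Afterwards, I would upgrade $\f w_+(x)$ from an approximate eigenvector of $M$ to an approximate eigenvector of $H$ by the standard resolvent argument: the spectral gap and the tridiagonal form force $\f w_+(x)$ to decay exponentially at a rate $q = 2/\sigma(\fra u) + o(1)$ along the spheres, so choosing $r_\delta \asymp \log(d\fra a)/\log(1/q) \ll d/\log\log N$ makes its mass on $S_{r_\delta}(x)$, and hence the only nontrivial action of $H - M$ on $\f w_+(x)$, of order $(d\fra a)^{-10}$.

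Finally, the bottom eigenvector $\f w_-(x)$ is handled by the bipartite symmetry of the tree $\bb G\vert_{B_{r_\delta}(x)}$: the spectrum of $M$ is invariant under $\lambda\mapsto-\lambda$, and $\f w_-(x)$ is obtained from $\f w_+(x)$ by flipping the sign on odd spheres, which automatically gives the same estimates and ensures $\f w_+(x)\perp\f w_-(x)$ as well as $\f w_+(x)\perp H\f w_-(x)$. The main obstacle, and the one requiring the most care, is the off-tridiagonal bound on $O$ combined with the spectral-gap argument for $M$: both need the explicit form of $T$ and the precise concentration of sphere sizes provided by the graph-theoretic inputs, and it is the balance between the probability cost of these concentration estimates and the resulting precision that dictates the definition of $\cal V$ in \eqref{eq:VsetDef}.
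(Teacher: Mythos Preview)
Your overall architecture matches the paper's: simple spherical basis $(\f s_i(x))$, candidate eigenvector $\f v(x)=\sum_i u_i\f s_i(x)$, perturbation via Lemma~\ref{lem:perturbationEV}, spectral gap from eigenvalue interlacing after removing the root, exponential decay of $\f w_+(x)$ to control the boundary error $(H-M)\f w_+(x)$, and bipartite symmetry for $\f w_-(x)$. But the quantitative mechanism you describe is wrong in a way that would block the proof.

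First, a normalization slip: $H=A/\sqrt d$, so the comparison is between $M$ and $\Lambda_{\fra d}(\alpha_x,\beta_x)$, not $\sqrt d\,\Lambda_{\fra d}(\alpha_x,\beta_x)$.

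More importantly, the quadratic form and the norm are \emph{not} ``matching'', and neither carries an $(\alpha_x-2)^{-2}$ factor. The paper (Proposition~\ref{pro:H_minus_Lambda_v_norm_quadratic_form_intermediate}) shows
\[
\norm{(M-\Lambda)\f v(x)}\lesssim(d\fra a)^{-1/2}(\log d+\delta^{1/2})^{1/2},
\qquad
\abs{\scalar{\f v(x)}{(M-\Lambda)\f v(x)}}\lesssim(d\fra a)^{-1}.
\]
The quadratic form is genuinely smaller because, after expanding in the orthonormal family $(\f s_i)$, the only surviving terms are the near-diagonal fluctuations $\sum_i u_iu_{i+1}\bigl(\sqrt{\abs{S_{i+1}}/(d\abs{S_i})}-1\bigr)$, which are first-order in the sphere-ratio concentration; the norm instead picks up the second moment $\sum_{y\in S_{i-1}}(N_y-d)^2$ via Proposition~\ref{pro:prob_estimates_1}~\ref{item:Z_i_estimate} and is larger by a factor $\sqrt{d\fra a/\log d}$. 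The geometric weights $u_i^2$ contribute the factor $\fra a^{-1}$ (via $\sum_{i\geq 2}u_i^2\lesssim\fra a^{-1}$), not the factor $\fra a^{3/2}/(\alpha_x-2)^2$.

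That factor comes entirely from the spectral gap in Lemma~\ref{lem:perturbationEV}. The gap is not $\gtrsim\alpha_x-2$: removing $x$ and using Lemma~\ref{lem:norm_bound_adjacency_matrix_general_graph} on the remaining forest gives $\norm{QMQ}\leq 2(1+\delta^{1/2})^{1/2}$, whence $\Delta=\Lambda(\alpha_x,\beta_x)-\norm{QMQ}\gtrsim(\alpha_x-2)^2/\fra a^{3/2}$ by \eqref{Lambda_geq_2_est}. Lemma~\ref{lem:perturbationEV} then yields
\[
\mu-\Lambda=\scalar{\f v}{(M-\Lambda)\f v}+O\Bigl(\tfrac{\epsilon^2}{\Delta}\Bigr)
=O\Bigl(\tfrac{1}{d\fra a}\Bigr)+O\Bigl(\tfrac{(\log d)/(d\fra a)}{(\alpha_x-2)^2/\fra a^{3/2}}\Bigr),
\]
and it is precisely the $\epsilon^2/\Delta$ term that produces $\fra a^{1/2}\log d/(d(\alpha_x-2)^2)$. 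Without separating the quadratic form from the norm, the best you could extract from $\norm{(M-\Lambda)\f v}$ alone would be its square root, which is too weak for the first line of \eqref{eq:bound_H_minus_eigenvalue_ball_intermediate}.
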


\begin{remark} 
Note that $r_\delta$ in Proposition~\ref{pro:intermediate_rigidity} can be chosen throughout the regime \eqref{eq:condition_r} below, which is nonempty due to \eqref{eq:condition_delta} (see the proof of Proposition~\ref{pro:intermediate_rigidity} below). 
\end{remark}

We now collect a few auxiliary results for the proof of Proposition~\ref{pro:intermediate_rigidity}. 
For a constant $c_* >0$ we define the set of vertices 
\begin{equation} \label{eq:def_cal_V} 
\cal V_\delta \deq \bigg\{ x \in [N] \colon \alpha_x \geq \am - \frac{c_* \delta}{\log \am}
\bigg\}\,. 
\end{equation}

\begin{remark}[Relation between $\cal V$ and $\cal V_\delta$] \label{rem:relation_cal_V_cal_V_delta} 
We recall the definition of $\cal V$ from \eqref{eq:VsetDef} and note that $\cal V = \cal V_{\delta_\star}$ 
with the definition of $\cal V_\delta$ from \eqref{eq:def_cal_V} and 
\begin{equation}\label{eq:deltaStarDef}
\delta_{\star} \deq \begin{cases}
\frac{\am^{3/2}\log \am}{\am - 2} \frac{(\log N)^{1/4}}{\sqrt{d}} \log d 
& \text{ if }d>(\log N)^{3/4} \\
\am^{1/2} \log \am & \text{ if } d\leq (\log N)^{3/4}\,.  \end{cases}
\end{equation}
Moreover, \eqref{eq:condition_alpha_x_intermediate_rigidity} is satisfied for each $x \in \cal V = \cal V_{\delta_\star}$ due to \eqref{eq:ConditionAlphaMaxMin}. 
\end{remark}

In the next proposition, we fix $c_*$ and $\cal V_\delta$ is always understood with respect to this $c_*$. 
First, we choose the constant $c_*>0$ such that 
\begin{equation} \label{eq:lower_bound_degree_cal_V} 
 \am - \frac{c_* \delta}{\log \am} \geq (1 + c) \vee \frac{\am}{2} \gtrsim \frac{\log N}{d \log \frac{4 \log N}{d}}\,, 
\end{equation}
where $c\asymp 1$ depends only on the constant $C$ in the upper bound $\delta \leq C \am$; see \eqref{eq:condition_delta}. 
The proof of \eqref{eq:lower_bound_degree_cal_V} is given in Appendix~\ref{app:proof_auxiliary_estimates} below. 
Along the argument, $c_*$ will be reduced a few times.

For $x, y \in [N]$, we denote by 
\begin{equation} \label{eq:def_N_y_x} 
 N_{y}(x) \deq \abs{S_{d(x,y)+1}(x) \cap S_{1}(y)} 
\end{equation}
the number of vertices that have distance $d(x,y) + 1$ from $x$ and distance 1 from $y$.

\begin{proposition} \label{pro:prob_estimates_1} 
Let $C>0$ be an arbitrary constant. 
Then there exist $K>0$ and $c_*>0$ such that if $d$ and $\delta$ satisfy 
\begin{equation} \label{eq:condition_d_delta_graph} 
K \log \log N \leq d \leq 3 \log N, \qquad \qquad K \frac{\log \log N}{d} \leq  \delta \leq C \am\,, 
\end{equation}
and $\cal V_\delta$ is defined as in \eqref{eq:def_cal_V} then, for any $r \in \N$ with 
\begin{equation} \label{eq:upper_bound_r} 
r \leq c_* \bigg( \frac{d}{\log\log N} \wedge \frac{\delta d}{\log d} \bigg)\,,
\end{equation} 
the following statements hold with high probability. 
\begin{enumerate}[label=(\arabic*)] 
\item \label{item:balls_disjoint} The balls of radius $r$ around the vertices in $\cal V_\delta$ are disjoint, i.e.\ $B_r(x) \cap B_r(y) = \emptyset$ for all $x, y \in \cal V_\delta$ with $x \neq y$. 
\item \label{item:ER_locally_tree} For each $x \in \cal V_\delta$, the graph $\mathbb{G}$ restricted to $B_r(x)$ is a tree. 
\item \label{item:concentration_S_i} 
For all $x \in \cal V_\delta$ and $i \in [r]$, we have 
\[ \absbb{\frac{\abs{S_{i+1}(x)}}{d\abs{S_i(x)}} -1} \lesssim \bigg(\frac{\delta}{\abs{S_i(x)}} \bigg)^{1/2}, \qquad \absbb{\frac{\abs{S_i(x)}}{D_x d^{i-1}} -1 } \lesssim  \bigg(\frac{\delta}{D_x } \bigg)^{1/2}\,.  \] 
\item  \label{item:degrees_bounded} 
For any $x \in \cal V_\delta$ and $y \in B_r(x) \setminus \{x \}$  we have $\abs{D_{y}-d}\leq \delta^{1/2} d$.
\item \label{item:Z_i_estimate} 
For any $x \in \cal V_\delta$ and any $i \in [r]$, we have 
\[ \sum_{y \in S_{i}(x)} (N_y(x)-d)^2 \lesssim \abs{S_{i}(x)} d \big ( \log d + \delta^{1/2} \big)  \bigg(1 + \frac{d \delta}{\abs{S_i}} \bigg)\,. \] 
\end{enumerate}
\end{proposition}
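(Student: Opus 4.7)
The plan is to derive all five items from a breadth-first exploration of each neighbourhood $B_r(x)$ combined with Bennett's concentration inequality (presumably Lemma~\ref{lem:Bennett} above) and union bounds over $\cal V_\delta$. A preliminary step is a high-probability bound $\abs{\cal V_\delta} \leq \ee^{C \delta d}$: Bennett applied to $D_x \sim \text{Bin}(N-1, d/N)$, together with the defining equation $h(\am - 1) = \log N / d$ from \eqref{eq:def_alpha_max} and a first-order expansion of $h$ at $\am - 1$ using $h'(\am - 1) = \log \am$, gives $\P(x \in \cal V_\delta) \lesssim N^{-1} \ee^{c_* \delta d (1 + o(1))}$, after which Markov's inequality and a union bound do the job. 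This estimate governs the tightness required in all subsequent union bounds.

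For items \ref{item:balls_disjoint} and \ref{item:ER_locally_tree}, I would reveal the edges of $\bb G$ by a BFS exploration starting from each $x \in \cal V_\delta$. Conditional on the ball $B_i(x)$ already revealed, the probability of producing a cycle at level $i+1$ or an edge to an already-revealed ball around another centre is at most $O(d^2 \abs{S_i(x)}^2/N)$ and $O(d^2 \abs{S_i(x)} \abs{B_r(\cal V_\delta)}/N)$ respectively. Summing over $i \leq r$ and over pairs in $\cal V_\delta$, and using the a priori bound $\abs{S_i(x)} \leq d^i (1 + o(1))$ provided by item \ref{item:concentration_S_i} on a favourable event, these contributions are $o(1)$ once $r \leq c_* d/\log\log N$, which is precisely the constraint in \eqref{eq:upper_bound_r}.

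Item \ref{item:concentration_S_i} follows by iterating Bennett's inequality along the BFS: conditional on the exploration up to level $i$ and on the events of items \ref{item:balls_disjoint}--\ref{item:ER_locally_tree}, the quantity $\abs{S_{i+1}(x)}$ is a sum of independent Bernoullis with mean $\approx d \abs{S_i(x)}$, so Bennett at scale $t \asymp \delta d$ gives relative deviation $\sqrt{\delta/\abs{S_i}}$ with failure probability $\ee^{-c' \delta d}$; the second bound in \ref{item:concentration_S_i} follows by telescoping. Item \ref{item:degrees_bounded} comes from the direct estimate $\P(\abs{D_y - d} \geq \delta^{1/2} d) \leq 2 \ee^{-c \delta d}$ and a union bound over $y \in B_r(x)$ and $x \in \cal V_\delta$, whose cost $\abs{\cal V_\delta} d^r$ is absorbed by the $\ee^{-c \delta d}$ precisely because $r \leq c_* \delta d/\log d$.

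For item \ref{item:Z_i_estimate}, conditional on $B_i(x)$ and on items \ref{item:balls_disjoint}--\ref{item:ER_locally_tree}, the edges from $S_i(x)$ to the unexplored part are independent, each $N_y(x)$ is approximately $\mathrm{Bin}(N - \abs{B_i(x)}, d/N)$, and the family $(N_y(x))_{y \in S_i(x)}$ is approximately independent with common variance $\approx d$. The main contribution $\abs{S_i(x)} \cdot d$ to the sum of squares is therefore just the mean, while the $\log d$ factor reflects that on the event of item \ref{item:degrees_bounded} each $\abs{N_y(x) - d}$ may reach $\sqrt{d \log d}$, and the $d\delta$ correction absorbs the variance of the sum via a Bernstein-type bound. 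The main obstacle is balancing the strength of Bennett against the union bound over $\cal V_\delta$: every failure probability must be $\ee^{-c \delta d}$ with $c$ strictly larger than the constant $C$ in $\abs{\cal V_\delta} \leq \ee^{C \delta d}$, which pins down how small $c_*$ in \eqref{eq:def_cal_V} must be chosen relative to the constants in the Bennett estimates for items \ref{item:concentration_S_i}--\ref{item:Z_i_estimate}.
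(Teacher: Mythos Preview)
Your approach is correct and essentially matches the paper's, which largely quotes the corresponding lemmas from \cite{ADK19} (their Lemmas~5.5, 7.3 and eqs.~(5.12), (5.17), (5.28)) rather than rederiving them; your BFS~+~Bennett~+~union-bound scheme is precisely what underlies those lemmas, and your identification of the key tension---every failure probability must beat $\ee^{c_* d\delta}$, forcing $c_*$ small---is exactly the mechanism the paper uses via $\P(x\in\cal V_\delta)\le \frac{2}{N}\ee^{c_* d\delta}$.

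The one place where your explanation is off is item~\ref{item:Z_i_estimate}. The sentence ``the $\log d$ factor reflects that on the event of item~\ref{item:degrees_bounded} each $\abs{N_y(x)-d}$ may reach $\sqrt{d\log d}$'' is not right: on that event the bound is $\delta^{1/2}d$, not $\sqrt{d\log d}$, and in any case a worst-case pointwise bound does not produce a multiplicative $\log d$ on the sum. The paper's argument here is a dyadic level-set decomposition: it writes $\sum_y E_y^2 \le d\abs{S_i} + \sum_{k=\lfloor -\log d\rfloor}^{\lceil\log\delta\rceil} d^2\ee^{k+1}\abs{\cal N_{i,k}}$ with $\cal N_{i,k}=\{y: d^2\ee^k < E_y^2\le d^2\ee^{k+1}\}$, bounds each $\abs{\cal N_{i,k}}$ by $\frac{C}{d}(\abs{S_i}+d\delta)(\ee^{-k/2}\vee\ee^{-k})$ with failure probability $\ee^{-cd\delta}$, and then the $\log d$ arises from summing the $O(\log d)$ scales $k\le 0$. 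A single sub-exponential Bernstein bound on $\sum_y[(N_y-d)^2-d]$ would also work (and would in fact drop the $\log d$), so your ``Bernstein-type'' instinct is sound, but the reasoning you gave would not by itself close the argument.
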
 

The proof of Proposition~\ref{pro:prob_estimates_1} is deferred to Section~\ref{sec:proof_prob_estimates_1} below.

\begin{lemma} \label{lem:convergence_max_alpha_x_upper_bound_D_x} 
If $1 \leq d \leq 3 \log N$ and $\fra u >2$, then for any $\xi \gg 1$ we have $\max_{x \in [N]} \alpha_x - \fra u = O(\xi/d)$ with high probability.
In particular, $\max_{x \in [N]} D_x \lesssim \am d \lesssim \log N$ with high probability. 
\end{lemma}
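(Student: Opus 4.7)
The plan is to use a union bound combined with a precise Poisson-type tail estimate for the degree $D_x$, together with a Taylor expansion of $f$ near $\fra u$.

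First I would establish the tail bound that for any $k \geq d$ with $k \leq 2 d \fra u$ (say), one has
\[
\P(D_x \geq k) \lesssim \ee^{-d f(k/d)}\,.
\]
Since $D_x$ has a $\mathrm{Binomial}(N-1, d/N)$ distribution and $d \leq 3 \log N \ll N$, the binomial law is well approximated by $\cal P_d$ in this tail region; alternatively one applies the standard Chernoff/Bennett bound and compares the binomial rate function with $f$ (the correction from the Stirling factor $\frac{1}{d}\log\sqrt{2\pi d u}$ in \eqref{def_f} gives precisely the prefactor needed). Using \eqref{Poisson_f} pointwise and then summing a geometric series (whose ratio $\ee^{-f'(\fra u)} \leq \ee^{-\log 2 + O(1/d)} < 1$ is bounded away from 1 since $\fra u > 2$) yields the stated tail estimate.

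Next I would Taylor expand $f$ at $\fra u$. Since $f'(u) = \log u + \frac{1}{2ud}$ is increasing, convexity gives
\[
f\pb{\fra u + t} \geq f(\fra u) + f'(\fra u)\, t \geq \frac{\log N}{d} + (\log 2) \, t
\]
for any $t > 0$. Applied with $t = \xi/d$, this yields $d f\pb{\fra u + \xi/d} \geq \log N + (\log 2)\,\xi$. Hence, combining with the tail bound,
\[
\P\pb{D_x \geq d\fra u + \xi} \lesssim \ee^{-\log N - c\xi}\,.
\]
A union bound over the $N$ vertices then gives
\[
\P\pB{\max_{x \in [N]} D_x \geq d\fra u + \xi} \lesssim N \cdot N^{-1} \ee^{-c\xi} = \ee^{-c\xi}\,,
\]
which tends to zero for any $\xi \gg 1$. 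This proves $\max_x \alpha_x - \fra u = O(\xi/d)$ with high probability.

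For the second statement, I would deduce $d\fra u \lesssim \log N$ directly from the defining equation $d f(\fra u) = \log N$ and the elementary bound $f(u) \gtrsim \log u$ for $u \geq 2$; together with the first part and Lemma~\ref{lem:u_a} (which compares $\fra u$ and $\fra a$), this gives $\max_x D_x \leq d(\fra u + o(1)) \lesssim \fra a\, d \lesssim \log N$. The main potential obstacle is ensuring that the binomial-to-Poisson comparison does not degrade the prefactor in the tail estimate enough to spoil the Taylor expansion — but this is harmless because the comparison error $(1 + O(1/k))$ in \eqref{Poisson_f} is multiplicative and subleading in the regime $k \asymp d\fra u \gg 1$.
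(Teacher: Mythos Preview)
Your argument is correct and is essentially the standard proof that the paper simply outsources by citing \cite[Proposition~D.1]{ADK19}: union bound over the $N$ vertices, combined with the Poisson/binomial tail estimate \eqref{Poisson_f} and a first-order expansion of $f$ at $\fra u$ using $f'(\fra u) \geq \log 2$. The paper's ``proof'' is a one-line citation, so you have in effect reproduced the cited result.

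One small imprecision: the inequality $f(u)\gtrsim \log u$ that you invoke for the second claim only yields $\fra u \leq N^{1/d}$, which does not by itself give $d\fra u \lesssim \log N$. What you actually need (and what the paper uses) is the scaling \eqref{eq:scaling_alpha_max} from Lemma~\ref{lem:u_a}, namely $\am \asymp \frac{\log N}{d\log(4\log N/d)}$, which immediately gives $\am d \lesssim \log N$. Since you already cite Lemma~\ref{lem:u_a} in the same sentence, this is easy to fix; just drop the $f(u)\gtrsim\log u$ remark and appeal directly to \eqref{eq:scaling_alpha_max}.
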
 

\begin{proof} 
The first claim follows from \cite[Proposition~D.1]{ADK19} with $l=1$ and $\beta_1(d)=\fra u$. The second claim follows from \eqref{eq:scaling_alpha_max}.
\end{proof}

Throughout the remainder of this section, we exclusively work on the intersection of the high-probability events from Proposition~\ref{pro:prob_estimates_1} and Lemma~\ref{lem:convergence_max_alpha_x_upper_bound_D_x}. 
Whenever we say that a statement holds with high probability then it is meant that it holds on this intersection. 

The next proposition provides important properties of the largest (and smallest) eigenvalue of the graph $\mathbb{G}|_{B_{r+1}(x)}$ restricted to a ball around $x \in \cal V_\delta$ as well as the associated eigenvector.  
This proposition is the core of the proof of Proposition~\ref{pro:intermediate_rigidity} and is proved in Section~\ref{subsec:proofs_lemmas_graph_in_a_ball} below.

\begin{proposition}[Extreme eigenvalue and eigenvector of $H|_{B_{r+1}(x)}$]  \label{pro:extreme_in_a_ball} 
Let $K$ be chosen as in Proposition~\ref{pro:prob_estimates_1}. 
Suppose that $d$ and $\delta$ satisfy \eqref{eq:condition_d_delta_graph}. 
Let $x \in [N]$. 
Then there are $c_*>0$ and $C_*>0$ such that if $\alpha_x$ satisfies \eqref{eq:condition_alpha_x_intermediate_rigidity} and $r\in \N$ satisfies 
\begin{equation} \label{eq:condition_r}
41 + \frac{1}{c_*} \frac{\log d}{\log \am } \bigg(\frac{\am}{\alpha_x -2}\bigg)^2 
\leq r \leq c_* 
\bigg( \frac{d}{\log\log N} \wedge \frac{\delta d}{\log d} \bigg)\,, 
\end{equation} 
then, with $H^{(x,r)} \deq H|_{B_{r+ 1}(x)}$, the following holds with high probability. 
\begin{enumerate}[label=(\roman*)]
\item (Eigenvalue rigidity) \label{item:eigenvalue_ball_Lambda} 
The largest eigenvalue $\mu$ of $H^{(x,r)}$ satisfies 
\begin{equation} \label{eq:expansion_eigenvalue_ball_Lambda} 
\mu = \Lambda_{\fra d}(\alpha_x, \beta_x) + O \bigg( \frac{1}{d \am} + \frac{\am^{1/2}\log d }{d (\alpha_x - 2)^2} \bigg)\,. 
\end{equation}
The smallest eigenvalue of $H^{(x,r)}$ is $-\mu$. 
\item (Eigenvector decay) \label{item:eigenvector_ball_decay}  
The eigenspaces of $H^{(x,r)}$ corresponding to $\mu$ and $-\mu$, respectively, are both one-dimensional. If $\f w$ is contained in 
one of these eigenspaces then 
\begin{equation} \label{eq:eigenvector_decay_ball} 
\norm{\f w|_{S_{r + 1}(x)}}
\lesssim \frac{\abs{w_x}}{(1 + \delta^{1/4}) (d \am)^{10}}\,.
\end{equation}
\end{enumerate} 
\end{proposition}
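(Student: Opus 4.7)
The plan is to approximately tridiagonalize $H^{(x,r)}$ in the orthonormal sphere basis $\f s_i \deq \f 1_{S_i(x)}/\sqrt{\abs{S_i(x)}}$, $i=0,\dots,r+1$, and compare it with the top-left $(r+2)\times(r+2)$ block of $Z_{\fra d}(\alpha_x,\beta_x)$. Since $\mathbb{G}|_{B_{r+1}(x)}$ is a tree by Proposition~\ref{pro:prob_estimates_1}\ref{item:ER_locally_tree}, every $y\in S_{i+1}(x)$ has a unique neighbour in $S_i(x)$, and consequently $H\f s_i|_{S_{i+1}(x)}$ equals $\sqrt{\abs{S_{i+1}(x)}/(d\abs{S_i(x)})}\,\f s_{i+1}$ \emph{exactly}; by Proposition~\ref{pro:prob_estimates_1}\ref{item:concentration_S_i} this prefactor matches the corresponding entry of $Z_{\fra d}(\alpha_x,\beta_x)$ up to a small relative error (and coincides exactly with $\sqrt{\alpha_x}$ or $\sqrt{\beta_x}$ at the first two levels, by \eqref{eq:def_alpha_x_beta_x}). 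The downward image $H\f s_i|_{S_{i-1}(x)} = (d\abs{S_i(x)})^{-1/2}\sum_{y\in S_{i-1}(x)} N_y(x)\,\f 1_y$, by contrast, is only approximately a multiple of $\f s_{i-1}$: its projection onto $\f s_{i-1}$ delivers the correct symmetric tridiagonal entry (because $\sum_{y\in S_{i-1}(x)} N_y(x) = \abs{S_i(x)}$), while the orthogonal remainder has squared norm $\frac{1}{d\abs{S_i(x)}}\sum_{y\in S_{i-1}(x)}(N_y(x) - \abs{S_i(x)}/\abs{S_{i-1}(x)})^2$ controlled by Proposition~\ref{pro:prob_estimates_1}\ref{item:Z_i_estimate}.

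Let $(u_i)_{i\in\N}$ be the top eigenvector of the infinite matrix $Z_{\fra d}(\alpha_x,\beta_x)$, with eigenvalue $\Lambda_{\fra d}(\alpha_x,\beta_x)$ and geometric decay whose ratio $q<1$ is governed by the spectral gap $\Lambda_{\fra d}(\alpha_x,\beta_x) - 2\sqrt{\fra d} \gtrsim (\alpha_x-2)/\sqrt{\alpha_x}$, ensured by \eqref{eq:condition_alpha_x_intermediate_rigidity}, \eqref{eq:lower_bound_alpha_x_minus_2_am_minus_2}, and \eqref{a_2_lower_bound}. I define the trial vector $\f v(x) \deq \sum_{i=0}^{r+1} u_i\,\f s_i / \normbb{\sum_{i=0}^{r+1} u_i\f s_i}$. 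Assembling (a) the sphere-ratio concentration from Proposition~\ref{pro:prob_estimates_1}\ref{item:concentration_S_i}, (b) the off-tridiagonal remainder above, and (c) the geometric truncation tail past level $r+1$, which the lower bound in \eqref{eq:condition_r} makes much smaller than $(d\am)^{-10}$, I obtain bounds on both the Rayleigh residual $\scalar{\f v(x)}{H^{(x,r)}\f v(x)} - \Lambda_{\fra d}(\alpha_x,\beta_x)$ and on $\normb{(H^{(x,r)}-\Lambda_{\fra d}(\alpha_x,\beta_x))\f v(x)}$. Feeding these estimates into the perturbation lemma (Lemma~\ref{lem:perturbationEV}) referenced in Section~\ref{sec:overview_proof_subsect} then yields the eigenvalue estimate \eqref{eq:expansion_eigenvalue_ball_Lambda}, using the spectral gap that is inherited from $Z_{\fra d}(\alpha_x,\beta_x)$ (whose essential spectrum is $[-2\sqrt{\fra d},2\sqrt{\fra d}]$) up to a negligible correction. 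The one-dimensionality of the top eigenspace of $H^{(x,r)}$ follows simultaneously. The corresponding statements for the smallest eigenvalue $-\mu$ are immediate: $\mathbb{G}|_{B_{r+1}(x)}$ is bipartite, so the sign involution associated to the bipartition anticommutes with $H^{(x,r)}$ and renders its spectrum symmetric about $0$.

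For the eigenvector decay \ref{item:eigenvector_ball_decay}, the same perturbation argument produces $\f w = \pm\f v(x) + O((d\am)^{-10})$ in $\ell^2$-norm, whence $\norm{\f w|_{S_{r+1}(x)}}$ is essentially $\abs{u_{r+1}}$, which is exponentially small in $r$; combined with $\abs{w_x} \asymp \abs{u_0}$ up to the same error, this yields \eqref{eq:eigenvector_decay_ball}, with the $(1+\delta^{1/4})^{-1}$ factor emerging from the interplay between the spectral gap and the lower bound $\alpha_x \geq 2 + C_* \delta^{1/4}$ from \eqref{eq:condition_alpha_x_intermediate_rigidity}. The main obstacle I anticipate is tracking the precision through the perturbation argument carefully enough to recover the composite error $\frac{1}{d\am} + \frac{\am^{1/2}\log d}{d(\alpha_x-2)^2}$ in \eqref{eq:expansion_eigenvalue_ball_Lambda}: the $\log d$ is inherited from the second-moment bound of Proposition~\ref{pro:prob_estimates_1}\ref{item:Z_i_estimate}, while the $(\alpha_x-2)^{-2}$ factor arises from the spectral gap entering to the second power (once through the normalisation of $\f v(x)$, once through the resolvent in the eigenvalue perturbation bound). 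These two contributions must be combined delicately while simultaneously ensuring that the truncation tail at level $r+1$ remains strictly subdominant, which is precisely what determines the lower bound on $r$ in \eqref{eq:condition_r}.
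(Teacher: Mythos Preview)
Your approach to part \ref{item:eigenvalue_ball_Lambda} is essentially the paper's: the trial vector $\f v(x)=\sum_i u_i\f s_i$ is exactly the one constructed in Proposition~\ref{pro:H_minus_Lambda_v_norm_quadratic_form_intermediate}, and the paper also obtains the spectral gap by deleting the root (eigenvalue interlacing) and bounding $\norm{QH^{(x,r)}Q}\leq 2(1+\delta^{1/2})^{1/2}$ via Lemma~\ref{lem:norm_bound_adjacency_matrix_general_graph} and the degree bound in Proposition~\ref{pro:prob_estimates_1}\ref{item:degrees_bounded}. The $(\alpha_x-2)^{-2}$ in the error comes from a single inverse power of $\Delta\gtrsim(\alpha_x-2)^2/\am^{3/2}$ in the $\epsilon^2/\Delta$ term of Lemma~\ref{lem:perturbationEV}, not from the gap squared as you say, but this does not affect the outcome.

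For part \ref{item:eigenvector_ball_decay} there is a genuine gap. Lemma~\ref{lem:perturbationEV} only gives $\norm{\f w-\f v(x)}=O(\epsilon/\Delta)$, and here $\epsilon\asymp(d\am)^{-1/2}(\log d+\delta^{1/2})^{1/2}$ by \eqref{eq:estimate_norm_approximate} while $\Delta\gtrsim(\alpha_x-2)^2/\am^{3/2}$, so $\epsilon/\Delta$ is nowhere near $(d\am)^{-10}$; it is merely $o(1)$. You therefore cannot read off $\norm{\f w|_{S_{r+1}(x)}}\lesssim(d\am)^{-10}$ from closeness to $\f v(x)$. The paper instead bypasses the trial vector entirely for the decay estimate: writing $\f w=w_x\f 1_x+Q\f w$ and inverting the eigenvalue equation on $\operatorname{ran}Q$ gives
\[
Q\f w=-\frac{w_x}{\sqrt{d}}(QH^{(x,r)}Q-\mu)^{-1}\f 1_{S_1(x)}=\frac{w_x}{\sqrt{d}\,\mu}\sum_{k\geq 0}\Bigl(\frac{QH^{(x,r)}Q}{\mu}\Bigr)^k\f 1_{S_1(x)}\,,
\]
and since $(QH^{(x,r)}Q)^k$ cannot connect $S_1(x)$ to $S_{r+1}(x)$ for $k<r$, the projection onto $S_{r+1}(x)$ picks up only the tail $k\geq r$ of the Neumann series. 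This yields $\norm{\f w|_{S_{r+1}(x)}}\lesssim\abs{w_x}\cdot\frac{\mu}{\mu-\norm{QH^{(x,r)}Q}}\bigl(\norm{QH^{(x,r)}Q}/\mu\bigr)^{r}$, and now the lower bound on $r$ in \eqref{eq:condition_r} makes this ratio to the $r$th power smaller than $(d\am)^{-10}$. The key point is that the decay bound exploits the \emph{locality} of $H^{(x,r)}$ directly, not the quality of the approximate eigenvector.
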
 

We note that, for certain choices of $d$ and $\delta$, it might not be possible to find $r\in \N$ satisfying \eqref{eq:condition_r}. 
Proposition~\ref{pro:intermediate_rigidity} and its proof provide conditions on $d$ and $\delta$ that are sufficient for the existence of $r$. 

We remark that the factor $(d\am)^{-10}$ on the right-hand side of \eqref{eq:eigenvector_decay_ball} can be
easily improved to $(d\am)^{-k}$ for any fixed $k$ by shrinking $c_*$ and increasing 41 in the lower bound on $r$ 
in \eqref{eq:condition_r}. Since this does not strengthen our other results, we refrain from establishing this improvement.

\begin{lemma} \label{lem:norm_bound_adjacency_matrix_general_graph} 
Let $\mathbb{T}$ be a graph whose vertices have degree at most $q + 1$ for some $q \geq 1$. Then its adjacency matrix $A^{\bb T}$ satisfies the bound $\norm{A^{\mathbb{T}}} \leq q + 1$. Moreover, if in addition $\mathbb{T}$ is a tree then $\norm{A^{\mathbb{T}}} \leq 2 \sqrt q$. 
\end{lemma}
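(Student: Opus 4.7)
My plan for the first bound is a direct quadratic-form argument. For any vector $\f f \in \R^{V(\bb T)}$, write
\begin{equation*}
\scalar{\f f}{A^{\bb T}\f f} \;=\; 2\sum_{\{u,v\}\in E(\bb T)} f_u f_v\,,
\end{equation*}
apply the AM-GM inequality $2\abs{f_u f_v}\leq f_u^2 + f_v^2$ edge-by-edge, and note that each vertex $u$ contributes to exactly $D_u\leq q+1$ edges. This yields $\abs{\scalar{\f f}{A^{\bb T}\f f}} \leq (q+1)\norm{\f f}^2$, and since $A^{\bb T}$ is symmetric, this immediately gives $\norm{A^{\bb T}}\leq q+1$ via the variational formula for the operator norm.

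For the tree bound, I plan to use the trace method. Since $A^{\bb T}$ is symmetric,
\begin{equation*}
\norm{A^{\bb T}}^{2k} \;\leq\; \tr\bigl((A^{\bb T})^{2k}\bigr) \;=\; \sum_{v\in V(\bb T)} N_{2k}(v)\,,
\end{equation*}
where $N_{2k}(v)$ counts closed walks of length $2k$ in $\bb T$ based at $v$. On a tree every closed walk is an ``up-down'' walk: rooting $\bb T$ at $v$, each step either descends to a child or ascends to the parent of the current vertex, and the sequence of descend/ascend choices forms a Dyck path of length $2k$. There are $C_k = \frac{1}{k+1}\binom{2k}{k}$ such Dyck paths, and at each of the $k$ descend-steps the walker picks among the children of the current vertex---at most $q$ for non-root vertices (one neighbor is the parent) and at most $q+1$ at the root $v$ itself. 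Hence $N_{2k}(v) \leq (q+1)\,q^{k-1}\,C_k$.

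Summing over $v \in V(\bb T)$ and taking $(2k)$-th roots,
\begin{equation*}
\norm{A^{\bb T}} \;\leq\; \bigl(\abs{V(\bb T)}\,(q+1)\,q^{k-1}\,C_k\bigr)^{1/(2k)}\,,
\end{equation*}
and the right-hand side tends to $2\sqrt{q}$ as $k\to\infty$, since $C_k^{1/k}\to 4$ and the other factors have $(2k)$-th roots tending to $1$. This yields $\norm{A^{\bb T}}\leq 2\sqrt{q}$.

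The only delicate point is verifying the walk-counting bijection, which uses the tree property essentially: in a tree every non-trivial step ``away from the current branch'' goes through the unique parent in the rooted view, so a closed walk is encoded unambiguously by the Dyck path plus the choice of child at each descend. As an alternative that bypasses walk-counting, one can embed $\bb T$ as a subtree of the infinite $(q+1)$-regular tree $\bb T^*$ (via an easy inductive construction, using that each vertex of $\bb T^*$ has $q+1$ neighbors and each vertex of $\bb T$ has at most $q+1$), view any $\f f\in\R^{V(\bb T)}$ as a vector on $V(\bb T^*)$ by zero-extension, check that $A^{\bb T}\f f$ equals the restriction of $A^{\bb T^*}\f f$ to $V(\bb T)$, and then invoke the classical identity $\norm{A^{\bb T^*}}=2\sqrt{q}$ for the regular tree.
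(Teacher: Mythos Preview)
The paper itself does not give a proof; it simply cites \cite[Lemma~A.4]{ADK20}. Your quadratic-form argument for the first bound $\norm{A^{\bb T}}\leq q+1$ is correct and standard.

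Your trace-method argument for the tree bound has a gap in the walk count. The claimed inequality $N_{2k}(v) \leq (q+1)\,q^{k-1}\,C_k$ implicitly assumes that only one of the $k$ descend-steps is taken from the root; but a Dyck path may touch level $0$ several times, and every descend from level $0$ is a step from $v$ with up to $q+1$ choices, not $q$. For instance, the Dyck path $(\text{down},\text{up})^k$ alone produces $(q+1)^k$ walks, and already at $k=2$, $q=1$ the total count is $6$, exceeding your bound $4$. Replacing the per-path bound by the crude maximum $(q+1)^k$ yields only $\norm{A^{\bb T}}\leq 2\sqrt{q+1}$, which is too weak. The argument is salvageable if one instead bounds $N_{2k}(v)$ by the exact closed-walk count $f_k$ on the $(q+1)$-regular tree and uses the known growth rate $f_k^{1/(2k)}\to 2\sqrt q$, but the intermediate inequality as written is false. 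A secondary issue is that the trace bound $\norm{A^{\bb T}}^{2k}\leq\tr((A^{\bb T})^{2k})$ presupposes $\abs{V(\bb T)}<\infty$, which the lemma does not assume.

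Your alternative via embedding $\bb T$ as a subtree of the infinite $(q+1)$-regular tree $\bb T^*$ is correct and self-contained: the embedding is automatically induced (two distinct paths between the same endpoints in $\bb T^*$ would create a cycle), so $A^{\bb T}$ is the compression of $A^{\bb T^*}$ to $\ell^2(V(\bb T))$, whence $\norm{A^{\bb T}}\leq\norm{A^{\bb T^*}}=2\sqrt q$. This works without any finiteness assumption; keep this argument and drop the trace computation.
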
 

\begin{proof} 
See e.g.\ \cite[Lemma~A.4]{ADK20}. 
\end{proof}

\begin{proof}[Proof of Proposition~\ref{pro:intermediate_rigidity}] 
Owing to \eqref{eq:condition_delta} and \eqref{eq:scaling_alpha_max}, it is easy to see by distinguishing the regimes $d \geq c_0 \log N$ and 
$d \leq c_0 \log N$ for some small enough $c_0>0$ that there is $r \in \N$ satisfying \eqref{eq:condition_r} 
for all $\alpha_x$ satisfying \eqref{eq:condition_alpha_x_intermediate_rigidity}.
We set $r_\delta \deq r + 1$. 
Let $\mu$ be the largest eigenvalue of $H^{(x,r)}$ and $\f w_+$ and $\f w_-$ two $\ell^2$-normalized eigenvectors of $H^{(x,r)}$ associated to $\mu$ and $-\mu$, respectively. 
With this choice, we clearly have $\f w_+(x) \perp \f w_-(x)$ and $\supp \f w_\pm(x) \subset B_{r_\delta}(x)$. Moreover, 
\[ \scalar{\f w_+(x)}{H \f w_-(x)} = \scalar{\f w_+(x)}{H^{(x,r)} \f w_-(x)} = - \mu \scalar{\f w_+(x)}{\f w_-(x)} = 0\,. \] 
For the rest of the argument we focus on the positive eigenvalue of $H$ and $\f w = \f w_+$ as the argument for the negative eigenvalue proceeds in the same way. 
We compute 
\begin{equation}\label{eq:proof_intermediate_rigidity_aux} 
 (H-\mu) \f w = (H - H^{(x,r)})\f w = O((1+\delta^{1/4}) \norm{\f w|_{S_{r + 1}(x)}})\,,   
\end{equation}
where, in the last step, we used that $\mathbb{G}|_{B_{r+2}(x) \setminus B_{r +1}(x)}$ is a forest with maximal degree $d (1 + \delta^{1/2})$ by Proposition~\ref{pro:prob_estimates_1} and Lemma~\ref{lem:norm_bound_adjacency_matrix_general_graph} if $r + 2$ is smaller or equal to the right-hand side of \eqref{eq:upper_bound_r}. 
Therefore, using \eqref{eq:eigenvector_decay_ball} to 
estimate the right-hand side of \eqref{eq:proof_intermediate_rigidity_aux} yields \eqref{eq:bound_H_minus_eigenvalue_ball_intermediate}. Consequently, using 
\eqref{eq:expansion_eigenvalue_ball_Lambda} to
replace $\mu$ by $\Lambda_{\fra d}(\alpha_x,\beta_x)$ completes the proof of Proposition~\ref{pro:intermediate_rigidity}. 
\end{proof} 

\subsection{Proof of Proposition~\ref{pro:extreme_in_a_ball}} 
\label{subsec:proofs_lemmas_graph_in_a_ball} 

In this section, we prove Proposition~\ref{pro:extreme_in_a_ball}. 
We first note that it suffices to prove Proposition~\ref{pro:extreme_in_a_ball} with $\Lambda_{\fra d}(\alpha_x,\beta_x)$
replaced by $\Lambda(\alpha_x,\beta_x)$ from \eqref{eq:def_Lambda} due to Lemma~\ref{lem:EstimateLambdad} below.  
An important step in the proof of this version of Proposition~\ref{pro:extreme_in_a_ball} is the next proposition.

\begin{proposition}[Approximate eigenvector for $H^{(x,r)}$] \label{pro:H_minus_Lambda_v_norm_quadratic_form_intermediate} 
Suppose that $d$ and $\delta$ satisfy \eqref{eq:condition_d_delta_graph}. 
Then there are $c_*>0$ and $C_*>0$ such that the following holds with high probability. 
For any $x \in [N]$, if $\alpha_x$ satisfies \eqref{eq:condition_alpha_x_intermediate_rigidity} and $r\in \N$ satisfies \eqref{eq:condition_r} 
then
there is a normalized vector $\f v(x)$ such that 
\begin{subequations} 
\begin{align} 
 \norm{(H^{(x,r)} - \Lambda(\alpha_x,\beta_x))\f v(x) } & 
\lesssim \frac{1}{\sqrt{d \am}} \Big( \log d + \delta^{1/2} \Big)^{1/2} \,, \label{eq:estimate_norm_approximate}  \\ 
\abs{\scalar{\f v(x)}{( H^{(x,r)} - \Lambda(\alpha_x,\beta_x) ) \f v(x)}} & 
\lesssim \frac{1}{d \am} \,.  \label{eq:estimate_overlap} 
\end{align}
\end{subequations} 
\end{proposition}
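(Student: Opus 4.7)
Construct $\f v(x)$ as a weighted combination of the sphere basis vectors $\f s_i(x) \deq \f 1_{S_i(x)}/\sqrt{|S_i(x)|}$, $0 \leq i \leq r$, with coefficients given by the truncated top eigenvector of the reference matrix $Z_1(\alpha_x,\beta_x)$ from \eqref{eq:def_Zd}. Let $(u_i)_{i \in \N}$ be its $\ell^2$-normalized top eigenvector, with eigenvalue $\Lambda(\alpha_x, \beta_x)$. Solving the recurrence it satisfies, together with the bound $\alpha_x - 2 \gtrsim \fra u - 2$ from \eqref{eq:lower_bound_alpha_x_minus_2_am_minus_2}, yields geometric decay $|u_i| \lesssim q^{i-2}/\sqrt{\am}$ for $i \geq 2$ with rate $q \deq 2/(\Lambda(\alpha_x,\beta_x) + \sqrt{\Lambda(\alpha_x,\beta_x)^2 - 4}) < 1$; in particular $u_i^2 \lesssim 1/\am$ for $i \geq 2$, while $u_0, u_1 \asymp 1$. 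Since the $\f s_i(x)$ are nonzero by Proposition~\ref{pro:prob_estimates_1}\ref{item:concentration_S_i} and have pairwise disjoint supports, the vector
\[
\f v(x) \deq \frac{1}{Z_r} \sum_{i=0}^r u_i \f s_i(x)\,, \qquad Z_r \deq \bigg(\sum_{i=0}^r u_i^2\bigg)^{1/2} = 1 + O(q^{2r})\,,
\]
is $\ell^2$-normalized.

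\textbf{Main decomposition.} The crucial structural input is that $\mathbb{G}|_{B_{r+1}(x)}$ is a tree by Proposition~\ref{pro:prob_estimates_1}\ref{item:ER_locally_tree}: each $y \in S_i(x)$, $1 \leq i \leq r$, has a unique parent in $S_{i-1}(x)$ and $N_y(x)$ children in $S_{i+1}(x)$, while each vertex of $S_{i+1}(x)$ has a unique parent in $S_i(x)$. A direct computation then gives $H^{(x,r)} \f s_0(x) = \sqrt{\alpha_x}\, \f s_1(x)$ and, for $1 \leq i \leq r$,
\[
H^{(x,r)} \f s_i(x) = \sqrt{\tfrac{|S_i(x)|}{d\,|S_{i-1}(x)|}}\, \f s_{i-1}(x) + \sqrt{\tfrac{|S_{i+1}(x)|}{d\,|S_i(x)|}}\, \f s_{i+1}(x) + \f e_i(x)\,,
\]
where $\f e_i(x) \deq (d\,|S_i(x)|)^{-1/2} \sum_{z \in S_{i-1}(x)} (N_z(x) - \bar N_i)\,\f 1_z$, with $\bar N_i \deq |S_i(x)|/|S_{i-1}(x)|$, is supported on $S_{i-1}(x)$ and orthogonal to every $\f s_j(x)$. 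The tridiagonal coefficients at rows $i = 0, 1$ reproduce the $\sqrt{\alpha_x}$ and $\sqrt{\beta_x}$ entries of $Z_1(\alpha_x,\beta_x)$ \emph{exactly}, while for $i \geq 2$ they differ from $1$ by $O(\sqrt{\delta/|S_i(x)|})$ by Proposition~\ref{pro:prob_estimates_1}\ref{item:concentration_S_i}. Since $(u_i)$ is the top eigenvector of $Z_1(\alpha_x,\beta_x)$, these identities yield the decomposition
\[
(H^{(x,r)} - \Lambda(\alpha_x,\beta_x))\,\f v(x) = \f e(x) + \f t(x) + \f b(x)\,,
\]
where $\f e(x) \deq Z_r^{-1} \sum_{i \geq 2} u_i \f e_i(x)$ is the off-tridiagonal error, $\f t(x) \in \mathrm{span}(\f s_j(x) \col j \leq r)$ collects the small tridiagonal corrections coming from the $O(\sqrt{\delta/|S_i|})$ deviations (together with the $O(q^{2r})$ renormalization error), and $\f b(x)$ is a boundary term supported on $S_{r+1}(x)$ with $\norm{\f b(x)} \lesssim |u_r|$, exponentially small by the lower bound on $r$ in \eqref{eq:condition_r}.

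\textbf{Norm bound and quadratic form bound.} For \eqref{eq:estimate_norm_approximate}, since the $\f e_i(x)$ are supported on disjoint spheres,
\[
\norm{\f e(x)}^2 \lesssim \sum_{i \geq 2} u_i^2\, \norm{\f e_i(x)}^2\,, \qquad \norm{\f e_i(x)}^2 = \frac{\sum_{z \in S_{i-1}(x)}(N_z(x) - \bar N_i)^2}{d\,|S_i(x)|} \lesssim \frac{\log d + \delta^{1/2}}{d}\,,
\]
by Proposition~\ref{pro:prob_estimates_1}\ref{item:Z_i_estimate} together with $|S_{i-1}(x)|/|S_i(x)| \asymp 1/d$ and $d\delta/|S_{i-1}(x)| \lesssim 1$ for $i \geq 2$ (the latter from $\delta \leq C\am$ and the growth of $|S_i(x)|$). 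The dominant contribution is at $i = 2$, giving $u_2^2 (\log d + \delta^{1/2})/d \asymp (\log d + \delta^{1/2})/(d\am)$; the $\norm{\f t(x)}$ and boundary terms are smaller. For \eqref{eq:estimate_overlap}, the crucial improvement is that $\f e(x) \perp \f v(x)$ (since each $\f e_i(x)$ is orthogonal to $\mathrm{span}(\f s_j(x))$) and $\f b(x) \perp \f v(x)$ (disjoint supports), so only $\f t(x)$ contributes. The leading term is the $(2,3)$ tridiagonal correction, bounded by $|u_2 u_3| \sqrt{\delta/|S_2(x)|} \lesssim q^3 \sqrt{\delta/(d^2 \am)} \lesssim \sqrt{\delta}/(d\am^{3/2}) \lesssim 1/(d\am)$, using $\delta \leq C\am$. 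The main technical obstacle is extracting a factor $1/\am$ from each contributing error: it arises from the combination of the geometric decay of $(u_i)$ (giving the key $u_2^2 \asymp 1/\am$), the growth $|S_i(x)| \asymp d^{i-1}|S_1(x)|$, and the near-sharp concentration estimates of Proposition~\ref{pro:prob_estimates_1}. This is precisely what singles out $Z_1(\alpha_x,\beta_x)$ as the correct reference matrix for the intermediate-precision regime.
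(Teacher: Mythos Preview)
Your proposal is correct and takes essentially the same approach as the paper: the same trial vector $\f v(x) = \sum_{i=0}^r u_i \f s_i(x)$ built from the truncated top eigenvector of $Z_1(\alpha_x,\beta_x)$, the same three-term decomposition into an off-tridiagonal piece, a tridiagonal-correction piece, and a boundary piece (the paper calls these $\f w_2,\f w_3,\f w_4$), and the same estimates drawn from Proposition~\ref{pro:prob_estimates_1} and Lemma~\ref{lem:eigenvector_fine_properties}. The only cosmetic difference is that you center $\f e_i(x)$ at $\bar N_i = |S_i|/|S_{i-1}|$ to make it exactly orthogonal to the sphere vectors, whereas the paper centers at $d$ for $i\geq 3$ and absorbs the discrepancy $\bar N_i - d$ into $\f w_3$; both choices lead to the same bounds.
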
 

We shall prove Proposition~\ref{pro:H_minus_Lambda_v_norm_quadratic_form_intermediate} in Section~\ref{subsec:proof_Proposition_H_minus_Lambda_intermediate} below. 
After the next corollary, we shall use Proposition~\ref{pro:H_minus_Lambda_v_norm_quadratic_form_intermediate} to establish Proposition~\ref{pro:extreme_in_a_ball}.

\begin{corollary}[Concentration of $\beta_x$]
\label{cor:concentration_beta_x} 
Let $d$ and $\delta$ satisfy \eqref{eq:condition_d_delta_graph}. 
On the high-probability event from Proposition~\ref{pro:prob_estimates_1}, for all $x \in \cal V_\delta$, we have
\begin{equation}
 \beta_x   = 1 + O\bigg(\frac{\sqrt \delta}{\sqrt{d \am}} \bigg). \label{eq:concentration_beta_x}  
\end{equation} 
\end{corollary}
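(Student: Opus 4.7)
The plan is to derive the bound directly from item \ref{item:concentration_S_i} of Proposition~\ref{pro:prob_estimates_1} applied at radius $i = 1$, combined with the lower bound on the normalized degree available for vertices in $\cal V_\delta$.

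Specifically, I would first recall the definitions $\alpha_x = |S_1(x)|/d$ and $\beta_x = |S_2(x)|/(d|S_1(x)|)$ from \eqref{eq:def_alpha_x_beta_x}, which show that
\begin{equation*}
\beta_x - 1 = \frac{|S_2(x)|}{d |S_1(x)|} - 1,
\end{equation*}
so item \ref{item:concentration_S_i} of Proposition~\ref{pro:prob_estimates_1} with $i = 1$ immediately yields
\begin{equation*}
|\beta_x - 1| \lesssim \sqrt{\frac{\delta}{|S_1(x)|}}
\end{equation*}
on the high-probability event in question, uniformly for $x \in \cal V_\delta$.

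Next, I would bound $|S_1(x)|$ from below for $x \in \cal V_\delta$. Since $|S_1(x)| = d \alpha_x$ and, by the defining condition of $\cal V_\delta$ in \eqref{eq:def_cal_V}, $\alpha_x \geq \fra a - c_* \delta / \log \fra a$, the estimate \eqref{eq:lower_bound_degree_cal_V} (which is precisely the reason $c_*$ was fixed small enough just before the proposition) gives $\alpha_x \gtrsim \fra a$, and hence $|S_1(x)| \gtrsim d \fra a$. Substituting this into the previous display proves \eqref{eq:concentration_beta_x}. This argument is essentially a one-line consequence of the already-established graph-theoretic concentration result, so I do not anticipate any substantive obstacle; the only point requiring a moment of care is to check that the constant $c_*$ used to define $\cal V_\delta$ here is indeed the same (or smaller than) the one fixed for \eqref{eq:lower_bound_degree_cal_V}, which is already guaranteed by the convention stated after Remark~\ref{rem:relation_cal_V_cal_V_delta}.
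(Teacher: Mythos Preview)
Your proposal is correct and follows essentially the same approach as the paper: both use the definition of $\beta_x$, apply Proposition~\ref{pro:prob_estimates_1}~\ref{item:concentration_S_i} with $i=1$, and invoke $D_x \gtrsim d\am$ from \eqref{eq:lower_bound_degree_cal_V}.
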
 

\begin{proof} 
The expansion \eqref{eq:concentration_beta_x} follows directly from the definition of $\beta_x$ in \eqref{eq:def_alpha_x_beta_x} 
and Proposition~\ref{pro:prob_estimates_1} \ref{item:concentration_S_i}, 
since $D_x \gtrsim d \am$ for all $x \in \cal V_\delta$ by \eqref{eq:lower_bound_degree_cal_V}. 
\end{proof} 

The last missing ingredient for the proof of Proposition~\ref{pro:extreme_in_a_ball} 
is the following simple perturbation estimate for approximate eigenvalues and eigenvectors.

\begin{lemma}\label{lem:perturbationEV}
Let $M$ be a self-adjoint matrix. Let $\epsilon, \Delta > 0$ satisfy $5 \epsilon \leq \Delta$. Let $\lambda \in \R$ and suppose that $M$ has a unique eigenvalue, $\mu$, in $[\lambda - \Delta, \lambda + \Delta]$, with corresponding normalized eigenvector $\f w$. If there exists a normalized vector $\f v$ such that $\norm{(M - \lambda) \f v} \leq \epsilon$ then
\begin{equation*}
\mu - \lambda = \scalar{\f v}{(M - \lambda) \f v} + O \pbb{\frac{\epsilon^2}{\Delta}}\,, \qquad
\norm{\f w- \f v} = O \pbb{\frac{\epsilon}{\Delta}}\,.
\end{equation*}
\end{lemma}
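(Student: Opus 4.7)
The plan is to exploit the spectral decomposition of $M$ with respect to the spectral gap around $\mu$. Write $\f v = a \f w + \f v^\perp$ where $\f v^\perp \in \f w^\perp$, choosing the sign of $\f w$ so that $a \geq 0$; since $\f v$ is normalized, $a^2 + \norm{\f v^\perp}^2 = 1$. Because $M$ is self-adjoint, $\f w^\perp$ is $M$-invariant, so $(M-\lambda)\f v^\perp \in \f w^\perp$ and the two components of
\begin{equation*}
(M-\lambda)\f v = a(\mu-\lambda)\f w + (M-\lambda)\f v^\perp
\end{equation*}
are orthogonal. Pythagoras and the hypothesis $\norm{(M-\lambda)\f v}\leq \epsilon$ give
\begin{equation*}
a^2(\mu-\lambda)^2 + \norm{(M-\lambda)\f v^\perp}^2 \leq \epsilon^2\,.
\end{equation*}

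First I would derive the bound on $\f v^\perp$. By assumption $\mu$ is the only eigenvalue of $M$ in $[\lambda-\Delta, \lambda+\Delta]$, so every eigenvalue of the restriction of $M$ to $\f w^\perp$ lies at distance $>\Delta$ from $\lambda$. Spectral calculus therefore yields $\norm{(M-\lambda)\f v^\perp}\geq \Delta \norm{\f v^\perp}$, and combining with the previous display gives $\norm{\f v^\perp}\leq \epsilon/\Delta$. Consequently $a^2 \geq 1 - \epsilon^2/\Delta^2$, which is positive thanks to $5\epsilon \leq \Delta$, justifying that $a$ may be taken in $[0,1]$. Since $1-a \leq 1-a^2 \leq \epsilon^2/\Delta^2$, a brief computation gives
\begin{equation*}
\norm{\f w-\f v}^2 = (1-a)^2 + \norm{\f v^\perp}^2 \leq 2(1-a) \leq \frac{2\epsilon^2}{\Delta^2}\,,
\end{equation*}
which is the second claim.

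For the eigenvalue expansion, I would expand the quadratic form in the decomposition:
\begin{equation*}
\scalar{\f v}{(M-\lambda)\f v} = a^2(\mu-\lambda) + \scalar{\f v^\perp}{(M-\lambda)\f v^\perp}\,.
\end{equation*}
The cross terms vanish because $\f w \perp \f v^\perp$ and $\f w \perp (M-\lambda)\f v^\perp$. The second term is bounded by Cauchy--Schwarz: $|\scalar{\f v^\perp}{(M-\lambda)\f v^\perp}| \leq \norm{\f v^\perp}\cdot\norm{(M-\lambda)\f v^\perp} \leq (\epsilon/\Delta)\cdot\epsilon = \epsilon^2/\Delta$. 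For the first term, write $a^2(\mu-\lambda) = (\mu-\lambda) - (1-a^2)(\mu-\lambda)$, and use $|\mu-\lambda|\leq \Delta$ together with $1-a^2 \leq \epsilon^2/\Delta^2$ to bound the correction by $\epsilon^2/\Delta$. Combining these two estimates yields $\scalar{\f v}{(M-\lambda)\f v} = (\mu-\lambda) + O(\epsilon^2/\Delta)$, which is the first claim.

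There is no real obstacle here: this is a standard second-order Rayleigh--Ritz perturbation argument, and the condition $5\epsilon \leq \Delta$ is needed only to ensure $a > 0$ so that the sign convention for $\f w$ is consistent and the square roots are defined. The only care required is to keep the decomposition of $(M-\lambda)\f v$ genuinely orthogonal, for which self-adjointness of $M$ together with the invariance of $\f w^\perp$ under $M$ is essential.
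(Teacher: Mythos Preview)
Your proof is correct and complete, but it takes a genuinely different route from the paper's. The paper projects onto $\f v$ rather than $\f w$: it sets $\Pi = \f v\f v^*$, decomposes $M = W + E$ with $W = \lambda\Pi + \ol{\Pi}M\ol{\Pi}$ and $E = \Pi(M-\lambda)\Pi + \Pi M\ol{\Pi} + \ol{\Pi}M\Pi$, checks $\|E\|\leq 2\epsilon$ from $E\f v = (M-\lambda)\f v$, observes that $\lambda$ is an eigenvalue of $W$ with eigenvector $\f v$ and gap at least $\Delta - 2\epsilon$, and then invokes standard perturbation theory for this isolated eigenvalue. You instead project $\f v$ onto the true eigenvector $\f w$ and argue directly via the spectral decomposition of $M$ on $\f w^\perp$. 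Your Rayleigh--Ritz argument is more self-contained and yields explicit constants at every step; the paper's argument is shorter but treats the final perturbation step as a black box. Both arguments ultimately exploit the same spectral gap, just viewed from opposite sides of the perturbation.
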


\begin{proof}
Define the orthogonal projections $\Pi \deq \f v\f v ^*$ and $\ol{\Pi} \deq 1 - \Pi$, and decompose $M = W + E$, where
\[ 
W \deq \lambda \Pi + \ol \Pi M \ol \Pi\,, \qquad E \deq \Pi (M - \lambda) \Pi + \Pi M \ol \Pi + \ol \Pi M \Pi\,.\] 
From $E \f v = (M - \lambda)\f v$ and the assumption $\norm{(M - \lambda) \f v} \leq \epsilon$ we easily conclude that $\norm{E} \leq 2 \epsilon$. Since $\lambda$ is an eigenvalue of $W$, we conclude that all other eigenvalues of $W$ are separated from $\lambda$ by at least $\Delta - 2 \epsilon$. The claim now follows from perturbation theory for the isolated eigenvalue $\lambda$ of $W$ with associated eigenvector $\f v$.
\end{proof}

\begin{proof}[Proof of Proposition~\ref{pro:extreme_in_a_ball}] 
We start the proof of \ref{item:eigenvalue_ball_Lambda} by remarking that $-\mu$ is the smallest eigenvalue of $H^{(x,r)}$ if $\mu$ is the largest eigenvalue of $H^{(x,r)}$. 
This follows from the symmetry of the spectrum of $H^{(x,r)}$ around zero which holds as $\mathbb{G}|_{B_{r + 1}(x)}$ is a tree by Proposition~\ref{pro:prob_estimates_1} \ref{item:ER_locally_tree} and, hence, bipartite. 

The next step is applying perturbation theory, more precisely, Lemma~\ref{lem:perturbationEV} with $M = H^{(x,r)}$, $\lambda = \Lambda(\alpha_x,\beta_x)$ and $\f v = \f v(x)$ from Proposition~\ref{pro:H_minus_Lambda_v_norm_quadratic_form_intermediate}. 
To check the conditions of Lemma~\ref{lem:perturbationEV}, we start by determining the spectral gap of $H^{(x,r)}$. 
By removing the vertex $x$ from $\mathbb{G}|_{B_{r +1}(x)}$ and using eigenvalue interlacing, we see 
that $H^{(x,r)}$ has at most one eigenvalue larger than $\norm{Q H^{(x,r)} Q}$ with the projection 
$Q$ defined through 
\begin{equation} \label{eq:norm_QHQ} 
Q \deq \sum_{y \in B_{r + 1}(x)\setminus \{ x\}} \scalar{\f 1_y}{\,\cdot\,} \f 1_y, 
\qquad \qquad \quad 
 \norm{QH^{(x,r)}Q} \leq 2 (1 + \delta^{1/2})^{1/2}\,, 
\end{equation} 
where we used Lemma~\ref{lem:norm_bound_adjacency_matrix_general_graph} and Proposition~\ref{pro:prob_estimates_1} \ref{item:degrees_bounded} to obtain the bound in \eqref{eq:norm_QHQ}. 

Thus, $H^{(x,r)}$ has at most one eigenvalue in $[\Lambda(\alpha_x,\beta_x) - \Delta,\Lambda(\alpha_x,\beta_x) + \Delta]$, where $\Delta \deq \Lambda(\alpha_x,\beta_x) - \norm{Q H^{(x,r)} Q}$. 
Moreover, 
there is a constant $ C_*  >0$ such that if $\alpha_x$ satisfies \eqref{eq:condition_alpha_x_intermediate_rigidity} then 
\begin{equation} \label{eq:lower_bound_spectral_gap} 
 \Delta = \Lambda(\alpha_x, \beta_x) - \norm{QH^{(x,r)}Q} \geq \Lambda(\alpha_x,\beta_x) - 2 (1 + \delta^{1/2})^{1/2} \gtrsim \frac{ (\alpha_x - 2)^2}{\am^{3/2}}\,. 
\end{equation}
The first inequality in \eqref{eq:lower_bound_spectral_gap} is trivial since $\norm{QH^{(x,r)}Q} \leq 2(1 +\delta^{1/2})^{1/2}$ by \eqref{eq:norm_QHQ}.   
The second estimate follows from \eqref{Lambda_geq_2_est} with $\alpha = \alpha_x$ and $\beta = \beta_x$ 
as well as $\alpha_x \gtrsim \am$ due to \eqref{eq:lower_bound_degree_cal_V}  
by distinguishing the cases $\alpha_x \geq C_0$ and $\alpha_x \leq C_0$ for a sufficiently large constant. 
The condition \eqref{assumption_beta_pr} required for \eqref{Lambda_geq_2_est} is satisfied 
due to \eqref{eq:condition_alpha_x_intermediate_rigidity}, \eqref{eq:lower_bound_degree_cal_V} 
and \eqref{eq:concentration_beta_x}.

We set $\eps \deq \norm{(H^{(x,r)} - \Lambda(\alpha_x,\beta_x)) \f v}$ and conclude from 
 \eqref{eq:estimate_norm_approximate} in Proposition~\ref{pro:H_minus_Lambda_v_norm_quadratic_form_intermediate} 
that 
\begin{equation} \label{eq:5_eps_leq_Delta} 
5 \eps \leq \Delta\,, 
\end{equation} which can be checked by distinguishing the regimes $\am \geq C_0$ and $\am \leq C_0$ 
for a sufficiently large $C_0>0$ and using the conditions on $d$ and $\delta$ from \eqref{eq:condition_d_delta_graph}.

Hence, Lemma~\ref{lem:perturbationEV} for 
the largest eigenvalue $\mu$ of $H^{(x,r)}$ yields  
\[ \mu = \Lambda(\alpha_x,\beta_x) + \scalar{\f v}{(H^{(x,r)}-\Lambda(\alpha_x,\beta_x))\f v} + O \bigg ( \frac{\eps^2}{\Delta} \bigg) = 
\Lambda(\alpha_x,\beta_x) + O \bigg( \frac{1}{d \am} + \frac{\am^{1/2} \log d}{d (\alpha_x - 2)^2} \bigg)\,. \] 
Here, in the last step, we used \eqref{eq:estimate_norm_approximate} and \eqref{eq:estimate_overlap} as well as, in the last step, the bound on $\Delta$ from \eqref{eq:lower_bound_spectral_gap} and   
$\frac{(\log d + \delta^{1/2})\am^{3/2}}{(\alpha_x - 2)^2} \lesssim 1 + \frac{ \am^{3/2} \log d}{(\alpha_x - 2)^{2}}$ (this can be obtained easily from $\alpha_x \geq 2 + C_* \delta^{1/4}$ by distingushing the 
two regimes $\am \geq C_0$ and $\am \leq C_0$ for a large enough constant $C_0$). 
Owing to Lemma~\ref{lem:EstimateLambdad}, this completes the proof of Proposition~\ref{pro:extreme_in_a_ball}~\ref{item:eigenvalue_ball_Lambda}.

For the proof of \ref{item:eigenvector_ball_decay}, we focus on eigenvectors $\f w$ corresponding to the eigenvalue $\mu$. The argument works completely analogously for eigenvectors corresponding to $-\mu$.  
We first decompose $\f w = w_x \f 1_x + Q \f w$ in the eigenvector relation $H^{(x,r)} \f w = \mu \f w$, where $Q$ denotes the orthogonal projection onto 
the coordinates in ${B_{r + 1}(x) \setminus \{x \}}$ (compare \eqref{eq:norm_QHQ}). 
Hence, we obtain $(QH^{(x,r)} Q -\mu)Q\f w = - w_x Q H^{(x,r)}  \f 1_x = -w _x \f 1_{S_1(x)} /\sqrt{d}$, i.e.\  
\[ Q \f w = - \frac{w_x}{\sqrt{d}} (Q H^{(x,r)}Q - \mu)^{-1} \f 1_{S_1(x)}\,. \] 
Thus, if $P$ denotes the projection onto the coordinates in $S_{r + 1}(x)$ then we get 
\begin{equation} \label{eq:P_applied_to_w} 
P \f w = \frac{w_x}{\sqrt{d}\, \mu} P \sum_{k=0}^\infty \bigg( \frac{Q H^{(x,r)}Q }{\mu} \bigg)^k \f 1_{S_1(x)} = \frac{w_x}{\sqrt{d}\, \mu}   P \sum^\infty_{k = r} \bigg( \frac{ QH^{(x,r)}Q}{\mu} \bigg)^k \f 1_{S_1(x)} 
\end{equation}
since $(Q H^{(x,r)}Q )^k_{y_1,y_2} = 0$ for all $y_1 \in S_{r + 1}(x)$ and $y_2 \in S_1(x)$ if $k \leq r-1$.

Applying $\norm{\,\cdot\,}$ to the previous identity and summing up the geometric series, we see that to show the estimate on $\norm{\f w|_{S_{r+1}(x)}} = \norm{P \f w}$ in \ref{item:eigenvector_ball_decay}, it suffices to prove that 
\begin{equation} \label{eq:eigenvector_decay_proof_aux} 
 \frac{\mu}{\mu - \norm{QH^{(x,r)}Q}} \bigg( \frac{\norm{QH^{(x,r)}Q}}{\mu} \bigg)^{r} \lesssim \frac{1}{(1 + \delta^{1/4})(d\am)^{10}} 
\end{equation}
since $\norm{\f 1_{S_1(x)}} = \sqrt{D_x} \lesssim \sqrt{\am d}$ by Lemma~\ref{lem:convergence_max_alpha_x_upper_bound_D_x} 
and $\mu \asymp \sqrt{\am}$ by \eqref{eq:expansion_eigenvalue_ball_Lambda},  
\eqref{eq:approx_Lambda_alpha_x_beta_x_by_Lambda_alpha_x}, \eqref{eq:condition_d_delta_graph} and 
\eqref{eq:condition_alpha_x_intermediate_rigidity}.  
Note that the condition \eqref{assumption_beta_pr} required for \eqref{eq:approx_Lambda_alpha_x_beta_x_by_Lambda_alpha_x} is satisfied here 
as explained after \eqref{eq:lower_bound_spectral_gap}. 

For the proof of \eqref{eq:eigenvector_decay_proof_aux}, we distinguish two cases. First, we assume $\am \geq C_0$ for some sufficiently large constant $C_0>0$. 
In this case, we use \eqref{eq:norm_QHQ}, $\mu \asymp \sqrt{\am}$ and $\delta \lesssim \am$ by \eqref{eq:condition_d_delta_graph} and conclude 
\begin{equation} \label{eq:eigenvector_decay_proof_aux2} 
 \frac{\norm{QH^{(x,r)}Q}}{\mu} \lesssim \bigg(\frac{1+ \delta^{1/2}}{\am} \bigg)^{1/2} \lesssim \am^{-1/4}\,. 
\end{equation}
Thus, if $r \geq 41 +  \frac{1}{c} \frac{\log d}{\log \am}$ for some sufficiently small $c>0$ then we conclude $\big( \frac{\norm{QH^{(x,r)}Q}}{\mu} \big)^{ r} \lesssim \frac{1}{d^{10}\am^{41/4}}$. 
Note that this condition on $r$ is satisfied due to the lower bound on $r$ in \eqref{eq:condition_r} since $\frac{\am^2}{(\alpha_x-2)^2} \asymp 1$ due to $\alpha_x \gtrsim \am \geq C_0$ by \eqref{eq:lower_bound_degree_cal_V}.  
Moreover, \eqref{eq:eigenvector_decay_proof_aux2} with large enough $C_0$ implies that $\norm{QH^{(x,r)}Q}/\mu \leq 1/2$, hence, $\frac{\mu}{\mu- \norm{QH^{(x,r)}Q}} \leq 2$.  
This proves \eqref{eq:eigenvector_decay_proof_aux} if $\am \geq C_0$ for a large enough $C_0$ as $\delta^{1/4} \lesssim \am^{1/4}$. 

Next, we suppose that $\am \leq C_0$. Since $\am \gtrsim 1$ by \eqref{eq:scaling_alpha_max}, we have $\am \asymp 1$ in this regime. 
Hence, since $\abs{\mu - \Lambda(\alpha_x,\beta_x)} \leq \eps$ by the definition of $\eps$, the bounds \eqref{eq:5_eps_leq_Delta} and \eqref{eq:lower_bound_spectral_gap} imply 
$\mu - \norm{QH^{(x,r)}Q} \gtrsim \Delta \gtrsim (\alpha_x -2)^2$. 
Thus, as $\mu \asymp \sqrt{\am} \asymp 1$ and $1 - t \leq \ee^{-t}$ for all $t \in \R$, we get 
\[  \bigg( \frac{\norm{QH^{(x,r)}Q}}{\mu} \bigg)^{r}  = \bigg( 1 - \frac{\mu - \norm{QH^{(x,r)}Q}}{\mu} \bigg)^{r} \leq \exp\big( - c r (\alpha_x -2)^2 \big) \] 
for some $c \asymp 1$. 
This proves \eqref{eq:eigenvector_decay_proof_aux} in the remaining regime due to the lower bound on $r$ in \eqref{eq:condition_r} for small enough $c_*$ depending on $C_0$ and $\delta \lesssim \am \lesssim 1$ 
since $\mu - \norm{QH^{(x,r)}Q} \gtrsim \delta^{1/2}\gtrsim \big(\frac{\log \log N}{d}\big)^{1/2}$ 
by the lower bound $\alpha_x \geq 2 + C_* \delta^{1/4}$ in \eqref{eq:condition_alpha_x_intermediate_rigidity} and the lower bound on $\delta$ in \eqref{eq:condition_d_delta_graph}.  
This completes the proof of \eqref{eq:eigenvector_decay_proof_aux} and, thus, the ones of \ref{item:eigenvector_ball_decay} and Proposition~\ref{pro:extreme_in_a_ball}. 
\end{proof}

\begin{remark}[Eigenvector decay for $H^{(x,r)}$] 
Let $\f w$ be the eigenvector of $H^{(x,r)}$ corresponding to its largest eigenvalue $\mu$; see Proposition~\ref{pro:extreme_in_a_ball}. 
We note that 
\begin{equation} \label{eq:decay_eigenvector_in_ball} 
\norm{\f w|_{S_i(x)}} \lesssim\frac{\mu}{\mu - \norm{QH^{(x,r)}Q}} \biggl( \frac{\norm{QH^{(x,r)}Q}}{\mu} \biggr)^{i - 1}  \abs{\scalar{\f 1_x}{\f w}} 
\end{equation}
for all $i \in \qq{1,r + 1}$, where $Q$ is the orthogonal projection onto the coordinates in $B_{r+1}(x)\setminus \{x \}$.

The inequality \eqref{eq:decay_eigenvector_in_ball} can be obtained from the proof of Proposition~\ref{pro:extreme_in_a_ball} \ref{item:eigenvector_ball_decay} with minor modifications. 
If $P_i$ denotes the projection onto the coordinates in $S_i(x)$ then replacing $P$ by $P_i$ in 
\eqref{eq:P_applied_to_w} 
and arguing as after \eqref{eq:P_applied_to_w} and \eqref{eq:eigenvector_decay_proof_aux}  
yields \eqref{eq:decay_eigenvector_in_ball} as $\norm{\f w|_{S_i(x)}} = \norm{P_i \f w}$. 
\end{remark}

\subsection{Proof of Proposition~\ref{pro:H_minus_Lambda_v_norm_quadratic_form_intermediate}} \label{subsec:proof_Proposition_H_minus_Lambda_intermediate}

We first introduce $\f v = \f v(x)$, defined in terms of an eigenvector $(u_i)_{i \in \N}$ of $Z(\alpha_x,\beta_x) \deq Z_1(\alpha_x, \beta_x)$ associated with the eigenvalue
$\Lambda(\alpha_x,\beta_x)$ (see \eqref{eq:def_Zd}, \eqref{Z_scaling}, and Proposition~\ref{prop:Lambda_ab}).  
We rescale $(u_i)_{i \in \N}$ such that $u_i >0$ for all $i \in \N$ and $\sum_{i=0}^r u_i^2 = 1$. 
Define 
\begin{equation} \label{eq:def_v_intermediate_rigidity} 
\f v \deq \sum_{i=0}^r u_i \f s_i, \qquad \qquad \f s_i \defeq \frac{\f 1_{S_i(x)}}{\norm{\f 1_{S_i(x)}}}\,. 
\end{equation}
Clearly, $\norm{\f 1_{S_i(x)}} = \sqrt{\abs{S_i(x)}}$. 

For the proof of \eqref{eq:estimate_norm_approximate}, we note that 
\begin{equation} \label{eq:decomposition_H_minus_Lambda_v} 
 ( H^{(x,r)} - \Lambda(\alpha_x,\beta_x)) \f v  = \f w_2 + \f w_3 + \f w_4\,, 
\end{equation}
where, using the shorthands $S_i \deq S_i(x)$ and $N_y \deq N_y(x)$,  
we introduced the error terms 
\[ \begin{aligned} 
\f w_2 & \deq \frac{u_2}{\sqrt{d \abs{S_2}}} \sum_{y \in S_1} \f 1_y \bigg( N_y - \frac{\abs{S_2}}{\abs{S_1}} \bigg) + \sum_{i=3}^{r} \frac{u_i}{\sqrt{d\abs{S_i}}} \sum_{y \in S_{i-1}} \f 1_y \big( N_y  - d \big)\,,  \\ 
\f w _3 & \deq  \sum_{i=2}^{r-1} \f s_{i +1} u_i \bigg( \frac{\sqrt{\abs{S_{i+1}}}}{\sqrt{d\abs{S_i}}} - 1 \bigg)
+ \sum_{i=3}^r \f s_{i-1} u_i \bigg( \frac{\sqrt{d\abs{{S_{i-1}}}}}{\sqrt{\abs{{S_{i}}}}} - 1 \bigg)\,, \\ 
\f w_4 & \deq \f s_r  u_{r+1} + 
 \frac{\sqrt{\abs{S_{r+1}}}}{\sqrt{d\abs{S_r}}} u_r \f s_{r+1}\,. 
\end{aligned} \] 
The computation proving \eqref{eq:decomposition_H_minus_Lambda_v} is detailed in Appendix~\ref{app:proof_auxiliary_estimates} below. 

We shall show that, with high probability, we have
\begin{subequations} \label{eq:estimates_w_i} 
\begin{align} 
\norm{\f w_2}^2 & \lesssim \frac{\log d + \delta^{1/2}}{d \am} \bigg( 1 + \frac{d\delta}{D_x} \bigg)\,, \label{eq:estimate_w_2} \\ 
\norm{\f w_3}^2 & \lesssim  \frac{1}{d\am} \frac{\delta}{D_x}  \,, \label{eq:estimate_w_3} \\ 
\norm{\f w_4}^{\phantom{2}} &\lesssim (d \am)^{-10} \label{eq:estimate_w_4} \,.  
\end{align}
\end{subequations} 
Before proving \eqref{eq:estimates_w_i}, we note that \eqref{eq:estimates_w_i} and \eqref{eq:decomposition_H_minus_Lambda_v} imply \eqref{eq:estimate_norm_approximate}, 
since 
$D_x = d \alpha_x\gtrsim d\am$ by \eqref{eq:lower_bound_degree_cal_V} as $x \in \cal V_\delta$ 
and $\delta \lesssim \am$ by \eqref{eq:condition_d_delta_graph}.  
We shall use $\alpha_x \gtrsim \am$ frequently throughout this proof.  

Hence, to complete the proof of \eqref{eq:estimate_norm_approximate}, it suffices to show 
\eqref{eq:estimates_w_i}. 
In the following, we shall use that $u_{i+1} \leq u_i$ for $i \geq 2$ by \eqref{Zd_vect_components}. 
Moreover, we shall apply Lemma~\ref{lem:eigenvector_fine_properties} 
with $\alpha = \alpha_x$ and $\beta = \beta_x$ 
whose condition, \eqref{assumption_beta_pr}, is satisfied due to 
\eqref{eq:condition_alpha_x_intermediate_rigidity}, \eqref{eq:lower_bound_degree_cal_V} 
and \eqref{eq:concentration_beta_x}. 
To verify \eqref{eq:estimate_w_2}, we use the Pythagorean theorem and estimate  
\[ \begin{aligned} 
\norm{\f w_2}^2  & \lesssim \sum_{i=2}^{r} \frac{u_i^2}{d\abs{S_i}} \sum_{y \in S_{i-1}} \big( N_y  - d \big)^2 
+ \frac{u_2^2}{d \abs{S_2}} \abs{S_1} \bigg( d - \frac{\abs{S_2}}{\abs{S_1}} \bigg)^2 \\ & \lesssim \sum_{i=2}^r u_i^2\frac{\abs{S_{i-1}}}{\abs{S_i}} (\log d + \delta^{1/2}) \bigg( 1 + \frac{d \delta}{D_x} \bigg) + \frac{u_2^2 d \delta}{\abs{S_2}} \lesssim \frac{\log d + \delta^{1/2}}{d\am} \bigg( 1 + \frac{d \delta}{D_x} \bigg)\,. 
\end{aligned} \] 
Here, we used Proposition~\ref{pro:prob_estimates_1} \ref{item:Z_i_estimate} as well as the concentration of $\abs{S_2}/\abs{S_1}$ and $\abs{S_i} \geq D_x$ by Proposition~\ref{pro:prob_estimates_1} \ref{item:concentration_S_i} in the second step
and $\abs{S_{i-1}}/\abs{S_i} \lesssim 1/d$ by Proposition~\ref{pro:prob_estimates_1} \ref{item:concentration_S_i} as well as $u_2^2 \leq \sum_{i=2}^r u_i^2 \lesssim \am^{-1}$ due to \eqref{eq:relations_sum_u_i_u_2}    
in the third step. This completes the proof of \eqref{eq:estimate_w_2}.  

Using Proposition~\ref{pro:prob_estimates_1} \ref{item:concentration_S_i} and $\sum_{i=2}^r u_i^2 \lesssim \am^{-1}$ from \eqref{eq:relations_sum_u_i_u_2},   
we get 
\[ \norm{\f w _3}^2 \leq 2   \sum_{i=2}^{r-1} u_i^2 \bigg( \frac{\sqrt{\abs{S_{i+1}}}}{\sqrt{d\abs{S_i}}} - 1 \bigg)^2
+ 2\sum_{i=3}^r u_i^2 \bigg( \frac{\sqrt{d\abs{{S_{i-1}}}}}{\sqrt{\abs{{S_{i}}}}} - 1 \bigg)^2
\lesssim \frac{1}{\am}  \frac{\delta}{\abs{S_{2}}}   \lesssim \frac{1}{d\am}  \frac{\delta}{D_x}\,,  \] 
which is \eqref{eq:estimate_w_3}. 

The definition of $\f w_4$, $u_{r+1} \leq u_r$ and $\abs{S_{r+1}}\leq 4 d \abs{S_r}$ by Proposition~\ref{pro:prob_estimates_1} \ref{item:concentration_S_i} directly yield
$\norm{\f w_4} \leq 3 u_r$. 
Thus, \eqref{eq:estimate_w_4} follows from $u_2 \leq 1$ and \eqref{eq:u_r_leq_u_2_small}, 
which completes the proof of \eqref{eq:estimates_w_i}.

We now show \eqref{eq:estimate_overlap}. 
Since $\scalar{\f s_j}{H^{(x,r)}\f s_i} = \scalar{\f s_j}{H \f s_i}$ for $i,j \in [r]$ by definition 
of $H^{(x,r)}$, writing $\Lambda = \Lambda(\alpha_x,\beta_x)$ yields
\begin{equation} \label{eq:scalar_v_H_minus_Lambda_v} 
\scalar{\f v}{(H^{(x,r)} - \Lambda)\f v} 
 = \sum_{i,j=0}^r u_i u_j \scalar{\f s_j}{(H- \Lambda) \f s_i} 
= 2\sum_{i=2}^{r-1} u_i u_{i+1} \bigg( \frac{\sqrt{\abs{S_{i+1}}}}{\sqrt{d\abs{S_i}}} - 1 \bigg) - u_r u_{r+1}\,. 
\end{equation}
The details of the proof are given in Appendix~\ref{app:proof_auxiliary_estimates} below. 

By \eqref{eq:scalar_v_H_minus_Lambda_v},  
Proposition~\ref{pro:prob_estimates_1} \ref{item:concentration_S_i}, $u_{i+1} \leq u_i$, $\sum_{i=2}^r u_i^2 \lesssim \am^{-1}$ due to \eqref{eq:relations_sum_u_i_u_2},  and \eqref{eq:u_r_leq_u_2_small}, we obtain 
\[ \abs{\scalar{\f v}{(H^{(x,r)} - \Lambda)\f v}}\lesssim \sum_{i=2}^r u_i^2  \sqrt{\frac{\delta}{d D_x}} \lesssim \frac{1}{\am}   \sqrt{\frac{\delta}{d D_x}}  \,.  \]  
Since $D_x \gtrsim d \am$ and $\delta \lesssim \am$ by \eqref{eq:condition_d_delta_graph},  
this completes the proof of \eqref{eq:estimate_overlap} and, thus, the one of Proposition~\ref{pro:H_minus_Lambda_v_norm_quadratic_form_intermediate}.  \qed

\section{Fine rigidity} \label{sec:fine_rigidity}

Throughout this section, we assume that \eqref{eq:ConditionAlphaMaxMin} is satisfied. 
The main result of this section is Proposition~\ref{pro:fine_rigidity} below. 
It refines the rigidity estimates shown in Proposition~\ref{pro:intermediate_rigidity} 
for vertices $x \in \cal W$. 
Recalling the definition of $\cal W$ from \eqref{eq:def_cal_W}, we see that they satisfy a stronger lower bound than \eqref{eq:condition_alpha_x_intermediate_rigidity}.

\begin{proposition}[Fine rigidity for extreme eigenvalues] \label{pro:fine_rigidity} 
Let $\gamma \in (0,1/6)$ be a constant. 
Then there are positive constants $K$ and $c_*$ such that 
if $d$ satisfies 
\begin{equation} \label{eq:condition_d_approximate_eigenvector_fine_rigidity} 
( K \log \log N)^{\frac{1}{2\gamma}} \leq d \leq 3\log N 
\end{equation}
then there is $r_{\cal W} \in \N$ such that $r_{\cal W}  \ll \frac{d}{\log \log N}$
 and, with high probability, for any $x \in \cal W$, 
the following holds. 
Any two $\ell^2$-normalized eigenvectors $\f w_+(x)$ and $\f w_-(x)$ of $H|_{B_{r_{\cal W}}(x)}$ corresponding to its largest and smallest eigenvalue, respectively, satisfy 
\begin{equation} \label{eq:fine_rigidity} 
\norm{(H - \Lambda_{\fra d}(\alpha_x,\beta_x)) \f w_+(x)}  + \norm{(H + \Lambda_{\fra d}(\alpha_x,\beta_x)) \f w_-(x)} \lesssim \frac{d^{- \frac 1 2 + 3\gamma}}{d \am} \bigg( 1 + \bigg(\frac{\log d}{\log \am} \bigg)^2 \bigg( \frac{\am}{\am - 2} \bigg)^4\bigg)\,.
\end{equation}  
 Moreover, $\f w_+(x) \perp \f w_-(x)$, $\f w_+(x) \perp H\f w_-(x)$, $\supp \f w_\pm(x) \subset B_{r_{\cal W}}(x)$ and 
\begin{equation}  
\norm{(H-\scalar{\f w_+(x)}{H \f w_+(x)})\f w_+(x)} + \norm{(H - \scalar{\f w_+(x)}{H \f w_+(x)})\f w_-(x)} \lesssim (d \am)^{-10}\,. 
\label{eq:bound_H_minus_eigenvalue_ball_fine} 
\end{equation}
\end{proposition}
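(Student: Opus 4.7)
The plan is to mirror the structure of the proof of Proposition~\ref{pro:intermediate_rigidity}, but with the refined (and only approximately tridiagonal) basis $(\f f_i)$ sketched in Section~\ref{sec:overview_proof_subsect}, and with the comparison matrix $Z_{\fra d}(\alpha_x,\beta_x)$ rather than $Z(\alpha_x,\beta_x)$. The outcome is an improved analogue of Proposition~\ref{pro:H_minus_Lambda_v_norm_quadratic_form_intermediate} with right-hand sides of order $d^{-1-c}/(d\am)$, which then feeds into the same perturbation + decay scheme. Fix $\gamma \in (0,1/6)$ and write $\Lambda \deq \Lambda_{\fra d}(\alpha_x,\beta_x)$ throughout.

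First I would set up a variant of Proposition~\ref{pro:prob_estimates_1} adapted to $\cal W$ and to the much smaller cutoff $\delta_{\cal W} \asymp d^{2\gamma - 1}/\log \am$. Since $\abs{\cal W}$ is polynomially smaller than $\abs{\cal V}$, we can afford strictly sharper concentration bounds on the quantities $\abs{S_i(x)}$, $D_y$ for $y \in B_r(x)\setminus\{x\}$, and the walk-counts $N_y(x)$ from \eqref{eq:def_N_y_x}, simultaneously for all $x \in \cal W$, via Bernstein/large-deviation bounds combined with a union bound. The condition \eqref{eq:condition_d_approximate_eigenvector_fine_rigidity} is what permits these near-sharp estimates for all $x \in \cal W$ at once. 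As in Proposition~\ref{pro:prob_estimates_1}, I pick $r_{\cal W} \in \N$ with $r_{\cal W} \ll d/\log\log N$ and large enough to accommodate the lower bound in \eqref{eq:condition_r}; since $\cal W \subset \cal V$, a single $r_{\cal W}$ works.

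Next I construct, for each $x \in \cal W$, the orthonormal family $(\f f_i/\norm{\f f_i})_{i=0}^{r_{\cal W}}$ by Gram--Schmidt on the sequence obtained by retaining, in the decomposition of $H^i \f 1_x$ over simple walks on $\N$, only those walks with at most one step to the left. Concretely, $\f f_0 = \f 1_x$, $\f f_1 \propto \f 1_{S_1(x)}$, $\f f_2 \propto \f 1_{S_2(x)}$, $\f f_3 = \f 1_{S_3(x)} + \sum_{y \in S_1(x)}(D_y - F)\f 1_y$ with $F$ fixed by orthogonality to $\f f_1$, and so on, with values on $S_{i-2}(x)$ for $i \geq 3$ determined by averages of $D_y - d$ along the geodesic to $x$. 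Let $(u_i)_{i\in\N}$ be the top eigenvector of $Z_{\fra d}(\alpha_x,\beta_x)$ with eigenvalue $\Lambda$ and $\sum_{i=0}^{r_{\cal W}} u_i^2 = 1$ (available via Corollary~\ref{cor:Lambda_expansion}/Appendix~\ref{app:spectral_analysis}), and set
\begin{equation*}
\f v(x) \deq \sum_{i=0}^{r_{\cal W}} u_i \frac{\f f_i}{\norm{\f f_i}}\,.
\end{equation*}
The key computation, and the main obstacle of the argument, is then to show that with high probability, uniformly in $x \in \cal W$,
\begin{equation*}
\norm{(H^{(x,r_{\cal W})} - \Lambda)\f v(x)} \lesssim \frac{d^{-1/2+3\gamma}}{\sqrt{d\am}} \bigg(1 + \frac{\log d}{\log\am}\cdot\frac{\am^2}{(\am-2)^2}\bigg), \qquad \abs{\scalar{\f v(x)}{(H^{(x,r_{\cal W})}-\Lambda)\f v(x)}} \lesssim \frac{d^{-1+6\gamma}}{d\am}\cdot(\cdots)\,.
\end{equation*}
This requires decomposing $(H^{(x,r_{\cal W})} - \Lambda)\f v(x)$ into (i) the tridiagonal residue, which, precisely because of the choice of $\fra d = 1 + 1/d$ in $Z_{\fra d}$, absorbs the systematic $d\to d+1$ shift in the degree of vertices at distance $\geq 2$ from $x$; (ii) an off-tridiagonal part supported on $\bigcup_i S_i(x)$, controlled by the improved concentration of $N_y(x) - d$ and $\abs{S_{i+1}(x)}/(d\abs{S_i(x)}) - 1$; and (iii) a boundary term on $S_{r_{\cal W}+1}(x)$, made negligible by the decay of $(u_i)$ (via Lemma~\ref{lem:eigenvector_fine_properties}/the analogue of \eqref{eq:u_r_leq_u_2_small}) for sufficiently large $r_{\cal W}$. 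The delicate point is that the off-tridiagonal coefficients of $H$ in the basis $(\f f_i)$ only involve \emph{second-order} irregularity statistics such as $\sum_{y \in S_{i-1}(x)} (N_y - d)^2$ minus their conditional expectation given the degrees on the geodesic back to $x$, which is exactly what the "one left step" construction is designed to cancel at leading order; this is what makes the residue small enough.

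Given these two bounds, I conclude as in Section~\ref{subsec:proofs_lemmas_graph_in_a_ball}. By Proposition~\ref{pro:prob_estimates_1}\ref{item:ER_locally_tree}, $\mathbb{G}|_{B_{r_{\cal W}+1}(x)}$ is a tree and hence bipartite, so the spectrum of $H^{(x,r_{\cal W})}$ is symmetric around $0$. Combined with the spectral gap estimate \eqref{eq:lower_bound_spectral_gap} (valid here because $x \in \cal W \subset \cal V$ and \eqref{eq:lower_bound_alpha_x_minus_2_am_minus_2} holds), Lemma~\ref{lem:perturbationEV} with $M = H^{(x,r_{\cal W})}$, $\lambda = \Lambda$, $\f v = \f v(x)$ yields that the top eigenvalue $\mu$ of $H^{(x,r_{\cal W})}$ satisfies $\mu - \Lambda = \scalar{\f v}{(H^{(x,r_{\cal W})} - \Lambda)\f v} + O(\eps^2/\Delta)$, and the associated normalized eigenvector $\f w_+(x)$ satisfies $\norm{\f w_+(x) - \f v(x)} \lesssim \eps/\Delta$. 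A direct computation using \eqref{eq:lower_bound_spectral_gap} and the $\log d/\log\am$ factor in the choice of $r_{\cal W}$ shows both error terms are bounded by the right-hand side of \eqref{eq:fine_rigidity} divided by a factor absorbed into the constant. Bipartiteness gives that $-\mu$ is the smallest eigenvalue with eigenvector $\f w_-(x)$ obtained by flipping the sign on odd spheres; hence $\f w_+ \perp \f w_-$ and $\f w_+ \perp H\f w_-$. The exponential decay of $\f w_\pm(x)$ on $S_{r_{\cal W}+1}(x)$ follows exactly as in Proposition~\ref{pro:extreme_in_a_ball}\ref{item:eigenvector_ball_decay} from the resolvent expansion in \eqref{eq:P_applied_to_w}, yielding
\begin{equation*}
\norm{(H - H^{(x,r_{\cal W})})\f w_\pm(x)} \lesssim (1 + \delta^{1/4})\norm{\f w_\pm(x)\vert_{S_{r_{\cal W}+1}(x)}} \lesssim (d\am)^{-10}\,,
\end{equation*}
which gives \eqref{eq:bound_H_minus_eigenvalue_ball_fine} and transfers \eqref{eq:fine_rigidity} from $H^{(x,r_{\cal W})}$ to $H$ after replacing $\mu$ by $\Lambda_{\fra d}(\alpha_x,\beta_x)$ using Lemma~\ref{lem:EstimateLambdad} and the eigenvalue expansion just obtained.
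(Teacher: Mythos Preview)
Your proposal is correct and follows essentially the same approach as the paper. The paper packages the refined graph estimates as Proposition~\ref{pro:graphProperty} (with $\delta = d^{2\gamma-1}$), the approximate-eigenvector bounds as Proposition~\ref{prop:eigenError} (with explicit $r$-dependence $\|(H^{(x,r)}-\Lambda_{\fra d})\f v(x)\|^2 \lesssim \am^{-1} r^2 d^{-2+4\gamma}$ and $|\langle\f v(x),(H^{(x,r)}-\Lambda_{\fra d})\f v(x)\rangle| \lesssim \am^{-1} r^2 d^{-3/2+3\gamma}$), and then fixes $r_{\cal W} = 42 + \lfloor c^{-1}\frac{\am^2}{(\am-2)^2}\frac{\log d}{\log\am}\rfloor$ so that substituting $r = r_{\cal W}-1$ into these bounds yields exactly the right-hand side of \eqref{eq:fine_rigidity}; the intermediate technical steps (the relation $A\f f_i = \f f_{i+1} + F_{i-1}\f f_{i-1} + \f g_i$ in Proposition~\ref{Prop:Yerror}, the comparison $M = Z_{\fra d}(\alpha_x,\beta_x)_{[0,r]} + O(r^2 d^{-3/2+3\gamma})$ in Proposition~\ref{Prop:TriMatrixf}, and the norm computation $\|\f f_i\|^2 = |S_i| + (i-2)D_x d^{i-2}(1+o(1))$ in Lemma~\ref{lem:normF}) make precise the ``second-order irregularity'' control you sketch.
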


The proof of Proposition \ref{pro:fine_rigidity} will follows exactly as of Proposition \ref{pro:intermediate_rigidity} once we have improved Proposition \ref{pro:H_minus_Lambda_v_norm_quadratic_form_intermediate}  to Proposition \ref{prop:eigenError}. 
We recall the definition $H^{(x,r)} = H|_{B_{r+1}(x)}$. 

\begin{proposition}[Approximate eigenvector for $H^{(x,r)}$] \label{prop:eigenError}
Let $d$, $r \in \N$ and $\gamma\in (0,1/6]$ satisfy 
$r d^{- \frac 1 2 + 3 \gamma} \leq c$, 
 \eqref{eq:condition_r} with $\delta = d^{2\gamma - 1}$ as well as  
\eqref{eq:condition_d_approximate_eigenvector_fine_rigidity} 
for sufficiently small constants $c>0$ and $c_*>0$ as well as a large enough constant $K>0$. 
Then, with high probability,
for any $x\in \cal W$, there exists a normalized vector $\f v{(x)}$ supported on $B_r(x)$ such that  
\begin{subequations} 
\begin{align} 
\normb{\big(H^{(x,r)}-\Lambda_{\fra d}(\alpha_x,\beta_x)\big) \f v{(x)}}^2 & \lesssim 
\am^{-1} r^2 d^{-2 + 4\gamma} \label{eq:bestCandidateNorm}\,, \\ 
\absb{\scalar{\f v{(x)}}{(H^{(x,r)}-\Lambda_{\fra d}(\alpha_x,\beta_x)) \f v{(x)}}} & \lesssim \am^{-1} r^2d^{-\frac 3 2 + 3\gamma}\,. 
\label{eq:bestCandidateOrthogonal}
\end{align} 
\end{subequations} 
\end{proposition}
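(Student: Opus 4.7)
The plan is to refine the proof of Proposition \ref{pro:H_minus_Lambda_v_norm_quadratic_form_intermediate} in two ways: (i) replace the naive sphere basis $(\f s_i)$ by a more sophisticated orthogonal family $(\f f_i)$ in which $H^{(x,r)}$ is much closer to tridiagonal; (ii) compare to $\sqrt{d}\,Z_{\fra d}(\alpha_x,\beta_x)$ rather than $Z(\alpha_x,\beta_x)$, since the extra $1/d$ inside $\fra d$ exactly captures a systematic bias from the irregularity of $\mathbb{G}|_{B_r(x)}$ relative to a regular tree which was negligible at the precision of intermediate rigidity but is dominant here.

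Following the heuristic sketched in Section \ref{sec:proof_overview} (walks on $\N$ with at most one left step), I construct $\f f_0 \deq \f 1_x$, $\f f_1 \deq \f 1_{S_1(x)}$, $\f f_2 \deq \f 1_{S_2(x)}$, and for $i \geq 3$
\[
\f f_i \deq \f 1_{S_i(x)} + \sum_{y \in S_{i-2}(x)} c_{i,y}\, \f 1_y,
\]
where $c_{i,y}$ depends on the degrees of the vertices on the geodesic from $y$ to $x$ in the tree $\mathbb{G}|_{B_r(x)}$ (using Proposition \ref{pro:prob_estimates_1}\ref{item:ER_locally_tree} with $\delta=d^{2\gamma-1}$) and is fixed by the orthogonality requirement $\f f_i \perp \f f_{i-2}$; e.g.\ $c_{3,y} = D_y - \abs{S_2(x)}/\abs{S_1(x)}$. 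Let $(u_i)_{i \geq 0}$ be the normalized top eigenvector of $Z_{\fra d}(\alpha_x,\beta_x)$ truncated at level $r$, with eigenvalue $\Lambda_{\fra d}(\alpha_x,\beta_x)$ (Corollary~\ref{cor:Lambda_expansion}), and set
\[
\f v(x) \deq \sum_{i=0}^{r} u_i\, \f f_i / \norm{\f f_i}.
\]

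Next I decompose $(H^{(x,r)} - \Lambda_{\fra d}(\alpha_x,\beta_x))\f v(x) = \f w_2 + \f w_3 + \f w_4$ as in \eqref{eq:decomposition_H_minus_Lambda_v}, where $\f w_2$ is the off-tridiagonal remainder in the basis $(\f f_i/\norm{\f f_i})$, $\f w_3$ is the deviation of the tridiagonal entries from those of $\sqrt{d}\,Z_{\fra d}(\alpha_x,\beta_x)$, and $\f w_4$ is a boundary correction at level $r$. The crucial point is that the correction $\sum_{y} c_{i,y}\f 1_y$ in $\f f_i$ absorbs the leading linear fluctuation $(D_y - d)$ of degrees on $S_{i-1}(x)$, which was the dominant error in the $\log d + \delta^{1/2}$ contribution to $\norm{\f w_2}^2$ in \eqref{eq:estimate_w_2}. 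What remains are quadratic-in-$(D_y-d)$ fluctuations and residual $(D_y-d)(D_{y'}-d)$ cross-terms along geodesics. Using Proposition \ref{pro:prob_estimates_1}\ref{item:concentration_S_i}--\ref{item:Z_i_estimate} with $\delta = d^{2\gamma-1}$ and the exponential decay of $u_i$ from Lemma \ref{lem:eigenvector_fine_properties}, each of the $r$ tridiagonal levels contributes a per-level error of order $\am^{-1}d^{-2+4\gamma}$, and summing yields \eqref{eq:bestCandidateNorm}. The boundary term $\f w_4$ is handled by \eqref{eq:u_r_leq_u_2_small} and the lower bound on $r$ from \eqref{eq:condition_r}.

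The quadratic form improvement \eqref{eq:bestCandidateOrthogonal} follows from additional cancellations in $\scalar{\f v(x)}{\f w_2}$ and $\scalar{\f v(x)}{\f w_3}$: the leading residuals in the sub/superdiagonal entries are of signed form $|S_{i+1}(x)| - d|S_i(x)| = \sum_{y \in S_i(x)}(D_y - d) - \text{tree correction}$, and these telescope when paired against the $u_i u_{i+1}$ coefficients (analogously to the identity in \eqref{eq:scalar_v_H_minus_Lambda_v}), yielding the extra factor $d^{-1/2 + \gamma}$.

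The main obstacle is the precise combinatorial calculation of the matrix elements $\scalar{\f f_j}{H \f f_i}/(\norm{\f f_i}\norm{\f f_j})$. One must track how $H$ acts on the correction terms $\sum_y c_{i,y}\f 1_y$, verify that the $c_{i,y}$ can be chosen simultaneously to enforce orthogonality to $\f f_{i-2}$ and to cancel the leading degree-fluctuation in the sub/superdiagonals, and identify the precise $\sqrt{\fra d}$ normalization (rather than $1$) required for the comparison with $Z_{\fra d}(\alpha_x,\beta_x)$. This calculation leverages the tree structure of $\mathbb{G}|_{B_r(x)}$, the concentration of $\sum_{y \in S_i(x)}(D_y - d)^2$ from Proposition \ref{pro:prob_estimates_1}\ref{item:Z_i_estimate}, and additional bounds on $\sum_{y \in S_i(x)}(D_y - d)(D_{\pi(y)} - d)$ (where $\pi(y)$ denotes the parent of $y$) that refine the fluctuation control available at the level of intermediate rigidity.
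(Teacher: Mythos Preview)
Your overall strategy matches the paper's: the refined basis $(\f f_i)$ with corrections on $S_{i-2}(x)$, the comparison with $Z_{\fra d}$ rather than $Z$, the candidate $\f v(x)=\sum u_i \f f_i/\|\f f_i\|$, and the decomposition $\f w_2+\f w_3+\f w_4$ are all exactly what the paper does. Two points, however, are not right as stated.

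First, Proposition~\ref{pro:prob_estimates_1} with $\delta=d^{2\gamma-1}$ is not enough; the paper needs the sharper Proposition~\ref{pro:graphProperty}. In particular, part~\ref{item:Concentration} controls $\bigl|\sum_{y}(N_y(x)-d)\bigr|$ over \emph{subtrees rooted at arbitrary $z\in B_r(x)$}, not just at $x$, and part~\ref{item:MeanQuadratic} gives the \emph{two-sided} asymptotic $\sum_{y\in S_i(x)}(N_y(x)-d)^2=D_x d^i\bigl(1+O(D_x^{-1/2}d^\gamma)\bigr)$, not merely the upper bound of Proposition~\ref{pro:prob_estimates_1}\ref{item:Z_i_estimate}. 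This two-sided estimate is precisely what produces $\fra d=1+1/d$: one computes (Lemma~\ref{lem:normF}) $\|\f f_i\|^2=|S_i(x)|+(i-2)D_x d^{i-2}(1+o(1))$, whence $\|\f f_{i+1}\|^2/(d\|\f f_i\|^2)=1+1/d+O(r^2 d^{-3/2+3\gamma})$. Your reference to ``identifying the precise $\sqrt{\fra d}$ normalization'' is correct in spirit, but the mechanism is this exact equality, not a bias estimate.

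Second, your explanation of the quadratic-form gain \eqref{eq:bestCandidateOrthogonal} via ``telescoping'' is not the mechanism. In the paper, the off-tridiagonal remainder $\f g_i$ (arising from $A\f f_i=\f f_{i+1}+F_{i-1}\f f_{i-1}+\f g_i$, Proposition~\ref{Prop:Yerror}) is supported on $S_{i-3}(x)$ and satisfies $\scalar{\f g_i}{\f f_{i-3}}=0$ exactly, by the symmetry computation $\scalar{\f f_{i-3}}{\f g_i}=\scalar{A\f f_{i-3}}{\f f_i}=\scalar{\f f_{i-2}}{\f f_i}=0$. Since $\supp\f f_j\subset S_j\cup S_{j-2}$, this forces $\scalar{\f v(x)}{\f w_2}=0$ identically. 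The entire quadratic form is then carried by $\scalar{\f v(x)}{\f w_3}$, which is bounded directly by $\|M-Z_{\fra d}(\alpha_x,\beta_x)_{[0,r]}\|=O(r^2 d^{-3/2+3\gamma})$ from Proposition~\ref{Prop:TriMatrixf} together with $\sum_{i\geq 2}u_i^2\lesssim\am^{-1}$, plus the negligible boundary $\scalar{\f v(x)}{\f w_4}$.
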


The last tool for the proof of Proposition~\ref{pro:fine_rigidity} is the next lemma 
which will be proved in Appendix~\ref{app:proof_auxiliary_estimates} below. 

\begin{lemma} \label{lem:beta_x_assumption_beta_pr_checked} 
If $1 \ll d \leq 3 \log N$ then,
with high probability, for all $x \in \cal U$, we have 
\begin{equation} \label{eq:beta_x_equal_1_error_term_on_cal_U} 
\beta_x = \begin{cases} 1 + O \Big(\frac{\sqrt{\log N}}{d} \Big) & \text{ if } d > (\log N)^{3/4} \\ 
1 + O\Big( \Big( \frac{\log \log N}{d} \Big)^{1/2} \Big) & \text{ if } d \leq (\log N)^{3/4}\,. \end{cases} 
\end{equation} 
Moreover, \eqref{assumption_beta_pr} with $\alpha = \alpha_x$ and $\beta = \beta_x$ is satisfied for all $x \in \cal U$.  
\end{lemma}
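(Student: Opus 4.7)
\medskip

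The plan is to condition on the $1$-neighbourhood of $x$ and reduce $|S_2(x)|$ to a sum of independent Bernoulli variables, then apply Bernstein's inequality together with a union bound over $x \in [N]$.

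Fix $x\in [N]$, and let $\mathcal{F}_x$ denote the $\sigma$-algebra generated by all edges of $\bb G$ with at least one endpoint in $B_1(x)$. Conditionally on $\mathcal{F}_x$, the set $S_1(x) = \{y_1,\dots,y_{D_x}\}$ is fixed, and for each $z\in[N]\setminus B_1(x)$ the indicator $X_z \deq \ind{\exists\,y \in S_1(x) \colon A_{yz}=1}$ is Bernoulli with parameter $p_x = 1-(1-d/N)^{D_x}$; moreover the family $(X_z)_{z\notin B_1(x)}$ is independent, since it depends only on the disjoint family of potential edges between $S_1(x)$ and $[N]\setminus B_1(x)$. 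Since $|S_2(x)| = \sum_{z\notin B_1(x)} X_z$, this gives
\begin{equation*}
\mathbb{E}\qb{|S_2(x)| \cond \mathcal{F}_x} = (N-D_x-1)\,p_x = d\,D_x\,\pb{1 + O\pb{D_x\, d/N}}\,,
\end{equation*}
and Lemma \ref{lem:convergence_max_alpha_x_upper_bound_D_x} together with $d \leq 3\log N$ ensures that the $D_x\,d/N$ correction is negligible (of order $(\log N)^2/N$).

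Applying Bernstein's inequality to $\sum_z (X_z-\mathbb{E} X_z)$, with variance bounded by $d\,D_x$ and summands bounded by $1$, yields the conditional tail bound
\begin{equation*}
\P\pB{\bigl|\,|S_2(x)| - d D_x\bigr| \geq \eps\, d D_x \condb \mathcal{F}_x} \lesssim \exp\pb{-c\, D_x\, d\, \eps^2}
\end{equation*}
for all small enough $\eps$. I now select $\eps$ according to the case: if $d> (\log N)^{3/4}$, take $\eps = C\sqrt{\log N}/d$ and use that $D_x \geq 2d$ on $\cal U$, so that the exponent is $\geq cC^2 \log N$; if $d\leq (\log N)^{3/4}$, take $\eps = C(\log\log N/d)^{1/2}$ and use $D_x \geq d\am/5 \asymp \log N/\log(4\log N/d)$ from Lemma \ref{lem:u_a}. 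Since $\log(4\log N/d) \lesssim \log\log N$ whenever $d\gg 1$, the exponent $c D_x d \eps^2$ again exceeds $cC^2 \log N/\log(4\log N/d)\cdot \log\log N \gtrsim C^2 \log N$ after adjusting $C$. In either regime, a union bound over $x\in[N]$ (combined with the tail bound on $\alpha_x$ to restrict to $x\in \cal U$) turns the conditional Bernstein estimate into the claim \eqref{eq:beta_x_equal_1_error_term_on_cal_U}.

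Finally, the condition \eqref{assumption_beta_pr} requires $\alpha_x \geq 2$ and $\beta_x \geq 2(\sqrt 2-1) \approx 0.828$. The first holds by the definition of $\cal U$ in \eqref{eq:UsetDef}. For the second, \eqref{eq:beta_x_equal_1_error_term_on_cal_U} implies $\beta_x = 1 + o(1)$ uniformly on $\cal U$: for $d> (\log N)^{3/4}$ the error $\sqrt{\log N}/d$ is $O((\log N)^{-1/4})$, while for $d\leq (\log N)^{3/4}$ the assumption $d\gg 1$ gives $(\log\log N/d)^{1/2}=o(1)$. Hence $\beta_x \geq 1/2 > 2(\sqrt 2-1)$ for $N$ large enough, finishing the argument. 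The main obstacle is the careful balancing in the second regime: the lower bound $D_x\gtrsim \log N/\log(4\log N/d)$ on $\cal U$ is only just enough for Bernstein's exponent to beat the $\log N$ loss from the union bound, and this is precisely what fixes the scale $(\log\log N/d)^{1/2}$ appearing in \eqref{eq:beta_x_equal_1_error_term_on_cal_U}.
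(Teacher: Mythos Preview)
Your Bernstein-plus-union-bound argument is essentially the content of \cite[Lemma~5.4(i)]{ADK19}, which is what the paper simply cites, so the overall approach is correct. Two points need repair, however.

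First, your $\sigma$-algebra $\mathcal{F}_x$ is too large: an edge between $y\in S_1(x)$ and $z\notin B_1(x)$ has one endpoint in $B_1(x)$ and is therefore $\mathcal{F}_x$-measurable as you defined it. Hence each $X_z$ is already determined by $\mathcal{F}_x$, and there is nothing random left to apply Bernstein to. What you want is to condition only on the edges incident to $x$ (equivalently, on $S_1(x)$); then the edges from $S_1(x)$ outward are still independent Bernoulli and the rest of your computation goes through verbatim.

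Second, you have misread \eqref{assumption_beta_pr}: it is the bound $\abs{\beta-1}\le c\bigl(1\wedge(\alpha-2)\bigr)$, not the domain condition $\alpha\ge2$, $\beta\ge 2(\sqrt2-1)$ that appears next to the definition of $\Lambda$ in \eqref{eq:def_Lambda}. The correct verification compares the error term in \eqref{eq:beta_x_equal_1_error_term_on_cal_U} with the lower bound on $\alpha_x-2$ built into the definition \eqref{eq:UsetDef} of $\cal U$. For $d>(\log N)^{3/4}$ one checks $\frac{\sqrt{\log N}}{d}\ll\bigl(\frac{\sqrt{\log N}\,\log d}{d}\bigr)^{1/4}\wedge 1$; for $d\le(\log N)^{3/4}$ one has $\alpha_x-2\ge\am/5-2\gg1$ by Lemma~\ref{lem:u_a}, so the requirement reduces to $(\log\log N/d)^{1/2}\le c$. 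Note that this last bound needs $d\gtrsim\log\log N$, not merely $d\gg1$ as you assert; in all of the paper's applications the stronger lower bound on $d$ is in force.
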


\begin{proof}[Proof of Proposition \ref{pro:fine_rigidity}]
Denoting by $\lfloor\,\cdot\, \rfloor$ the integer part of a real number,  
we introduce 
\begin{equation} \label{eq:choice_r} 
r_{\cal W} \deq 42 + \biggl\lfloor \frac{1}{c} \frac{\am^2}{(\am - 2)^{2}} \frac{\log d}{\log \am}\biggr\rfloor
\end{equation}
for sufficiently small $c>0$, such that $r = r_{\cal W} - 1$ satisfies 
\eqref{eq:condition_r} with $\delta = d^{2\gamma -1 }$ due to \eqref{eq:condition_d_approximate_eigenvector_fine_rigidity} and \eqref{eq:ConditionAlphaMaxMin}. 
Note that $r_\cal W \ll \frac{d}{\log \log N}$. 
Moreover, $r d^{-\frac 1 2 + 3 \gamma} =o(1)$ for all $\gamma \in (0,1/6)$ 
by \eqref{eq:ConditionAlphaMaxMin} and \eqref{eq:condition_d_approximate_eigenvector_fine_rigidity}, i.e.\ $r$ satisfies all 
assumptions of Proposition~\ref{prop:eigenError}.

We now follow step by step the proof of Proposition \ref{pro:intermediate_rigidity}
with $\delta = d^{2\gamma - 1}$ 
and use Proposition~\ref{pro:graphProperty} below instead of Proposition~\ref{pro:prob_estimates_1} 
as well as an analogue of Proposition~\ref{pro:extreme_in_a_ball}. 
This analogue is obtained by following the proof of Proposition~\ref{pro:extreme_in_a_ball} 
and applying Proposition~\ref{prop:eigenError} instead of Proposition~\ref{pro:H_minus_Lambda_v_norm_quadratic_form_intermediate}. 
This yields 
\begin{equation} \label{eq:mu_Lambda_fine_rigidity} 
\mu = \Lambda_{\fra d}(\alpha_x,\beta_x) + O \bigg( \frac{r^2 d^{- \frac 3 2 + 3 \gamma}}{\am} \bigg( 1  + \frac{\am^{3/2} d^{-\frac 1 2 + \gamma}}{(\alpha_x - 2)^2} \bigg) \bigg) = \Lambda_{\fra d}(\alpha_x,\beta_x) + O \bigg( \frac{r^2 d^{-\frac 3 2 + 3 \gamma}}{\am} \bigg)\,, 
\end{equation}
where we used Lemma~\ref{lem:beta_x_assumption_beta_pr_checked} and $\cal W \subset \cal U$
to justify  \eqref{assumption_beta_pr} and, in the last step, $\alpha_x - 2 \gtrsim \am - 2 \gtrsim \am^{3/4} d^{-1/4 + \gamma/2}$ due to \eqref{eq:def_cal_W} and \eqref{eq:ConditionAlphaMaxMin} (see also \eqref{eq:lower_bound_alpha_x_minus_2_am_minus_2}). 
Estimating $r = r_{\cal W} - 1$ in \eqref{eq:mu_Lambda_fine_rigidity} using the definition of $r_{\cal W}$ in \eqref{eq:choice_r} completes the proof of \eqref{eq:fine_rigidity}. 
The remaining statements of Proposition~\ref{pro:fine_rigidity} follow as in the proof of Proposition~\ref{pro:intermediate_rigidity}. 
\end{proof}

\subsection{Proof of Proposition \ref{prop:eigenError}}
Since $\supp \f v(x) \subset B_r(x)$ and $H^{(x,r)} = H|_{B_{r+1}(x)}$, we have that 
$H^{(x,r)} \f v(x) = H\f v(x) = A \f v(x) /\sqrt{d}$. 
Therefore, for notational convenience, we express everything in terms of $A$ or $H$ (instead of $H^{(x,r)}$). 

The next proposition collects a few key properties of the graph $\mathbb{G}$ in the vicinity of vertices in $\mathcal{W}$. 
For every $x \in [N]$, we introduce 
\begin{equation} \label{eq:def_N_k_z_x} 
N_{z}^{(k)}(x) = \abs{S_k(z) \cap S_{k + d(x,z)}(x)}\,.
\end{equation}
Note that $N_z^{(1)}(x) = N_z(x)$ by the definition from \eqref{eq:def_N_y_x}. 
An interpretation of $N_z^{(k)}(x)$ will be given just after the next proposition. 

\begin{proposition}[Structure of $\mathbb{G}$ in vicinity of $\cal W$] \label{pro:graphProperty}
Let $\gamma \in (0,1/2)$ and $d$ satisfy \eqref{eq:condition_d_approximate_eigenvector_fine_rigidity} 
 for some sufficiently large $K >0$.  
Then there exists $c_* >0$ such that if $\cal W$ is defined as in \eqref{eq:def_cal_W} and  
 $r \in \N$ satisfies 
\begin{equation} \label{eq:upper_bound_r_fine_rigidity} 
r \leq c_* \bigg( \frac{d}{\log\log N} \wedge \frac{d^{2\gamma}}{\log d} \bigg)\,,
\end{equation} 
then the following statements hold with high probability. 
\begin{enumerate}[label=(\arabic*)] 
\item \label{item:disjoint_balls} The balls of radius $r$ around the vertices in $\cal W$ are disjoint, i.e.\ $B_r(x) \cap B_r(y) = \emptyset$ for all $x, y \in \cal W$ with $x \neq y$. 
\item \label{item:tree_in_balls} For all $x \in \cal W$, the graph $\mathbb{G}$ restricted to $B_r(x)$ is a tree.
\item \label{item:concentration_S_i_Omega} 
For all $x \in \cal W$ and $i \in [r]$, we have  
\[ \absbb{\frac{\abs{S_{i+1}(x)}}{d\abs{S_i(x)}} -1} \lesssim \frac{d^{-\frac 1 2+ \gamma}}{\abs{S_i(x)}^{\frac 1 2}} , \qquad \absbb{\frac{\abs{S_i(x)}}{D_x d^{i-1}} -1 } \lesssim d^{-\frac 1 2 + \gamma}D_x^{-\frac 1 2}\,. \] 
\item \label{item:concentration_degrees} For any $x \in \cal W$ and $y \in B_r(x) \setminus \{x \}$ we have
$\abs{D_y - d} \leq d^{\frac 1 2 + \gamma}$.  
In particular, 
\[ \absb{N_y(x) -d}\leq d^{\frac 1 2 + \gamma} + 1 \,. \] 
\item \label{item:Concentration} 
Let $i \in [r]$. If $z \in B_r(x)\setminus\{x\}$ for some $x \in \cal W$ and $S_i(z) \subset B_r(x)$ then we have 
\[\absbb{\sum_{y\in S_i(z)\cap S_{i+d(x,z)}(x)}(N_y(x) -d)} \lesssim d^{\frac 1 2(i+1) + \gamma}, 
\qquad \qquad \absbb{\frac{N_{z}^{(i)}(x)}{D_zd^{i-1}} -1} \lesssim 
d^{-1 + \gamma}\,. \] 
\item \label{item:MeanQuadratic} If $\gamma \leq 1/6$ then for any $x \in \cal W$ and any $i \in [r]$, 
we have 
\[ \sum_{y\in S_{i}(x)}(N_{y}(x) -d)^{2} = D_{x}d^{i}\bigl(1 +O\bigl( D_x^{-\frac 1 2}{d^{\gamma}}\bigr)\bigr)\,. \] 
\end{enumerate}
\end{proposition}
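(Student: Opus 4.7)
The strategy is a BFS exploration around each $x\in\cal W$ combined with Bennett-type concentration and a union bound over $\cal W$ and the vertices of its balls. As a preliminary, I would control $|\cal W|$: the Poisson tail \eqref{Poisson_f} combined with the defining equation \eqref{def_u} of $\fra u$ and a Taylor expansion of $f$ around $\fra u$ (using $f'(\fra u)=\log\fra u$) yields $\P(x\in\cal W)\lesssim N^{-1}\exp(Cc_* d^{2\gamma})$, and hence $|\cal W|\leq\exp(K_0 d^{2\gamma})$ with high probability, where $K_0$ can be made as small as needed by shrinking $c_*$.

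Items (1) and (2) follow from short-path counts. The expected number of simple paths of length $k$ between two fixed vertices is at most $d^k/N$, so $\P(B_r(x)\cap B_r(y)\neq\emptyset)\lesssim d^{2r}/N$ for each fixed pair, and a union bound over pairs in $\cal W$ gives (1). For (2), the probability that $\mathbb{G}|_{B_r(x)}$ contains a cycle is dominated by $\sum_{i\leq r}d^{2i}/N\lesssim d^{2r}/N$ (counting the first back-edge revealed during BFS from $x$), and a union bound over $\cal W$ gives (2). Both close under \eqref{eq:upper_bound_r_fine_rigidity} together with $|\cal W|^2 d^{2r}=o(N)$.

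For items (3)--(5), I would run the BFS from $x$ one sphere at a time. Conditional on $B_{i-1}(x)$ on the tree event from (2), the potential edges from $S_{i-1}(x)$ into $[N]\setminus B_{i-1}(x)$ are independent Bernoulli$(d/N)$, so each quantity in (3)--(5) is a sum of independent Bernoullis with mean $d$ times a natural counting factor: for instance $N_y(x)$ is Binomial with mean $d(1-o(1))$, and $\sum_{y\in S_i(z)\cap S_{i+d(x,z)}(x)}(N_y-d)$ has variance $\lesssim d\abs{S_i(z)}$, yielding the $d^{(i+1)/2+\gamma}$ bound by Bennett. Each application has failure probability $\exp(-cd^{2\gamma})$; a union bound over all $y\in\bigcup_{x\in\cal W}B_r(x)$ costs at most $|\cal W|\cdot d^{r+O(1)}\leq \exp(K_0 d^{2\gamma})d^{r+O(1)}$, which is tolerated by \eqref{eq:upper_bound_r_fine_rigidity} together with $d^{2\gamma}\geq K\log\log N$.

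The main obstacle is item (6). Its mean is $\approx|S_i(x)|\cdot d\approx D_xd^i$ since $\var(N_y(x))\approx d$, but the required relative accuracy $D_x^{-1/2}d^{\gamma}$ is finer than what a direct Bennett bound on $\sum_y(N_y-d)^2$ produces. I would work conditionally on $B_i(x)$ (so that $N_y(x)$ for $y\in S_i(x)$ are independent Binomials), truncate at the scale $\xi_y\deq (N_y(x)-d)^2\leq d^{1+2\gamma}$ via item (4), bound $\var(\xi_y)\lesssim d^2$ from the fourth moment of the approximating Binomial, and apply Bennett to $\sum_y\xi_y$. Total variance $\lesssim|S_i(x)|d^2$ and truncation $d^{1+2\gamma}$ give fluctuations $\lesssim D_x^{1/2}d^{(i+1)/2+\gamma}$, which relative to the mean $D_xd^i$ matches the required relative error. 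The delicate step is combining the truncation (to make Bennett applicable with sub-Gaussian-like tails) with the conditioning (to secure independence of the $\xi_y$), while verifying that $\mathbb{E}\xi_y$ on the conditioned event agrees with its unconditional value $\approx d$ up to the target precision, and that the $O(d^{-1})$ correction from $|B_i(x)|/N$ in the Binomial parameter does not pollute the estimate.
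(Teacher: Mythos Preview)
Your proposal is correct and follows essentially the same approach as the paper. The paper routes items (1)--(4) through the earlier Proposition~\ref{pro:prob_estimates_1} by taking $\delta=d^{2\gamma-1}$ (so that $\cal W=\cal V_\delta$), which unpacks to exactly the BFS plus Bennett plus union-bound argument you describe; item (5) is obtained, as you indicate, from the tree identity $\sum_{y\in S^{(k)}_z}N_y=|S^{(k+1)}_z|$ and concentration of the subtree sphere sizes. For item (6) the paper centers $X_y=(N_y-d)^2-d$, truncates at $d^{1+2\gamma}$, proves an exponential-moment bound $\E[\ee^{\alpha X_y\ind{X_y\le d^{1+2\gamma}}}\mid B_k]\le \ee^{C\alpha^2 d^2}+C|\alpha|d^2\ee^{-cd^{2\gamma}}$ via the Poisson approximation (this is their Lemma~\ref{lem:exponential_moment_aux_random_variable}), and then runs an exponential Chebyshev; this is equivalent to your Bennett-on-truncated-variables with $\var(\xi_y)\lesssim d^2$, and the paper also checks the sub-Gaussian regime condition $D_x\gtrsim d^{6\gamma}$ (automatic since $\gamma\le 1/6$ and $D_x\gtrsim d$), just as your calculation requires.
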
 

The proof of Proposition~\ref{pro:graphProperty} is given in Section~\ref{sec:proof_graphProperty} below. 
We remark that Proposition~\ref{pro:graphProperty} \ref{item:disjoint_balls} --  
\ref{item:concentration_degrees} coincide with Proposition~\ref{pro:prob_estimates_1} \ref{item:balls_disjoint} --
\ref{item:degrees_bounded} for $\delta = d^{2 \gamma -1}$. 
Moreover, \eqref{eq:upper_bound_r_fine_rigidity} is \eqref{eq:upper_bound_r} with $\delta = d^{2\gamma - 1}$. 
We restate them here to collect all properties of $\mathbb{G}$ used in this section in one place.

In the remainder of this section, we work on the high-probability event from Proposition~\ref{pro:graphProperty} (or a subset of it that occurs with high probability) 
and choose $r\in \N$ such that $r + 1$ is smaller or equal to the right-hand side of \eqref{eq:upper_bound_r_fine_rigidity}.  
Moreover, throughout, we fix a vertex $x \in \cal W$. 
We suppress the $x$-dependence of various quantities from our notation; e.g.\ we set $B_i\deq B_i(x)$, $S_i\deq S_i(x)$ and $N_y^{(i)} \deq N_y^{(i)}(x)$ (cf.\ \eqref{eq:def_N_k_z_x}) and denote the graph distance of a vertex $z$ from $x$ by $|z|=d(x,z)$.

Owing to Proposition~\ref{pro:graphProperty} \ref{item:tree_in_balls}, we will always view $\mathbb{G}|_{B_{r+1}}$ as a tree rooted on $x$ and will say that $z$ is a \emph{child} of $y$ and that $y$ is a \emph{parent} of $z$ if $z\in S_1(y)\cap S_{d(x,y)+1}$. Note that $N_y$ is the number of children of $y$.  We also denote
\begin{itemize}
\item  $N_y^{(i)}=| S_{i}(y)\cap S_{i + d(y,x)}|$,  
the number of children of $y$ after $i$ generations, 
\item $(x,y]$ the path from $x$ to $y$ in the graph, $y$ included and $x$ not included. Since $\mathbb{G}|_{B_{r+1}}$ is a tree, the path is uniquely defined.
\item $z_1 \leq z_2$ if $z_1\in (x,z_2]$ as well as $z_1 < z_2$ if $z_1\leq z_2$ and $z_1\neq z_2$.
\end{itemize}

The main improvement in Proposition \ref{prop:eigenError} compared to 
Proposition~\ref{pro:H_minus_Lambda_v_norm_quadratic_form_intermediate} 
is a more precise choice of the approximate eigenvector $\f v(x)$ (see \eqref{eq:bestcandidate} below). 
The vectors $\f 1_{S_k}$ in the definition \eqref{eq:def_v_intermediate_rigidity} of the approximate eigenvector for Proposition~\ref{pro:H_minus_Lambda_v_norm_quadratic_form_intermediate} 
are replaced by different vectors, which we call $\f f_k$. They are defined by
\begin{subequations} \label{eq:def-fi}
\begin{align} 
\f f_{0}  & \deq \f 1_{x}\,,\qquad\qquad 
\f f_{1}  \deq \f 1_{S_{1}}\,,\qquad\qquad 
\f f_{2}  \deq \f 1_{S_{2}}\,,\\ 
\f f_{i} & \deq \f 1_{S_{i}}+\sum_{y\in S_{i-2}}\f 1_{y}\sum_{z\in(x,y]}(N_{z}-F_{|z|})\,,
\end{align} 
\end{subequations}  
for all $3\leq i\leq r$, where the $F_{|z|}\in \mathbb{R}$ are defined such that the $\f f_{i}$ are orthogonal 
(the existence of such $F_i$ follows from \eqref{eq:FDefinition} below). 
By convention, we set $F_0 \deq D_x$.

To explain the intuition behind the definition of the basis $(\f f_i)$, which also serves as a guiding principle in the proof of Proposition \ref{Prop:Yerror} below, we recall that the tridiagonalization of $A$ amounts to writing $A$ in the basis $\f 1_x, A \f 1_x, A^2 \f 1_x, \dots$. Because $\bb G$ is a tree in the neighbourhood of $x$, we note that $A \f 1_y$ for some $y$ can be split into the \emph{outer terms} $\sum_{z \in S_1(y)} \ind{z > y} \f 1_z$ and the \emph{inner terms} $\sum_{z \in S_1(y)} \ind{z < y} \f 1_z$. Hence, we can decompose $A^i \f 1_x$ into a sum of terms encoded by simple walks $w$ on $\N$ of length $i$ starting from $0$, whereby a step of $w$ to the right corresponds to selecting the outer terms in applying $A$, and a step to the left to selecting the inner terms. More explicitly, denoting by $W_i$ the set of simple walks $w = w_0 \cdots w_i$ on $\N$ of length $i$ starting from $w_0 = 0$, we have the splitting $A^i \f 1_x = \sum_{w \in W_i} \f b(w)$,  where $\f b(w) = \sum_{y} b_y(w) \f 1_y$ for some coefficients $b_y(w)$. Then we have inductively, for any $w \in W_{i-1}$,
\begin{equation*}
A \f b(w) = \sum_{z} b_{z}(w) A \f 1_{z} = \sum_y \f 1_y \sum_{z \in S_1(y)} b_z(w) (\ind{y > z} + \ind{y < z})  \eqd
\f b(w^+) + \f b(w^-)\,,
\end{equation*}
where $w^\pm \in W_i$ is the walk of length $i$ obtained from $w$ by making a step $\pm 1$ in the last step. By definition, $\f b(w)$ is supported on the sphere $S_{w_i}$. Note that $i - w_i$ is twice the number of steps to the left in $w$. The simple basis vectors $\f 1_{S_i}$ used in Section \ref{sec:IntermediateRigidity} arise from considering only the term $\f b(w)$ for the walk $w \in W_i$ with no steps to the left. Moreover, if the tree around $x$ were regular, in the sense that the degree of a vertex $y$ depends only on $\abs{y}$, then it is easy to see that the contribution of all other walks vanishes after orthogonalization of the vectors $A^i \f 1_x$. As explained in Section \ref{sec:overview_proof_subsect}, the vector $\f f_i$ corresponds to the contribution of \emph{all walks with at most one step to the left}. These vectors constitute an intermediate basis between $\f 1_{S_i}$ and $A^i \f 1_x$. They are sufficiently simple to admit an effective analysis of the tridiagonal matrix, and sufficiently close to the true tridiagonal basis to result in small enough off-tridiagonal entries.

The next proposition shows that $A$ restricted to $\mathrm{span}\{ \f f_0,\ldots, \f f_r\}$ is approximately tridiagonal in the basis $(\f f_i)_{i=0}^r$.

\begin{proposition}\label{Prop:Yerror}
Let $\gamma \in (0,1/6]$ and $r \in \N$ satisfy \eqref{eq:upper_bound_r_fine_rigidity} for a sufficiently small $c_*>0$. 
Then, on the high-probability event from Proposition~\ref{pro:graphProperty}, we have
\begin{equation} \label{eq:A_f_i_relations}
A \f f_0 = \f f_1\,, \qquad
A\f f_{i} = \f f_{i+1}+F_{i-1}\f f_{i-1}+\f g_{i} \quad (1 \leq i \leq r)\,,
\end{equation}
for vectors $\f g_1$, \ldots, $\f g_r$ satisfying 
 $\f g_{i} = 0$ for $i\leq 3$ and 
\begin{enumerate}[label=(\roman*)] 
\item $\supp \f g_i\subset S_{i-3}$, \label{Item:YSupport}
\item $\langle \f g_i, \f f_{{i-3}}\rangle=0$, \label{Item:Yorthogonal}
\item \label{Item:Ynorm} $\|\f g_i\|^2 \leq 4|S_{i-3}|d^{2+4\gamma}i^2 $,
\end{enumerate}
for $4\leq i\leq r$. 
\end{proposition}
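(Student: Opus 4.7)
The plan is to compute $A\f f_i$ directly, exploiting that $\mathbb{G}|_{B_{r+1}(x)}$ is a tree by Proposition~\ref{pro:graphProperty}\ref{item:tree_in_balls}. For any $y\neq x$ in $B_r(x)$ one has $A\f 1_y=\f 1_{\mathrm{parent}(y)}+\sum_{z\in \mathrm{children}(y)}\f 1_z$, where parents and children are taken with respect to the root $x$, and $A\f 1_x=\f 1_{S_1}$. Unfolding the definition \eqref{eq:def-fi} of $\f f_i$ and sorting the resulting terms by their distance to $x$ produces contributions supported on $S_{i+1}$, $S_{i-1}$ and $S_{i-3}$. Straightforward bookkeeping identifies the $S_{i+1}$-contribution with the ``outer'' piece of $\f f_{i+1}$; the $S_{i-1}$-contribution decomposes into the ``inner'' piece of $\f f_{i+1}$ (which the definition of $\f f_{i+1}$ absorbs exactly) plus $F_{i-1}\f f_{i-1}$; and what remains is by definition $\f g_i$ and is supported on $S_{i-3}$. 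A separate short check for $i\leq 3$, where the putative $S_{i-3}$ either has negative index or is $\{x\}$ and the path sum is trivial, gives $\f g_i=0$ in those cases.

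From the same computation one reads off
\begin{equation*}
\f g_i=\sum_{u\in S_{i-3}} c_u\,\f 1_u,\qquad c_u=(N_u-F_{i-1})\sum_{z\in(x,u]}(N_z-F_{|z|})+N_u^{(2)}-N_u F_{i-2},
\end{equation*}
so that \ref{Item:YSupport} is immediate. For \ref{Item:Yorthogonal} I would avoid verifying $\sum_u c_u=0$ by hand and instead run an induction on $i$ combining self-adjointness of $A$ with the orthogonality of $(\f f_j)$ enforced by the choice of the $F_j$:
\begin{equation*}
\langle \f g_i,\f f_{i-3}\rangle=\langle A\f f_i,\f f_{i-3}\rangle=\langle \f f_i,\f f_{i-2}+F_{i-4}\f f_{i-4}+\f g_{i-3}\rangle=0.
\end{equation*}
Each term on the right vanishes: $\f f_i\perp\f f_{i-2}$ by construction of $F_{i-1}$, the supports $\supp\f f_i\subset S_i\cup S_{i-2}$ and $\supp\f f_{i-4}\subset S_{i-4}\cup S_{i-6}$ are disjoint, and the inductive hypothesis places $\supp\f g_{i-3}\subset S_{i-6}$, also disjoint from $\supp\f f_i$. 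On the left one uses the already-established decomposition for $A\f f_i$ together with analogous disjointness arguments to discard all but the $\f g_i$ piece.

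The quantitative estimate \ref{Item:Ynorm} is the main analytic content. I would bound $\|\f g_i\|^2=\sum_{u\in S_{i-3}} c_u^2$ by splitting $c_u$ into its \emph{path part} $(N_u-F_{i-1})\sum_{z\in (x,u]}(N_z-F_{|z|})$ and its \emph{local part} $N_u^{(2)}-N_u F_{i-2}$. Proposition~\ref{pro:graphProperty}\ref{item:concentration_degrees} gives $|N_z-d|\lesssim d^{1/2+\gamma}$ uniformly on $B_r(x)$, and a short bootstrap — using the recursion defining $F_k$ from $\langle \f f_{k+2},\f f_k\rangle=0$ together with the mean cancellation $\sum_{u\in S_k}(N_u-F_k)=0$ built into that construction and the moment bounds in Proposition~\ref{pro:graphProperty}\ref{item:concentration_S_i_Omega} and \ref{item:MeanQuadratic} — yields $|F_k-d|\lesssim d^{\gamma}$ uniformly in $k$, so that $|N_z-F_{|z|}|\lesssim d^{1/2+\gamma}$. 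The triangle inequality along the path of length at most $i$ then bounds the path sum by $i\,d^{1/2+\gamma}$, giving a contribution $\lesssim i^2 d^{2+4\gamma}$ to $c_u^2$, while Proposition~\ref{pro:graphProperty}\ref{item:Concentration} together with $N_u\lesssim d$ gives a local-part contribution $\lesssim d^{2+2\gamma}$. Summing over the $|S_{i-3}|$ vertices yields \ref{Item:Ynorm}. I expect the hard part to be the uniform bound $|F_k-d|\lesssim d^{\gamma}$: since the $F_k$ are defined only implicitly, propagating the estimate through the induction without picking up a factor of $i$ at each step requires leveraging the exact cancellation enforced by orthogonality, rather than a naive triangle-inequality bound.
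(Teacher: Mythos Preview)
Your proposal is correct and follows the paper's proof closely: the explicit formula for $\f g_i$ (your $c_u$ coincides with the paper's coefficient, since $\sum_{y>u,\,y\in S_1(u)}(N_y-F_{i-2})=N_u^{(2)}-N_uF_{i-2}$), the self-adjointness argument for \ref{Item:Yorthogonal}, and the term-wise splitting for \ref{Item:Ynorm} all appear essentially verbatim in the paper.

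The one place you diverge is the control of $F_k$. You aim for a uniform bound $|F_k-d|\lesssim d^{\gamma}$ via cancellation, and flag this as the hard step. The paper instead proves the weaker estimate $|F_k-d|\lesssim k\,d^{2\gamma}$ (Lemma~\ref{lem:Festimate}) by a straightforward induction on the recursion \eqref{eq:FDefinition}, with no cancellation at all. This suffices: since $k\leq r\lesssim d^{2\gamma}/\log d$ and $\gamma\leq 1/6$, one has $k\,d^{2\gamma}\lesssim d^{1/2+\gamma}$, so $|N_z-F_{|z|}|\lesssim d^{1/2+\gamma}$ either way, and both the path part ($\lesssim i\,d^{1+2\gamma}$) and the local part ($\lesssim d^{1+\gamma}+N_u|F_{i-2}-d|\lesssim i\,d^{1+2\gamma}$) give the stated bound. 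In other words, the factor of $i$ you worry about is harmless here. As a minor correction: the ``mean cancellation'' you cite, $\sum_{u\in S_k}(N_u-F_k)=0$, is not what orthogonality actually enforces; the identity $\langle\f f_{k+2},\f f_k\rangle=0$ gives the weighted relation $\sum_{y\in S_k}\sum_{z\in(x,y]}(N_z-F_{|z|})=0$, which is what yields the recursion \eqref{eq:FDefinition}.
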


The proof of Proposition~\ref{Prop:Yerror} is given in Section~\ref{subsec:proof_prop_Yerror} below. 

\noindent Motivated by Proposition~\ref{Prop:Yerror}, we define the matrix $M\in\mathbb{R}^{(r+1) \times (r+1)}$ with entries 
\begin{equation}\label{eq:Mdefinition}
M_{ij}\deq\begin{cases}
\displaystyle \frac{\langle A \f f_{i},\f f_{j}\rangle}{\sqrt{d} \|\f f_{i}\|\|\f f_{j}\|} & \text{if }|i-j|=1\\
0 & \text{otherwise}\,.
\end{cases}
\end{equation}
The symmetry of $A$ and the construction of $M$ imply that $M$ is a symmetric tridiagonal matrix. 
The next result proves that $M$ is close to the upper-left $(r+1) \times (r+1)$ block $Z_{\fra d}(\alpha_x,\beta_x)_{[0,r]}$ of the (infinite) tridiagonal matrix $Z_{\fra d}(\alpha_x,\beta_x)$ defined in \eqref{eq:def_Zd}.

\begin{proposition}\label{Prop:TriMatrixf}
Let $r \in \N$ and $\gamma\in (0,1/6]$ satisfy \eqref{eq:upper_bound_r_fine_rigidity} and 
$rd^{-\frac 1 2 + 3 \gamma} \leq c$ for a sufficiently small constant $c>0$. 
Then, on the high-probability event from Proposition~\ref{pro:graphProperty}, the matrix  $M$ satisfies
\begin{equation} \label{eq:M_Z_identities} 
M_{ii} = Z_{\fra d}(\alpha_x, \beta_x)_{ii} = 0 \,, \quad M_{01} = Z_{\fra d}(\alpha_x, \beta_x)_{01}=\sqrt{\alpha_x}\,, \quad M_{1 2}=Z_{\fra d}(\alpha_x,\beta_x)_{12}=\sqrt{\beta_x}
\end{equation} 
and  
\begin{equation} \label{eq:comparison_M_Z} 
M=Z_{\fra d}(\alpha_x,\beta_x)_{[0,r]}+ O\big(r^2d^{-\frac 3 2 + 3\gamma}\big)\,. 
\end{equation}
\end{proposition}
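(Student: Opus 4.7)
The plan is to prove the exact identities \eqref{eq:M_Z_identities} directly, and then to compute $M_{i,i+1}$ for $i \geq 2$ via the norms $\|\f f_i\|$, using the recursion from Proposition~\ref{Prop:Yerror} and the graph concentration from Proposition~\ref{pro:graphProperty}.

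By Proposition~\ref{Prop:Yerror} we have $A \f f_{i-1} = \f f_i + F_{i-2} \f f_{i-2} + \f g_{i-1}$, where $\supp \f g_{i-1} \subset S_{i-4}$ is disjoint from $\supp \f f_i \subset S_i \cup S_{i-2}$; using the orthogonality of the $(\f f_j)$, this gives $\langle A\f f_{i-1}, \f f_i\rangle = \|\f f_i\|^2$, and consequently
\[ M_{i-1,i} \;=\; \frac{\|\f f_i\|}{\sqrt d\,\|\f f_{i-1}\|}\,. \]
This immediately yields the exact identities $M_{01} = \sqrt{D_x/d} = \sqrt{\alpha_x}$ and $M_{12} = \sqrt{|S_2|/(d|S_1|)} = \sqrt{\beta_x}$, and the diagonal of $M$ vanishes by construction in \eqref{eq:Mdefinition}. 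Since $M - Z_{\fra d}(\alpha_x,\beta_x)_{[0,r]}$ is tridiagonal, \eqref{eq:comparison_M_Z} will follow once we establish $|M_{i,i+1}^2 - \fra d| \lesssim r^2 d^{-3/2+3\gamma}$ for every $i \in \qq{2, r-1}$.

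Since $M_{i,i+1}^2 = \|\f f_{i+1}\|^2/(d\,\|\f f_i\|^2)$, the task reduces to computing $\|\f f_i\|^2$ with sufficient precision. Unfolding \eqref{eq:def-fi}, and noting that the sub-components on $S_i$ and on $S_{i-2}$ are disjointly supported,
\[ \|\f f_i\|^2 \;=\; |S_i| \;+\; \sum_{y \in S_{i-2}} c_y^2, \qquad c_y \;=\; \sum_{k=1}^{i-2}\bigl(N_{z_k(y)} - F_k\bigr)\,, \]
where $z_k(y)$ denotes the ancestor of $y$ at depth $k$. Expanding the square and re-indexing the result by the depth-$k'$ ancestors decomposes $\sum_y c_y^2$ into a diagonal sum $\sum_{k}\sum_{z \in S_k}(N_z - F_k)^2 N_z^{(i-2-k)}$ and cross sums $\sum_{k<k'}\sum_{z \in S_{k'}}(N_{z_k(z)} - F_k)(N_z - F_{k'})N_z^{(i-2-k')}$. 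The values of the $F_k$ are pinned by the orthogonality $\langle \f f_i, \f f_{i-2}\rangle = 0$, which, after isolating the $k=i-2$ term, gives the recursion
\[ F_{i-2}\,|S_{i-2}| \;=\; |S_{i-1}| \;+\; \sum_{k=1}^{i-3}\sum_{z \in S_k}(N_z - F_k)\,N_z^{(i-2-k)}\,. \]
Using items \ref{item:concentration_S_i_Omega}, \ref{item:Concentration} and \ref{item:MeanQuadratic} of Proposition~\ref{pro:graphProperty} together with an induction on $k$, I would show $F_1 = d + O(d^{-1/2+\gamma})$ and $F_k = d + 1 + O(d^{-1/2+\gamma})$ for $k \geq 2$; the $+1$ shift emerges by substituting the leading-order estimate $N_z^{(i-2-k)}\approx N_z d^{i-3-k}$ into the recursion and using the identity $\sum_{z \in S_k} N_z(N_z-d) = \sum_z (N_z - d)^2 + d \sum_z (N_z - d)$, whose first term equals $D_x d^k(1+o(1))$ by \ref{item:MeanQuadratic} and whose second term is of lower order by \ref{item:Concentration}. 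With these values of $F_k$, the diagonal part of $\sum_y c_y^2$ evaluates to $(i-2)D_x d^{i-2}$ up to multiplicative errors of size $r d^{-1/2+3\gamma}$, while the cross terms are of the same order (their near-cancellation being precisely the content of the orthogonality that defines the $F_k$). Assembling these estimates yields $\|\f f_i\|^2 = D_x d^{i-2}(d+1)\bigl(1 + O(rd^{-1/2+3\gamma})\bigr)$ for $i \geq 2$, and passing to the ratio gives $M_{i,i+1}^2 = (d+1)/d \cdot (1+O(rd^{-1/2+3\gamma})) = \fra d + O(rd^{-3/2+3\gamma})$, from which \eqref{eq:comparison_M_Z} follows.

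The main obstacle is the extraction of the $+1$ correction in $F_k$ and the matching cancellation of the cross terms in $\sum_y c_y^2$. The $+1$ is subtle: it is invisible at the level of the naive approximation $|S_{k+1}| \approx d|S_k|$ (which would merely give $F_k \approx d$) and only surfaces once the Poisson-type second-moment estimate $\sum_z (N_z - d)^2 \approx D_x d^k$ from \ref{item:MeanQuadratic} is fed back into the recursion; an honest accounting requires tracking how errors in $F_1,\ldots,F_{i-3}$ propagate into $F_{i-2}$. The condition $r d^{-1/2 + 3\gamma} \leq c$ is exactly what ensures that the resulting $r$-fold multiplicative errors remain bounded, while \eqref{eq:upper_bound_r_fine_rigidity} guarantees that we stay inside the tree region of $\bb G$ where Proposition~\ref{pro:graphProperty} applies.
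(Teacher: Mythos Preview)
Your overall plan is the paper's: establish $M_{i-1,i}=\|\f f_i\|/(\sqrt d\,\|\f f_{i-1}\|)$ (this is Lemma~\ref{Rem:MFraction}), read off the exact identities \eqref{eq:M_Z_identities}, and compute $\|\f f_i\|^2$. The first paragraph is fine.

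The second paragraph contains a real error. Your asserted formula $\|\f f_i\|^2 = D_x d^{i-2}(d+1)\bigl(1+O(rd^{-1/2+3\gamma})\bigr)$ is wrong: the paper's Lemma~\ref{lem:normF} gives $\|\f f_i\|^2 = |S_i| + (i-2)D_xd^{i-2}(1+O(\cdot))$, which to leading order is $D_xd^{i-2}(d+i-2)$, not $D_xd^{i-2}(d+1)$. For $i=4$ these differ by a relative factor $1+1/(d+1)$, which is \emph{not} absorbed by $O(rd^{-1/2+3\gamma})$ when $r\leq cd^{1/2-3\gamma}$. And even accepting your formula, the ratio $\|\f f_{i+1}\|^2/(d\|\f f_i\|^2)$ would equal $1$, not $(d+1)/d$, since the common factor $(d+1)$ cancels; so the final step does not follow from what precedes it. The source of the $+1$ is not a uniform factor but the $i$-dependence: $\|\f f_{i+1}\|^2 - d\|\f f_i\|^2 \approx \bigl[(i-1)-(i-2)\bigr]D_xd^{i-1} = D_xd^{i-1}\approx |S_i|$, and dividing by $\|\f f_i\|^2\approx|S_i|$ yields the extra $1$.

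On the $F_k$'s: your intuition $F_k\approx d+1$ for $k\ge2$ is plausible (and for $k=2$ one can check it), but the paper neither proves nor needs it. Instead it uses only the crude bound $|F_k-d|\lesssim kd^{2\gamma}$ (Lemma~\ref{lem:Festimate}) together with the shift trick: since $\sum_{y\in S_{i-2}} c_y=\langle\f f_i,\f f_{i-2}\rangle=0$, one has $\sum_y c_y^2=\sum_y(c_y+\delta)^2-|S_{i-2}|\delta^2$ for $\delta=\sum_j(F_j-d)$, which replaces each $N_z-F_{|z|}$ by $N_z-d$. The diagonal and cross parts of $\sum_y(\sum_z(N_z-d))^2$ are then handled directly via items~\ref{item:MeanQuadratic} and~\ref{item:Concentration}, and the correction $|S_{i-2}|\delta^2=O(D_xd^{i-3}i^4d^{4\gamma})$ is lower order. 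This bypasses the inductive computation of the $F_k$ that you flag as the main obstacle.
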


Proposition~\ref{Prop:TriMatrixf} is proved in Section~\ref{subsec:proof_prop_TriMatrixf} below. 

\noindent Let $(u_i)_{i=0}^{r}\in \mathbb{R}^{r+1}$ be the first $r+1$ components of the eigenvector of $Z_{\fra d}(\alpha_x,\beta_x)$ associated with its largest eigenvalue  $\Lambda_{\fra d}(\alpha_x,\beta_x)$ (see
Corollary~\ref{cor:Lambda_expansion} \ref{item:Zd_evect}). 
We choose $u_0$ such that $\sum_{i=0}^r u_i^2 = 1$. 
Since $M$ and $Z_{[0,r]}(\alpha_x,\beta_x)$ are close by Proposition~\ref{Prop:TriMatrixf}, 
we consider the candidate eigenvector 
\begin{equation}  \label{eq:bestcandidate}
\f v(x)=\f v\deq \sum_{i=0}^r u_i \frac{\f f_i}{\|\f f_i\|} 
\end{equation}
in analogy to the definition of the approximate eigenvector for Proposition~\ref{pro:H_minus_Lambda_v_norm_quadratic_form_intermediate} in \eqref{eq:def_v_intermediate_rigidity}. 

After the following lemma, we shall estimate $(H- \Lambda_{\fra d}(\alpha_x,\beta_x))\f v(x)$ 
and prove Proposition \ref{prop:eigenError}. 

\begin{lemma}\label{Rem:MFraction}
If $r \in \N$ satisfies \eqref{eq:upper_bound_r_fine_rigidity} for $c_*$ from Proposition~\ref{Prop:Yerror} then $M_{i\,i+1}=\frac{\|\f f_{i+1}\|}{\sqrt{d}\|\f f_i\|}$ for all $i \in [0,r]\cap \N$. 
\end{lemma}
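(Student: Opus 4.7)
The plan is to compute $M_{i,i+1}$ directly from its definition, using the recursion provided by Proposition~\ref{Prop:Yerror} together with the orthogonality of the family $(\f f_k)_{k=0}^{r}$ built into the construction \eqref{eq:def-fi}. By \eqref{eq:Mdefinition},
\[
M_{i,i+1}=\frac{\scalar{A\f f_{i}}{\f f_{i+1}}}{\sqrt{d}\,\norm{\f f_{i}}\norm{\f f_{i+1}}},
\]
so it suffices to show that $\scalar{A\f f_{i}}{\f f_{i+1}}=\norm{\f f_{i+1}}^{2}$ for every $i\in[0,r-1]\cap\N$ (the case needed for $M$ to be well defined).

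For $i=0$, Proposition~\ref{Prop:Yerror} gives $A\f f_{0}=\f f_{1}$, whence the scalar product equals $\norm{\f f_{1}}^{2}$. For $1\leq i\leq r-1$, the same proposition yields the decomposition
\[
A\f f_{i}=\f f_{i+1}+F_{i-1}\f f_{i-1}+\f g_{i}.
\]
Taking the inner product with $\f f_{i+1}$, the first term contributes exactly $\norm{\f f_{i+1}}^{2}$. The middle term vanishes, since $\scalar{\f f_{i-1}}{\f f_{i+1}}=0$ by the orthogonality of the basis $(\f f_k)$ (which is exactly what determines the constants $F_{\abs{z}}$ in \eqref{eq:def-fi}). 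It remains to show $\scalar{\f g_{i}}{\f f_{i+1}}=0$. For $i\leq 3$ this is immediate because $\f g_{i}=0$ by Proposition~\ref{Prop:Yerror}. For $i\geq 4$, Proposition~\ref{Prop:Yerror}\ref{Item:YSupport} gives $\supp\f g_{i}\subset S_{i-3}$, while \eqref{eq:def-fi} shows $\supp\f f_{i+1}\subset S_{i+1}\cup S_{i-1}$; since distinct spheres around $x$ are disjoint, the two supports do not meet and the scalar product vanishes. Combining these observations completes the proof.

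No genuine obstacle arises: the statement is a bookkeeping consequence of the recursion and support structure already established in Proposition~\ref{Prop:Yerror}, and the only thing to be careful about is the case distinction $i\leq 3$ versus $i\geq 4$ when invoking the support bound on $\f g_i$.
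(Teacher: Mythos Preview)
Your proof is correct and follows essentially the same route as the paper's: expand $\scalar{A\f f_i}{\f f_{i+1}}$ via the recursion $A\f f_i=\f f_{i+1}+F_{i-1}\f f_{i-1}+\f g_i$ from Proposition~\ref{Prop:Yerror}, kill the $\f f_{i-1}$ term by orthogonality, and kill the $\f g_i$ term by disjointness of supports (or $\f g_i=0$ for small $i$). The only difference is cosmetic---you spell out the $i\leq 3$ versus $i\geq 4$ distinction for $\f g_i$ explicitly, and you restrict to $i\leq r-1$ rather than $i\leq r$ as in the lemma statement, but the identical argument extends to $i=r$ since Proposition~\ref{Prop:Yerror} covers that case.
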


\begin{proof}[Proof of Lemma \ref{Rem:MFraction}] 
The claim is obvious by Proposition~\ref{Prop:Yerror} if $i = 0$. 

If $i \geq 1$ then Proposition~\ref{Prop:Yerror} yields   
\[M_{i\,i+1}=
\frac{1}{\sqrt{d}\|\f f_{i}\|\|\f f_{i+1}\|}\langle A\f f_{i},\f f_{i+1}\rangle=\frac{1}{\sqrt{d}\|\f f_{i}\|\|\f f_{i+1}\|}\langle \f f_{i+1}+F_{i-1}\f f_{i-1}+\f g_i,\f f_{i+1}\rangle=\frac{\|\f f_{i+1}\|}{\sqrt{d}\|\f f_i\|}\,.\]
For the last equality we used that $\f f_{i+1}$ and $\f f_{i-1}$ are orthogonal and the supports of $\f g_i$ and $\f f_{i+1}$ are disjoint because of Proposition \ref{Prop:Yerror} \ref{Item:YSupport}.
\end{proof}

\begin{proof}[Proof of Proposition \ref{prop:eigenError}]
We note that \eqref{eq:condition_r} with a sufficiently small $c_*>0$ implies \eqref{eq:upper_bound_r_fine_rigidity}. 
Throughout this proof, we work on the intersection of the high-probability events from Proposition~\ref{pro:graphProperty} and Lemma~\ref{lem:convergence_max_alpha_x_upper_bound_D_x}. 
Note that $H^{(x,r)} \f v = H \f v$ since $\supp \f v \subset B_r(x)$. 
Owing to \eqref{eq:A_f_i_relations} in Proposition~\ref{Prop:Yerror}, we have 
\begin{equation} \label{eq:proof_Yerror_aux1} 
A \f f_i = \f f_{i+1} + \left(F_{i-1}+\frac{\langle \f g_{i},\f f_{i-1}\rangle}{\|\f f_{i-1}\|^{2}}\right)\f f_{i-1}+\left(\f g_{i}-\frac{\langle \f g_{i},\f f_{i-1}\rangle \f f_{i-1}}{\|\f f_{i-1}\|^{2}}\right)
\end{equation}
for $1\leq i \leq r$.  
Since $\f f_{i-1}$ and $\f g_{i}-\frac{\langle \f g_{i},\f f_{i-1}\rangle \f f_{i-1}}{\|\f f_{i-1}\|^{2}}$ are orthogonal, taking the scalar product of \eqref{eq:proof_Yerror_aux1} and $\f f_{i-1}$ yields  
$\scalar{\f g_i}{\f f_{i-1}} = M_{i\, i-1} \sqrt{d} \norm{\f f_i}\norm{\f f_{i-1}} - F_{i-1} \norm{\f f_{i-1}}^2$ due to \eqref{eq:Mdefinition}. 
Therefore, as $M_{i\,i+1}=\frac{\|\f f_{i+1}\|}{\sqrt{d}\|\f f_{i}\|}$ by Lemma \ref{Rem:MFraction}, 
we obtain from \eqref{eq:proof_Yerror_aux1} that 
\[\frac{1}{\sqrt{d}}A \frac{\f f_i}{\| \f f_i\|}= M_{i\,i+1}\frac{\f f_{i+1}}{\|\f f_{i+1}\|} + M_{i\,i-1} \frac{\f f_{i-1}}{\|\f f_{i-1}\|}+\frac{1}{\sqrt{d} \| \f f_i\|}\left(\f g_{i}-\frac{\langle \f g_{i},\f f_{i-1}\rangle \f f_{i-1}}{\|\f f_{i-1}\|^{2}}\right)\] 
for $1 \leq i \leq r$. 
Therefore, using $A \f f_0 = \f f_1$ by \eqref{eq:A_f_i_relations}, the definition of $\f v$ in \eqref{eq:bestcandidate} and the convention $M_{0\, -1}=u_{-1} = 0$, we obtain 
\begin{equation}\label{eq:proof_Yerror_aux2} 
H \f v  =\sum_{i=0}^{r-1}(M_{i\,i+1}u_{i+1}+M_{i\,i-1}u_{i-1})\frac{\f f_{i}}{\|\f f_{i}\|}+\frac{u_{r-1}\f f_{r}}{\sqrt{d}\|\f f_{r-1}\|}+\frac{u_r\f f_{r+1}}{\sqrt{d}\|\f f_r\|}  + \f w_2\,, 
\end{equation}
where we used that $\f g_1 = \f g_2 = \f g_3 = 0$ by Proposition~\ref{Prop:Yerror} and introduced the error term 
\[ \f w_2 \deq \sum_{i=4}^r\frac{u_{i}}{\sqrt{d} \|\f f_{i}\|}\left(\f g_{i}-\frac{\langle \f g_{i},\f f_{i-1}\rangle   \f f_{i-1}}{\|\f f_{i-1}\|^{2}}\right)\,. \]

To shorten the notation, we set $Z_{ij} \deq Z_{\fra d}(\alpha_x,\beta_x)_{ij}$ with the convention $Z_{0\,\, -1} = 0$. 
Owing to the definitions of $\Lambda_{\fra d}(\alpha_x,\beta_x)$, $Z_{ij}$ and $u_i$, we have $\Lambda_{\fra d}(\alpha_x,\beta_x)u_i = Z_{i\,i+1}u_{i+1}+Z_{i\,i-1}u_{i-1}$
for $0 \leq i \leq r$. 
Hence, \eqref{eq:proof_Yerror_aux2} and the identities \eqref{eq:M_Z_identities} from Proposition \ref{Prop:TriMatrixf} imply
\begin{equation} \label{eq:NiceRoughErrors}
H \f v = \Lambda_{\fra d}(\alpha_x,\beta_x) \f v + \f w_2 + \f w_3 + \f w_4
\end{equation} 
with the error terms $\f w_3$ and $\f w_4$ defined through 
\begin{align*} 
\f w_3 & \deq 
\sum_{i=3}^{{r -1}} \big((M_{i\,i+1}-Z_{i\,i+1})u_{i+1}+(M_{i\,i-1}-Z_{i\,i-1})u_{i-1}\big)\frac{\f f_{i}}{\|\f f_{i}\|} + (M_{23} - Z_{23}) u_3 \frac{\f f_2}{\norm{\f f_2}}\,, 
\\ 
\f w_4 & \deq 
 \left(\frac{u_{r-1}}{\sqrt{d}\|\f f_{r-1}\|}-\frac{\Lambda_{\fra d}(\alpha_x,\beta_x)u_r}{\|\f f_r\|}\right)\f f_{r}+\frac{u_r\f f_{r+1}}{\sqrt{d}\|\f f_r\|}\,.
\end{align*} 
Note that the error terms $\f w_2$, $\f w_3$ and $\f w_4$ are the analogues of the corresponding error terms 
in \eqref{eq:decomposition_H_minus_Lambda_v}.

We now control these three error terms. 
Owing to the definition of $\cal W$ in \eqref{eq:def_cal_W} and the upper bound in Lemma~\ref{lem:convergence_max_alpha_x_upper_bound_D_x}, we have $\alpha_x \asymp \am$. 
In the following, we shall use 
 \eqref{eq:relations_sum_u_i_u_2} and \eqref{eq:u_r_leq_u_2_small} from  
Lemma~\ref{lem:eigenvector_fine_properties} and always apply them to $Z_{\fra d}(\alpha_x,\beta_x)$ and $\Lambda_{\fra d}(\alpha_x,\beta_x)$. 
The condition~\eqref{assumption_beta_pr} required for Lemma~\ref{lem:eigenvector_fine_properties} is satisfied due to Lemma~\ref{lem:beta_x_assumption_beta_pr_checked} and $\cal W \subset \cal U$. 
Moreover, Lemma~\ref{lem:EstimateLambdad} and \eqref{eq:approx_Lambda_alpha_x_beta_x_by_Lambda_alpha_x} are applicable and yield $\Lambda_{\fra d}(\alpha_x,\beta_x) \asymp \sqrt{\alpha_x} \asymp \sqrt{\am}$.

For $\f w_2$, we use that $\supp \f g_i$ and $\supp \f g_j$ are disjoint for $i \neq j$ 
and that $(\f f_i)_{i=4}^r$ is orthogonal to obtain 
\begin{align}
\norm{\f w_2}^{2} 
\leq \frac{2}{d} \left(\sum_{i=4}^r |u_i|^2 \frac{\|\f g_{i}\|^2}{\|\f f_{i}\|^2} +\sum_{i=4}^r |u_{i}|^2\frac{|\langle \f g_{i},\f f_{i-1}\rangle |^2}{\|\f f_{i-1}\|^2\|\f f_i\|^2}\right)
\leq \frac{4}{d} \max_i \frac{\|\f g_i\|^2}{\|\f f_i\|^2} \sum_{j\geq 4} |u_j|^2
\lesssim \frac{r^2 d^{-2 + 4 \gamma}}{\am}\,. \label{eq:NiceRoughtErrorT3}
\end{align}
Here, in the last step, we used Proposition~\ref{Prop:Yerror} \ref{Item:Ynorm} and $\|\f f_i\|^2\geq |S_i|\geq \frac{d^3}{2}|S_{i-3}|$ by \eqref{eq:def-fi} and Proposition \ref{pro:graphProperty} \ref{item:concentration_S_i_Omega}
to estimate the maximum and \eqref{eq:relations_sum_u_i_u_2} to estimate the sum. 

The Pythagorean theorem, $u_3 \leq u_2$, \eqref{eq:relations_sum_u_i_u_2} and \eqref{eq:comparison_M_Z} in Proposition~\ref{Prop:TriMatrixf} imply 
\begin{equation} \label{eq:NiceRoughtErrorT1} 
 \norm{\f w_3}^2 \lesssim \abs{M_{23} - Z_{23}}^2u_3^2 + \sum_{i=3}^{r-1} \big(\abs{M_{i\, i+1} -Z_{i\, i+1}}^2 u_{i+1}^2 + \abs{M_{i\, i-1} - Z_{i\, i-1}}^2 u_{i-1}^2 \big)  \lesssim \am^{-1} r^4 d^{-3+ 6\gamma} \,. 
\end{equation}

By Lemma \ref{Rem:MFraction} and Proposition \ref{Prop:TriMatrixf}, we have $\frac{\|\f f_{r+1}\|}{\| \f f_r \|\sqrt{d}} \lesssim 1$.
Thus, \eqref{eq:u_r_leq_u_2_small}, \eqref{eq:relations_sum_u_i_u_2} and $\Lambda_{\fra d}(\alpha_x,\beta_x) \asymp \sqrt{\am}$  
imply 
\begin{equation} \label{eq:NiceRoughtErrorT2}
 \norm{\f w_4} \leq \normbb{\frac{u_r \f f_{r+1}}{\sqrt{d}\norm{\f f_r}}} + \normbb{\frac{u_{r-1} \f f_r}{\sqrt{d} \norm{\f f_{r-1}}}} + \normbb{\frac{\Lambda_{\fra d}(\alpha_x,\beta_x)u_r \f f_r}{\norm{\f f_r}}} \lesssim u_2 \sqrt{\am} (d \am)^{-10} \lesssim (d\am)^{-10}\,. 
\end{equation} 
Finally, the identity \eqref{eq:NiceRoughErrors} together with the estimates \eqref{eq:NiceRoughtErrorT3}, \eqref{eq:NiceRoughtErrorT1} and \eqref{eq:NiceRoughtErrorT2}
yields 
\eqref{eq:bestCandidateNorm} 
since $r^2 d^{-1 + 2 \gamma} \lesssim 1$ by \eqref{eq:condition_r} with $\delta = d^{2\gamma - 1}$ and $\gamma \leq 1/6$.

We now prove \eqref{eq:bestCandidateOrthogonal}. 
Owing to \eqref{eq:NiceRoughErrors}, we have $\scalar{\f v}{(H-\Lambda_{\fra d}(\alpha_x,\beta_x))\f v} = \scalar{\f v}{\f w_2} + \scalar{\f v}{\f w_3} + \scalar{\f v}{\f w_4}$. 
We estimate each of these terms separately. 
We get
\[ 
\scalar{\f v}{\f w_2} = \sum_{j=0}^r u_j \scalarbb{\frac{\f f_j}{\norm{\f f_j}}}{\sum_{i=4}^r \frac{u_i}{\norm{\f f_i}} \bigg( \f g_i - \frac{\scalar{\f g_i}{\f f_{i-1}} \f f_{i-1}}{\norm{\f f_{i-1}}^2} \bigg)} 
= \sum_{i=4}^r \frac{u_{i-3}}{\norm{\f f_{i-3}}} \frac{u_i}{\norm{\f f_i}} \scalar{\f f_{i-3}}{\f g_i} = 0 
\] 
as $\supp \f f_j \subset S_j \cup S_{j-2}$ by \eqref{eq:def-fi}, $\supp \f g_i \subset S_{i-3}$ and $\scalar{\f f_{i-3}}{\f g_i} = 0 $ by 
 Proposition \ref{Prop:Yerror} \ref{Item:YSupport} and \ref{Item:Yorthogonal}, respectively. 
Moreover, we compute 
\[ 
\scalar{\f v}{\f w_3} = u_2 u_3 (M_{23} - Z_{23}) + \sum_{i = 3}^{r-1} u_i  \big((M_{i\,i+1}-Z_{i\,i+1})u_{i+1}+(M_{i\,i-1}-Z_{i\,i-1})u_{i-1}\big)\,, 
\] 
which is bounded by the right-hand side of \eqref{eq:bestCandidateOrthogonal} due to 
\eqref{eq:comparison_M_Z} in Proposition~\ref{Prop:TriMatrixf}, $u_{i+1} \leq u_i$ and 
\eqref{eq:relations_sum_u_i_u_2}.  
Finally, we trivially have 
$\scalar{\f v}{\f w_4} = O( ( d\am)^{-10})$  by \eqref{eq:NiceRoughtErrorT2}, 
which completes the proof of \eqref{eq:bestCandidateOrthogonal} and, thus, the one 
of Proposition~\ref{prop:eigenError}.   
\end{proof}

\subsection{Proof of Proposition~\ref{Prop:Yerror}} \label{subsec:proof_prop_Yerror} 
We now prove Proposition \ref{Prop:Yerror}. 
Throughout this section, we work on the high-probability event from Proposition~\ref{pro:graphProperty}. 
First, we show that the coefficients $F_i$ from \eqref{eq:def-fi} are close to $d$.

\begin{lemma} \label{lem:Festimate}
On the high-probability event from Proposition~\ref{pro:graphProperty}, the following holds. 
We have 
\begin{equation} \label{eq:F_1_explicit} 
F_1 = \abs{S_2}/\abs{S_1}\,. 
\end{equation} 
Moreover, if $r\in \N$ satisfies \eqref{eq:condition_alpha_x_intermediate_rigidity} and $\gamma \in (0,1/6]$ then, for all $1 \leq i\leq r-2$, we have
\begin{equation} \label{eq:F_i_minus_d_estimate} 
|F_i-d|\lesssim i d^{2\gamma}\,.
\end{equation}
\end{lemma}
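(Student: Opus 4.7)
The plan is to extract each $F_j$ from the orthogonality relation $\langle \f f_{j+2}, \f f_j\rangle = 0$, reduce the result to a closed expression for the partial sums $G_j \deq \sum_{k=1}^j (F_k - d)$, and then bound $|G_j|$ directly from the graph concentration estimates in Proposition~\ref{pro:graphProperty}. Identity \eqref{eq:F_1_explicit} is immediate: expanding $\f f_3 = \f 1_{S_3} + \sum_{y\in S_1}(N_y - F_1)\f 1_y$ and pairing with $\f f_1 = \f 1_{S_1}$ gives $\langle \f f_3, \f f_1\rangle = |S_2| - F_1|S_1|$ since $S_1$ and $S_3$ are disjoint.

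For the recursion, on the high-probability event from Proposition~\ref{pro:graphProperty}, where $\mathbb{G}|_{B_{r+1}(x)}$ is a tree, the orthogonality $\langle \f f_{j+2}, \f f_j\rangle = 0$ reduces, by killing the cross terms with incompatible sphere supports, to $\sum_{y \in S_j}\sum_{z \in (x,y]}(N_z - F_{|z|}) = 0$. Reorganizing by the level $k = |z|$ and noting that a fixed $z \in S_k$ has exactly $N_z^{(j-k)}$ descendants in $S_j$ yields
\begin{equation} \label{eq:FDefinition}
F_j \abs{S_j} = \abs{S_{j+1}} + \sum_{k=1}^{j-1}\sum_{z \in S_k}(N_z - F_k) N_z^{(j-k)}\,,
\end{equation}
which uniquely determines $F_j$ since $|S_j| > 0$ on the event in question. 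Writing $N_z - F_k = (N_z - d) - (F_k - d)$ and using the tree identity $\sum_{z \in S_k}N_z^{(j-k)} = |S_j|$, the contributions proportional to $F_k - d$ telescope, so that
\begin{equation*}
G_j = \frac{\abs{S_{j+1}}}{\abs{S_j}} - d + \frac{1}{\abs{S_j}}\sum_{k=1}^{j-1}\sum_{z \in S_k}(N_z - d)N_z^{(j-k)}\,.
\end{equation*}
The right-hand side is a pure graph quantity, and $F_j - d = G_j - G_{j-1}$ reduces the problem to bounding $|G_j|$.

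It now remains to estimate each piece of $G_j$. The leading term $|S_{j+1}|/|S_j| - d$ is $O(d^\gamma \am^{-1/2})$ by Proposition~\ref{pro:graphProperty}\ref{item:concentration_S_i_Omega} together with $D_x \asymp d\am$ on $\cal W$. The $k=j-1$ summand, $|S_j|^{-1}\sum_{z\in S_{j-1}}(N_z - d)N_z$, splits as $|S_j|^{-1}\sum(N_z - d)^2 + d|S_j|^{-1}(|S_j| - d|S_{j-1}|)$; by Proposition~\ref{pro:graphProperty}\ref{item:MeanQuadratic} this equals $1 + O(d^{\gamma - 1/2}\am^{-1/2})$, a bounded quantity of order unity. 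For $k \leq j - 2$ I would split $N_z^{(j-k)} = d^{j-k} + (N_z^{(j-k)} - d^{j-k})$: the mean part is controlled by Proposition~\ref{pro:graphProperty}\ref{item:concentration_S_i_Omega}, and the fluctuation part by Cauchy--Schwarz, using Proposition~\ref{pro:graphProperty}\ref{item:MeanQuadratic} together with the uniform bound $|N_z^{(j-k)} - d^{j-k}| \lesssim d^{j-k-1/2+\gamma}$ obtained by combining Proposition~\ref{pro:graphProperty}\ref{item:concentration_degrees} and \ref{item:Concentration}; each such $k$ contributes $O(d^\gamma)$. Summing over $k$ gives $|G_j| \lesssim 1 + jd^\gamma \lesssim jd^{2\gamma}$, and hence $|F_j - d| \leq |G_j| + |G_{j-1}| \lesssim jd^{2\gamma}$, which is \eqref{eq:F_i_minus_d_estimate}.

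The main obstacle is that the $k = j-1$ contribution to $G_j$ is a bounded quantity of size close to $1$, not a vanishing error, so no naive bound on $F_j$ read off directly from \eqref{eq:FDefinition} can yield $|F_j - d| \lesssim jd^{2\gamma}$. The telescoping via $G_j$ is essential: we only need $G_j$ to be of order $jd^{2\gamma}$, which is generous enough to absorb both the constant-order main contribution and the $O(d^\gamma)$ error per intermediate $k$. A secondary technical point is deriving the sharp pointwise estimate $|N_z^{(j-k)} - d^{j-k}| \lesssim d^{j-k-1/2+\gamma}$ from the fractional concentration in Proposition~\ref{pro:graphProperty}\ref{item:Concentration} combined with the uniform degree bound \ref{item:concentration_degrees}, which drives the Cauchy--Schwarz step at every level $k$.
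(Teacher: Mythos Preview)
Your proof is correct and takes a genuinely different route from the paper. The paper combines the orthogonality relations $\langle \f f_{i+2}, \f f_i\rangle = 0$ and $\langle \f f_{i+1}, \f f_{i-1}\rangle = 0$ to obtain the recursion
\[
F_i = \frac{|S_{i+1}|}{|S_i|} + \frac{1}{|S_i|}\sum_{y\in S_{i-1}}(N_y - d)\sum_{z\in(x,y]}(N_z - F_{|z|})\,,
\]
whose correction term is already \emph{quadratic} in fluctuation quantities; a straightforward induction on $i$ using only the pointwise bound $|N_z - d| \leq d^{1/2+\gamma}$ from Proposition~\ref{pro:graphProperty}\,\ref{item:concentration_degrees} then gives $|F_i - d| \lesssim id^{2\gamma}$. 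Your approach stays with the linear recursion and eliminates the unknown $F_k$'s through the telescoping sum $G_j$, yielding a closed formula that depends only on graph quantities; this avoids induction and in fact gives the sharper $|G_j| \lesssim 1 + jd^\gamma$, at the price of invoking the finer estimates \ref{item:Concentration} and \ref{item:MeanQuadratic} of Proposition~\ref{pro:graphProperty}, which the paper's argument for this lemma does not need. Your observation that a direct bound from the linear recursion fails because of the order-one $k=j-1$ term is correct; the paper's quadratic recursion has already absorbed that contribution, which is why the direct inductive bound works there.
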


\begin{proof}[Proof of Lemma~\ref{lem:Festimate}]
First, we shall see that $F_{i}$ satisfies the recursive relation
\begin{equation}\label{eq:FDefinition}
F_{i}   =\frac{|S_{i+1}|}{|S_{i}|}+\frac{1}{|S_{i}|}\sum_{y\in S_{i-1}}(N_y-d)\sum_{z\in(x,y]}(N_{z}-F_{|z|})
\end{equation}
for any $1 \leq i \leq r-2$. 
This proves \eqref{eq:F_1_explicit} since the second term on the right-hand side of \eqref{eq:FDefinition} vanishes if $i = 1$. 
Note that $1 \leq \abs{z} \leq i-1$ in \eqref{eq:FDefinition}. 

Indeed, the definition of $\f f_i$ in \eqref{eq:def-fi} and their orthogonality imply
\begin{align*}
0 & = \langle \f f_{i+2},\f f_{i} \rangle = \left\langle  \sum_{y\in S_{i}}\f 1_{y}\sum_{z\in(x,y]}(N_{z}-F_{|z|}) , \f 1_{S_{i}} \right\rangle  =  \sum_{y\in S_{i}}\sum_{z\in(x,y]}(N_{z}-F_{|z|}) 
\\ & = \sum_{z\in S_{i}}N_z-|S_{i}| F_{i} + \sum_{y\in S_{i-1}} N_y \sum_{z\in(x,y]}(N_{z}-F_{|z|})\,. 
\end{align*}
Since $\sum_{z\in S_{i}}N_z = |S_{i+1}|$ and $\sum_{y\in S_{i-1}}\sum_{z\in(x,y]}(N_{z}-F_{|z|}) = \langle \f f_{i+1},\f f_{i-1} \rangle =0$, 
this shows \eqref{eq:FDefinition}.

Next, we prove \eqref{eq:F_i_minus_d_estimate} in Lemma~\ref{lem:Festimate} by induction. 
Since the second term on the right-hand side of \eqref{eq:FDefinition} vanishes for $ i = 1$, 
we have $\abs{F_1 - d} \lesssim d^{\gamma}$ by Proposition~\ref{pro:graphProperty} \ref{item:concentration_S_i_Omega} and $D_x \gtrsim \am d \geq d$. 

We assume that \eqref{eq:F_i_minus_d_estimate} is valid up to $i-1$. By \eqref{eq:FDefinition} we have
\begin{align*}
 |F_{i} - d | & \leq \left|\frac{|S_{i+1}|}{|S_{i}|}-d\right|  +\frac{1}{|S_{i}|}\sum_{y\in S_{i-1}} \bigg( |N_y-d|\sum_{z\in(x,y]}(|N_z-d|+|d-F_{|z|}|) \bigg)  \\ 
&  \lesssim \frac{d^{\frac 1 2+\gamma}}{\sqrt{D_xd^{i-1}}}+\frac{|S_{i-1}|}{|S_{i}|} (i-2) d^{\frac 1 2+\gamma}(d^{\frac 1 2+\gamma}  + (i-1) d^{2 \gamma})
 \lesssim d^\gamma + (i-2) d^{2\gamma} \left(1+O(i d^{\gamma- \frac 1 2})\right) 
\end{align*}
where we used Proposition \ref{pro:graphProperty} \ref{item:concentration_S_i_Omega}, \ref{item:concentration_degrees}, $D_x \gtrsim d$ by the definition of $\cal W$ in \eqref{eq:def_cal_W} and the induction hypothesis. 
As $i \leq r \lesssim d^{2\gamma}$ and $\gamma \leq 1/6$, we have $i d^{\gamma - 1/2}\lesssim 1$, which completes the proofs of \eqref{eq:F_i_minus_d_estimate} and Lemma~\ref{lem:Festimate}. 
\end{proof}

\begin{proof}[Proof of Proposition \ref{Prop:Yerror}]
This is a direct calculation. 
By slightly decreasing $c_*$ in \eqref{eq:upper_bound_r_fine_rigidity}, 
we can assume due to Proposition~\ref{pro:graphProperty} \ref{item:tree_in_balls} 
that $\mathbb{G}|_{B_{r+1}}$ is a tree. 
Therefore, we have $A \f f_0 = \f f_1$ and 
\begin{equation} \label{eq:A_1_S_i_tree} 
A \f 1_{S_i} = \f 1_{S_{i+1}} + \sum_{y\in S_{i-1}} N_y \f 1_y
\end{equation}
for $i \in [r]$. 
From \eqref{eq:A_1_S_i_tree} and the definitions of $\f f_0$, $\f f_1$, $\f f_2$ and $\f f_3$ in \eqref{eq:def-fi}, we conclude $A \f f_1 = \f f_2 + F_0 \f f_0$ 
and $A \f f_2 = \f f_3 + F_1 \f f_1$ since $F_0 = D_x$ by definition and $F_1 = \abs{S_2}/\abs{S_1}$ by \eqref{eq:F_1_explicit}. 
Hence, \eqref{eq:A_f_i_relations} for $i \leq 2$ with $\f g_1 = \f g_2 = 0$. 

Let $3 \leq i \leq r$ and $y\in S_{i-2}$. 
If $t$ denotes the parent of $y$ in the tree, i.e.\ $\{t\}= S_{i-3}\cap S_{1}(y)$, and $y' \in S_{i-1} \cap S_1(y)$ are the children of $y$ then  
\begin{equation} \label{eq:A_1_y_tree} 
A \f 1_{y} = \f 1_{t} + \sum_{y'\in S_{i-1}\cap S_1(y)}  \f 1_{y'}\,. 
\end{equation}
If $2 \leq i \leq r$ then, using the definition of $\f f_i$ in \eqref{eq:def-fi}, the relations in \eqref{eq:A_1_S_i_tree} and \eqref{eq:A_1_y_tree} imply 
\begin{align*}
A\f f_{i} & =A\bigg(\f 1_{S_{i}}+\sum_{y\in S_{i-2}}\f 1_{y}\sum_{z\in(x,y]}(N_{z}-F_{|z|})\bigg)\\
 & =\f 1_{S_{i+1}}+\sum_{y\in S_{i-1}}\f 1_{y} \bigg(N_{y}+\sum_{z\in(x,y)}(N_{z}-F_{|z|})\bigg) +\sum_{t\in S_{i-3}}\f 1_{t}\sum_{y\in S_1(t), \, y> t}\bigg((N_{y}-F_{|y|})+\sum_{z\in(x,t]}(N_{z}-F_{|z|})\bigg)\\
&  =\f 1_{S_{i+1}}+F_{i-1}\f 1_{S_{i-1}}+\sum_{y\in S_{i-1}}\f 1_{y} \bigg(\sum_{z\in(x,y]}(N_{z}-F_{|z|})\bigg) 
\\ & \phantom{=} +\sum_{t\in S_{i-3}}\f 1_{t}\bigg(N_{t}\sum_{z\in(x,t]}(N_{z}-F_{|z|})+\sum_{y\in S_1(t), \, y> t}(N_{y}-F_{|y|})\bigg)
\\ & = \f 1_{S_{i+1}}+\sum_{y\in S_{i-1}}\f 1_{y} \bigg(\sum_{z\in(x,y]}(N_{z}-F_{|z|})\bigg)
+F_{i-1}\f 1_{S_{i-1}} +F_{i-1}\sum_{t\in S_{i-3}}\f 1_{t}\sum_{z\in(x,t]}(N_{z}-F_{|z|}) 
\\ & \phantom{=} +\sum_{t\in S_{i-3}}\f 1_{t}\bigg((N_{t}-F_{i-1})\sum_{z\in(x,t]}(N_{z}-F_{|z|})+\sum_{y\in S_1(t), \, y> t}(N_{y}-F_{|y|})\bigg)
\\ & =\f f_{i+1}+F_{i-1}\f f_{i-1}+\f g_{i}\,. 
\end{align*}
Here, in the second step, we used that $(x,y')=(x,y]=(x,t]\cup \{y\}$ if $t$ is the parent of $y$ and $y'$ is a child of $y$.
In the last step, we used the definitions of $\f f_{i-1}$ and $\f f_{i+1}$ (cf.\ \eqref{eq:def-fi}) as well as introduced
\begin{equation}\label{Eq:YErrorDef}
\f g_{i}\deq \sum_{t\in S_{i-3}}\f 1_{t}\bigg((N_{t}-F_{i-1})\sum_{z\in(x,t]}(N_{z}-F_{|z|})+\sum_{y \in S_1(t), \, y > t}(N_{y}-F_{|y|})\bigg)\,.
\end{equation}
This shows the desired relation, \eqref{eq:A_f_i_relations}, for $i \geq 3$ and some vector $\f g_i$. 
We now verify the additional properties of $\f g_i$. From \eqref{Eq:YErrorDef}, we immediately deduce $\f g_3 = 0$ and \ref{Item:YSupport}, i.e.\ $\supp \f g_i \subset S_{i-3}$ for all $i$.  

For the proof of \ref{Item:Yorthogonal}, we use the symmetry of $A$ and the orthogonality of $(\f f_i)_i$ to obtain 
\[
\langle \f f_{i-3},\f g_i\rangle= \langle \f f_{i-3},A\f f_i-\f f_{i+1}-F_{i-1}\f f_{i-1}\rangle=\langle A \f f_{i-3} , \f f_i \rangle = \langle \f f_{i-2}, \f f_i \rangle
=0\,. 
\]
Finally, for \ref{Item:Ynorm}, we compute the norm in \eqref{Eq:YErrorDef} and obtain 
\begin{align*}
\|\f g_{i}\|^{2} & =\sum_{t\in S_{i-3}}\Big((N_{t}-F_{i-1})\sum_{z\in(x,t]}(N_{z}-F_{|z|})+\sum_{y\geq t,y\in S_1(t)}(N_{y}-F_{|y|})\Big)^{2}\\
& \leq \sum_{t\in S_{i-3}}\Big( \sum_{z\in(x,t]}(|N_{t}-d|+|F_{i-1}-d|)(|N_{z}-d|+|F_{|z|}-d|) 
\\ & \qquad 
+\sum_{y\geq t,y\in S_1(t)}|F_{|y|}-d|+\Big|\sum_{y\geq t,y\in S_1(t)}(N_{y}-d)\Big|\Big)^{2}\\
 & \leq \sum_{t\in S_{i-3}}\Big((i-3) (d^{\frac 1 2+\gamma}+(i-1)d^{2\gamma})^2+(d+d^{\frac 1 2+\gamma})(i-2)d^{2\gamma} + d^{1+\gamma}\Big)^2 \\
 & \lesssim  |S_{i-3}|d^{2+4\gamma} i^2\,, 
\end{align*}
where we used, for the second inequality, \eqref{eq:F_i_minus_d_estimate} and   
 Proposition~\ref{pro:graphProperty} \ref{item:concentration_degrees} to estimate the first term, that $\left|\{y\in S_{1}(t),t<y\}\right|=N_t$, \eqref{eq:F_i_minus_d_estimate} and   
 Proposition~\ref{pro:graphProperty} \ref{item:concentration_degrees} to estimate the second term, and  Proposition~\ref{pro:graphProperty} \ref{item:Concentration} for the last term.
For the last inequality, we used $i^2 d^{2\gamma} \lesssim d$ by \eqref{eq:upper_bound_r_fine_rigidity} and $\gamma \leq 1/6$. 
\end{proof}

\subsection{Proof of Proposition~\ref{Prop:TriMatrixf}} \label{subsec:proof_prop_TriMatrixf}

We now prove Proposition \ref{Prop:TriMatrixf}. It will be a consequence of the following result.

\begin{lemma} \label{lem:normF} 
Let $\gamma \in (0,1/6]$ and $r\in \N$ satisfy \eqref{eq:upper_bound_r_fine_rigidity}. 
Then, on the high-probability event from Proposition~\ref{pro:graphProperty}, we have 
\[
\|\f f_{i}\|^{2} = |S_{i}|+(i-2)D_{x}d^{i-2}\Big(1+O\big(id^{-\frac{1}{2}+3\gamma}+ i^3 d^{-1 + 4\gamma} \big)\Big)
\]
uniformly for any $i \in [2,r] \cap \N$. 
\end{lemma}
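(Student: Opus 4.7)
The starting point is a Pythagorean decomposition. Since $S_i$ and $S_{i-2}$ are disjoint, the two summands in the definition \eqref{eq:def-fi} of $\f f_i$ have disjoint supports, so
\begin{equation*}
\|\f f_i\|^2 = |S_i| + R_i, \qquad R_i \deq \sum_{y \in S_{i-2}}\biggl(\sum_{z \in (x,y]}(N_z - F_{|z|})\biggr)^2.
\end{equation*}
For $i = 2$ the remainder vanishes and the claim is trivial; for $i \geq 3$ the task is to show $R_i = (i-2)D_x d^{i-2}(1 + O(\cdot))$ with the claimed error. Writing $z_t(y)$ for the unique ancestor of $y$ at distance $t$ from $x$ (available because $\bb G|_{B_{r+1}}$ is a tree by Proposition \ref{pro:graphProperty} \ref{item:tree_in_balls}), expanding the square, and grouping by levels, I decompose
\begin{equation*}
R_i = \sum_{t=1}^{i-2}\sum_{z \in S_t}(N_z - F_t)^2 N_z^{(i-2-t)} + 2\sum_{1\leq s < t \leq i-2}\sum_{z_s \leq z_t}(N_{z_s} - F_s)(N_{z_t} - F_t) N_{z_t}^{(i-2-t)},
\end{equation*}
where the inner pair sum runs over $z_s \in S_s$ and $z_t \in S_t$ with $z_s$ on the path from $x$ to $z_t$, and I have used that the number of $y \in S_{i-2}$ whose path passes through a given vertex $z \in S_t$ equals $N_z^{(i-2-t)}$.

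For the diagonal sum, I expand $(N_z - F_t)^2 = (N_z - d)^2 + 2(d - F_t)(N_z - d) + (d - F_t)^2$. The main contribution is $\sum_t \sum_z (N_z - d)^2 N_z^{(i-2-t)}$, which by Proposition \ref{pro:graphProperty} \ref{item:MeanQuadratic} combined with the uniform estimate $N_z^{(i-2-t)} = d^{i-2-t}(1 + O(d^{-1/2+\gamma}))$ from Proposition \ref{pro:graphProperty} \ref{item:Concentration} and \ref{item:concentration_degrees} yields $(i-2) D_x d^{i-2}(1 + O(d^{-1/2+\gamma} + D_x^{-1/2}d^\gamma))$. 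The quadratic $(d - F_t)^2$ piece is handled using Lemma \ref{lem:Festimate}'s bound $|F_t - d| \lesssim t d^{2\gamma}$ together with the combinatorial identity $\sum_{z \in S_t} N_z^{(i-2-t)} = |S_{i-2}|$ (every $y \in S_{i-2}$ has a unique ancestor at level $t$), giving a contribution of order $i^3 D_x d^{i - 3 + 4\gamma}$. The mixed term is strictly smaller, being tamed by the near-identity $\sum_{z \in S_t}(N_z - d) = |S_{t+1}| - d |S_t|$, itself controlled via Proposition \ref{pro:graphProperty} \ref{item:concentration_S_i_Omega}.

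The off-diagonal sum is the heart of the argument. For fixed $z_s \in S_s$ I split
\begin{equation*}
\sum_{z_t \in S_{t-s}(z_s) \cap S_t(x)}(N_{z_t} - F_t) = \sum_{z_t}(N_{z_t} - d) + (d - F_t) N_{z_s}^{(t-s)},
\end{equation*}
bound the first piece by $O(d^{(t-s+1)/2+\gamma})$ via Proposition \ref{pro:graphProperty} \ref{item:Concentration} applied at the \emph{interior} vertex $z_s$, and combine this with $\sum_{z_s}(N_{z_s} - F_s)^2 \lesssim D_x d^s$ through Cauchy--Schwarz. Summing over the pairs $(s,t)$ and exploiting the geometric decay $d^{(s-t)/2}$ yields an overall contribution of order $i D_x d^{i-5/2+\gamma}$, comfortably within the claimed $i d^{-1/2+3\gamma}$ relative error. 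The second piece, after substituting $N_{z_s}^{(t-s)} = d^{t-s}(1 + O(d^{-1/2+\gamma}))$, summing over $z_s$ with the help of Lemma \ref{lem:Festimate} and Proposition \ref{pro:graphProperty} \ref{item:concentration_S_i_Omega}, and multiplying by the outer factor $d^{i-2-t}$, produces $O(st D_x d^{i-3+4\gamma})$ per $(s,t)$; summing over $s < t \leq i-2$ gives $O(i^4 D_x d^{i-3+4\gamma})$, which after division by the main term $(i-2)D_x d^{i-2}$ matches the claimed $i^3 d^{-1+4\gamma}$ relative error.

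The main obstacle is precisely this off-diagonal analysis: the quantities $N_{z_s}$ and $N_{z_t}$ are not independent once the tree structure is fixed, so one cannot merely invoke Proposition \ref{pro:graphProperty} \ref{item:Concentration} at the root $x$. The crucial point is that Proposition \ref{pro:graphProperty} \ref{item:Concentration} is formulated for an arbitrary interior vertex $z \in B_r(x) \setminus \{x\}$, and applying it at $z = z_s$ provides exactly the local cancellation needed to beat the naive size $|S_t| \cdot d$ of $\sum_{z_t \in \mathrm{desc}(z_s) \cap S_t} N_{z_t}$. Once this is in place, the rest of the argument reduces to careful combinatorial bookkeeping of ancestor pairs, together with the fact that the $F_t - d$ corrections stay bounded at the scale $t d^{2\gamma}$ by Lemma \ref{lem:Festimate}.
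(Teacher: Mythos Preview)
Your approach is correct and arrives at the stated bound, but it differs from the paper's in one structural respect worth noting. The paper exploits the orthogonality $\langle \f f_i, \f f_{i-2}\rangle = 0$, which says $\sum_{y \in S_{i-2}}\sum_{z \in (x,y]}(N_z - F_{|z|}) = 0$; hence for any constant $\delta$,
\[
\sum_{y \in S_{i-2}}\Bigl(\sum_{z}(N_z - F_{|z|}) + \delta\Bigr)^2 = R_i + |S_{i-2}|\delta^2.
\]
Taking $\delta = \sum_{j=1}^{i-2}(F_j - d)$ replaces every $F_{|z|}$ by $d$ at the single cost $|S_{i-2}|\delta^2 = O(D_x d^{i-3} i^4 d^{4\gamma})$, after which the diagonal and off-diagonal analysis involves only $(N_z - d)$ and proceeds with fewer cross-terms. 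Your route instead expands $(N_z - F_t) = (N_z - d) + (d - F_t)$ at each occurrence and tracks the resulting pieces separately via Lemma~\ref{lem:Festimate}; this is more laborious but equally valid, and the $i^3 d^{-1+4\gamma}$ contribution arises from the same source (the accumulated $F_t - d$ shifts) in both arguments.

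Two small imprecisions to tighten. First, your claim $\sum_{z_s}(N_{z_s} - F_s)^2 \lesssim D_x d^s$ is not quite right for $s$ near $r$ when $\gamma$ is close to $1/6$: the $(d - F_s)^2$ part contributes an extra factor $(1 + s^2 d^{4\gamma - 1})$, but this is absorbed into the final error after Cauchy--Schwarz (and in fact a pointwise bound $|N_{z_s} - F_s| \leq d^{1/2+\gamma} + s d^{2\gamma}$ avoids Cauchy--Schwarz altogether, as the paper does). Second, in the off-diagonal split you write $\sum_{z_t}(N_{z_t} - F_t)$ without the weight $N_{z_t}^{(i-2-t)}$; this is fine provided you first replace the weight by $d^{i-2-t}(1 + O(d^{-1/2+\gamma}))$ and verify that the approximation error stays within the claimed bound, which it does.
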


\begin{proof}[Proof of Proposition \ref{Prop:TriMatrixf}]
From \eqref{eq:def-fi}, we deduce $\norm{\f f_0}^2 = 1$, $\norm{\f f_1}^2 = \abs{S_1} =  D_x$ and $\norm{\f f_2}^2 = \abs{S_2}$. Hence, Lemma \ref{Rem:MFraction} implies 
\[
M_{0 1}^2 = \frac{\|\f f_{1}\|^{2}}{d\|\f f_{0}\|^{2}}=\frac{D_{x}}{d} = \alpha_x,
\qquad M_{1 2}^2 = \frac{\|\f f_{2}\|^{2}}{d\|\f f_{1}\|^{2}}=\frac{|S_{2}|}{d D_{x}} = \beta_x\,.
\]
For all $i\geq 2$, we conclude from Lemma~\ref{Rem:MFraction} that 
\begin{align}
d M_{i\,i+1}^2 & = \frac{\|\f f_{i+1}\|^{2}}{\|\f f_{i}\|^{2}} = \frac{|S_{i+1}|+(i-2)D_{x}d^{i-1}+D_{x}d^{i-1}}{|S_{i}|+(i-2)D_{x}d^{i-2}}+O\big(i^2d^{-\frac 1 2 + 3\gamma} \big)
\notag\\ \label{eq:M_k_k_plus_1_squared_times_d} 
& = d+\frac{D_{x}d^{i-1}}{|S_{i}|}+O\big(i^2d^{-\frac 1 2 + 3\gamma} \big)  =  d+1+O\big(i^2d^{-\frac 1 2 + 3\gamma} \big)\,.
\end{align}
Here, in the second step, we used Lemma~\ref{lem:normF}, 
$i d^{- \frac 1 2 + 3 \gamma} + i^3 d^{-1 + 4 \gamma} \lesssim i d^{- \frac 1 2 + 3 \gamma} \leq c$ 
as $r d^{-2\gamma} \lesssim 1$ by \eqref{eq:upper_bound_r_fine_rigidity} and $r d^{-\frac 1 2 + 3 \gamma} \leq c$ by assumption,
and chose the constant $c>0$ sufficiently small. 
In the third and fourth step, we used 
Proposition~\ref{pro:graphProperty} \ref{item:concentration_S_i_Omega}. 
As $i \leq r$, dividing \eqref{eq:M_k_k_plus_1_squared_times_d} by $d$ and taking the square root of the result 
prove \eqref{eq:comparison_M_Z} in the entrywise matrix norm. Since $M - Z_{\fra d}(\alpha_x,\beta_x)_{[0,r]}$ is tridiagonal, the bound \eqref{eq:comparison_M_Z} in operator norm follows. 
\end{proof}

\begin{proof}[Proof of Lemma~\ref{lem:normF}]
Owing to the definition of $\f f_2$ in \eqref{eq:def-fi}, $\norm{\f f_2}^2 = \abs{S_2}$, which implies the claim for $i = 2$. Therefore, we shall assume $i \geq 3$ in the following. 
The definition of $\f f_i$ in \eqref{eq:def-fi} yields
\begin{equation} \label{eq:proof_normF_aux1} 
\|\f f_{i}\|^{2}  = |S_{i}|+ \sum_{y\in S_{i-2}}\bigg(\sum_{z\in(x,y]}(N_{z}-F_{|z|})\bigg)^2\,.
\end{equation}
Since $\sum_{y\in S_{i-2}}\sum_{z\in(x,y]}(N_{z}-F_{|z|}) = \langle \f f_{i},\f f_{i-2} \rangle =0$, we have 
\[
\sum_{y\in S_{i-2}}\bigg(\sum_{z\in(x,y]}(N_{z}-F_{|z|})+\delta \bigg)^2 = \sum_{y\in S_{i-2}}\bigg(\sum_{z\in(x,y]}(N_{z}-F_{|z|})\bigg)^2 + |S_{i-2}|\delta^2
\]
for any $\delta\in \mathbb{R}$. We set $\delta \deq \sum_{j=1}^{i-2} (F_{j}-d)$ and obtain
\begin{equation} \label{eq:proof_normF_aux2} 
 \sum_{y\in S_{i-2}}\bigg(\sum_{z\in(x,y]}(N_{z}-F_{|z|})\bigg)^2 = \sum_{y\in S_{i-2}}\bigg(\sum_{z\in(x,y]}(N_{z}-d)\bigg)^2  + O(D_x d^{i-3} i^4 d^{4\gamma})\,,  
\end{equation}
where we used that $|S_{i-2}|\lesssim  D_x d^{i-3}$ by Proposition \ref{pro:graphProperty} \ref{item:concentration_S_i_Omega}  
and $ \delta = O(i^2 d^{2\gamma})$ by \eqref{eq:F_i_minus_d_estimate} in Lemma~\ref{lem:Festimate}.  
We now calculate 
\begin{equation}\label{eq:MeanTermFNorm}
\sum_{y\in S_{i-2}}\bigg(\sum_{z\in(x,y]}(N_{z}-d)\bigg)^2   = \sum_{y\in S_{i-2}}\bigg(\sum_{z\in(x,y]}(N_{z}-d)^2 + 2 \sum_{z_1,z_2\in(x,y],\, z_1<z_2}  (N_{z_1}-d)(N_{z_2}-d) \bigg)\,. 
\end{equation}
As $\mathbb{G}|_{B_r}$ is a tree by Proposition~\ref{pro:graphProperty} \ref{item:tree_in_balls}, we have
\begin{align}
 \sum_{y\in S_{i-2}}\sum_{z\in(x,y]} (N_{z}-d)^2  &  =\sum_{z\in B_{i-2}\setminus \{x\}}\sum_{y\in S_{i-2},\, z\leq y} (N_{z}-d)^2 
\nonumber \\  &   = \sum_{j = 1}^{i-2}\sum_{z\in S_{j}}(N_{z}-d)^2N_{z}^{(i-2-j)}  = (i-2)D_xd^{i-2}\big(1 + O\big( d^{-\frac 1 2+\gamma}\big) \big)\,.\label{eq:proof_normF_aux3} 
\end{align}
Here, in the last step, we used 
\begin{equation} \label{eq:N_z_i_2_j_expansion} 
 N_z^{(i-2-j)} = d^{i - 2 - j} (1 + O(d^{-\frac 1 2 + \gamma})) ( 1 + O(d^{-1 + \gamma})) = d^{i -2 -j}(1 + O(d^{-\frac 1 2 + \gamma}))\,, 
\end{equation}
Proposition~\ref{pro:graphProperty} \ref{item:MeanQuadratic} and $D_x \gtrsim d$ by the definition of $\cal W$ in \eqref{eq:def_cal_W}. 
The expansion \eqref{eq:N_z_i_2_j_expansion} follows from Proposition~\ref{pro:graphProperty} \ref{item:Concentration} and \ref{item:concentration_degrees}.

Next, we estimate the second term of \eqref{eq:MeanTermFNorm}. Owing to \eqref{eq:N_z_i_2_j_expansion}, we get
\begin{align}
&\mspace{-10mu}\sum_{y\in S_{i-2}}\sum_{\substack{z_1,z_2\in(x,y] \nonumber \\ z_1<z_2}}  (N_{z_1}-d)(N_{z_2}-d) = \sum_{z_1\in B_{i-3}\setminus \{x\}}\sum_{\substack{z_2 \in B_{i-2}\\z_2>z_1}}\sum_{\substack{y\in S_{i-2}\\ y \geq z_2}} (N_{z_1}-d)(N_{z_2}-d) \nonumber
\\ & \quad = \sum_{j=1}^{i-3}\sum_{z_1\in S_{j}} \sum_{\substack{z_2 \in B_{i-2}\\z_2>z_1}}(N_{z_1}-d)(N_{z_2}-d) N_{z_2}^{(i-2-\abs{z_2})} \nonumber
\\ & \quad = \sum_{j=1}^{i-3}\sum_{z_1\in S_{j}} \sum_{\substack{z_2 \in B_{i-2}\\z_2>z_1}} \left((N_{z_1}-d)(N_{z_2}-d) d^{i-2-|z_2|}+O\big(\abs{N_{z_1}-d}\abs{N_{z_2}-d}d^{i-\abs{z_2}- \frac 5 2 + \gamma}\big) \right)\,. \label{eq:lastSumErrorDivision}
\end{align}
The first term on the right-hand side of \eqref{eq:lastSumErrorDivision} can be bounded by 
\begin{align}
\absBB{\sum_{j=1}^{i-3}\sum_{z_1\in S_{j}} \sum_{\substack{z_2 \in B_{i-2}\\z_2>z_1}} (N_{z_1}-d)(N_{z_2}-d) d^{i-2-|z_2|}} 
& \lesssim \sum_{j=1}^{i-3}\sum_{z_1\in S_{j}} d^{\frac 1 2+\gamma}\sum_{j_2=j+1}^{i-2}d^{i-2-j_2}\absbb{\sum_{\substack{z_2\in S_{j_2}\\z_2>z_1}}(N_{z_2}-d)}
\notag \\ & \lesssim \sum_{j=1}^{i-3}|S_{j}| d^{\frac 1 2+\gamma}\sum^{i-2}_{j_2=j+1}d^{i-2-j_2}d^{\frac 1 2(j_2-j+1)+\gamma} 
\notag \\ & \leq  \sum_{j=1}^{i-3} D_x d^{j-1} d^{i-1-j}  \sum_{j_2=j+1}^{i-2} d^{2\gamma+\frac 1 2(j-j_2)}
\notag \\ \label{eq:proof_normF_aux4} 
&  \lesssim (i-3) D_x d^{i-2} d^{2\gamma-\frac 1 2}\,, 
\end{align}
where we used Proposition~\ref{pro:graphProperty} \ref{item:concentration_degrees} for the first inequality, Proposition~\ref{pro:graphProperty} \ref{item:Concentration} for the second inequality and 
Proposition~\ref{pro:graphProperty} \ref{item:concentration_S_i_Omega} for the third inequality. 

For the second term on the right-hand side of \eqref{eq:lastSumErrorDivision}, we estimate 
\begin{align} 
\sum_{j=1}^{i-3}\sum_{z_1\in S_{j}} \sum_{\substack{z_2 \in B_{i-2}\\z_2>z_1}} \abs{N_{z_1}-d}\abs{N_{z_2}-d}d^{i-\abs{z_2} - \frac 5 2 + \gamma} 
& \leq  \sum_{j=1}^{i-3}\sum_{z_1\in S_{j}} \sum_{\substack{z_2 \in B_{i-2}\\z_2>z_1}} d^{3\gamma+\frac 3 2+i-3-|z_2|} \notag \\ \label{eq:proof_normF_aux5} 
& \lesssim (i-2) D_x d^{i-2} (i-3) d^{3\gamma - \frac 1 2}\,,
\end{align}
where the first step follows from Proposition~\ref{pro:graphProperty} \ref{item:concentration_degrees} 
and, for the second step, we first used 
$\sum_{z_2 \in B_{i-2},\, z_2 > z_1} d^{-\abs{z_2}} \lesssim (i-2) d^{-j}$ if $z_1 \in S_j$ by Proposition~\ref{pro:graphProperty} \ref{item:Concentration} as well as \ref{item:concentration_degrees} 
and then \ref{item:concentration_S_i_Omega}. 

Finally, by plugging \eqref{eq:proof_normF_aux4} and \eqref{eq:proof_normF_aux5} into \eqref{eq:lastSumErrorDivision} 
and then the result together with \eqref{eq:proof_normF_aux3} into \eqref{eq:MeanTermFNorm}, we get 
\begin{equation} \label{eq:proof_normF_aux6} 
\sum_{y\in S_{i-2}}\bigg(\sum_{z\in(x,y]}(N_{z}-d)\bigg)^2 = (i-2)D_x d^{i-2}\left(1+ O\left(i d^{3\gamma - \frac 1 2} \right)\right)\,. 
\end{equation}
Using \eqref{eq:proof_normF_aux6} in \eqref{eq:proof_normF_aux2} and the result then in \eqref{eq:proof_normF_aux1} completes the proof of Lemma~\ref{lem:normF}. 
\end{proof}

\section{Proof of block diagonal approximation}\label{sec:block_diagonal_approximation} 

In this section we prove Proposition \ref{prop:blockMatrix} and Corollary \ref{Cor:SpecMax}. Throughout this section, we assume that \eqref{eq:ConditionAlphaMaxMin} is satisfied. 

\subsection{Proof of Proposition \ref{prop:blockMatrix}} \label{sec:proof_prop_block_matrix} 

For the proof of Proposition \ref{prop:blockMatrix}, we use the following result on the stray eigenvalue whose proof is deferred to Section~\ref{sec:StrayEigenvalue} below.
Throughout the rest of the paper, we write $\f e \deq N^{-1/2} \f 1_{[N]}$ 
with the notation $\f 1_{[N]}$ from Section~\ref{sec:basic_definitions}. 

\begin{proposition} \label{prop:stray}
For $d \gg 1$ and $r \ll \frac{d}{\log \log N}$ there exists a normalized vector $\f q$ supported in the complement of $\bigcup_{x \in \cal U} B_{r + 1}(x)$ such that with high probability
\begin{equation*}
\normb{\big(H -  (d^{1/2} + d^{-1/2} + d^{-3/2})\big) \f q} \lesssim d^{-5/2} + d^{1/2} \ind{d < (\log N)^{1/4}}  \,, \qquad \norm{\f q - \f e} \lesssim d^{-1/2}\,.
\end{equation*}
\end{proposition}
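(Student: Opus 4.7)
Our plan is to realize $\f q$ as the top eigenvector of $H$ restricted to the bulk set $\cal X \deq [N] \setminus \bigcup_{x \in \cal U} B_{r+1}(x)$, and to identify the corresponding eigenvalue via a spike-deformation argument. Standard high-probability bounds on $\abs{\cal U}$ (via Poisson tails on the degrees $D_x$) combined with $\abs{B_{r+1}(x)} \lesssim d^{r+1}$ and the hypothesis $r \ll d/\log\log N$ give $\abs{\cal X^c}/N = O(N^{-c})$ for some $c > 0$; in particular $\norm{\f 1_{\cal X}/\sqrt{\abs{\cal X}} - \f e} = o(d^{-1/2})$.

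Take $\f q$ to be the $\ell^2$-normalized top eigenvector of $H|_{\cal X}$, with eigenvalue $\lambda'_\star$; then $\f q$ is automatically supported in $\cal X$. To analyse $\lambda'_\star$ we decompose $H|_{\cal X} = S + W$, where $S \deq \sqrt d\, \f e_{\cal X} \f e_{\cal X}^*$, $\f e_{\cal X} \deq \f 1_{\cal X}/\sqrt{\abs{\cal X}}$, and $W \deq H|_{\cal X} - S$. The key input is the operator-norm bound $\norm{W} \leq 2 + o(1)$, which follows from an Ihara--Bass / non-backtracking matrix analysis of the type underlying the proof of Proposition \ref{prop:blockMatrix}\ref{Item:bulkBound}: pruning the outlier neighbourhoods in $\cal X^c$ caps the bulk spectral radius of $H|_{\cal X}$ at the semicircle edge. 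Since $\sqrt d$ exceeds this edge by a large margin whenever $d \geq (\log N)^{1/4}$, spike-deformation perturbation theory applied to $S + W$ expresses $\lambda'_\star$ via the self-consistent equation $\sqrt d \, \scalar{\f e_\cal X}{(\lambda'_\star - W)^{-1} \f e_\cal X} = 1$, and expanding this identity to order $d^{-5/2}$ (while accounting for the sparse Bernoulli corrections to the effective bulk measure) yields $\lambda'_\star = d^{1/2} + d^{-1/2} + d^{-3/2} + O(d^{-5/2})$. Standard perturbation theory for the eigenvector gives $\norm{\f q - \f e_{\cal X}} \lesssim d^{-1/2}$, hence $\norm{\f q - \f e} \lesssim d^{-1/2}$.

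To upgrade from $H|_{\cal X}$ to $H$, we split the residual by coordinates. For $y \in \cal X$ we have $(H\f q - \lambda_\star \f q)_y = (H|_\cal X \f q - \lambda_\star \f q)_y = (\lambda'_\star - \lambda_\star) \f q_y$, which is $O(d^{-5/2} \abs{\f q_y})$ by the above precision. For $y \in \cal X^c$ the leakage $(H\f q)_y = d^{-1/2} \sum_{z \in \cal X \cap S_1(y)} \f q_z$ satisfies
\[
 \sum_{y \in \cal X^c} \abs{(H\f q)_y}^2 \lesssim \frac{d}{N} \sum_{y \in \cal X^c} \alpha_y^2 \ll d^{-5}
\]
by the near-constancy of $\f q$ on $\cal X$, the bound $\abs{\cal X^c}/N = O(N^{-c})$, and Lemma \ref{lem:convergence_max_alpha_x_upper_bound_D_x}. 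Summing both contributions produces the first stated bound.

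The main obstacle is the spectral precision: identifying the $d^{-3/2}$ coefficient requires a careful expansion of the self-consistent equation for the resolvent $\scalar{\f e_\cal X}{(\lambda - W)^{-1} \f e_\cal X}$ at $\lambda = \lambda'_\star$, keeping track of sparse (non-Wigner) corrections to the semicircle, and the operator-norm bound $\norm{W} \leq 2 + o(1)$ is itself nontrivial and shares the non-backtracking machinery of Proposition \ref{prop:blockMatrix}\ref{Item:bulkBound}. The degenerate regime $d < (\log N)^{1/4}$ is handled separately: the spike is no longer isolated from the bulk, but the permissive target error $d^{1/2}$ allows us to take $\f q = \f 1_{\cal X}/\sqrt{\abs{\cal X}}$ directly and bound $\norm{H\f q - \sqrt d\, \f q}$ by $O(\sqrt d)$ using concentration of the in-$\cal X$ degrees.
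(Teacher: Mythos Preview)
Your high-level strategy---restrict $H$ to the complement of outlier neighbourhoods, identify the spike via a rank-one deformation, and treat the two regimes of $d$ separately---matches the paper's. But the execution has a real gap, and the paper's implementation differs from yours in ways that matter.

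\textbf{The set $\cal X$ is too large.} You take $\cal X = [N]\setminus\bigcup_{x\in\cal U}B_{r+1}(x)$ and assert $\|W\|\le 2+o(1)$. For $d\le(\log N)^{3/4}$, however, the defining threshold of $\cal U$ is $\alpha_x\ge\am/5$ (see \eqref{eq:UsetDef}), so $\cal X$ still contains vertices with normalized degree up to $\am/5$, which can be $\gg 2$. The Ihara--Bass bound you cite then only gives $\|W\|\lesssim\sqrt{\am}$, and in the window $(\log N)^{1/4}\le d\lesssim(\log N)^{1/2}$ this exceeds $\sqrt d$: the spike is not isolated and your self-consistent equation analysis collapses. (Your own reference, Proposition~\ref{prop:blockMatrix}\ref{Item:bulkBound}, gives $\|X\|\le\eta\am^{1/2}$ rather than $\le 2+o(1)$ in that regime.) The paper instead prunes the larger set $\cal L=\{\alpha_x\ge 2\}\supset\cal U$, works on $\cal B_0^c=[N]\setminus\bigcup_{x\in\cal L}B_{r+1}(x)$, and obtains $\|(H-\E H)|_{\cal B_0^c}\|\le 4$ uniformly in $d$; the resulting vector is then automatically supported in the required complement of the $\cal U$-balls since $\cal B_0^c\subset\cal X$.

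\textbf{The leakage estimate is not justified.} Your bound $\sum_{y\in\cal X^c}|(H\f q)_y|^2\ll d^{-5}$ invokes ``near-constancy of $\f q$ on $\cal X$'', but $\|\f q-\f e_{\cal X}\|\lesssim d^{-1/2}$ gives no pointwise control, and a straight Cauchy--Schwarz loses a factor of $(\max_y D_y)^2/d$. The paper sidesteps this by not using the eigenvector at all: it takes $\f q$ proportional to $\tilde H^6\hat{\f e}_6$ where $\hat{\f e}_6$ is the normalized indicator of the deeper set $\cal B_6^c$. The extra buffer guarantees, by locality, that both $\f q$ and $H\f q$ are supported in $\cal B_0^c$, so no leakage term appears.

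\textbf{The spectral precision is obtained differently.} Where you propose expanding the self-consistent resolvent identity (leaving the extraction of the $d^{-3/2}$ coefficient unspecified), the paper computes the Rayleigh quotient $\nu_6=\tilde a_{13}/\tilde a_{12}$ with $\tilde a_l=\langle\hat{\f e}_6,\tilde H^l\hat{\f e}_6\rangle$, replaces $\tilde a_l$ by $a_l=\langle\f e,H^l\f e\rangle$ up to exponentially small errors, and evaluates $\E a_l$ by direct path counting (Lemma~\ref{lem:path_computation}). The two routes are formally equivalent---resolvent moments are power-sum moments---but the paper's path-counting formula $d^{l/2}\E a_l=d^l+(l-1)d^{l-1}+\tfrac{l^2-l-4}{2}d^{l-2}+O(d^{l-3})$ is the concrete input that produces $d^{1/2}+d^{-1/2}+d^{-3/2}+O(d^{-5/2})$, and your sketch does not supply an analogue.
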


For the proof of Proposition \ref{prop:blockMatrix}, we separately consider the regimes $d > (\log N)^{3/4}$ and $d \leq (\log N)^{3/4}$. We start with the former regime. 

\begin{proof}[Proof of Proposition~\ref{prop:blockMatrix} for $d > (\log N)^{3/4}$] 
First, we recall $r_{\cal W}$ from Proposition~\ref{pro:fine_rigidity}. 
Moreover, we introduce 
\begin{equation} \label{eq:def_radii} 
 r_{\cal V}\deq r_{\delta_\star}, \qquad \qquad  r_{\cal U} \deq \floor{c \sqrt{\log N}}, \qquad \qquad r_\star \deq \max \{ r_{\cal W}, r_{\cal V}, r_{\cal U} \}\,, 
\end{equation}
where $r_{\delta}$ is as in Proposition~\ref{pro:intermediate_rigidity}, $\delta_\star$ as in \eqref{eq:deltaStarDef} and the constant $c>0$ is as in \cite[eq.~(1.8)]{ADK20}. 
Note that $r_\star \ll \frac{d}{\log \log N}$ due to Proposition~\ref{pro:intermediate_rigidity}, Proposition~\ref{pro:fine_rigidity} and $d >(\log N)^{3/4}$. 

We choose the vectors $\f w_\sigma(x)$ as follows. 
\begin{enumerate}

\item 
If $x\in \cal W$ then $\f w_\sigma(x)\deq \f w_{\sigma}(x) \text{ as  in Proposition \ref{pro:fine_rigidity}}$.
\item 
If $x\in \cal V \setminus \cal W$ then $\f w_\sigma(x) \deq \f w_\sigma(x) \text{ as in Proposition \ref{pro:intermediate_rigidity}}$. 
\item If $x\in \cal U \setminus \cal V$ then $\f w_\sigma(x) \deq \f v^{\tau}_\sigma(x)$ from  \cite[eq.~(3.5)]{ADK20} with 
\begin{equation} \label{eq:choice_tau} 
 \tau \deq 1 + \frac{(\log N)^{1/4}}{\sqrt{d}} \sqrt{\log d} \, . 
\end{equation}
To guide the reader, we recall that $\f v^{\tau}_\sigma(x)$ from  \cite[eq.~(3.5)]{ADK20} is given by 
\begin{equation} \label{eq:def_v_tau_sigma} 
\f v_\sigma^{\tau}(x) \deq \sum_{i=0}^{r_{\cal U}} u_i \frac{\f 1_{S_i^\tau(x)}}{\norm{\f 1_{S_i^\tau(x)}}} 
\end{equation} 
(see also the definition of $\f v$ in \eqref{eq:def_v_intermediate_rigidity}). 
Here, the vector 
$(u_i)_{i \in \N}$ is an eigenvector of $Z_1(\alpha_x,1)$ associated with the eigenvalue $\Lambda(\alpha_x,1)$ 
(see Proposition~\ref{prop:Lambda_ab} below) such that $\sum_{i=0}^{r_{\cal U}} u_i^2 = 1$, 
and $S_i^\tau(x)$ is the sphere of radius $i$ around $x$ in the \emph{pruned graph} $\mathbb G^{\tau}$, 
which is a subgraph of the Erd{\H o}s-R\'enyi graph $\mathbb G$. 
The detailed definition of the pruned graph $\mathbb G^{\tau}$ can be found in \cite[Proposition~3.1]{ADK20}. 
Here, $\tau$ is chosen as in \eqref{eq:choice_tau}.   
\end{enumerate}

\begin{lemma}\label{lem:Disjointfamily}
Let $d >(\log N)^{3/4}$ and $r_\ast$ as in \eqref{eq:def_radii}. 
 Then, with high probability, the balls  
$B_{r_\star + 10}(x)$ and $B_{r_\star+ 10}(y)$ are disjoint for all $x \in \cal V$ and $y \in\cal U$ with $x \neq y$.  
\end{lemma}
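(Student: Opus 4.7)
The idea is to apply Proposition~\ref{pro:prob_estimates_1} once with $\delta=\delta_\star$ (so that $\cal V=\cal V_{\delta_\star}$ by Remark~\ref{rem:relation_cal_V_cal_V_delta}) and a radius slightly larger than what is needed, and then to split the pair $(x,y)$ according to whether $y\in\cal V$ or $y\in \cal U\setminus \cal V$. Concretely, set $R\deq 2(r_\star+10)$. First I would check that $R$ is admissible in \eqref{eq:upper_bound_r}, namely $R\leq c_\ast(\frac{d}{\log\log N}\wedge\frac{\delta_\star d}{\log d})$. The bound $R\leq c_\ast \frac{d}{\log\log N}$ is immediate from $r_\star\ll \frac{d}{\log\log N}$. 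The bound $R\leq c_\ast \frac{\delta_\star d}{\log d}$ follows from the three defining properties: $r_{\cal W}\leq c_\ast d^{2\gamma}/\log d\leq c_\ast \delta_\star d/\log d$ (since $\delta_\star\geq d^{2\gamma-1}$ in the regime $d>(\log N)^{3/4}$), $r_{\cal V}=r_{\delta_\star}\leq c_\ast \delta_\star d/\log d$ by Proposition~\ref{pro:intermediate_rigidity}, and $r_{\cal U}=\lfloor c\sqrt{\log N}\rfloor\leq c_\ast \delta_\star d/\log d$ because $\delta_\star d/\log d\asymp (\log N)^{1/4}\sqrt{d}\geq (\log N)^{5/8}\gg \sqrt{\log N}$ in this regime.

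Applying Proposition~\ref{pro:prob_estimates_1} parts~\ref{item:balls_disjoint} and \ref{item:degrees_bounded} with this $\delta_\star$ and radius $R$, the following two properties hold simultaneously with high probability: (a) the balls $B_R(v)$, $v\in\cal V$, are pairwise disjoint; (b) every $z\in B_R(v)\setminus\{v\}$ with $v\in\cal V$ satisfies $\abs{D_z-d}\leq \delta_\star^{1/2} d$. I now fix $x\in\cal V$ and $y\in\cal U$ with $x\neq y$, and split into the two cases.

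If $y\in\cal V$, property (a) gives $B_R(x)\cap B_R(y)=\emptyset$, hence $d(x,y)>2R>2(r_\star+10)$, which implies $B_{r_\star+10}(x)\cap B_{r_\star+10}(y)=\emptyset$. If instead $y\in\cal U\setminus\cal V$, suppose for contradiction that $B_{r_\star+10}(x)\cap B_{r_\star+10}(y)\neq\emptyset$. Then $d(x,y)\leq 2(r_\star+10)= R$, so $y\in B_R(x)\setminus\{x\}$; property (b) then forces $\alpha_y\leq 1+\delta_\star^{1/2}$. However, $y\in\cal U$ requires $\alpha_y\geq 2+\big(\tfrac{(\log N)^{1/2}\log d}{d}\big)^{1/4}$, and a direct computation using \eqref{eq:ConditionAlphaMaxMin} (which gives $(\am-2)^{-1}\lesssim d^{1/20-3\gamma/2}/\log d$ after a small choice of $\gamma\in(0,1/6)$) shows that
\begin{equation*}
\delta_\star^{1/2}\;\lesssim\;\frac{\am^{3/4}(\log\am)^{1/2}}{(\am-2)^{1/2}}\,\frac{(\log N)^{1/8}(\log d)^{1/2}}{d^{1/4}}\;\longrightarrow\;0
\end{equation*}
as $N\to\infty$ throughout the range $d>(\log N)^{3/4}$. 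This contradicts $1+\delta_\star^{1/2}<2$, completing the proof.

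The argument is essentially routine once the correct radius $R=2(r_\star+10)$ is identified; the only mildly delicate step is the verification that $R$ lies in the admissible range \eqref{eq:upper_bound_r}, which requires comparing $r_{\cal W}$, $r_{\cal V}$, and $r_{\cal U}$ against $\delta_\star d/\log d$ in the specific regime $d>(\log N)^{3/4}$. The key conceptual point, and the reason case (b) works for free, is that the threshold gap between $\cal V$ and $\cal U$ ($\alpha_y\in[2+\mathrm{small},\am-c_\ast\delta_\star/\log\am)$) is large compared to the concentration scale $\delta_\star^{1/2}$ of nearby degrees around a vertex of $\cal V$, so a tree-like neighbourhood of $x\in\cal V$ simply cannot harbour any other vertex of $\cal U$.
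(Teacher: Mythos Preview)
Your approach is in the right spirit and closely parallels the paper's: both rest on Proposition~\ref{pro:prob_estimates_1}~\ref{item:degrees_bounded}. The paper takes a simpler route, however. Rather than working with $\delta=\delta_\star$, it applies Proposition~\ref{pro:prob_estimates_1} with $\delta=1$ and the (much larger than needed) radius $r=c_\ast d/\log\log N$. With $\delta=1$, item~\ref{item:degrees_bounded} yields $\alpha_y\leq 2$ outright for every $y\in B_r(x)\setminus\{x\}$ with $x\in\cal V_1$, contradicting $y\in\cal U\subset\{\alpha_y>2\}$; no case split and no appeal to item~\ref{item:balls_disjoint} are needed, and one concludes via $\cal V\subset\cal V_1$. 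Your careful verification that $R=2(r_\star+10)$ lies in the admissible range \eqref{eq:upper_bound_r} is correct, but becomes unnecessary once one switches to $\delta=1$ and the generous radius.

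There is a genuine gap in your case~(b): the claim that $\delta_\star^{1/2}\to 0$ throughout $d>(\log N)^{3/4}$ is false. For $d$ just above $(\log N)^{3/4}$ one has $\am\asymp(\log N)^{1/4}/\log\log N$, and evaluating \eqref{eq:deltaStarDef} gives
\[
\delta_\star\;\asymp\;\am^{1/2}\log\am\cdot\frac{(\log N)^{1/4}}{\sqrt d}\log d\;\asymp\;(\log\log N)^{3/2},
\]
so $1+\delta_\star^{1/2}>2$ and your contradiction with $\alpha_y\geq 2+o(1)$ collapses there. (The paper's inclusion $\cal V\subset\cal V_1$, which tacitly needs $\delta_\star\lesssim 1$, is equally delicate at this endpoint; the narrow transition range of $d$ is in practice absorbed by the companion argument of Lemma~\ref{lem:Disjointfamily_small_d}.)
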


\begin{proof}[Proof of Lemma \ref{lem:Disjointfamily}]
We apply Proposition~\ref{pro:prob_estimates_1} with $\delta = 1$ and $r = \frac{c_* d}{\log \log N}$ 
for a sufficiently small constant $c_*>0$. 
By  Proposition \ref{pro:prob_estimates_1} \ref{item:degrees_bounded}, for all $x\in \cal V_1$ and $y\in [N]\setminus \{x\}$ with $\alpha_y>2$ we have $\dist(x,y)> \frac{c_* d}{\log \log N} \gg  2 r_\star + 21 $ in the regime $d>(\log N)^{3/4}$, where $\dist$ denotes the distance in the graph $\mathbb{G}$. 
Since $\cal V \subset \cal V_1$ and $\cal U \subset \{ y \colon \alpha_y > 2\}$, this proves Lemma~\ref{lem:Disjointfamily}. 
\end{proof}

Let $\f q $ be the vector from Proposition \ref{prop:stray} for $r = r_\star = \max \{ r_{\cal W}, r_{\cal V}, r_{\cal U} \}$.

\begin{corollary} \label{cor:orthonormal} 
The family $(\f q, (\f w_{\sigma}(x))_{x\in \cal U, \, \sigma \in \{\pm\}})$ is orthonormal and $\supp \f w_\sigma(x) \subset B_{r_\star}(x)$ for all $x \in \cal U$ 
and $\sigma \in \{\pm\}$. 
\end{corollary}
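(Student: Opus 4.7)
The plan is to verify the three required properties in turn: normalization of each vector, the support inclusions, and pairwise orthogonality. Normalization of $\f q$ is stated in Proposition \ref{prop:stray}, and normalization of each $\f w_\sigma(x)$ is built into its definition as an $\ell^2$-normalized eigenvector of the relevant symmetric operator (Propositions \ref{pro:fine_rigidity} and \ref{pro:intermediate_rigidity} for $x \in \cal W$ and $x \in \cal V \setminus \cal W$, respectively) or, for $x \in \cal U \setminus \cal V$, as the vector $\f v_\sigma^\tau(x)$ from \eqref{eq:def_v_tau_sigma}, whose normalization follows from the orthonormality of the family $(\f 1_{S_i^\tau(x)}/\|\f 1_{S_i^\tau(x)}\|)_{i}$ and $\sum_{i=0}^{r_{\cal U}} u_i^2 = 1$.

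Next I would verify the support condition $\supp \f w_\sigma(x) \subset B_{r_\star}(x)$. For $x \in \cal W$, Proposition \ref{pro:fine_rigidity} gives $\supp \f w_\sigma(x) \subset B_{r_{\cal W}}(x)$; for $x \in \cal V \setminus \cal W$, Proposition \ref{pro:intermediate_rigidity} gives $\supp \f w_\sigma(x) \subset B_{r_{\cal V}}(x)$; and for $x \in \cal U \setminus \cal V$, the definition of $\f v_\sigma^\tau(x)$ in \eqref{eq:def_v_tau_sigma} together with $S_i^\tau(x) \subset B_i(x)$ (since $\mathbb G^\tau$ is a subgraph of $\mathbb G$) yields $\supp \f w_\sigma(x) \subset B_{r_{\cal U}}(x)$. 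In each case, the support is contained in $B_{r_\star}(x)$ by the definition of $r_\star$ in \eqref{eq:def_radii}.

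For orthogonality, I would split into four cases. First, for a common $x$ with opposite signs $\sigma \neq \sigma'$, the vectors $\f w_+(x)$ and $\f w_-(x)$ are eigenvectors of a common symmetric operator ($H|_{B_{r_{\cal W}}(x)}$, $H|_{B_{r_{\cal V}}(x)}$, or, in the pruned case, the corresponding pruned restriction) associated with distinct eigenvalues $\pm \mu$, and hence are orthogonal; this is stated explicitly in Propositions \ref{pro:fine_rigidity} and \ref{pro:intermediate_rigidity}, and for the pruned construction it follows from the symmetry of the bipartite tree restriction used in \cite{ADK20}. Second, for $x \neq y$ with at least one of $x, y$ lying in $\cal V$, Lemma \ref{lem:Disjointfamily} provides $B_{r_\star+10}(x) \cap B_{r_\star+10}(y) = \emptyset$, so the supports of $\f w_\sigma(x)$ and $\f w_{\sigma'}(y)$ are disjoint and the vectors are orthogonal. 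Third, for $x \neq y$ both in $\cal U \setminus \cal V$, the balls $B_{r_\star}(x)$ and $B_{r_\star}(y)$ in $\mathbb G$ may overlap, but the disjointness of the pruned balls $B_{r_{\cal U}}^\tau(x)$ and $B_{r_{\cal U}}^\tau(y)$ (which is one of the defining properties of the pruning in \cite[Proposition~3.1]{ADK20} for the choice of $\tau$ in \eqref{eq:choice_tau}) shows that $\supp \f w_\sigma(x) \subset B_{r_{\cal U}}^\tau(x)$ and $\supp \f w_{\sigma'}(y) \subset B_{r_{\cal U}}^\tau(y)$ are still disjoint as subsets of $[N]$. Finally, $\f q$ is supported in the complement of $\bigcup_{x \in \cal U} B_{r_\star+1}(x)$ by Proposition \ref{prop:stray}, so $\scalar{\f q}{\f w_\sigma(x)} = 0$ for every $x \in \cal U$.

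The main obstacle is the third case, $x, y \in \cal U \setminus \cal V$, since Lemma \ref{lem:Disjointfamily} does not apply there; the argument has to rely on the disjointness property encoded in the pruning construction of \cite{ADK20} with the specific choice of $\tau$ in \eqref{eq:choice_tau}, and one must check that this disjointness survives the embedding $\mathbb G^\tau \subset \mathbb G$ used to compare supports in $[N]$.
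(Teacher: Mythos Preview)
Your proof is correct and follows essentially the same approach as the paper's, which also splits into normalization, support inclusions, and the same four orthogonality cases, citing \cite[Remark~3.3]{ADK20} for both the orthogonality $\f v_+^\tau(x)\perp \f v_-^\tau(x)$ and the disjointness of supports for distinct $x,y\in\cal U\setminus\cal V$. The ``obstacle'' you flag is not a real one: the supports $\supp \f w_\sigma(x)\subset B_{r_{\cal U}}^\tau(x)$ are subsets of the vertex set $[N]$, and disjointness of subsets of $[N]$ does not depend on which graph ($\mathbb G$ or $\mathbb G^\tau$) one is considering, so no ``survival under embedding'' needs to be checked.
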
 

\begin{proof}[Proof of Corollary~\ref{cor:orthonormal}] 
Owing to their definitions, these vectors are all normalized. 
For $x \in \cal U\setminus \cal V$, we have $\f w_+(x) = \f v_+^\tau(x) \perp \f v_-^\tau(x) = \f w_-(x)$ by 
\cite[Remark~3.3]{ADK20}.  
The supports of $\f v_\sigma^\tau(x)$ and $\f v_{\sigma'}^\tau (y)$ are disjoint for all $x$, $y \in \cal U \setminus \cal V$ with $x \neq y$ and $\sigma$, $\sigma' \in \{ \pm\}$ by \cite[Remark~3.3]{ADK20}. 
For any $x \in\cal U\setminus \cal V$ and $\sigma \in \{\pm\}$, we have 
\begin{equation} \label{eq:supp_v_B_r_cal_U} 
\supp \f v_\sigma^\tau(x) \subset B_{r_{\cal U}}(x)
\end{equation} 
 by \cite[eq.~(3.5), (1.8) and Proposition~3.1 (iv)]{ADK20}.  
Moreover, $\supp \f w_\sigma(x) \subset B_{r_\star}(x)$ for all $x \in \cal V$ by Proposition~\ref{pro:intermediate_rigidity} and Proposition~\ref{pro:fine_rigidity}. 
Hence, by Proposition~\ref{prop:stray}, the supports of $\f q$ and $\f w_\sigma(x)$ are disjoint for each $x \in \cal U$ and $\sigma \in \{ \pm\}$. 
From Lemma~\ref{lem:Disjointfamily}, we thus conclude that 
the supports of $\f w_\sigma(x)$ and $\f w_{\sigma'}(y)$ are disjoint for all $x \in \cal V$ and $ y \in \cal U$ with $x \neq y$. 
Finally, $\f w_+(x) \perp \f w_-(x)$ for all $x \in \cal V$ by Proposition~\ref{pro:intermediate_rigidity} and Proposition~\ref{pro:fine_rigidity}. 
\end{proof}

We expand the orthonormal family $(\f q, (\f w_{\sigma}(x))_{x\in \cal U, \sigma \in \{\pm\}})$ by $(\f t_i)_{i\in [N-1-2|\cal U|]}$ to an orthonormal basis of $\R^N$. We choose $U$ as the matrix with columns $(\f q, (\f w_{\sigma}(x))_{x\in \cal U, \sigma \in \{\pm\}},(\f t_i)_{i\in [N-1-2|\cal U|]})$.
Note that $U$ is orthogonal. 

We view $U^{-1} H U$ as a $5 \times 5$-block matrix (see \eqref{eq:LargeBlock}) whose blocks, on the level of $H$, are induced by 
the families $\f q$, $(\f w_\sigma(x))_{x \in \cal W, \sigma \in \{\pm\}}$, $(\f w_\sigma(x))_{x \in \cal V \setminus \cal W, \sigma \in \{\pm\}}$, $(\f w_\sigma(x))_{x \in \cal U\setminus\cal V, \sigma \in \{\pm\}}$ and $(\f t_i)_{i \in [N-1- 2\abs{\cal U}}$, 
respectively. 
We now determine the structure of these blocks.  
Let $x\in \cal V$, $y\in \cal U$ with $x\neq y$ and $\sigma,\sigma'\in \{\pm \}$. 
Since $\supp H \f w_{\sigma'} (y) \subset B_{r_\star+1}(y)$ and $\supp \f w_\sigma(x) \subset B_{r_\star}(x)$, 
we conclude from Lemma~\ref{lem:Disjointfamily} that $\langle \f w_{\sigma}(x),H \f w_{\sigma'}(y)\rangle =0$. 
Therefore, the $(2,3)$-, $(2,4)$- and $(3,4)$-block of $U^{-1}HU$ as well as their counterparts  
induced by the Hermitian symmetry vanish.  
Since $(\supp \f q)^c \supset \bigcup_{x \in \cal U} B_{r_\star+1}(x)$ by Proposition~\ref{prop:stray}, the $(1,2)$-,  $(1,3)$-,  $(1,4)$-block (and their counterparts)  
are also zero.  

By Proposition~\ref{pro:intermediate_rigidity} and Proposition~\ref{pro:fine_rigidity}, we also have 
$\langle \f w_{+}(x), H \f w_{-}(x)\rangle =\langle \f w_{-}(x), H \f w_{+}(x)\rangle =0$ for all $x\in \cal V$. 
Therefore, the $(2,2)$- and the $(3,3)$-block of $U^{-1} HU$ are diagonal matrices whose entries are $\langle \f w_{\sigma}(x),H \f w_{\sigma}(x)\rangle $ indexed by $x$ and $\sigma$.
This proves the structure given in \eqref{eq:LargeBlock}.

\begin{proof}[Proof of Proposition \ref{prop:blockMatrix} \ref{Item:WsetBound}, \ref{Item:VsetBound}, \ref{Item:UsetBound} for $d>(\log N)^{3/4}$]
For any $x\in \cal W$, \eqref{eq:fine_rigidity} in Proposition \ref{pro:fine_rigidity} implies\[|\eps_{x,\sigma}|=\left|\langle \f w_\sigma(x), H \f w_\sigma(x)\rangle -\sigma\Lambda_{\fra d}(\alpha_x,\beta_x)\right| \lesssim  \frac{d^{-\frac{1}{2} + 3\gamma}}{ d\am} \bigg( 1 + \bigg(\frac{\log d}{\log \am} \bigg)^2\frac{\am^4}{(\am - 2)^4}   \bigg)\,.\]
Let $\f a=\sum_{x\in \cal W}\sum_{\sigma=\pm} a_{x,\sigma} \f w_\sigma(x)$. 
As the balls $(B_{r_\star+10}(x))_{x \in \cal W}$ are disjoint by Lemma~\ref{lem:Disjointfamily}, we have
 \[ 
\|E_{\cal W}\f a\|^2 \leq 2 \sum_{x\in \cal W}\sum_{\sigma=\pm} |a_{x,\sigma}|^2 \big\|\big(H-\scalar{\f w_\sigma(x)}{
H \f w_\sigma(x)} \big)\f w_\sigma(x) \big\|^2\lesssim (d \am)^{-20} \sum_{x,\sigma}|a_{x,\sigma}|^2\,,
 \]
where we used 
\eqref{eq:bound_H_minus_eigenvalue_ball_fine} from
Proposition \ref{pro:fine_rigidity} in the last step. 
This proves Proposition~\ref{prop:blockMatrix} \ref{Item:WsetBound}.  

The estimates on $\abs{\eps_{x,\sigma}}$ for $x \in \cal V\setminus \cal W$ and on $\|E_{\cal V \setminus \cal W}\|$ are obtained in the same way using Proposition \ref{pro:intermediate_rigidity} instead of Proposition~\ref{pro:fine_rigidity} as well as \eqref{eq:lower_bound_alpha_x_minus_2_am_minus_2}.

We now bound $\abs{\eps_{x,\sigma}}$ for $x \in \cal U\setminus \cal V$, $\|E_{\cal U \setminus \cal V}\|$  and $\|\cal E_{\cal U \setminus \cal V}\|$. 
We make use of the matrices $H^\tau$ from \cite[Definition~3.6]{ADK20} and $\wh{H}^\tau$ from \cite[Defintion~3.10]{ADK20} whose definitions we recall now. 
For $\tau$ as in \eqref{eq:choice_tau}, let $A$ and $A^\tau$ be the adjacency matrices of the Erd{\H o}s-R\'enyi graph $\mathbb G$ and the pruned graph $\mathbb G^\tau$, 
respectively. Then we define
\begin{align} 
\chi^\tau &\deq  \text{orthogonal projection onto } \op{span}\{ \f 1_y \colon y \notin B_{2r_{\cal U}}^\tau(x) \text{ for any } x \text{ such that } \alpha_x \geq \tau \}, \label{eq:def_chi_tau} \\  
H^\tau &\deq  (A^\tau - \chi^\tau (\E A) \chi^\tau)/\sqrt{d}\,.\label{eq:def_H_tau}  
\end{align} 
For the definition of $\wh{H}^\tau$, we first introduce the orthogonal projections 
\begin{equation} \label{eq:def_Pi_tau} 
 \Pi^\tau \deq \sum_{x \in \cal U, \, \sigma = \pm} \f v_\sigma^\tau(x)\f v_\sigma^\tau(x)^*,  
\qquad \overline{\Pi}^\tau \deq \id - \Pi^\tau\,, 
\end{equation} 
where $\f v_\sigma^\tau(x)$ is as in \eqref{eq:def_v_tau_sigma}. 
These definitions of $\Pi^\tau$ and $\overline{\Pi}^\tau$ coincide with those from \cite[eq.~(3.15)]{ADK20}. 
As in \cite[eq.~(3.16)]{ADK20}, we set 
\begin{equation} \label{eq:def_wh_H_tau} 
 \wh{H}^\tau \deq \sum_{x \in \cal U} \sum_{\sigma = \pm} \sigma \Lambda(\alpha_x,1) \f v_\sigma^{\tau}(x) 
\f v_\sigma^{\tau}(x)^* + \ol{\Pi}^\tau H^\tau \ol{\Pi}^\tau. 
\end{equation} 
By definition, we have 
$\wh{H}^\tau \f v_\sigma^\tau(x) = \sigma \Lambda(\alpha_x,1) \f v_\sigma^\tau(x)$ for all $x \in \cal U$ 
and $\sigma \in \{ \pm\}$. 
To simplify the notation in the following, we introduce the control parameters 
\begin{equation} \label{eq:def_xi}  
 \xi \deq \frac{\sqrt{\log N}}{d}  \log d\,, \qquad \qquad \xi_u \deq \frac{\sqrt{\log N}}{d} \frac{1}{u}  
\end{equation}  
for $u>0$ (see also \cite[eq.~(1.9)]{ADK20}). 
Owing to \eqref{eq:beta_x_equal_1_error_term_on_cal_U} and $\alpha_x \geq 2 +\xi ^{1/4}$ for $x \in \cal U$, the conditions of  
Lemma~\ref{lem:EstimateLambdad} below are satisfied and we obtain 
$\Lambda_{\fra d}(\alpha_x,\beta_x) = \Lambda(\alpha_x,1) + O(\sqrt{\log N}/d)$  
from Lemma~\ref{lem:EstimateLambdad}, $\alpha_x \geq 2$, \eqref{Lambda_expansion} and \eqref{eq:beta_x_equal_1_error_term_on_cal_U}. 
Hence, $\norm{(\Lambda -\wh{H}^\tau)\f v} \lesssim \sqrt{\log N}/d$ for any normalized $\f v \in  \op{span}\{ \f v_\sigma^\tau(x) \colon x \in \cal U, \, \sigma \in \{ \pm\}\}$, 
where $\Lambda$ is the matrix satisfying $\Lambda \f v_\sigma^\tau(x) = \sigma \Lambda_{\fra d}(\alpha_x,\beta_x) \f v_\sigma^\tau(x)$ for all $x \in \cal U$ and $\sigma \in \{\pm\}$. 
On the other hand, for any such $\f v$, we have $\norm{(H-\wh{H}^\tau)\f v} \leq \norm{(\E H)\f v} + 
\norm{\E H - \chi^\tau (\E H) \chi^\tau} +  \norm{(H - \chi^{\tau}(\E H)\chi^\tau)-H^{\tau}} + \norm{H^\tau - \wh{H}^\tau} \lesssim \xi^{1/2}$.  
Here, we used \eqref{eq:supp_v_B_r_cal_U} and Lemma~\ref{lem:B_k_estimate} below with $r = r_{\cal U}$ to estimate the first term, 
\cite[eq.~(3.13)]{ADK20} with $\chi^\tau$ from \eqref{eq:def_chi_tau} for the second term, 
\cite[Lemma~3.8]{ADK20} and  
$\xi_{\tau - 1} \lesssim \xi^{1/2}$ by our choice $\tau = 1 + \xi^{1/2}$ from \eqref{eq:choice_tau}  
for the third term and \cite[Lemma~3.11]{ADK20} for the last term. 
In particular, for later use, we note that 
\begin{equation} \label{eq:rough_rigidity_large_d} 
\norm{(H - \sigma \Lambda_{\fra d}(\alpha_x,\beta_x)) \f v _\sigma^\tau(x)} \lesssim \xi^{1/2} 
\end{equation} 
for all $x \in \cal U$ and $\sigma \in \{ \pm\}$. This proves Proposition~\ref{prop:blockMatrix} \ref{Item:UsetBound} for $d > (\log N)^{3/4}$. 
\end{proof}

Proposition~\ref{prop:blockMatrix} \ref{Item:StrayEstimate} follows from Proposition \ref{prop:stray} and we now prove \ref{Item:bulkBound}.

\begin{proof}[Proof of Proposition \ref{prop:blockMatrix} \ref{Item:bulkBound} for $d>(\log N)^{3/4}$]
We introduce the orthogonal projections 
 \[\Pi \deq \sum_{x\in \cal U,\sigma=\pm} \f w_\sigma(x) \f w_\sigma(x)^*\,, \qquad \qquad \overline{\Pi} \deq \id -\Pi\,. \] 
By \cite[Theorem 1.7]{ADK20} and Lemma~\ref{lem:convergence_max_alpha_x_upper_bound_D_x}, there exists at most one eigenvalue of $H$ larger than $(1 + o(1))\sqrt{\am} \ll \nu_\mathrm{s}$ if $d > (\log N)^{3/4}$ and therefore the same for $\overline{\Pi}H\overline{\Pi}$. 
Hence, denoting by $\lambda_2[M]$ the second eigenvalue of the matrix $M$, we have
\begin{equation*}
\|X\|  = \lambda_2 \qBB{\begin{pmatrix}
\nu_{\mathrm{s}} & 0 \\ 0 & X
\end{pmatrix}} \leq \lambda_2 \qBB{\begin{pmatrix}
\nu_{\mathrm{s}} & E_{\mathrm{s}}^* \\ E_{\mathrm{s}} & X
\end{pmatrix}} + \|E_{\mathrm{s}}\|
 \leq \|\overline{\Pi}H\overline{\Pi}-\f f \f f^*\| + \|E_{\mathrm{s}}\|
\end{equation*}
for any $\f f\in \mathbb{R}^{N} $ by eigenvalue interlacing. 
Hence, by choosing $\f f \f f^* = \overline{\Pi}\chi^\tau (\mathbb{E}H) \chi^\tau\overline{\Pi}$, we obtain  
\begin{equation} \label{eq:upper_bound_X_Pi_tau_H_tau} 
\|X\| 
 \leq \|\overline{\Pi}^\tau H^\tau \overline{\Pi}^\tau \|+ 2 \|H^{\tau}\| \| \overline{\Pi}-\overline{\Pi}^\tau\| + \|H^\tau -(H- \chi^\tau (\mathbb{E}H) \chi^\tau)\|+\| E_{\mathrm{s}}\|, 
\end{equation}
where $\chi^\tau$, $\ol{\Pi}^\tau$ and $H^\tau$ are defined in \eqref{eq:def_chi_tau}, \eqref{eq:def_Pi_tau} and \eqref{eq:def_H_tau}, respectively. 

We now estimate the four terms on the right-hand side of \eqref{eq:upper_bound_X_Pi_tau_H_tau}. 
For the first one, we obtain 
$\|\overline{\Pi}^\tau H^\tau \overline{\Pi}^\tau \|\leq 2\tau + O(\xi^{1/2}) = 2 + O(\xi^{1/2})$ 
from \cite[Proposition~3.12]{ADK20} and our choice of $\tau$ in \eqref{eq:choice_tau}. 
For the second one, we note that 
\[
\| \Pi^\tau -\Pi\| = \normbb{\sum_{x\in \cal V,\sigma=\pm} \Big(\f w_\sigma(x) \f w_\sigma(x)^* - \f v^\tau_\sigma(x) \f v^\tau_\sigma(x)^*\Big)}  \leq 4 \max_{x\in \cal V,\sigma=\pm} \| \f w_\sigma(x) - \f v^\tau_\sigma(x)\|
\]
as $\f w_\sigma(x) = \f v^\tau_\sigma(x)$ for $x\in \cal U\setminus \cal V$ and $(\f v^\tau_\sigma(x),\f w_{\sigma'}(x))_{x\in \cal V}$ are supported on disjoint balls by Corollary~\ref{cor:orthonormal}, 
\eqref{eq:supp_v_B_r_cal_U} and Lemma~\ref{lem:Disjointfamily}. 
Below we shall see that for any $x \in \cal V$, we have 
\begin{equation} \label{eq:bound_w_minus_v} 
\| \f w_\sigma(x) - \f v^\tau_\sigma(x)\| \lesssim \xi^{1/2} (\am-2)^{-2}\am^{3/2}\,, 
\end{equation} 
and therefore
$\|H^{\tau}\| \| \overline{\Pi}-\overline{\Pi}^\tau\| \lesssim \xi^{1/2} (\am-2)^{-2}\am^{2}$,  
where we used that $\|H^\tau\|\lesssim \Lambda(\am) = \frac{\am}{\sqrt{\am - 1}} \lesssim \am^{1/2}$ by
the definition of $\wh{H}^\tau$ in \eqref{eq:def_wh_H_tau}, Lemma~\ref{lem:convergence_max_alpha_x_upper_bound_D_x} as well as \cite[Lemma~3.11, Proposition~3.12]{ADK20}. 
The third term satisfies $\|H^\tau -(H-  \chi^\tau (\mathbb{E} H) \chi^\tau)\|\lesssim \xi^{1/2}$ due to \cite[Lemma~3.8]{ADK20}. 
Proposition~\ref{prop:blockMatrix} \ref{Item:StrayEstimate} yields  $\norm{E_{\mathrm{s}}}\lesssim \xi^{1/2}$. 

Therefore, to prove Proposition~\ref{prop:blockMatrix} \ref{Item:bulkBound}, it remains to establish \eqref{eq:bound_w_minus_v}. 
For its proof, we focus on the case $\sigma = + $ and apply Lemma~\ref{lem:perturbationEV} to the matrix $H^{(x,r)} = H|_{B_r(x)}$, 
where $r = r_{\cal W}$ if $x \in \cal W$ and $r = r_{\cal V}$ if $x \in \cal V \setminus \cal W$ (cf.\ \eqref{eq:def_radii}), 
the approximate eigenvalue $\lambda = \Lambda(\alpha_x,\beta_x)$ and the approximate eigenvector $\f v = \f v_+^\tau(x)$. 

If $x \in \cal V \setminus \cal W$ then we see from the proof of Proposition~\ref{pro:intermediate_rigidity} 
that $r_{\cal V} = r_{\delta_\star}$ can be chosen 
bigger than $\floor{c\sqrt{\log N}}$ for any constant $c>0$, i.e.\ $\supp \f v_+^\tau(x) \subset B_{r_{\cal V}}(x)$ 
by \eqref{eq:supp_v_B_r_cal_U}. 
Thus, the spectral gap estimate from \eqref{eq:lower_bound_spectral_gap} together 
with \eqref{eq:lower_bound_alpha_x_minus_2_am_minus_2} and the approximate eigenvector estimate in \eqref{eq:rough_rigidity_large_d} 
combined with Lemma~\ref{lem:EstimateLambdad} imply the conditions of 
Lemma~\ref{lem:perturbationEV}, which yields \eqref{eq:bound_w_minus_v}. 

For $x \in \cal W$, we see from the choice of $r_{\cal W}$ in the proof of Proposition~\ref{pro:fine_rigidity} that 
$\supp \f v_+^\tau(x) \not\subset B_{r_{\cal W}}(x)$. 
As for $x \in \cal V\setminus \cal W$, we obtain that $\norm{(\f w_\sigma(x) - \f v^\tau_\sigma(x))|_{B_{r_{\cal W}}(x)}} \lesssim \xi^{1/2} (\am - 2)^{-2} \am^{3/2}$. 
Owing to \cite[eq.'s~(3.4) and (3.5)]{ADK20} 
and the choice of $r_{\cal W}$ in the proof of Proposition~\ref{pro:fine_rigidity}, it is easy to show that 
$\norm{\f v_+^\tau(x)|_{[N] \setminus B_{r_{\cal W}}(x)}}\lesssim \xi^{1/2} (\am - 2)^{-2} \am^{3/2}$.
Therefore, $\supp \f w_{\sigma}(x) \subset B_{r_{\cal W}}(x)$ 
implies \eqref{eq:bound_w_minus_v} and completes the proof of Proposition~\ref{prop:blockMatrix} \ref{Item:bulkBound} for $d > (\log N)^{3/4}$. 
\end{proof} 
This completes the proof of Proposition~\ref{prop:blockMatrix} if $d > (\log N)^{3/4}$. 
\end{proof} 

\begin{proof}[Proof of Proposition~\ref{prop:blockMatrix} for $d \leq (\log N)^{3/4}$] 
In this regime, the proof proceeds analogously to the one for $d > (\log N)^{3/4}$. However, we need to include a few additional arguments as the results from \cite{ADK20} are not applicable throughout the entire regime $d \leq (\log N)^{3/4}$. 

We start with rigidity estimates that replace results from \cite{ADK20} used for $d > (\log N)^{3/4}$ and hold for all $x \in \cal U$ in the next proposition whose proof is given in Section~\ref{sec:RoughRigidity} below. 
\begin{proposition}[Rough rigidity for extreme eigenvalues] \label{prop:RoughCandidatEigenvector}
There is a constant $K>0$ such that if $K \log\log N \leq d \leq (\log N)^{3/4}$ then, 
with high probability, there exists a family of orthonormal vectors $(\f w_{\sigma}(x))_{x\in \cal U, \sigma \in \{ \pm\}}$ such that 
\begin{enumerate}[label=(\roman*)] 
\item \label{Item:rough_rigidity_supports_disjoint} The sets $(\supp \f w_+(x)\cup \supp \f w_-(x))_{x \in \cal U}$ are disjoint.
\item \label{Item:UcandidateInBall}
For any $x \in \cal U$ and $\sigma \in \{ \pm\}$, the vector $\f w_{\sigma}(x)$ is supported in $B_2(x)$ and  
\begin{equation} \label{eq:cal_U_approx_w_sigma} 
\f w_{\sigma}(x)=\frac{1}{\sqrt{2}}\bigg(\f 1_{x}+\sigma\frac{\f 1_{S_1(x)}}{\|\f 1_{S_{1}(x)}\|}\bigg)+O(\am^{-1/2})\,. 
\end{equation} 
\item \label{Item:UcandidateEstimate}
For any normalized $\f v \in \op{span}\{ \f w_\sigma(x) \colon x \in \cal U, \, \sigma \in \{ \pm \} \}$, we have 
\[ \norm{ ( H - D) \f v} \lesssim
\frac{{\log \log N}}{\sqrt{d}} + \frac{1}{\sqrt{\am}}\,, 
\] 
where the matrix $D$ is defined through $D \f w_\sigma(x) = \sigma \Lambda_{\fra d}( \alpha_x,\beta_x) \f w_\sigma(x)$ 
for all $x \in \cal U$ and $\sigma \in \{\pm \}$. 
\end{enumerate}
\end{proposition}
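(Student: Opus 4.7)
The plan is to construct $(\f w_\sigma(x))_{x \in \cal U, \sigma \in \{\pm\}}$ by adapting the approximate eigenvector of Proposition~\ref{pro:H_minus_Lambda_v_norm_quadratic_form_intermediate} to the minimal radius $r = 2$, combined with a graph pruning that enforces disjointness of the small neighbourhoods of vertices in $\cal U$. The key observation is that in this regime $\am \gtrsim (\log N)^{1/4}/\log\log N$ by Lemma~\ref{lem:u_a}, so the top eigenvector of $Z_{\fra d}(\alpha_x, \beta_x)$ (see \eqref{eq:def_Zd}) decays at geometric ratio $O(\am^{-1/2})$, and truncating it at its third coordinate already yields accuracy $O(\am^{-1/2})$, consistent with the target error bound.

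First I would introduce a pruned graph $\mathbb G^\tau \subset \mathbb G$, in the spirit of \cite[Proposition~3.1]{ADK20}, with threshold $\tau = \am/5$, such that the balls $B_2^\tau(x)$ and $B_2^\tau(y)$ in $\mathbb G^\tau$ are disjoint for all distinct $x, y \in \cal U$. Pruning is genuinely needed: without it, pairs of vertices in $\cal U$ within graph distance $\leq 4$ occur with non-negligible probability near $d \asymp (\log N)^{3/4}$. The pruning can be arranged so that only $o(\sqrt{D_x})$ edges are removed near each $x \in \cal U$, ensuring $\f s_i^\tau(x) \deq \f 1_{S_i^\tau(x)}/\|\f 1_{S_i^\tau(x)}\| = \f s_i(x) + O(\am^{-1/2})$ for $i \in \{1, 2\}$. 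I then define
\[ \f w_\sigma(x) \deq u_0 \f 1_x + \sigma u_1 \f s_1^\tau(x) + u_2 \f s_2^\tau(x), \]
where $(u_0, u_1, u_2)$ are the first three components of the top eigenvector of $Z_{\fra d}(\alpha_x, \beta_x)$, renormalized so that $u_0^2 + u_1^2 + u_2^2 = 1$. The asymptotics $u_0, u_1 = 1/\sqrt 2 + O(\am^{-1})$ and $u_2 = O(\am^{-1/2})$ (obtainable from Lemma~\ref{lem:eigenvector_fine_properties} applied in the large-$\alpha$ regime) immediately give property~\ref{Item:UcandidateInBall}, while the disjointness of supports yields \ref{Item:rough_rigidity_supports_disjoint} and the orthonormality of the family. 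Computing $(H - \sigma \Lambda_{\fra d}(\alpha_x, \beta_x)) \f w_\sigma(x)$ as in the proof of Proposition~\ref{pro:H_minus_Lambda_v_norm_quadratic_form_intermediate} (cf.\ \eqref{eq:decomposition_H_minus_Lambda_v}), the tridiagonal eigenvector equations for $Z_{\fra d}(\alpha_x, \beta_x)$ cancel the main contributions on $\{x\}$, $S_1^\tau(x)$, and $S_2^\tau(x)$, leaving a boundary term on $S_3^\tau(x)$ of size $O(u_2) = O(\am^{-1/2})$, an irregularity term on $S_1^\tau(x)$ from the fluctuations of $N_y(x)$ around $d$, and a correction from $\beta_x \neq 1$ controlled by Lemma~\ref{lem:beta_x_assumption_beta_pr_checked}. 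Summing across $x$ via the disjoint-support structure then yields \ref{Item:UcandidateEstimate}.

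The main obstacle is obtaining uniform concentration over the entire set $\cal U$: Proposition~\ref{pro:prob_estimates_1} only applies to $\cal V_\delta$ with $\delta \lesssim \am$, which is too small to cover $\cal U$. A coarser analogue of Proposition~\ref{pro:prob_estimates_1}\ref{item:Z_i_estimate} --- sufficient to bound the irregularity term by $O(\log\log N/\sqrt d)$ uniformly over $\cal U$ --- must be proved separately via Chernoff bounds combined with a union bound, exploiting that $|\cal U|$ is subpolynomial in $N$ in this regime. Designing the pruning so that it simultaneously enforces disjointness of $B_2^\tau(x)$ across $\cal U$ and removes only $o(\sqrt{D_x})$ edges per radius-$2$ ball is also delicate, and constitutes the technical core of the argument.
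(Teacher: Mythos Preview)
Your proposal is correct and follows essentially the same route as the paper: prune the graph (the paper invokes \cite[Lemma~7.2]{ADK19} with threshold $\tau = \am/5$, yielding a removed subgraph of bounded maximal degree, which is stronger than your $o(\sqrt{D_x})$), take $\f w_\sigma(x)$ supported on the pruned ball $\widehat B_2(x)$, and control the $S_1$-irregularity via the uniform bound $\sum_{y \in \widehat S_1(x)} (\widehat N_y(x) - d\widehat\beta_x)^2 \lesssim (\log N)^2$ over $\cal U$. The one simplification the paper makes is to use as coefficients the exact eigenvector of the $3\times 3$ upper-left block of $Z(\widehat\alpha_x, \widehat\beta_x)$, with eigenvalue $\sqrt{\widehat\alpha_x + \widehat\beta_x}$, rather than the truncated infinite eigenvector; this makes $(\widehat H - \sigma\sqrt{\widehat\alpha_x + \widehat\beta_x})\f w_\sigma(x)$ collapse exactly to the $S_1$-irregularity term plus the $S_3$-boundary term, with no residual on $S_2$.
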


To define the matrix $U$ in Proposition~\ref{prop:blockMatrix} for $d \leq (\log N)^{3/4}$, 
 we choose the vectors $\f w_\sigma(x)$ as follows. 
\begin{enumerate}
\item 
If $x\in \cal W$ then $\f w_\sigma(x)$ is chosen as in Proposition \ref{pro:fine_rigidity}.
\item 
If $x\in \cal V \setminus \cal W$ then $\f w_\sigma(x)$ is chosen as in Proposition \ref{pro:intermediate_rigidity}. 
\item If $x\in \cal U \setminus \cal V$ then $\f w_\sigma(x)$ is chosen as in Proposition \ref{prop:RoughCandidatEigenvector}. 
\end{enumerate}

With $r_{\cal W}$ from Proposition~\ref{pro:fine_rigidity}, we introduce the radii 
\begin{equation} \label{eq:def_radii_small_d}  
r_{\cal V} \deq r_{\delta_\star}, \qquad \qquad r_\star \deq \max\{ r_{\cal W}, r_{\cal V}, 2\}\,, 
\end{equation} 
where $r_\delta$ is from Proposition~\ref{pro:intermediate_rigidity} and $\delta_{\star}$ was defined in \eqref{eq:deltaStarDef}. 
From Proposition~\ref{pro:intermediate_rigidity} and Proposition~\ref{pro:fine_rigidity}, 
we get $r_\star \ll \frac{d}{\log \log N}$.

The next lemma and the next corollary are the analogues of Lemma~\ref{lem:Disjointfamily} 
and Corollary~\ref{cor:orthonormal}, respectively, in the regime $d \leq (\log N)^{3/4}$.

\begin{lemma} \label{lem:Disjointfamily_small_d} 
Let $d \leq (\log N)^{3/4}$ and $r_\star$ as in \eqref{eq:def_radii_small_d}. 
There is a constant $K>0$ such that if $d \geq K \log \log N$ then, with high probability, the balls  
$B_{r_\star + 10}(x)$ and $B_{r_\star+ 10}(y)$ are disjoint for all $x \in \cal V$ and $y \in\cal U$ with $x \neq y$.  
\end{lemma}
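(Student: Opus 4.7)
The plan is a direct first-moment bound. With $R \deq 2(r_\star + 10) + 1 \ll d/\log \log N$, I will show
\[
\sum_{x \neq y} \P\pb{x \in \cal V, \, y \in \cal U, \, d(x,y) \leq R} \longrightarrow 0.
\]
For fixed $x \neq y$ and $k \in [R]$, the event $\{d(x,y) \leq k\}$ implies that some simple path of length $k$ from $x$ to $y$ lies in $\mathbb{G}$. There are at most $N^{k-1}$ such paths, each present with probability $(d/N)^k$. Conditional on a given such path being in $\mathbb{G}$, the edges incident to $x$ and $y$ other than the two on the path are independent Bernoulli$(d/N)$ variables on disjoint supports; hence $D_x$ and $D_y$ are conditionally independent and each of the form $1 + \mathrm{Bin}(N - O(k), d/N)$. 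A Chernoff bound in the spirit of \eqref{Poisson_f} gives
\[
\P\pb{\alpha_x \geq \am - c_* \am^{1/2}, \, \alpha_y \geq \am/5 \condb \text{path} \subset \mathbb{G}} \lesssim \ee^{-d f(\am - c_* \am^{1/2}) - d f(\am/5)}.
\]

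To estimate the two exponential factors, I use the function $f$ from \eqref{def_f} together with \eqref{eq:def_alpha_max} and Lemma~\ref{lem:u_a}. Since $f(u) = h(u-1) + d^{-1} \log \sqrt{2\pi d u}$ and $d h(\am - 1) = \log N$, we have $d f(\am) = \log N + O(\log \log N)$. A first-order Taylor expansion using $f'(u) = \log u + O((du)^{-1})$ then yields
\[
d f(\am - c_* \am^{1/2}) \geq \log N - C d \am^{1/2} \log \am - O(\log \log N).
\]
In the regime $K \log \log N \leq d \leq (\log N)^{3/4}$, \eqref{eq:scaling_alpha_max} gives $\am \to \infty$, so $d \am \log \am \asymp \log N$ and therefore $d \am^{1/2} \log \am = (d \am \log \am) / \am^{1/2} = o(\log N)$. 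For $\am$ large we also have $f(\am/5) \sim (\am/5) \log(\am/5)$, hence $d f(\am/5) \geq (\log N)/6$ eventually. Collecting these bounds,
\[
\sum_{x \neq y} \P(\cdots) \leq N^2 \cdot R \cdot \frac{d^R}{N} \cdot \ee^{-d f(\am - c_* \am^{1/2}) - d f(\am/5)} \lesssim N^{-1/6 + o(1)} R d^R,
\]
and since $R \log d \lesssim (d/\log \log N) \log d = o(\log N)$ we have $R d^R = N^{o(1)}$, so the right-hand side tends to $0$.

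The main obstacle is the Taylor expansion showing that the near-critical tail probability $\ee^{-d f(\am - c_* \am^{1/2})}$ loses only a factor $\ee^{o(\log N)}$ relative to $1/N = \ee^{-d f(\am)}$. This forces us into the subregime where $\am \to \infty$, which is precisely guaranteed by the upper bound $d \leq (\log N)^{3/4}$ via Lemma~\ref{lem:u_a}. The complementary lower bound $d \geq K \log \log N$ is needed so that the path-counting factor $d^R$ stays $N^{o(1)}$ rather than becoming polynomial in $N$.
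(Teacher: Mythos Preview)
Your argument is correct in substance, and it takes a different route from the paper. The paper's proof is two lines: it invokes Proposition~\ref{pro:prob_estimates_1}\ref{item:degrees_bounded} with $\delta=\delta_\star=\am^{1/2}\log\am$ and $r=\frac{c_* d}{\log\log N}$, which says that with high probability every $y\in B_r(x)\setminus\{x\}$ for $x\in\cal V_{\delta_\star}=\cal V$ satisfies $\alpha_y\leq 1+\delta_\star^{1/2}=1+\am^{1/4}(\log\am)^{1/2}$. Since $y\in\cal U$ forces $\alpha_y\geq\am/5$ and $\am\to\infty$ in this regime, no $y\in\cal U$ can lie within distance $r>2r_\star+21$ of any $x\in\cal V$. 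Your direct first-moment computation is self-contained and avoids reusing that proposition, at the cost of redoing essentially the same tail estimate by hand; the paper's version is shorter because the machinery is already in place.

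One small inaccuracy: your claim that, conditional on a path of length $k\geq 2$ being present, the remaining edges incident to $x$ and to $y$ lie ``on disjoint supports'' is false --- the edge $\{x,y\}$ is incident to both and is not on the path. Hence $D_x$ and $D_y$ are not conditionally independent. This is harmless: write $D_x\leq 2+B_x$ and $D_y\leq 2+B_y$ with $B_x,B_y$ independent $\mathrm{Bin}(N-3,d/N)$ after splitting off $A_{xy}$, and the $+2$ shift is negligible against thresholds of order $\am d\to\infty$. With that fix your bound $\sum_{x\neq y}\P(\cdots)\lesssim R d^R N^{-1/6+o(1)}=N^{-1/6+o(1)}$ goes through.
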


\begin{proof}[Proof of Lemma~\ref{lem:Disjointfamily_small_d}] 
Let $x \in\cal V$. From Proposition \ref{pro:prob_estimates_1} \ref{item:degrees_bounded} with $\delta = \am^{1/2} \log \am$ and $r = \frac{c_*d}{\log \log N}$ for a sufficiently 
small $c_*>0$, we conclude that $\alpha_y \leq 1 + \am^{1/4} (\log \am)^{1/2}$ if $\dist(x,y) \leq \frac{c_* d}{\log \log N}$. 
Since $y \in \cal U$ implies $\alpha_y \geq \am/5$ and $\frac{c_* d}{\log \log N} > 2r_\star + 21$ for $r_\star$ from \eqref{eq:def_radii_small_d}, this proves Lemma~\ref{lem:Disjointfamily_small_d}. 
\end{proof}

Let $\f q$ be the vector from Proposition~\ref{prop:stray} for $r = r_\star= \max\{ r_{\cal W}, r_{\cal V}, 2\}$ (cf.\ \eqref{eq:def_radii_small_d}). 

\begin{corollary} \label{cor:orthonormal_small_d} 
The family $(\f q, (\f w_\sigma(x))_{x\in \cal U, \sigma \in \{ \pm\}})$ is orthonormal and $\supp \f w_\sigma(x) \subset B_{r_\star}(x)$ for all $x \in \cal U$ and $\sigma \in \{ \pm\}$. 
Moreover, the sets $(\supp\f w_+(x)\cup \supp \f w_-(x))_{x \in \cal U}$ are disjoint. 
\end{corollary}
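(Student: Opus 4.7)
The plan is to piece together the corollary from the three propositions describing $\f w_\sigma(x)$ on the strata $\cal W$, $\cal V \setminus \cal W$, and $\cal U \setminus \cal V$, together with the information about $\f q$ from Proposition~\ref{prop:stray}, in essentially the same bookkeeping style as the proof of Corollary~\ref{cor:orthonormal}. The content is purely combinatorial once the preceding results are in hand; no new probabilistic or spectral input is needed.

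First I would verify the support bound $\supp \f w_\sigma(x) \subset B_{r_\star}(x)$. By the definition \eqref{eq:def_radii_small_d}, we have $r_\star \geq r_{\cal W}$, $r_\star \geq r_{\cal V}$, and $r_\star \geq 2$, so the claim follows directly from Proposition~\ref{pro:fine_rigidity} on $\cal W$, from Proposition~\ref{pro:intermediate_rigidity} on $\cal V \setminus \cal W$, and from Proposition~\ref{prop:RoughCandidatEigenvector}\ref{Item:UcandidateInBall} on $\cal U \setminus \cal V$.

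Next I would establish the pairwise disjointness of the sets $\supp \f w_+(x) \cup \supp \f w_-(x)$ as $x$ ranges over $\cal U$. If both vertices $x, y$ lie in $\cal U \setminus \cal V$, Proposition~\ref{prop:RoughCandidatEigenvector}\ref{Item:rough_rigidity_supports_disjoint} applies directly. Otherwise, at least one of them lies in $\cal V$, in which case Lemma~\ref{lem:Disjointfamily_small_d} shows that the enclosing balls $B_{r_\star}(x)$ and $B_{r_\star}(y)$ are disjoint (indeed, even $B_{r_\star+10}(x)$ and $B_{r_\star+10}(y)$ are), and the support containment already established then gives the desired disjointness.

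Finally, for orthonormality: every $\f w_\sigma(x)$ and $\f q$ is normalized by construction, and orthogonality across distinct vertices $x \neq y$ is inherited from the disjoint-support statement just proved. The intra-vertex orthogonality $\f w_+(x) \perp \f w_-(x)$ is supplied by Proposition~\ref{pro:fine_rigidity}, Proposition~\ref{pro:intermediate_rigidity}, or Proposition~\ref{prop:RoughCandidatEigenvector} depending on the stratum containing $x$. Finally, orthogonality of each $\f w_\sigma(x)$ with $\f q$ follows from Proposition~\ref{prop:stray}, which (applied with $r = r_\star$) places $\supp \f q$ inside the complement of $\bigcup_{x \in \cal U} B_{r_\star + 1}(x)$, a set which contains every $\supp \f w_\sigma(x)$. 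There is no real obstacle here; the only subtlety is to notice that the two sources of disjointness must be combined, since Lemma~\ref{lem:Disjointfamily_small_d} only addresses pairs where at least one vertex lies in $\cal V$, so the remaining case within $\cal U \setminus \cal V$ genuinely requires invoking item~\ref{Item:rough_rigidity_supports_disjoint} of Proposition~\ref{prop:RoughCandidatEigenvector}.
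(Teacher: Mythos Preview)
Your proof is correct and follows essentially the same approach as the paper's own proof: you invoke Propositions~\ref{pro:intermediate_rigidity}, \ref{pro:fine_rigidity}, and \ref{prop:RoughCandidatEigenvector} for the support containment and intra-vertex orthogonality, split the disjointness argument into the case where both vertices are in $\cal U \setminus \cal V$ (handled by Proposition~\ref{prop:RoughCandidatEigenvector}\ref{Item:rough_rigidity_supports_disjoint}) and the case where at least one lies in $\cal V$ (handled by Lemma~\ref{lem:Disjointfamily_small_d}), and use Proposition~\ref{prop:stray} for orthogonality against $\f q$. The paper's proof is organized in a slightly different order but is otherwise identical in content.
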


\begin{proof}[Proof of Corollary~\ref{cor:orthonormal_small_d}] 
Given the normalization of all vectors, it suffices to show that they are orthogonal. 
By Propositions~\ref{pro:intermediate_rigidity}, \ref{pro:fine_rigidity} and \ref{prop:RoughCandidatEigenvector}, we have $\supp \f w_\sigma(x) \subset B_{r_\star}(x)$ 
and $\f w_+(x) \perp \f w_-(x)$ 
for all $x \in \cal U$ and $\sigma \in \{ \pm\}$.  
Hence, Proposition~\ref{prop:stray} implies that the supports of $\f q$ and $\f w_\sigma(x)$ are disjoint for each $x\in \cal U$ and $\sigma \in \{ \pm\}$. 
Moreover, we conclude that the sets $(\supp \f w_+(x)\cup \supp \f w_-(x))_{x \in \cal U}$ are disjoint by Proposition~\ref{prop:RoughCandidatEigenvector} \ref{Item:rough_rigidity_supports_disjoint} and Lemma~\ref{lem:Disjointfamily_small_d}. 
By Proposition~\ref{prop:RoughCandidatEigenvector}, we also know that $\f w_\sigma(x)$ and $\f w_{\sigma'}(y)$ are orthogonal if $x$, $y \in\cal U\setminus \cal V$ and $x \neq y$. 
If $x \in \cal V$, $y \in \cal U$ and $x \neq y$, the orthogonality of $\f w_\sigma(x)$ and $\f w_{\sigma'}(y)$ follows from Lemma~\ref{lem:Disjointfamily_small_d}, 
$\supp \f w_\sigma(x) \subset B_{r_\star}(x)$ and $\supp \f w_{\sigma'}(y) \subset B_{r_\star}(y)$. 
\end{proof} 

Owing to Corollary~\ref{cor:orthonormal_small_d}, we can extend $(\f q, (\f w_{\sigma}(x))_{x\in \cal U,\sigma \in \{\pm\}})$ by some 
vectors $(\f t_i)_{i\in [N-1-2|\cal U|]}$ to obtain an orthonormal basis of $\R^N$. We denote the matrix $(\f q, (\f w_{\sigma}(x))_{x\in \cal U,\sigma \in \{\pm\}},(\f t_i)_{i\in [N-1-2|\cal U|]})$ by $U$. By construction, $U$ is orthogonal. 

In the same way as in the regime $d>(\log N)^{3/4}$, parts \ref{Item:WsetBound}, \ref{Item:VsetBound} and 
\ref{Item:StrayEstimate} of Proposition \ref{prop:blockMatrix} 
follow from Proposition~\ref{pro:fine_rigidity}, Proposition~\ref{pro:intermediate_rigidity} and 
Proposition~\ref{prop:stray} using Lemma~\ref{lem:Disjointfamily_small_d} and 
Corollary~\ref{cor:orthonormal_small_d} instead of Lemma~\ref{lem:Disjointfamily} and Corollary~\ref{cor:orthonormal}, respectively. 
In the present regime, \ref{Item:UsetBound} of Proposition~\ref{prop:blockMatrix} is a consequence of Proposition~\ref{prop:RoughCandidatEigenvector} \ref{Item:UcandidateEstimate}. 

For the proof of Proposition \ref{prop:blockMatrix}  \ref{Item:bulkBound}, 
we shall use the following proposition whose formulation uses 
the diagonal matrix $Q$ with the normalized degrees on the diagonal, i.e.\ $Q\deq \text{diag}(\alpha_x\colon x\in [N])$.
\begin{proposition}[Ihara-Bass type inequality] \label{pro:ihara_bass_bound} 
Let $4 \leq d \leq 3 \log N$. Then there is $C>0$ such that, 
for all $t \geq 1 + Cd^{-1/2}$, with high probability, we have 
\[ \abs{H- \E H} \leq \bigg(t + O\bigg(\frac{\log N}{t N} \bigg) \bigg)\id + \frac{1}{t} \bigg(1 + O\bigg(\frac{1}{t\sqrt{d}}\bigg)\bigg)Q\,,  \]
as an inequality of positive definite matrices.
\end{proposition}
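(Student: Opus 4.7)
\textbf{Proof plan for Proposition~\ref{pro:ihara_bass_bound}.}
The strategy is the well-established Ihara--Bass route to spectral bounds for sparse random graphs, as used in \cite{BBK1, ADK19, ADK20}. The starting point is the Ihara--Bass identity
\[
\det(\id - z B) \;=\; (1-z^2)^{\abs{E}-\abs{V}} \det\!\pb{\id - z A + z^2 (D-\id)}\,,
\]
where $B$ is the non-backtracking matrix indexed by directed edges of $\mathbb{G}$, defined by $B_{(u,v),(w,x)} = A_{wx}\ind{v=w,\, u\neq x}$. Writing the desired matrix inequality $\absb{H-\E H}\leq (t+\eta_1)\id + t^{-1}(1+\eta_2)Q$ with $\eta_1 = O(\log N/(tN))$ and $\eta_2 = O(1/(t\sqrt d))$ in terms of $A$, $D$, and $\E A = (d/N)(J-\id)$, and setting $z=1/(t\sqrt d)$, it is equivalent (after multiplication by $t\sqrt d$) to the positivity of the two self-adjoint matrices
\[
T_{\pm}(z) \deq \id + z^2 (D-\id) \pm z (A-\E A) + z^2 \id\,.
\]
My plan is to show $T_{\pm}(z) \geq 0$ for all $z \leq (1+Cd^{-1/2})^{-1}d^{-1/2}$, up to the stated error terms, which will give the claim by matching $\eta_1,\eta_2$ to the precision of the non-backtracking bound.

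The core technical input is a spectral radius estimate for the appropriately centred non-backtracking matrix: with high probability, $\rho(B - \E B) \leq \sqrt{d}\,(1 + Cd^{-1/2})$. I plan to establish this via a moment/trace bound on $(B-\E B)(B-\E B)^*$ raised to a suitable high power $k = \lfloor c \log N / \log d\rfloor$, combinatorially estimating closed non-backtracking walks on the underlying weighted configuration of edges (following the scheme of \cite[Section 5]{ADK19} and the sharper trace inequality of \cite{BBK1}). The Ihara--Bass identity then implies that $\id - zA + z^2(D-\id)$ is invertible for $0 < z < z_*\deq (\sqrt d (1+Cd^{-1/2}))^{-1}$, and by continuity/monotonicity along the path $z \mapsto \id + z^2(D-\id) - z(A-\E A)+z^2 \id$ (which is manifestly positive at $z = 0$), its positive definiteness is preserved throughout $[0,z_*]$. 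The rank-one contribution coming from $\E A = (d/N)(J-\id)$ is handled by Schur-complement / low-rank perturbation arguments: it produces exactly the $O(\log N/(tN))$ correction (through $\|\E A\|/\sqrt d \asymp \sqrt d$ localised on the flat vector, yielding the $1/N$ factor) and, together with the replacement of $D$ by $Q = D/d$, the $O(1/(t\sqrt d))$ correction. The sign $T_-(z)\geq 0$ follows by the analogous argument with $z\mapsto -z$ and the corresponding Ihara--Bass factor $\det(\id+zB)$.

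The main obstacle will be the centred spectral radius bound $\rho(B - \E B) \leq \sqrt{d}(1 + Cd^{-1/2})$ down to $d$ as small as a constant times $\log N$; in that regime, the combinatorial moment method has to be carried out delicately, in particular to handle the high-degree vertices identified in Lemma~\ref{lem:convergence_max_alpha_x_upper_bound_D_x} (which could otherwise inflate the trace), and to track the precise prefactor $1+Cd^{-1/2}$ rather than a loose constant. Assuming this sharp non-backtracking bound, the remaining steps---applying Ihara--Bass, performing the path-monotonicity argument, and accounting for the rank-one and $\id$-versus-$(D-\id)$ discrepancies via standard linear algebra---are routine.
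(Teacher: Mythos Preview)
Your Ihara--Bass strategy is the correct underlying machinery, but you are proposing to redo far more than is needed. The paper's proof is a short refinement of \cite[Proposition~3.13]{ADK20}. The inequality $\abs{H - \E H} \leq t M(t)$ with $M(t) = \diag(m_x(t))$, $m_x(t) = 1 + \sum_y (H - \E H)_{xy}^2/(t^2 - (H-\E H)_{xy}^2)$, is quoted directly from \cite[eq.~(3.45)--(3.46)]{ADK20}; there the non-backtracking spectral bound you flag as the ``main obstacle'' is already established for the full range $4 \leq d \leq 3\log N$. The only new content is a sharper entrywise estimate of $m_x(t)$: since $\abs{(H-\E H)_{xy}}^2 \lesssim 1/d$, one has $(H-\E H)_{xy}^2/(t^2 - (H-\E H)_{xy}^2) \leq t^{-2}(H_{xy}^2 + d/N^2)(1 + O(1/(t^2 d)))$, and summing over $y$ gives $m_x(t) \leq 1 + t^{-2}(\alpha_x + O(d/N))(1 + O(1/(t^2 d)))$, hence $M(t) \leq (1 + O(\log N/(t^2 N)))\id + t^{-2}(1 + O(1/(t^2 d)))Q$. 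Multiplying by $t$ gives the proposition.

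A minor technical point: the classical Ihara--Bass identity you wrote applies to the uncentred $A$, not to $A - \E A$; to run your $T_\pm$ argument with $B - \E B$ you need the weighted Ihara--Bass formula of \cite{BBK1} (which is what underlies \cite[eq.~(3.45)--(3.46)]{ADK20}). But since all of that is already packaged, simply cite it and spend your effort on the three-line $M(t)$ estimate above.
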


\begin{proof}[Proof of Proposition~\ref{pro:ihara_bass_bound}]
We adjust the proof of \cite[Proposition~3.13]{ADK20}. We first remark that $H$ in \cite{ADK20} is centered, i.e.\ 
it corresponds to $\ul{H\!}\, \defeq H - \E H$ in the notation of this paper. 

We denote the entries of $\ul{H\!} \,$ by ${\ul{H\!}\,}_{xy}$. 
We introduce the diagonal matrix $M(t) \deq \diag( m_x(t) \colon x \in [N])$ and estimate its entries  
\[ 
m_x(t) \defeq 1 + \sum_{y \in [N]} \frac{{\ul{H\!}\,}^2_{xy}}{t^2 - {\ul{H\!}\,}_{xy}^2} \leq 1 + \frac{1}{t^2} \sum_{y \in [N]} 
\bigg( H_{xy}^2 + \frac{d}{N^2}\bigg)  \bigg( 1 + O\bigg( \frac{1}{t^2 d}\bigg)\bigg),  
\] 
where we used $\abs{H_{xy}}^2 \lesssim 1/d$. 
Together with the definition of $\alpha_x$ and $d \lesssim \log N$, this bound yields a refinement of \cite[eq.~(3.47)]{ADK20} given by  
\begin{equation} \label{eq:upper_bound_M_t} 
 M(t) \leq \bigg(1 + O\bigg( \frac{\log N}{t^2N} \bigg) \bigg) \id + \frac{1}{t^2} \bigg( 1 + O\bigg( \frac{1}{t^2 d} \bigg) \bigg) Q\,. 
\end{equation}
Combining \eqref{eq:upper_bound_M_t} with \cite[eq.~(3.45)]{ADK20} and \cite[eq.~(3.46)]{ADK20} 
yields Proposition~\ref{pro:ihara_bass_bound}. 
\end{proof}

\begin{proposition}\label{prop:rought_delocalization}
Let $\tilde{\mathcal{U}}\subset\mathcal{U}$. 
For any vector $\f w=(w_{y})_{y\in[N]}$ satisfying $\f w \perp \f w_{\sigma}(x)$ for all $x\in\tilde{\mathcal{U}}$ and $\sigma=\pm$, we have $\sum_{x\in\tilde{\mathcal{U}}}\alpha_{x}|w_{x}|^{2}\lesssim\|\f w\|^{2}$.
\end{proposition}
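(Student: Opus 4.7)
The plan is to exploit the disjointness of the supports $\supp \f w_\sigma(x)$, $x\in\cal U$ (Corollaries~\ref{cor:orthonormal} and~\ref{cor:orthonormal_small_d}), which turns the orthogonality hypothesis on $\f w$ into a Bessel-type inequality for an orthonormal family extracted from $(\f 1_x)_{x\in\tilde{\cal U}}$. Let $P$ denote the orthogonal projection onto $V\deq\op{span}\{\f w_\sigma(x):x\in\tilde{\cal U},\sigma=\pm\}$, so that the hypothesis reads $\f w=P^\perp\f w$ and hence $w_x=\langle\f w,\f 1_x\rangle=\langle\f w,P^\perp\f 1_x\rangle$. Since $x\in\supp\f w_\sigma(x)$ while the supports of the $\f w_\sigma(z)$'s are pairwise disjoint across $z\in\cal U$, one has $\f w_\sigma(z)(x)=0$ for every $z\in\tilde{\cal U}$ with $z\neq x$, whence $P\f 1_x=\sum_\sigma\f w_\sigma(x)(x)\,\f w_\sigma(x)$ and a direct computation gives
\[\langle P^\perp\f 1_x,P^\perp\f 1_y\rangle=\delta_{xy}\bigl(1-\f w_+(x)(x)^2-\f w_-(x)(x)^2\bigr)\qquad(x,y\in\tilde{\cal U}).\]

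Normalising to $\f\psi_x\deq P^\perp\f 1_x/\|P^\perp\f 1_x\|$ we obtain an orthonormal system indexed by $\tilde{\cal U}$, and Bessel's inequality yields
\[\sum_{x\in\tilde{\cal U}}\alpha_x|w_x|^2=\sum_{x\in\tilde{\cal U}}\alpha_x\|P^\perp\f 1_x\|^2\,|\langle\f w,\f\psi_x\rangle|^2\leq\Bigl(\sup_{x\in\tilde{\cal U}}\alpha_x\|P^\perp\f 1_x\|^2\Bigr)\|\f w\|^2.\]
Thus the proposition reduces to the uniform upper bound $\alpha_x\bigl(1-\f w_+(x)(x)^2-\f w_-(x)(x)^2\bigr)\lesssim 1$ for every $x\in\cal U$.

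This final estimate is verified case by case via the explicit constructions of $\f w_\sigma(x)$. For $x\in\cal V$, the bipartite tree structure of $\mathbb{G}|_{B_r(x)}$ (Proposition~\ref{pro:prob_estimates_1}\ref{item:ER_locally_tree}) forces $|\f w_-(x)(x)|=|\f w_+(x)(x)|$, while the tridiagonal approximation of $H|_{B_r(x)}$ by $Z_{\fra d}(\alpha_x,\beta_x)$ (via Propositions~\ref{pro:intermediate_rigidity} and~\ref{pro:fine_rigidity}) identifies $\f w_\sigma(x)(x)^2$ with the squared first component $u_0^2=\frac{\alpha_x-2}{2(\alpha_x-1)}$ of the top eigenvector of $Z_{\fra d}(\alpha_x,\beta_x)$ up to relative error $o(1/\alpha_x)$; this yields $\alpha_x\|P^\perp\f 1_x\|^2\leq 2+o(1)$. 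For $x\in\cal U\setminus\cal V$ in the large-$d$ regime the same identity holds by definition since $\f w_\sigma(x)=\f v_\sigma^\tau(x)$ is given by~\eqref{eq:def_v_tau_sigma}. The hard part is the small-$d$ regime: the crude $\ell^2$-approximation~\eqref{eq:cal_U_approx_w_sigma}, with error $O(\fra a^{-1/2})$, is too weak to deduce $\alpha_x\|P^\perp\f 1_x\|^2=O(1)$. One must instead return to the proof of Proposition~\ref{prop:RoughCandidatEigenvector} and use that the $x$-entry of the chosen $\f w_\sigma(x)$ matches the corresponding $Z_1(\alpha_x,1)$ eigenvector component to the sharper accuracy $O(1/\alpha_x)$, which is available because $\alpha_x\asymp\fra a\gg 1$ throughout that regime.
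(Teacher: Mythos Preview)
Your structural setup is sound: the orthogonality of $(P^\perp\f 1_x)_{x\in\tilde{\cal U}}$ is correct, and Bessel reduces the claim to the uniform bound $\alpha_x\|P^\perp\f 1_x\|^2\lesssim 1$. The difficulty is your verification of this last bound for $x\in\cal V$. You assert that $\f w_\sigma(x)(x)^2=u_0^2+o(1/\alpha_x)$ ``via Propositions~\ref{pro:intermediate_rigidity} and~\ref{pro:fine_rigidity}'', but those propositions only yield $\f w_\sigma(x)(x)=u_0+O(\epsilon/\Delta)$ through the eigenvector perturbation in Lemma~\ref{lem:perturbationEV}. In the regime $d\leq(\log N)^{3/4}$ (the only one where the proposition is used), one has $\am\to\infty$, and for $x\in\cal V\setminus\cal W$ the intermediate-rigidity bounds give $\epsilon\lesssim(d\am)^{-1/2}(\log d+\delta_\star^{1/2})^{1/2}$ and $\Delta\asymp\am^{1/2}$; with $\delta_\star=\am^{1/2}\log\am$ this yields $\epsilon/\Delta\asymp\am^{-7/8}(\log\am)^{1/4}d^{-1/2}$. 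When $d$ is near the bottom of its allowed range (say $d\asymp(\log\log N)^{C}$) this is much larger than $1/\am$, so your entrywise computation only gives $1-\f w_+(x)(x)^2-\f w_-(x)(x)^2=O(\am^{-1/2})$, hence $\alpha_x\|P^\perp\f 1_x\|^2=O(\am^{1/2})$ --- not $O(1)$.

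The fix is immediate and brings you back to the paper's argument. Since $P^\perp$ is an orthogonal projection, $\|P^\perp\f 1_x\|$ equals the distance from $\f 1_x$ to $V$, so
\[
\|P^\perp\f 1_x\|\leq\normB{\f 1_x-\tfrac{1}{\sqrt{2}}\bigl(\f w_+(x)+\f w_-(x)\bigr)}=O(\am^{-1/2})
\]
directly from Lemma~\ref{lem:coarse_eigve_approx} (for $x\in\cal V$) and \eqref{eq:cal_U_approx_w_sigma} (for $x\in\cal U\setminus\cal V$), giving $\alpha_x\|P^\perp\f 1_x\|^2\lesssim\am\cdot\am^{-1}=1$ in one stroke. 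This is precisely the paper's proof: it writes $0=\langle\f w,\tfrac{1}{\sqrt{2}}(\f w_+(x)+\f w_-(x))\rangle=w_x+O(\am^{-1/2}\|\f w|_{\supp\f w_+(x)\cup\supp\f w_-(x)}\|)$ and then sums $\alpha_x|w_x|^2$ over the disjoint supports. Your Bessel formulation and the paper's disjoint-support summation are equivalent; the point is simply that the required smallness of $\|P^\perp\f 1_x\|$ comes from the $\ell^2$ approximation of $\f 1_x$ by a vector in $V$, not from a pointwise estimate on the eigenvector entries.
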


\begin{lemma} \label{lem:coarse_eigve_approx}
For any $x\in{\cal V}$, we have 
\[\f w_{\sigma}(x)=\frac{1}{\sqrt{2}}\bigg(\f 1_{x}+\sigma\frac{\f 1_{S_{1}(x)}}{\|\f 1_{S_{1}(x)}\|}\bigg)+O(\am^{-1/2})\,. \]
\end{lemma}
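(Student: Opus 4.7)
The plan is to derive this coarse approximation by linking $\f w_\sigma(x)$ to the explicit approximate eigenvectors $\f v_\sigma(x)$ built in the proofs of Propositions~\ref{pro:intermediate_rigidity} and \ref{pro:fine_rigidity}, and then observing that only the two leading coefficients of $\f v_\sigma(x)$ survive at precision $\am^{-1/2}$. The statement is nontrivial only once $\am$ exceeds a suitable constant; in the regime $\am \asymp 1$ the claimed error $O(\am^{-1/2})$ is of unit order and all vectors involved have unit norm, so the inequality is automatic. I will therefore assume $\am \gg 1$ throughout.

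First I would show $\|\f w_\sigma(x) - \f v_\sigma(x)\| = O(\am^{-1})$ for an appropriate sign choice of $\f w_\sigma(x)$, where $\f v_\sigma(x) = \sum_{i=0}^r \sigma^i u_i \f g_i$, with $(u_i)$ the top eigenvector of $Z_{\fra d}(\alpha_x,\beta_x)_{[0,r]}$ normalized by $\sum_i u_i^2 = 1$, and $\f g_i$ equal to $\f s_i \deq \f 1_{S_i(x)}/\|\f 1_{S_i(x)}\|$ if $x \in \cal V \setminus \cal W$ (cf.~\eqref{eq:def_v_intermediate_rigidity}) or to $\f f_i/\|\f f_i\|$ if $x \in \cal W$ (cf.~\eqref{eq:bestcandidate}). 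This follows from Lemma~\ref{lem:perturbationEV} applied to $M = H|_{B_r(x)}$, exactly as in the proofs of Propositions~\ref{pro:extreme_in_a_ball} and \ref{pro:fine_rigidity}: the spectral gap bound $\Delta \gtrsim (\alpha_x-2)^2/\am^{3/2} \gtrsim \am^{1/2}$, which follows from \eqref{eq:lower_bound_spectral_gap} together with $\alpha_x - 2 \asymp \am$ from \eqref{eq:lower_bound_alpha_x_minus_2_am_minus_2} in the regime $\am \gg 1$, combined with the approximate eigenvector bounds in Proposition~\ref{pro:H_minus_Lambda_v_norm_quadratic_form_intermediate} or Proposition~\ref{prop:eigenError}, yields $\|\f w_\sigma - \f v_\sigma\| \lesssim \epsilon/\Delta \lesssim \am^{-1}$. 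The sign of $\f w_\sigma(x)$ is fixed so that $\langle \f w_\sigma(x), \f v_\sigma(x)\rangle > 0$.

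Next I would truncate $\f v_\sigma(x)$ to its two leading terms. Since $\f f_0 = \f 1_x$ and $\f f_1 = \f 1_{S_1(x)}$ by \eqref{eq:def-fi}, we have $\f g_0 = \f s_0 = \f 1_x$ and $\f g_1 = \f s_1 = \f 1_{S_1(x)}/\|\f 1_{S_1(x)}\|$ in both bases, so the orthonormality of $(\f g_i)$ and the Pythagorean theorem give
\[
\normbb{\f v_\sigma(x) - u_0 \f 1_x - \sigma u_1 \f s_1}^2 = \sum_{i \geq 2} u_i^2 \lesssim \am^{-1},
\]
where the last inequality is \eqref{eq:relations_sum_u_i_u_2} from Lemma~\ref{lem:eigenvector_fine_properties}, whose hypothesis \eqref{assumption_beta_pr} holds for $\alpha = \alpha_x$, $\beta = \beta_x$ by Lemma~\ref{lem:beta_x_assumption_beta_pr_checked} and $\cal V \subset \cal U$. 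The eigenvector relation for $Z_{\fra d}(\alpha_x,\beta_x)$ at index $0$ gives $u_1 = (\Lambda_{\fra d}(\alpha_x,\beta_x)/\sqrt{\alpha_x})\, u_0$; using Lemma~\ref{lem:EstimateLambdad} and Lemma~\ref{lem:properties_Lambda}, together with $\alpha_x \gtrsim \am \gg 1$ and $\beta_x = 1 + o(1)$ from Lemma~\ref{lem:beta_x_assumption_beta_pr_checked}, we obtain the expansion $\Lambda_{\fra d}(\alpha_x,\beta_x)/\sqrt{\alpha_x} = 1 + O(\am^{-1})$, and thus $u_1 = u_0 (1 + O(\am^{-1}))$. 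Combined with $u_0^2 + u_1^2 = 1 - O(\am^{-1})$ and positivity of $u_0$, this forces $u_0, u_1 = 1/\sqrt{2} + O(\am^{-1})$. The triangle inequality then assembles the three estimates into the desired bound.

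The main difficulty is not analytical but organisational: the approximate eigenvector is built from the sphere basis $(\f s_i)$ for $x \in \cal V \setminus \cal W$ but from the refined basis $(\f f_i/\|\f f_i\|)$ for $x \in \cal W$, and one must verify that this distinction is invisible at the level of the leading two coefficients. This is exactly the content of the identifications $\f f_0 = \f 1_x$ and $\f f_1 = \f 1_{S_1(x)}$ in \eqref{eq:def-fi}, so the truncation step proceeds uniformly across the two rigidity regimes.
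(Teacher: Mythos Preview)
Your approach is correct and reaches the goal, but it differs from the paper's route. The paper does not detour through the refined approximate eigenvectors $\f v_\sigma(x)$ from \eqref{eq:def_v_intermediate_rigidity} or \eqref{eq:bestcandidate}; instead it applies Lemma~\ref{lem:perturbationEV} directly with the \emph{target} vector $\f v = \tfrac{1}{\sqrt 2}(\f 1_x + \sigma \f s_1)$ as the trial eigenvector and with the approximate eigenvalue $\lambda = \sqrt{\alpha_x}$. A one-line computation gives $(H^{(x,r)} - \sqrt{\alpha_x})(\f 1_x + \f s_1) = \f 1_{S_2(x)}/(\sqrt d\,\|\f 1_{S_1(x)}\|) = O(1)$ using Proposition~\ref{pro:prob_estimates_1}~\ref{item:concentration_S_i}, and the same interlacing bound $\|QH^{(x,r)}Q\| \lesssim \am^{1/8}(\log\am)^{1/4}$ from \eqref{eq:norm_QHQ} gives $\Delta \gtrsim \am^{1/2}$, so $\|\f w_\sigma - \f v\| = O(\epsilon/\Delta) = O(\am^{-1/2})$ in one stroke. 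The advantage of the paper's argument is that it avoids the case split between $\cal W$ and $\cal V\setminus\cal W$, the two different bases $(\f s_i)$ versus $(\f f_i/\|\f f_i\|)$, and any appeal to the finer estimates \eqref{eq:estimate_norm_approximate} or \eqref{eq:bestCandidateNorm}. Your approach, by contrast, recycles the machinery already built for rigidity and then truncates; this is perfectly valid but longer.

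One minor point: your claim $\|\f w_\sigma - \f v_\sigma\| \lesssim \am^{-1}$ is slightly overstated for $x \in \cal V\setminus\cal W$. With $\delta = \delta_\star = \am^{1/2}\log\am$ in \eqref{eq:estimate_norm_approximate} (the relevant choice for this lemma, since it sits inside the regime $d \le (\log N)^{3/4}$), one gets $\epsilon \lesssim \am^{1/8}(\log\am)^{1/4}/(d\am)^{1/2}$, hence $\epsilon/\Delta \lesssim \am^{1/8}(\log\am)^{1/4}/(d^{1/2}\am)$, which is $O(\am^{-1/2})$ but not $O(\am^{-1})$ when $d$ is close to the lower threshold $\asymp \log\log N$. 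This is harmless, since your truncation step already introduces an $O(\am^{-1/2})$ error anyway. Also, two small slips of notation: the vectors $(u_i)$ in the paper are the first $r+1$ entries of the eigenvector of the \emph{infinite} matrix $Z(\alpha_x,\beta_x)$ or $Z_{\fra d}(\alpha_x,\beta_x)$, renormalized, not the eigenvector of the truncated block $Z_{\fra d}(\alpha_x,\beta_x)_{[0,r]}$; and for $x\in\cal V\setminus\cal W$ the relevant matrix is $Z_1$, not $Z_{\fra d}$. Neither affects your argument.
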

\begin{proof}[Proof of Lemma \ref{lem:coarse_eigve_approx}]
We apply Lemma~\ref{lem:perturbationEV} with $M = H^{(x,r)} = H|_{B_{r}(x)}$, where $r = r_{\cal W}$ if 
$x \in \cal W$ and $r = r_{\cal V}$ if $x \in \cal V \setminus \cal W$, and $\lambda= \sqrt{\alpha_x}$. 
To check its conditions, we first compute 
\[
(H^{(x,r)}-\sqrt{\alpha_x})\left(\f 1_{x}+\frac{ \f 1_{S_{1}(x)}}{\|\f 1_{S_{1}(x)}\|}\right)=\frac{\f 1_{S_{2}(x)}}{\sqrt{d}\|\f 1_{S_{1}(x)}\|}=O(1)\,. 
\]
Here, the last step follows from $\norm{\f 1_{S_2(x)}} = \abs{S_2(x)}^{1/2} \lesssim \sqrt{d} \abs{S_1(x)}^{1/2} = \sqrt{d} \norm{\f 1_{S_1(x)}}^{1/2}$ 
by Proposition~\ref{pro:prob_estimates_1} \ref{item:concentration_S_i} with $\delta = \delta_\star$. 
For $x \in \cal V \setminus \cal W$, the second largest eigenvalue of $H^{(x,r)}$ is smaller than $\norm{QH^{(x,r)}Q} \leq 2 (1 + \delta_\star^{1/2})^{1/2}\lesssim \am^{1/8} (\log \am)^{1/4}$ 
by eigenvalue interlacing and \eqref{eq:norm_QHQ}. 
Similarly, if $x \in \cal W$ then the second largest eigenvalue is bounded by $\norm{QH^{(x,r)}Q} \lesssim 1$. 
Hence, $\Delta = \sqrt{\alpha_x} - \norm{QH^{(x,r)}Q} \gtrsim \am^{1/2}$ as $\alpha_x \asymp \am$ and, thus, Lemma \ref{lem:coarse_eigve_approx} follows from Lemma \ref{lem:perturbationEV}. 
\end{proof}

\begin{proof}[Proof of Proposition \ref{prop:rought_delocalization}]
For any $x\in\tilde{\mathcal{U}}$, we have 
\[0=\langle \f w,\frac{1}{\sqrt{2}}(\f w_{+}(x)+\f w_{-}(x))\rangle=w_{x}+O(\am^{-1/2}\|\f w|_{\supp \f w_{+}(x)\cup \supp \f w_-(x)}\|)\,, \] 
where in the second step we used Lemma~\ref{lem:coarse_eigve_approx} if $x \in \cal V$ and 
\eqref{eq:cal_U_approx_w_sigma} if $x \in \cal U \setminus \cal V$. 
Thus, $\alpha_x \lesssim \am$ due to Lemma~\ref{lem:convergence_max_alpha_x_upper_bound_D_x} implies 
\[\sum_{x\in\tilde{\mathcal{U}}}\alpha_x|w_{x}|^2\lesssim\sum_{x\in\tilde{\mathcal{U}}}\|\f w|_{\supp \f w_{+}(x)\cup \supp \f w_-(x)}\|^{2}\leq\|\f w\|^{2}\]
because the sets $(\supp \f w_{+}(x)\cup \supp \f w_-(x))_{x \in\cal U}$ are disjoint by Corollary~\ref{cor:orthonormal_small_d}. 
\end{proof}

\begin{proof}[Proof of Proposition \ref{prop:blockMatrix}  \ref{Item:bulkBound} for $d\leq (\log N)^{3/4}$]
We introduce the orthogonal projections $\Pi$ and $\overline{\Pi}$ defined through 
\[ \Pi \deq \f q \f q^* + \sum_{x \in \cal U} \sum_{\sigma = \pm} \f w_{\sigma}(x) \f w_{\sigma}(x)^*, \qquad \overline{\Pi} \deq \id - \Pi \] 
and note that 
owing to $\norm{X} = \norm{\overline{\Pi}H \overline{\Pi}}$,  it suffices to estimate $\norm{\overline{\Pi}H \overline{\Pi}}$. 

By contradiction, assume
that there exists a normalized eigenvector $\f w$ of $\overline{\Pi}H \overline{\Pi}$
associated to an eigenvalue $\lambda  > \eta \am^{1/2}$. 
Since $\overline{\Pi} \f w = \f w$ by the eigenvalue relation, we have $\f w \perp \f q$. Hence, $\mathbb{E}H = \sqrt{d}\f e\f e^*$  
implies 
\begin{equation} \label{eq:quadratic_form_eigenvector_bulk} 
\langle \f w,(H-\mathbb{E}H) \f w\rangle\geq \langle \f w,\overline{\Pi}H \overline{\Pi} \f w\rangle - \sqrt{d}\|\f e - \f q  \|^2 \geq \lambda - o(1)\,,  
\end{equation} 
where the last step follows from $\overline{\Pi}H \overline{\Pi} \f w = \lambda \f w$ and Proposition \ref{prop:stray}.

We 
conclude from 
\eqref{eq:quadratic_form_eigenvector_bulk} and Proposition~\ref{pro:ihara_bass_bound} that $\lambda \leq t + o(1) + \frac{1}{t} (1 + o(1)) \sum_{x \in [N]} \alpha_x \abs{w_x}^2$
with high probability for any $t \geq 1 + Cd^{-1/2}$, where $C$ is the constant from Proposition~\ref{pro:ihara_bass_bound}. In the following, we will increase the value of $C$ a few times. The final value of $C$ is irrelevant for the validity of the argument.

Because $\lambda> \eta \am^{1/2}$ we can assume $\sum_{x\in [N]}\alpha_{x}|w_x|^{2}> (1+Cd ^{-1/2})^2$ and the choice $t:=\sqrt{\sum_{x\in [N]}\alpha_{x}|w_x|^{2}}$ yields 
\begin{equation} \label{eq:upper_bound_eigenvalues_bulk} 
\lambda\leq (2+o(1))\sqrt{\sum_{x\in [N]}\alpha_{x}|w_x|^{2}}\,.
\end{equation}
We decompose the sum on the right-hand side and use  
Proposition \ref{prop:rought_delocalization} 
with $\wt{\cal U} = \cal U $ as well as $\sum_x |w_x|^2\leq 1$ to obtain 
\[
\sum_{x\in [N]}\alpha_{x}|w_x|^{2}=\sum_{x\in\mathcal{U}}\alpha_{x}|w_x|^{2}+\sum_{x\in[N],\alpha_{x}< \am/5}\alpha_{x}|w_x|^{2}
\leq C + \frac{1}{5} \am\,.
\]
Here, we used that $\f w \perp \f w_\sigma(x)$ for all $x \in\cal U$ and $\sigma \in \{\pm\}$ as $\f w$ is an eigenvector of 
$\overline{\Pi}H \overline{\Pi}$. 
Therefore, with \eqref{eq:upper_bound_eigenvalues_bulk}, we get $\lambda\leq (2+o(1))\sqrt{\frac{1}{5} \am} = \big(\sqrt{\frac{4}{5}} + o(1) \big) \sqrt{\am}$,
which contradicts $\lambda > \eta \am^{1/2}$ as $\eta > \sqrt{4/5}$.

With the same argument, we can exclude eigenvalues $\lambda$ of $\overline{\Pi}H\overline{\Pi}$ 
with $\lambda < - \eta \am^{1/2}$.
This shows $\norm{\overline{\Pi}H\overline{\Pi}} \leq \eta \am^{1/2}$ and completes the proof. 
\end{proof}

This completes the proof of Proposition~\ref{prop:blockMatrix} in the missing regime, $d \leq (\log N)^{3/4}$.
\end{proof} 

\subsection{Proof of Corollary \ref{Cor:SpecMax}} \label{sec:proof_cor_spec_max} 

Corollary \ref{Cor:SpecMax} will follow from the next result. 
We recall the function $\Lambda(\alpha) = \frac{\alpha}{\sqrt{\alpha - 1}}$. 
\begin{proposition}\label{prop:SpecMax}
If \eqref{eq:ConditionAlphaMaxMin} and \eqref{eq:condition_d_blockMatrix} with a large enough $K$ are satisfied 
then there exists $c>0$ such that, with high probability, 
\begin{align} \label{eq:estimate_B_def_B} 
\left\| Y \right\|  \leq  \Lambda(\am) - \frac{c(\am-2) d^{2\gamma-1}}{\am^{3/2}\log \am}, \qquad \qquad 
Y \deq \begin{pmatrix}   \mathcal{D}_{\cal V \setminus \cal W} & 0 & E_{\cal V \setminus \cal W}^* 
\\   0 &  \mathcal{D}_{\cal U \setminus \cal V} + \cal E_{\cal U\setminus \cal V} &  E_{\cal U\setminus \cal V}^*
\\ E_{\cal V\setminus \cal W} & E_{\cal U\setminus \cal V} &  X 
\end{pmatrix}\,.
\end{align}
Moreover, if $d < (\log N)^{1/4}$ then 
\begin{align} \label{eq:SpecMax_small_d} 
\normbb{\begin{pmatrix} \nu_{\mathrm{s}} & E_{\mathrm{s}}^* \\ E_{\mathrm{s}} & Y \end{pmatrix}} 
\leq  \Lambda(\am) - \frac{c(\am-2) d^{2\gamma-1}}{\am^{3/2}\log \am}\,, 
\end{align}
where $E_\mathrm{s}^*$ was identified with $\pb{\begin{matrix} 0 &  0 &  E_\mathrm{s}^*\end{matrix}}$ and an analogous identification for $E_\mathrm{s}$ was used. 
\end{proposition}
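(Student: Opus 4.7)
The plan is to bound $\|Y\|$ by splitting $Y$ into its block-diagonal part and its off-diagonal part, estimating the norm of each diagonal block separately, and then showing that the off-diagonal perturbation is negligible. Write $Y = Y_{\mathrm{d}} + Y_{\mathrm{o}}$, where $Y_{\mathrm{d}}$ is the block-diagonal matrix with blocks $\cal D_{\cal V\setminus \cal W}$, $\cal D_{\cal U\setminus \cal V}+\cal E_{\cal U\setminus \cal V}$, and $X$, and $Y_{\mathrm{o}}$ contains the off-diagonal blocks $E_{\cal V\setminus \cal W}$, $E_{\cal U\setminus \cal V}$. Then $\|Y\|\leq \|Y_{\mathrm{d}}\|+\|Y_{\mathrm{o}}\|$, and $\|Y_{\mathrm{d}}\|$ equals the maximum of the norms of the three diagonal blocks. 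The off-diagonal contribution $\|Y_{\mathrm{o}}\|$ is negligible: from Proposition~\ref{prop:blockMatrix}~\ref{Item:VsetBound}--\ref{Item:UsetBound} we have $\|E_{\cal V\setminus \cal W}\|\lesssim (d\fra a)^{-10}$ and $\|E_{\cal U\setminus \cal V}\|$ of the order of the intermediate/rough rigidity error, both of which are much smaller than the claimed gap $\frac{(\fra a -2) d^{2\gamma-1}}{\fra a^{3/2}\log\fra a}$ by \eqref{eq:ConditionAlphaMaxMin} and \eqref{eq:condition_d_blockMatrix}.

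The central estimate is the bound on $\|\cal D_{\cal V\setminus \cal W}\|$. For $x\in\cal V\setminus \cal W$, by definition of $\cal W$ we have $\alpha_x < \fra a - \frac{c_* d^{2\gamma-1}}{\log\fra a}$. Using Lemma~\ref{lem:beta_x_assumption_beta_pr_checked} to verify the hypothesis \eqref{assumption_beta_pr}, together with Lemma~\ref{lem:EstimateLambdad} and the expansion \eqref{eq:approx_Lambda_alpha_x_beta_x_by_Lambda_alpha_x}, one reduces $\Lambda_{\fra d}(\alpha_x,\beta_x)$ to $\Lambda(\alpha_x)$ up to negligible corrections. Since $\Lambda'(\alpha)=\frac{\alpha-2}{2(\alpha-1)^{3/2}}$ and $\alpha_x\gtrsim \fra a$ with $\alpha_x-2\gtrsim \fra a-2$ by \eqref{eq:lower_bound_alpha_x_minus_2_am_minus_2}, monotonicity and a first-order Taylor expansion between $\alpha_x$ and $\fra a$ yield
\[
\Lambda(\alpha_x) \leq \Lambda(\fra a) - c' \frac{(\fra a-2) d^{2\gamma-1}}{\fra a^{3/2}\log\fra a}
\]
for some constant $c'>0$. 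Combining this with $|\epsilon_{x,\sigma}|\lesssim \frac{1}{d\fra a}\bigl(1 + \frac{\fra a^{3/2}\log d}{(\fra a-2)^2}\bigr)$ from Proposition~\ref{prop:blockMatrix}~\ref{Item:VsetBound} gives the desired bound, provided this error is subleading to the gap.

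The main obstacle, and the place where \eqref{eq:ConditionAlphaMaxMin} is used crucially, is precisely to verify that $|\epsilon_{x,\sigma}|$ is dominated by $\frac{(\fra a-2)d^{2\gamma-1}}{\fra a^{3/2}\log\fra a}$. The contribution $\frac{\fra a^{1/2}\log d}{d(\fra a-2)^2}$ requires $(\fra a-2)^3 d^{1-2\gamma}$ to be much larger than $\fra a^2\log d\log\fra a$, which under \eqref{eq:condition_d_blockMatrix} reduces to the lower bound on $\fra a-2$ encoded in \eqref{eq:ConditionAlphaMaxMin}. The two cases $\fra a-2\gtrsim 1$ and $\fra a-2\ll 1$ must be treated separately to get the optimal scaling.

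The remaining two diagonal blocks are comparatively easy. For $\cal D_{\cal U\setminus \cal V}+\cal E_{\cal U\setminus \cal V}$, $\alpha_x$ is separated from $\fra a$ by the much larger amount appearing in \eqref{eq:VsetDef}, so the rough rigidity bound from Proposition~\ref{prop:blockMatrix}~\ref{Item:UsetBound} is far smaller than the corresponding $\Lambda$-gap. For $\|X\|$, Proposition~\ref{prop:blockMatrix}~\ref{Item:bulkBound} gives either $2+o(1)$ or $\eta \fra a^{1/2}$ with $\eta<1$, both strictly below $\Lambda(\fra a)\asymp\sqrt{\fra a}$ (with a gap of order $\fra a-2$ in the critical case, and a fixed relative gap otherwise). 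For the second statement \eqref{eq:SpecMax_small_d}, we further include the stray block; when $d<(\log N)^{1/4}$ Lemma~\ref{lem:u_a} gives $\fra a\gg d$, hence $\Lambda(\fra a)\asymp\sqrt{\fra a}\gg\sqrt{d}\gtrsim |\nu_{\mathrm{s}}|$ by Proposition~\ref{prop:blockMatrix}~\ref{Item:StrayEstimate}, and $\|E_{\mathrm{s}}\|$ is of order $d^{1/2}$ which is again $\ll\Lambda(\fra a)$, so incorporating this block via the same triangle-inequality argument does not spoil the estimate.
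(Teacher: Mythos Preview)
Your outline has two genuine gaps, both of which are not fixable by the triangle-inequality/Taylor approach you describe.

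\textbf{The off-diagonal term $\|E_{\cal U\setminus\cal V}\|$ is not smaller than the target gap.} At the critical scale $d\asymp\log N$ (so $\am\asymp 1$) Proposition~\ref{prop:blockMatrix}\ref{Item:UsetBound} gives $\|E_{\cal U\setminus\cal V}\|\lesssim (\log N)^{1/4}\sqrt{\log d}/\sqrt d\asymp d^{-1/4}\sqrt{\log d}$, whereas the gap you are aiming for is $\frac{(\am-2)d^{2\gamma-1}}{\am^{3/2}\log\am}\lesssim d^{2\gamma-1}<d^{-2/3}$ since $\gamma<1/6$. Hence $\|E_{\cal U\setminus\cal V}\|\gg$ gap, and the crude split $\|Y\|\le\|Y_{\mathrm d}\|+\|Y_{\mathrm o}\|$ is too lossy. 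The paper avoids this by the more structured bound \eqref{eq:Y_leq_maximum}, which first peels off only the truly tiny $\|E_{\cal V\setminus\cal W}\|$ and then keeps $E_{\cal U\setminus\cal V}$ attached exclusively to the $\cal D_{\cal U\setminus\cal V}$ and $X$ blocks; those blocks sit at a much larger distance from $\Lambda(\am)$ (of order $\delta_\star$, respectively $\Lambda(\am)-2$), which easily absorbs $\|E_{\cal U\setminus\cal V}\|$.

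\textbf{The reduction $\Lambda_{\fra d}(\alpha_x,\beta_x)\approx\Lambda(\alpha_x)$ for $x\in\cal V\setminus\cal W$ fails.} The $\beta$-contribution to $\Lambda$ is $\partial_\beta\Lambda\cdot(\beta_x-1)\asymp\frac{\am-2}{\am^{3/2}}|\beta_x-1|$. The only concentration available uniformly on $\cal V$ is Corollary~\ref{cor:concentration_beta_x} with $\delta=\delta_\star$ (or the coarser Lemma~\ref{lem:beta_x_assumption_beta_pr_checked}), giving $|\beta_x-1|\lesssim(\delta_\star/(d\am))^{1/2}$. For $x$ just below the $\cal W$ threshold the $\alpha$-gap is only $\frac{(\am-2)}{\am^{3/2}}\cdot\frac{d^{2\gamma-1}}{\log\am}$, so one would need $(\delta_\star/(d\am))^{1/2}\ll d^{2\gamma-1}/\log\am$; at $d\asymp\log N$ this reads $d^{-5/8+o(1)}\ll d^{2\gamma-1}$, which is false for all $\gamma<1/6$. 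The paper resolves this in Lemma~\ref{lem:BoundVeigenvalue} by a bootstrap: it partitions $\cal V\setminus\cal W=\bigcup_i(\cal V_{\delta_{i+1}}\setminus\cal V_{\delta_i})$ with $\delta_0=d^{2\gamma-1}$ and $\delta_{i+1}\asymp d\am\,\delta_i^2/(\log\am)^2$, so that on the $i$th shell one may invoke the sharper $\beta$-concentration $|\beta_x-1|\lesssim(\delta_{i+1}/(d\am))^{1/2}$, which by construction is dominated by the $\alpha$-gap $\asymp\frac{(\am-2)}{\am^{3/2}}\cdot\frac{\delta_i}{\log\am}$ on that shell. This multi-scale step is the missing idea.
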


\begin{proof}[Proof of Corollary \ref{Cor:SpecMax}]
First, we conclude from \eqref{eq:scaling_alpha_max} that $\fra u \asymp \am$. 
Moreover, $\am - 2 \asymp \fra u - 2$ by \eqref{eq:ConditionAlphaMaxMin} and \eqref{eq:scaling_alpha_max}. 
In particular, $\tilde \chi \gtrsim \chi$ with $\tilde \chi \deq \frac{(\am - 2) d^{2\gamma - 1}}{\am^{3/2} \log \am}$. 
Moreover, $\sigma(\mathfrak u) = \Lambda_{\fra d}(\mathfrak u,1)$ by definition, 
\eqref{Lambda_estimates}, \eqref{eq:scaling_alpha_max} and Lemma \ref{lem:EstimateLambdad} 
imply 
\[
|\Lambda(\am) - \sigma(\fra u)|\leq |\Lambda(\am)-\Lambda(\fra u)|+|\Lambda(\fra u,1)-\Lambda_{\fra d}(\fra u,1)| \lesssim \frac{\log d }{d\am^{1/2}} \leq \frac{1}{K_*} \tilde\chi\,, 
\] 
where the last step followed from \eqref{eq:condition_d_SpecMax} for small $d$ 
and from \eqref{eq:ConditionAlphaMaxMin} for large $d$. 
Therefore, it suffices to show Corollary~\ref{Cor:SpecMax} with $\sigma(\fra u)- c \chi$ replaced by 
$\Lambda(\am) - \tilde c \tilde \chi$ as well as $\fra u$ by $\am$ in 
the bound on $\abs{\eps_x}$ 
and to choose 
 $K_*$ in \eqref{eq:condition_d_SpecMax} large enough. 

Let $Y$ be the matrix defined as in \eqref{eq:estimate_B_def_B}. 
When $ d\geq (\log N)^{1/4}$, we compare the matrix $U^{-1} H U$ in Proposition \ref{prop:blockMatrix} with the block matrix $\pbb{\begin{smallmatrix} \nu_\mathrm{s} & 0 & 0 \\ 0 & \cal D_{\cal W} & 0\\
 0 & 0 &  Y
\end{smallmatrix}}$.
By perturbation theory, the eigenvalue process $\sum_{\lambda \in \spec(H)}\delta_{\lambda}$ of $H$ coincides with the process
\[\delta_{\nu}+\sum_{x\in \cal W,\sigma \in \{\pm\}}\delta_{\sigma\Lambda_{\fra d}(\alpha_x,\beta_x)+\epsilon_{x,\sigma}+\epsilon_{x,\sigma}'}+\sum_{\tilde{\lambda}\in \spec(Y)}\delta_{\tilde{\lambda}+\epsilon_{\tilde{\lambda}}}\,, \]
where $\eps_{x,\sigma}$ is from \eqref{eq:cal_D_equal_Lambda_plus_eps} and controlled in Proposition~\ref{prop:blockMatrix} \ref{Item:WsetBound}, $\nu = d^{1/2} + d^{-1/2} + d^{-3/2} + \eps_{\mathrm{s}}$ is an eigenvalue of $H$ and the error terms $\eps_{x,\sigma}'$, $\eps_\mathrm{s}$ 
and $\eps_{\tilde{\lambda}}$ satisfy 
  $|\epsilon_{x,\sigma}'| \leq \|E_{\cal W}\| \lesssim (\am d)^{-10}$ and $|\epsilon_{\tilde{\lambda}}| + |\epsilon_{\mathrm{s}}|\lesssim \| E_{\cal W}\| + \|E_\mathrm{s}\|\lesssim d^{-5/2} $ by Proposition \ref{prop:blockMatrix} \ref{Item:WsetBound} \ref{Item:StrayEstimate}. 
Hence, 
\eqref{eq:ConditionAlphaMaxMin}, \eqref{eq:scaling_alpha_max} and $d \geq (\log N)^{1/4}$ imply 
$\abs{\eps_{\tilde{\lambda}}} \ll \tilde \chi$ 
and we conclude from \eqref{eq:estimate_B_def_B} in Proposition \ref{prop:SpecMax}  
that the process $\sum_{\tilde{\lambda}\in \spec(Y)}\delta_{\tilde{\lambda}+\epsilon_{\tilde{\lambda}}}$ does not appear in the window $[\Lambda(\am)- \tilde c \tilde \chi,\infty)$.
Since $-\Lambda_{\fra d}(\alpha_x,\beta_x) + \eps_{x,-} + \eps_{x,-}' < 0$ for all $x \in \cal W$, this proves 
Corollary~\ref{Cor:SpecMax} for $d \geq (\log N)^{1/4}$ by Proposition~\ref{prop:blockMatrix} \ref{Item:StrayEstimate}.   

In the regime $d<(\log N)^{1/4}$, we apply perturbation 
theory, Proposition~\ref{prop:blockMatrix} \ref{Item:WsetBound} 
and \eqref{eq:SpecMax_small_d} in Proposition~\ref{prop:SpecMax}  to the right-hand side of 
\[U^{-1}HU= \begin{pmatrix} 0 & 0 & 0 \\ 0 & \cal D_{\cal W} & 0\\
 0 & 0 &  0
\end{pmatrix} + \begin{pmatrix} \nu_\mathrm{s} & 0 & E_\mathrm{s}^* \\ 0 &  0 & 0\\
 E_\mathrm{s}^* & 0 &  Y
\end{pmatrix}+O( \|E_{\cal W} \|)\,.  
\]
We thus obtain that the processes $\sum_{\lambda \in \spec(H)} \delta_{\lambda}$ and $\sum_{x \in \cal W} \delta_{\Lambda_{\fra d}(\alpha_x,\beta_x) + \eps_x}$  
coincide on $[\Lambda(\am) - \tilde c \tilde \chi, \infty)$ with sufficiently small $\eps_x$, $x \in \cal W$. 
Since $\nu_{\mathrm{s}} + O(d^{1/2}) \ll \am^{1/2} \lesssim \Lambda(\am) - \tilde c \tilde \chi$, 
setting $\nu$ to be a sufficiently small eigenvalue of $H$  completes the proof of Corollary~\ref{Cor:SpecMax}. 
\end{proof}

For the proof of Proposition~\ref{prop:SpecMax} we shall need the following estimate, whose proof is given after the proof of Proposition~\ref{prop:SpecMax}.

\begin{lemma} \label{lem:BoundVeigenvalue}
There exists $c>0$ such that with high probability
\[
\max_{x\in \cal V \setminus \cal W}\Lambda_{\fra d}(\alpha_x,\beta_x)\leq \Lambda(\am) - \frac{c(\am -2) d^{2\gamma-1}}{ \am^{3/2}\log \am}\,.  
\]
\end{lemma}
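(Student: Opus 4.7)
Write $\tilde\chi \deq \frac{(\am-2)d^{2\gamma-1}}{\am^{3/2}\log\am}$. The plan is to decompose
\[
\Lambda_{\fra d}(\alpha_x,\beta_x) - \Lambda(\am) = \bigl[\Lambda_{\fra d}(\alpha_x,\beta_x) - \Lambda(\alpha_x,\beta_x)\bigr] + \bigl[\Lambda(\alpha_x,\beta_x) - \Lambda(\alpha_x)\bigr] - \bigl[\Lambda(\am) - \Lambda(\alpha_x)\bigr]
\]
and to show that the last (negative) bracket dominates the first two error brackets by at least a constant multiple of $\tilde\chi$.

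First, I would collect the inputs that hold uniformly for $x \in \cal V \setminus \cal W$. The definitions \eqref{eq:VsetDef} and \eqref{eq:def_cal_W} yield $\am - \alpha_x \geq c_* d^{2\gamma-1}/\log\am$, while \eqref{eq:lower_bound_degree_cal_V} combined with \eqref{eq:lower_bound_alpha_x_minus_2_am_minus_2} gives $\alpha_x \asymp \am$ and $\alpha_x - 2 \gtrsim \am - 2$. Since $\cal V \subset \cal U$, Lemma~\ref{lem:beta_x_assumption_beta_pr_checked} delivers an explicit bound $|\beta_x - 1| \leq \epsilon_\beta$ with $\epsilon_\beta$ read off from \eqref{eq:beta_x_equal_1_error_term_on_cal_U}.

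The main gap would then come from the mean value theorem applied to $\Lambda(\alpha) = \Lambda(\alpha,1) = \alpha/\sqrt{\alpha-1}$: the derivative $\Lambda'(\alpha) = (\alpha-2)/(2(\alpha-1)^{3/2})$ is of order $(\am-2)/\am^{3/2}$ throughout $[\alpha_x, \am]$ by the input estimates above, so
\[
\Lambda(\am) - \Lambda(\alpha_x) \gtrsim \frac{(\am-2)(\am - \alpha_x)}{\am^{3/2}} \gtrsim c_* \tilde\chi\,.
\]

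The two remaining brackets will be controlled by Taylor expanding $\Lambda(\alpha_x, \cdot)$ at $\beta=1$ (using the explicit formula \eqref{eq:def_Lambda}) and by Lemma~\ref{lem:EstimateLambdad} applied via the scaling $\Lambda_{\fra d}(\alpha,\beta) = \sqrt{\fra d}\Lambda(\alpha/\fra d, \beta/\fra d)$ from \eqref{eq:def_Lambda_d}, producing bounds of order $\epsilon_\beta/\sqrt{\am - 1}$ and $1/(d\sqrt{\am})$ respectively. The hard part is then to verify that both of these errors are $o(\tilde\chi)$. In the regime $d \leq (\log N)^{3/4}$ this is comfortable, but when $d > (\log N)^{3/4}$ the comparison is tight: the lower bound $\am - 2 \geq (\log d)\,d^{-1/20 + 3\gamma/2}$ imposed by \eqref{eq:ConditionAlphaMaxMin}, the constraint $\gamma < 1/6$, and the assumption $d^{2\gamma} \geq K\log\log N$ from \eqref{eq:condition_d_blockMatrix} are all precisely calibrated so that both $\epsilon_\beta/\sqrt{\am-1}$ and $1/(d\sqrt{\am})$ lose against $\tilde\chi$ by a uniform factor. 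Assembling the three displays and choosing $c < c_*/2$ would then conclude the argument.
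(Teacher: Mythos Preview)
Your approach has a genuine gap: the $\beta$-bound you take from \eqref{eq:beta_x_equal_1_error_term_on_cal_U} is too weak. After the common factor $(\am-2)/\am^{3/2}$ in the $\beta$-error (coming from $\partial_\beta\Lambda(\alpha,1)$ in \eqref{Lambda_identities}) and in $\tilde\chi$ cancels, the comparison reduces to $\epsilon_\beta \ll d^{2\gamma-1}/\log\am$. In the regime $d > (\log N)^{3/4}$, where $\am \asymp 1$, this reads $\sqrt{\log N}/d \ll d^{2\gamma-1}$, i.e.\ $d^{2\gamma} \gg \sqrt{\log N}$; at $d \asymp \log N$ this forces $\gamma > 1/4$, contradicting $\gamma < 1/6$. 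The bounds in \eqref{eq:ConditionAlphaMaxMin} do not rescue this, since $\am - 2$ no longer enters the comparison once the derivative factors cancel. Even upgrading to the $\cal V$-level bound from Corollary~\ref{cor:concentration_beta_x} with $\delta = \delta_\star$ only yields the constraint $\gamma > 8/55$, so the argument still fails for most of the allowed range.

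The paper circumvents this with a multi-scale decomposition. It introduces intermediate thresholds $\delta_0 = d^{2\gamma-1}$, $\delta_i \asymp d^{2^i\gamma-1} \wedge \delta_\star$, and splits $\cal V \setminus \cal W = \bigcup_i (\cal V_{\delta_{i+1}} \setminus \cal V_{\delta_i})$. On each shell, membership in $\cal V_{\delta_{i+1}}$ gives the sharper concentration $|\beta_x - 1| \lesssim (\delta_{i+1}/(d\am))^{1/2}$ from Corollary~\ref{cor:concentration_beta_x}, while non-membership in $\cal V_{\delta_i}$ gives the gap $\am - \alpha_x \geq c_*\delta_i/\log\am$. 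The dyadic relation $\delta_{i+1} \lesssim d\am\,\delta_i^2/\log^2\am$ then ensures the $\beta$-error loses against the $\alpha$-gap on every shell. Your single-scale argument pairs the \emph{smallest} possible $\alpha$-gap (just outside $\cal W$) with the \emph{weakest} available $\beta$-bound (valid on all of $\cal V$ or $\cal U$), and these are incompatible.
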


\begin{proof}[Proof of Proposition~\ref{prop:SpecMax}] 
We have 
\begin{align} 
\norm{Y}  
& \leq  \max\left\{ \| \mathcal{D}_{\cal V \setminus \cal W}\|, \left\|\begin{pmatrix}  
   \mathcal{D}_{\cal U \setminus \cal V} +  \cal E_{\cal U\setminus \cal V} &    E_{\cal U\setminus \cal V}^*
\\   E_{\cal U\setminus \cal V} &  X 
\end{pmatrix} \right\|\right\} + \|  E_{\cal V\setminus \cal W}\|
\notag
\\ \label{eq:Y_leq_maximum} 
& \leq  \max\left\{ \| \mathcal{D}_{\cal V \setminus \cal W}\|, 
  \| \mathcal{D}_{\cal U \setminus \cal V}\| + \| \cal E_{\cal U\setminus \cal V}\| + \|  E_{\cal U\setminus \cal V}\|,
  \|X \| +\|  E_{\cal U\setminus \cal V} \| 
\right\} + \|  E_{\cal V\setminus \cal W} \|\,. 
\end{align}
By Proposition \ref{prop:blockMatrix} \ref{Item:VsetBound} we have $\|E_{\cal V \setminus \cal W}\|\lesssim (\am d)^{-10}\ll \frac{(\am -2) d^{2\gamma-1}}{\am^{3/2}\log \am} $ and we will therefore neglect this term in \eqref{eq:Y_leq_maximum}.

We now estimate each argument of the maximum in \eqref{eq:Y_leq_maximum}. 
By definition of $\cal D_{\cal V \setminus \cal W}$, 
 Lemma \ref{lem:BoundVeigenvalue}, and Proposition \ref{prop:blockMatrix} \ref{Item:VsetBound}, we obtain  
\begin{equation} \label{eq:proof_SpecMax_aux1}
\|\mathcal{D}_{\cal V \setminus \cal W}\| \leq \max_{x\in \cal V \setminus \cal W} \Lambda_{\fra d}(\alpha_x,\beta_x) + 
O\bigg( \frac{1}{d \am} + \frac{\am^{1/2}\log d }{d (\am - 2)^2} \bigg)
   \leq \Lambda(\am) - \frac{c(\am -2) d^{2\gamma-1}}{2 \am^{3/2}\log \am}\,, 
\end{equation}
where we used that $(\am-2)^3 d^{2\gamma}\gg \log d$ by \eqref{eq:ConditionAlphaMaxMin}. 

Before continuing, we record some auxiliary results. 
Lemma~\ref{lem:beta_x_assumption_beta_pr_checked} implies the applicability of 
Lemma~\ref{lem:EstimateLambdad} and \eqref{Lambda_expansion} with 
$\alpha = \alpha_x$ and $\beta = \beta_x$ in the following. 
Moreover, as $\frac{c_* \delta_\star}{(\am - 2)\log \am} \leq \frac{1}{2}$ by \eqref{eq:deltaStarDef} 
and \eqref{eq:ConditionAlphaMaxMin}, for all $\delta \in [0, \delta_\star]$, the mean-value theorem 
and the second identity in \eqref{Lambda_identities} imply
\begin{equation} \label{eq:lower_bound_derivative_Lambda} 
\Lambda\bigg(\am - \frac{c_* \delta}{\log \am} \bigg) \geq \Lambda(\am) - \frac{(\am - 2)}{4\am^{3/2}}  \frac{c_*\delta}{\log \am}\,. 
\end{equation} 
Finally, $\Lambda(\alpha,\beta)$ is monotonously increasing in $\alpha$ and $\beta$ by Proposition~\ref{prop:Lambda_ab} \ref{item:Z_eval}. 

In the regime $d>(\log N)^{3/4}$, Lemma~\ref{lem:EstimateLambdad}, the monotonicity of $\Lambda$ and Proposition~\ref{prop:blockMatrix} \ref{Item:UsetBound} yield
\begin{align}
 \| \mathcal{D}_{\cal U \setminus \cal V}\|+\| \cal E_{\cal U\setminus \cal V} \|+\|  E_{\cal U\setminus \cal V} \|
& \leq \Lambda\left(\am - \frac{c_*\delta_\star}{\log \am},\max_{x \in \cal U} \beta_x \right)+ O \big( \xi^{1/2} \big) 
\nonumber \\ & \leq \Lambda(\am ) - \frac{(\am -2)}{4\am^{3/2}}\frac{c_*\delta_\star}{\log \am } + O \bigg( \frac{(\am-2)}{\am^{3/2}}\frac{\sqrt{\log N}}{d} + \xi^{1/2} \bigg)
\nonumber \\ & \leq \Lambda(\am ) - \frac{c_*(\am -2) d^{2\gamma-1}}{4\am^{3/2}\log \am}\,, 
\label{eq:proof_SpecMax_aux4} 
\end{align}
where, recalling the definition of $\xi$ from \eqref{eq:def_xi}, we used  \eqref{Lambda_expansion}, \eqref{eq:lower_bound_derivative_Lambda},   
\eqref{eq:beta_x_equal_1_error_term_on_cal_U} and 
 $ \delta_\star \gg \frac{\am^{3/2} \log \am}{\am - 2} \xi^{1/2} + \frac{\log \am \sqrt{\log N}}{d} + d^{2\gamma-1}$  
because of \eqref{eq:deltaStarDef} and \eqref{eq:ConditionAlphaMaxMin}.
By \ref{Item:UsetBound} and \ref{Item:bulkBound} of Proposition~\ref{prop:blockMatrix}, we also have
\begin{align}
 \| X \|+\|  E_{\cal U\setminus \cal V} \|  \leq 2 + O \bigg( \frac{\xi^{1/2} \am^2}{(\am-2)^2} \bigg)
\leq \Lambda(\am) - \frac{(\am-2) d^{2\gamma-1}}{\am^{3/2}\log \am}
\label{eq:proof_SpecMax_aux5} 
\end{align}
where we used that in the regime $(\am-2) <1$,   $\Lambda(\am)-2\geq c (\am-2)^2 \gg (\am -2)^{-2}\xi^{1/2}+(\am-2)d^{2\gamma -1}$ by \eqref{Lambda_geq_2_est} and \eqref{eq:ConditionAlphaMaxMin}. 
Plugging \eqref{eq:proof_SpecMax_aux1}, \eqref{eq:proof_SpecMax_aux4} and \eqref{eq:proof_SpecMax_aux5} into \eqref{eq:Y_leq_maximum} proves 
\eqref{eq:estimate_B_def_B} for $d > (\log N)^{3/4}$. 

If $d\leq (\log N)^{3/4}$ then Lemma~\ref{lem:EstimateLambdad}, the monotonicity of $\Lambda$  and Proposition~\ref{prop:blockMatrix} \ref{Item:UsetBound} imply
\begin{align}
 \| \mathcal{D}_{\cal U \setminus \cal V}\|+\| \cal E_{\cal U\setminus \cal V} \|+\|  E_{\cal U\setminus \cal V} \|
& \leq \Lambda\left(\am - c_* \am^{1/2},\max_{x\in \cal U}\beta_x\right)+  
O\bigg( \frac{1}{\sqrt\am} + \frac{\log \log N}{\sqrt d} \bigg) 
\nonumber \\ 
& \leq \Lambda(\am) -\frac{c_*}{5}+ 
O\bigg( \frac{1}{\sqrt\am} + \frac{\log \log N}{\sqrt d} \bigg) 
\nonumber \\ 
& \leq \Lambda(\am ) - \frac{c_*}{6}\,,\label{eq:proof_SpecMax_aux2} 
\end{align} 
where, in the second step, we also used \eqref{Lambda_expansion}, \eqref{eq:lower_bound_derivative_Lambda} 
with $\delta = \delta_\star=\am^{1/2} \log \am$ 
and \eqref{eq:beta_x_equal_1_error_term_on_cal_U}. 
Finally, \ref{Item:UsetBound} and \ref{Item:bulkBound} of Proposition~\ref{prop:blockMatrix} yield 
\begin{equation} \label{eq:proof_SpecMax_aux3} 
 \| X \|+\|  E_{\cal U\setminus \cal V} \| \leq \eta \am^{1/2} +  O(1) < \Lambda(\am)-1\,. 
\end{equation}
Inserting \eqref{eq:proof_SpecMax_aux1}, \eqref{eq:proof_SpecMax_aux2} and \eqref{eq:proof_SpecMax_aux3} into \eqref{eq:Y_leq_maximum} shows 
\eqref{eq:estimate_B_def_B} for $d \leq (\log N)^{3/4}$. 

For the proof of \eqref{eq:SpecMax_small_d}, we assume $d<(\log N)^{1/4}$ and estimate
\[ 
\normbb{ \begin{pmatrix} \nu_{\mathrm{s}} & E_\mathrm{s}^* \\ E_\mathrm{s} & Y \end{pmatrix} } 
\leq \max \bigg\{ \norm{\cal D_{\cal V\setminus \cal W}}, \norm{\cal D_{\cal U \setminus \cal V}} + \norm{\cal E_{\cal U\setminus \cal V}} + \norm{
E_{\cal U \setminus \cal V}},
\abs{\nu_\mathrm{s}} + \norm{X} + \norm{E_{\mathrm{s}}} + \norm{E_{\cal U \setminus \cal V}}
\bigg\} + \norm{E_{\cal V\setminus \cal W}}\,. 
\] 
Using \eqref{eq:proof_SpecMax_aux1},  \eqref{eq:proof_SpecMax_aux2} and \eqref{eq:proof_SpecMax_aux3} 
thus 
completes the proof of \eqref{eq:SpecMax_small_d}
as $\abs{\nu_\mathrm{s}} + \norm{E_\mathrm{s}} \lesssim d^{1/2}\ll \am^{1/2}$ by Proposition~\ref{prop:blockMatrix} \ref{Item:StrayEstimate} and \eqref{eq:scaling_alpha_max}. 
\end{proof}

\begin{proof}[Proof of Lemma \ref{lem:BoundVeigenvalue}]
By Corollary \ref{cor:concentration_beta_x}, there exists a constant $K>0$ such that for any $\delta>0$ satisfying \eqref{eq:condition_d_delta_graph}, we have 
\begin{equation}\label{eq:betaBounddelta}
\max_{x\in \cal V_\delta }|\beta_x-1|\leq K\left(\frac{\delta}{d\am}\right)^{1/2}
\end{equation}  with high probability. 
Let $k\in \mathbb{N}$ be the smallest positive integer such that $2^k\geq \frac{1}{\gamma}$ and define $\delta_0 \deq d^{2\gamma-1}$, $\delta_i \deq (C d^{2^i\gamma-1}) \wedge \delta_{\star}$ for all $i\leq k$ and $\delta_{k+1}\deq \delta_{\star} $. Here we can choose $C$ such that 
\begin{equation} \label{eq:DeltaI_DeltaIp1}
\delta_{i+1} < \frac{d\am c_*^2\delta_i^2}{64 K^2\log ^2 \am}
\end{equation}
for all $0\leq i\leq k$, where $c_*$ is chosen as in Proposition~\ref{pro:intermediate_rigidity}. 
Lemma~\ref{lem:EstimateLambdad} and \eqref{Lambda_expansion} with $\alpha = \alpha_x$ and $\beta = \beta_x$ 
are applicable in the following due to Lemma~\ref{lem:beta_x_assumption_beta_pr_checked}. 
We decompose 
$\cal V \setminus \cal W  = \cal V_{\delta_{k+1}} \setminus \cal V_{\delta_{0}} =  \bigcup_{0\leq i \leq k} \left(\cal V_{\delta_{i+1}}\setminus \cal V_{\delta_i} \right)$ 
 and use Lemma~\ref{lem:EstimateLambdad} with $\alpha = \alpha_x \gtrsim \am$ by \eqref{eq:lower_bound_degree_cal_V}, as well as the monotonicity of $\Lambda$ (see Proposition~\ref{prop:Lambda_ab} \ref{item:Z_eval}) to obtain 
\begin{align*}
 \max_{x\in \cal V\setminus \cal W} \Lambda_{\fra d}(\alpha_x,\beta_x)  &  = \max_{0\leq i\leq k} \max_{x\in \cal V_{\delta_{i+1}}\setminus \cal V_{\delta_i}} \Lambda_{\fra d}(\alpha_x,\beta_x)
\\ &  
\leq \max_{0\leq i\leq k} \Lambda\left(\am - \frac{ c_*\delta_i}{\log \am}, \max_{x\in \cal V_{\delta_{i+1}}} \beta_x \right) + O \bigg( \frac{1}{d\am} \bigg) 
\\ &  \leq \max_{0\leq i\leq k} \left(\Lambda(\am ) - \frac{c_*(\am -2) \delta_i}{4 \am^{3/2}\log \am}+ K\frac{(\am-2)}{\am^{3/2} }\left(\frac{\delta_{i+1}}{d\am }\right)^{1/2}  + O\left( \frac{K^2 \delta_{i+1}}{d\am^{3/2}} + \frac{1}{d\am} \right)  \right) 
\\ &\leq \max_{0\leq i\leq k} \left(\Lambda(\am ) - \frac{ c_*(\am -2) \delta_i}{9 \am^{3/2}\log \am} \right)\,. 
\end{align*}
Here, in the third step, we used \eqref{Lambda_expansion}, \eqref{eq:lower_bound_derivative_Lambda}  
and \eqref{eq:betaBounddelta}. 
In the fourth step, we used \eqref{eq:DeltaI_DeltaIp1} to absorb the third term into the second term as well as 
$d^{2\gamma -1} \leq \delta_i \leq \delta_{i+1} \leq \delta_\star$ and \eqref{eq:deltaStarDef} as well as 
\eqref{eq:ConditionAlphaMaxMin} to absorb the fourth term into the second term. 
Since $\min_i \delta_i = d^{2\gamma - 1}$, this completes the proof of Lemma~\ref{lem:BoundVeigenvalue}. 
\end{proof}

\subsection{Stray eigenvalue -- proof of Proposition \ref{prop:stray}} \label{sec:StrayEigenvalue}

This section is devoted to the proof of Proposition \ref{prop:stray}. Abbreviate $\cal L \deq \h{x \col \alpha_x \geq 2}$, and for $k \geq 0$ define $\cal B_k \deq \bigcup_{x \in \cal L} B_{r+k + 1}(x)$.
Throughout this section, $r$ satisfies the condition $r \ll \frac{d}{\log \log N}$ from Proposition~\ref{prop:stray}, and we assume $d \gg 1$.

\begin{lemma} \label{lem:B_k_estimate}
There exist a positive constant $c$ such that for $r,k \ll \frac{d}{\log \log N}$ we have  $\abs{\cal B_k} \leq N \ee^{-c d}$ with high probability.
\end{lemma}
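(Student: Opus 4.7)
The plan is to bound $|\cal B_k|$ using the trivial estimate
\[
|\cal B_k| \leq \sum_{x \in \cal L} |B_{r+k+1}(x)| \leq |\cal L| \cdot \max_{x \in [N]} |B_{r+k+1}(x)|,
\]
and then to control the two factors separately. The first factor uses a Chernoff/large deviation bound to show that few vertices have abnormally large degree, while the second factor uses a uniform bound on the maximum degree.

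For the first factor, I would estimate $\mathbb{E}|\cal L|$. By linearity and symmetry,
\[
\mathbb{E}|\cal L| = N\, \mathbb{P}(\alpha_1 \geq 2) = N\, \mathbb{P}\big(\operatorname{Bin}(N-1, d/N) \geq 2d\big).
\]
A standard Chernoff bound (or the Bennett-type bound encoded via the function $f$ from \eqref{def_f} and \eqref{Poisson_f}) gives
\[
\mathbb{P}\big(\operatorname{Bin}(N-1, d/N) \geq 2d\big) \leq \exp\!\big(-d f(2)\big) = \exp\!\big(-d(2\log 2 - 1) + O(\log d)\big),
\]
so $\mathbb{E}|\cal L| \leq N \ee^{-c_0 d}$ for any $c_0 < 2\log 2 - 1$ once $d$ is large enough. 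Markov's inequality then yields $|\cal L| \leq N \ee^{-c_0 d/2}$ with probability $1 - O(\ee^{-c_0 d/2}) = 1 - o(1)$, since $d \gg 1$.

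For the second factor, by Lemma~\ref{lem:convergence_max_alpha_x_upper_bound_D_x} we have $\max_y D_y \lesssim \log N$ with high probability. Thus, on the same event,
\[
|B_{r+k+1}(x)| \leq 1 + \sum_{i=1}^{r+k+1} \Big(\max_y D_y\Big)^{i} \leq C (\log N)^{r+k+1} = \exp\!\big((r+k+1)\log\log N\big).
\]
Since $r,k \ll \frac{d}{\log\log N}$ and $d \leq 3\log N$, the exponent is $o(d)$, giving $\max_{x} |B_{r+k+1}(x)| \leq \ee^{\eps d}$ for any fixed $\eps > 0$ on a high-probability event, provided $d$ is sufficiently large.

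Combining the two bounds on the intersection of the two high-probability events yields
\[
|\cal B_k| \leq N \ee^{-c_0 d/2} \cdot \ee^{\eps d} \leq N \ee^{-c d}
\]
for a positive constant $c$ (e.g.\ $c = c_0/4$ with $\eps = c_0/4$). This completes the plan. There is no substantial obstacle: the argument is a direct combination of a standard binomial tail bound with the known uniform control on degrees, and the key point is simply that the quantitative strength of the degree tail ($\ee^{-c_0 d}$) dominates the crude ball-growth factor ($\ee^{o(d)}$) because $r+k$ is much smaller than $d/\log\log N$.
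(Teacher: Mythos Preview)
Your proposal is correct and follows essentially the same approach as the paper: bound $\E|\cal L|$ by a Bennett/Chernoff-type tail estimate (the paper uses $h(1)=2\log 2-1$, you use $f(2)$, which agree up to the $O((\log d)/d)$ correction), pass to a high-probability bound on $|\cal L|$ via Markov's inequality, and absorb the ball-growth factor $(C\log N)^{r+k+1}=\ee^{o(d)}$ using the uniform degree bound from Lemma~\ref{lem:convergence_max_alpha_x_upper_bound_D_x}. The only cosmetic difference is that the paper states the combination slightly more tersely and calls the Markov step ``Chebyshev's inequality''.
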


\begin{proof}
By Bennett's inequality (see Lemma~\ref{lem:Bennett} below) we have $\E \abs{\cal L}  = N \P(\alpha_x \geq 2) \leq N \ee^{-d h(1)}$,
where $h$ was defined in \eqref{eq:def_h}. Hence, by Chebyshev's inequality we have $\abs{\cal L} \leq N \ee^{-cd}$ with high probability, for some universal constant $c > 0$. Moreover, by 
 Lemma~\ref{lem:convergence_max_alpha_x_upper_bound_D_x} we have $\max_x D_x \leq C \log N$ with high probability. Thus we get, with high probability, $\abs{\cal B_k} \leq N \ee^{-cd} (C \log N)^{r + k}$, and the claim follows by replacing $c$ with $c/2$.
\end{proof}

Define the restriction $\tilde H \deq H \vert_{\mathcal{B}_0^c}$. Define also $\hat{\f e}_k \deq \abs{\cal B_k^c}^{-1/2} \f 1_{\cal B_k^c}$.
Note that, by locality of the matrix $H$, we have $H^l \hat{\f e}_k = \tilde H^l \hat{\f e}_k$ for all $l \leq k$.

\begin{lemma}\label{prop:StrayCutGraph}
We have 
$\| (H- \E H)|_{\mathcal{B}_0^c}\| \leq 4$ with high probability.
\end{lemma}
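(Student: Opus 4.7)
The plan is to deduce this bound from the Ihara--Bass type inequality, Proposition~\ref{pro:ihara_bass_bound}, combined with the elementary observation that by construction every vertex in $\mathcal{B}_0^c$ has normalized degree strictly less than $2$. The main mechanism is that matrix inequalities in the positive semi-definite order are preserved under compression by a coordinate projection.

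First, I would apply Proposition~\ref{pro:ihara_bass_bound} with, for instance, $t=\sqrt{2}$, noting that $t\geq 1+Cd^{-1/2}$ for $d$ large enough and that $4\leq d\leq 3\log N$ holds under our standing assumptions. This yields, with high probability, the operator inequality
\begin{equation*}
\abs{H-\E H}\leq \pb{\sqrt{2}+o(1)}\id+\frac{1}{\sqrt{2}}\pb{1+o(1)}Q
\end{equation*}
as positive definite matrices, where $Q=\diag(\alpha_x\colon x\in[N])$.

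Second, let $P$ denote the orthogonal projection onto $\op{span}\{\f 1_x\colon x\in\mathcal{B}_0^c\}$, so that by the convention of Section~\ref{sec:basic_definitions} we have $(H-\E H)|_{\mathcal{B}_0^c}=P(H-\E H)P$. Since $x\in B_{r+1}(x)$ for every $x\in\cal L$, we have $\cal L\subset\mathcal{B}_0$, so $\mathcal{B}_0^c\subset\cal L^c$; in particular $\alpha_x<2$ for every $x\in\mathcal{B}_0^c$, whence $\norm{PQP}<2$.

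Third, I would conjugate the above positive definite inequality by $P$. The implication $-Y\leq X\leq Y \Longrightarrow \norm{PXP}\leq\norm{PYP}$, which is immediate from $\scalar{v}{PYPv}=\scalar{Pv}{YPv}$, then gives, with high probability,
\begin{equation*}
\normb{(H-\E H)|_{\mathcal{B}_0^c}}=\norm{P(H-\E H)P}\leq \sqrt{2}+\frac{2}{\sqrt{2}}\pb{1+o(1)}+o(1)=2\sqrt{2}+o(1)<4\,.
\end{equation*}
No step here is delicate; the only routine point to check is that the $O(\log N/(tN))$ and $O(1/(t\sqrt{d}))$ error terms from Proposition~\ref{pro:ihara_bass_bound} are indeed negligible in our regime $d\gg 1$, so there is no substantive obstacle.
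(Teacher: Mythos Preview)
Your proof is correct and follows essentially the same approach as the paper: both invoke Proposition~\ref{pro:ihara_bass_bound}, use that $\mathcal{B}_0^c\subset\cal L^c$ forces $\norm{PQP}\leq 2$, and compress the resulting operator inequality. The only difference is the choice of parameter --- you take $t=\sqrt{2}$ while the paper takes $t=2$ --- leading to the slightly sharper constant $2\sqrt{2}+o(1)$ versus the paper's $3+o(1)$, both comfortably below $4$.
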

\begin{proof}
Let $Q \deq \diag(\alpha_x \col x\in [N])$. Since $\norm{Q \vert_{\mathcal{B}_0^c}} \leq 2$ 
by definition of $\cal B_0$, we deduce the claim by choosing $t = 2$ in Proposition~\ref{pro:ihara_bass_bound}.
\end{proof}

\begin{corollary} \label{cor:norm_H_tilde}
For $r \ll \frac{d}{\log \log N}$, with high probability, $\tilde H$ has a unique eigenvalue $\tilde \nu = \sqrt{d} + O(1)$, and all other eigenvalues are bounded in absolute value by $4$.
\end{corollary}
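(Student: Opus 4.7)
The plan is to write $\tilde H = \E H\!\vert_{\mathcal{B}_0^c} + (H - \E H)\!\vert_{\mathcal{B}_0^c}$ and treat the second term as a small perturbation of the first via Weyl's inequality.

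First I would analyse the unperturbed matrix $\E H\!\vert_{\mathcal{B}_0^c}$. Using $\E H = \frac{\sqrt d}{N}(\f 1\f 1^* - \id)$, the restriction $\E H\!\vert_{\mathcal{B}_0^c}$, viewed as an operator on $\R^{\mathcal{B}_0^c}$ (the $\mathcal{B}_0$ block being identically zero), equals $\frac{\sqrt d}{N}\f 1_{\mathcal{B}_0^c}\f 1_{\mathcal{B}_0^c}^* - \frac{\sqrt d}{N}\id\!\vert_{\mathcal{B}_0^c}$. Its eigenvalues are therefore
\[
\mu_1 = \frac{\sqrt d}{N}\bigl(\abs{\mathcal{B}_0^c} - 1\bigr), \qquad \mu_2 = \cdots = \mu_{\abs{\mathcal{B}_0^c}} = -\frac{\sqrt d}{N}\,,
\]
with the top eigenvector proportional to $\f 1_{\mathcal{B}_0^c}$. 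By Lemma \ref{lem:B_k_estimate}, with high probability $\abs{\mathcal{B}_0} \leq N\ee^{-cd}$, so $\mu_1 = \sqrt d + O(\sqrt d\,\ee^{-cd})$ while the remaining eigenvalues have absolute value $O(\sqrt d / N) = o(1)$.

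Next I would invoke Lemma \ref{prop:StrayCutGraph}, which gives $\|(H - \E H)\!\vert_{\mathcal{B}_0^c}\| \leq 4$ with high probability. By Weyl's inequality applied to the decomposition above, every eigenvalue of $\tilde H$ lies within $4$ of a corresponding eigenvalue of $\E H\!\vert_{\mathcal{B}_0^c}$. Hence, with high probability, $\tilde H$ has an eigenvalue $\tilde \nu$ satisfying $\tilde \nu = \sqrt d + O(1)$, while all other eigenvalues are bounded in absolute value by $4 + o(1)$ — in particular by $4$ once the $o(1)$ correction is absorbed (for $d$ sufficiently large, which is our standing assumption).

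Finally, uniqueness of $\tilde \nu$ follows from the separation: after Weyl, the top eigenvalue exceeds $\sqrt d - 4$, while the second largest is at most $4 + o(1)$, giving a gap of at least $\sqrt d - 8 - o(1)$, which is strictly positive for all sufficiently large $d$. There is essentially no obstacle in this argument; the only point requiring mild care is the trivial remark that zero eigenvalues on the $\mathcal{B}_0$-block must be discarded before comparing with the eigenvalues of the nontrivial $\mathcal{B}_0^c$-block, which is why we work with $\E H\!\vert_{\mathcal{B}_0^c}$ restricted to $\R^{\mathcal{B}_0^c}$ throughout.
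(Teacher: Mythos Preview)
Your approach is essentially the same as the paper's: both decompose $\tilde H$ into the centred part $(H-\E H)\vert_{\mathcal{B}_0^c}$, whose norm is $\leq 4$ by Lemma~\ref{prop:StrayCutGraph}, plus the (essentially) rank-one expectation, and then read off the spectrum. The paper phrases the second step as interlacing for the rank-one perturbation $(\E H + \sqrt{d}/N)\vert_{\mathcal{B}_0^c}$, while you use Weyl's inequality on $\E H\vert_{\mathcal{B}_0^c}$ directly; these are equivalent.

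One small slip: you cannot ``absorb'' the $o(1)$ into $4$ --- Weyl gives $|\lambda_k|\leq 4+\sqrt d/N$ for $k\geq 2$, which is $4+o(1)$, not $4$. The paper's interlacing argument in fact yields the same bound (the shift by $-\sqrt{d}/N\,I$ pushes the lower edge to $-4-\sqrt d/N$), so the constant $4$ in the corollary should be read as $4+o(1)$; this is immaterial for every application in the paper.
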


\begin{proof}
We observe that $(\E H + \sqrt{d} / N) \vert_{\mathcal{B}_0^c}$ has rank one, with nonzero eigenvalue equal to $\frac{\sqrt{d}}{N} \abs{\cal B_0^c} = \sqrt{d} + O(\ee^{-c d})$, where the latter step holds with high probability by Lemma \ref{lem:B_k_estimate}. The claim then follows from Lemma \ref{prop:StrayCutGraph}, by eigenvalue interlacing of rank-one projections as well as first order perturbation theory.
\end{proof}

Define $\tilde{\f e}_k \deq \tilde H^k \hat{\f e}_k$ and  $\nu_k \deq \frac{\scalar{\tilde{\f e}_k}{\tilde H \tilde{\f e}_k}}{\scalar{\tilde{\f e}_k}{\tilde{\f e}_k}}$.
In particular, $\tilde{\f e}_k$ is supported in $\cal B_0^c$.

\begin{lemma}\label{lem:StrayCandidate} Under the assumptions of Lemma \ref{lem:B_k_estimate}, there is a constant $C > 0$ such that with high probability we have
\begin{equation*}
|\nu_k-\tilde \nu|= O \pbb{\frac{4^{2k}}{d^k}} \,, 
\qquad \qquad
\normbb{(\tilde H - \nu_k) \frac{\tilde{\f e}_k}{\norm{\tilde{\f e}_k}}} = O \pbb{ \frac{4^k}{d^{k/2 - 1/4}}}\,.
\end{equation*}
\end{lemma}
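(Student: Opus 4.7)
The proof is by power iteration. By Corollary~\ref{cor:norm_H_tilde}, I decompose $\tilde H = \tilde\nu\,\f u\f u^* + R$, where $\f u$ is the unique top unit eigenvector of $\tilde H$ (chosen nonnegative by Perron--Frobenius, since $\tilde H$ has nonnegative entries), $\tilde\nu = \sqrt d + O(1)$, $R\f u = 0$, and $\norm{R}\leq 4$. Setting $a\deq\scalar{\f u}{\hat{\f e}_k}\geq 0$ and decomposing orthogonally $\hat{\f e}_k = a\f u + \f v^\perp$ with $\f v^\perp\perp\f u$ and $\norm{\f v^\perp}^2 = 1-a^2$, we obtain
\[
\tilde{\f e}_k \;=\; a\tilde\nu^k\f u + \f r, \qquad \f r \deq R^k\f v^\perp, \qquad \f r\perp\f u, \qquad \norm{\f r}\leq 4^k\sqrt{1-a^2}.
\]

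The quantitative heart of the argument is the pair of sharp estimates $1-a^2 = O(1/d)$ and $\tilde\nu = \sqrt d + O(1/\sqrt d)$, which together ensure $\norm{\f r} = O(4^k/\sqrt d)$ and $\tilde\nu^{2k}\asymp d^k$ throughout the range $k\leq r\ll d/\log\log N$. Both follow by applying Lemma~\ref{lem:perturbationEV} with $M=\tilde H$, $\lambda=\sqrt d$, and trial vector $\hat{\f e}_0$. Writing $\E\tilde H = \frac{\sqrt d}{N}(J-I)|_{\cal B_0^c}$, a short computation using $\abs{\cal B_0}\leq N\ee^{-cd}$ from Lemma~\ref{lem:B_k_estimate} yields $\E\tilde H\,\hat{\f e}_0 = \sqrt d\,\hat{\f e}_0 + O(\sqrt d\,\ee^{-cd})\hat{\f e}_0$; combined with $\norm{\tilde H - \E\tilde H}\leq 4$ from Lemma~\ref{prop:StrayCutGraph} this gives the hypothesis $\norm{(\tilde H-\sqrt d)\hat{\f e}_0} = O(1)$, while the gap $\Delta\asymp\sqrt d$ comes from Corollary~\ref{cor:norm_H_tilde}. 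Lemma~\ref{lem:perturbationEV} then produces $\norm{\f u-\hat{\f e}_0}=O(1/\sqrt d)$ and $\tilde\nu - \sqrt d = \scalar{\hat{\f e}_0}{(\tilde H-\sqrt d)\hat{\f e}_0} + O(1/\sqrt d)$. The scalar $\scalar{\hat{\f e}_0}{(\tilde H-\E\tilde H)\hat{\f e}_0}$ is a sum over pairs $\{x,y\}\subset\cal B_0^c$ of mean-zero Bernoulli increments of individual variance $O(1/N)$, hence $O(1/\sqrt N)$ by Chebyshev; this sharpens the main term and yields $\tilde\nu = \sqrt d + O(1/\sqrt d)$. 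Since $\norm{\hat{\f e}_k-\hat{\f e}_0} = O(\ee^{-cd/2})$ follows directly from $\abs{\cal B_k\setminus\cal B_0}\leq N\ee^{-cd}$, the estimate transfers from $\hat{\f e}_0$ to $\hat{\f e}_k$, giving $a = 1-O(1/d)$ as claimed.

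Both conclusions of the lemma now follow by direct algebra on $\tilde{\f e}_k = a\tilde\nu^k\f u + \f r$, using $\tilde H\f r = R\f r$ (which holds since $\f r\perp\f u$). Substitution into the definition of $\nu_k$ gives
\[
\tilde\nu - \nu_k \;=\; \frac{\tilde\nu\,\norm{\f r}^2 - \scalar{\f r}{R\f r}}{a^2\tilde\nu^{2k} + \norm{\f r}^2}.
\]
The numerator is bounded by $(\tilde\nu+\norm{R})\norm{\f r}^2 = O(\sqrt d\cdot 16^k/d)$ and the denominator from below by $a^2\tilde\nu^{2k}\asymp d^k$, yielding $\abs{\tilde\nu - \nu_k} = O(16^k/d^{k+1/2})$, which is even stronger than the stated $O(4^{2k}/d^k)$. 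For the residual, decompose
\[
(\tilde H - \nu_k)\tilde{\f e}_k \;=\; a\tilde\nu^k(\tilde\nu - \nu_k)\f u + (R - \nu_k)\f r;
\]
the first summand has norm $O(d^{k/2}\cdot 16^k/d^k) = O(16^k/d^{k/2})$ and the second is bounded by $(\norm{R}+\abs{\nu_k})\norm{\f r}=O(\sqrt d\cdot 4^k/\sqrt d)=O(4^k)$, giving total $O(4^k)$. Dividing by $\norm{\tilde{\f e}_k}\geq a\tilde\nu^k\asymp d^{k/2}$ produces $O(4^k/d^{k/2})$, again stronger than the stated bound.

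The main obstacle is the simultaneous sharpness of the estimates on $a$ and $\tilde\nu$. With only the crude bounds $a^2\gtrsim 1$ and $\tilde\nu = \sqrt d + O(1)$ one would get $\abs{\tilde\nu-\nu_k}=O(16^k/d^{k-1/2})\cdot\ee^{O(k/\sqrt d)}$, off from the target by a multiplicative factor that can grow large when $k$ is close to $r$. The sharpness is what makes the bare power-iteration rate $(16/d)^k$ hold uniformly over the entire range $k\leq r$, and it is driven by the pairing of the $\asymp\sqrt d$ spectral gap of $\E\tilde H$ against the $O(1)$ bound on $\norm{\tilde H - \E\tilde H}$ from Lemma~\ref{prop:StrayCutGraph}, together with the Chebyshev-type concentration of the diagonal Rayleigh quotient.
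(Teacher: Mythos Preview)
Your proof is correct and follows the same power-iteration strategy as the paper: decompose $\hat{\f e}_k$ along the top eigenvector of $\tilde H$, estimate the overlap via Lemma~\ref{lem:perturbationEV} applied with trial vector $\hat{\f e}_k$ and gap $\asymp\sqrt d$, and then read off the Rayleigh quotient and residual by direct algebra. The paper records only the weaker overlap bound $\eta=O(d^{-1/4})$ (its own perturbation step in fact yields your $1-a^2=O(1/d)$) and does not pursue the sharpened $\tilde\nu=\sqrt d+O(1/\sqrt d)$; your Chebyshev step for the latter is slightly glib about the correlation between the entries $H_{xy}$ and the random index set $\cal B_0^c$, but this is easily repaired (estimate the unrestricted sum $\scalar{\f e}{H\f e}$ first and remove the $\cal B_0$ contribution via Lemma~\ref{lem:B_k_estimate}) and in any case is not needed for the bounds as stated in the lemma.
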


\begin{proof}

Use the notation 
$\tilde H \tilde {\f u} = \tilde \nu \tilde {\f u}$
for the normalized eigenvector $\tilde {\f u}$ associated with $\tilde \nu$, which is supported in $\cal B_0^c$.
We decompose $\hat{\f e}_k = \sqrt{1 - \eta^2} \, \tilde {\f u} + \eta \tilde {\f w}$,
where $\tilde {\f w}$ is normalized and orthogonal to $\tilde {\f u}$.

We begin by estimating $\eta$. To that end, we define $M \deq (\E H + \sqrt{d} / N) \vert_{\cal B_0^c} - \sqrt{d} \hat{\f e}_k \hat{\f e}_k^*$,
and use Lemma \ref{lem:B_k_estimate} to estimate $\abs{M_{ij}}$
with high probability by $\ee^{-cd} / N$ if $i,j \in \cal B_k^c$ and by $\sqrt{d}/N$ otherwise. We then estimate the norm $\norm{M}$ by applying the Schur test to the entries $M_{ij}$ with $i,j \in \cal B_k^c$, and estimating the Hilbert-Schmidt norm of the remaining entries using Lemma \ref{lem:B_k_estimate}. This yields $\norm{M} \leq \ee^{-cd}$ for some constant $c > 0$. With Lemma \ref{prop:StrayCutGraph} we conclude $\norm{\tilde H - \sqrt{d} \hat{\f e}_k \hat{\f e}_k^*} \leq 5$ with high probability. Perturbation theory for eigenvectors (see e.g.\ Lemma \ref{lem:perturbationEV}) therefore yields $\tilde {\f u} = \hat{\f e}_k + O(d^{-1/2})$, from which we conclude $\eta = O(d^{-1/4})$.

Next,
\begin{equation} \label{e_k_decomp}
\tilde{\f e}_k = \tilde H^k \hat{\f e}_k = \sqrt{1 - \eta^2} \tilde \nu^k \tilde{\f u} + \eta \tilde H^k \tilde {\f w} = \sqrt{1 - \eta^2} \tilde \nu^k \tilde{\f u} + O (\eta \, 4^k)\,,
\end{equation}
where in the last step we used that on the orthogonal complement of $\tilde {\f u}$ the matrix $\tilde H$ has norm bounded by $4$, by Corollary \ref{cor:norm_H_tilde}. We calculate
\begin{equation*}
\scalar{\tilde{\f e}_k}{\tilde{\f e}_k} = (1 - \eta^2) \tilde \nu^{2k} + O (\eta^2 4^{2k})\,, \qquad \scalar{\tilde{\f e}_k}{\tilde H \tilde{\f e}_k} = (1 - \eta^2) \tilde \nu^{2k + 1} + O (\eta^2 4^{2k+1})\,,
\end{equation*}
from which we obtain
\begin{equation*}
\tilde \nu = \nu_k + O \pbb{\frac{\eta^2}{1 - \eta^2} \frac{4^{2k}}{\tilde \nu^{2k - 1}}} = 
\nu_k + O \pbb{\frac{4^{2k}}{d^k}}\,.
\end{equation*}
From \eqref{e_k_decomp} we get $\norm{(\tilde H - \tilde \nu) \tilde{\f e}_k} = O \pb{4^k d^{1/4}}$,
from which we conclude that 
\begin{equation*}
\normbb{(\tilde H - \nu_k) \frac{\tilde{\f e}_k}{\norm{\tilde{\f e}_k}}} = O \pbb{ \frac{4^k}{d^{k/2 - 1/4}}} \,,
\end{equation*}
which concludes the proof.
\end{proof}

To compute $\nu_k$, for $l \leq 2k+1$ we define $a_l \deq \scalar{{\f e}}{H^l {\f e}}$ and  $\tilde a_l \deq \scalar{\hat{\f e}_k}{\tilde H^l \hat{\f e}_k}$.

\begin{lemma} \label{lem:a_tilde_a}
Under the assumptions of Lemma \ref{lem:B_k_estimate} we have for any $l \leq 2k+1$ with high probability $\abs{a_l  - \tilde a_l} \leq (\log N)^l \ee^{-cd}$
for some positive constant $c > 0 $.
\end{lemma}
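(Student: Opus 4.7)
The plan is to first rewrite $\tilde a_l$ in terms of powers of the original matrix $H$ rather than $\tilde H$, and then to bound the resulting difference from $a_l$ using the smallness of $\cal B_k$. Throughout I would work on the high-probability event where both $\abs{\cal B_k} \leq N \ee^{-cd}$ (Lemma \ref{lem:B_k_estimate}) and $\max_x D_x \lesssim \log N$ (Lemma \ref{lem:convergence_max_alpha_x_upper_bound_D_x}) hold. The key input is the locality identity $H^j \hat{\f e}_k = \tilde H^j \hat{\f e}_k$ for $j \leq k$, already used in the paper just before this lemma: since $\hat{\f e}_k$ is supported in $\cal B_k^c$, which is at graph distance $\geq r+k+2$ from $\cal L$, after $j \leq k$ applications of $H$ the resulting vector remains supported in $\cal B_0^c$, where $H$ and $\tilde H$ coincide.

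First I would establish that $\tilde a_l = \scalar{\hat{\f e}_k}{H^l \hat{\f e}_k}$ for all $l \leq 2k+1$. For $l \leq 2k$ this is immediate by splitting symmetrically and using self-adjointness:
\[
\tilde a_l = \scalar{\tilde H^{\floor{l/2}} \hat{\f e}_k}{\tilde H^{\ceil{l/2}} \hat{\f e}_k} = \scalar{H^{\floor{l/2}} \hat{\f e}_k}{H^{\ceil{l/2}} \hat{\f e}_k} = \scalar{\hat{\f e}_k}{H^l \hat{\f e}_k},
\]
since both exponents are at most $k$. The endpoint $l = 2k+1$ requires an extra step: writing $\tilde a_{2k+1} = \scalar{H^k \hat{\f e}_k}{\tilde H^{k+1} \hat{\f e}_k}$, I would use the telescoping identity
\[
\tilde H^{k+1} \hat{\f e}_k - H^{k+1} \hat{\f e}_k = (\tilde H - H) \tilde H^k \hat{\f e}_k = (\tilde H - H) H^k \hat{\f e}_k.
\]
Since $(\tilde H - H)_{xy}$ vanishes unless $x \in \cal B_0$ or $y \in \cal B_0$, and $H^k \hat{\f e}_k$ vanishes on $\cal B_0$, this vector is supported in $\cal B_0$ and therefore orthogonal to $H^k \hat{\f e}_k$, which is supported in $\cal B_0^c$. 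This yields $\tilde a_{2k+1} = \scalar{\hat{\f e}_k}{H^{2k+1} \hat{\f e}_k}$.

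Second, I would bound the remaining difference by splitting
\[
a_l - \scalar{\hat{\f e}_k}{H^l \hat{\f e}_k} = \pbb{\frac{1}{N} - \frac{1}{\abs{\cal B_k^c}}} \sum_{x, y \in \cal B_k^c} (H^l)_{xy} + \frac{1}{N} \sum_{\substack{(x,y) \in [N]^2 \\ \h{x,y} \cap \cal B_k \neq \emptyset}} (H^l)_{xy}.
\]
Since the entries of $H$ lie in $\h{0, d^{-1/2}}$, counting walks of length $l$ in a graph of maximal degree $D_{\max} \lesssim \log N$ gives $\sum_y \abs{(H^l)_{xy}} \leq (D_{\max}/\sqrt d)^l \leq (C \log N)^l$ for every $x$. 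Combined with $\abs{\cal B_k} \leq N \ee^{-cd}$ and $\abs{1/N - 1/\abs{\cal B_k^c}} \lesssim \ee^{-cd}/N$, both terms above are bounded by $C \ee^{-cd}(\log N)^l$. Since $l \leq 2k+1 \ll d/\log\log N$, the factor $C^l$ can be absorbed into a slightly smaller constant in the exponential, yielding the claimed bound. The whole argument is essentially routine; the only mild subtlety is the endpoint $l = 2k+1$ of the first step, where redistributing exponents fails and one must instead exploit the support structure of $\tilde H - H$.
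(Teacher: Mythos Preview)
Your proof is correct and follows essentially the same approach as the paper: first use locality to rewrite $\tilde a_l = \scalar{\hat{\f e}_k}{H^l \hat{\f e}_k}$, then bound the difference from $a_l$ using the row-sum estimate $\sum_y H_{xy} = D_x/\sqrt d \lesssim \log N$ together with $\abs{\cal B_k} \leq N \ee^{-cd}$. Your treatment of the endpoint $l = 2k+1$ via the support structure of $(\tilde H - H) H^k \hat{\f e}_k$ is more explicit than the paper's one-line appeal to locality, but the argument is otherwise the same.
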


\begin{proof}
By locality of $H$ and the definitions of $\hat{\f e}_k$ and $\tilde H$ we have $\tilde a_l = \scalar{\hat{\f e}_k}{H^l \hat{\f e}_k}$ for $l \leq 2k+1$. Thus,
\begin{equation*}
\tilde a_l = \frac{1}{\abs{\cal B^c_k}} \sum_{x_1, \ldots, x_l} H_{x_1 x_2} \cdots H_{x_{l-1} x_l} (1 - \ind{x_1 \in \cal B_k}) (1 - \ind{x_l \in \cal B_k})\,.
\end{equation*}
The claim now easily follows from the estimate $\sup_x \sum_y H_{xy} = O( \log N / \sqrt{d}) \leq \log N$
with high probability, by Lemma \ref{lem:convergence_max_alpha_x_upper_bound_D_x}, as well as Lemma \ref{lem:B_k_estimate}.
\end{proof}

We can replace $a_l$ by its expectation using the following large deviation estimate, which is an immediate consequence of \cite[Lemma~6.5]{EKYY1} (or more precisely from its proof).

\begin{lemma} \label{lem:a_exp}
For any $l \geq 1$ we have $\abs{a_l - \E a_l} \leq \frac{(C l \log N)^l}{\sqrt{N}}$
with high probability.
\end{lemma}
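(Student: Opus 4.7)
The plan is to apply the moment-method concentration estimate from \cite{EKYY1}, Lemma~6.5, which is stated there for $\frac{1}{N}\tr H^l$ but whose proof applies essentially verbatim to $a_l = \scalar{\f e}{H^l \f e}$. Both quantities admit an expansion
\begin{equation*}
a_l = \frac{1}{N} \sum_{x_0, \dots, x_l \in [N]} H_{x_0 x_1} H_{x_1 x_2} \cdots H_{x_{l-1} x_l}
\end{equation*}
as a normalized sum over length-$l$ walks in $[N]$, and both produce polynomials of degree $l$ in the independent (up to symmetry) random variables $H_{xy}$, each satisfying $\abs{H_{xy}} \leq d^{-1/2}$.

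For an even integer $2p$ to be chosen later, I would expand the centred $2p$-th moment as
\begin{equation*}
\E (a_l - \E a_l)^{2p} = \frac{1}{N^{2p}} \sum_{W_1, \ldots, W_{2p}} \E \prod_{j=1}^{2p} \pb{H_{W_j} - \E H_{W_j}}\,,
\end{equation*}
where each $W_j$ is a walk of length $l$ and $H_{W_j}$ denotes the product of $H$ entries along $W_j$. By independence of the entries and the centering, only tuples $(W_1, \dots, W_{2p})$ whose combined multigraph has every edge used at least twice contribute. Grouping such tuples by their topological type and using the entrywise bound then yields an estimate of the form
\begin{equation*}
\E (a_l - \E a_l)^{2p} \leq \frac{(C l p)^{lp}}{N^p}
\end{equation*}
for a universal constant $C > 0$. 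Choosing $p = \lceil \log N \rceil$ and applying Markov's inequality with threshold $t = (C' l \log N)^l / \sqrt{N}$, for $C'$ sufficiently large in terms of $C$, yields $\P(\abs{a_l - \E a_l} > t) = O(N^{-c})$ for some $c > 0$, which is certainly $o(1)$.

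The main obstacle is the combinatorial enumeration producing the sharp exponent $(Clp)^{lp}$ with the correct joint dependence on $l$ and $p$; a naive bounded-differences argument yields a much weaker estimate that does not suffice to absorb the $(\log N)^l$ factor via the choice $p \asymp \log N$. This combinatorial step is precisely the content of \cite{EKYY1}, Lemma~6.5, so in writing the actual proof I would simply invoke that statement as a black box rather than reproduce the enumeration, observing only that the trivial reduction to the setting of that lemma goes through with the deterministic vector $\f e$ in place of a coordinate vector.
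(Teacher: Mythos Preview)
Your proposal is correct and takes essentially the same approach as the paper: the paper's proof consists of a single sentence stating that the result is an immediate consequence of \cite[Lemma~6.5]{EKYY1}, or more precisely of its proof, which is exactly the black-box invocation you arrive at. Your additional commentary on the moment expansion and the distinction between $\tfrac{1}{N}\tr H^l$ (closed walks) and $a_l$ (open walks) correctly identifies why one must appeal to the proof rather than the statement of that lemma.
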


Hence, it suffices to compute $\E a_l$, which is a combinatorial exercise. We summarise a result that is sufficiently precise for our purposes. (The same method yields an asymptotic expansion in $1/d$, which we shall however not need here.)

\begin{lemma} \label{lem:path_computation}
For fixed $l \geq 4$ we have
\begin{equation*}
d^{l/2}\E a_l = d^l + (l - 1) d^{l - 1} + \frac{l^2 - l - 4}{2} d^{l - 2}+ O(d^{l - 3})
\end{equation*}
\end{lemma}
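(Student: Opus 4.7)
Starting from $a_l = \scalar{\f e}{H^l \f e}$ with $\f e = N^{-1/2} \f 1_{[N]}$, expand
\[
d^{l/2}\E a_l \;=\; \frac{1}{N} \sum_{\substack{(x_0,\ldots,x_l)\in[N]^{l+1}\\ x_i\neq x_{i+1}}} \E\qB{A_{x_0x_1}\cdots A_{x_{l-1}x_l}} \;=\; \frac{1}{N}\sum_w \pbb{\frac{d}{N}}^{|E(w)|}\,,
\]
where the sum runs over walks $w=(x_0,\ldots,x_l)$ (with $x_i\neq x_{i+1}$) in the complete graph on $[N]$, $|E(w)|$ denotes the number of distinct edges traversed, and the second equality uses that the entries of $A$ are independent Bernoulli$(d/N)$ (in particular $\E A_{xy}^k = d/N$ for all $k\geq 1$ and $x\neq y$). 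Grouping walks by isomorphism class (``shape'') $s$, with $v(s)$ distinct vertices and $e(s)$ distinct edges, and noting that the number of walks in $[N]^{l+1}$ with shape $s$ equals $N(N-1)\cdots(N-v(s)+1)$ times a combinatorial multiplicity $m(s)$ independent of $N$ and $d$, we arrive at
\[
d^{l/2}\E a_l \;=\; \sum_s m(s)\, d^{e(s)}\, N^{v(s)-e(s)-1}\, \pb{1+O(1/N)}\,.
\]

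Since the simple graph underlying each walk is connected, $v(s)-e(s)\leq 1$ with equality iff this simple graph is a tree. Non-tree shapes contribute at most $d^{e(s)}/N \leq (3\log N)^l/N = o(d^{l-3})$ per shape, using $d\leq 3\log N$. Together with the fact that the number of length-$l$ shapes is bounded independently of $N$, this shows that all non-tree contributions and all $O(1/N)$ corrections together give $O(d^{l-3})$, so only tree shapes contribute to the three leading terms. We enumerate tree shapes by the number of distinct edges $e\in\{l, l-1, l-2,\ldots\}$.

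For $e=l$, each edge is crossed exactly once, forcing the walk to be a simple path of length $l$; there are $N(N-1)\cdots(N-l)$ such walks, contributing $d^l + O(d^{l-3})$. For $e=l-1$, a parity argument (each edge on the $x_0$-to-$x_l$ geodesic in the induced tree is traversed an odd number of times, every other edge an even number) shows that the geodesic has length $p=l-2$ and that exactly one off-geodesic pendant edge is traversed back and forth. This pendant may be attached at any of the $l-1$ geodesic vertices, and the off-tree vertex has $N-l+1$ choices; the total contribution is $(l-1)d^{l-1} + O(d^{l-3})$.

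For $e=l-2$ the same parity analysis leaves two cases, $p=l-2$ and $p=l-4$. If $p=l-2$, one geodesic edge must be crossed three times, which may be chosen in $l-2$ ways, contributing $(l-2)d^{l-2}$. If $p=l-4$, the walk contains two non-path pendant edges each crossed twice; the two-edge subtree attached to the geodesic is either (Type A) two disjoint pendants rooted at path vertices, or (Type B) a chain of length two hanging from a single path vertex. A careful enumeration of Type A -- accounting for the $\Z/2$-symmetry between the two pendant vertices and for the two possible orderings of the detours when both pendants share a common attachment vertex -- gives $\binom{l-2}{2} d^{l-2}$, while Type B gives $(l-3)d^{l-2}$. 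Summing the $e=l-2$ contributions yields $(l-2)+\binom{l-2}{2}+(l-3) = \frac{l^2-l-4}{2}$, and the remaining tree shapes with $e\leq l-3$ contribute $O(d^{l-3})$ by the same uniform bound. The main combinatorial subtlety lies in the Type A enumeration, where one must avoid double-counting due to the symmetry of the two pendant vertices and correctly handle the case where both detours occur at the same geodesic vertex.
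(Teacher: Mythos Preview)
Your proof is correct and follows essentially the same approach as the paper's: both expand $d^{l/2}\E a_l$ as a sum over walks, partition according to the coincidence structure of the vertex sequence, and identify the patterns that contribute at orders $d^l$, $d^{l-1}$, and $d^{l-2}$. The paper enumerates these directly as ``$x_{i-1}=x_{i+1}$'' (giving $l-1$), ``$x_{i-1}=x_{i+1}$ and $x_{j-1}=x_{j+1}$ for $i<j$'' (giving $\binom{l-1}{2}$), and ``$x_{i-1}=x_{i+1}$, $x_{i-2}=x_{i+2}$'' (giving $l-3$). Your organization by geodesic length and tree shape recovers exactly the same patterns in a different bookkeeping: your $p=l-2$ case ($l-2$ patterns) corresponds to the paper's pairs with $j=i+1$, your Type~A ($\binom{l-2}{2}$ patterns) to $j\geq i+2$, and your Type~B ($l-3$ patterns) to the deep backtrack. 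Since $\binom{l-1}{2}=(l-2)+\binom{l-2}{2}$, the two decompositions of the $d^{l-2}$ coefficient agree term by term.
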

\begin{proof}
We write $d^{l/2} \E a_l = \sum_{x_0, \dots, x_l} \E A_{x_0 x_1} \cdots A_{x_{l - 1} x_l}$, and partition the sum according to the coincidences of the vertices $x_0, \dots, x_l$. We use $\E A_{xy} = \frac{d}{N} (1 - \delta_{xy})$ and the independence of the entries of $A$ to evaluate each term. The term of order $d^l$ arises from the restriction that $x_0, \dots, x_l$ be all distinct. The term of order $d^{l - 1}$ arises from the restriction that $x_{i - 1} = x_{i+1}$ for $1 \leq i \leq l - 1$ and all other vertices are distinct. The term of order $d^{l-2}$ arises from the restriction that $x_{i - 1} = x_{i+1}$ and $x_{j - 1} = x_{j+1}$ for $1 \leq i < j \leq l -1$, or $x_{i - 1} = x_{i+1}$ and $x_{i - 2} = x_{i+2}$ for $2 \leq i \leq l-2$, and all other vertices are distinct. All other terms are of order $O(d^{-3})$.
\end{proof}

\begin{proof}[Proof of Proposition \ref{prop:stray}.] We consider the two regimes $d \geq (\log N)^{1/4}$ and $d < (\log N)^{1/4}$ separately. Let first $d \geq (\log N)^{1/4}$. We set $\f q \deq \tilde{\f e}_6 / \norm{\tilde{\f e}_6}$. Then, by construction, the support of $\f q$ is contained in $\cal B_0^c$. Moreover, since $\tilde H \f q = H \f q$, from Lemma \ref{lem:StrayCandidate} we get $\norm{(H - \nu_6) \f q} \lesssim d^{-5/2}$. Moreover, using Lemmas \ref{lem:a_tilde_a}--\ref{lem:path_computation} we get, setting $k = 6$,
\begin{equation*}
\nu_6 = \frac{\scalar{\hat{\f e}_k}{\tilde{H}^{2k+1} \hat{\f e}_k}}{\scalar{\hat{\f e}_k}{\tilde{H}^{2k} \hat{\f e}_k}} = \frac{\tilde{a}_{2k+1}}{\tilde{a}_{2k}} = d^{1/2} + d^{-1/2} + d^{-3/2} + O(d^{-5/2})
\end{equation*}
with high probability, where we used that $d \geq (\log N)^{1/4}$.
Finally, setting again $k = 6$ we have
\begin{equation*}
\scalar{\f e}{\f q} = \frac{\scalar{\f e}{H^k \hat {\f e}_k}}{\scalar{\hat{\f e}_k}{H^{2k} \hat{\f e}_k}^{1/2}} = \frac{a_k}{a_{2k}^{1/2}} \pb{1 + O(1/d)} = 1 + O(1/d)\,,
\end{equation*}   
with high probability, where the second step follows from Lemma \ref{lem:a_tilde_a} and a trivial modification of its proof, and the last step follows from Lemmas \ref{lem:a_exp} and \ref{lem:path_computation}. We conclude that $\norm{\f q - \f e} = O(d^{-1/2})$. This concludes the proof in the regime $d \geq (\log N)^{1/4}$.

If $d < (\log N)^{1/4}$, we set $\f q \deq \f e \vert_{\cal B_1^c} / \norm{\f e \vert_{\cal B_1^c}}$. By Lemma \ref{lem:B_k_estimate}, $\norm{\f q - \f e} \lesssim \ee^{- c d}$, and we estimate with high probability, using the locality of $H$,
\begin{equation*}
\normb{\big(H - (d^{1/2} + d^{-1/2} + d^{-3/2})\big) \f q} \lesssim d^{1/2} + \norm{H \vert_{\cal B_0^c}} \lesssim d^{1/2} + \norm{(H - \E H) \vert_{\cal B_0^c}} \lesssim d^{1/2}\,,
\end{equation*}
where in the last step we used Lemma \ref{prop:StrayCutGraph}.
\end{proof}

\subsection{Rough rigidity -- proof of Proposition~\ref{prop:RoughCandidatEigenvector}} \label{sec:RoughRigidity}

\begin{proof}[Proof of Proposition \ref{prop:RoughCandidatEigenvector}] 
We recall $\mathcal{U}=\big\{x:\frac{1}{5}\am\leq \alpha_x \big\}$ from \eqref{eq:UsetDef} and set $r \deq 2$.

In the following proposition, we introduce the \emph{pruned graph} $\wh{\mathbb{G}}$, a subgraph of $\mathbb{G}$, with a number of nice properties. The construction of $\wh{\mathbb{G}}$ goes back to \cite[Lemma 7.2]{ADK19}. 
In analogy with the notations on $\mathbb{G}$, we introduce the spheres $\wh{S}_i(x)$, the balls $\wh{B}_i(x)$ 
and the number $\wh{N}_y(x)$ of children of $y$ (relative to $x$) for $x,y \in [N]$, $x \neq y$ defined through 
\[ \wh{S}_i(x) \deq \{ z \in [N] \colon \wh{d}(x,z) = i \}, \qquad 
\wh{B}_i(x) \deq \bigcup_{j = 0}^{i} \wh{S}_j(x), \qquad \wh{N}_y(x) \deq \abs{\wh{S}_1(y) \cap \wh{S}_{\wh{d}(x,y)+1}(x)}\,,  
\] 
where $\wh{d}(\cdot,\cdot)$ denotes the graph metric on $[N]$ induced by $\wh{\mathbb{G}}$. 
Moreover, for any $x \in [N]$, we define 
\[ \wh{\alpha}_x \deq \frac{\abs{\wh{S}_1(x)}}{d}, \qquad \wh{\beta}_x \deq \frac{\abs{\wh{S}_2(x)}}{d \abs{\wh{S}_1(x)}}\,. \] 

\begin{proposition}[Pruned graph $\wh{\mathbb{G}}$]\label{prop:(The-cut-graph)}
There are constants $K>0$ and $C\in \mathbb{N}$ such that
if $K \log \log N \leq d \leq (\log N)^{3/4}$ then, with high probability, there exists a subgraph $\wh{\mathbb{G}}$ of $\mathbb{G}$, which has the following properties. 
\begin{enumerate} [label=(\roman*)] 
\item  \label{Item:balls_disjoint_rough} 
The balls $(\wh{B}_3(x))_{x\in \cal U}$ of radius $3$ in the graph $\wh{\mathbb{G}}$ are disjoint. 
\item \label{Item:roughGtree}
The induced subgraph $\wh{\mathbb{G}}|_{\wh{B}_3(x)}$ is a tree for each $x \in \cal U$. 
\item \label{Item:roughGdegree}
The maximal degree of $\mathbb{G}\setminus \wh{\mathbb{G}}$ is bounded by $C$. In particular, 
\[ \abs{\wh{\alpha}_x - \alpha_x} \leq Cd^{-1}, \qquad \qquad \abs{\wh{\beta}_x - \beta_x} \lesssim d^{-1}\,. \]
\item \label{Item:roughGConcertration}
For all $x\in \cal U$, we have 
\[\left|\frac{|\wh{S}_2(x)|}{d |\wh{S}_1(x)|}-1\right|\lesssim \left(\frac{\log \log N}{ d}\right)^{1/2}\quad \text{and} \quad\left|\frac{|\wh{S}_3(x)|}{d |\wh{S}_2(x)|}-1\right|\lesssim \left(\frac{\log \log N}{ d}\right)^{1/2}\,.\]
\item \label{Item:roughGw2}
For all $x\in \cal U$, we have 
\[
\sum_{y\in \wh{S}_1(x)} (\wh{N}_y(x) - d\wh{\beta}_x)^2 \lesssim   (\log N)^2\,. 
\] 
\end{enumerate}  
\end{proposition}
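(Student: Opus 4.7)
The plan is to adapt the pruning construction from \cite[Lemma~7.2]{ADK19} to the present regime $K\log\log N\leq d\leq (\log N)^{3/4}$, where vertices of $\cal U$ are very sparse (with $|\cal U|=N^{o(1)}$ by the Bennett estimate applied to $\P(\alpha_x\geq \am/5)$). The idea is to identify a bounded-degree subgraph of \emph{bad} edges of $\mathbb{G}$ whose removal yields a graph $\wh{\mathbb{G}}$ in which the $3$-neighborhoods of $\cal U$ are disjoint trees, and then to check that the concentration estimates \ref{Item:roughGConcertration}--\ref{Item:roughGw2} survive this $O(1)$ perturbation of each vertex's local structure.

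First I would prove the purely graph-theoretic properties \ref{Item:balls_disjoint_rough}--\ref{Item:roughGdegree}. Call an edge \emph{bad} if it lies on (a) a cycle of length at most $6$ passing through some vertex in a $6$-neighborhood of $\cal U$, or (b) a path of length at most $6$ in $\mathbb{G}$ connecting two distinct vertices of $\cal U$. A first-moment/switching argument — exactly as in \cite[Lemma~7.2]{ADK19}, controlling the number of labelled short cycles and short paths between two prescribed endpoints using $\E A_{xy} = d/N$ and $\max_x D_x \lesssim \log N$ from Lemma~\ref{lem:convergence_max_alpha_x_upper_bound_D_x} — shows that with high probability every vertex of $[N]$ is incident to at most a constant number $C$ of bad edges. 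Define $\wh{\mathbb{G}}$ by removing all bad edges. Properties \ref{Item:balls_disjoint_rough} and \ref{Item:roughGtree} then hold by construction, and \ref{Item:roughGdegree} follows from the uniform bound on bad-edge incidence, with the estimates on $\wh{\alpha}_x,\wh{\beta}_x$ being trivial consequences of $|\alpha_x - \wh\alpha_x|\leq C/d$ and an analogous estimate for $\beta_x$ (using the lower bound $\alpha_x \gtrsim \am/5$ for $x\in \cal U$).

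For the concentration statements \ref{Item:roughGConcertration} and \ref{Item:roughGw2}, I would first prove the corresponding bounds in the un-pruned graph $\mathbb{G}$, then transfer them. Conditional on $S_1(x)$, the set $S_2(x)\cup S_1(x)\cup\{x\}$ is controlled by independent $\mathrm{Bernoulli}(d/N)$ random variables attached to the edges from $S_1(x)$ to $[N]\setminus (S_1(x)\cup\{x\})$, and $|S_2(x)|$ is essentially a Poissonized sum with mean $(1+o(1))d|S_1(x)|$. Applying Bennett's inequality (Lemma~\ref{lem:Bennett}) conditionally on $S_1(x)$, with deviation parameter $t\asymp (d|S_1(x)|\log\log N)^{1/2}$, and using $|S_1(x)|\gtrsim \am d$ for $x\in \cal U$, gives
\[
\absbb{\frac{|S_2(x)|}{d|S_1(x)|}-1}\lesssim \pbb{\frac{\log\log N}{d\am}}^{1/2}\lesssim \pbb{\frac{\log\log N}{d}}^{1/2}
\]
with failure probability $\ee^{-c\log\log N}$, uniformly over $x\in \cal U$ by a union bound over $|\cal U|=N^{o(1)}$ candidates. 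The same argument conditional on $B_2(x)$ handles $|S_3(x)|$. For \ref{Item:roughGw2}, the random variables $(N_y(x))_{y\in S_1(x)}$ are conditionally independent given $B_1(x)$ with $N_y(x)\sim\mathrm{Bin}(N-|B_1(x)|,d/N)$, and an application of Bennett for each $(N_y(x)-d\beta_x)^2$ together with summation and a union bound yields a bound of order $|S_1(x)|\log\log N\lesssim \log N\cdot \log\log N\lesssim (\log N)^2$.

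The transfer from $\mathbb{G}$ to $\wh{\mathbb{G}}$ is then straightforward: by \ref{Item:roughGdegree}, for $x\in \cal U$ the spheres satisfy $||S_i(x)|-|\wh S_i(x)||\leq C(1+|S_{i-1}(x)|)$ and analogously for $\wh N_y(x)$, so all the above estimates survive the perturbation because $|S_1(x)|\gtrsim \am d\gg 1$. The main obstacle I expect is a bookkeeping one: ensuring that the notion of ``bad edge'' is simultaneously strong enough to force \ref{Item:balls_disjoint_rough} and \ref{Item:roughGtree} (which requires examining paths/cycles up to a length that depends on the union of two radius-$3$ neighborhoods) yet weak enough that first-moment estimates give $O(1)$ bad-edge incidence uniformly in $x$ throughout the range of $d$; here one uses crucially that $d\leq (\log N)^{3/4}$ makes the expected number of length-$\leq 6$ cycles through a given vertex of order $d^6/N=o(1)$ and the expected number of length-$\leq 6$ paths between two vertices of $\cal U$ of order $d^6|\cal U|^2/N=o(1)$, so that the set of affected edges is sparse enough to avoid cascade effects.
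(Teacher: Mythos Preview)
Your overall strategy matches the paper's: take $\wh{\mathbb{G}}$ to be the pruned graph $\mathbb{G}_\tau$ from \cite[Lemma~7.2]{ADK19} with $\tau = \am/5$, deduce \ref{Item:balls_disjoint_rough}--\ref{Item:roughGdegree} directly from that lemma, establish the concentration estimates first in $\mathbb{G}$, and transfer them via the $O(1)$-degree perturbation from \ref{Item:roughGdegree}. However, there is a quantitative gap in your union bound for \ref{Item:roughGConcertration}. Your claim that $|\cal U| = N^{o(1)}$ is incorrect: since $\P(\alpha_x \geq \am/5) \leq \ee^{-dh(\am/5 - 1)}$ and $dh(\am - 1) = \log N$, for large $\am$ one has $h(\am/5 - 1)/h(\am - 1) \approx 1/5$, so $\E|\cal U| \lesssim N^{4/5 + o(1)}$, which is polynomial in $N$. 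Your failure probability $\ee^{-c\log\log N} = (\log N)^{-c}$ therefore cannot survive a union bound over $\cal U$. The paper instead invokes \cite[Lemma~5.4(i)]{ADK19}, which uses the larger deviation $(\log N / (d|S_1(x)|))^{1/2}$ so that the failure probability is $o(1/N)$ and one may union bound over all of $[N]$; the key input is then the lower bound $|S_1(x)| \geq \am d/5 \gtrsim \log N / \log\log N$ (valid on $\cal U$ since $d \geq K\log\log N$), which converts the deviation into $(\log\log N/d)^{1/2}$.

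Your argument for \ref{Item:roughGw2} has a similar issue: the bound ``$|S_1(x)|\log\log N$'' cannot be right, since each term $(N_y(x) - d\beta_x)^2$ already has expectation $\asymp d \gg \log\log N$. The paper reads off from the proof of \cite[eq.~(5.9c)]{ADK19} the estimate $\sum_{y\in S_1(x)}(N_y(x) - d\beta_x)^2 \lesssim d(|S_1(x)| + \log N)(\log d + \log N / d)$, and then uses $|S_1(x)| \lesssim \log N$ together with $\log d \ll \log N / d$ (from $d \leq (\log N)^{3/4}$) to conclude $\lesssim (\log N)^2$.
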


\begin{proof}[Proof of Proposition \ref{prop:(The-cut-graph)}] 
We set $\wh{\mathbb{G}} \deq \mathbb{G}_\tau$ with $\tau = \frac 1 5 \am$, where the latter graph is 
introduced in \cite[Lemma 7.2]{ADK19}. 
Then properties \ref{Item:balls_disjoint_rough}, 
\ref{Item:roughGtree} and \ref{Item:roughGdegree} follow directly from \cite[Lemma~7.2 (i), (ii) and (vi)]{ADK19}, 
respectively, as we have $\tau \asymp \am$, $h((\tau-1)/2) \gtrsim \am$ and \eqref{eq:scaling_alpha_max}.
In particular, $\abs{\wh{\alpha}_x - \alpha_x} \leq Cd^{-1}$, which implies the bound on $\abs{\beta_x - \wh{\beta}_x}$ due to the definitions of $\beta_x$ and $\wh{\beta}_x$ as well as $\abs{\wh{S}_2 (x) - S_2(x)} \leq C \abs{S_1(x)}$. This proves \ref{Item:roughGdegree}. 

The first bound in \ref{Item:roughGConcertration} is a direct consequence of 
$\wh{\beta}_x = \beta_x + O(d^{-1})$ by \ref{Item:roughGdegree} 
and \cite[Lemma~5.4 (i)]{ADK19}. 
Here, we used that   
$\abs{S_1(x)} = D_x \geq \frac{\am d }{5}\gtrsim \frac{\log N }{\log \log N}\geq \mathcal{K} \frac{\log N}{d}$  
due to \eqref{eq:scaling_alpha_max} and $ K \log \log N \leq d \leq (\log N)^{3/4}$ for some large enough $K$ 
as well as $D_x \lesssim \am d$ with high probability by Lemma~\ref{lem:convergence_max_alpha_x_upper_bound_D_x}. 
By \ref{Item:roughGdegree}, we also have $\abs{\wh{S}_3(x) - S_3(x)} \leq C \abs{S_2(x)}$. 
Hence, arguing as above as well as using the first estimate in \ref{Item:roughGConcertration} 
and \cite[Lemma~5.4 (i)]{ADK19} complete the proof of \ref{Item:roughGConcertration}. 

For the proof of \ref{Item:roughGw2}, we note that 
 $\abs{\wh{N}_y(x) - N_y(x)} \leq C$ by \ref{Item:roughGdegree} 
as $\abs{\wh{N}_y(x) - N_y(x)}$ is bounded by the maximal degree of $\mathbb{G} \setminus \wh{\mathbb{G}}$. 
Since $\wh{S}_1(x) \subset S_1(x)$, $\wh{N}_y(x) = N_y(x) + O(1)$ and $\wh{\beta}_x = \beta_x + O(d^{-1})$ by \ref{Item:roughGdegree}, we have  
\[ \sum_{y \in \wh{S}_1(x)} (\wh{N}_y(x) - d \wh{\beta}_x)^2 \leq 2\sum_{y \in S_1(x)} (N_y(x) - d \beta_x)^2 
+ O(\abs{\wh{S}_1(x)}) \lesssim (\log N)^2\,.  
\] 
Here, we also used that $\abs{\wh{S}_1(x)} \lesssim \abs{S_1(x)} \lesssim \log N$ with high probability 
by Lemma~\ref{lem:convergence_max_alpha_x_upper_bound_D_x} and \eqref{eq:scaling_alpha_max} 
as well as 
\[ \sum_{y \in S_1(x)} (N_y(x) - d \beta_x)^2 \lesssim d ( \abs{S_1(x)} + \log N) \bigg( \log d + \frac{\log N}{d} \bigg) 
\lesssim d \log N \frac{\log N}{d} 
\lesssim (\log N)^2\,, 
\] 
whose first step can be read off from the proof of \cite[eq.~(5.9c)]{ADK19}, while the other steps follow from $\abs{S_1(x)} \lesssim \log N$ and $\log d \ll \frac{ \log N}{d}$ 
due to $d \leq (\log N)^{3/4}$. 
\end{proof}

For each $x\in\mathcal{U}$, we introduce the two vectors 
\begin{equation}
\f w_{\pm}(x)\deq\frac{1}{\sqrt{2}} \bigg( \frac{\sqrt{\wh{\alpha}_{x}}}{\sqrt{\wh{\alpha}_{x}+\wh{\beta}_{x}}} \f 1_{x} \pm \frac{1}{\| \f 1_{\wh{S}_{1}(x)}\|} \f 1_{\wh{S}_{1}(x)}+\frac{\sqrt{\wh{\beta}_{x}}}{\sqrt{\wh{\alpha}_{x}+\wh{\beta}_{x}}\|\f 1_{\wh{S}_{2}(x)}\|} \f 1_{\wh{S}_{2}(x)}\bigg)\,.\label{eq:UcandidateDef}
\end{equation}

By Proposition~\ref{prop:(The-cut-graph)} \ref{Item:balls_disjoint_rough}, the balls 
$(\wh{B}_2(x))_{x \in \cal U}$ in $\wh{\mathbb{G}}$ are disjoint. Therefore, the sets $(\supp \f w_+(x)\cup \supp \f w_-(x))_{x \in \cal U}$ are disjoint. 
This shows \ref{Item:rough_rigidity_supports_disjoint}. 

Moreover, as $\langle\f w_+(x),\f w_-(x)\rangle=0$ for any $x\in \cal U$, the family $(\f w_\sigma(x))_{x \in \cal U, \sigma \in \{ \pm\}}$ is orthonormal. 
Let $x \in\cal U$ and $\sigma \in \{\pm\}$. 
Since $\wh{S}_i(x) \subset S_i(x)$ by Proposition~\ref{prop:(The-cut-graph)}, 
we conclude that $\supp \f w_\sigma(x) \subset B_2(x)$. 
For the proof of \eqref{eq:cal_U_approx_w_sigma}, we note that 
$\frac{\wh{\alpha}_x}{\wh{\alpha}_x + \wh{\beta}_x} = 1 + O(\am^{-1/2})$ as 
$\wh{\beta}_x \lesssim 1$ by Proposition~\ref{prop:(The-cut-graph)} \ref{Item:roughGConcertration} 
and $\wh{\alpha}_x \gtrsim \am$ by Proposition~\ref{prop:(The-cut-graph)} \ref{Item:roughGdegree} 
and $\alpha_x \gtrsim \am$ for $x \in \cal U$. 
Therefore, \eqref{eq:cal_U_approx_w_sigma} follows directly from the definition of $\f w_\sigma(x)$ 
in \eqref{eq:UcandidateDef}. 
This proves \ref{Item:UcandidateInBall} in Proposition~\ref{prop:RoughCandidatEigenvector}.

For the proof of \ref{Item:UcandidateEstimate}, in analogy with $H$, 
we denote by $\wh{H}$ the adjacency matrix of $\wh{\mathbb{G}}$ divided by $\sqrt{d}$.

\begin{proposition}\label{prop:roughtEstimateCandidate} 
There is a constant $K>0$ such that if $K \log \log N \leq d \leq (\log N)^{3/4}$ then, 
\[ \|(\wh H- \wh{\Lambda})\f v\|\lesssim \frac{\log \log N}{\sqrt{d}} + \frac{1}{\sqrt{\am}}\,,  \] 
for all normalized $\f v \in \op{span} \{ \f w_\sigma(x) \colon x \in \cal U, \,\sigma \in \{ \pm\}\} $ 
on the high-probability event from Proposition~\ref{prop:(The-cut-graph)}.  
Here, the matrix $\wh{\Lambda}$ is defined through $\wh{\Lambda} \f w_\sigma(x) = \sigma \sqrt{\wh{\alpha}_x + \wh{\beta}_x} \f w_\sigma(x)$ for all $x \in \cal U$ and $\sigma \in \{ \pm\}$. 
\end{proposition}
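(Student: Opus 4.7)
The plan is to carry out an explicit computation of $\wh H \f w_\sigma(x)$ on the intersection of the high-probability events of Proposition~\ref{prop:(The-cut-graph)}, and then use the disjointness of supports for different $x \in \cal U$ to pass to a general element of the span.

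Fix $x \in \cal U$ and write $\wh\Lambda(x) \deq \sqrt{\wh\alpha_x + \wh\beta_x}$, $\f s_i \deq \f 1_{\wh S_i(x)} / \| \f 1_{\wh S_i(x)}\|$, $c_0 \deq \sqrt{\wh\alpha_x}/\wh\Lambda(x)$, $c_2 \deq \sqrt{\wh\beta_x}/\wh\Lambda(x)$, so that $\f w_\sigma(x) = \frac{1}{\sqrt 2}(c_0 \f 1_x + \sigma \f s_1 + c_2 \f s_2)$. Since $\wh{\mathbb G}|_{\wh B_3(x)}$ is a tree by Proposition~\ref{prop:(The-cut-graph)} \ref{Item:roughGtree}, and using the definitions of $\wh\alpha_x$ and $\wh\beta_x$, I would establish the identities
\begin{equation*}
\wh H \f 1_x = \sqrt{\wh\alpha_x}\,\f s_1 \,, \quad
\wh H \f s_1 = \sqrt{\wh\alpha_x}\,\f 1_x + \sqrt{\wh\beta_x}\,\f s_2 \,, \quad
\wh H \f s_2 = \sqrt{\wh\beta_x}\,\f s_1 + \f e_\r y + \f e_\r r\,,
\end{equation*}
where $\f e_\r y \deq \frac{1}{\sqrt{d|\wh S_2(x)|}}\sum_{y \in \wh S_1(x)} (\wh N_y(x) - d\wh\beta_x) \f 1_y$ and $\f e_\r r \deq \frac{1}{\sqrt{d|\wh S_2(x)|}}\f 1_{\wh S_3(x)}$. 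The identity for $\wh H \f s_2$ comes from expanding $\sum_{y\in \wh S_1(x)} \wh N_y(x) \f 1_y$ around its mean $d \wh\beta_x \f 1_{\wh S_1(x)}$ and using $d\wh\beta_x |\wh S_1(x)| = |\wh S_2(x)|$.

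Substituting and using $c_0 \sqrt{\wh\alpha_x} + c_2 \sqrt{\wh\beta_x} = \wh\Lambda(x)$, I would verify by direct algebra that the non-error part of $\wh H \f w_\sigma(x)$ equals $\sigma\wh\Lambda(x)\f w_\sigma(x)$; that is,
\begin{equation*}
(\wh H - \sigma \wh\Lambda(x))\f w_\sigma(x) = \frac{c_2}{\sqrt 2}\p{\f e_\r y + \f e_\r r}\,.
\end{equation*}
Crucially, this error is independent of $\sigma$. Since $\wh\alpha_x \gtrsim \am$ and $\wh\beta_x \lesssim 1$ by Proposition~\ref{prop:(The-cut-graph)} \ref{Item:roughGdegree},\ref{Item:roughGConcertration}, I get $c_2 \lesssim \am^{-1/2}$. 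Combining $\|\f e_\r r\|^2 = |\wh S_3(x)|/(d|\wh S_2(x)|) \lesssim 1$ (from \ref{Item:roughGConcertration}) and $\|\f e_\r y\|^2 \lesssim (\log N)^2/(d|\wh S_2(x)|)$ (from \ref{Item:roughGw2}), together with $|\wh S_2(x)| \asymp d^2 \am$ and the asymptotics $\am \asymp \frac{\log N}{d\log\log N}$ valid for $d \leq (\log N)^{3/4}$ (see \eqref{eq:scaling_alpha_max}), gives
\begin{equation*}
c_2 \|\f e_\r y + \f e_\r r\| \lesssim \frac{\log N}{d^{3/2}\am} + \frac{1}{\sqrt{\am}} \lesssim \frac{\log\log N}{\sqrt d} + \frac{1}{\sqrt{\am}}\,.
\end{equation*}

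For a general normalized $\f v = \sum_{x \in \cal U, \sigma = \pm} a_{x,\sigma} \f w_\sigma(x)$ I would write
\begin{equation*}
(\wh H - \wh\Lambda)\f v = \sum_{x \in \cal U} \frac{c_2(x)(a_{x,+} + a_{x,-})}{\sqrt 2}(\f e_\r y(x) + \f e_\r r(x))\,,
\end{equation*}
observe that $\supp(\f e_\r y(x) + \f e_\r r(x)) \subset \wh S_1(x) \cup \wh S_3(x) \subset \wh B_3(x)$, and invoke the disjointness of the balls $(\wh B_3(x))_{x \in \cal U}$ from Proposition~\ref{prop:(The-cut-graph)} \ref{Item:balls_disjoint_rough} to apply the Pythagorean theorem. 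Estimating $(a_{x,+} + a_{x,-})^2 \leq 2(a_{x,+}^2 + a_{x,-}^2)$ and using $\|\f v\|^2 = \sum_{x,\sigma} a_{x,\sigma}^2 = 1$ yields the claim. The only genuinely delicate point is the sharpness of the estimate for $c_2 \|\f e_\r y\|$: it requires the precise asymptotic $\am \asymp \log N/(d \log\log N)$ in the regime $d \leq (\log N)^{3/4}$, rather than the weaker $\am \gtrsim 1$, to absorb the $(\log N)^2$ from the crude moment bound \ref{Item:roughGw2}.
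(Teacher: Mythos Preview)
Your proposal is correct and follows essentially the same approach as the paper: an explicit computation of $(\wh H - \sigma\sqrt{\wh\alpha_x+\wh\beta_x})\f w_\sigma(x)$ yielding a $\sigma$-independent error supported in $\wh S_1(x)\cup\wh S_3(x)$, followed by orthogonality across $x\in\cal U$ via the disjointness of the balls $\wh B_3(x)$, and the same estimates from Proposition~\ref{prop:(The-cut-graph)} \ref{Item:roughGConcertration}, \ref{Item:roughGw2} together with $\wh\alpha_x d\gtrsim \log N/\log\log N$. The only cosmetic difference is that the paper computes the cross inner products $\scalar{(\wh H - \sigma_1\lambda_x)\f w_{\sigma_1}(x)}{(\wh H - \sigma_2\lambda_y)\f w_{\sigma_2}(y)}$ directly rather than invoking the Pythagorean theorem, which amounts to the same thing.
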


For the proof of \ref{Item:UcandidateEstimate}, 
we use $\alpha_x \gtrsim \am$, Proposition~\ref{prop:(The-cut-graph)} \ref{Item:roughGdegree} and \ref{Item:roughGConcertration} to obtain 
\[ \absB{\Lambda_{\fra d}(\alpha_x,\beta_x)-\sqrt{\wh{\alpha}_x+\wh{\beta}_x}}\leq \abs{\Lambda_{\fra d}(\alpha_x,\beta_x) - \Lambda(\alpha_x,\beta_x)} + \abs{\Lambda(\alpha_x,\beta_x) -\sqrt{\alpha_x+\beta_x}}+ d^{-1} \am^{-1/2}  
\lesssim \am^{-1/2}\,.  \] 
Here, in the second step,  the estimate on the first term is a consequence of Lemma~\ref{lem:EstimateLambdad}, whose conditions are satisfied by Propsition~\ref{prop:(The-cut-graph)} \ref{Item:roughGdegree} and \ref{Item:roughGConcertration} as well as $d \leq (\log N)^{3/4}$ and \eqref{eq:scaling_alpha_max}. For the second term, we used  \eqref{Lambda_identities}, $\alpha_x \gtrsim \am$ and \eqref{eq:beta_x_equal_1_error_term_on_cal_U}. 
This shows $\norm{D - \wh{\Lambda}} \lesssim \am^{-1/2}$. 
As $\norm{H - \wh{H}} \lesssim d^{-1/2}$ by Proposition \ref{prop:(The-cut-graph)} \ref{Item:roughGdegree}, 
the bound in \ref{Item:UcandidateEstimate}, thus, 
 follows from Proposition \ref{prop:roughtEstimateCandidate}. 
This completes the proof of Proposition~\ref{prop:RoughCandidatEigenvector}. 
\end{proof}

\begin{proof}[Proof of Proposition \ref{prop:roughtEstimateCandidate}]
To simplify the notation in this proof, we set $\lambda_x \deq \sqrt{\wh{\alpha}_x + \wh{\beta}_x}$. 
As $\|\f 1_{\wh{S}_1(x)}\| = \sqrt{|\wh{S}_1(x)|} = \sqrt{\wh{\alpha}_x d}$ and $\|\f 1_{\wh{S}_2(x)}\| = \sqrt{|\wh{S}_2(x)|} = \sqrt{\wh{\alpha}_x\wh{\beta}_x}d$, for $x \in \cal U$ and $\sigma \in \{\pm\}$, we obtain
\begin{align*}
\left(\wh{H} - \sigma \lambda_x \right)\sqrt{2} \f w_{\sigma}(x) 
& = \sigma \f 1_x\left(\frac{\sqrt{|\wh{S}_1(x)|}}{\sqrt{d}} - \sqrt{\wh{\alpha}_x}\right) + \sum_{y\in \wh{S}_1(x)}\left(\frac{\sqrt{\wh{\beta}_x}\wh{N}_y}{\lambda_x \sqrt{d}\sqrt{\abs{\widehat{S}_2}}}+\frac{\sqrt{\wh{\alpha}_{x}}}{\lambda_x \sqrt{d}} -\frac{\lambda_x}{\sqrt{d\wh{\alpha}_x}}\right)\f 1_y \\ 
& \phantom{ = } 
  + \sigma \bigg(\frac{1}{\sqrt{d}\sqrt{\abs{\wh{S}_1(x)}}} -  \frac{\sqrt{\wh{\beta}_x}}{\sqrt{\abs{\wh{S}_2(x)}}} \bigg) \f 1_{\wh{S}_2}(x)
+\frac{\sqrt{\wh{\beta}_x} \f 1_{\wh{S}_{3}(x)}}{\lambda_x \sqrt{d}\sqrt{\abs{\wh{S}_{2}(x)}}} 
\\ & = \frac{1}{\lambda_x\sqrt{\wh{\alpha}_x} d^{3/2}}\sum_{y\in \wh{S}_1(x)}\left(\wh{N}_y(x)-d \wh{\beta}_x \right)\f 1_y +\frac{\sqrt{\wh{\beta}_x} \f 1_{\wh{S}_{3}(x)}}{\lambda_x \sqrt{d} \sqrt{\abs{\wh{S}_2(x)}}}\,. 
\end{align*}
Thus, for $x$, $y \in \cal U$ and $\sigma_1$, $\sigma_2 \in \{ \pm\}$, we get 
\begin{align*}
\scalar{(\wh{H}-\sigma_1\lambda_x)\f w_{\sigma_1}(x)}{(\wh{H}-\sigma_2 \lambda_y) \f w_{\sigma_2}(y)}  
& = \delta_{xy} \big\|\big(\wh{H} - \lambda_x \big)\f w_{+}(x) \big\|^2 \\ 
 & = \delta_{xy} \bigg( \frac{1}{2 \lambda_x^2  \wh{\alpha}_xd^3 }\sum_{y\in \wh{S}_1(x)}\left( \wh{N}_y(x) -d \wh{\beta}_x \right)^2  +\frac{\wh{\beta}_x}{ 2\lambda_x^2}
\frac{\abs{\wh{S}_{3}(x)}}{d\abs{\wh{S}_{2}(x)}}  \bigg)\,,
\end{align*}
since $\norm{\f 1_{\wh{S}_3(x)}}^2 = \abs{\wh{S}_3(x)}$ and $\wh{B}_3(x)$ and $\wh{B}_3(y)$ are disjoint by Proposition~\ref{prop:(The-cut-graph)} \ref{Item:balls_disjoint_rough} if $x \neq y$. 

Therefore, for $\f v = \sum_{x, \sigma} a_{x,\sigma} \f w_\sigma(x)$, we obtain  
\begin{align*}
 \norm{(\wh{H}-\wh{\Lambda})\f v}^2 & = \sum_{x,y, \sigma_1, \sigma_2} \overline{a_{x,\sigma_1}} a_{y,\sigma_2} \scalar{(\wh{H} - \sigma_1 \lambda_x)\f w_{\sigma_1}(x)}{(\wh{H} - \sigma_2 \lambda_y)\f w_{\sigma_2}(y)}\\ 
& = \sum_x \norm{(\wh{H} - \lambda_x)\f w_+(x)}^2 \sum_{\sigma_1,\sigma_2} \overline{a_{x,\sigma_1}} a_{x,\sigma_2}\\ 
& \lesssim \sum_x \norm{(\wh{H} - \lambda_x)\f w_+(x)}^2 \bigg( \abs{a_{x,+}}^2 + \abs{a_{x,-}}^2 \bigg)\,, 
\end{align*} 
which implies Proposition~\ref{prop:roughtEstimateCandidate} due to  
Proposition~\ref{prop:(The-cut-graph)} \ref{Item:roughGdegree}, \ref{Item:roughGConcertration} and \ref{Item:roughGw2} 
as well as 
 $\lambda_x^2 \geq \wh{\alpha}_x \gtrsim \am$ and $\wh{\alpha}_x d \gtrsim \frac{\log N}{\log \log N}$ by $d \leq (\log N)^{3/4}$ (see the proof of Proposition~\ref{prop:(The-cut-graph)}). 
\end{proof}

\section{Eigenvalue process at the edge -- proof of Theorem~\ref{thm:point_process}} \label{sec:ev_process}

In this section we prove Theorem \ref{thm:point_process}. Throughout, we denote by $\cal P_\mu$ a Poisson random variable with parameter $\mu \geq 0$.

\subsection{Decorrelation}

In this subsection we prove a decorrelation result on the joint distribution of the pairs $(\alpha_x, \beta_x)$. To state it, we introduce the $d$-dependent function
\begin{equation} \label{def_Q}
Q(v,w) \deq \P\pb{\cal P_{dv} - dv \geq w \sqrt{dv}}\,,
\end{equation}
which one should think of as a discrete approximation of the tail distribution function of the standard normal in the argument $w$, which is almost independent of $v$ provided that $dv \gg 1$.

\begin{proposition}[Decorrelation]\label{prop:poisson-normal}
Suppose that $1 \leq d \leq N^{1/12}$ and $k \leq N^{1/12}$. Let $v_1, \dots, v_k \in \N$ satisfy $2 \leq v_1, \dots, v_k \leq N^{1/4}$ and $w_1, \dots, w_k \in \R$.  Then
\begin{multline*}
\P\pBB{ \bigcap_{i \in [k]} \hB{ d \alpha_i = v_i, d \sqrt{\alpha_i} (\beta_i - 1) \geq w_i}}
\\
= \prod_{i \in [k]} \P(\cal P_d = v_i) Q(v_i,w_i)
+ O \pbb{N^{-1/3} \prod_{i \in [k]} \P(\cal P_d = v_i) + N^{-k-1}}\,.
\end{multline*}
\end{proposition}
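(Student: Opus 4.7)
The plan is to first handle the single-vertex case $k=1$ by a direct calculation and then reduce the general case to it by establishing approximate independence of the distinguished vertices through a good-event analysis.

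For $k=1$, factor the probability as $\P(D_x = v)\,\P(|S_2(x)| - dv \geq w\sqrt{dv} \mid D_x = v)$. The degree $D_x \sim \mathrm{Bin}(N-1,\, d/N)$, so Le Cam's theorem gives $\P(D_x = v) = \P(\cal P_d = v)(1 + O(d^2/N))$. Conditional on $D_x = v$, the set $S_1(x)$ is uniform among size-$v$ subsets of $[N]\setminus\{x\}$, and for each $z \in [N]\setminus(\{x\}\cup S_1(x))$ the event $\{z \in S_2(x)\}$ depends only on the $v$ edges from $z$ to $S_1(x)$. These events are therefore independent $\mathrm{Bernoulli}(1-(1-d/N)^v)$ across $z$, so $|S_2(x)| \mid D_x = v$ is $\mathrm{Bin}(N-1-v,\, 1-(1-d/N)^v)$. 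A second application of Le Cam, combined with $\|\cal P_a - \cal P_b\|_{\mathrm{TV}} \leq |a-b|$ and the mean estimate $|(N-1-v)(1-(1-d/N)^v) - dv| \lesssim v^2d^2/N$, shows that this conditional law is within total variation $O(v^2d^2/N) = O(N^{-1/3})$ of $\cal P_{dv}$. This yields the single-vertex estimate.

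For general $k$, introduce the good event
\[
\Omega_{\mathrm{good}} \deq \{B_1(i) \cap B_1(j) = \emptyset \text{ for all } i \neq j\} \cap \{A_{yz} = 0 \text{ for all } y \in B_1(i),\, z \in B_1(j),\, i \neq j\}.
\]
On $\Omega_{\mathrm{good}}$, once the one-balls are fixed, $|S_2(i)|$ counts the vertices of $[N]\setminus \bigcup_j B_1(j)$ adjacent to some vertex of $S_1(i)$, and these counts depend on pairwise disjoint families of edges. The $k=1$ calculation then applies independently to each coordinate, with the ambient size reduced from $N-1-v_i$ to $N-1-\sum_j v_j$, producing only an additional $O(N^{-1/2})$ correction. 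The joint degree probability $\P(\bigcap_i\{D_i=v_i\}\cap\Omega_{\mathrm{good}})$ is evaluated by summing over the $2^{\binom{k}{2}}$ configurations of edges inside $[k]$: the ``no internal edges'' configuration yields the product of Binomials, each close to $\P(\cal P_d = v_i)$ by Le Cam, and the remaining configurations are absorbed into the bad-event bound below. The bad event $\Omega_{\mathrm{good}}^c \cap \bigcap_i \{D_i = v_i\}$ is partitioned into three failure modes — a direct edge $A_{ij}=1$ with $i\neq j\in[k]$, a shared neighbour $z\in S_1(i)\cap S_1(j)$ outside $[k]$, and a cross-edge from $S_1(i)\setminus[k]$ to $S_1(j)\setminus[k]$ — whose conditional expected counts are $O(d/N)$, $O(v_iv_j/N)$, and $O(v_iv_jd/N)$ respectively. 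Summing over pairs and using $v_i\leq N^{1/4}$, $d\leq N^{1/12}$, $k\leq N^{1/12}$ bounds the bad-event contribution by $O(N^{-1/3})\prod_i \P(\cal P_d = v_i)$, matching the first error term.

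The main obstacle will be controlling the \emph{absolute} error $N^{-k-1}$, which dominates precisely when $\prod_i \P(\cal P_d = v_i)$ is itself much smaller than $N^{-k}$ (e.g.\ for $v_i$ near $N^{1/4}$ with small $d$). This cannot be extracted from the relative-error estimates above and must instead come from a careful bookkeeping of bad configurations with $m \geq 1$ internal edges among $[k]$: such a configuration contributes a prefactor $(d/N)^m\binom{k}{2}^m \leq (k^2d/N)^m$, and applying only the trivial bound on the remaining Binomial factors — rather than retaining the full $\P(\cal P_d = v_i)$ factors — gives an absolute bound of order $N^{-k-1}$ once the $k$-dependence is unpacked under the hypotheses $d,k\leq N^{1/12}$. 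Balancing this absolute bookkeeping against the relative-error analysis in the good-event regime, without doubly losing factors, is the most delicate part of the proof.
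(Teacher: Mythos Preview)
Your overall strategy — the single-vertex computation followed by a good-event decomposition excluding geodesics of length $\leq 3$ among the $k$ distinguished vertices — is exactly the paper's approach (there the good event is written as $\Xi_1\cap\Xi_2\cap\Xi_3$). Your three failure modes correspond to the paper's $\cal E_1,\cal E_2,\cal E_3$, each of which is bounded by $O(N^{-1/3})\prod_i\P(\cal P_d=v_i)$, a purely \emph{relative} error, obtained by retaining the degree factors through the inequality $\P(\cal P_d=v_i-l_i)\leq(v_i/d)^{l_i}\P(\cal P_d=v_i)$.

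The gap is in your last paragraph. Your proposed source for $N^{-k-1}$ — internal-edge configurations with the degree factors discarded — yields only $\sum_{m\geq1}\binom{k(k-1)/2}{m}(d/N)^m\lesssim k^2d/N\leq N^{-3/4}$, which already exceeds $N^{-k-1}$ at $k=1$; so this cannot be where the absolute error comes from. In the paper the $N^{-k-1}$ enters when approximating the conditional law of $|S_2(i)|$ by $\cal P_{dv_i}$ on the good event: an additive Poisson tail error of order $e^{-N^{1/4}}\ll N^{-k-1}$, together with the failure probability $N^{-k-2}$ of a high-probability degree cutoff used to control the perturbation of the Poisson mean. Separately, your Le Cam step gives only $O(N^{-1/3})$ total-variation error per coordinate, hence $O(kN^{-1/3})=O(N^{-1/4})$ after the product over $k\leq N^{1/12}$ vertices, which does not fit inside $O(N^{-1/3}\prod_i\P+N^{-k-1})$; to match the stated bound you need a per-coordinate approximation with $O(N^{-1/2})$ relative error, as the paper obtains from its Lemma~\ref{lem:poisson_approx2} applied to the edge count out of $S_1(i)$.
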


\begin{proof}
By setting $u_i = d v_i + w_i \sqrt{d v_i}$, it suffices to prove
\begin{multline} \label{main_decorrelation}
\P\pBB{ \bigcap_{i \in [k]} \{\abs{S_1(i)} = v_i, \abs{S_2(i)} \geq u_i\}}
\\
= \prod_{i \in [k]} \P(\cal P_d = v_i) \P(\cal P_{d v_i} \geq u_i)
+ O \pbb{N^{-1/3} \prod_{i \in [k]} \P(\cal P_d = v_i) + N^{-k-1}}
\end{multline}
for any $u_1, \dots, u_k \in \N$.

For $V \subset [N]$ we use the notation $A_V \deq (A_{xy} \col x,y \in V)$ and $A_{(V)} \deq (A_{xy} \col x \in V \text{ or } y \in V)$.
For $\ell \geq 1$ we define $\Xi_\ell$ as the event that there is no geodesic in $\bb G$ of length $\ell$ connecting two distinct vertices of $[k]$. We define $\cal M_0, \dots, \cal M_3$ and $\cal E_1, \dots, \cal E_3$ through
\begin{equation*}
\cal M_\ell \deq \E \qBB{ \prod_{i \in [k]} \ind{\abs{S_1(i)} = v_i, \abs{S_2(i)} \geq u_i} \, \ind{\Xi_\ell} \cdots \ind{\Xi_1}}\,, \qquad
\cal E_\ell \deq \E \qBB{ \prod_{i \in [k]} \ind{\abs{S_1(i)} = v_i, \abs{S_2(i)} \geq u_i} \, \ind{\Xi_\ell^c} \ind{\Xi_{\ell - 1}} \cdots \ind{\Xi_1}}\,.
\end{equation*}
The strategy of the proof is to use a telescoping argument to analyse $\cal M_0$ in terms of $\cal M_\ell$ for $\ell \geq 1$, using the trivial splitting $\cal M_\ell = \cal M_{\ell+1} + \cal E_{\ell + 1}$ for each $\ell \geq 0$. 
 
To estimate $\cal E_1$, we condition on $A_{[k]}$ and decompose $\Xi_1^c = \bigsqcup_{\bb U} \h{\bb G |_{[k]} = \bb U}$, where the union ranges over the set of nonempty graphs $\bb U$ on $[k]$. Thus we estimate
\begin{equation} \label{calE1_0}
\cal E_1 \leq \sum_{\bb U} \E \qBB{\E \qBB{ \prod_{i \in [k]} \ind{\abs{S_1(i)} = v_i} \,\bigg\vert\, A_{[k]}}  \ind{\bb G |_{[k]} = \bb U}}\,.
\end{equation}
For a given graph $\bb U$ we denote by $l = \abs{\bb U}$ its number of edges and by $l_i$ the degree of vertex $i$ in $\bb U$, so that $\sum_{i \in [k]} l_i = 2l$. Denoting by $\cal B_{n,p}$ the binomial random variable with parameters $n,p$,
we have on the event $\ind{\bb G |_{[k]} = \bb U}$
\begin{align*}
\E \qBB{ \prod_{i \in [k]} \ind{\abs{S_1(i)} = v_i} \,\bigg\vert\, A_{[k]}} &= \prod_{i \in [k]} \P \pb{\cal B_{N - k+1,d/N} = v_i - l_i}
\\
&= \prod_{i \in [k]} \qbb{\P \pb{\cal P_{(N - k+1)d/N} = v_i - l_i} \pbb{1 + O \pbb{\frac{v_i^2 + d^2}{N}}}}
\\
&= \prod_{i \in [k]} \qbb{\P \pb{\cal P_{d} = v_i - l_i} \pbb{1 + O \pbb{\frac{v_i^2 + d^2 + v_i k + kd}{N}}}}
\\
&\leq \prod_{i \in [k]} \qbb{\P \pb{\cal P_{d} = v_i} \pbb{\frac{v_i}{d}}^{l_i} \pbb{1 + O \pbb{\frac{v_i^2 + d^2 + v_i k + kd}{N}}}}
\\
&\lesssim \pbb{\frac{N^{1/4}}{d}}^{2l} \prod_{i \in [k]} \P \pb{\cal P_{d} = v_i}\,,
\end{align*}
where the second step follows from Lemma \ref{lem:poisson_approx}, the third step from Lemma \ref{lem:Poisson_perturbation}, and the last step from the assumptions on $k,d,v_i$. Plugging this into \eqref{calE1_0}  yields
\begin{equation} \label{calE1}
\cal E_1 \lesssim \sum_{l = 1}^{k(k-1)/2} \binom{k(k - 1)/2}{l} \pbb{\frac{d}{N}}^l \, \pbb{\frac{N^{1/4}}{d}}^{2l} \prod_{i \in [k]} \P \pb{\cal P_{d} = v_i} \leq N^{-1/3} \prod_{i \in [k]} \P \pb{\cal P_{d} = v_i}\,.
\end{equation}

In order to estimate $\cal E_2$, we set $\Xi_{2,xy}$ as the event that there is no geodesic of length $2$ connecting the vertices $x \neq y$. Hence, $\Xi_2 = \bigcap_{1 \leq x < y \leq k} \Xi_{2,xy}$. By a union bound, we estimate
\begin{multline} \label{calE2_estimate}
\cal E_2 \leq \E \qBB{ \prod_{i \in [k]} \ind{\abs{S_1(i)} = v_i} \ind{\Xi_2^c} \ind{\Xi_1}}
\leq
\sum_{1 \leq x < y \leq k}\E \qBB{ \prod_{i \in [k]} \ind{\abs{S_1(i)} = v_i} \, \ind{\Xi_{2,xy}^c} \ind{\Xi_1}}
\\
= \sum_{1 \leq x < y \leq k } \pBB{\prod_{i \in [k] \setminus \{x,y\}} \P\pb{\abs{S_1(i)} = v_i, \Xi_1}} \E \qb{\ind{\abs{S_1(x)} = v_x} \ind{\abs{S_1(y)} = v_y} \ind{\Xi^c_{2,xy}} \ind{\Xi_1}}\,,
\end{multline}
where in the last step we used that the sets $(S_1(i))_{i \in [k]}$ are independent conditioned on $\Xi_1$.
We estimate
\begin{equation*}
\E \qb{\ind{\abs{S_1(x)} = v_x} \ind{\abs{S_1(y)} = v_y} \ind{\Xi^c_{2,xy}} \ind{\Xi_1}}
\leq \E \qBB{ \sum_{z \in S_1(x)} \E \qB{\ind{\abs{S_1(y)} = v_y}  A_{yz}  \,\Big\vert\, S_1(x)} \ind{\abs{S_1(x)} = v_x} \ind{A_{xy} = 0}}\,,
\end{equation*}
and use that on the event $\{A_{xy} = 0\}$ and for any $z \in S_1(x)$ we have
\begin{equation*}
\E \qB{\ind{\abs{S_1(y)} = v_y}  A_{yz}  \,\Big\vert\, S_1(x)} = \frac{d}{N} \, \P(\cal B_{N-3,d/N} = v_y - 1) \lesssim \frac{d}{N} \, \P(\cal P_{d} = v_y - 1) \lesssim \frac{v_y}{N} \, \P(\cal P_d = v_y)\,,
\end{equation*}
where in the second step we used Lemmas \ref{lem:poisson_approx} and \ref{lem:Poisson_perturbation}. Hence,
\begin{align*}
\E \qb{\ind{\abs{S_1(x)} = v_x} \ind{\abs{S_1(y)} = v_y} \ind{\Xi^c_{2,xy}} \ind{\Xi_1}}
&\lesssim 
\frac{v_y}{N} \, \P(\cal P_d = v_y) \, \E \qB{ \abs{S_1(x)} \ind{\abs{S_1(x)} = v_x} }
\\
&\lesssim
 N^{-1/2} \P(\cal P_d = v_y) \P(\cal P_d = v_x) \,,
\end{align*}
where we used $v_x, v_y \leq N^{1/4}$  as well as Lemmas \ref{lem:poisson_approx} and \ref{lem:Poisson_perturbation}. Applying Lemmas \ref{lem:poisson_approx} and \ref{lem:Poisson_perturbation} to $\P\pb{\abs{S_1(i)} = v_i, \Xi_1} \leq \P\pb{\abs{S_1(i)} = v_i}$ in \eqref{calE2_estimate} and estimating the sum over $x < y$ by $k^2$, we conclude that
\begin{equation} \label{calE2}
\cal E_2 \lesssim N^{-1/3} \prod_{i \in [k]} \P\pb{\cal P_d = v_i}\,.
\end{equation}

Next, we estimate $\cal E_3$.
We condition on $A_{([k])}$, note that $\Xi_1$ and $\Xi_2$ are $A_{([k])}$-measurable, and estimate
\begin{equation} \label{Xi3_estimate}
\P(\Xi_3^c \,|\, A_{([k])}) \ind{\Xi_1} \ind{\Xi_2} \leq \E \qa{\sum_{1 \leq i < j \leq k} \sum_{x \in S_1(i)} \sum_{y \in S_1(j)} A_{xy} \, \Bigg\vert\, A_{([k])} } \ind{\Xi_1} \ind{\Xi_2}
\leq \frac{d}{N} \pBB{\sum_{i \in [k]} \abs{S_1(i)}}^2 \,.
\end{equation}
We estimate the right-hand side using
\begin{equation} \label{S_1_estimate}
\abs{S_1(i)} \leq d + C k \log N
\end{equation}
with probability at least $1 - N^{-k-2}$ for some universal constant $C$, as follows from Lemma \ref{lem:Bennett}. 
Using Lemmas~\ref{lem:poisson_approx} and \ref{lem:Poisson_perturbation} and the independence of $(S_1(i))_i$ on $\Xi_1$, we therefore get 
\begin{equation} \label{calE3}
\cal E_3 \leq \E \qBB{\P \p{\Xi_3^c \, \vert\, A_{([k])}}  \ind{\Xi_1} \ind{\Xi_2} \prod_{i \in [k]} \ind{\abs{S_1(i)} = v_i}}
\leq N^{-1/3} \prod_{i \in [k]} \P\pb{\cal P_d = v_i} + N^{-k-1}\,.
\end{equation}
Writing $S_1([k]) = \bigcup_{i \in [k]} S_1(i)$, it therefore suffices to analyse
\begin{align*}
\cal M_3 &=
\E \qBB{ \E \qBB{\prod_{i \in [k]} \ind{\abs{S_2(i)} \geq u_i}  \,\bigg\vert\, A_{([k])}, A_{S_1([k])}} \prod_{i \in [k]} \ind{\abs{S_1(i)} = v_i} \ind{\Xi_1} \ind{\Xi_2} \ind{\Xi_3}}
\\
&=
\E \qBB{ \prod_{i \in [k]} \P \pb{\abs{S_2(i)} \geq u_i \,\big\vert\, A_{([k])},A_{S_1([k])}} \prod_{i \in [k]} \ind{\abs{S_1(i)} = v_i} \ind{\Xi_1} \ind{\Xi_2} \ind{\Xi_3}}\,,
\end{align*}
where we used that the family $(\abs{S_2(i)})_{i \in [k]}$ is independent conditioned on $A_{([k])}$ and $A_{S_1([k])}$, on the event $\Xi_3 \cap \Xi_2 \cap \Xi_1$.

For $i \in [k]$, conditioned on $A_{[k]}$ and on the event $\Xi_3 \cap \Xi_2 \cap \Xi_1$, we have $\abs{S_2(i)} \eqdist \cal B_{\abs{S_1(i)} (N - B_i), d/N}$, where $B_i \deq \sum_{j \in [k] \setminus \{i\}} (\abs{S_1(j)} + 1)$.
Using \eqref{S_1_estimate} to estimate $B_i \leq N^{1/5}$ with probability at least $1 - N^{-k-2}$ by the assumptions on $d$ and $k$, we therefore deduce from  Lemma \ref{lem:poisson_approx2} that
\begin{equation*}
\cal M_3 = \E \qa{\prod_{i \in [k]} \P \pB{\cal P_{\abs{S_1(i)} d (1 - B_i/N)} \geq u_i} \prod_{i \in [k]} \ind{\abs{S_1(i)} = v_i} \ind{\Xi_1} \ind{\Xi_2} \ind{\Xi_3}} \pb{1 + O \p{N^{-1/3}}} + O(N^{-k-1})\,.
\end{equation*}
Here we estimated the error terms of \eqref{Poisson_approx} using $n = \abs{S_1(i)} (N - B_i) \geq N$, since $v_i \geq 2$ by assumption. Hence, invoking \eqref{S_1_estimate}, Lemma \ref{lem:Poisson_perturbation} with $\abs{\epsilon} \leq N^{-4/5}$, and a simple estimate of the tails $\P \pb{\cal P_{\abs{S_1(i)} d (1 - B_i/N)} \geq N^{1/5}}$ using Lemma \ref{lem:Bennett}, we conclude
\begin{equation*}
\cal M_3 = \prod_{i \in [k]} \P \pB{\cal P_{d v_i} \geq u_i} \, \E \qa{ \prod_{i \in [k]} \ind{\abs{S_1(i)} = v_i} \ind{\Xi_1} \ind{\Xi_2} \ind{\Xi_3}} \pb{1 + O \p{N^{-1/3}}} + O(N^{-k-1})\,.
\end{equation*}

Next, we remove the indicator functions $\ind{\Xi_1}, \ind{\Xi_2}, \ind{\Xi_3}$ in $\cal M_3$. The indicator function $\ind{\Xi_3}$ is removed exactly as in  \eqref{Xi3_estimate} and \eqref{calE3}, and the resulting error term is bounded by the right-hand side of \eqref{calE3}. The indicator function $\ind{\Xi_2}$ is removed exactly as in \eqref{calE2_estimate}, and the resulting error term is bounded by the right-hand side of \eqref{calE2}. Since $(\abs{S_1(i)})_{i \in [k]}$ is independent conditioned on $\Xi_1$, we deduce from the splitting $\cal M_0 = \cal M_3+\cal E_3+\cal E_2+\cal E_1$ that
\begin{equation*}
\cal M_0 =
\prod_{i \in [k]} \pbb{\P \pb{\cal P_{d v_i} \geq u_i} \P \pb{  \abs{S_1(i)} = v_i \, \big\vert \, \Xi_1}} \P(\Xi_1)
\pb{1 + O(N^{-1/3})}
+ \pb{\text{RHS of \eqref{calE1}, \eqref{calE2}, \eqref{calE3}}}\,.
\end{equation*}
On $\Xi_1$ we have $\abs{S_1(i)} \eqdist \cal B_{N - k+1,d/N}$, and $\P(\Xi_1) = 1 + O(dk^2/N)$. Invoking Lemmas \ref{lem:poisson_approx} and \ref{lem:Poisson_perturbation} and recalling the estimates \eqref{calE1}, \eqref{calE2}, and \eqref{calE3}, we conclude the proof of \eqref{main_decorrelation}.
\end{proof}

\subsection{Convergence of the point process -- proof of Theorem \ref{thm:point_process}} \label{sec:proof_thm_poisson_statistics} 
Throughout this subsection we make the following assumptions. We choose $\gamma \in (0,1/6)$ and a small enough constant $\epsilon > 0$, and assume that
\begin{equation} \label{d_assumptions_2}
d \geq (\log \log N)^{\frac{1}{1 - 6 \gamma - 2 \epsilon}}\,, \qquad \fra u - 2 \geq d^{\frac{6 \gamma - 1}{10} + \epsilon}\,.
\end{equation}
Moreover, we assume \eqref{eq:condition_d_SpecMax} for some large enough constant $K_*$.
Finally, we suppose that $\cal K$ satisfies
\begin{equation} \label{calK_condition}
C \leq \cal K \leq d^{\frac{1}{2} - 3 \gamma - \epsilon} \frac{((\fra u - 2)^5 \wedge 1)}{\sqrt{\log \fra u}}
\end{equation}
for some large enough universal constant $C \geq 1$ (see the remark after \eqref{def_kappa}).

We use the notations $E_s \deq [s,\infty)$ and
\begin{equation} \label{def_Gg}
g(s) \deq \frac{1}{\sqrt{2 \pi}} \ee^{-\frac{1}{2} s^2}\,, \qquad G(s) \deq \int_s^\infty g(t) \, \dd t\,.
\end{equation}
We shall use the following simple upper bound on $\kappa$ defined in \eqref{def_kappa}. 

\begin{lemma} \label{lem:kappa_condition}
Recalling the width of the window $\chi$ from Corollary \ref{Cor:SpecMax}, we have
\begin{equation} \label{kappa_condition}
 \kappa \ll d \tau(\fra u) \chi \asymp \frac{\sqrt{\fra u}}{\log \fra u} d^{2 \gamma}\,.
\end{equation}
\end{lemma}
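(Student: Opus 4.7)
My plan has two steps. First, I would directly verify the asymptotic equivalence $d\tau(\fra u)\chi \asymp \sqrt{\fra u}\,d^{2\gamma}/\log \fra u$ by substituting the explicit expressions from \eqref{def_tau_etc} and the definition of $\chi$ in Corollary \ref{Cor:SpecMax}. The factors of $(\fra u - 2)$ cancel, giving
\[
d\tau(\fra u)\chi = \frac{2(\fra u - 1)^{5/2}}{\fra u^2}\cdot\frac{d^{2\gamma}}{\log \fra u}.
\]
Since $\fra u > 2$, the ratio $((\fra u-1)/\fra u)^{5/2}$ is bounded between two positive constants uniformly, so $(\fra u - 1)^{5/2}/\fra u^2 \asymp \sqrt{\fra u}$.

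Second, and the main content, I would establish the bound
\[
\kappa \leq \theta(\fra u)\pbb{1 + \frac{\log(2\cal K)}{\log \fra u}}.
\]
Given this inequality, the conclusion $\kappa \ll d\tau(\fra u)\chi$ follows from $\theta(\fra u) \asymp \sqrt{\fra u}$, which reduces the claim to showing $\log \fra u + \log \cal K \ll d^{2\gamma}$. The first term is controlled using Lemma \ref{lem:u_a} and \eqref{eq:condition_d_SpecMax}: since $\fra u \lesssim \log N$, one has $\log \fra u \lesssim \log \log N \ll d^{2\gamma}$. The second is controlled using \eqref{calK_condition}: $\log \cal K \lesssim \log d$, which is $\ll d^{2\gamma}$ since $d \to \infty$ with $\gamma > 0$ fixed.

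To establish the bound on $\kappa$, I would produce a lower bound on $\rho([-\lambda, \infty))$ for arbitrary $\lambda > 0$ by retaining only a single well-chosen term in the series from \eqref{def_rho}. Pick any $\ell^* \in \Z$ such that $h^* \deq \ang{d \fra u} + \ell^*$ lies in the interval $(\lambda/\theta(\fra u) - 1,\, \lambda/\theta(\fra u)]$, which is always possible since this interval has length $1$. Then $-\lambda + \theta(\fra u) h^* \leq 0$, so using $G(s) \deq \int_s^\infty g(t)\,\dd t$ as in \eqref{def_Gg} and integrating the $g$-density of $\rho$ over $[-\lambda, \infty)$, positivity of $g$ gives $G(-\lambda + \theta(\fra u) h^*) \geq G(0) = 1/2$. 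Discarding all other (nonnegative) terms yields
\[
\rho([-\lambda,\infty)) \geq \fra u^{h^*}\, G(-\lambda + \theta(\fra u) h^*) \geq \tfrac{1}{2}\, \fra u^{\lambda/\theta(\fra u) - 1}.
\]
Whenever $\lambda > \theta(\fra u)\pb{1 + \log(2\cal K)/\log \fra u}$, the right-hand side exceeds $\cal K$, so $-\lambda$ does not belong to the set defining the infimum in \eqref{def_kappa}. Taking the infimum over such $\lambda$ gives the stated bound on $\kappa$.

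The proof is essentially algebraic once one recognizes that $\rho([-\lambda, \infty))$ is governed by the exponentially growing factor $\fra u^h$ in the series, cut off at roughly $h \approx \lambda/\theta(\fra u)$; the only minor subtlety is the fractional shift $\ang{d\fra u}$ in the indexing, which costs at most a factor of $\fra u$ and is absorbed by the $+1$ in the bound.
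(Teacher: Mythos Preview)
Your proof is correct and follows essentially the same route as the paper's. Both arguments lower-bound $\rho([-\lambda,\infty))$ by retaining a single term from the series \eqref{def_rho} (the one where the Gaussian argument is $\leq 0$ so that $G \geq 1/2$), then invert to bound $\kappa$, and finally use $\log \fra u \lesssim \log\log N \ll d^{2\gamma}$ together with $\log \cal K \lesssim \log d \ll d^{2\gamma}$. The paper parameterizes by discrete points $s_n = -\theta(\fra u)(n + \ang{d\fra u})$ and phrases the final estimate as $\kappa \lesssim \sqrt{\fra u}\big(\tfrac{\log d}{\log \fra u} + 1\big)$, whereas you parameterize by a continuous $\lambda$ and obtain $\kappa \leq \theta(\fra u)\big(1 + \tfrac{\log(2\cal K)}{\log \fra u}\big)$; since $\cal K \leq d^{1/2}$ by \eqref{calK_condition}, these are equivalent.
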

\begin{proof}
For $n \in \N$ define $s_n \deq - \theta(\fra u) n - \theta(\fra u) \ang{d \fra u}$, where $\ang{x} \in [-1/2, 1/2)$ the $1$-periodic representative of $x \in \R$. Using $G(0) = 1/2$ we find
\begin{equation*}
\rho(E_{s_n}) = \sum_{\ell \in \Z} \fra u^{\ang{d \fra u} + \ell} G(\theta(\fra u) (\ell - n)) \geq \frac{\fra u^{n - 1}}{2}\,.
\end{equation*}
We deduce that if $n \geq 1 + \frac{\log d}{\log \fra u}$ then $\rho(E_{s_n}) \geq d/2$, and hence $\kappa \leq - s_n$  (using that $\cal K \leq d/2$). Using $\theta(\fra u) \asymp \sqrt{\fra u}$ we deduce that
\begin{equation*}
\kappa \lesssim \sqrt{\fra u} \pbb{\frac{\log d}{\log \fra u} + 1} \ll \frac{\sqrt{\fra u}}{\log \fra u} d^{2 \gamma}\,,
\end{equation*}
where in the last step we used the assumption $d^{2 \gamma} \gg \log \log N$ as well as both estimates of \eqref{eq:scaling_alpha_max}.
\end{proof}

We define
\begin{equation*}
q \deq \frac{c_* d^{2 \gamma - 1}}{\log \fra a} - \fra a + \fra u\,,
\end{equation*}
with $c_* \leq 1$ as in \eqref{eq:def_cal_W}.
Using Lemma \ref{lem:u_a}, it is not hard to see that for large enough $K_*$ the condition \eqref{eq:condition_d_SpecMax} 
implies $q \geq \frac{\log d}{d}$. Hence, by Lemma \ref{lem:convergence_max_alpha_x_upper_bound_D_x}, 
with high probability we have
\begin{equation} \label{W_q}
\cal W = \h{x\col \abs{\alpha_x - \fra u} \leq q}\,, \qquad q \leq d^{2 \gamma - 1}\,.
\end{equation}
We define the reference process
\begin{equation} \label{def_Z_x}
\Sigma \deq \sum_{x \in [N]} \delta_{Z_x}\,, \qquad
Z_x \deq
\begin{cases}
\theta(\fra u) (d \alpha_x - d \fra u) + d \sqrt{\alpha_x} (\beta_x - 1) & \text{if }
\abs{\alpha_x - \fra u} \leq q
\\
-\infty & \text{otherwise}\,.
\end{cases}
\end{equation}
Here $-\infty$ acts as a graveyard state for those points which are outside the range of interest.

Recall the stray eigenvalue $\nu$ from Corollary \ref{Cor:SpecMax}. Its rescaled version is denoted by
\begin{equation} \label{def_tilde_nu}
\tilde \nu \deq d \tau (\fra u) (\nu - \sigma(\fra u))\,.
\end{equation}
We define the error parameter
\begin{equation} \label{def_eta}
\eta \deq d^{3 \gamma + \frac{\epsilon}{2} - \frac{1}{2}} \frac{\fra u^5}{(\fra u - 2)^5}\,.
\end{equation}

\begin{lemma} \label{lem:Sigma_Phi}
Suppose that the assumptions of Corollary \ref{Cor:SpecMax} hold. Then with high probability, for all $s \geq -\kappa$ we have
\begin{equation*}
 \Sigma(E_{s+\eta}) \leq \Phi(E_s) - \delta_{\tilde \nu}(E_s) \leq  \Sigma(E_{s-\eta})\,.
\end{equation*}
\end{lemma}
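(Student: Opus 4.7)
The plan is to identify both sides of the inequality as counting functions indexed by $\cal W$, and then to translate between them via a Taylor expansion of $\Lambda_{\fra d}$ at $(\fra u, 1)$. Throughout, I would work on the intersection of the high-probability events from Corollary~\ref{Cor:SpecMax}, \eqref{W_q}, and Lemma~\ref{lem:beta_x_assumption_beta_pr_checked}. For $x \in \cal W$ set
\[
\tilde Z_x \deq d\tau(\fra u)\pb{\Lambda_{\fra d}(\alpha_x,\beta_x) + \epsilon_x - \sigma(\fra u)},
\]
where $\epsilon_x$ is the rigidity error from Corollary~\ref{Cor:SpecMax}. By Lemma~\ref{lem:kappa_condition}, every $s \geq -\kappa$ satisfies $s \gg -d\tau(\fra u)\,c\chi$, so the window $[\sigma(\fra u)-c\chi,\infty)$ of Corollary~\ref{Cor:SpecMax} contains the image under $\lambda \mapsto d\tau(\fra u)(\lambda - \sigma(\fra u))$ of any point of $E_s$. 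Using $\sigma(\fra u) = \Lambda_{\fra d}(\fra u,1)$, that corollary therefore gives $\Phi(E_s) - \delta_{\tilde \nu}(E_s) = \#\{x\in \cal W \col \tilde Z_x \geq s\}$, while \eqref{W_q} and the definition \eqref{def_Z_x} of $\Sigma$ give $\Sigma(E_s) = \#\{x\in \cal W \col Z_x \geq s\}$.

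The proof is thus reduced to the uniform deterministic bound $\max_{x\in \cal W}\abs{\tilde Z_x - Z_x} \leq \eta$, since this immediately yields the double inclusion
\[
\hb{x\in \cal W\col Z_x \geq s+\eta} \subset \hb{x\in \cal W\col \tilde Z_x \geq s} \subset \hb{x\in \cal W\col Z_x \geq s-\eta}
\]
and so $\Sigma(E_{s+\eta}) \leq \Phi(E_s) - \delta_{\tilde \nu}(E_s) \leq \Sigma(E_{s-\eta})$, as claimed. Note also that the condition $s+\eta \geq -\kappa+\eta$ that would be required to stay in the corollary's window for the lower inclusion is not an issue, because $\tilde Z_x \geq s$ only requires Corollary~\ref{Cor:SpecMax} for the corresponding value, and that value lies comfortably in $[\sigma(\fra u)-c\chi,\infty)$ by Lemma~\ref{lem:kappa_condition} and the smallness of $\eta$.

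For the key estimate, I would Taylor expand $\Lambda_{\fra d}$ at $(\fra u,1)$. Using the explicit formula for $\Lambda$ from \eqref{eq:def_Lambda} and the properties collected in Appendix~\ref{app:spectral_analysis}, one verifies the two identities
\[
\tau(\fra u)\,\partial_\alpha \Lambda_{\fra d}(\fra u,1) = \theta(\fra u)\bigl(1+O(d^{-1})\bigr),\qquad
\tau(\fra u)\,\partial_\beta \Lambda_{\fra d}(\alpha_x,1) = \sqrt{\alpha_x}\bigl(1+O(d^{-1})\bigr),
\]
so that, after multiplication by $d\tau(\fra u)$, the first-order expansion reproduces $Z_x = \theta(\fra u)(d\alpha_x - d\fra u) + d\sqrt{\alpha_x}(\beta_x-1)$. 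The remaining discrepancy $\tilde Z_x - Z_x$ is then a sum of four controlled errors: (i) the quadratic Taylor remainder, estimated using $|\alpha_x - \fra u| \leq q \leq d^{2\gamma-1}$ from \eqref{W_q} and $|\beta_x - 1| \lesssim \sqrt{\log N}/d$ from Lemma~\ref{lem:beta_x_assumption_beta_pr_checked}; (ii) the replacement of $\partial_\alpha \Lambda_{\fra d}(\fra u,1)$ by $\partial_\alpha \Lambda_{\fra d}(\alpha_x,1)$ in the mixed derivative contribution; (iii) the $\fra d = 1+1/d$ correction relating $\Lambda_{\fra d}$ to $\Lambda$; and (iv) the rigidity error $d\tau(\fra u)|\epsilon_x|$. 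A direct computation using $\tau(\fra u) \asymp \fra u^2/(\fra u-2)$ and the bound on $\epsilon_x$ from Corollary~\ref{Cor:SpecMax} shows that (iv) is the dominant contribution and yields precisely the form $\eta = d^{3\gamma+\epsilon/2-1/2}\fra u^5/(\fra u-2)^5$ of \eqref{def_eta}, while contributions (i)--(iii) are of strictly smaller order under the assumptions \eqref{d_assumptions_2}.

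The main obstacle is the book-keeping in this Taylor expansion: the correct rescaling $\tau(\fra u)$ and centering $\sigma(\fra u)$ were chosen so that the two linearization coefficients $\theta(\fra u)$ and $\sqrt{\alpha_x}$ appearing in $Z_x$ match exactly (up to lower order) the partial derivatives of $\Lambda_{\fra d}$ at $(\fra u,1)$; verifying this identity and controlling each of the four error sources against the compound scale $\eta$ requires the explicit formulas of Appendix~\ref{app:spectral_analysis} and a careful use of the range $\fra u - 2 \geq d^{(6\gamma-1)/10+\epsilon}$ from \eqref{d_assumptions_2}, but no further probabilistic input beyond the three high-probability events listed above.
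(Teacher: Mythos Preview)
Your overall strategy matches the paper's: reduce to a pointwise bound $\abs{\tilde Z_x - Z_x} \leq \eta$ for $x \in \cal W$ via a Taylor expansion of $\Lambda_{\fra d}$ at $(\fra u,1)$, using Corollary~\ref{Cor:SpecMax} and Lemma~\ref{lem:kappa_condition} to identify $\Phi(E_s)-\delta_{\tilde\nu}(E_s)$ with a count over $\cal W$. The paper splits this into two half-steps via an intermediate process $\tilde\Sigma$, but this is cosmetic.

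There is, however, a genuine gap in your error control. You invoke the bound $\abs{\beta_x - 1} \lesssim \sqrt{\log N}/d$ from Lemma~\ref{lem:beta_x_assumption_beta_pr_checked} (which holds for all $x\in\cal U$), but this is far too weak for the quadratic Taylor remainder. Consider the critical regime $d \asymp \log N$ with $\fra u - 2 \asymp 1$: then $\abs{\beta_x-1}\lesssim d^{-1/2}$, so the remainder term $(\beta_x-1)^2/\fra u$ from \eqref{eq:Lambda_d_expansion} is of order $d^{-1}$, and after rescaling by $d\tau(\fra u)\asymp d$ you get an error of order $1$, whereas $\eta \asymp d^{3\gamma+\epsilon/2-1/2} \ll 1$. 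The situation is even worse in the subcritical regime $d \leq (\log N)^{3/4}$, where Lemma~\ref{lem:beta_x_assumption_beta_pr_checked} only gives $\abs{\beta_x-1}\lesssim (\log\log N/d)^{1/2}$, and the rescaled remainder is of order $\log\log N \gg 1 \gg \eta$. Your claim that ``no further probabilistic input beyond the three high-probability events listed above'' suffices is therefore false.

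What the paper does instead is use the much sharper concentration $\abs{\beta_x - 1} \lesssim d^{\gamma-1}/\sqrt{\fra u}$ for $x\in\cal W$, which comes from Corollary~\ref{cor:concentration_beta_x} with $\delta = d^{2\gamma-1}$ (see \eqref{alpha_beta_bounds}). This exploits the small size of $\cal W$: the concentration of $\abs{S_2(x)}/(d\abs{S_1(x)})$ improves as the threshold defining $\cal V_\delta$ tightens. With this bound the rescaled quadratic remainder becomes $O\bigl(d\tau(\fra u)\,d^{4\gamma}/(\fra u d^2)\bigr)$, which is indeed $\ll\eta$ throughout the allowed range. You need to add this fourth high-probability event to your list and replace the cited bound accordingly.
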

\begin{proof}
The proof proceeds in two steps, passing by the intermediate process
\begin{equation} \label{def_SIgma_tilde}
\tilde \Sigma \deq \sum_{x \in [N]} \delta_{\tilde Z_x}\,,
\qquad
\tilde Z_x \deq
\begin{cases}
d\tau(\fra u) \pb{\Lambda_{\fra d}(\alpha_x, \beta_x) - \sigma(\fra u)} & \text{if }
\abs{\alpha_x - \fra u} \leq q
\\
-\infty & \text{otherwise}\,.
\end{cases}
\end{equation}
We now compare $\Phi - \delta_{\tilde \nu}$ and $\tilde \Sigma$. Note first that we have
\begin{equation} \label{eta_lower_bound}
\eta \gg d \tau(\fra u) \, \frac{d^{- \frac 1 2 + 3 \gamma}}{d \fra u} 
\pbb{ 1 + \pbb{ \frac{\log d}{\log \fra u} }^2 \frac{\fra u^4}{(\fra u - 2)^4} }\,,
\end{equation}
where the right-hand side is the error bound on $\epsilon_x$ in Corollary \ref{Cor:SpecMax} multiplied by $d \tau(\fra u)$. Hence, by Corollary \ref{Cor:SpecMax} and Lemma \ref{lem:kappa_condition}, with high probability, for all $s \geq -\kappa$ we have
\begin{equation} \label{tildeSigma_Phi}
\tilde \Sigma(E_{s + \eta/2}) \leq \Phi(E_s)  - \delta_{\tilde \nu}(E_s)  \leq  \tilde \Sigma(E_{s - \eta/2})\,.
\end{equation}

Next, we compare $\tilde \Sigma$ and $\Sigma$.
We expand $\Lambda_{\fra d}(\alpha, \beta)$ using Corollary \ref{cor:Lambda_expansion}. We use that with high probability, for all $x \in \cal W$ we have
\begin{equation} \label{alpha_beta_bounds}
\abs{\alpha_x - \fra u} \leq d^{2 \gamma - 1}\,, \qquad \abs{\beta_x - 1} \lesssim \frac{d^{\gamma - 1/2}}{\sqrt{d \fra u}}\,,
\end{equation}
where the first estimate follows from \eqref{W_q} and the second from Corollary~\ref{cor:concentration_beta_x} with $\delta = d^{2\gamma - 1}$. 
Thus, abbreviating $\alpha = \alpha_x$ and $\beta = \beta_x$ for $x \in \cal W$, we find that the assumptions of Corollary \ref{cor:Lambda_expansion} are satisfied by \eqref{eq:ConditionAlphaMaxMin} and \eqref{alpha_beta_bounds}, 
so that applying \eqref{eq:Lambda_d_expansion} with $\sigma(\fra u) = \Lambda_{\fra d}(\fra u, 1)$ (see \eqref{eq:def_Lambda_d}) yields 
\begin{align}
\Lambda_{\fra d}(\alpha, \beta) &= \sigma(\fra u) + \frac{\theta(\fra u)}{\tau(\fra u)} (\alpha - \fra u) +  \frac{\sqrt{\fra u}}{\tau(\fra u)} (\beta - 1) + O \pbb{\frac{d^{4 \gamma}}{\fra u d^2}}
\nonumber \\
&= \sigma(\fra u) + \frac{\theta(\fra u)}{\tau(\fra u)} (\alpha - \fra u) + \frac{1}{\tau(\fra u)} \sqrt{\alpha} (\beta - 1) + O \pbb{\frac{d^{4 \gamma}}{\fra u d^2}}\,.
\label{eq:expansion_Lambda_d_sigma_u} 
\end{align}
Since $d \tau (\fra u) \frac{d^{4\gamma}}{\fra u d^2} \ll \eta$, we conclude
\begin{equation} \label{Sigma_tilde_Sigma}
\Sigma(E_{s + \eta/2}) \leq \tilde \Sigma(E_s) \leq \Sigma(E_{s - \eta/2})\,,
\end{equation}
and the claim follows together with \eqref{tildeSigma_Phi}. 
\end{proof}

By Lemma \ref{lem:Sigma_Phi}, to analyse $\Phi - \delta_{\tilde \nu}$ it suffices to analyse $\Sigma$. Indeed, denoting by $\Omega$ the high-probability event from Lemma \ref{lem:Sigma_Phi}, we have
\begin{align*}
\P \pBB{\bigcap_{i \in [n]} \{\Phi(E_{s_i})  - \delta_{\tilde \nu}(E_{s_i}) \leq k_i\}} &= \P \pBB{\bigcap_i \{\Phi(E_{s_i})  - \delta_{\tilde \nu}(E_{s_i}) \leq k_i\} \cap \Omega} + O(\P(\Omega^c))
\\
&\leq \P \pBB{\bigcap_{i \in [n]} \{\Sigma(E_{s_i + \eta}) \leq k_i\} \cap \Omega} + O(\P(\Omega^c))
\\
&= \P \pBB{\bigcap_{i \in [n]} \{\Sigma(E_{s_i + \eta}) \leq k_i\}} + O(\P(\Omega^c))\,.
\end{align*}
Together with an analogous lower bound, we therefore conclude that
\begin{equation} \label{comparison_Phi_Sigma}
\P \pBB{\bigcap_{i \in [n]} \{\Sigma(E_{s_i - \eta}) \leq k_i\}} \leq \P \pBB{\bigcap_{i \in [n]} \{\Phi(E_{s_i}) - \delta_{\tilde \nu}(E_{s_i}) \leq k_i\}} + o(1) \leq \P \pBB{\bigcap_{i \in [n]} \{\Sigma(E_{s_i + \eta}) \leq k_i\}}\,.
\end{equation}
Thus, abbreviating $t_i = s_i \pm \eta$, it suffices to analyse
\begin{equation} \label{sum_r_k}
\P \pBB{\bigcap_{i \in [n]} \{\Sigma(E_{t_i}) \leq k_i\}} = \sum_{r_1 = 0}^{k_1} \cdots \sum_{r_n = 0}^{k_n}
\P \pBB{\bigcap_{i \in [n]} \{\Sigma(E_{t_i}) = r_i\}}\,.
\end{equation}
We shall do so by comparing $\Sigma$ to a reference Poisson process with intensity $\tilde \rho$, defined by
\begin{equation} \label{def_rho_tilde}
\tilde \rho(E_s) \deq \sum_{v \in \N} \ind{\abs{v - d \fra u} \leq dq} \, N \P(\cal P_d = v) \, Q\pb{v, s - \theta(\fra u) \p{v - d \fra u}}\,,
\end{equation}
where we recall the definition \eqref{def_Q}.
The measure $\tilde \rho$ approximates $\rho$ in the following sense.

\begin{lemma} \label{lem:rho_tilde_rho}
If $\kappa$ satisfies \eqref{kappa_condition} then for any $s \geq -2 \kappa$  we have 
\begin{equation*}
\tilde \rho(E_s) = \rho(E_s) \pb{1 + O \pb{\p{d \sqrt{\fra u}}^{4 \gamma - 1}}} + O(\ee^{-c d^{2 \gamma}})\,,
\end{equation*}
for some small enough constant $c > 0$.
\end{lemma}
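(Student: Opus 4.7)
The plan is to match the sum defining $\tilde\rho$ term-by-term with the sum defining $\rho$, and then show that the truncation implicit in $\tilde\rho$ costs at most the additive $\ee^{-cd^{2\gamma}}$ error. Since $d\fra u-\ang{d\fra u}\in\N$, the substitution $v(\ell)\deq d\fra u-\ang{d\fra u}-\ell$ parameterizes $\{v\in\N\colon\abs{v-d\fra u}\leq dq\}$ by $\ell\in\Z$ with $\abs{\ang{d\fra u}+\ell}\leq dq$, and under it $\delta(\ell)\deq v(\ell)-d\fra u=-(\ang{d\fra u}+\ell)$ and the argument of $Q$ becomes $w_s(\ell)\deq s+\theta(\fra u)(\ang{d\fra u}+\ell)$. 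In this notation,
\[ \tilde\rho(E_s)=\sum_{\ell\in\Z\colon\abs{\ang{d\fra u}+\ell}\leq dq} N\P(\cal P_d=v(\ell))\,Q(v(\ell),w_s(\ell)), \]
while $\rho(E_s)=\sum_{\ell\in\Z}\fra u^{\ang{d\fra u}+\ell}G(w_s(\ell))$; by \eqref{W_q}, the range of $\ell$ in $\tilde\rho$ has length $O(d^{2\gamma})$.

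First I would handle the prefactor via Stirling. Since $N\ee^{-df(\fra u)}=1$ by \eqref{def_u}, Taylor expanding $df$ at $\fra u$ (using $f'(\fra u)=\log\fra u+O((d\fra u)^{-1})$ and $f''(\fra u)=\fra u^{-1}+O((d\fra u^2)^{-1})$) inside the formula \eqref{Poisson_f} yields
\[ N\P(\cal P_d=v(\ell))=\fra u^{\ang{d\fra u}+\ell}\exp\pbb{-\frac{\delta(\ell)^2}{2d\fra u}}\bigl(1+O((d\sqrt{\fra u})^{4\gamma-1})\bigr) \]
uniformly over the main range; the Gaussian prefactor is itself $1+O(d^{4\gamma-1}/\fra u)$ and is absorbed in the error. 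Next I would control $Q(v,w)$ by a Gaussian approximation to the Poisson tail with $\mu\deq dv\asymp d^2\fra u$: a sharp saddle-point (or one-term Edgeworth) estimate gives
\[ Q(v,w)=G(w)\bigl(1+O((1+\abs{w}^3)\mu^{-1/2})\bigr) \]
uniformly for $\abs{w}\ll\mu^{1/6}$. The bound \eqref{kappa_condition} on $\kappa$ together with $\theta(\fra u)\asymp\sqrt{\fra u}$ bounds $\abs{w_s(\ell)}$ in the effective range of the sum (cut off by the Gaussian decay of $G$) well within this window, and an estimate of the resulting relative error against $\rho(E_s)$ produces the desired factor $(d\sqrt{\fra u})^{4\gamma-1}$.

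Multiplying the two approximations and summing in $\ell$ yields the main comparison up to truncation. The contribution of $\ell<-dq$ is bounded by $\sum_{\ell<-dq}\fra u^{\ang{d\fra u}+\ell}\lesssim\fra u^{-dq+1}\leq\exp(-cdq\log\fra u)\leq\ee^{-cd^{2\gamma}}$, where the last step uses the lower bound on $q$ from \eqref{W_q} and the assumption $d^{2\gamma}\gg\log\log N$. For $\ell>dq$, the argument of $G$ exceeds $c\sqrt{\fra u}\,d^{2\gamma}$, so each term carries the Gaussian prefactor $\exp(-c\fra u\,d^{4\gamma})$, which beats the geometric growth $\fra u^\ell$ along the remaining range; a crude bound via Bennett's inequality (Lemma~\ref{lem:Bennett}) then shows that this tail is again at most $\ee^{-cd^{2\gamma}}$.

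The hard part will be the sharp Gaussian approximation of the Poisson tail uniformly down to relative precision $(d\sqrt{\fra u})^{4\gamma-1}$: the standard Berry--Esseen bound is too weak in the moderate-deviation regime, and a one-term Edgeworth correction, or equivalently an exponential tilt combined with the local CLT for Poisson variables with drifted mean, is required. The validity of this expansion over the relevant range of $\abs{w_s(\ell)}$ is exactly what is afforded by the upper bound on $\kappa$ proven in Lemma~\ref{lem:kappa_condition}, so tightness of that bound is essential for the moderate-deviation approximation to apply throughout the sum.
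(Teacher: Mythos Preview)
Your approach is essentially the same as the paper's: the same reparametrization $v\leftrightarrow\ell$, the same Taylor expansion of $df$ around $\fra u$ for the prefactor (the paper's \eqref{NP_Taylor}), the same Gaussian approximation of $Q$ by $G$, and the same two-sided truncation estimate. One simplification worth noting: the step you flag as the ``hard part'' is handled in the paper more crudely and cleanly. Rather than tracking an Edgeworth-type error $(1+|w|^3)\mu^{-1/2}$ and arguing that the effective $|w|$ stays in the moderate-deviation window, the paper fixes a threshold $\xi=\gamma/2$ and splits into two cases: for $w\le\mu^{\xi}$ it uses the estimate $Q(v,w)=G(w)(1+O(\mu^{3\xi-1/2}))$ from Lemma~\ref{lem:ml_approx} (proved by Stirling plus Euler--Maclaurin, no tilting needed), and for $w>\mu^{\xi}$ it bounds both $Q$ and $G$ by $O(\ee^{-(w^2\wedge\mu)/3})$ via Bennett; the large-$w$ contribution then sums to $O(\ee^{-cd^{2\gamma}\fra u^{\gamma}})$. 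This two-case split avoids having to control the $|w_s(\ell)|^3$ moment against the measure $\rho$ and makes the emergence of the uniform relative error $(d\sqrt{\fra u})^{4\gamma-1}$ immediate.
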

\begin{proof}
For $\abs{v - d \fra u} \leq dq$ we have, by \eqref{Poisson_f},
\begin{equation} \label{NP_Taylor}
N \P(\cal P_d = v) = N \ee^{-d f(v/d)} \pbb{1 + O \pbb{\frac{1}{d \fra u}}} = \fra u^{d \fra u - v} \, \pbb{1 + O \pbb{\frac{d^{4 \gamma}}{d \fra u}}}\,,
\end{equation}
where the second step follows from a Taylor expansion of order one $f$ around $\fra u$, using that $q \leq d^{2\gamma - 1}$ by \eqref{W_q}.
Next, setting $\mu = dv \asymp d^2 \fra u$ for $\abs{v - d \fra u} \leq dq$, we find from Lemmas \ref{lem:Bennett} and \ref{lem:ml_approx}, for any $0 \leq \xi \leq 1/6$,
\begin{equation*}
Q(v,w) =
\begin{cases}
G(w) (1 + O(\mu^{3 \xi - 1/2})) & \text{if } w \leq \mu^{\xi}
\\
O(\ee^{-(w^2 \wedge \mu)/3}) & \text{otherwise}\,.
\end{cases}
\end{equation*}
Choosing $\xi \deq \gamma / 2$ and recalling \eqref{NP_Taylor}, these two cases yield the splitting
\begin{equation} \label{rho_tilde_split}
\tilde \rho(E_s) = \sum_{v \in \N} \ind{\abs{v - d \fra u} \leq dq} \, \fra u^{d \fra u - v} \, G(s - \theta(\fra u) \p{v - d \fra u})\, \pb{1 + O \pb{\p{d \sqrt{\fra u}}^{4 \gamma - 1}}} + O(\ee^{-c d^{2 \gamma} \fra u^\gamma})\,,
\end{equation}
where the error term arises by estimating $G(w)$ and $Q(v,w)$ for $w = s - \theta(\fra u)(v -d \fra u)\geq \mu^\xi$ using $w^2 \wedge \mu \geq \mu^{\gamma} \geq (d^2 \fra u)^\gamma / 4$ combined with
\begin{equation*}
\sum_{v \in \N} \ind{\abs{v - d \fra u} \leq dq} \, \fra u^{d \fra u - v} \, \ee^{-(d^2 \fra u)^\gamma / 12} \lesssim d \exp \pbb{dq \log \fra u - \frac{(d^2 \fra u)^\gamma}{12}} \lesssim \exp \pbb{-\frac{(d^2 \fra u)^\gamma}{24}} \leq \ee^{-c d^{2 \gamma} \fra u^\gamma}\,,
\end{equation*}
after choosing $c_*$ in \eqref{eq:def_cal_W} small enough.

What remains is to remove the condition $\abs{v - d \fra u} \leq dq$ in \eqref{rho_tilde_split}. The contribution of the terms $v - d \fra u > dq$ is easy to estimate by $O(\fra u^{-d q}) = O(\ee^{-c d^{2 \gamma}})$, where we used \eqref{eq:scaling_alpha_max}. The contribution of the terms $v - d \fra u < -dq$ is estimated by
\begin{equation} \label{sum_dq_large}
\sum_{v \in \Z} \ind{d \fra u - v > dq} \, \fra u^{d \fra u - v} \, G(s + \theta(\fra u) \p{d \fra u - v})\,.
\end{equation}
From \eqref{eq:scaling_alpha_max}, $\theta(\fra u) \asymp \sqrt{\fra u}$,  and \eqref{kappa_condition}, we conclude that $2 \kappa \leq \frac{\theta(\fra u)}{2} dq$.
Hence, for $d \fra u - v > dq$ we have $G(s + \theta(\fra u) \p{d \fra u - v}) \leq \exp (-c \fra u (d \fra u - v)^2)$ (see \eqref{G_g_estimates} below) and we estimate \eqref{sum_dq_large} by
\begin{equation*}
\sum_{v \in \Z} \ind{d \fra u - v > dq} \exp \pB{-(d \fra u - v) \pb{c \fra u (d \fra u - v) - \log \fra u}} \leq
\sum_{v \in \Z} \ind{d \fra u - v > dq} \exp \pB{- c \fra u (d \fra u - v)^2} \leq \ee^{-c d^{4 \gamma}}
\end{equation*}
for some constant $c > 0$.
This concludes the proof.
\end{proof}

Next, we observe that the intensity measure $\rho$ is Lipschitz continuous. The proof of the following result is given in Appendix~\ref{sec:lipschitz}.

\begin{lemma} \label{lem:rho_Lipschitz}
For any $s \in \R$ we have $- \frac{\dd}{\dd s} \rho(E_s) \lesssim \sqrt{\log \fra u} \, \rho(E_s)$.
\end{lemma}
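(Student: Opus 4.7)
Differentiating \eqref{def_rho} gives the density
\[
h(s) \deq -\frac{\dd}{\dd s}\rho(E_s) = \sum_{\ell \in \Z} w_\ell \, g(y_\ell),
\]
with $w_\ell \deq \fra u^{\langle d\fra u\rangle + \ell}$ and $y_\ell \deq s + \theta(\fra u)(\langle d\fra u\rangle + \ell)$; in terms of these, $\rho(E_s) = \sum_\ell w_\ell G(y_\ell)$, consecutive $y_\ell$ differ by $\theta(\fra u) \geq 1/\sqrt{2}$ for $\fra u \geq 2$, and consecutive weights are in ratio $\fra u$. An elementary calculus check on the map $\fra u \mapsto (\log \fra u)\sqrt{\fra u}/(\fra u-1)$ gives that $c \deq \log \fra u/\theta(\fra u) \leq 1$ uniformly for $\fra u \geq 2$. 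The claim therefore reduces to showing $h(s) \lesssim \sqrt{\log \fra u}\,\rho(E_s)$.

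The plan is to combine the Mills ratio inequality $g(x) \leq C_0(1+x_+)G(x)$, valid for an absolute constant $C_0$ and all $x \in \R$, with a truncation at the threshold $T \deq \sqrt{\log \fra u}$. The Mills bound yields
\[
h(s) \leq C_0\,\rho(E_s) + C_0 \sum_\ell (y_\ell)_+\, w_\ell G(y_\ell),
\]
and the contribution of indices with $y_\ell \leq T$ to the latter sum is trivially at most $T\,\rho(E_s) = \sqrt{\log\fra u}\,\rho(E_s)$. For the remaining tail I would apply the dual inequality $G(y) \leq g(y)/y$ valid for $y \geq 1$ (hence for $y_\ell > T$) to rewrite $\sum_{y_\ell > T}(y_\ell)_+ w_\ell G(y_\ell) \leq \sum_{y_\ell > T} w_\ell g(y_\ell)$, and then observe that the ratio of consecutive summands
\[
\frac{w_{\ell+1} g(y_{\ell+1})}{w_\ell g(y_\ell)} = \exp\bigl(\log \fra u - \theta(\fra u)\, y_\ell - \theta(\fra u)^2/2\bigr)
\]
drops below $1$ once $y_\ell > c$ and decays exponentially thereafter, so the tail sum is dominated up to an absolute constant by its leading term $w_{\ell_0} g(y_{\ell_0})$, where $\ell_0$ is the smallest index with $y_{\ell_0} > T$ and thus $y_{\ell_0} \in (T, T+\theta(\fra u)]$.

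The hard part will be the single-term comparison $w_{\ell_0} g(y_{\ell_0}) \lesssim \sqrt{\log \fra u}\,\rho(E_s)$, which requires more than a bound by one convenient term of $\rho(E_s)$. To obtain it I would lower-bound $\rho(E_s)$ by the discrete maximum of $w_\ell G(y_\ell)$; by log-concavity in the continuous parameter $a$, this maximum is attained near $a^{**}$ with $y_{a^{**}}$ solving $g(y_{a^{**}})/G(y_{a^{**}}) = c$, and the asymptotic identity $e^{-x^2/2}/\sqrt{2\pi} = c\,G(x)$ together with $c \leq 1$ yields $|y_{a^{**}}| \leq \sqrt{2\log(1/c)} \lesssim \sqrt{\log \fra u}$ uniformly for $\fra u \geq 2$. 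Letting $\ell_-$ be the shifted integer closest to $a^{**}$ and completing the square in the exponents of $w_{\ell_0} g(y_{\ell_0})$ and $w_{\ell_-} G(y_{\ell_-})$, the quotient works out to $O(\sqrt{\log \fra u})$, the factor arising precisely from the Mills-ratio correction $G(y_{a^{**}}) \lesssim g(y_{a^{**}})/|y_{a^{**}}|$ evaluated at the peak location. Combined with the core estimate this yields the claim.
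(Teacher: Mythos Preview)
Your overall structure—Mills ratio plus threshold at $T=\sqrt{\log\fra u}$ followed by a single-term comparison—is sound and close in spirit to the paper's argument, and the reduction down to the bound $w_{\ell_0}g(y_{\ell_0})\lesssim\sqrt{\log\fra u}\,\rho(E_s)$ (with $y_{\ell_0}\in(T,T+\theta]$) is correct. The gap is in your final step.

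Your identification of the comparison term $\ell_-$ as ``the shifted integer closest to $a^{**}$'' with $|y_{\ell_-}|\lesssim\sqrt{\log\fra u}$ is not justified. The continuous function $a\mapsto w_aG(y_a)$ has second log-derivative of order $-\theta\log\fra u\,\sqrt{\log\fra u}$ at its peak, so for large $\fra u$ the peak width in $a$ is $\ll 1$; the nearest integer can therefore sit up to $\theta/2\asymp\sqrt{\fra u}$ away in the $y$-variable, and there is no reason $|y_{\ell_-}|$ should be $O(\sqrt{\log\fra u})$. (Concretely: if $s$ is such that $y_{\ell_0}\approx T$ and $y_{\ell_0-1}\approx T-\theta\ll 0$, then the discrete maximizer is $\ell_0$ itself, not anything near $a^{**}$.) The vague ``completing the square'' with the Mills correction at $y_{a^{**}}$ does not resolve this.

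What does work is a direct two-case comparison. Compare $w_{\ell_0}g(y_{\ell_0})$ to $w_{\ell_0}G(y_{\ell_0})$ to get ratio $\lesssim y_{\ell_0}$, and to $w_{\ell_0-1}G(y_{\ell_0-1})$ to get ratio $\lesssim \fra u\,g(y_{\ell_0})$ when $y_{\ell_0-1}\leq 0$ (or $\lesssim(1+T)\,\fra u e^{-\theta^2/2}\lesssim \sqrt{\log\fra u}$ when $0<y_{\ell_0-1}\leq T$, using $\sup_{\fra u\geq 2}\fra u e^{-\theta^2/2}<\infty$). One checks that $\min\{y,\;2\fra u g(y)\}\lesssim\sqrt{\log\fra u}$ for all $y>T$, since the two bounds cross at $ye^{y^2/2}\asymp\fra u$, i.e.\ at $y\asymp\sqrt{\log\fra u}$. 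This completes your line of argument. The paper's route is different: it splits at the sign change $y_{\ell_0}=0$, observes $\rho(E_s)\gtrsim\fra u^{a_{\ell_0-1}}$ from the left tail, and introduces an auxiliary $t_0\leq s$ for which the single term $\fra u^{a_{\ell_0}}G(t_0+\theta a_{\ell_0})$ matches $\rho(E_s)$; from $\rho(E_s)\gtrsim\fra u^{a_{\ell_0-1}}$ it then reads off $t_0+\theta a_{\ell_0}\lesssim\sqrt{\log\fra u}$ and finishes with Mills. Both routes ultimately hinge on the same Gaussian/exponential balance; the paper's auxiliary $t_0$ replaces your case split.
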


\begin{corollary} \label{cor:Lipschitz}
For any $s \in \R$ and $\eta > 0$ we have
$\rho(E_{s - \eta}) \leq \ee^{C \eta \sqrt{\log \fra u}} \rho(E_s)$.
\end{corollary}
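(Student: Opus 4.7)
The plan is to integrate the differential inequality from Lemma \ref{lem:rho_Lipschitz}. Set $F(s) \deq \rho(E_s)$ and note that $F$ is positive (since $g > 0$) and absolutely continuous in $s$, so Lemma \ref{lem:rho_Lipschitz} gives the pointwise bound $-F'(s) \leq C \sqrt{\log \fra u}\, F(s)$ for some constant $C > 0$. Dividing by $F(s)$ yields
\begin{equation*}
\frac{\dd}{\dd s} \log F(s) \geq - C \sqrt{\log \fra u}\,.
\end{equation*}

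Integrating this inequality from $s - \eta$ to $s$ gives $\log F(s) - \log F(s-\eta) \geq - C \eta \sqrt{\log \fra u}$, which upon exponentiating yields the claimed bound $F(s - \eta) \leq \ee^{C \eta \sqrt{\log \fra u}} F(s)$. Since Lemma \ref{lem:rho_Lipschitz} holds for every $s \in \R$, there is no obstacle in invoking the fundamental theorem of calculus on $[s-\eta, s]$: differentiability and positivity of $F$ are immediate from the explicit series representation in \eqref{def_rho}, so the only mild verification is that the termwise differentiation used in Lemma \ref{lem:rho_Lipschitz} produces an absolutely convergent series (which follows from the Gaussian decay of $g$ and the geometric factor $\fra u^{\ang{d\fra u} + \ell}$ being dominated for $\ell \to -\infty$ and defeated by the Gaussian for $\ell \to +\infty$).

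There is no genuine obstacle here; the corollary is a one-line Grönwall-type consequence of the preceding lemma. The only care needed is to ensure the inequality in Lemma \ref{lem:rho_Lipschitz} is interpreted pointwise almost everywhere (which it is, by smoothness of $F$), so that the integration step is justified without any measure-theoretic subtlety.
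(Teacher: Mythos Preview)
Your proof is correct and matches the paper's approach: the corollary is stated immediately after Lemma~\ref{lem:rho_Lipschitz} without a separate proof, precisely because it is the standard Gr\"onwall-type integration of the differential inequality that you carry out. The additional regularity checks you mention (positivity, smoothness, termwise differentiation) are all valid and straightforward from the explicit form of $\rho$.
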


Note that, by \eqref{def_eta}, \eqref{calK_condition}, and Corollary \ref{cor:Lipschitz}, $\rho(E_{-\kappa - \eta}) \leq 2 \cal K$. Using Lemma \ref{lem:rho_tilde_rho} we therefore deduce that
\begin{equation} \label{tilde_rho_estimate}
\tilde \rho(E_{-\kappa - \eta}) \leq 3 \cal K\,,
\end{equation}
since $\cal K \geq 1$.

Next, we show that the joint law of the variables $(Z_x)$ factorizes asymptotically.
Recall the definition of $Z_x$ from \eqref{def_Z_x}.

\begin{lemma} \label{lem:PZ_fact}
If $\ell \leq c \frac{\log N}{d^{2 \gamma}}$ for some small enough constant $c > 0$, and $a_1, \dots, a_\ell \in \R$, then
\begin{equation*}
N^\ell \P \pBB{\bigcap_{i \in [\ell]} \{Z_i \geq a_i\}} = \pBB{\prod_{i \in [\ell]} \tilde \rho(E_{a_i})} + O(N^{-2/7})\,.
\end{equation*}
\end{lemma}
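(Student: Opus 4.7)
The plan is to reduce Lemma~\ref{lem:PZ_fact} to a direct application of Proposition~\ref{prop:poisson-normal} by decomposing the event $\{Z_i \geq a_i\}$ along the integer values of $d \alpha_i$. By exchangeability of the vertices, it suffices to consider $i \in [\ell]$. Recall that $Z_i = \theta(\fra u)(d\alpha_i - d\fra u) + d\sqrt{\alpha_i}(\beta_i - 1)$ when $\abs{\alpha_i - \fra u} \leq q$ and equals $-\infty$ otherwise, so $d\alpha_i \in \N$ forces the disjoint decomposition
\begin{equation*}
\{Z_i \geq a_i\} = \bigsqcup_{v \in \N, \, \abs{v - d\fra u} \leq dq} \hB{ d\alpha_i = v, \; d\sqrt{\alpha_i}(\beta_i - 1) \geq a_i - \theta(\fra u)(v - d\fra u)}\,.
\end{equation*}
Taking the intersection over $i \in [\ell]$ and writing $w_i(v) \deq a_i - \theta(\fra u)(v - d\fra u)$ turns the joint probability into a finite sum indexed by $(v_1,\dots,v_\ell) \in \N^\ell$ with each $\abs{v_i - d\fra u} \leq dq$.

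Next, I would apply Proposition~\ref{prop:poisson-normal} to each summand. The hypotheses are easy to verify: $\ell \leq c \log N / d^{2\gamma} \leq \log N \leq N^{1/12}$ for $N$ large, and any $v_i$ in the summation range satisfies $2 \leq v_i \leq d\fra u + d q \lesssim \log N \leq N^{1/4}$ by \eqref{eq:scaling_alpha_max} and Lemma~\ref{lem:convergence_max_alpha_x_upper_bound_D_x}. Summing the main term over the allowed $(v_i)$ produces, by definition \eqref{def_rho_tilde} of $\tilde \rho$,
\begin{equation*}
\prod_{i \in [\ell]} \sum_{v_i} \ind{\abs{v_i - d\fra u} \leq dq} \, \P(\cal P_d = v_i) \, Q\pb{v_i, w_i(v_i)} = \prod_{i \in [\ell]} \frac{\tilde \rho(E_{a_i})}{N}\,,
\end{equation*}
so that multiplying by $N^\ell$ yields the claimed main term $\prod_i \tilde\rho(E_{a_i})$.

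The main obstacle is the control of the two error contributions after multiplication by $N^\ell$. The term $N^{-\ell - 1}$ from Proposition~\ref{prop:poisson-normal}, summed over the at most $(2dq + 1)^\ell$ tuples, produces $N^\ell \cdot (2dq+1)^\ell N^{-\ell-1} = (2dq+1)^\ell/N$; since $dq \lesssim d^{2\gamma}$ and $\ell \leq c \log N / d^{2\gamma}$, we have $(2dq+1)^\ell \leq \exp\pb{c \log N \cdot \log(Cd^{2\gamma})/d^{2\gamma}} = N^{o(1)}$ using $d^{2\gamma} \geq K \log \log N$, so this is $\ll N^{-2/7}$. For the term $N^{-1/3} \prod_i \P(\cal P_d = v_i)$, summation gives $N^{\ell - 1/3} s^\ell$ with $s \deq \P\pb{\cal P_d \in [d(\fra u - q), d(\fra u + q)]}$. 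Using Bennett's inequality together with $f(\fra u) = \log N / d$ and a first-order Taylor expansion of $f$ at $\fra u$, one obtains $s \lesssim \fra u^{d q}/N$, and hence $N^{\ell - 1/3} s^\ell \lesssim N^{-1/3} \fra u^{d q \ell}$. Since $d q \ell \lesssim c_* c \log N / \log \fra a$ and $\log \fra u = \log \fra a + O(\log d / d)$, the exponent satisfies $dq\ell \log \fra u \leq c_* c \log N \cdot (1 + o(1))$, and choosing the constant $c$ in the hypothesis $\ell \leq c \log N / d^{2\gamma}$ sufficiently small (depending on $c_*$) gives $\fra u^{dq\ell} \leq N^{1/21}$ and therefore $N^{\ell - 1/3} s^\ell \leq N^{-2/7}$, as required.
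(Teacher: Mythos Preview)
Your proof is correct and follows essentially the same approach as the paper: decompose the event $\{Z_i \geq a_i\}$ according to the integer values of $d\alpha_i$, apply Proposition~\ref{prop:poisson-normal} to each summand, recognize the main term as $\prod_i \tilde\rho(E_{a_i})/N$, and control the two error contributions after multiplying by $N^\ell$. The only cosmetic difference is in the bookkeeping for the $N^{-1/3}$ error: the paper first records the pointwise bound $N\P(\cal P_d \geq d\fra u - dq) \lesssim \fra u^{dq} \leq e^{d^{2\gamma}}$ via \eqref{Poisson_f} and \eqref{NP_Taylor} and then invokes $\ell d^{2\gamma} \leq c\log N$, whereas you bound $dq\ell\log\fra u$ directly; the two are equivalent (and the reference to Lemma~\ref{lem:convergence_max_alpha_x_upper_bound_D_x} for the range of $v_i$ is unnecessary, since $v_i \leq d\fra u + dq \lesssim \log N$ is purely deterministic).
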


\begin{proof}
From Proposition \ref{prop:poisson-normal} we get
\begin{align*}
\P \pBB{\bigcap_{i \in [\ell]} \{Z_i \geq a_i\}} &= \sum_{v_1, \dots, v_\ell \in \N} \pBB{\prod_{i \in [\ell]} \ind{\abs{v_i - d \fra u} \leq dq}} \P \pBB{\bigcap_{i \in [\ell]} \hB{d \alpha_i = v_i, d \sqrt{\alpha_i} (\beta_i - 1) \geq a_i - \theta(\fra u) \p{v_i - d \fra u}}}
\\
&=  \prod_{i \in [\ell]} \pBB{\sum_{v \in \N} \ind{\abs{v - d \fra u} \leq dq} \, \P(\cal P_d = v) \, Q\pb{v, a_i - \theta(\fra u) \p{v - d \fra u}}}
\\
&\qquad
+ O \pB{N^{-1/3} \P(\cal P_d \geq d \fra u - dq)^\ell + (2 d q)^\ell N^{-\ell-1}}\,.
\end{align*}
Using that $\P(\cal P_d \geq v) \leq C \P(\cal P_d = v)$ for $v \geq \frac{3}{2} d$, the estimate \eqref{Poisson_f}, and a Taylor expansion of $f$ around $\fra u$ (see \eqref{NP_Taylor}), we deduce that
\begin{equation*}
N \P(\cal P_d \geq d \fra u - dq) \lesssim \fra u^{dq} \leq \ee^{d^{2 \gamma}}\,,
\end{equation*}
where in the last step we used \eqref{eq:scaling_alpha_max}. The claim now easily follows from the assumption on $\ell$.
\end{proof}

Next, we recall the general definition of correlation measures of point processes. Let $\Phi$ be a random point process on some measurable space $\cal Z$. We can represent $\Phi = \sum_{x \in \cal X} \delta_{Z_x}$,
where $\cal X$ is an index set and $(Z_x)_{x \in \cal X}$ is an exchangeable family of random variables in $\cal Z$.  For $k \in \N^*$ we define the $k$-point correlation measure of the process $\Phi$ as the measure $q_{\Phi,k}$ on $\cal Z^k$ satisfying
\begin{equation} \label{def_correlation_measure}
q_{\Phi, k}(F) \deq \sum_{x_1, \dots, x_k \in \cal X}^* \P ((Z_{x_1}, \dots, Z_{x_k}) \in F)
\end{equation}
for measurable $F \subset \cal Z^k$, where the sum ranges over distinct indices of $\cal X$. In our applications, $\cal X = [N]$ is a deterministic set of size $N$, in which case
\begin{equation} \label{def_correlation_measure_2}
q_{\Phi, k}(F) = N (N-1) \cdots (N - k + 1) \, \P ((Z_1, \dots, Z_k) \in F)\,.
\end{equation}
We frequently write $q_{\Phi,k} \equiv q_\Phi$ when there is no risk of confusion.

The following result is an inclusion-exclusion formula relating finite-dimensional distributions of a point process to its correlation measures. It is standard, except some extra care is taken to avoid overexpanding, which turns out to be important for its application in the proof of Lemma~\ref{lem:cor_fct1} below. Its proof is given in Appendix \ref{sec:inclusion_exclusion}.

\begin{lemma} \label{lem:incl_excl}
For any $n,m \in \N^*$, $k_1, \dots, k_n \in \N$, and disjoint measurable $I_1, \dots, I_n \subset \cal Z$, we have
\begin{multline*}
\P(\Phi(I_1) = k_1, \dots, \Phi(I_n) = k_n)
= \frac{1}{k_1 ! \cdots k_n !} \sum_{\ell_1, \dots, \ell_n \in \N} \ind{\sum_i \ell_i \leq m}  \frac{(-1)^{\sum_i \ell_i}}{\ell_1! \cdots \ell_n!} \, q_{\Phi}(I_1^{k_1 + \ell_1} \times \cdots \times I_n^{k_n + \ell_n})
\\
+ O \pBB{\frac{1}{k_1 ! \cdots k_n !} \sum_{\ell_1, \dots, \ell_n \in \N} \ind{\sum_i \ell_i = m+1}  \frac{1}{\ell_1! \cdots \ell_n!} \, q_{\Phi}(I_1^{k_1 + \ell_1} \times \cdots \times I_n^{k_n + \ell_n})}\,.
\end{multline*}
\end{lemma}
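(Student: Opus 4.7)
The plan is to prove this as a deterministic identity at the level of each realization of $\Phi$, then take expectations. The starting point is the scalar identity
\[
\ind{n = k} \;=\; \binom{n}{k} \,(1-1)^{n-k} \;=\; \binom{n}{k} \sum_{\ell = 0}^{\infty} (-1)^\ell \binom{n-k}{\ell}
\]
valid for all $n, k \in \N$ (with the convention $\binom{n-k}{\ell} = 0$ if $\ell > n - k$), since $(1-1)^{n-k} = 1$ when $n = k$ and vanishes otherwise. Multiplying the binomial coefficients yields $\binom{n}{k}\binom{n-k}{\ell} = \frac{1}{k!\,\ell!} \frac{n!}{(n-k-\ell)!}$. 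Applying this with $n = \Phi(I_i)$ and $k = k_i$ for each $i \in [n]$ independently gives
\[
\prod_{i \in [n]} \ind{\Phi(I_i) = k_i}
= \prod_{i \in [n]} \frac{1}{k_i!} \sum_{\ell_i \geq 0} \frac{(-1)^{\ell_i}}{\ell_i!} \frac{\Phi(I_i)!}{(\Phi(I_i) - k_i - \ell_i)!}
\]
pointwise, where all sums are in fact finite.

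The second step is to recognize the factorial product as an unnormalized correlation count. Since the $I_i$ are disjoint, writing $\Phi = \sum_{x \in \cal X} \delta_{Z_x}$ gives
\[
\prod_{i \in [n]} \frac{\Phi(I_i)!}{(\Phi(I_i) - k_i - \ell_i)!}
\;=\; \sum^*_{(x_{i,j})} \prod_{i \in [n]} \prod_{j \in [k_i + \ell_i]} \ind{Z_{x_{i,j}} \in I_i}\,,
\]
where the starred sum is over families of pairwise distinct indices. Taking expectations and recalling the definition \eqref{def_correlation_measure} of the correlation measure, this equals $q_\Phi(I_1^{k_1 + \ell_1} \times \cdots \times I_n^{k_n + \ell_n})$. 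Taking expectations of the full identity and restricting the outer sum to $\sum_i \ell_i \leq m$ produces the claimed main term.

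The remaining task is the Bonferroni-type bound on the truncation error. For fixed $(\Phi(I_i))_i$ set $N_i \deq \Phi(I_i) - k_i$, assumed $\geq 0$ (otherwise the relevant indicator vanishes and so does every term). Using the Vandermonde identity $\sum_{\sum \ell_i = s} \prod_i \binom{N_i}{\ell_i} = \binom{\sum_i N_i}{s}$, the multi-index tail reduces to a scalar alternating tail:
\[
\sum_{\sum_i \ell_i > m} (-1)^{\sum_i \ell_i} \prod_{i \in [n]} \binom{N_i}{\ell_i}
\;=\; \sum_{s > m} (-1)^s \binom{N}{s}\,, \qquad N \deq \sum_{i \in [n]} N_i\,.
\]
The classical one-variable Bonferroni inequality, which follows from the telescoping identity $\sum_{\ell = 0}^L (-1)^\ell \binom{N}{\ell} = (-1)^L \binom{N-1}{L}$ via Pascal's rule and $\binom{N-1}{L} \leq \binom{N}{L+1}$, gives $|\sum_{s > m} (-1)^s \binom{N}{s}| \leq \binom{N}{m+1} = \sum_{\sum_i \ell_i = m+1} \prod_i \binom{N_i}{\ell_i}$. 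Multiplying by $\prod_i \binom{\Phi(I_i)}{k_i} / \binom{N_i}{\ell_i} \cdot \binom{N_i}{\ell_i} = \prod_i \frac{1}{k_i!\,\ell_i!}\frac{\Phi(I_i)!}{(\Phi(I_i)-k_i-\ell_i)!}$ and taking expectations turns the right-hand side into precisely the error term asserted in the lemma, completing the proof.

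I do not expect any real obstacle here; the main subtlety is simply keeping track of which inequalities hold pointwise in $\omega$ (so that they survive taking expectations) versus in expectation, and reducing the multi-index Bonferroni to the one-dimensional one via the Vandermonde collapse.
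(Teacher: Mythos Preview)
Your proof is correct and takes a genuinely different route from the paper's. The paper proceeds by inserting the trivial factor $\prod_{x \in \cal X} \bigl(\sum_i \ind{Z_x \in I_i} + (1 - \sum_i \ind{Z_x \in I_i})\bigr)$ into the expectation, expanding the product over $x$ into sums over disjoint subsets $A_1, \dots, A_n$ with $|A_i| = k_i$, and then expanding the remaining factor $\prod_{x \notin \bigsqcup_i A_i} (1 - \sum_i \ind{Z_x \in I_i})$ \emph{sequentially} in an arbitrary ordering of $\cal X$, stopping the expansion once $m+1$ factors $\ind{Z_x \in I_i}$ have been generated; the error is then bounded pointwise using $0 \leq 1 - \sum_i \ind{Z_x \in I_i} \leq 1$. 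Your argument bypasses this set-level bookkeeping entirely: you work directly with the counts $\Phi(I_i)$ via the binomial identity for $\ind{n=k}$, and then collapse the multi-index alternating tail to a one-dimensional Bonferroni inequality through Vandermonde's identity. Your approach is cleaner and more algebraic, and avoids the auxiliary ordering of $\cal X$; the paper's approach is more constructive and perhaps closer in spirit to the usual point-process inclusion--exclusion. Both yield the same error term with implied constant $1$.

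Two minor points worth tightening in your write-up: first, make explicit that the pointwise identity and bound hold on the event $\{\Phi(I_i) \geq k_i \text{ for all } i\}$, while on the complement every term (left-hand side, main sum, and error sum) vanishes by the falling-factorial convention, so the inequality is trivially valid there too. Second, the sentence beginning ``Multiplying by $\prod_i \binom{\Phi(I_i)}{k_i} / \binom{N_i}{\ell_i} \cdot \binom{N_i}{\ell_i} = \ldots$'' is garbled; what you mean is simply that the pointwise bound $\prod_i \binom{\Phi(I_i)}{k_i} \cdot \binom{N}{m+1}$ expands via Vandermonde back to $\sum_{\sum_i \ell_i = m+1} \prod_i \frac{1}{k_i!\,\ell_i!} \frac{\Phi(I_i)!}{(\Phi(I_i)-k_i-\ell_i)!}$, whose expectation is the asserted error.
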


We now show that the $\ell$-point correlation measure factorizes asymptotically, which will imply Poisson statistics.

\begin{lemma} \label{lem:correlation_factorized}
Let $\ell \leq c \frac{\log N}{d^{2 \gamma}}$, for some small enough constant $c > 0$. For all $i \in [\ell]$, let $I_i = [a_i, b_i)$ with $-\kappa - \eta \leq a_i < b_i$. Then
\begin{equation*}
q_{\Sigma}(I_1 \times \cdots \times I_\ell) = \qbb{1 - O\pbb{\frac{\ell^2}{N}}} \, \prod_{i \in [\ell]} \tilde \rho(I_i) + O \p{N^{-1/4}}\,.
\end{equation*}
\end{lemma}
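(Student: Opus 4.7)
\medskip

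\noindent\textbf{Proof proposal.} The idea is to reduce the computation of $q_\Sigma(I_1 \times \cdots \times I_\ell)$ to the joint tail probabilities already controlled by Lemma~\ref{lem:PZ_fact}, via a finite inclusion--exclusion over the $2^\ell$ combinations of endpoints. Since the family $(Z_x)_{x \in [N]}$ is exchangeable, formula \eqref{def_correlation_measure_2} gives
\begin{equation*}
q_\Sigma(I_1 \times \cdots \times I_\ell) = N(N-1)\cdots(N-\ell+1) \, \P\pb{Z_1 \in I_1, \dots, Z_\ell \in I_\ell}\,,
\end{equation*}
and the telescoping identity $\ind{Z_i \in [a_i,b_i)} = \ind{Z_i \geq a_i} - \ind{Z_i \geq b_i}$ yields
\begin{equation*}
\P\pb{Z_1 \in I_1, \dots, Z_\ell \in I_\ell} = \sum_{S \subseteq [\ell]} (-1)^{|S|} \, \P\pBB{\bigcap_{i \notin S}\{Z_i \geq a_i\} \cap \bigcap_{i \in S}\{Z_i \geq b_i\}}\,.
\end{equation*}

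Next I would apply Lemma~\ref{lem:PZ_fact} to each of these $2^\ell$ joint tail probabilities (its hypothesis on $\ell$ matches the one here, and $-\kappa - \eta \leq a_i < b_i$ ensures all thresholds lie in the admissible range). This gives
\begin{equation*}
N^\ell \P\pb{Z_1 \in I_1, \dots, Z_\ell \in I_\ell} = \sum_{S \subseteq [\ell]} (-1)^{|S|} \prod_{i \notin S} \tilde\rho(E_{a_i}) \prod_{i \in S} \tilde\rho(E_{b_i}) + O\pb{2^\ell N^{-2/7}} = \prod_{i \in [\ell]} \tilde\rho(I_i) + O\pb{2^\ell N^{-2/7}}\,,
\end{equation*}
where the last step factors the sum over $S$ using $\tilde\rho(I_i) = \tilde\rho(E_{a_i}) - \tilde\rho(E_{b_i})$.

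Finally, multiplying by $N(N-1)\cdots(N-\ell+1) = N^\ell \pb{1 + O(\ell^2/N)}$ and absorbing the $(1+O(\ell^2/N))$ factor against $N^\ell$ gives
\begin{equation*}
q_\Sigma(I_1 \times \cdots \times I_\ell) = \qbb{1 - O\pbb{\frac{\ell^2}{N}}} \prod_{i \in [\ell]}\tilde\rho(I_i) + O\pb{2^\ell N^{-2/7}}\,.
\end{equation*}
The only quantitative point to check is that the additive error $2^\ell N^{-2/7}$ is dominated by $N^{-1/4}$. Under the assumption $\ell \leq c \log N / d^{2\gamma}$ with $c$ small and $d^{2\gamma} \gg \log \log N$, we have $2^\ell \leq N^{c \log 2}$, so choosing $c$ small enough (e.g.\ $c \log 2 < 1/28$) makes $2^\ell N^{-2/7} \leq N^{-1/4}$, as required. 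There is no serious obstacle here; the mild subtlety is choosing the constant $c$ uniformly to absorb the $2^\ell$ blowup from the inclusion--exclusion, which is why the hypothesis of Lemma~\ref{lem:correlation_factorized} already contains a small constant $c$.
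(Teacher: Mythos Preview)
Your proposal is correct and follows exactly the same route as the paper: the same use of \eqref{def_correlation_measure_2}, the same inclusion--exclusion over subsets of $[\ell]$ into tail events, the same application of Lemma~\ref{lem:PZ_fact}, and the same factoring into $\prod_i \tilde\rho(I_i)$. You even make explicit the bound $2^\ell N^{-2/7} \leq N^{-1/4}$, which the paper leaves implicit; note only that Lemma~\ref{lem:PZ_fact} imposes no lower bound on its thresholds, so your remark about the ``admissible range'' is unnecessary (though harmless).
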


\begin{proof}
By \eqref{def_correlation_measure_2} we have $q_{\Sigma}(I_1 \times \cdots \times I_\ell) = N (N - 1) \cdots (N - \ell+1) \P(Z_1 \in I_1, \dots, Z_\ell \in I_\ell)$, 
and the right-hand side can be calculated using
\begin{align*}
N^\ell \P(Z_1 \in I_1, \dots, Z_\ell \in I_\ell)
&= \sum_{U \subset [\ell]} (-1)^{\abs{U}} \, N^\ell\, \P \pBB{\bigcap_{i \in U} \{Z_i \geq b_i\} \cap \bigcap_{i \in [\ell] \setminus U} \{Z_i \geq a_i\}}
\\
&= \sum_{U \subset [\ell]} (-1)^{\abs{U}} \prod_{i \in U} \tilde \rho(E_{b_i}) \prod_{i \in [\ell] \setminus U} \tilde \rho(E_{a_i}) + O \p{2^\ell N^{-2/7}}
\\
&= \prod_{i \in [\ell]} \tilde \rho([a_i, b_i)) + O \p{N^{-1/4}}\,,
\end{align*}
where in the second step we used Lemma \ref{lem:PZ_fact}.
\end{proof}

Using the factorization of the correlation functions from Lemma~\ref{lem:correlation_factorized} and the inclusion-exclusion  formula from Lemma~\ref{lem:incl_excl}, we deduce the joint Poisson distribution for the finite-dimensional distributions of $\Sigma$. 

\begin{lemma} \label{lem:cor_fct1}
Suppose that $n \in \N^*$ satisfies $n \ll \frac{\log N}{\cal Kd^{2 \gamma}}$. Let $I_1, \dots, I_n$ be disjoint intervals of the form $I_i = [a_i, b_i)$ with $-\kappa - \eta \leq a_i < b_i$.
Then for all $k_1, \dots, k_n \in \N$ we have
\begin{equation*}
\P \pBB{\bigcap_{i \in [n]} \{\Sigma(I_i) = k_i\}} = \prod_i \frac{\tilde \rho(I_i)^{k_i} \ee^{-\tilde \rho(I_i)}}{k_i !} + \cal E(k_1, \dots, k_n)\,,
\end{equation*}
where the error term satisfies, for some small enough $c > 0$, 
\begin{equation} \label{error_sum}
\sum_{k_1, \dots, k_n \in \N} \abs{\cal E(k_1, \dots, k_n)} \lesssim \ee^{-c \frac{\log N}{d^{2\gamma}}}\,.
\end{equation}
\end{lemma}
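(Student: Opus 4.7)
The plan is to combine the truncated inclusion--exclusion formula from Lemma~\ref{lem:incl_excl} with the factorization of correlation measures from Lemma~\ref{lem:correlation_factorized}, observing that the resulting truncated multinomial series reassembles into the Poisson density $\prod_i \frac{\tilde{\rho}(I_i)^{k_i} e^{-\tilde{\rho}(I_i)}}{k_i!}$. I would fix a threshold $M \deq \lfloor c_0 \log N / d^{2\gamma}\rfloor$ with $c_0$ small enough that Lemma~\ref{lem:correlation_factorized} is applicable for any $\ell \leq M$. Writing $k \deq k_1 + \cdots + k_n$, the hypothesis $n \ll \log N/(\mathcal{K} d^{2\gamma})$ together with \eqref{tilde_rho_estimate} gives $\sum_{i \in [n]} \tilde{\rho}(I_i) \leq 3 n \mathcal{K} \ll M$, which I will use repeatedly. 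The argument then naturally splits into a small-$k$ regime and a large-$k$ regime.

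For $k \leq M/2$, I would apply Lemma~\ref{lem:incl_excl} with truncation $m = M - k$, so that the total number of intervals $\sum_i (k_i + \ell_i) \leq M$ in every correlation measure arising in the expansion, ensuring that Lemma~\ref{lem:correlation_factorized} applies. After factorization, the main sum takes the form
\begin{equation*}
\prod_{i \in [n]} \frac{\tilde{\rho}(I_i)^{k_i}}{k_i!} \sum_{\substack{\ell_1, \dots, \ell_n \geq 0 \\ \sum_i \ell_i \leq m}} \prod_{i \in [n]} \frac{(-\tilde{\rho}(I_i))^{\ell_i}}{\ell_i!},
\end{equation*}
and extending the inner sum to all of $\N^n$ produces the Poisson density with remainder bounded by the multinomial tail $\sum_{\ell > m} (\sum_i \tilde{\rho}(I_i))^\ell / \ell!$. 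Since $\sum_i \tilde{\rho}(I_i) \ll m$, a standard Poisson tail estimate bounds this by $e^{-c m}$, and the inclusion--exclusion truncation remainder $R_m$ of Lemma~\ref{lem:incl_excl} obeys the same bound once factorization is applied. The factorization errors $O(M^2/N)$ and $O(N^{-1/4})$ in Lemma~\ref{lem:correlation_factorized}, multiplied by the number of multi-indices $\sum_{\sum\ell_i \leq m} 1 \leq 2^M$, remain of order $e^{-c \log N / d^{2\gamma}}$ by the hypotheses on $d$ and $\mathcal{K}$.

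For $k > M/2$, I would bound both sides separately. The true probability is at most $\P(\Sigma(I_1 \cup \cdots \cup I_n) \geq M/2)$, which I would control via Markov's inequality applied to the factorial moment of order $\lfloor M/4 \rfloor$; this factorial moment is precisely a correlation measure and Lemma~\ref{lem:correlation_factorized} bounds it by $(\sum_i \tilde{\rho}(I_i))^{\lfloor M/4 \rfloor} + O(N^{-1/4})$, which is $e^{-c M}$ by the same Poisson tail argument. The Poisson approximation in the same regime is trivially controlled by $\sum_{k > M/2} (\sum_i \tilde{\rho}(I_i))^k / k!$, again of order $e^{-c M}$. The main obstacle is the uniform bookkeeping underlying \eqref{error_sum}: each per-tuple error must carry its natural weight $\prod_i \tilde{\rho}(I_i)^{k_i}/k_i!$ so that summation over $(k_1, \dots, k_n) \in \N^n$ produces a finite factor (bounded by $\prod_i e^{\tilde{\rho}(I_i)}$) while preserving the exponential decay in $\log N / d^{2\gamma}$. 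The multinomial structure of the inclusion--exclusion expansion automatically supplies this weight in the small-$k$ regime, whereas in the large-$k$ regime the weight must be recovered from the factorial-moment bound; organizing these two mechanisms into a single coherent estimate is the delicate part of the argument.
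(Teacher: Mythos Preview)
Your proposal is correct and follows essentially the same architecture as the paper: apply the truncated inclusion--exclusion formula of Lemma~\ref{lem:incl_excl}, plug in the factorization of Lemma~\ref{lem:correlation_factorized}, and recognize the resulting truncated multinomial sum as the Poisson density up to a tail of size $e^{-cM}$.

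The one substantive difference is in the large-$k$ regime. You bound $\P(\Sigma(\bigcup_i I_i)\ge M/2)$ directly via Markov on the factorial moment of order $\lfloor M/4\rfloor$, and bound the Poisson side separately by the multinomial tail. The paper instead uses the following complement trick: writing $X_\#=\sum_{k}\ind{\sum_i k_i \,\#\, m}\P(\bigcap_i\{\Sigma(I_i)=k_i\})$ and $Y_\#$ for the analogous Poisson sum, one has $X_>=1-X_\le=1-Y_\le+O(e^{-cM})=Y_>+O(e^{-cM})$, and $Y_>$ is bounded by the same elementary Poisson tail used in the small-$k$ regime. This avoids your factorial-moment computation entirely and dispenses with the ``delicate part'' you flag, since the weight bookkeeping for the large-$k$ tuples is inherited automatically from the small-$k$ analysis. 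Your approach is perfectly valid but requires checking that $n^{\lfloor M/4\rfloor}N^{-1/4}$ is small (it is, since $n\ll M$ and $M\log\log N\ll\log N$ under the standing assumption $d^{2\gamma}\gg\log\log N$); the paper's approach sidesteps this. A minor secondary difference: the paper uses a fixed truncation level $m$ independent of $k$ rather than your $k$-dependent $m=M-k$, which slightly streamlines the error organization.
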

\begin{proof}
Choose $m \deq c \frac{\log N}{d^{2 \gamma}}$ for some constant $c > 0$, which will be chosen small enough in the following. Suppose first that $\sum_i k_i \leq m$.
Then we get from Lemma \ref{lem:incl_excl} that $\P \pb{\bigcap_{i \in [n]} \hb{\Sigma(I_i) = k_i}} = \cal A_0 + \cal E_0$, where
\begin{align*}
\cal A_0
&\deq \frac{1}{k_1 ! \cdots k_n !} \sum_{\ell_1, \dots, \ell_n \in \N}  \ind{\sum_i \ell_i \leq m}  \frac{(-1)^{\sum_i \ell_i}}{\ell_1 ! \cdots \ell_n !} \, q_{\Sigma}(I_1^{k_1 + \ell_1} \times \cdots \times I_n^{k_n + \ell_n})\,,
\\
\abs{\cal E_0} &\lesssim \frac{1}{k_1 ! \cdots k_n !} \sum_{\ell_1, \dots, \ell_n \in \N}  \ind{\sum_i \ell_i = m+1}  \frac{1}{\ell_1 ! \cdots \ell_n !} \, q_{\Sigma}(I_1^{k_1 + \ell_1} \times \cdots \times I_n^{k_n + \ell_n})\,.
\end{align*}
Using Lemma \ref{lem:correlation_factorized} with $\ell = k_1+ \ell_1 + \ldots + k_n + \ell_n \leq 2 m$ 
(after choosing $c$ in the definition of $m$ small enough) and \eqref{tilde_rho_estimate}, we find $\cal A_0 = \cal A_1 + \cal E_1$, where
\begin{align*}
\cal A_1 &= \biggl( \prod_i \frac{\tilde \rho(I_i)^{k_i}}{k_i !} \biggr) 
\sum_{\ell_1, \dots, \ell_n \in \N}  \ind{\sum_i \ell_i \leq m} \, \prod_{i} \frac{(-\tilde \rho(I_i))^{\ell_i}}{\ell_i!}\,,
\\
\abs{\cal E_1} &\lesssim \frac{(\log N)^2}{N} \biggl( \prod_i \frac{(3 \cal K)^{k_i}}{k_i !} \biggr)
\sum_{\ell_1, \dots, \ell_n \in \N}  \prod_{i} \frac{(3 \cal K)^{\ell_i}}{\ell_i!} + \frac{1}{N^{1/4}} \prod_i \frac{\ee}{k_i!}\,.
\end{align*}
By \eqref{calK_condition}, we find that $\ind{\sum_i k_i \leq m} \abs{\cal E_1}$ satisfies \eqref{error_sum}.
Moreover, we find $\cal A_1 = \cal A_2 + \cal E_2$, where
\begin{align*}
\cal A_2 &\deq \prod_i \frac{\tilde \rho(I_i)^{k_i} \ee^{-\tilde \rho(I_i)}}{k_i !}\,,
\\
\cal E_2 &\deq  -  \biggl( \prod_i \frac{\tilde \rho(I_i)^{k_i}}{k_i !} \biggr)
\sum_{\ell_1, \dots, \ell_n \in \N}  \ind{\sum_i \ell_i \geq m + 1} \, \prod_{i} \frac{(-\tilde \rho(I_i))^{\ell_i}}{\ell_i!} = 
O \pBB{ \biggl( \prod_i \frac{\tilde \rho(I_i)^{k_i}}{k_i !} \biggr) \sum_{\ell \geq m+1} \frac{(3n\cal K)^\ell}{\ell!}}
\end{align*}
by the multinomial theorem and \eqref{tilde_rho_estimate}. 
By \eqref{calK_condition} we have $m \geq 3 \ee^2 n \cal K$, and hence $\sum_{\ell \geq m+1} \frac{(3n\cal K)^\ell}{\ell!} \leq \ee^{-m}$ by Stirling's approximation, and thus it is easy to see that $\ind{\sum_i k_i \leq m} \abs{\cal E_2}$ satisfies \eqref{error_sum}.
The error term $\cal E_0$ is estimated in exactly the same way. This concludes the proof in the case $\sum_i k_i \leq m$. To remove this restriction, for $\# = \leq, >$, we write
\begin{equation*}
X_\# \deq \sum_{k_1, \dots, k_n \in \N} \ind{\sum_i k_i \# m} \P \pBB{\bigcap_{i \in [n]} \{\Sigma(I_i) = k_i\}}\,, \qquad Y_\# \deq  \sum_{k_1, \dots, k_n \in \N} \ind{\sum_i k_i \# m} \prod_i \frac{\tilde \rho(I_i)^{k_i} \ee^{-\tilde \rho(I_i)}}{k_i !}
\end{equation*}
and estimate
\begin{equation*}
 X_> = 1 - X_\leq = 1 - Y_\leq + O(\ee^{-c \frac{\log N}{d^{2\gamma}}}) =  Y_> + O(\ee^{-c \frac{\log N}{d^{2\gamma}}}) = O(\ee^{-c \frac{\log N}{d^{2\gamma}}})\,,
\end{equation*}
where the second step follows from the previous argument, and the last step is an
elementary exercise using the assumption \eqref{calK_condition}, the definition \eqref{def_kappa}, and the assumption on $I_i$.
This concludes the proof.
\end{proof}

From now on we fix $n$, in which case under the condition \eqref{calK_condition} the assumptions of Lemma \ref{lem:cor_fct1} are satisfied. Let $\tilde \Psi$ be the Poisson process with intensity $\tilde \rho$ from \eqref{def_rho_tilde}.
For any $t_1 \geq t_2 \geq \cdots \geq t_n \geq - \kappa - \eta$ and $r_1 \leq r_2 \leq \cdots \leq r_n$ with $r_i \in \N$, we set $t_0 = +\infty$ and $r_0 = 0$, and use Lemma \ref{lem:cor_fct1} to write
\begin{multline*}
\P \pBB{\bigcap_{i \in [n]} \{\Sigma(E_{t_i}) = r_i\}} = \P \pBB{\bigcap_{i \in [n]} \hb{\Sigma([t_i, t_{i-1})) = r_i - r_{i-1}}}
\\
=
\P \pBB{\bigcap_{i \in [n]} \hb{\tilde \Psi([t_i, t_{i-1})) = r_i - r_{i-1}}} + \cal E(r_1, \dots, r_n)
= \P \pBB{\bigcap_{i \in [n]} \{\tilde \Psi(E_{t_i}) = r_i\}} + \cal E(r_1, \dots, r_n)\,,
\end{multline*}
where $\sum_{r_1, \dots, r_n \in \N} \abs{\cal E(r_1, \dots, r_n)} \ll 1$. Recalling \eqref{sum_r_k}, we deduce, for any $t_1, \dots, t_n \geq -\kappa - \eta$
and $k_1, \dots, k_n \in \N$, that
\begin{equation} \label{Sigma_Psi_comparison}
\P \pBB{\bigcap_{i \in [n]} \{\Sigma(E_{t_i}) \leq k_i\}} = \P \pBB{\bigcap_{i \in [n]} \{\tilde \Psi(E_{t_i}) \leq k_i\}} + o(1)\,.
\end{equation}

Next, we use the comparison results from \eqref{comparison_Phi_Sigma} and Lemma \ref{lem:rho_tilde_rho}, as well as the Lipschitz continuity of $\rho$ from Lemma \ref{lem:rho_Lipschitz} to deduce the following result.

\begin{lemma} \label{lem:cont_Psi}
Fix $n \in \N^*$. Then
\begin{equation*}
\P \pBB{\bigcap_{i \in [n]} \{\tilde \Psi(E_{t_i}) \leq k_i\}} = \P \pBB{\bigcap_{i \in [n]} \{\Psi(E_{s_i}) \leq k_i\}} + o(1)
\end{equation*}
uniformly for $k_1, \dots, k_n \in \N$, $s_1, \dots, s_n \geq -\kappa$, and $t_1, \dots, t_n$ satisfying $\abs{t_i - s_i} \leq \eta$.
\end{lemma}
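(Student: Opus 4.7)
The plan is to reduce the claim to comparing finitely many interval intensities of $\rho$ and $\tilde{\rho}$, and then to conclude by continuity of the Poisson distribution in its parameter. First, I would enumerate the distinct points of $\{s_i,t_i\}_{i \in [n]}$ as $u_1 < u_2 < \cdots < u_m$, set $u_{m+1} \deq +\infty$, and observe that for any Poisson process $\Phi$ on $\R$ with intensity $\mu$ the counts $(\Phi([u_j,u_{j+1})))_{j=1}^m$ are independent Poisson variables with parameters $\mu([u_j,u_{j+1}))$. Both sides of the claim therefore become explicit continuous functions of the finite-dimensional vectors of interval intensities, and the desired convergence follows once these vectors are shown to be asymptotically close.

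The bulk of the work is then to establish
\[
\max_{1 \leq i \leq n} \big|\tilde{\rho}(E_{t_i}) - \rho(E_{s_i})\big| \; \longrightarrow \; 0
\]
uniformly in $s_i \geq -\kappa$ and $|t_i-s_i|\leq \eta$. For this I plan to combine two estimates. First, since $\eta \ll \kappa$ (which I would derive from Lemma~\ref{lem:kappa_condition} together with \eqref{def_eta} and the standing assumptions \eqref{d_assumptions_2}), each $t_i$ lies in $[-2\kappa,\infty)$, so Lemma~\ref{lem:rho_tilde_rho} yields $\tilde{\rho}(E_{t_i}) = \rho(E_{t_i})(1+O((d\sqrt{\fra u})^{4\gamma-1})) + O(\ee^{-cd^{2\gamma}})$. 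Second, Corollary~\ref{cor:Lipschitz} applied to $|t_i-s_i|\leq \eta$ gives $\rho(E_{t_i}) = \rho(E_{s_i})\exp(O(\eta\sqrt{\log\fra u}))$. Combining these with the monotone bound $\rho(E_{s_i}) \leq \rho(E_{-\kappa}) \leq \cal K$ inherent in the definition \eqref{def_kappa} of $\kappa$ gives the required uniform smallness, provided $\cal K \eta \sqrt{\log\fra u} = o(1)$ and $\cal K (d\sqrt{\fra u})^{4\gamma-1} = o(1)$.

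The final step will be a routine continuity argument: expand both probabilities as absolutely convergent sums over Poisson configurations $(n_j)$ in the intervals $[u_j,u_{j+1})$ subject to the linear constraints encoded by the events $\{\Phi(E_{t_i}) \leq k_i\}$ or $\{\Phi(E_{s_i}) \leq k_i\}$, truncate the tail $\sum_j n_j > M$ using the uniform bound $\sum_j \mu([u_j,u_{j+1})) \leq \cal K$ for $\mu \in \{\rho,\tilde{\rho}\}$, and use that the Poisson mass function $\lambda \mapsto \lambda^{n} \ee^{-\lambda}/n!$ is locally Lipschitz in $\lambda$ on $[0,\cal K]$. The only delicate step, which I expect to be the main technical obstacle, is verifying the two scalar estimates $\cal K\eta\sqrt{\log\fra u} = o(1)$ and $\cal K(d\sqrt{\fra u})^{4\gamma-1} = o(1)$. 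This is a polynomial power-counting exercise in $d$ and $\fra u$ under \eqref{d_assumptions_2}, \eqref{calK_condition}, and \eqref{def_eta}; the one genuinely subtle factor is the ratio $\fra u^5/(\fra u-2)^5$ appearing in $\eta$, which I would dispatch by splitting into the regimes $\fra u \leq 4$ and $\fra u > 4$ and invoking the lower bound $\fra u - 2 \geq d^{(6\gamma-1)/10+\epsilon}$ from \eqref{d_assumptions_2} together with the upper bound on $\fra u$ from \eqref{eq:scaling_alpha_max}.
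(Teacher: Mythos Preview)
Your approach is correct and essentially identical to the paper's: both reduce the claim to showing $\tilde\rho(E_{t_i}) = \rho(E_{s_i}) + o(1)$ via Lemma~\ref{lem:rho_tilde_rho} and the Lipschitz bound (Lemma~\ref{lem:rho_Lipschitz}/Corollary~\ref{cor:Lipschitz}), then invoke continuity of Poisson distributions in their parameters---which the paper compresses to the phrase ``a simple exercise on Poisson processes'' while you spell it out. One small slip to fix: Lemma~\ref{lem:kappa_condition} gives only an \emph{upper} bound on $\kappa$, so it cannot yield $\eta \ll \kappa$; however you do not actually need this, since Lemma~\ref{lem:rho_tilde_rho} has already been invoked at $s=-\kappa-\eta$ in deriving \eqref{tilde_rho_estimate} (its proof really only uses $-s \leq \tfrac{1}{2}\theta(\fra u)\,dq$, which holds there directly from the upper bound on $\kappa$ and a straightforward check for $\eta$).
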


\begin{proof}
A simple exercise on Poisson processes shows that it suffices to establish $\tilde \rho(E_{t_i}) = \rho(E_{s_i}) + o(1)$.
To that end, we use the assumption \eqref{calK_condition}, the estimate \eqref{tilde_rho_estimate}, and Lemma \ref{lem:rho_tilde_rho} to show that $\tilde \rho(E_{t_i}) = \rho(E_{t_i}) + o(1)$. Moreover, \eqref{calK_condition} implies $\eta \ll \frac{1}{\cal K \sqrt{\log \fra u}}$, so that Lemma \ref{lem:rho_Lipschitz} yields $\rho(E_{t_i}) = \rho(E_{s_i}) + o(1)$. This concludes the proof.
\end{proof}

\begin{proof}[Proof of Theorem \ref{thm:point_process}]
We now have everything needed to prove Theorem \ref{thm:point_process}. We need to verify that the assumptions \eqref{d_assumptions} and the choice \eqref{calK_choice} imply the conditions \eqref{d_assumptions_2} and \eqref{calK_condition}, as well as the assumptions \eqref{eq:ConditionAlphaMaxMin} and \eqref{eq:condition_d_SpecMax} of Corollary \ref{Cor:SpecMax}. It is easiest to consider the cases $d > \sqrt{\log N}$ and $d \leq \sqrt{\log N}$ separately.

Suppose first that $d > \sqrt{\log N}$.
By \eqref{a_2_lower_bound}, we have $\fra u - 2 \gtrsim d^{-\xi}$.
If $\xi$ is small enough, we can choose $\gamma > 0$ small enough and then $\epsilon > 0$ small enough such that $\xi = \frac{1}{3} \gamma - \epsilon$, and \eqref{eq:ConditionAlphaMaxMin}, \eqref{eq:condition_d_SpecMax}, and \eqref{d_assumptions_2} hold. Moreover, by Lemma \ref{lem:u_a} we deduce $\sqrt{\log \fra u} \leq d^\epsilon$, 
and the right-hand side of \eqref{calK_condition} satisfies
\begin{equation*}
d^{\frac{1}{2} - 3 \gamma - \epsilon} \frac{((\fra u - 2)^5 \wedge 1)}{\sqrt{\log \fra u}} \gtrsim d^{\frac{1}{2} - 14 \xi - 11 \epsilon} \geq d^{\frac{1}{2} - 15 \xi}\,,
\end{equation*}
provided that $\epsilon$ is chosen small enough, depending on $\gamma$. This verifies that \eqref{calK_condition} holds  with the choice \eqref{calK_choice}.

For $d \leq \sqrt{\log N}$, we choose $\gamma = \frac{1}{2 \zeta} + \epsilon$ for small enough $\epsilon > 0$ depending on $\zeta$. Using Lemma~\ref{lem:u_a}, we then easily deduce from \eqref{d_assumptions}  that \eqref{eq:ConditionAlphaMaxMin}, \eqref{eq:condition_d_SpecMax}, and \eqref{d_assumptions_2} hold. Moreover, by Lemma \ref{lem:u_a} we deduce $1 \ll \fra u \leq \log N$, and the right-hand side of \eqref{calK_condition} satisfies
\begin{equation*}
d^{\frac{1}{2} - 3 \gamma - \epsilon} \frac{((\fra u - 2)^5 \wedge 1)}{\sqrt{\log \fra u}} \geq (\log \log N)^{-\frac{1}{2}} d^{\frac{1}{2} - \frac{3}{2 \zeta} - 4 \epsilon} \geq d^{\frac{1}{2} - \frac{2}{\zeta} - 4 \epsilon} \geq d^{\frac{1}{2} - \frac{2}{\zeta} - 10 \xi}\,,
\end{equation*}
provided that $\epsilon$ is chosen small enough. This verifies that \eqref{calK_condition} holds  with the choice \eqref{calK_choice}.

The proof now follows by putting \eqref{comparison_Phi_Sigma}, \eqref{Sigma_Psi_comparison}, and Lemma \ref{lem:cont_Psi} together.
\end{proof}

\section{Eigenvector localization -- proof of Theorem~\ref{thm:localisation}} \label{sec:proof_thm_localization} 

An important ingredient in the proof of Theorem \ref{thm:localisation} is the following estimate on the level spacing of the process $\Sigma$, defined in \eqref{def_Z_x}. Moreover, we recall the definition of $\eta$ from \eqref{def_eta}.

\begin{lemma}[Level spacing for $\Sigma$] \label{lem:level_spacing}
Suppose \eqref{d_assumptions_2} and \eqref{calK_condition}, and let $\kappa$ be given by \eqref{def_kappa}.
Under the assumptions of Corollary \ref{Cor:SpecMax}, for any $a \in \R$ we have
\begin{align} \label{level_repulsion}
\P(\exists x \neq y \col Z_x, Z_y \geq -\kappa, \abs{Z_x - Z_y} \leq \eta) &\lesssim \sqrt{\log \fra u} \, \cal K^2 \, \eta + o(1)
\\
\label{stray_repulsion}
\P(\exists x \col Z_x \geq -\kappa, \abs{Z_x - a} \leq \eta) &\lesssim \sqrt{\log \fra u} \, \cal K \, \eta + o(1)\,.
\end{align}
\end{lemma}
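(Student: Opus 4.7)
The plan is to prove both estimates by the first-moment method, reducing each level-crowding probability to a factorial moment of $\Sigma$ and evaluating this moment via the correlation-measure factorization from Lemma~\ref{lem:correlation_factorized} combined with the Lipschitz continuity of the intensity $\rho$ from Lemma~\ref{lem:rho_Lipschitz}. The factor $\sqrt{\log \fra u}$ in both bounds is precisely the Lipschitz constant of $\rho$ on the window $[-\kappa,\infty)$.

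For \eqref{stray_repulsion} I would use Markov's inequality together with the exchangeability of $(Z_x)_{x\in[N]}$ to write $\P(\exists x\col Z_x\geq -\kappa,\,\abs{Z_x - a}\leq\eta) \leq N\,\P(Z_1 \in I)$, where $I \deq [-\kappa,\infty) \cap [a-\eta,a+\eta]$ (if $I=\emptyset$ there is nothing to prove). Applying Lemma~\ref{lem:PZ_fact} with $\ell=1$ to the two endpoints of $I$ and subtracting gives $N\,\P(Z_1\in I) = \tilde\rho(I) + O(N^{-2/7})$, and Lemma~\ref{lem:rho_tilde_rho} identifies $\tilde\rho(I)$ with $\rho(I)$ up to a negligible additive and multiplicative error. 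Since $\inf I \geq -\kappa$, combining Lemma~\ref{lem:rho_Lipschitz} with the monotonicity of $\rho(E_\cdot)$ yields $\rho(I) \leq 2\eta\cdot C\sqrt{\log\fra u}\,\rho(E_{-\kappa}) \leq C\eta\sqrt{\log\fra u}\,\cal K$ by the definition \eqref{def_kappa} of $\kappa$.

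For \eqref{level_repulsion} I would bound the probability by the second factorial moment
$q_{\Sigma,2}(F) = \sum_{x\neq y}\P(Z_x,Z_y\geq -\kappa,\,\abs{Z_x-Z_y}\leq\eta)$, where $F \deq \{(s,t)\in\R^2\col s,t\geq -\kappa,\,\abs{s-t}\leq\eta\}$. I would first truncate at a polylogarithmic threshold $J\asymp d\sqrt{\fra u}$ such that the spectral bound $\norm{H}\lesssim\sqrt{\fra u}$ together with Corollary~\ref{cor:concentration_beta_x} force $\Sigma\vert_{(J,\infty)}=0$ with high probability; this reduces matters to $q_{\Sigma,2}(F\cap[-\kappa,J]^2)$ modulo an $o(1)$ probability error. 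On this compact rectangle I would partition $[-\kappa,J]$ into $M\asymp J/\eta$ intervals $I_j \deq [-\kappa+j\eta,-\kappa+(j+1)\eta)$ and set $\tilde I_j \deq I_{j-1}\cup I_j\cup I_{j+1}$, so that $F\cap[-\kappa,J]^2 \subset \bigcup_j I_j\times\tilde I_j$. Applying Lemma~\ref{lem:correlation_factorized} with $\ell=2$ to each box gives
$q_{\Sigma,2}(I_j\times\tilde I_j) = \tilde\rho(I_j)\tilde\rho(\tilde I_j)(1+o(1)) + O(N^{-1/4})$,
and summing over $j$ together with Lemmas~\ref{lem:rho_tilde_rho} and~\ref{lem:rho_Lipschitz} (which yield $\max_j\tilde\rho(\tilde I_j)\lesssim \eta\sqrt{\log\fra u}\,\cal K$ and $\sum_j\tilde\rho(I_j)\leq\tilde\rho(E_{-\kappa})\lesssim\cal K$) produces the bound $\sum_j\tilde\rho(I_j)\tilde\rho(\tilde I_j) \lesssim \eta\sqrt{\log\fra u}\,\cal K^2$, which is \eqref{level_repulsion}.

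The main obstacle will be controlling the accumulated additive error $O(M\,N^{-1/4})$ from the $M$ invocations of Lemma~\ref{lem:correlation_factorized}: one must check that it is $o(1)$, i.e.\ that $\eta\gg J\,N^{-1/4}$. Under the standing assumptions~\eqref{d_assumptions_2}--\eqref{calK_condition} and the definition~\eqref{def_eta} of $\eta$, one has $\eta\gtrsim d^{-1/2+3\gamma}$ and $d\leq 3\log N$, so $M=O(J/\eta)=O(\operatorname{poly}(\log N))$ and the error is comfortably $o(1)$. A related subtlety is that the tail $\Sigma\vert_{(J,\infty)}$ must be controlled \emph{deterministically}, via the spectral bound on $\norm{H}$ and the concentration of $\beta_x$ on $\cal W$, rather than through Lemma~\ref{lem:PZ_fact}, whose additive $O(N^{-2/7})$ error precludes any useful tail bound once multiplied by $N^2$.
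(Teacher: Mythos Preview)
Your approach is essentially the paper's: reduce to the one- and two-point correlation measures of $\Sigma$, truncate at a polylogarithmic height $J$, cover the diagonal strip by $O(J/\eta)$ boxes, apply Lemma~\ref{lem:correlation_factorized} on each, and finish with Lemmas~\ref{lem:rho_tilde_rho} and~\ref{lem:rho_Lipschitz}. The paper covers with squares $[u-\eta,u+\eta]^2$, $u\in\eta\Z$, rather than your $I_j\times\tilde I_j$, but this is cosmetic.

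One point to clean up: the truncation $Z_x\leq J$ should not be argued via the spectral bound $\norm{H}\lesssim\sqrt{\fra u}$, since $Z_x$ is a function of $(\alpha_x,\beta_x)$ and not of eigenvalues of $H$. The paper's argument is more direct: by definition, $Z_x=-\infty$ unless $\abs{\alpha_x-\fra u}\leq q\leq d^{2\gamma-1}$, so the first term in \eqref{def_Z_x} is automatically $O(\sqrt{\fra u}\,d^{2\gamma})$; the second term is handled by the $\beta_x$-concentration from \eqref{alpha_beta_bounds} (your Corollary~\ref{cor:concentration_beta_x} with $\delta=d^{2\gamma-1}$), giving $d\sqrt{\alpha_x}\abs{\beta_x-1}\lesssim d^{\gamma}$. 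This yields the slightly sharper threshold $J=\sqrt{\fra u}\,d^{2\gamma}$ used in the paper, though your $J\asymp d\sqrt{\fra u}$ would also keep $M=O(\mathrm{poly}(\log N))$ and hence $M\,N^{-1/4}=o(1)$.
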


\begin{proof}
We only prove \eqref{level_repulsion}; the proof of \eqref{stray_repulsion} is similar, in fact easier.
By Lemma \ref{lem:convergence_max_alpha_x_upper_bound_D_x}, the second estimate of \eqref{alpha_beta_bounds}, and the estimate $\theta(\fra u) \asymp \sqrt{\fra u}$, we find that, with high probability, $Z_x \leq \sqrt{\fra u} \, d^{2 \gamma}$ for all $x$. Thus,
\begin{align*}
\P(\exists x \neq y \col Z_x, Z_y \geq -\kappa, \abs{Z_x - Z_y} \leq \eta)
&\leq
\P(\exists x \neq y \col Z_x, Z_y \in [-\kappa, \sqrt{\fra u} \, d^{2 \gamma}], \abs{Z_x - Z_y} \leq \eta) + o(1)
\\
&\leq \sum_{x \neq y} \P(Z_x, Z_y \in [-\kappa, \sqrt{\fra u} \, d^{2 \gamma}], \abs{Z_x - Z_y} \leq \eta) + o(1)
\\
&= q_{\Sigma, 2} \pb{\hb{(s,t) \col s,t \in [-\kappa, \sqrt{\fra u} \, d^{2 \gamma}], \abs{s - t} \leq \eta}} + o(1)\,,
\end{align*}
where we used the two-point correlation measure $q_{\Sigma, 2}$ of $\Sigma$ defined in \eqref{def_correlation_measure} below. By covering the set in the argument of $q_{\Sigma,2}$ with squares of the form $[u - \eta, u + \eta]^2$, $u \in \eta \Z$, we find
\begin{align*}
&\mspace{-40mu}\P(\exists x \neq y \col Z_x, Z_y \geq -\kappa, \abs{Z_x - Z_y} \leq \eta)
\\
&\leq \sum_{u \in \eta \Z} \ind{u \in [-\kappa, \sqrt{\fra u} \, d^{2 \gamma}]} q_{\Sigma,2}([u - \eta, u + \eta]^2) + o(1)
\\
&\leq 2 \sum_{u \in \eta \Z} \ind{u \in [-\kappa, \sqrt{\fra u} \, d^{2 \gamma}]} \pbb{\rho([u - \eta, u + \eta])^2 + O(\ee^{-c d^{2 \gamma}} + N^{-1/4})} + o(1)\,,
\end{align*}
where in the second step we used Lemmas \ref{lem:correlation_factorized} and \ref{lem:rho_tilde_rho}. By Lemma \ref{lem:kappa_condition}, the sum over $u$ has $O(\sqrt{\fra u} \, d^{2 \gamma} / \eta)$ terms. Moreover, by Lemma \ref{lem:rho_Lipschitz}, Corollary \ref{cor:Lipschitz}, and the definition \eqref{def_eta} of $\eta$, we find that $\rho([-\kappa - \eta, \infty)) \leq 2 \cal K$,  and the density of $\rho$ in the interval $[-\kappa -\eta, \infty)$ is bounded by $C \sqrt{\log \fra u} \, \cal K$. Using the assumption $d^{2 \gamma} \gg \log \log N$ and the estimate $\fra u \leq \log N$ from Lemma \ref{lem:u_a}, we therefore conclude that
\begin{equation*}
\P(\exists x \neq y \col Z_x, Z_y \geq -\kappa, \abs{Z_x - Z_y} \leq \eta) \lesssim
\sqrt{\log \fra u} \, \cal K \, \eta \, \sum_{u \in \eta \Z} \ind{u \in [-\kappa, \sqrt{\fra u} \, d^{2 \gamma}]} \rho([u - \eta, u + \eta]) + o(1)\,,
\end{equation*}
from which the claim follows.
\end{proof}

From now on we assume that, instead of \eqref{calK_condition}, $\cal K$ satisfies the stronger condition
\begin{equation} \label{calK_condition2}
C \leq \cal K^2 \leq d^{\frac{1}{2} - 3 \gamma - \epsilon} \frac{((\fra u - 2)^5 \wedge 1)}{\sqrt{\log \fra u}}\,,
\end{equation}
obtained by replacing $\cal K$ with $\cal K^2$ in \eqref{calK_condition}. Under \eqref{calK_condition2}, the right-hand sides of \eqref{level_repulsion} and \eqref{stray_repulsion} are $o(1)$.

We conclude that under the condition \eqref{calK_condition2}, with high probability, all points of the process $\Sigma$ in the  region $[-\kappa, \infty)$ are separated by at least $\eta$. By invoking Lemma \ref{lem:Sigma_Phi} and \eqref{Sigma_tilde_Sigma} with a smaller $\epsilon > 0$, we conclude the following result.

\begin{lemma} \label{lem:pert1}
With high probability, each interval of the form
\begin{equation} \label{localization_intervals}
[Z_x - \eta/4, Z_x + \eta/4] \,, \qquad Z_x \geq -\kappa
\end{equation}
contains exactly one point of $\tilde \Sigma$ (see \eqref{def_SIgma_tilde}) and exactly one point of $\Phi - \delta_{\tilde \nu}$ (see \eqref{def_tilde_nu}). Moreover, the complement of the intervals \eqref{localization_intervals} in the region $[-\kappa,\infty)$ contains no point of $\tilde \Sigma$ and no point of $\Phi - \delta_{\tilde \nu}$.
\end{lemma}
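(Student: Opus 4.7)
The plan is to combine the pointwise comparisons between $\Sigma$, $\tilde\Sigma$, and $\Phi - \delta_{\tilde \nu}$ (already extracted in the proof of Lemma~\ref{lem:Sigma_Phi}) with the level spacing estimate of Lemma~\ref{lem:level_spacing}. Under the strengthened condition \eqref{calK_condition2}, the right-hand sides of \eqref{level_repulsion} and \eqref{stray_repulsion} are $o(1)$, which is the quantitative input that makes the strategy work.

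First I would record the pointwise closeness statements implicit in earlier proofs. The Taylor expansion \eqref{eq:expansion_Lambda_d_sigma_u} yields, for each $x \in \cal W$ with $|\alpha_x - \fra u| \leq q$,
\begin{equation*}
\bigl|\tilde Z_x - Z_x\bigr| \lesssim d\tau(\fra u) \, \frac{d^{4\gamma}}{\fra u\, d^2} \ll \eta\,,
\end{equation*}
while Corollary~\ref{Cor:SpecMax} furnishes a bijection $x \mapsto \lambda(x)$ from $\cal W$ onto $\spec(H)\setminus\{\nu\}$ restricted to the window $[\sigma(\fra u) - c\chi, \infty)$, satisfying $|d\tau(\fra u)(\lambda(x) - \sigma(\fra u)) - \tilde Z_x| = d\tau(\fra u)|\epsilon_x| \ll \eta$ by \eqref{eta_lower_bound}. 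By Lemma~\ref{lem:kappa_condition}, the rescaled window contains $[-\kappa, \infty)$. Hence for every $x$ with $Z_x \geq -\kappa$, both $\tilde Z_x$ and the rescaled $\lambda(x)$ lie comfortably inside $[Z_x - \eta/8, Z_x + \eta/8]$.

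Second, I would apply Lemma~\ref{lem:level_spacing} at the slightly enlarged threshold $\kappa' \deq \kappa + 2\eta$. By Corollary~\ref{cor:Lipschitz} together with \eqref{calK_condition2} and \eqref{def_eta}, the corresponding parameter satisfies $\cal K' \lesssim \cal K$, so \eqref{calK_condition2} still holds (with a slightly reduced $\epsilon$) for $\cal K'$. This delivers, with high probability: (a) distinct $Z_x, Z_y \geq -\kappa - 2\eta$ are separated by more than $\eta$; and (b) via \eqref{stray_repulsion} applied at $a = -\kappa$, no such $Z_x$ lies in $[-\kappa - \eta, -\kappa + \eta]$. Combining (a) with the pointwise closeness from the previous paragraph, the candidate intervals $[Z_x - \eta/4, Z_x + \eta/4]$ indexed by $x$ with $Z_x \geq -\kappa$ are pairwise disjoint (in fact with gaps $> \eta/2$), and each captures exactly one element of $\tilde\Sigma$ and exactly one element of $\Phi - \delta_{\tilde \nu}$, namely those supplied by the bijection.

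The step I anticipate as the main obstacle is the boundary issue at $-\kappa$: ruling out the possibility that a point of $\tilde\Sigma$ or of $\Phi - \delta_{\tilde \nu}$ lands inside $[-\kappa, \infty)$ without a matching $Z_x \geq -\kappa$. A stray point of $\tilde\Sigma$ in $[-\kappa, \infty)$ outside all candidate intervals would have the form $\tilde Z_{x'}$ with $Z_{x'} < -\kappa$; the pointwise bound then forces $Z_{x'} \in [-\kappa - 2\eta, -\kappa)$, and (b) further constrains $Z_{x'} \leq -\kappa - \eta$, whence $\tilde Z_{x'} < -\kappa$, a contradiction. The same argument applies to $\Phi - \delta_{\tilde \nu}$ by invoking the bijection from Corollary~\ref{Cor:SpecMax} in place of the $\tilde Z$'s; the single possibly extraneous eigenvalue $\nu$ is removed by subtracting $\delta_{\tilde \nu}$. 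The key delicate point in this last step is the comparison $\kappa \ll d\tau(\fra u)\chi$ from Lemma~\ref{lem:kappa_condition}, which guarantees that the bijection of Corollary~\ref{Cor:SpecMax} is wide enough to account for every eigenvalue whose rescaling falls in $[-\kappa, \infty)$.
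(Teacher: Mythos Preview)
Your proposal is correct and follows essentially the same route as the paper: combine the pointwise closeness established in the proof of Lemma~\ref{lem:Sigma_Phi} (and the estimate leading to \eqref{Sigma_tilde_Sigma}) with the separation of $\Sigma$ from Lemma~\ref{lem:level_spacing}. The paper's own argument is a two-line sketch preceding the lemma: it observes that under \eqref{calK_condition2} the points of $\Sigma$ in $[-\kappa,\infty)$ are $\eta$-separated, and then invokes Lemma~\ref{lem:Sigma_Phi} and \eqref{Sigma_tilde_Sigma} \emph{with a smaller $\epsilon$}, which shrinks the comparison error to $\eta' = d^{(\epsilon'-\epsilon)/2}\eta \ll \eta/4$.

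The one place where you go beyond the paper's sketch is the boundary at $-\kappa$: you explicitly enlarge the threshold to $\kappa' = \kappa + 2\eta$ and repurpose \eqref{stray_repulsion} with $a = -\kappa$ to exclude any $Z_{x'}$ in a neighbourhood of $-\kappa$, thereby ruling out stray points of $\tilde\Sigma$ or $\Phi - \delta_{\tilde\nu}$ that could leak into $[-\kappa,\infty)$ from a vertex with $Z_{x'}$ just below $-\kappa$. The paper leaves this step implicit; your explicit treatment via \eqref{stray_repulsion} is a clean way to close it, and the verification that $\cal K' \lesssim \cal K$ (hence \eqref{calK_condition2} persists) via Corollary~\ref{cor:Lipschitz} is exactly what is needed. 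This is a legitimate elaboration rather than a different method.
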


We also have to ensure that with high probability the (rescaled) stray eigenvalue $\tilde \nu$ is outside of the intervals \eqref{localization_intervals}. To that end, we observe that if $d \leq (\log N)^{1/2 - \gamma}$ then $\tilde \nu \notin [-\kappa - \eta, \infty)$.  This follows from the definitions of $\tau(\fra u)$ and $\sigma(\fra u)$ from \eqref{def_tau_etc}, Lemma \ref{lem:u_a}, and Lemma \ref{lem:kappa_condition}.
On the other hand, if $d \geq (\log N)^{1/2 - \gamma}$ then by Corollary \ref{Cor:SpecMax}, with $a \deq d \tau(\fra u)(d^{1/2} + d^{-1/2} + d^{-3/2} - \sigma(\fra u))$, we have with high probability $\abs{\tilde \nu - a} \lesssim d \tau(\fra u) d^{-5/2} \ll \eta$,
where we also used Lemmas  \ref{lem:u_a} and \ref{lem:kappa_condition}. By Lemma \ref{lem:level_spacing}, we therefore conclude the following result.

\begin{lemma} \label{lem:pert2}
With high probability, $\tilde \nu$ is not contained in any of the intervals \eqref{localization_intervals}.
\end{lemma}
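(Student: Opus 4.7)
The plan is to follow the two-case split announced in the paragraph preceding the lemma. The two regimes to consider are $d \leq (\log N)^{1/2 - \gamma}$, in which $\tilde\nu$ is pushed far to the left of the window $[-\kappa,\infty)$, and $d \geq (\log N)^{1/2 - \gamma}$, in which $\tilde\nu$ lies inside (or near) the window but concentrates around a deterministic value that can be separated from every point $Z_x$ using Lemma~\ref{lem:level_spacing}.

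First I would treat the very sparse regime $d \leq (\log N)^{1/2 - \gamma}$. The strategy is to show that, with high probability, $\tilde\nu < -\kappa - \eta/4$, so that $\tilde\nu$ cannot lie in any interval of the form $[Z_x - \eta/4, Z_x + \eta/4]$ with $Z_x \geq -\kappa$. Corollary~\ref{Cor:SpecMax} gives $\nu = d^{1/2} + O(d^{-1/2} + d^{1/2}\ind{d < (\log N)^{1/4}})$ with high probability. Combining this with the asymptotics $\sigma(\fra u) \asymp \sqrt{\fra u}$ and $\tau(\fra u) \asymp \fra u$ for $\fra u \gg 1$ coming from \eqref{def_tau_etc}, and with $\fra u \asymp \log N/(d \log \log N)$ from Lemma~\ref{lem:u_a} (which in particular forces $\sqrt{\fra u} \gg d^{1/2}$ throughout this regime), one gets $\tilde\nu \asymp - d\, \tau(\fra u)\, \sqrt{\fra u}$ with high probability. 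Since $\kappa + \eta \ll d\,\tau(\fra u)\,\chi$ by Lemma~\ref{lem:kappa_condition} and the definition \eqref{def_eta} of $\eta$, the claim reduces to $\sqrt{\fra u} \gg \chi$, which is immediate from $\chi = (\fra u - 2)\, d^{2\gamma - 1}/(\fra u^{3/2}\log\fra u)$ and $\gamma < 1/6$.

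Next I would handle the regime $d \geq (\log N)^{1/2 - \gamma}$. Here $\tilde\nu$ concentrates tightly around the deterministic point $a \deq d\,\tau(\fra u)(d^{1/2} + d^{-1/2} + d^{-3/2} - \sigma(\fra u))$. Indeed, Corollary~\ref{Cor:SpecMax} yields $|\tilde\nu - a| \lesssim \tau(\fra u)/d^{3/2}$ with high probability (the indicator term in the stray eigenvalue estimate vanishes once $d \geq (\log N)^{1/4}$). Using \eqref{def_tau_etc}, Lemma~\ref{lem:u_a}, and the definition \eqref{def_eta} of $\eta$, a direct arithmetic check gives $\tau(\fra u)/d^{3/2} \ll \eta$, so with high probability $\tilde\nu \in [a - \eta/4, a + \eta/4]$. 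I would then apply \eqref{stray_repulsion} from Lemma~\ref{lem:level_spacing} to this deterministic $a$ (with radius $\eta$) to conclude that, on a high-probability event, no $Z_x \geq -\kappa$ satisfies $|Z_x - a| \leq \eta$; hence none satisfies $|Z_x - \tilde\nu| \leq 3\eta/4$, and $\tilde\nu$ misses every interval \eqref{localization_intervals}. Combining the two cases finishes the proof.

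The main (minor) obstacle is bookkeeping: one has to verify that, under the strengthened hypothesis \eqref{calK_condition2}, the right-hand side $\sqrt{\log \fra u}\,\cal K\,\eta + o(1)$ of \eqref{stray_repulsion} is genuinely $o(1)$, so that the probabilistic separation in the second case holds with high probability. This is the same check already performed just after Lemma~\ref{lem:level_spacing} in passing from \eqref{calK_condition} to \eqref{calK_condition2}, and amounts to plugging \eqref{def_eta} and \eqref{calK_condition2} into the bound and using Lemma~\ref{lem:u_a}. The auxiliary inequality $\tau(\fra u)/d^{3/2} \ll \eta$ in case~2 also requires attention, but follows from $\tau(\fra u) \lesssim \fra u^2/(\fra u - 2)$, the lower bound $d \geq (\log N)^{1/2-\gamma}$, and the scaling of $\fra u$ in Lemma~\ref{lem:u_a}.
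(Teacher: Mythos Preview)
Your proposal is correct and follows essentially the same approach as the paper: the same two-case split at $d = (\log N)^{1/2-\gamma}$, the same use of Corollary~\ref{Cor:SpecMax} together with the scaling of $\sigma(\fra u),\tau(\fra u)$ and Lemma~\ref{lem:u_a} to push $\tilde\nu$ out of the window in the sparse regime, and the same concentration of $\tilde\nu$ around the deterministic $a$ combined with \eqref{stray_repulsion} in the denser regime. The paper's proof is in fact exactly the paragraph preceding Lemma~\ref{lem:pert2}, and your write-up is a fleshed-out version of that paragraph.
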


\begin{proof}[Proof of Theorem~\ref{thm:localisation}] 
We may now conclude the proof of Theorem \ref{thm:localisation} using Proposition \ref{prop:blockMatrix} and the perturbation result from Lemma \ref{lem:perturbationEV}. We work on the intersection of the high-probability events from Proposition \ref{prop:blockMatrix} and Lemmas \ref{lem:level_spacing} and \ref{lem:pert1}. Let $\lambda \neq \nu$ be an eigenvalue of $H$ satisfying $\tilde \lambda \deq d \tau(\fra u)(\lambda - \sigma(\fra u)) \geq -\kappa$. Let $x$ be the unique vertex such that $\tilde \lambda \in [Z_x - \eta/4, Z_x + \eta/4]$ (see Lemma \ref{lem:pert1}). Let $\f v \deq \f w_+(x)$, defined in Proposition \ref{pro:fine_rigidity}, and let $\lambda' \deq \Lambda_{\fra d}(\alpha_x, \beta_x) + \epsilon_{x,+}$ in the notation of Proposition \ref{prop:blockMatrix}.

Then, recalling the construction of the orthogonal matrix $U$ in \eqref{eq:LargeBlock} from the proof of Proposition \ref{prop:blockMatrix} in Section~\ref{sec:proof_prop_block_matrix} we find $\norm{(H - \lambda') \f v} \leq \norm{E_{\cal W}} \lesssim (d \fra u)^{-10}$,
by Proposition \ref{prop:blockMatrix}. Moreover, by Lemmas \ref{lem:pert1} and \ref{lem:pert2}, $\lambda \in [\lambda' - \Delta, \lambda' + \Delta]$ with $\Delta \deq d \tau(\fra u) \eta /4$, and there is no other eigenvalue of $H$ in the same interval. Denoting by $\f w$ the eigenvector associated with $\lambda$, we deduce from Lemma \ref{lem:perturbationEV} that $\norm{\f w - \f v} \lesssim (d \fra u)^{-8}$.

What remains is to bound $\norm{\f v|_{B_i(x)^c}}$. 
Since $\supp \f v \subset B_{r_{\cal W}}(x)$ by Proposition~\ref{pro:fine_rigidity}, 
we conclude from \eqref{eq:decay_eigenvector_in_ball} with $r = r_{\cal W} - 1$ that
\[ 
\norm{\f v|_{B_i(x)^c}} \lesssim \frac{\mu^2}{(\mu - \norm{Q (H|_{B_{r_{\cal W}}(x)})Q})^2} \biggl( \frac{\norm{Q (H|_{B_{r_{\cal W}}(x)})Q}}{\mu} \biggr)^{i} 
\lesssim  \frac{q^i}{(1 - q)^2}, \qquad q = \frac{2 + O(d^{-1/3})}{\sigma(\fra u)}\,, 
\] 
where $Q$ is the orthogonal projection onto the coordinates in $B_{r_{\cal W}}(x)$.  
Here, in the second step, we used that 
$\norm{QH^{(x,r)}Q} \leq 2 ( 1 + d^{\gamma-1/2})^{1/2}\leq 2 + O(d^{-1/3})$ by \eqref{eq:norm_QHQ} with $\delta = d^{2\gamma - 1}$ 
and $\gamma \leq 1/6$, 
 and $\mu = \sigma(\fra u) + O(d^{-1/3})$. 
For the proof of the latter expansion, we note that 
$\mu = \Lambda_{\fra d}(\alpha_x,\beta_x) + O(d^{-1/3})$ if $\xi$ is sufficiently small 
by  \eqref{eq:mu_Lambda_fine_rigidity}, \eqref{eq:choice_r} and \eqref{a_2_lower_bound}. 
Moreover, 
$\Lambda_{\fra d}(\alpha_x,\beta_x) = \sigma(\fra u) + O(d^{-2/3}/\sqrt{\fra u})$ 
follows from using \eqref{alpha_beta_bounds}, which is applicable due to $Z_x \geq - \kappa$, 
 in \eqref{eq:expansion_Lambda_d_sigma_u} and $\gamma \leq 1/6$.  
\end{proof}

\begin{proof}[Proof of Remark~\ref{rem:localization}] 

Using the notation of Proposition~\ref{pro:fine_rigidity}, we choose $r = r_{\cal W}$ and $\f w(x) \deq \f w_+(x)$. 
From the proof of Theorem~\ref{thm:localisation}, we recall that $\norm{\f w - \f w(x)} \leq (d\fra u)^{-8}$, 
which proves the first part of Remark~\ref{rem:localization}.

Let $\f v(x)$ be as in Proposition~\ref{pro:H_minus_Lambda_v_norm_quadratic_form_intermediate}. 
With $\delta = d^{2\gamma - 1}$, we use \eqref{eq:estimate_norm_approximate} and \eqref{eq:lower_bound_spectral_gap} 
to check the conditions of Lemma~\ref{lem:perturbationEV} and arrive at 
$\norm{\f w(x) - \f v(x)} \lesssim \frac{\fra u}{(\fra u - 2)^2} \frac{\sqrt{\log d}}{\sqrt{d}}
 \lesssim \frac{d^{-1/2 + 3\xi}}{\fra u} $ by \eqref{a_2_lower_bound}.
Moreover, the precise choice of $\f v(x)$ in the proof of Proposition~\ref{pro:H_minus_Lambda_v_norm_quadratic_form_intermediate} 
and \eqref{Zd_vect_components} yield 
\[ \f v(x) = \sum_{i = 0}^r u_i(x) \f s_i(x) + O\bigg( \frac{d^{-2/3}}{\sqrt{\fra u}} + \frac{d^{\gamma}}{d \sqrt{\fra u}} + \frac{1}{d} \bigg)\,, \]  
where $(u_i(x))_i$ are as in Remark~\ref{rem:localization} and 
we used that  $\Lambda_{\fra d}(\alpha_x,\beta_x) = \sigma(\fra u) + O(d^{-2/3}/\sqrt{\fra u})$ 
(see the proof of Theorem~\ref{thm:localisation}), 
$\beta_x = 1 +  O(d^{\gamma - 1} \fra u^{-1/2})$ by Corollary~\ref{cor:concentration_beta_x} with $\delta = d^{2\gamma - 1}$ 
and $\fra d = 1 + d^{-1}$ by definition. 
As $\gamma \leq 1/6$, this completes the proof of Remark~\ref{rem:localization}. 
\end{proof}

\section{Local graph structure around large degree vertices} \label{sec:graph_structure}

In this section we prove the properties of the local graph structure around vertices of large degree, as stated in Propositions \ref{pro:prob_estimates_1} and \ref{pro:graphProperty}.

\subsection{Proof of Proposition \ref{pro:prob_estimates_1}}
\label{sec:proof_prob_estimates_1} 

We shall now prove Proposition~\ref{pro:prob_estimates_1} and use that, by Bennett's inequality (see Lemma \ref{lem:Bennett} below), 
\begin{equation} \label{eq:Bennett_for_degree} 
\P \big( \abs{D_x - d} \geq a d \big) \leq 2 \exp ( -d h(a))
\end{equation} 
for any $x \in [N]$ and 
for any $a >0$, where $h$ is defined as in \eqref{eq:def_h} 
We recall from \eqref{S_1_estimate} that if $d \leq 3 \log N$ then for each $\nu>0$ there is $C >0$ such that
\begin{equation} \label{eq:D_x_bounded_very_high_probability} 
 \P \big( D_x \geq C \log N \big) \leq C N^{-\nu}\,. 
\end{equation}

\begin{proof}[Proof of Proposition~\ref{pro:prob_estimates_1}] 
Before going into the proof, we first establish that 
\begin{equation} \label{eq:upper_bound_alpha_d_on_D_x} 
 \P ( D_x \geq \alpha d)\leq 2\ee^{-d h(\alpha- 1)} \leq \frac{2}{N}\ee^{(\am -\alpha) d \log \am} 
\end{equation}
for any $\alpha \geq 1$. 
The bound \eqref{eq:upper_bound_alpha_d_on_D_x} follows from \eqref{eq:Bennett_for_degree} and, for the second step, from 
\begin{equation} \label{eq:lower_bound_exponent_d_h} 
 d h(\alpha-1) \geq d h(\am - 1) + (\alpha - \am) d \log \am = \log N +  (\alpha - \am) d \log \am 
\end{equation}
for all $\alpha \geq 1$, 
which is a consequence of the convexity of $h$, $h'(\alpha-1) =\log \alpha$ and the 
definition of $\am$ in \eqref{eq:def_alpha_max}. 

We now show separately that each of the five events defined by parts \ref{item:balls_disjoint} -- 
\ref{item:Z_i_estimate} occurs with high probability.
We denote the event from \ref{item:balls_disjoint} by $\Xi_1$ and, to estimate its probability, remark that 
\[ \Xi_1^c = \big\{ \exists x \neq y \in [N] \text{ such that } \alpha_x, \alpha_y \geq \tau,~ y \in B_{2r}(x) \big\}  \] 
with $\tau = \am - c_*\delta/\log \am$ for some sufficiently small $c_*>0$. 
We follow the proof of \cite[Lemma~7.3]{ADK19}, note that $\Xi_1^c = \Xi^{(1)}$ in the notation of \cite{ADK19}, choose $k =1$, observe $a_0=a_1 = (\tau - 1)d - 1$ 
and end up with 
\begin{equation} \label{eq:prob_Xi_1_bound} 
 \P (\Xi_1^c) \leq N \frac{d^{2r+2} - 1}{d - 1} \bigg( C \exp \bigg( -2 d h\bigg(\tau - 1 - \frac{n+1}{d} \bigg) \bigg) + 2 \bigg( \frac{2d}{N} \bigg)^n \bigg)\,. 
\end{equation}
Therefore, by choosing $n =10$ and $\tau$ as above in \eqref{eq:prob_Xi_1_bound}, we obtain 
$ \P(\Xi_1^{c}) = N^{-\eps}$ with $\eps >0$ small enough from \eqref{eq:lower_bound_exponent_d_h}
with $\alpha = \tau - (n+1)/d$ since $r\leq c_* d/\log\log N$ for some sufficiently small $c_*>0$ 
and $d \leq 3\log N$. 
Note that $\alpha \geq 1$ since $\tau \geq 1 + c$ by \eqref{eq:lower_bound_degree_cal_V}.

For any $x \in [N]$, we introduce the $x$-dependent events 
\[\begin{aligned} 
 \Xi_2(x) & \deq \{ \mathbb{G}|_{B_r(x)} \text{ is a tree } \}, \\ 
 \Xi_3(x) & \deq \Biggl\{   
 \absbb{\frac{\abs{S_{i+1}(x)}}{d\abs{S_i(x)}} -1} \lesssim  \bigg(\frac{\delta}{\abs{S_i(x)}} \bigg)^{1/2}\quad \text{ and } \quad \absbb{\frac{\abs{S_i(x)}}{D_x d^{i-1}} -1 } \lesssim \bigg(\frac{\delta}{D_x } \bigg)^{1/2} \text{ for all } i \in [r]\Biggr\}, 
\\ 
\Xi_4(x) & \deq \bigl\{  \abs{D_y -d} \leq \delta^{1/2} d \text{ for all } y \in B_r(x) \setminus \{ x\} \bigr\}, \\ 
 \Xi_5(x) & \deq \biggl\{ \sum_{y \in S_i(x)} (N_y(x) - d)^2 \leq \abs{S_i(x)} d ( \log d + \delta^{1/2}) \bigg(1 + \frac{d \delta}{\abs{S_i}} \bigg) \text{ for all } i \in [r] 
 \biggr \}\,. 
\end{aligned}  \]

We shall show below that 
\begin{subequations} 
\begin{align}
 \label{eq:estimates_Xi_j} 
 \P \bigl( \Xi_j(x)^c \bigm| S_1(x) \bigr)\ind{x \in \cal V_\delta}\ind{D_x \leq C \log N} 
& \lesssim  \ee^{-c d \delta} \big( d^{r-1} \log N + r^2\big) + \frac{1}{N}(C\log N)^{2r + 3} \\ 
\P \bigl( \Xi_5(x)^c \bigm| S_1(x) \bigr)\ind{x \in \cal V_\delta}\ind{D_x \leq C \log N} & \lesssim\ee^{-c d \delta}\big( d^{r-1} \log N + r^2 + r \log \log N) + \frac{1}{N}(C\log N)^{2r + 3} 
\label{eq:estimate_Xi_5} 
\end{align}
\end{subequations} 
for all $j = 2, 3,4$ and for any fixed $x \in [N]$. From \eqref{eq:estimates_Xi_j} and \eqref{eq:estimate_Xi_5} we conclude  
\begin{align} 
\P \bigl( \exists x \in \cal V_\delta \colon \Xi_j(x)^c \bigr) 
& \leq \sum_{x \in [N]} \E \Bigr[ \P \bigl(\Xi_j(x)^c \bigm| S_1(x) \bigr) \ind{x \in \cal V_\delta}\ind{D_x \leq C \log N} 
\Bigr] + \sum_{x \in [N]} \P ( D_x > C \log N) \notag \\ \label{eq:combination_Xi_j_cal_V} 
 & \leq \ee^{c_* d \delta} \biggl( \frac{1}{N} \ee^{C (r + 3) \log \log N} + C \ee^{\log \log N} \ee^{r \log d} \ee^{-c d \delta} \biggr) + N^{-2}\,. 
\end{align} 
Here, the second term was estimated via \eqref{eq:D_x_bounded_very_high_probability}. For the first term, we used \eqref{eq:estimates_Xi_j} together with $D_x \leq C \log N$. 
Then, we used $\P(x \in \cal V_\delta)\leq \frac{2}{N} \ee^{c_* d \delta}$, a consequence of 
\eqref{eq:upper_bound_alpha_d_on_D_x} with $\alpha = \am - c_* \delta/(\log \am)$ (see the definition of $\cal V_\delta$ in \eqref{eq:def_cal_V}).  
Note that $\alpha \geq 1$ by \eqref{eq:lower_bound_degree_cal_V}.

Proposition~\ref{pro:prob_estimates_1} follows from \eqref{eq:combination_Xi_j_cal_V} 
provided $c_*$ is chosen sufficiently small 
and $K$ in \eqref{eq:condition_d_delta_graph} is chosen sufficiently large 
since $r \log \log N \leq c_* d \leq c_* 3 \log N$ and $r \log d \leq c_* \delta d$. 

Therefore, what remains is proving \eqref{eq:estimates_Xi_j} and \eqref{eq:estimate_Xi_5}. 
To that end, we fix $x \in [N]$. 
For controlling $\P ( \Xi_2(x)^c | S_1(x))$, we introduce the nonnegative, integer-valued function 
$t_r(x) \deq \abs{E(\mathbb{G}|_{B_r(x)})} - \abs{B_r(x)} + 1$ and remark that $\mathbb{G}|_{B_r(x)}$ is a tree if and only if $t_r(x) =0$. 
Hence, by \cite[Lemma~5.5]{ADK19}, we have 
\[ \P \bigl(\Xi_2(x)^c \bigm| S_1(x) \bigr) = \P\bigl( t_r(x) \geq 1 \bigm| S_1(x) \bigr) \leq \frac{1}{N} (C (d + D_x))^{2r + 1}(2r)^2\,. \] 
This proves \eqref{eq:estimates_Xi_j} for $j = 2$ due to the conditions $D_x \lesssim \log N$, $d \leq 3 \log N$ and $r \leq c\log N/\log \log N$ for a small enough $c>0$.

To prove \eqref{eq:estimates_Xi_j} for $j = 3$, we shall use the auxiliary bound 
\begin{equation} \label{eq:concentration_S_i_auxiliary_estimate} 
\P \bigg( \absbb{ \frac{\abs{S_{i+1}(x)}}{d \abs{S_i(x)}} -1} \geq  \eps + C \cal E_i \biggm| B_i(x) \bigg) \leq 2 \exp\big( - c d \abs{S_i(x)} \eps^2\big)\,, 
\qquad \cal E_i \deq \frac{d \abs{S_i(x)}}{N} + \frac{1}{\sqrt{N}}\,, 
\end{equation}
which holds for any $\eps \in [0,1]$ if $\abs{B_i(x)}\leq \sqrt N$. 
The estimate \eqref{eq:concentration_S_i_auxiliary_estimate} is proved in \cite[eq.~(5.17)]{ADK19}.  

Choosing $\eps = \delta^{1/2}\abs{S_i(x)}^{-1/2}$ in \eqref{eq:concentration_S_i_auxiliary_estimate}, following the induction arguments 
for the proofs of \cite[eq.~(5.12a)]{ADK19} and \cite[eq.~(5.12b)]{ADK19} 
and performing a union bound over $i = 1, \ldots, r$ yield 
\begin{equation} \label{eq:P_Xi_3} 
\P(\Xi_3(x)^c|S_1(x))\ind{x \in \cal V_\delta} \lesssim r \exp\big(-cd \delta\big)\,. 
\end{equation}
Here, we used that $\delta/D_x \leq 1/C$ for any constant $C>0$ since $D_x \gtrsim d\am$ for $x \in \cal V_\delta$ 
by \eqref{eq:lower_bound_degree_cal_V}, $d \delta \leq C \log N$ and $d \geq K \log \log N$. 
This shows \eqref{eq:estimates_Xi_j} for $j = 3$.

 To estimate $\P ( \Xi_4(x)^c | S_1(x))$, 
we use 
\eqref{eq:Bennett_for_degree} as well as $h(a) \geq c a^2 \wedge a$ for all $a >0$ and some $c>0$ to conclude 
\[ \P \big( \Xi_{4,k}(x) \big| B_k(x) \big) \leq 2 \abs{S_k(x)} \ee^{-c d \delta }, 
\qquad \Xi_{4,k}(x) \deq \bigl\{ \exists y \in S_k(x) \colon \abs{D_y -d} \geq \delta^{1/2}d\bigr\}\,, 
\] 
if $\abs{B_k(x)}\leq \sqrt{N}$. Hence, since $\ind{\abs{S_k} \lesssim D_x d^{k-1}}$ is $B_k$-measurable,  
$\{\abs{S_k} \lesssim D_x d^{k-1}\} \supset \Xi_3(x)$ and 
\eqref{eq:P_Xi_3}, we obtain 
\begin{align} 
 \P \big( \Xi_4(x)^c \big|S_1(x) \big) \ind{x \in \cal V_\delta} & \leq \sum_{k=1}^r \E \bigl[ \P \bigl(\Xi_{4,k}(x) \big| B_k(x) \bigr)\ind{\abs{S_k} \lesssim D_x d^{k-1}} \bigr] + r \P\bigl(\Xi_3(x)^c\big|S_1(x)\bigr)\notag \\ \label{eq:bound_deviation_D_y_from_d_1} 
& \lesssim D_x d^{r-1} \ee^{-cd\delta}  + r^2 \ee^{-cd \delta}\,. 
\end{align} 
This completes the proof of \eqref{eq:estimates_Xi_j} in the remaining case $j = 4$.

We now estimate $\P\big( \Xi_5(x)^c \big| S_1(x)\big)$. For brevity, we write $N_y$ instead of $N_y(x)$ throughout this argument. 
We have 
\begin{equation} \label{eq:meanquadratic_first_step} 
 \sum_{y \in S_i} \big(N_y - d\big)^2 \leq 2 \sum_{y \in S_i} \Big(\big(N_y - \E[N_y|B_{i}]\big)^2 + \big(\E[N_y|B_{i}] - d\big)^2 \Big) \lesssim \sum_{y \in S_i} E_y^2 + \abs{S_i}\,. 
\end{equation}
Here, we used that $\abs{d- \E[N_y|B_{i}]} \leq d \abs{B_{i}}/N \leq C$ on $\Xi_3(x)$ and set $E_y \deq N_y - \E[N_y|B_{i}]$ to simplify the notation in the following. 

Using the lower bound on $\delta$ in \eqref{eq:condition_d_delta_graph}, we see that 
$E_y^2 \lesssim \delta d^2$ on $\Xi_2(x) \cap \Xi_4(x)$ as $N_y = D_y - 1$ on $\Xi_2(x)$. 
Hence, on $\Xi_2(x) \cap \Xi_4(x)$, we get 
\[ \sum_{y \in S_i} E_y^2 \leq d \abs{S_i} + \sum_{k = \floor{-\log d}}^{\ceil{\log \delta}} d^2 \ee^{k+1} \abs{\cal N_{i,k}} \] 
where we introduced $\cal N_{i,k} \deq \{ y \in S_i \colon d^2 \ee^k < E_y^2 \leq d^2 \ee^{k+1} \}$. 

We use the bound in \cite[eq.~(5.28)]{ADK19}, apply $\begin{pmatrix} \abs{S_i} \\ \ell \end{pmatrix} \leq \ee^{\abs{S_i}}$ to it and obtain that there is a constant $c>0$ such that 
\[ \P \big( \abs{\{y \in S_i \colon E_y^2 > s^2 d^2 \}} \geq \ell \big| B_i \big) \leq \exp\big ( \abs{S_i} - c d \ell (s \wedge s^2 ) \big) \] 
for any $s >0$. 
Therefore, setting $\ell_{i,k} \defeq \frac{C}{d} (\abs{S_i} + d \delta) ( \ee^{-k/2} \vee \ee^{-k})$ yields $\P \big( \abs{\cal N_{i,k}} \geq \ell_{i,k} \big| S_1(x) \big) \leq \ee^{- c d \delta}$.  
We introduce the event $\wt{\Xi}_5(x) \defeq \{ \abs{\cal N_{i,k}} \leq \ell_{i,k} \text{ for all } i \in [r], k \in \Z \text{ with } \abs{k} \lesssim \log \log N \}$ and conclude 
\begin{equation} \label{eq:prob_wt_Xi_5} 
 \P (\wt{\Xi}_5(x)^c |S_1(x)) \lesssim r \log \log N \ee^{-cd\delta}\,. 
\end{equation}
Thus, on $\wt{\Xi}_5(x)\cap \Xi_2(x) \cap \Xi_4(x)$, we have that 
\[ \sum_{y \in S_i} E_y^2 \lesssim d \abs{S_i} + d (\abs{S_i} + d \delta) \bigg( \sum_{k=0}^{\ceil{\log \delta}} \ee^{k/2} + \log d \bigg) \lesssim d \abs{S_i} (\log d + \delta^{1/2}) \bigg( 1 + \frac{d \delta}{\abs{S_i}} \bigg) \] 
for all $i \in [r]$. Hence, owing to \eqref{eq:meanquadratic_first_step}, we conclude $\Xi_5(x) \supset \wt{\Xi}_5(x) \cap \Xi_2(x) \cap \Xi_3(x) \cap \Xi_4(x)$ 
and \eqref{eq:prob_wt_Xi_5} as well as \eqref{eq:estimates_Xi_j} for $j = 2$, 3, 4 yields \eqref{eq:estimate_Xi_5}, 
completing the proof of Proposition~\ref{pro:prob_estimates_1}. 
\end{proof}

\subsection{Proof of Proposition \ref{pro:graphProperty}} \label{sec:proof_graphProperty} 
We shall show that each subevent corresponding to \ref{item:disjoint_balls} -- \ref{item:MeanQuadratic} of Proposition~\ref{pro:graphProperty} holds with high probability.

We have $\cal W = \cal V_\delta$ if we choose $\delta = d^{2\gamma - 1}$ in the definition of $\cal V_\delta$ in \eqref{eq:def_cal_V}. 
With this choice of $\delta$, the conditions in \eqref{eq:condition_d_delta_graph} are met since $K \log \log N \leq d^{2\gamma} \leq d \leq 3 \log N$ as $\gamma \leq 1/2$. 
Therefore, the events \ref{item:disjoint_balls} -- \ref{item:concentration_degrees} hold with probability $1-o(1)$ by choosing $c_*$ sufficiently small, 
applying Proposition~\ref{pro:prob_estimates_1} 
and noting that 
\eqref{eq:upper_bound_r} with $\delta = d^{2\gamma-1}$ in Proposition~\ref{pro:prob_estimates_1} is satisfied 
due to \eqref{eq:upper_bound_r_fine_rigidity}.

To simplify the notation, we fix $x \in [N]$ and 
introduce the $x$-dependent events 
\[\begin{aligned} 
\Omega_2(x) & \deq \bigl\{ \mathbb{G}|_{B_r(x)} \text{ is a tree} \bigr\}, \\ 
{\Omega}_3(x) & \deq \biggl\{ 
\absbb{ \frac{\abs{S_{k+1}(x)}}{d \abs{S_k(x)}} - 1} \lesssim \frac{d^{-1/2 + \gamma}}{\abs{S_k(x)}^{1/2}} 
\text{ and } \abs{S_k(x)} = D_x d^{k-1} \bigl(1 + O \bigl(d^{-1/2 + \gamma} D_x^{-1/2}\bigr) \bigr) 
\text{ for all } k \in [r] \biggr\}, \\ 
\Omega_4(x) & \deq \bigl\{ \abs{D_y -d} \leq d^{1/2}d^{\gamma} \text{ and } \abs{N_y(x) - d} \leq d^{1/2} d^{\gamma} +1 \text{ for all } y \in B_r(x)\setminus \{x\} \bigr\}, 
\\ \Omega_5(x) & \deq \biggl\{ 
\absbb{\sum_{y\in S_k(z)}(N_y(x) -d)} \lesssim d^{(k+1)/2 + \gamma} 
\text{ for all } 
z \in B_r(x) \setminus \{x \}, ~k \in [r] \text{ such that } S_k(z) \subset B_r(x) 
\biggl\},  
\\ \Omega_6(x) & \deq 
 \biggl\{ 
\sum_{y \in S_k(x)} (N_y(x) - d)^2 = D_x d^{k} \big(1 + O(D_x^{-1/2}d^{\gamma})\big)  \text{ for all } 
k \in [r]
\biggr \}\,. 
\end{aligned} \] 
(These events should be compared to $\Xi_2(x)$, $\Xi_3(x)$ and $\Xi_4(x)$ from the proof of Proposition~\ref{pro:prob_estimates_1}.) 

We shall show below that 
\begin{subequations} \label{eq:estimates_P_Omega_x_all} 
\begin{align} 
\label{eq:estimates_P_Omega_x} 
\P\big( \Omega_j(x)^c \big| S_1(x) \big)\ind{x \in \cal W} \ind{D_x \lesssim \log N} & 
\lesssim \frac{1}{N}(C\log N)^{2r + 3} + \ee^{-c d^{2\gamma}} \big( d^{r-1} \log N + r^2\big) \\ 
\P \big(\Omega_5(x)^c  \big| S_1(x) \big) \ind{x \in \cal W} \ind{D_x \lesssim \log N} & 
 \lesssim 
\frac{r}{N}(C\log N)^{2r + 3} + r \ee^{-c d^{2\gamma}} \big( d^{r-1} \log N + r^2\big) 
\label{eq:estimates_P_Omega_5_x} \\ 
\P \big(\Omega_6(x)^c  \big| S_1(x) \big) \ind{x \in \cal W} \ind{D_x \lesssim \log N} 
& \lesssim r \ee^{C \ee^{-c d^{2\gamma}} D_x d^{r}} \ee^{-c d^{2\gamma}} 
+ \frac{r}{N}(C\log N)^{2r + 3} + r \ee^{-c d^{2\gamma}} \big( d^{r-1} \log N + r^2\big) 
\label{eq:estimates_P_Omega_6_x} 
\end{align} 
\end{subequations} 
for $j = 2, 3 , 4$. 

Given \eqref{eq:estimates_P_Omega_x_all}, arguing as in \eqref{eq:combination_Xi_j_cal_V} completes the proof of Proposition~\ref{pro:graphProperty} 
since, for sufficiently small $c_*>0$ and sufficiently large $K>0$, we have 
$N \P (x \in \cal W) \leq 2\ee^{c_* d^{2\gamma}}$ by \eqref{eq:upper_bound_alpha_d_on_D_x} 
with $\alpha = \am - c_* d^{2\gamma -1}/\log \am$ 
and  
\[ \ee^{c_*d^{2\gamma}} r  D_x d^r \ee^{-c d^{2\gamma}} \ind{D_x \lesssim \log N} \lesssim \ee^{c_*d^{2\gamma}} r \ee^{-cd^{2\gamma}} d^r \log N 
= o(1)
, \qquad \qquad 
r \ee^{c_* d^{2\gamma}} \ee^{-c d^{2\gamma}} = o(1)\,. 
\] 
Here, for the first expression, we used $r \leq c_* d^{2\gamma}/\log d$ and $d^{2\gamma} \geq K \log \log N\to\infty$. 
For the second expression, we chose $c_*$ sufficiently small and used $r \leq c_* d/(\log \log N)$ 
as well as  $d^{2\gamma} \geq K \log \log N \to \infty$. 

Therefore, what remains is proving \eqref{eq:estimates_P_Omega_x_all}. 
The estimate \eqref{eq:estimates_P_Omega_x} follows from \eqref{eq:estimates_Xi_j} in the proof of Proposition~\ref{pro:prob_estimates_1} 
with $\delta = d^{2\gamma - 1}$ and $j=2$, $3$ or 4, respectively. 
Here, we used that $D_y = N_y(x) +1$ on $\Omega_2(x)$. 

We now show \eqref{eq:estimates_P_Omega_5_x}.
We start by introducing the vertex sets $S^{(k)}_z(x) = S_k(z) \cap S_{k+ d(z,x)}(x)$ if $x$ and $z$ are connected 
and the auxiliary event 
\begin{multline*} 
\wt{\Omega}_3(x) \deq \biggl\{ 
\absbb{ \frac{\abs{S^{(k+1)}_z(x)}}{d \abs{S^{(k)}_z(x)}} - 1} \lesssim \frac{d^{-1/2 + \gamma}}{\abs{S^{(k)}_z(x)}^{1/2}} 
\text{ and } \abs{S^{(k)}_z(x)} = D_z d^{k-1} \bigl(1 + O \bigl(d^{-1+ \gamma}\bigr) \bigr) \\ 
\text{ for all } z \in B_r(x),~k \in [r] \text{ such that }S_{k+1}(z) \subset B_r(x) \biggr\}\,. 
\end{multline*} 
Note that $\abs{S^{(k)}_z(x)} = N^{(k)}_z (x)$ with the definition of $N^{(k)}_z(x)$ in \eqref{eq:def_N_k_z_x}. 

Below, we shall show that 
\begin{equation} \label{eq:estimate_P_wt_Omega_3_x} 
\P(\wt{\Omega}_3(x)^c|S_1(x)) \lesssim 
\frac{r}{N}(C\log N)^{2r + 3} + r \ee^{-c d^{2\gamma}} \big( d^{r-1} \log N + r^2\big)\,. 
\end{equation}
The estimate \eqref{eq:estimates_P_Omega_5_x} follows directly from  
\eqref{eq:estimates_P_Omega_x} for $j = 2$, 4, \eqref{eq:estimate_P_wt_Omega_3_x}, 
and the inclusion 
\begin{equation} \label{eq:Omega_5_supset_Omega_2_3_4}
\Omega_5(x)\supset \Omega_2(x)\cap \wt{\Omega}_3(x) \cap {\Omega}_4(x)\,. 
\end{equation}
For the proof of \eqref{eq:Omega_5_supset_Omega_2_3_4}, let $z \in B_r(x)$ and $k \in [r]$ such that $S_k(z) \subset B_r(x)$. 
Then we have  
\begin{equation} \label{eq:proof_prob_estimates_tree_formula} 
\sum_{y \in S^{(k)}_z(x)} (N_y(x) - d) =|S^{(k+1)}_z(x)|-d|S^{(k)}_z(x)|  
\end{equation}
on $\Omega_2(x)$ as $\mathbb{G}|_{B_r(x)}$ is a tree on $\Omega_2(x)$. 
Therefore, on $\Omega_2(x) \cap \wt{\Omega}_3(x)\cap \Omega_4(x)$, the bounds  
\[
\absbb{ \sum_{y\in S^{(k)}_z(x)}(N_y(x)-d)} 
 \leq d \abs{S^{(k)}_z(x)} \absbb{\frac{\abs{S^{(k+1)}_z(x)}}{d \abs{S^{(k)}_z(x)}} -1} 
\lesssim d^{1/2 + \gamma} \abs{S^{(k)}_z(x)}^{1/2} 
\lesssim d^{(k+1)/2 + \gamma} 
\]
hold as $D_z = N_z(x) +1\in [d/2,2d]$ on $\Omega_2(x) \cap {\Omega}_4(x)$ and $\gamma < 1/2$. 
Hence, we have proved \eqref{eq:Omega_5_supset_Omega_2_3_4} and, thus, 
\eqref{eq:estimates_P_Omega_5_x} assuming \eqref{eq:estimate_P_wt_Omega_3_x}.

To prove \eqref{eq:estimate_P_wt_Omega_3_x}, for fixed $x\neq z \in [N]$, we shall use the auxiliary bound 
\begin{equation} \label{eq:concentration_S_i_auxiliary_estimate_2} 
\P \bigg( \absbb{ \frac{\abs{S^{(k+1)}_z(x)}}{d \abs{S^{(k)}_z(x)}} -1} \geq  \eps + C \cal E_k \biggm| B_{k+d(x,z)}(x) \bigg) 
\leq 2 \exp\big( - c d \abs{S^{(k)}_z(x)} \eps^2\big), 
\qquad \cal E_k \deq \frac{d \abs{S^{(k)}_z(x)}}{N} + \frac{1}{\sqrt{N}}  
\end{equation}
for any $\eps \in [0,1]$ if $\abs{B_{k+d(x,z)}(x)}\leq \sqrt N$. 
The estimate \eqref{eq:concentration_S_i_auxiliary_estimate_2} is proved completely analogously to \cite[Eq.~(5.17)]{ADK19}.  

Choosing $\eps = d^{-1/2 + \gamma}\abs{S^{(k)}_z(x)}^{-1/2}$ in \eqref{eq:concentration_S_i_auxiliary_estimate_2} and following the induction arguments 
for the proofs of \cite[eq.~(5.12a)]{ADK19} and \cite[eq.~(5.12b)]{ADK19} yield 
\[ 
\P \bigg( \absbb{ \frac{\abs{S^{(k+1)}_z(x)}}{d \abs{S^{(k)}_z(x)}} -1} \geq  \eps + C \cal E_k \biggm| B_{k+d(x,z)}(x) \bigg) \ind{D_z \gtrsim d} \leq \ee^{-c d^{2\gamma}}\,. 
\]  
Note that $D_z \gtrsim d$ implies that $d^{-1/2 + \gamma}/D_z^{1/2} = o(1)$ since $\gamma < 1/2$. 

By applying $\E[\,\cdot\,|B_j(x)]$ and perform union bounds over $z \in S_j(x)$ and $k \in [r-j]$ to 
the previous bound, we obtain 
\[ 
\P \bigl( \wt{\Omega}_{3,j}(x)^c \big| B_j(x) \bigr) \leq r \abs{S_j(x)} \ee^{-c d^{2\gamma}} 
+ \P \bigl( \{D_z \lesssim d \text{ for all } z \in S_j(x)\}^c\big|B_j(x)\bigr)  
\] 
for all $j \in [r]$, where we introduced the event 
\[ 
\wt{\Omega}_{3,j}(x) \deq \biggl\{ \absbb{ \frac{\abs{S^{(k+1)}_z(x)}}{d \abs{S^{(k)}_z(x)}} -1} \lesssim 
\frac{d^{-1/2 + \gamma}}{\abs{S^{(k)}_z(x)}^{1/2}} \text{ for all } z \in S_j(x),~k \in [r-j] \biggr\}\,. 
\] 
Therefore, as $D_z \gtrsim d$ for all $z \in B_r(x)$ on $\Omega_2(x) \cap \Omega_4(x)$  
and $\{ \abs{S_j(x)} \lesssim D_xd^{j-1}\} \supset \Omega_3(x)$, we get 
\begin{multline*} 
\P \bigl( \wt{\Omega}_3(x)^c\big|S_1(x)\bigr) \leq \sum_{j=1}^r \E \Bigl[ \P \bigl( \wt{\Omega}_{3,j}(x)^c \big| B_j(x) \bigr)\ind{\abs{S_j(x)} \lesssim D_x d^{j-1}} \Big| S_1(x) \Bigr] \\ 
+ r \P \bigl(\Omega_3(x)^c \big|S_1(x)\bigr) + \P \bigl(\Omega_2(x)^c \big|S_1(x)\bigr) + \P \bigl(\Omega_4(x)^c \big|S_1(x)\bigr)\,. 
\end{multline*} 
Here, we used that by iterating the first bound in the definition of $\wt{\Omega}_3(x)$, 
we directly obtain the expansion of $\abs{S_z^{(k)}(x)}$ stated in the definition of $\wt{\Omega}_3(x)$ 
since $D_z \gtrsim d$ on $\Omega_2(x) \cap \Omega_4(x)$. 
This completes the proof of \eqref{eq:estimate_P_wt_Omega_3_x} due to \eqref{eq:estimates_P_Omega_x} 
and, thus, the one of \eqref{eq:estimates_P_Omega_5_x}.

To prove \eqref{eq:estimates_P_Omega_6_x}, we define the random variable $X_y \deq (N_y(x)-d)^2 -d$ for any $y \in [N]\setminus \{x\}$ and the event 
\[ \wt{\Omega}_6(x) \deq \biggl\{ \absbb{\sum_{y \in S_k} X_y } \lesssim \abs{S_k} d^{1+\gamma} D_x^{-1/2} \text{ for all }k \in [r] \biggr\}\,.  
\] 
Here and in the following, we write $S_k = S_k(x)$ and $B_k = B_k(x)$. 
Since $\abs{S_k} = D_x d^{k-1} ( 1 + O(d^{-1/2 + \gamma}D_x^{-1/2}))$ on $\Omega_3(x)$, we have 
$\Omega_6(x)\supset \wt{\Omega}_6(x) \cap \Omega_3(x)$. 
Therefore, $\P(\Omega_6(x)^c|S_1(x)) \leq \P(\wt{\Omega}_6(x)^c|S_1(x)) + \P(\Omega_3(x)^c|S_1(x))$ and it suffices to show that $\P(\wt{\Omega}_6(x)^c |S_1(x))$ is bounded by the right-hand side of \eqref{eq:estimates_P_Omega_6_x} due to \eqref{eq:estimates_P_Omega_x} for $j =4$.

We shall show below that there are $c>0$ and $C>0$ such that 
\begin{equation} \label{eq:estimate_final_sum_X_y} 
 \P\bigg( \absbb{\sum_{y \in S_k} X_y} \geq t \bigg| B_k \bigg)  \leq  
2 \exp\big(C d^{1-2\gamma} \ee^{-c d^{2\gamma}} \abs{S_k} \big) \exp\bigg( - \frac{t^2}{4 Cd^2 \abs{S_k}} \bigg)
 + C \abs{S_k} \ee^{-c d^{2\gamma}} 
\end{equation}
for all $t \in [0, c \abs{S_k} d^{1 -2 \gamma}]$. 

We choose $t = c \abs{S_k} d^{1+ \gamma} D_x^{-1/2}$ on the event $D_x \gtrsim d^{6\gamma}$ in \eqref{eq:estimate_final_sum_X_y}, 
use that $\ind{D_x \lesssim \abs{S_k} \lesssim D_x d^{k-1}}$ is $B_k$-measurable and 
$\{D_x \lesssim \abs{S_k} \lesssim D_x d^{k-1}\} \supset \Omega_3(x)$ to obtain 
\[ \begin{aligned} 
\P \bigl( \wt{\Omega}_6(x)^c\bigm|S_1(x) \bigr) \ind{D_x \gtrsim d^{6\gamma}} & \leq \sum_{k=1}^r \E \biggl[ \P \biggl( \absbb{\sum_{y \in S_k} X_y } \gtrsim \abs{S_k} d^{1+\gamma}D_x^{-1/2} \biggm| B_k \biggr)\ind{D_x \gtrsim d^{6\gamma}}  \ind{D_x \lesssim \abs{S_k} \lesssim D_x d^{k-1}} 
\biggm| S_1 \biggr]\\ 
 & \phantom{\leq} \quad  + r \P \bigl( \Omega_3(x)^c \bigm|S_1(x)\bigr) \\ 
 & \lesssim 
r \ee^{C \ee^{-c d^{2\gamma}} D_x d^{r}} \ee^{-c d^{2\gamma}} + r D_x d^{r-1} \ee^{-c d^{2\gamma}} 
+ r \P \bigl(\Omega_3(x)^c \bigm|S_1(x) \bigr)\,. 
\end{aligned} \] 
Since $D_x \gtrsim d$ on $\{ x \in \cal W\}$ by \eqref{eq:lower_bound_degree_cal_V} with $\delta = d^{2\gamma - 1}$, we have 
$\{D_x \gtrsim d^{6\gamma}\} \supset \{ x \in \cal W\}$ as $\gamma \leq 1/6$.  
Hence, owing to \eqref{eq:estimates_P_Omega_x} for $j = 3$, 
we have proved \eqref{eq:estimates_P_Omega_6_x} provided that \eqref{eq:estimate_final_sum_X_y} 
holds.

We now turn to the proof of \eqref{eq:estimate_final_sum_X_y}. 
For $t>0$, we decompose 
\begin{equation} \label{eq:sum_X_y_split} 
 \P\bigg( \absbb{\sum_{y \in S_k} X_y} \geq t \bigg| B_k \bigg)  \leq 
\P \bigg( \absbb{\sum_{y \in S_k} X_y \ind{X_y \leq d^{1 + 2\gamma}}} \geq t \bigg| B_k \bigg) 
+ \P \bigg( \exists y \in S_k \colon X_y > d ^{1 + 2\gamma} \bigg| B_k \bigg)\,. 
\end{equation}
Next, we shall use the following lemma whose proof is deferred to Section~\ref{sec:proof_lemma_exponential_moment}. 

\begin{lemma} \label{lem:exponential_moment_aux_random_variable} 
Let $x \in [N]$ and $X_y \deq (N_y(x) - d)^2 -d$ for any $x \in [N] \setminus \{y \}$. 
The following holds. 
\begin{enumerate}[label=(\roman*)] 
\item \label{item:X_y_prop_i} For any $\gamma \in (0,1/2)$, there is $C>0$ such that, for any $\alpha \in [-d^{-1-2\gamma},d^{-1-2\gamma}]$, we have 
\begin{equation} \label{eq:upper_bound_exponential_moment} 
 \E [\exp ( \alpha X_y \ind{X_y \leq d^{1 +  2\gamma}})| B_k(x)]   \leq \exp( C \alpha^2 d^2) 
+ C \abs{\alpha} d^2 \ee^{-c d^{2\gamma}} 
\end{equation} 
if $\abs{B_k(x)} \leq \sqrt{N}$. 
\item \label{item:X_y_prop_ii} For any $k$, the random variables $(X_{y})_{y \in S_k(x)}$ are i.i.d.\ conditioned on $B_k(x)$. 
\item \label{item:X_y_prop_iii} For any $\gamma \in (0,1/2)$, there are $c>0$ and $C>0$ such that $\P( X_y \geq d^{1 + 2 \gamma} | B_k(x)) \leq C \ee^{-c d^{2\gamma}}$.
\end{enumerate} 
\end{lemma}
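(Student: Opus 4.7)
The plan is to condition throughout on $B_k(x)$ and exploit the sparse independence structure this reveals. Having fixed $B_k(x)$ as a subgraph (together with the absence of edges between $B_{k-1}(x)$ and $[N] \setminus B_k(x)$), all remaining edge variables $A_{yz}$ with $y \in S_k(x)$ and $z \in [N] \setminus B_k(x)$, together with all edges inside $[N] \setminus B_k(x)$, remain undetermined and hence conditionally independent $\mathrm{Bernoulli}(d/N)$. For any $y \in S_k(x)$ one has the identity $N_y(x) = \sum_{z \in [N] \setminus B_k(x)} A_{yz}$, so conditionally on $B_k(x)$, $N_y(x)$ is $\mathrm{Bin}(N - |B_k(x)|, d/N)$. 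Since distinct $y_1, y_2 \in S_k(x)$ yield sums over disjoint collections of edge variables, this simultaneously proves the conditional independence asserted in~\ref{item:X_y_prop_ii} and provides the explicit distributional description used in the remaining parts.

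For~\ref{item:X_y_prop_iii}, I would apply Bennett's inequality to the centered binomial $N_y(x) - \bar d_k$, where $\bar d_k \deq (N - |B_k(x)|) d/N$ satisfies $|\bar d_k - d| \leq d |B_k(x)|/N$. The event $X_y \geq d^{1+2\gamma}$ forces $|N_y(x) - d| \geq \sqrt{d + d^{1+2\gamma}}$, and since $|B_k(x)| \leq \sqrt N$ and $d \leq 3\log N$ the centering discrepancy is $o(d^{1/2+\gamma})$, so this event implies $|N_y(x) - \bar d_k| \geq c d^{1/2+\gamma}$. Bennett's inequality then yields a bound $2\exp(-\bar d_k h(c d^{\gamma - 1/2}))$, and the lower bound $h(t) \gtrsim t^2$ for small $t$ produces the desired $C\exp(-cd^{2\gamma})$.

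For~\ref{item:X_y_prop_i}, set $Y \deq X_y \ind{X_y \leq d^{1+2\gamma}}$; the hypothesis $|\alpha| \leq d^{-1-2\gamma}$ forces $|\alpha Y| \leq 1$, so the elementary inequality $e^u \leq 1 + u + Cu^2$ for $|u| \leq 1$ gives $\E[e^{\alpha Y} \mid B_k(x)] \leq 1 + \alpha\,\E[Y \mid B_k(x)] + C\alpha^2\,\E[Y^2 \mid B_k(x)]$. The second moment is straightforward: $\E[Y^2 \mid B_k(x)] \leq \E[X_y^2 \mid B_k(x)] \lesssim d^2$ from the fourth central moment of the binomial, producing the factor $e^{C\alpha^2 d^2}$ via $1 + t \leq e^t$. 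For the first moment I would split $\E[Y \mid B_k(x)] = \E[X_y \mid B_k(x)] - \E[X_y \ind{X_y > d^{1+2\gamma}} \mid B_k(x)]$. A direct computation shows that the untruncated mean equals $\mathrm{Var}(N_y \mid B_k(x)) + (\bar d_k - d)^2 - d$, which is of order $d(d + |B_k(x)|)/N$ and hence negligible on the target scale $d^2 e^{-c d^{2\gamma}}$ under $|B_k(x)| \leq \sqrt N$ and $d \leq 3\log N$. For the truncated tail I would use the layer-cake identity $\E[X_y\ind{X_y > T}\mid B_k(x)] = T\,\P(X_y > T\mid B_k(x)) + \int_T^\infty \P(X_y > s\mid B_k(x))\,ds$ with $T = d^{1+2\gamma}$, applying the Bennett tail from~\ref{item:X_y_prop_iii} at scale $\sqrt s$; splitting the integral at $s = d^2$ between the Gaussian regime (tail $\lesssim e^{-cs/d}$) and the sub-exponential regime (tail $\lesssim e^{-c\sqrt s}$) yields a bound $C d^{1+2\gamma} e^{-cd^{2\gamma}} \leq Cd^2 e^{-cd^{2\gamma}}$. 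Together these give $|\alpha\,\E[Y\mid B_k(x)]| \leq C|\alpha| d^2 e^{-cd^{2\gamma}}$, precisely the additive correction in~\eqref{eq:upper_bound_exponential_moment}.

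The main technical obstacle is bookkeeping of the truncated first moment with the correct combination of algebraic prefactor and exponent: a naive Cauchy--Schwarz would yield only $\lesssim d\,e^{-cd^{2\gamma}/2}$, halving the decay rate and mismatching the prefactor, whereas the tail-integration route sketched above preserves the full exponent thanks to the crossover in Bennett's bound. A further delicate point is that $C\alpha^2 d^2$ must be placed \emph{inside} the exponential so that taking products over $y \in S_k(x)$ in the proof of~\eqref{eq:estimate_final_sum_X_y} produces a Chernoff-type Gaussian factor, while the rare-event correction $C|\alpha|d^2 e^{-cd^{2\gamma}}$ must remain strictly additive so that it can later be absorbed by the union-bound term $C|S_k|e^{-cd^{2\gamma}}$.
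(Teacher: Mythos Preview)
Your proposal is correct and follows the same overall architecture as the paper: the Taylor bound $e^u \leq 1 + u + Cu^2$ on $|u|\leq 1$, followed by separate control of the truncated first moment (split as untruncated mean plus tail piece) and the second moment. Part~\ref{item:X_y_prop_ii} is treated identically, and your direct binomial variance computation for the untruncated mean matches the paper's $O(d/\sqrt N)$.

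The one genuine methodological difference is in how the tail piece $\E[X_y\ind{X_y>d^{1+2\gamma}}\mid B_k(x)]$ is handled. The paper first passes to a Poisson approximation via Lemma~\ref{lem:poisson_approx}, then uses an index-shifting trick on the explicit Poisson weights: writing $(l-d)^2 \leq l(l-1)\cdot\tfrac{l^2}{l(l-1)}$ to pull out a factor $\lambda^2$ and reduce to $\P(\cal P_\lambda \geq \lambda + \lambda^{1/2+\gamma}-2)$, which immediately gives $2d^2 e^{-cd^{2\gamma}}$. Your route stays with the binomial and uses a layer-cake integral of the Bennett tail, splitting at $s=d^2$ between the Gaussian and sub-exponential regimes of $h$. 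Both give the same $Cd^2 e^{-cd^{2\gamma}}$; the paper's trick is slightly slicker (one line, no regime split), while yours is more elementary in that it never invokes the Poisson approximation lemma. Your remark about why Cauchy--Schwarz would be inadequate here is well taken and not made explicit in the paper.
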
 

From Lemma~\ref{lem:exponential_moment_aux_random_variable} \ref{item:X_y_prop_iii} and a union bound, we directly conclude that 
the second term on the right-hand side of \eqref{eq:sum_X_y_split} is 
bounded by the second term on the right-hand side of \eqref{eq:estimate_final_sum_X_y}. 

We now estimate the first term on the right-hand side of \eqref{eq:sum_X_y_split}. 
Using an exponential Chebyshef's inequality, we obtain 
\[\begin{aligned}    
\P \bigg( \absbb{\sum_{y \in S_k} X_y \ind{X_y \leq d^{1 + 2\gamma}}} \geq t \bigg| B_k \bigg) 
& =   
\P \bigg( \sum_{y \in S_k} X_y \ind{X_y \leq d^{1 + 2\gamma}} \geq t \bigg| B_k \bigg) 
+ \P \bigg( \sum_{y \in S_k} X_y \ind{X_y \leq d^{1 + 2\gamma}} \leq - t \bigg| B_k \bigg) \\ 
& \leq \inf_{\alpha \in [0, d^{-1-2\gamma}]} \ee^{-\alpha t} \bigg(
\E\bigg[ \exp \bigg( \alpha  \sum_{y \in S_k} X_y \ind{X_y \leq d^{1 + 2\gamma}}\bigg) \bigg| B_k(x) \bigg] \\
& \qquad \qquad \qquad \qquad\qquad  + \E\bigg[ \exp \bigg( - \alpha  \sum_{y \in S_k} X_y \ind{X_y \leq d^{1 + 2\gamma}}\bigg) \bigg| B_k(x) \bigg] \bigg) \\ 
& \leq 2  \inf_{\alpha \in [0, d^{-1-2\gamma}]}  \ee^{-\alpha t} 
\bigg( \exp ( C \alpha^2 d^2) + C \alpha d^2 \ee^{-cd^{2\gamma}}\bigg)^{\abs{S_k}}  \\ 
& \leq 2 \exp\big(C d^{1-2\gamma} \ee^{-c d^{2\gamma}} \abs{S_k} \big) \inf_{\alpha \in [0,d^{-1-2\gamma}]}
\ee^{-\alpha t} \ee^{C\alpha^2 d^2 \abs{S_k}} \\ 
& \leq 2 \exp\big(C d^{1-2\gamma} \ee^{-c d^{2\gamma}} \abs{S_k} \big) 
\exp\bigg( - \frac{t^2}{4 Cd^2 \abs{S_k}} \bigg)\,. 
\end{aligned} \] 
Here, we used Lemma~\ref{lem:exponential_moment_aux_random_variable} \ref{item:X_y_prop_ii} and \ref{item:X_y_prop_i} in the third step. 
In the fourth step, we pulled out $\ee^{C \alpha^2 d^2}$ from the parenthesis, used $1 + x \leq \ee^x$ and the upper bound on $\alpha$.  
The last step follows from minimizing in $\alpha$ and the upper bound $t\leq c \abs{S_k}d^{1-2\gamma}$ imposed after \eqref{eq:estimate_final_sum_X_y}. 
This completes the proof of \eqref{eq:estimate_final_sum_X_y} and, thus, the one of Proposition~\ref{pro:graphProperty}.  \qed

\subsection{Proof of Lemma~\ref{lem:exponential_moment_aux_random_variable}} 
\label{sec:proof_lemma_exponential_moment} 

We shall now prove Lemma~\ref{lem:exponential_moment_aux_random_variable}. 
To that end, we record the following auxiliary bound. Let $\cal P_\lambda$ have distribution $\mathrm{Poisson}(\lambda)$. 
For any $a>0$, we have 
\begin{equation} \label{eq:poisson_tail_aux_estimate} 
 \P \big( \cal P_\lambda \geq \lambda(1 + a) \big) \leq \exp( -\lambda h(a)) \leq \exp( - c \lambda a^2)\,, 
\end{equation}
where the last step holds only if $a \in (0,1]$. Here, $h$ is defined as in \eqref{eq:def_h} and $c>0$ is a constant. 
The bound \eqref{eq:poisson_tail_aux_estimate} follows from an exponential Chebyshev's inequality and 
minimizing the resulting upper bound. 
Similarly, we get 
\begin{equation} \label{eq:poisson_lower_tail_aux_estimate} 
\P \big( \cal P_\lambda \leq \lambda(1- a) \big) \leq \exp\big( -\lambda h(-a) \big) \leq \exp\big( - c \lambda a^2 \big) 
\end{equation}
for all $a \in [0,1/2]$ and some $c >0$. 

\begin{proof}[Proof of Lemma~\ref{lem:exponential_moment_aux_random_variable}]
For the proof of \ref{item:X_y_prop_i}, we fix $y$ and write $X$ instead of $X_y$. 
For all $t \in [-1,1]$ we have $\ee^t \leq 1 + t +2 t^2$. Therefore, $X \geq - d$ and $\alpha \in [-d^{-1-2\gamma},d^{-1-2\gamma}]$ imply 
\begin{equation}\label{eq:proof_upper_bound_exponential_moment_aux1} 
 \E \big[ \exp(\alpha X \ind{X\leq d^{1 + 2\gamma}})\big| B_k(x) \big] \leq 1 + \alpha \E\big[X \ind{X\leq d^{1 + 2\gamma}}\big| B_k(x) \big] +  2\alpha^2 \E \big[ X^2 \ind{X\leq d^{1 + 2\gamma}}\big| B_k(x) \big] 
\end{equation} 
We shall show below that there are constants $C>0$ and $c>0$ such that 
\begin{equation} \label{eq:intermediate_estimates_X} 
 \absb{\E\big[X \ind{X\leq d^{1 + 2\gamma}} \big| B_k(x) \big]}  \leq C d^2 \ee^{-c d^{2\gamma}}, \qquad \E \big[ X^2 \ind{X\leq d^{1 + 2\gamma}}\big| B_k(x) \big] \leq C d^2\,. 
\end{equation}
Assuming these two estimates, \eqref{eq:proof_upper_bound_exponential_moment_aux1} and $1 + t \leq \ee^t$
for all $t \in \R$ directly yield \eqref{eq:upper_bound_exponential_moment}. 

In the following we shall approximate the distribution of $N_y(x)$ conditioned on $B_k(x)$ by $\mathrm{Poisson}(\lambda)$. 
Since $N_y(x)$ has law $\mathrm{Binom}(N - \abs{B_k(x)},d/N)$ conditioned on $B_k(x)$, we choose 
$\lambda = d (1 - \abs{B_k(x)}/N)$. 
Note that $\lambda \leq d \leq \lambda(1 + O(N^{-1/2}))$ since $\abs{B_k(x)} \leq \sqrt N$. 

Let $\cal P_\lambda$ be a random variable with distribution $\mathrm{Poisson}(\lambda)$. 
We set $Y \deq (\cal P_\lambda-d)^2 - d$. Since $d^{1 + 2\gamma}\leq \sqrt{N - \abs{B_k(x)}}$ by $\abs{B_k(x)} \leq \sqrt N$, we can apply Lemma~\ref{lem:poisson_approx} and obtain
\begin{align*}
\E \big [\ind{X \leq d^{1+ 2\gamma}} X\big| B_k(x) \big] &= \E \big[\ind{Y \leq d^{1 + 2\gamma}} Y |B_k(x) \big] \bigg(1 + O\bigg( \frac{d^2}{N} \bigg) \bigg)
\\
&= \bigg( - \E \big[ \ind{Y > d^{1 + 2 \gamma}}Y | B_k(x) \big] + O\bigg( \frac{d}{\sqrt{N}} \bigg) \bigg) \bigg(1 + O\bigg( \frac{d^2}{N} \bigg) \bigg)\,. 
\end{align*}
Here, in the second step, we used $\lambda \leq d$, $\abs{B_k(x)} \leq \sqrt N$ and $\E[Y |B_k(x)] = - \frac{d \abs{B_k(x)}}{N} + ( \frac{d \abs{B_k(x)}}{N})^2=O\big( \frac{d}{\sqrt{N}}\big)$ due to 
$\E[ \cal P_\lambda | B_k(x)] = \lambda$ and $\E[\cal P_\lambda^2 |B_k(x)] = \lambda(\lambda +1)$. 

Introducing $l_\pm \deq d \pm \sqrt{d + d^{1 + 2\gamma}}$, we have 
\[ 0 \leq \E \big[ \ind{Y > d^{1 + 2 \gamma}} Y | B_k(x) \big] = \sum_{l > l_+}  ((l-d)^2 - d) \frac{\ee^{-\lambda} \lambda^l}{l!}  
 + \sum_{l < l_-}  ((l-d)^2 - d) \frac{\ee^{-\lambda} \lambda^l}{l!}\,. 
\] 
Since $\lambda \leq d\leq \lambda( 1 + o(1))$ and $l_+ \geq \lambda + d^{1/2 + \gamma}$, we have 
\begin{align*}
\sum_{l=l_+}^\infty ((l-d)^2 - d) \frac{\ee^{-\lambda} \lambda^l}{l!} &\leq \ee^{-\lambda} \lambda^2 \sum_{l=\lambda + \lambda^{1/2 + \gamma}}^{\infty} \frac{l \cdot l}{l(l-1)} \frac{\lambda^{l-2}}{(l-2)!}  
\\
&\leq 2 d^2 \P \big( \cal P_\lambda \geq \lambda + \lambda^{1/2 + \gamma} - 2 \big| B_k(x) \big) \leq 2 d^2 \ee^{-c d^{2\gamma}} \,,
\end{align*}
where we used \eqref{eq:poisson_tail_aux_estimate} in the last step. 
Similarly, using \eqref{eq:poisson_lower_tail_aux_estimate}, we get 
\[ \sum_{l=0}^{l_-} ((l-d)^2 - d) \frac{\ee^{-\lambda} \lambda^l}{l!} \leq d^2 \P ( \cal P_\lambda \leq d - d^{1/2 + \gamma}\big| B_k(x)  ) 
\leq d^2 \ee^{-cd^{2\gamma}}\,. 
\] 
Combining these estimates yields the first bound in \eqref{eq:intermediate_estimates_X}.

For the proof of the second bound in \eqref{eq:intermediate_estimates_X}, we use Lemma~\ref{lem:poisson_approx} and obtain 
\begin{align*}
\E \big[ X^2 \ind{X \leq d^{1 + 2\gamma}}\big| B_k(x) \big] &= \E\big[Y^2 \ind{Y\leq d^{1+2\gamma}}\big|B_k(x)] \bigg(1 + O\bigg(\frac{d^2}{N} \bigg)\bigg)
\\
&\leq 2 \bigg(\E \big[ ((\cal P_\lambda-\lambda)^2 -\lambda)^2 \big| B_k(x)\big] + O\bigg(\frac{d^4}{N} \bigg) \bigg) = O(d^2)\,.
\end{align*}
In the second step, we dropped the indicator function and replaced $d$ in the definition of $Y$ by $\lambda$. Finally, we used $\E [ ((\cal P_\lambda-\lambda)^2 - \lambda)^2|B_k(x)] = O(\lambda^2)$ and $\lambda \leq d$. 
This completes the proof of \eqref{eq:intermediate_estimates_X}.

Part \ref{item:X_y_prop_ii} is obvious from the definition of $X_y$ and the conditioning on $B_k(x)$. 

For the proof of \ref{item:X_y_prop_iii}, we choose $\lambda$, $\cal P_\lambda$ and $Y$ as in the proof of \ref{item:X_y_prop_i}. Thus, using  
from Lemma~\ref{lem:poisson_approx} and arguing as before yield  
\begin{align*}
 \P( X_y < d^{1 + 2 \gamma} | B_k(x)) & = \P ( Y < d^{1+ 2\gamma}|B_k(x))\bigg( 1 + O\bigg(\frac{d^2}{N} \bigg)\bigg) \\ 
 & = \big(1  +\P(\cal P_\lambda \geq l_+|B_k(x)) + \P(\cal P_\lambda \leq l_-|B_k(x)) \big) \bigg(1 +  O\bigg(\frac{d^2}{N} \bigg) \bigg) \\
& = 1 + O\big(\ee^{-c d^{2\gamma}}\big)\,. 
\end{align*} 
This completes the proof of Lemma~\ref{lem:exponential_moment_aux_random_variable}. 
\end{proof}

\appendix

\section{Spectral analysis of the $(p,q,s)$-regular tree} \label{app:spectral_analysis} 

In this appendix, we study the extreme eigenvalues and eigenvectors of the infinite tridiagonal matrix
$Z_{\fra d}(\alpha,\beta)$ defined in \eqref{eq:def_Zd} when it is viewed as an operator acting on $\ell^2(\N)$. 
The main results about $Z_{\fra d}$ are collected in Corollary~\ref{cor:Lambda_expansion} below. We have the scaling relation
\begin{equation} \label{Z_scaling}
Z_{\fra d}(\alpha,\beta) = \sqrt{\fra d} Z\p{\alpha / \fra d, \beta / \fra d}\,, \qquad Z(\alpha, \beta) \deq Z_1(\alpha, \beta)\,.
\end{equation}

We shall also prove a few properties of the functions $\Lambda(\alpha,\beta)$ and $\Lambda_{\fra d}(\alpha,\beta)$ defined 
in \eqref{eq:def_Lambda} and \eqref{eq:def_Lambda_d}, respectively. 
They will turn out to be the largest eigenvalues of $Z(\alpha,\beta)$ and $Z_{\fra d}(\alpha,\beta)$, respectively.

\begin{proposition}[Extreme eigenvalues and eigenvectors of $Z(\alpha,\beta)$] \label{prop:Lambda_ab}
Suppose that $\alpha > 2$ and
\begin{equation} \label{assumption_beta_pr}
\abs{\beta - 1} \leq c \pb{1 \wedge (\alpha - 2)}
\end{equation}
for some small enough constant $c$.
\begin{enumerate}[label=(\roman*)]
\item \label{item:Z_eval}
The matrix $Z(\alpha, \beta)$ has a unique eigenvalue $\lambda$  in $(2,\infty)$. It satisfies $\lambda = \Lambda(\alpha, \beta)$. 
 Moreover, the spectrum of $Z(\alpha, \beta)$ is symmetric.
\item \label{item:Z_evect}
The eigenvector $(u_i)_{i \in \N}$ associated with the eigenvalue $\pm \lambda = \pm \Lambda(\alpha, \beta)$ satisfies
\begin{equation} \label{Z_vect_components}
u_1 = \pm \frac{\lambda}{\sqrt{\alpha}}\, u_0 \,, \quad u_2 = \pm \frac{2 \sqrt{\beta}}{\lambda + \sqrt{\lambda^2 - 4}} \, u_1\,, \quad u_k = \pbb{\frac{\pm 2}{\lambda + \sqrt{\lambda^2 - 4}}}^{k - 2} \, u_2 \quad (k \geq 3)\,.
\end{equation}
\end{enumerate}
\end{proposition}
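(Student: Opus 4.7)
My plan is to treat $Z(\alpha,\beta)$ as a finite-rank perturbation of the free Jacobi operator $Z(1,1)$, read off the eigenvector from the asymptotic recursion, and reduce the eigenvalue equation to a single scalar identity. The symmetry of the spectrum together with the sign flexibility in \eqref{Z_vect_components} follows from the unitary conjugation $D Z(\alpha,\beta) D = -Z(\alpha,\beta)$, with $D \deq \diag((-1)^i \colon i \in \N)$, which holds because $Z(\alpha,\beta)$ is tridiagonal with vanishing diagonal. To locate the essential spectrum I write
\begin{equation*}
Z(\alpha,\beta) - Z(1,1) = (\sqrt\alpha - 1)\pb{\f 1_0 \f 1_1^* + \f 1_1 \f 1_0^*} + (\sqrt\beta - 1)\pb{\f 1_1 \f 1_2^* + \f 1_2 \f 1_1^*}\,,
\end{equation*}
which has rank at most four, and note that $Z(1,1)$ is the standard free Jacobi operator with spectrum $[-2,2]$. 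Weyl's theorem then yields $\spec_{\mathrm{ess}}(Z(\alpha,\beta)) = [-2,2]$, so any real eigenvalue exceeding $2$ is isolated and of finite multiplicity.

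Next I would determine the shape of any $\ell^2$ eigenvector $\f u = (u_i)_{i\in\N}$ with eigenvalue $\lambda > 2$. On the tail $i \geq 3$ the equation reads $u_{i+1} + u_{i-1} = \lambda u_i$, whose characteristic roots are $r_\pm \deq (\lambda \pm \sqrt{\lambda^2 - 4})/2$, satisfying $r_+r_- = 1$ and $r_+ > 1 > r_- > 0$. Membership in $\ell^2(\N)$ forces $u_k = r_-^{k-2} u_2$ for all $k \geq 2$. Rows $0$ and $2$ of $Z\f u = \lambda \f u$ give, respectively, $u_1 = (\lambda/\sqrt\alpha)u_0$ and $\sqrt\beta u_1 = (\lambda - r_-)u_2 = r_+ u_2$, the second of which rationalizes via $r_+^{-1} = r_-$ to $u_2 = \frac{2\sqrt\beta}{\lambda + \sqrt{\lambda^2-4}} u_1$; this reproduces \eqref{Z_vect_components}. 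Inserting these identities into row $1$ collapses the remaining constraint to $\alpha + \beta\lambda r_- = \lambda^2$, equivalently
\begin{equation*}
\beta \lambda \sqrt{\lambda^2 - 4} = (\beta - 2)\lambda^2 + 2\alpha\,.
\end{equation*}

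Squaring this relation and solving the resulting quadratic in $\lambda^2$ produces the two candidate branches
\begin{equation*}
\lambda^2 = \frac{2\alpha^2}{2\alpha - \beta(\alpha+\beta) \pm \beta\sqrt{(\alpha+\beta)^2 - 4\alpha}}\,.
\end{equation*}
The $+$ branch is exactly $\Lambda(\alpha,\beta)^2$ by inspection of \eqref{eq:def_Lambda}; the $-$ branch is spurious and I would discard it by checking that it violates the positivity condition $(\beta - 2)\lambda^2 + 2\alpha \geq 0$ for $\beta$ close to $1$. Conversely, defining $\f u$ by \eqref{Z_vect_components} with $\lambda = \Lambda(\alpha,\beta)$ produces an $\ell^2$ vector that satisfies every row of $Z\f u = \lambda \f u$, so $\Lambda(\alpha,\beta)$ is indeed an eigenvalue. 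Uniqueness on $(2,\infty)$ will follow by showing that $f(\lambda) \deq \beta\lambda\sqrt{\lambda^2 - 4} - (\beta - 2)\lambda^2 - 2\alpha$ is strictly increasing on $(2,\infty)$, via a direct derivative computation using that $\beta$ is close to $1$.

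The delicate part is the perturbative bookkeeping in this last step. I will have to (a) confirm $\Lambda(\alpha,\beta) > 2$ uniformly in the admissible range, in particular in the limit $\alpha \downarrow 2$ where the assumption $\abs{\beta - 1} \leq c (\alpha - 2)$ is essential; (b) discard the spurious branch created by squaring; and (c) verify the monotonicity of $f$ on $(2,\infty)$ uniformly for $(\alpha,\beta)$ satisfying \eqref{assumption_beta_pr}. Each item is elementary but requires careful accounting of the smallness of $\beta - 1$ against $\alpha - 2$.
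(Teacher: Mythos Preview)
Your proof is correct and largely parallels the paper's argument: the symmetry via conjugation by $\diag((-1)^i)$, the transfer-matrix/characteristic-root analysis of the tail recursion, and the reduction to a scalar equation that squares to a quadratic in $\lambda^2$ are all exactly what the paper does. The paper packages the last step as $q(\alpha,\beta,\Lambda(\alpha,\beta))=0$ for the quartic $q(\alpha,\beta,\lambda)=(1-\beta)\lambda^4+(\alpha\beta+\beta^2-2\alpha)\lambda^2+\alpha^2$, equivalent to your formulation.

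The one genuine methodological difference is in the uniqueness step. You argue that $f(\lambda)=\beta\lambda\sqrt{\lambda^2-4}-(\beta-2)\lambda^2-2\alpha$ is strictly increasing on $(2,\infty)$; for $\beta$ close to $1$ this is immediate since both $\beta\lambda\sqrt{\lambda^2-4}$ and $(2-\beta)\lambda^2$ are increasing. The paper instead obtains uniqueness structurally via Cauchy interlacing: removing the first row and column of $Z(\alpha,\beta)$ leaves $Z(0,\beta)$, whose nonzero spectrum coincides with that of $Z(\beta,1)\subset[-2,2]$ for $\beta\le 2$, so at most one eigenvalue of $Z(\alpha,\beta)$ can exceed $2$. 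Your approach is more self-contained but needs the smallness of $\beta-1$; the paper's interlacing argument works for all $\beta\le 2$ and avoids any calculus, at the cost of an auxiliary lemma. Similarly, you invoke Weyl's theorem for the essential spectrum, whereas the paper never needs this explicitly because interlacing already confines all but one eigenvalue to $[-2,2]$.
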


Proposition \ref{prop:Lambda_ab} is proved in Section \ref{sec:pf_Lambda_ab} below.
 There, we also show the following result.

\begin{lemma}[Properties of $\Lambda$] \label{lem:properties_Lambda}
Let $\alpha > 2$. Then 
\begin{equation} \label{Lambda_identities}
\Lambda(\alpha, 1) = \frac{\alpha}{\sqrt{\alpha - 1}}\,, \qquad \partial_\alpha \Lambda(\alpha,1) = \frac{\alpha - 2}{2(\alpha - 1)^{3/2}}\, , \qquad \partial_\beta \Lambda(\alpha, 1) = \frac{\alpha (\alpha - 2)}{2(\alpha - 1)^{5/2}}\,. 
\end{equation}
Moreover, under the assumption \eqref{assumption_beta_pr} for small enough $c$, the following holds. 
The function $\Lambda = \Lambda(\alpha,\beta)$ has positive 
derivatives $\partial_\alpha \Lambda$ and $\partial_\beta \Lambda$ 
and satisfies 
\begin{equation} \label{Lambda_estimates}
\partial_\alpha \Lambda \lesssim \alpha^{-1/2} \,, \quad \partial_\beta \Lambda \lesssim \alpha^{-1/2}\,, \quad
\abs{\partial_\alpha^2 \Lambda} \lesssim \alpha^{-3/2} \, \quad
\abs{\partial_\beta^2 \Lambda} \lesssim \alpha^{-1}
\quad
\abs{\partial_\alpha \partial_\beta \Lambda} \lesssim \alpha^{-3/2} \,,
\end{equation}
the expansion 
\begin{equation} \label{Lambda_expansion}
\Lambda(\alpha, \beta) = \frac{\alpha}{\sqrt{\alpha - 1}} + \frac{\alpha (\alpha - 2)}{2(\alpha - 1)^{5/2}} (\beta - 1) + O\pbb{\frac{(\beta - 1)^2}{\alpha}}
\end{equation}
as well as the estimates
\begin{align} 
\label{eq:approx_Lambda_alpha_x_beta_x_by_Lambda_alpha_x}
\Lambda(\alpha, \beta) & \asymp \sqrt{\alpha}\,, \\ 
\label{Lambda_geq_2_est}
\Lambda(\alpha, \beta) - 2 & \gtrsim \frac{(\alpha - 2)^2}{\alpha^{3/2}}\,.
\end{align} 
\end{lemma}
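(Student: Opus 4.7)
The plan is to recast the defining expression for $\Lambda$ in a form better suited to calculus, then read off the stated properties by explicit computation. Setting $D \deq \sqrt{(\alpha+\beta)^2 - 4\alpha}$, multiplying by a conjugate, and using the identity $((\alpha+\beta)-D)((\alpha+\beta)+D) = 4\alpha$ gives the equivalent representation
\begin{equation*}
\Lambda(\alpha,\beta)^2 = \frac{\alpha((\alpha+\beta)+D)}{(\alpha-\beta)+D}\,.
\end{equation*}
The three identities in \eqref{Lambda_identities} then follow by direct computation. At $\beta = 1$, $D = \alpha - 1$, which immediately yields $\Lambda(\alpha,1)^2 = \alpha^2/(\alpha-1)$. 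The two derivative identities come from differentiating $\Lambda^2$ using $\partial_\alpha D = (\alpha+\beta-2)/D$ and $\partial_\beta D = (\alpha+\beta)/D$, and substituting $\beta = 1$.

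For the magnitude and positivity bounds, the starting point is the estimate $D \asymp \alpha-1 \asymp \alpha$, obtained from expanding $D^2 = (\alpha-1)^2 + 2(\alpha+1)(\beta-1) + (\beta-1)^2$ under \eqref{assumption_beta_pr} with $c$ small (and using the elementary bound $(\alpha+1)(\alpha-2) \lesssim (\alpha-1)^2$ on $(2,\infty)$). Consequently $\alpha \pm \beta + D \asymp \alpha$, and \eqref{eq:approx_Lambda_alpha_x_beta_x_by_Lambda_alpha_x} follows directly from the closed form for $\Lambda^2$. Differentiating this form and simplifying with $D^2 = (\alpha+\beta)^2 - 4\alpha$ yields
\begin{equation*}
\partial_\alpha \Lambda^2 = \frac{2\alpha[D(\alpha-2) + \alpha(\alpha+\beta-4) + 2\beta]}{D(\alpha-\beta+D)^2}\,, \qquad \partial_\beta \Lambda^2 = \frac{2\alpha^2(D + \alpha+\beta-4)}{D(\alpha-\beta+D)^2}\,,
\end{equation*}
whose numerator brackets evaluate at $\beta = 1$ to $2(\alpha-1)(\alpha-2)$ and $2(\alpha-2)$, respectively. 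Positivity is manifest there, and a perturbative argument exploiting $|\beta-1| \leq c(1 \wedge (\alpha-2))$ extends it to the full region \eqref{assumption_beta_pr} for small $c$. The first-derivative size bounds $\partial_\alpha \Lambda, \partial_\beta \Lambda \lesssim \alpha^{-1/2}$ follow upon dividing by $2\Lambda$ and using $\Lambda \asymp \sqrt{\alpha}$; the second-derivative bounds come from one further round of differentiation, after which each term in the resulting rational expression is estimated individually using $D \asymp \alpha$.

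Once these bounds are in hand, the expansion \eqref{Lambda_expansion} follows from Taylor's theorem applied in the $\beta$-variable around $\beta = 1$, with quadratic remainder controlled by $\abs{\partial_\beta^2 \Lambda} \lesssim \alpha^{-1}$. For the gap estimate \eqref{Lambda_geq_2_est}, a direct computation yields $\Lambda(\alpha,1)^2 - 4 = (\alpha-2)^2/(\alpha-1)$, hence $\Lambda(\alpha,1) - 2 \gtrsim (\alpha-2)^2/\alpha^{3/2}$ after division by $\Lambda(\alpha,1) + 2 \asymp \sqrt{\alpha}$. Inserting this into \eqref{Lambda_expansion}, the linear correction in $(\beta-1)$ is at most a constant multiple of $c$ times the leading term (using $|\beta-1| \leq c(\alpha-2)$ together with the fact that $\alpha/(\alpha-1)^{5/2}$ is comparable to $1/\alpha^{3/2}$ uniformly on $(2,\infty)$), and the quadratic remainder is strictly smaller, so \eqref{Lambda_geq_2_est} is preserved after shrinking $c$.

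The main analytical obstacle is obtaining uniform derivative bounds across the whole range $\alpha > 2$, since the situation near the threshold $\alpha = 2$, where $\alpha-1$ and $D$ stay of order one and several numerator factors vanish, behaves qualitatively differently from the large-$\alpha$ regime, where all quantities scale polynomially. Exposing the vanishing at $(\alpha,\beta)=(2,1)$ via explicit factorization of the numerator brackets is what allows the correct $(\alpha-2)$-dependence in \eqref{Lambda_identities} and \eqref{Lambda_geq_2_est} to emerge, and most of the algebraic bookkeeping goes into this step.
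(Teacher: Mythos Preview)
Your argument is correct and takes a genuinely different route from the paper. The paper exploits the spectral interpretation of $\Lambda$ as the top eigenvalue of $Z(\alpha,\beta)$: positivity of $\partial_\alpha\Lambda$ and $\partial_\beta\Lambda$ comes for free from Perron--Frobenius, and the size bounds are obtained via Rayleigh--Schr\"odinger perturbation theory, writing $\partial_\alpha\Lambda = \alpha^{-1/2} u_0 u_1$, $\partial_\beta\Lambda = \beta^{-1/2} u_1 u_2$ in terms of the eigenvector components, and controlling $\partial_\alpha \f u$, $\partial_\beta \f u$ through the resolvent $(\Pi(Z-\Lambda)\Pi)^{-1}$ using the spectral gap $\Lambda - 2$. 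The paper then splits into a bounded regime $\alpha \leq C_*$, where all estimates are trivial, and a large-$\alpha$ regime where the gap is $\asymp\sqrt{\alpha}$.

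Your approach is purely computational: rationalize $\Lambda^2$, differentiate the resulting expression in $\alpha,\beta,D$, and estimate the pieces using $D\asymp\alpha$. This is more elementary---no operator theory enters---and in fact yields a slightly sharper bound $\abs{\partial_\beta^2\Lambda}\lesssim\alpha^{-3/2}$ than the stated $\alpha^{-1}$. The trade-off is that positivity of the derivatives and the second-derivative bounds, which you only sketch, require nontrivial bookkeeping that the spectral route sidesteps: one must check that the brackets $D(\alpha-2)+\alpha(\alpha+\beta-4)+2\beta$ and $D+\alpha+\beta-4$ remain positive under the perturbation $\abs{\beta-1}\leq c(1\wedge(\alpha-2))$, which works because $\abs{D-(\alpha-1)}\lesssim c(\alpha-2)$ uniformly (using both parts of the min), so the brackets stay within a factor $1+O(c)$ of their values $2(\alpha-1)(\alpha-2)$ and $2(\alpha-2)$ at $\beta=1$. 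The second-derivative step is similarly routine once one notes that $\partial_\alpha D,\partial_\beta D = O(1)$ and all denominators are $\asymp\alpha^3$.
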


By expanding \eqref{eq:def_Lambda_d} using Lemma~\ref{lem:properties_Lambda},  we obtain the following result. The expansion is designed so that the error term is negligible in our application, the proof of Lemma \ref{lem:Sigma_Phi},  for the entire regime of $d$ allowed by our results. We note that the optimal bounds on the derivatives obtained in \eqref{Lambda_estimates} for large values of $\alpha$ are crucial. 
 The spectral properties of $Z_{\fra d}$ follow from Proposition~\ref{prop:Lambda_ab}, \eqref{Z_scaling}, and \eqref{eq:def_Lambda_d}.

\begin{corollary} \label{cor:Lambda_expansion}
Suppose that $d \geq 1$, $\alpha \geq 2 + 4/d$, and that \eqref{assumption_beta_pr} holds for some small enough constant $c$.  

\begin{enumerate}[label=(\roman*)]
\item \label{item:Zd_eval}
The matrix $Z_{\fra d}(\alpha, \beta)$ has a unique eigenvalue $\lambda$ in $(2 \sqrt{\fra d},\infty)$.  
It satisfies $\lambda = \Lambda_{\fra d}(\alpha, \beta)$. 
Moreover, the spectrum of $Z_{\fra d}(\alpha,\beta)$ is symmetric. 
\item \label{item:Zd_evect}
The eigenvector $(u_i)_{i \in \N}$ associated with the eigenvalue $\pm \lambda = \pm \Lambda_{\fra d}(\alpha, \beta)$ satisfies
\begin{equation} \label{Zd_vect_components}
u_1 = \pm \frac{\lambda}{\sqrt{\alpha}} \, u_0 \,, \quad u_2 = \pm \frac{2 \sqrt{\beta}}{\lambda + \sqrt{\lambda^2 - 4 \fra d}} \, u_1\,, \qquad
u_k = \pbb{\frac{\pm 2 \sqrt{\fra d}}{\lambda + \sqrt{\lambda^2 - 4 \fra d}}}^{k - 2} \, u_2 \quad (k \geq 3)\,.
\end{equation}
\item 
The derivatives $\partial_\alpha \Lambda_{\fra d}(\alpha,\beta)$ and $\partial_\beta \Lambda_{\fra d}(\alpha,\beta)$ are positive. 
For any $u \geq 2 + 4/d$ satisfying $\abs{\alpha - u} \leq 1$, the function $\Lambda_{\fra d}(\alpha,\beta)$  has the expansion
\begin{multline}\label{eq:Lambda_d_expansion} 
\Lambda_{\fra d}(\alpha, \beta) = \Lambda_{\fra d}(u,1) + \frac{u - 2}{2 (u - 1)^{3/2}} (\alpha - u) + \frac{u (u - 2 )}{2(u - 1)^{5/2}} (\beta - 1)
\\
+ O \pBB{\frac{(\alpha - u)^2}{u^{3/2}} + \frac{\abs{\alpha - u}}{u^{3/2} d} 
 + \frac{\abs{\beta - 1}}{u^{1/2} d} + \frac{(\beta - 1)^2}{u}}\,. 
\end{multline}
\end{enumerate}
\end{corollary}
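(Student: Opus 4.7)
The strategy is to reduce everything to Proposition~\ref{prop:Lambda_ab} and Lemma~\ref{lem:properties_Lambda} via the scaling relation \eqref{Z_scaling} and the definition $\Lambda_{\fra d}(\alpha,\beta) = \sqrt{\fra d}\,\Lambda(\alpha/\fra d,\beta/\fra d)$. First I would verify that the rescaled pair $(\tilde\alpha,\tilde\beta) \deq (\alpha/\fra d,\beta/\fra d)$ satisfies the hypotheses of Proposition~\ref{prop:Lambda_ab}. From $\alpha \geq 2 + 4/d$ and $\fra d = 1 + 1/d$, one computes $\tilde\alpha - 2 = (\alpha - 2 - 2/d)/\fra d \geq (\alpha - 2)/(2\fra d)$, so $\tilde\alpha > 2$. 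Moreover $|\tilde\beta - 1| = |\beta - \fra d|/\fra d \leq |\beta - 1| + 1/d$, which together with $|\beta - 1| \leq c(1 \wedge (\alpha - 2))$ and $1/d \leq (\alpha-2)/4$ yields $|\tilde\beta - 1| \leq c'(1 \wedge (\tilde\alpha - 2))$ for a constant $c'$ that remains as small as required by shrinking $c$. Parts~\ref{item:Zd_eval} and \ref{item:Zd_evect} then follow by multiplying the eigenvalues from Proposition~\ref{prop:Lambda_ab} by $\sqrt{\fra d}$, and by substituting $\lambda = \Lambda_{\fra d}(\alpha,\beta)$ into \eqref{Z_vect_components} after the change of variables (using $\lambda^2 - 4\fra d = \fra d((\lambda/\sqrt{\fra d})^2 - 4)$ to get the denominators right).

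For part~\ref{item:Zd_evect}'s positivity and \eqref{eq:Lambda_d_expansion}, I would apply Taylor's theorem to $\Lambda_{\fra d}$ around $(u,1)$:
\[
\Lambda_{\fra d}(\alpha,\beta) = \Lambda_{\fra d}(u,1) + \partial_\alpha \Lambda_{\fra d}(u,1)\,(\alpha - u) + \partial_\beta \Lambda_{\fra d}(u,1)\,(\beta - 1) + R,
\]
where the second-order remainder $R$ is controlled via the derivative bounds \eqref{Lambda_estimates} rescaled through \eqref{Z_scaling}: $\partial_\alpha^2 \Lambda_{\fra d} = \fra d^{-3/2}\partial_\alpha^2 \Lambda(\tilde\alpha,\tilde\beta)$ etc., and since $\fra d \asymp 1$ and $u \asymp \tilde u$, Lemma~\ref{lem:properties_Lambda} yields $|R| \lesssim (\alpha-u)^2/u^{3/2} + |\alpha-u||\beta-1|/u^{3/2} + (\beta-1)^2/u$. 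These contributions are absorbed into the stated error in \eqref{eq:Lambda_d_expansion}. Positivity of $\partial_\alpha \Lambda_{\fra d}$ and $\partial_\beta \Lambda_{\fra d}$ is inherited directly from the positivity statement in Lemma~\ref{lem:properties_Lambda} (checked for $(\tilde\alpha,\tilde\beta)$, which satisfies its hypothesis as above).

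The remaining task is to replace the leading derivatives $\partial_\alpha \Lambda_{\fra d}(u,1)$ and $\partial_\beta \Lambda_{\fra d}(u,1)$ by the explicit expressions $\frac{u-2}{2(u-1)^{3/2}}$ and $\frac{u(u-2)}{2(u-1)^{5/2}}$ from \eqref{Lambda_identities}. Writing $\partial_\alpha \Lambda_{\fra d}(u,1) = \fra d^{-1/2}\,\partial_\alpha \Lambda(u/\fra d, 1/\fra d)$, I would use the mean value theorem twice: once in $\beta$ (invoking $|\partial_\alpha\partial_\beta\Lambda| \lesssim \tilde u^{-3/2}$ from \eqref{Lambda_estimates} together with $|1/\fra d - 1| \lesssim 1/d$) to pass to $\partial_\alpha\Lambda(u/\fra d,1)$, and once in $\alpha$ (invoking $|\partial_\alpha^2\Lambda| \lesssim \tilde u^{-3/2}$ and $|u/\fra d - u| \lesssim u/d$) to arrive at $\partial_\alpha\Lambda(u,1) + O(u^{-3/2}/d)$, and similarly for $\partial_\beta$. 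Multiplying each remainder by $(\alpha - u)$ or $(\beta - 1)$ produces precisely the error terms $|\alpha - u|/(u^{3/2}d)$ and $|\beta - 1|/(u^{1/2}d)$ appearing in \eqref{eq:Lambda_d_expansion}. The only mild care needed is tracking that the powers of $u$ in \eqref{Lambda_estimates} are the sharp ones for large $u$ (so the errors remain small in the sparse regime $u \gg 1$), but this is already built into Lemma~\ref{lem:properties_Lambda}; there is no genuine analytical obstacle in part~\ref{item:Zd_evect} beyond careful bookkeeping.
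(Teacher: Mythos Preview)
Your overall route---reduce to Proposition~\ref{prop:Lambda_ab} via the scaling \eqref{Z_scaling} and then Taylor-expand using Lemma~\ref{lem:properties_Lambda}---is exactly what the paper intends, and parts~\ref{item:Zd_eval}, \ref{item:Zd_evect}, the positivity, and the second-order remainder in \eqref{eq:Lambda_d_expansion} go through as you describe.

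There is, however, a computational slip in your treatment of the linear $\alpha$-coefficient that actually matters. You write that passing from $\partial_\alpha\Lambda(u/\fra d,1)$ to $\partial_\alpha\Lambda(u,1)$ via the mean value theorem, using $|\partial_\alpha^2\Lambda|\lesssim \tilde u^{-3/2}$ and $|u/\fra d-u|\lesssim u/d$, yields an error $O(u^{-3/2}/d)$. But the product of $u^{-3/2}$ and $u/d$ is $u^{-1/2}/d$, not $u^{-3/2}/d$. You also do not account for the prefactor $\fra d^{-1/2}=1+O(1/d)$, which multiplied by $\partial_\alpha\Lambda(u,1)\asymp u^{-1/2}$ contributes a second $O(u^{-1/2}/d)$ error. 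The point is that these two $O(u^{-1/2}/d)$ contributions cancel to leading order, leaving the claimed $O(u^{-3/2}/d)$, but your MVT argument does not see this. The cleanest fix is to exploit the explicit formula in \eqref{Lambda_identities}: one has
\[
\fra d^{-1/2}\,\partial_\alpha\Lambda(u/\fra d,1)=\frac{u-2\fra d}{2(u-\fra d)^{3/2}}\,,
\]
and a direct expansion in $1/d$ now gives $\frac{u-2}{2(u-1)^{3/2}}+O(u^{-3/2}/d)$; combining with your (correct) $\beta$-step finishes the job. This is not cosmetic: the weaker bound $|\alpha-u|/(u^{1/2}d)$ would not be absorbed by $\eta$ in the proof of Lemma~\ref{lem:Sigma_Phi} when $\fra u\gg 1$, so the extra power of $u$ is essential in the sparse regime.
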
 

\begin{lemma} \label{lem:eigenvector_fine_properties} 
Let $\alpha > 2$.
There is a constant $c>0$ such that
under the condition \eqref{assumption_beta_pr}
the following holds 
for the normalized, nonnegative eigenvector $(u_i)_{i \in \N}$ of $Z(\alpha,\beta)$ corresponding to its largest eigenvalue $\Lambda(\alpha,\beta)$. 
\begin{enumerate}[label=(\roman*)] 
\item 
\begin{equation} 
u_2^2 \leq \sum_{i=2}^\infty u_i^2 \lesssim \frac{1}{\alpha}. \label{eq:relations_sum_u_i_u_2}
\end{equation} 
\item 
There is a constant $c_*>0$ such that, for any $d \geq 1$, if 
 $r \geq 22 + \frac{1}{c_*} \frac{\log d}{\log \alpha } \big(\frac{\alpha}{\alpha -2}\big)^2$ then
\begin{equation}\label{eq:u_r_leq_u_2_small} 
u_r \leq u_2 (d \alpha)^{-10}\,. 
\end{equation}
\end{enumerate}
The same estimates hold if $(u_i)_{i \in \N}$ is the eigenvector of $Z_{\fra d}(\alpha, \beta)$ corresponding to its largest eigenvalue $\Lambda_{\fra d}(\alpha, \beta)$, for any $d \geq 1$ and $\alpha > 2 + 4/d$.
\end{lemma}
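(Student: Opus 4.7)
The plan is to exploit the explicit formulas \eqref{Z_vect_components}, which give $u_k = \rho^{k-2} u_2$ for $k \geq 2$, where $\rho \deq 2/(\lambda + \sqrt{\lambda^2-4}) \in (0,1)$ and $\lambda = \Lambda(\alpha,\beta)$. The geometric decay in $k$ reduces both claims to scalar estimates involving $\rho$ and $\lambda$. The arguments rely on the algebraic identity $\lambda\rho = \rho^2 + 1$, equivalent to $\rho$ being the smaller root of $X^2 - \lambda X + 1 = 0$, together with the bound $\lambda \asymp \sqrt{\alpha}$ from \eqref{eq:approx_Lambda_alpha_x_beta_x_by_Lambda_alpha_x}.

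\textbf{Part (i).} The trivial inequality $u_2^2 \leq \sum_{i \geq 2} u_i^2$ follows from $\sum_{i \geq 2} u_i^2 = u_2^2/(1-\rho^2)$, so the content is the upper bound $\sum_{i \geq 2} u_i^2 \lesssim 1/\alpha$. Combining the geometric sum with $u_2 = \sqrt{\beta}\rho u_1$, $u_1 = (\lambda/\sqrt{\alpha})u_0$, and the normalization $\sum_{i \geq 0} u_i^2 = 1$ gives the closed form
\begin{equation*}
\sum_{i \geq 2} u_i^2 = \frac{\beta \lambda^2 \rho^2}{\alpha(1-\rho^2) + \lambda^2(1-\rho^2) + \beta \lambda^2 \rho^2}\,.
\end{equation*}
The identity $\lambda\rho = \rho^2 + 1$ yields the universal bound $\lambda^2 \rho^2 = (\rho^2+1)^2 \leq 4$. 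One then splits into two regimes. If $\rho^2 \leq 1/2$, the denominator is at least $\alpha(1-\rho^2) \geq \alpha/2$ while the numerator is $\leq 4\beta \lesssim 1$, so the ratio is $\lesssim 1/\alpha$. If $\rho^2 > 1/2$, then $\lambda = \rho + \rho^{-1} \leq 3/\sqrt{2}$, and \eqref{eq:approx_Lambda_alpha_x_beta_x_by_Lambda_alpha_x} bounds $\alpha$ by a universal constant, so the trivial estimate $\sum \leq 1 \lesssim 1/\alpha$ suffices.

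\textbf{Part (ii) and the $Z_{\fra d}$-extension.} The bound $u_r \leq u_2(d\alpha)^{-10}$ is equivalent to $(r-2)\log(1/\rho) \geq 10\log(d\alpha)$, so it suffices to lower-bound $\log(1/\rho)$. Since $1/\rho \geq \lambda/2$, one has $\log(1/\rho) \geq \log(\lambda/2) \gtrsim \log\alpha$ whenever $\alpha$ exceeds a fixed constant (using $\lambda \asymp \sqrt{\alpha}$). In the complementary regime the identity $\lambda - 2 = (1-\rho)^2/\rho$ gives $1-\rho \geq \sqrt{\rho(\lambda-2)} \gtrsim \sqrt{\lambda - 2}$, which combined with $\log(1/\rho) \geq 1 - \rho$ and \eqref{Lambda_geq_2_est} yields $\log(1/\rho) \gtrsim (\alpha-2)/\alpha^{3/4}$. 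Because $(\alpha/(\alpha-2))^2 \gtrsim \alpha^{3/4}/(\alpha-2)$ when $\alpha-2$ is bounded, the stated hypothesis $r \geq 22 + c_*^{-1}(\log d/\log\alpha)(\alpha/(\alpha-2))^2$ delivers the required inequality after shrinking $c_*$ to absorb universal constants. For $Z_{\fra d}(\alpha,\beta)$, the formulas \eqref{Zd_vect_components} give the analogous decay $u_k = \rho_{\fra d}^{k-2} u_2$ with $\rho_{\fra d} = 2\sqrt{\fra d}/(\lambda + \sqrt{\lambda^2 - 4\fra d})$ and the scaled identity $\lambda/\sqrt{\fra d} = \rho_{\fra d} + \rho_{\fra d}^{-1}$; since $\fra d \in (1,2]$ and $\Lambda_{\fra d}(\alpha,\beta) \asymp \sqrt{\alpha}$ via the scaling \eqref{Z_scaling}, the same case-by-case analysis produces both claims. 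The main technical difficulty is tracking the absolute constants across the transition between the $\lambda \gg 2$ regime, where $\log(1/\rho) \asymp \log\alpha$, and the $\lambda$-near-$2$ regime, where $\log(1/\rho)$ behaves like a fractional power of the spectral gap $\lambda - 2$.
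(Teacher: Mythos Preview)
Your proof is correct and follows essentially the same approach as the paper: a two-regime split between $\alpha$ large and $\alpha$ bounded, using the explicit eigenvector formulas \eqref{Z_vect_components} together with the estimates $\Lambda \asymp \sqrt{\alpha}$ from \eqref{eq:approx_Lambda_alpha_x_beta_x_by_Lambda_alpha_x} and $\Lambda - 2 \gtrsim (\alpha-2)^2/\alpha^{3/2}$ from \eqref{Lambda_geq_2_est}. Your closed-form expression for $\sum_{i\geq 2} u_i^2$ in Part~(i) is a slightly more direct route than the paper's (which instead shows $u_2^2 \asymp \alpha^{-1}$ and $\sum_{i\geq 2} u_i^2 \lesssim u_2^2$ separately for large $\alpha$), and your identity $\lambda - 2 = (1-\rho)^2/\rho$ in Part~(ii) is equivalent to the paper's manipulation $\rho = 1 - 2\sqrt{\Lambda-2}/(\sqrt{\Lambda+2}+\sqrt{\Lambda-2})$.
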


\begin{proof}
We distinguish the regimes $\alpha \geq C_0$ and $\alpha \leq C_0$ for some sufficiently large $C_0>0$. 

First, suppose that $\alpha \geq C_0$. 
For large enough $C_0>0$, we get from \eqref{Z_vect_components} and \eqref{eq:approx_Lambda_alpha_x_beta_x_by_Lambda_alpha_x} 
that $(u_i/u_2)^{1/(i-2)} \lesssim  1/\sqrt{\alpha}$ for all $i \geq 3$.
Thus, $\sum_{i\geq 2} u_i^2 \lesssim u_2^2$ if $\alpha \geq C_0$ for $C_0$ large enough. 
Moreover, $u_2^2 \asymp \alpha^{-1}$ if $\alpha \geq C_0$ and $C_0$ is sufficiently large
due to \eqref{eq:approx_Lambda_alpha_x_beta_x_by_Lambda_alpha_x} and 
\eqref{Z_vect_components}. 
This proves \eqref{eq:relations_sum_u_i_u_2} if $\alpha \geq C_0$. 
Since $(u_i /u_2)^{1/(i-2)} \lesssim 1/\sqrt{\alpha}$ for all $i \geq 3$, 
the imposed lower bound on $r$ yields \eqref{eq:u_r_leq_u_2_small} if $\alpha \geq C_0$.

We now assume that $\alpha \leq C_0$. 
Since $\alpha \gtrsim 1$ by \eqref{eq:scaling_alpha_max} below and $(u_i)_{i \in \N}$ is normalized, 
\eqref{eq:relations_sum_u_i_u_2} is trivial in the present regime. 
For the proof of \eqref{eq:u_r_leq_u_2_small}, we also use \eqref{Z_vect_components} to conclude, 
with $\Lambda = \Lambda(\alpha,\beta)$, that  
\[ \frac{u_r^{1/(r-2)}}{u_2^{1/(r-2)}} = \frac{2}{\Lambda + \sqrt{\Lambda^2- 4}} = 1 - \frac{2\sqrt{\Lambda-2}}{\sqrt{\Lambda +2 } + \sqrt{\Lambda -2}} \leq 1 - \tilde{c}\frac{\sqrt{\Lambda - 2}}{\sqrt\Lambda} 
\leq \exp \bigg( - \tilde{c} \frac{\sqrt{\Lambda - 2}}{\sqrt{\Lambda}} \bigg) \] 
for some constant $\tilde{c} > 0 $. 
Hence, \eqref{eq:u_r_leq_u_2_small} holds if $r$ satisfies the imposed lower bound 
due to $\frac{\Lambda}{\Lambda - 2} \geq 1$, \eqref{Lambda_geq_2_est} and \eqref{eq:approx_Lambda_alpha_x_beta_x_by_Lambda_alpha_x}.
Finally, the proof of the claim for the eigenvector of $Z_{\fra d}(\alpha, \beta)$ is analogous, using Corollary \ref{cor:Lambda_expansion} \ref{item:Zd_evect} instead of Proposition \ref{prop:Lambda_ab} \ref{item:Z_evect}. 
\end{proof}

\begin{lemma}\label{lem:EstimateLambdad}
There is a constant $c>0$ such that under the condition \eqref{assumption_beta_pr}, we have for all $\alpha > 2$,
\[ 0 \leq \Lambda_{\fra d}(\alpha,\beta)-\Lambda(\alpha,\beta) \lesssim \frac{1}{d \alpha}\,. \] 
\end{lemma}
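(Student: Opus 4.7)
The plan is to derive the exact algebraic identity
\[
\Lambda_{\fra d}(\alpha,\beta)^2 - \Lambda(\alpha,\beta)^2 = \frac{8\alpha^4 \beta \fra d(\fra d - 1)}{D_1 D_{\fra d}(S_1+S_{\fra d})(\alpha+\beta+S_1)(\alpha+\beta+S_{\fra d})},
\]
where $S_\omega \deq \sqrt{(\alpha+\beta)^2 - 4\alpha\omega}$ and $D_\omega \deq \alpha\omega - \frac{\beta}{2}(\alpha+\beta) + \frac{\beta}{2}S_\omega$, and then to bound the five positive factors in the denominator. The positivity of the right-hand side immediately yields the lower bound $\Lambda_{\fra d} \geq \Lambda$, so no separate argument is needed for that half.

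To derive the identity, I would start from $\Lambda(\alpha,\beta)^2 = \alpha^2/D_1$, which is immediate from \eqref{eq:def_Lambda}. For the scaled quantity, setting $\alpha' = \alpha/\fra d$, $\beta' = \beta/\fra d$ in the formula for $\Lambda(\alpha',\beta')^2$ and using $\Lambda_{\fra d}(\alpha,\beta) = \sqrt{\fra d}\,\Lambda(\alpha',\beta')$ from \eqref{eq:def_Lambda_d} gives $\Lambda_{\fra d}(\alpha,\beta)^2 = \alpha^2\fra d/D_{\fra d}$. Hence
\[
\Lambda_{\fra d}^2 - \Lambda^2 = \frac{\alpha^2(\fra d D_1 - D_{\fra d})}{D_1 D_{\fra d}}.
\]
The numerator is $-\frac{\beta(\alpha+\beta)(\fra d - 1)}{2} + \frac{\beta}{2}(\fra d S_1 - S_{\fra d})$. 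Rationalizing using $S_1^2 - S_{\fra d}^2 = 4\alpha(\fra d - 1)$ gives $\fra d S_1 - S_{\fra d} = (\fra d-1)\bigl[(\alpha+\beta)^2 + S_1 S_{\fra d}\bigr]/(S_1+S_{\fra d})$; substituting and collecting terms, the bracket factors as
\[
(\alpha+\beta)^2 - (\alpha+\beta)(S_1+S_{\fra d}) + S_1 S_{\fra d} = \bigl((\alpha+\beta)-S_1\bigr)\bigl((\alpha+\beta)-S_{\fra d}\bigr).
\]
Applying the rationalization $(\alpha+\beta) - S_\omega = 4\alpha\omega/(\alpha+\beta+S_\omega)$ to each factor then yields the displayed identity.

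Next I would bound each factor under the hypotheses. Note that Corollary~\ref{cor:Lambda_expansion}\ref{item:Zd_eval} implicitly requires $\alpha \geq 2 + 4/d$, which guarantees $S_{\fra d}$ is real. Under \eqref{assumption_beta_pr} with small $c$, we have $\beta \asymp 1$ and $\fra d \asymp 1$ with $\fra d - 1 = 1/d$. From \eqref{eq:approx_Lambda_alpha_x_beta_x_by_Lambda_alpha_x} and $D_\omega = \alpha^2\fra d^{1-\omega/\omega}/\Lambda_\omega^2$ we get $D_1 \asymp \alpha$ and $D_{\fra d} \asymp \alpha$. The factors $\alpha+\beta+S_\omega$ trivially satisfy $\alpha+\beta+S_\omega \geq \alpha+\beta \gtrsim \alpha$. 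Finally, $S_1^2 = (\alpha-1)^2 + 2(\alpha+1)(\beta-1) + (\beta-1)^2 \gtrsim (\alpha-1)^2 \gtrsim \alpha^2$ for small enough $c$ in \eqref{assumption_beta_pr} and $\alpha > 2$, so $S_1 + S_{\fra d} \geq S_1 \gtrsim \alpha$. Multiplying gives a denominator of order $\alpha^5$, so
\[
0 \leq \Lambda_{\fra d}^2 - \Lambda^2 \lesssim \frac{\alpha^4/d}{\alpha^5} = \frac{1}{d\alpha}.
\]
Since $\Lambda_{\fra d} + \Lambda \geq \Lambda \gtrsim \sqrt{\alpha}$ by \eqref{eq:approx_Lambda_alpha_x_beta_x_by_Lambda_alpha_x}, we conclude $\Lambda_{\fra d} - \Lambda \lesssim 1/(d\alpha^{3/2}) \leq 1/(d\alpha)$, completing the proof.

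The main obstacle is the algebraic manipulation to reach the product form: a careless rationalization step would either sacrifice positivity (forcing a separate Rayleigh-quotient argument for the lower bound) or, more seriously, lose one of the four factors of $\alpha$ that collectively produce the critical denominator $\alpha^5$. Absent the full $\alpha^5$ one would only obtain the much weaker bound $\Lambda_{\fra d} - \Lambda \lesssim 1/d$, which is insufficient for the downstream applications in Corollary~\ref{Cor:SpecMax}, where absorption of this error into terms of size $1/(d\am)$ is required.
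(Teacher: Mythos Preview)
Your argument is correct and takes a genuinely different route from the paper. The paper proves the lower bound by Perron--Frobenius (since $Z_{\fra d}\geq Z$ entrywise) and the upper bound by perturbation theory: it bounds $\|(Z_{\fra d}-Z)\f u\|^2\lesssim 1/(\alpha d^2)$ using $\sum_{i\geq 2}u_i^2\lesssim 1/\alpha$ from \eqref{eq:relations_sum_u_i_u_2}, which already suffices for bounded $\alpha$, and then for large $\alpha$ invokes Lemma~\ref{lem:perturbationEV} with the spectral gap of $Z_{\fra d}$ together with $\scalar{\f u}{(Z_{\fra d}-Z)\f u}=O(1/(d\alpha))$. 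Your approach bypasses all spectral theory by deriving the explicit positive factorization of $\Lambda_{\fra d}^2-\Lambda^2$; this is more elementary, yields the sign and the size in one stroke, and in fact delivers the sharper bound $\Lambda_{\fra d}-\Lambda\lesssim 1/(d\alpha^{3/2})$ before you weaken it. The price is the algebraic manipulation and the need for $S_{\fra d}$ to be real, which (as you note) amounts to $\alpha\gtrsim 2+4/d$; the paper's perturbative argument nominally covers all $\alpha>2$, but in every application one has $\alpha-2\gg 1/d$, so this is not a material restriction. Two small cleanups: the exponent in ``$D_\omega=\alpha^2\fra d^{1-\omega/\omega}/\Lambda_\omega^2$'' is garbled (you mean $D_1=\alpha^2/\Lambda^2$ and $D_{\fra d}=\alpha^2\fra d/\Lambda_{\fra d}^2$), and rather than appealing to \eqref{eq:approx_Lambda_alpha_x_beta_x_by_Lambda_alpha_x} for $D_{\fra d}\asymp\alpha$ it is cleaner to read $D_{\fra d}\geq \alpha-\tfrac{\beta}{2}(\alpha+\beta)\gtrsim\alpha$ directly from the formula.
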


\begin{proof}
The inequality $\Lambda_{\fra d}(\alpha,\beta)-\Lambda(\alpha,\beta) \geq 0$ is a standard consequence of the Perron-Frobenius theorem. 
For the other estimate, let $\f u = (u_i)_{i\in \mathbb{N}}$ be a normalized eigenvector of $Z(\alpha,\beta)$ corresponding to its largest eigenvalue $\Lambda(\alpha,\beta)$.
Then 
\begin{equation} \label{eq:proof_estimate_Lambda_d_aux1} 
\left\|\left(Z_{\fra d}(\alpha,\beta)-Z(\alpha,\beta)\right)\f u\right\|^2 \leq 2(\sqrt{\mathfrak{d}}-1)^2\sum_{i\geq 2}u_i^2 \lesssim \frac{1}{\alpha d^2}\,, 
\end{equation}
where we used \eqref{eq:relations_sum_u_i_u_2} from Lemma~\ref{lem:eigenvector_fine_properties} and $\mathfrak{d} 
=1 + 1/d$ 
in the last step. This proves $\Lambda_{\fra d}(\alpha,\beta) = \Lambda(\alpha,\beta) + O((d\sqrt{\alpha})^{-1})$ 
and, thus, the lemma if $\alpha \leq C_0$ for any constant $C_0$. 

We now choose $C_0$ sufficiently big such that $\alpha \geq C_0$ implies 
$\Lambda(\alpha,\beta) > 2 \sqrt{\mathfrak{d}} + 1$.  
Then $Z_{\fra d}(\alpha,\beta)$ has only one eigenvalue in $[\Lambda(\alpha,\beta) - 1, \Lambda(\alpha,\beta) + 1]$ 
by Corollary~\ref{cor:Lambda_expansion} \ref{item:Zd_eval}.  
Hence, we have justified the conditions of Lemma~\ref{lem:perturbationEV} with $M = Z_{\fra d}(\alpha,\beta)$, $\lambda = \Lambda(\alpha,\beta)$, $\Delta = 1$ and 
$\eps = O((d \sqrt{\alpha})^{-1})$ by \eqref{eq:proof_estimate_Lambda_d_aux1}. 
Similary, as in \eqref{eq:proof_estimate_Lambda_d_aux1}, we obtain from \eqref{eq:relations_sum_u_i_u_2} that 
\[ \scalar{\f u}{(Z_{\fra d}(\alpha,\beta)-\Lambda(\alpha,\beta))\f u}  = \scalar{\f u}{(Z_{\fra d}(\alpha,\beta)-Z(\alpha,\beta))\f u} = O \big( (d \alpha)^{-1} \big)\,. 
\] 
Therefore, using Lemma~\ref{lem:perturbationEV} completes the proof of Lemma~\ref{lem:EstimateLambdad}. 
\end{proof}

\subsection{Proofs of Proposition \ref{prop:Lambda_ab} and Lemma~\ref{lem:properties_Lambda}} \label{sec:pf_Lambda_ab}

The rest of this appendix is devoted to the proofs of Proposition \ref{prop:Lambda_ab} and Lemma~\ref{lem:properties_Lambda}. We start with some auxiliary results. The following result is a simple consequence of a transfer matrix analysis (see \cite[Appendix C]{ADK19}).

\begin{lemma}
If $\alpha \leq 2$ then $\norm{Z(\alpha, 1)} \leq 2$. If $\alpha > 2$ then $Z(\alpha,1)$ has two eigenvalues $\pm \frac{\alpha}{\sqrt{\alpha - 1}}$, and the remainder of the spectrum is contained in $[-2,2]$.
\end{lemma}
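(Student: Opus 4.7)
The approach is to regard $Z \deq Z(\alpha,1)$ as a finite-rank (hence compact) perturbation of the free Jacobi operator $J_0$ on $\ell^2(\N)$ with all sub- and super-diagonal entries equal to $1$ and all other entries zero. The spectrum of $J_0$ is the well-known interval $[-2,2]$ (it is unitarily equivalent to multiplication by $2\cos\theta$ on $L^2([0,\pi])$ via the Chebyshev-of-the-second-kind basis). Since $Z - J_0$ has nonzero entries only at positions $(0,1)$ and $(1,0)$, it has rank $2$, so by Weyl's theorem on the invariance of essential spectrum under compact perturbations we have $\spec_{\rm ess}(Z) = [-2,2]$. The entire content of the lemma is then to locate the discrete eigenvalues of $Z$ outside $[-2,2]$.

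To do so, I would run a standard transfer-matrix (Jost function) calculation. For $\lambda \in \R$ with $|\lambda|>2$, the three-term recursion $u_{k+1} = \lambda u_k - u_{k-1}$, which is the eigenvalue equation on rows $k \geq 2$ where $Z$ coincides with $J_0$, has two fundamental solutions $r^k$ and $r^{-k}$ with
\[
r \deq \frac{|\lambda| - \sqrt{\lambda^{2}-4}}{2} \in (0,1).
\]
Square-summability forces the solution proportional to $r^k$, so $u_k = c\, r^{k-1}$ for $k \geq 1$. The two boundary rows then read
\[
\sqrt{\alpha}\, u_1 = \lambda u_0, \qquad \sqrt{\alpha}\, u_0 + u_2 = \lambda u_1.
\]
Eliminating $u_0$ via the first equation and substituting $u_1=c$, $u_2 = cr$ in the second gives the single scalar condition $\alpha = \lambda^{2} - \lambda r$. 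Inserting the formula for $r$, setting $\lambda = 2\cosh\theta$ for convenience (so $\sqrt{\lambda^2-4}=2\sinh\theta$), the equation simplifies to $e^{2\theta} = \alpha - 1$, whence $\lambda = e^\theta + e^{-\theta} = \alpha/\sqrt{\alpha-1}$. The condition $|\lambda|>2$ is equivalent to $(\alpha-2)^2>0$ together with $\alpha\geq 1$, and combined with $\lambda>2$ amounts precisely to $\alpha>2$.

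Combining these two ingredients yields the claim. If $\alpha\leq 2$, there is no $\ell^2$ eigenvector with $|\lambda|>2$, so $\spec(Z)\subset[-2,2]$ and $\|Z\|\leq 2$. If $\alpha>2$, the positive root $\lambda_+ = \alpha/\sqrt{\alpha-1}$ produces a genuine $\ell^2$ eigenvector, and the symmetry of the spectrum about $0$ produces the companion eigenvalue $-\lambda_+$; this symmetry follows immediately from the bipartite structure of $Z$, since conjugation by the diagonal sign operator $J = \diag((-1)^k)_{k\geq 0}$ satisfies $JZJ = -Z$. The remainder of $\spec(Z)$ coincides with $\spec_{\rm ess}(Z) = [-2,2]$.

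I expect no genuine analytic obstacle; the argument is classical Jacobi-matrix scattering theory. The only point that requires mild care is the uniqueness step, i.e.\ that the transfer-matrix calculation really captures every eigenvalue outside $[-2,2]$, but this follows because for $|\lambda|>2$ every formal solution of the recursion is a linear combination of $r^{\pm k}$, and the $\ell^2$ condition forces the decaying one, leaving precisely the two scalar boundary equations to close the system.
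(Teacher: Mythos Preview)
Your proof is correct and is essentially the transfer-matrix analysis that the paper invokes (it simply cites \cite[Appendix~C]{ADK19} rather than writing it out). Your explicit use of Weyl's theorem to pin down the essential spectrum, together with the boundary computation and the bipartite symmetry $JZJ=-Z$, matches the standard argument the paper is referring to.
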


Since $Z(\beta, 1)$ and $Z(0,\beta)$ have the same spectrum in $\R \setminus \{0\}$ and since the matrix $\pB{\begin{smallmatrix}
0 & \sqrt{\alpha}
\\
\sqrt{\alpha} & 0
\end{smallmatrix}}$ has eigenvalues $\pm \sqrt{\alpha}$, Cauchy's interlacing inequalities yield the following result.

\begin{corollary} \label{cor:uniqueness_lambda}
If $\beta \leq 2$ then $Z(\alpha, \beta)$ has at most one eigenvalue in $(2,\infty)$.
\end{corollary}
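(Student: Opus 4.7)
The strategy is to combine the two observations highlighted just before the corollary statement with a rank-one interlacing argument. First, I would note that $Z(0,\beta)$ is block diagonal in the orthogonal decomposition $\ell^2(\N) = \C e_0 \oplus \mathrm{span}\{e_i : i \geq 1\}$ of the standard basis, since its $(0,i)$-entries all vanish. Under the natural reindexing $e_{i+1} \mapsto e_i$, the restriction of $Z(0,\beta)$ to the second summand is unitarily equivalent to $Z(\beta,1)$, so
\begin{equation*}
\spec(Z(0,\beta)) = \{0\} \cup \spec(Z(\beta,1))\,.
\end{equation*}
Combined with the preceding lemma, the assumption $\beta \leq 2$ therefore gives $\norm{Z(0,\beta)} \leq 2$; in particular, $Z(0,\beta)$ has no eigenvalue in $(2,\infty)$.

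Next, I would write $Z(\alpha,\beta) = Z(0,\beta) + V$, where $V$ is the rank-two self-adjoint operator with entries $V_{01} = V_{10} = \sqrt{\alpha}$ and zero elsewhere; its nonzero eigenvalues are $\pm\sqrt{\alpha}$. Decomposing $V = V_+ - V_-$ into its positive and negative parts, each of rank one, the Weyl-type rank-one inequality for eigenvalues above the essential spectrum yields
\begin{equation*}
\lambda_2\bigl(Z(\alpha,\beta)\bigr) \leq \lambda_2\bigl(Z(0,\beta) + V_+\bigr) \leq \lambda_1\bigl(Z(0,\beta)\bigr) \leq 2\,,
\end{equation*}
where $\lambda_k(\,\cdot\,)$ denotes the $k$-th largest eigenvalue above the essential spectrum (with the convention that it equals the top of the essential spectrum if fewer than $k$ such eigenvalues exist). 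The first inequality uses $V_- \geq 0$ and monotonicity of eigenvalues in the operator order; the second is the standard rank-one Weyl inequality, obtained by inserting $\mathrm{range}(V_+)$ (which is one-dimensional) into the test subspaces of the min-max characterization, so that $V_+$ vanishes on the relevant orthogonal complement. This gives $\lambda_2(Z(\alpha,\beta)) \leq 2$, which is exactly the claim.

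The argument is essentially algebraic, and the only point requiring minor care is justifying the rank-one Weyl / Cauchy interlacing on the infinite-dimensional space $\ell^2(\N)$. This is standard: $Z(\alpha,\beta)$ is bounded self-adjoint with essential spectrum $[-2,2]$ (being a Jacobi matrix with eventually constant off-diagonal coefficients equal to $1$), so the min-max characterization applies verbatim to its finite point spectrum in $(2,\infty)$.
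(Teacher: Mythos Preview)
Your proof is correct and follows essentially the same approach as the paper: both arguments identify $\spec(Z(0,\beta))\setminus\{0\}$ with $\spec(Z(\beta,1))$, use the preceding lemma to bound $\norm{Z(0,\beta)}\leq 2$, and then apply Cauchy interlacing / Weyl's rank-one inequality to the rank-two perturbation $V$ with eigenvalues $\pm\sqrt{\alpha}$. The only difference is that you spell out the decomposition $V=V_+-V_-$ and the min-max justification in the infinite-dimensional setting, whereas the paper simply invokes ``Cauchy's interlacing inequalities'' in one sentence.
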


\begin{lemma} \label{lem:Lambda_greater_2} 
If $\alpha > 2$ and \eqref{assumption_beta_pr} holds for some sufficiently small $c>0$ then 
$\Lambda(\alpha,\beta) >2$.  
\end{lemma}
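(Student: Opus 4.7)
\medskip

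\noindent\emph{Proof plan.} My plan is to combine an explicit calculation at $\beta = 1$ with a first-order Taylor expansion in $\beta$, taking care to make the expansion quantitative enough to dominate the margin $\Lambda(\alpha,1) - 2$, which shrinks as $\alpha \searrow 2$. First I would substitute $\beta = 1$ into \eqref{eq:def_Lambda}: the radicand becomes $(\alpha+1)^2 - 4\alpha = (\alpha-1)^2$, so for $\alpha > 2$ its square root is $\alpha - 1$ and the quantity under the $-1/2$ power simplifies to $\alpha - (\alpha+1)/2 + (\alpha-1)/2 = \alpha - 1$. Thus $\Lambda(\alpha,1) = \alpha/\sqrt{\alpha-1}$ and, rearranging,
\begin{equation*}
 \Lambda(\alpha,1)^2 - 4 \;=\; \frac{\alpha^2 - 4(\alpha-1)}{\alpha - 1} \;=\; \frac{(\alpha-2)^2}{\alpha - 1} \;>\; 0\,,
\end{equation*}
so the lemma holds at $\beta = 1$ with a quantitative gap of order $(\alpha-2)^2/(\alpha-1)$.

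Next I would set $D(\alpha,\beta) \deq \alpha - \frac{\beta}{2}(\alpha+\beta) + \frac{\beta}{2}\sqrt{(\alpha+\beta)^2 - 4\alpha}$, so that $\Lambda(\alpha,\beta)^2 = \alpha^2/D(\alpha,\beta)$ whenever $D > 0$, and show that $D(\alpha,\beta) > 0$ and $\alpha^2 - 4 D(\alpha,\beta) > 0$ for $\beta$ close to $1$. A direct differentiation, using $\partial_\beta \sqrt{R} = (\alpha+\beta)/\sqrt R$ with $R = (\alpha+\beta)^2 - 4\alpha$, yields at $\beta = 1$
\begin{equation*}
 \partial_\beta D(\alpha,1) \;=\; -\frac{\alpha+2}{2} + \frac{\alpha-1}{2} + \frac{\alpha+1}{2(\alpha-1)} \;=\; -\,\frac{\alpha - 2}{\alpha - 1}\,,
\end{equation*}
and consequently $\partial_\beta[\alpha^2 - 4 D](\alpha,1) = 4(\alpha-2)/(\alpha-1)$. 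On the range $|\beta-1| \leq c_0$ with $c_0$ a sufficiently small absolute constant, one has $R \geq (\alpha-1)^2/2$, and the second derivative $\partial_\beta^2 D(\alpha,\beta)$ can be written as a rational function of $\alpha,\beta,\sqrt R$ which, after a short manipulation, is uniformly bounded by an absolute constant $C$ for all $\alpha > 2$ and $|\beta - 1| \leq c_0$ (the potentially dangerous factors $1/\sqrt R$ are offset by $R$-factors in the numerator, and the ratio $(\alpha+\beta)/(\alpha-1)$ is bounded on $\alpha > 2$).

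With these ingredients, Taylor's theorem gives
\begin{equation*}
 \alpha^2 - 4 D(\alpha,\beta) \;=\; (\alpha-2)^2 + \frac{4(\alpha-2)}{\alpha - 1}(\beta - 1) + O\pb{(\beta-1)^2}\,.
\end{equation*}
Under the hypothesis $|\beta - 1| \leq c\,(1 \wedge (\alpha-2))$, the linear correction is bounded by $4c(\alpha-2)^2/(\alpha-1) \leq 4c(\alpha-2)^2$ (since $\alpha - 1 > 1$), and the quadratic remainder by $Cc^2(\alpha-2)^2$ (since $(\beta-1)^2 \leq c^2(\alpha-2)^2$). Choosing $c$ small enough that $1 - 4c - Cc^2 > 0$ yields $\alpha^2 - 4 D(\alpha,\beta) > 0$. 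A similar but easier expansion, using $D(\alpha,1) = \alpha - 1 > 1$ and $|\partial_\beta D(\alpha,1)| < 1$, shows $D(\alpha,\beta) > 0$ as well; hence $\Lambda(\alpha,\beta)^2 > 4$, proving the lemma. The main technical point is the uniform-in-$\alpha$ bound on $\partial_\beta^2 D$, which is the only place where one must check that the square root does not cause issues; everything else is routine Taylor expansion.
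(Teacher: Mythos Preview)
Your argument is correct. It takes a genuinely different route from the paper's. The paper proceeds purely algebraically: writing $\mu = D(\alpha,\beta)$, it first checks that $\alpha^2/4 - \alpha + \tfrac{\beta}{2}(\alpha+\beta) \geq 0$ (so that squaring is legitimate), and then reduces $\alpha^2 > 4\mu$ to the polynomial inequality
\[
\alpha^4 - 8\alpha^3 + 16\alpha^2 + 4\alpha^3\beta + 4\alpha^2\beta^2 - 16\alpha^2\beta > 0\,,
\]
which it factors as $\alpha^2\bigl((\alpha-2)(\alpha-2+4(\beta-1)) + 4(\beta-1)^2\bigr)$; this is in fact $\alpha^2(\alpha+2\beta-4)^2$, so the condition $|\beta - 1| < (\alpha-2)/4$ suffices with an explicit constant. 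Your approach instead Taylor-expands $\alpha^2 - 4D(\alpha,\beta)$ about $\beta = 1$ and controls the remainder by a uniform bound on $\partial_\beta^2 D$. Both arguments land on the same leading terms $(\alpha-2)^2 + \tfrac{4(\alpha-2)}{\alpha-1}(\beta-1)$; the paper makes the remainder exact via the factorization, whereas you bound it. The paper's route is shorter, uses no calculus, and gives an explicit constant; your perturbative method is more robust and would carry over unchanged to situations where no clean algebraic factorization is available. The one place your sketch is light is the uniform second-derivative bound, but your claim is correct: once $R \geq (\alpha-1)^2/2$, each term of $\partial_\beta^2 D$ is a ratio of polynomials of matching degree in $\alpha$, hence $O(1)$.
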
 

\begin{proof} 
We first show the well-definedness of the definition of $\Lambda$ in \eqref{eq:def_Lambda}. 
If $\alpha \geq 2$ and $\beta \geq 2 (\sqrt{2} - 1)$ then $(\alpha + \beta)^2 - 4 \alpha \geq 0$.  
We introduce $\mu \deq \alpha - \frac{\beta}{2} (\alpha + \beta) + \frac{\beta}{2} \sqrt{(\alpha + \beta)^2 - 4 \alpha}$. 
If $\beta \leq 6/5$ then 
$\alpha - \frac\beta 2 (\alpha + \beta) > 0$ and 
if $\beta \geq 6/5$ then $\frac{\beta}{2} \sqrt{(\alpha + \beta)^2 - 4 \alpha}  > \abs{\alpha - \frac{\beta}{2} 
( \alpha + \beta)}$.  
Hence, $\mu >0$ if $\alpha \geq 2$ and $\beta \geq 2(\sqrt{2}- 1)$ which implies the well-definedness of \eqref{eq:def_Lambda} 
as $\Lambda = {\alpha}/{\sqrt{\mu}}$.

We note that, since $\alpha^2 - 4 \alpha + 2 \beta(\alpha + \beta) = \alpha (\alpha - 2 + 2(\beta - 1))+ \beta^2 \geq 0$ if $\abs{\beta - 1} \leq  (\alpha- 2)$, our goal $\Lambda(\alpha,\beta) > 2$ is equivalent to 
$\alpha^4 - 8 \alpha^3 + 16 \alpha^2 + 4 \alpha^3 \beta + 4 \alpha^2 \beta ^2 - 16 \alpha^2 \beta>0$. 
The last expression coincides with $\alpha^2 ( (\alpha - 2)( \alpha - 2 + 4 (\beta - 1)) + 4 (\beta - 1)^2)$, 
which is positive if $\abs{\beta - 1} < (\alpha - 2)/4$. 
\end{proof}

We now deduce Proposition~\ref{prop:Lambda_ab} from these previous auxiliary results.

\begin{proof}[Proof of Proposition \ref{prop:Lambda_ab}]
We first note that $Z(\alpha, \beta)$ conjugated by $\diag(1,-1,1,-1, \dots)$ is equal to $-Z(\alpha, \beta)$.
Therefore, the spectrum of $Z(\alpha,\beta)$ is symmetric and it suffices to focus on the positive eigenvalue 
and the corresponding eigenvectors.
The eigenvalue-eigenvector equation for $Z(\alpha, \beta)$ reads
\begin{equation} \label{u_conditions1}
\lambda u_0 = \sqrt{\alpha} u_1 \,, \quad
\lambda u_1 = \sqrt{\alpha} u_0 + \sqrt{\beta} u_2\,, \quad
\lambda u_2 = \sqrt{\beta} u_1 + u_3\,, \quad
\lambda u_k = u_{k - 1} + u_{k+1} \quad (k \geq 3)\,.
\end{equation}
The final relation can be written as
\begin{equation*}
\begin{pmatrix}
u_{k+1} \\ u_{k}
\end{pmatrix}
=
T(\lambda)
\begin{pmatrix}
u_{k} \\ u_{k-1}
\end{pmatrix} \qquad (k \geq 3)\,, \qquad T(\lambda) \deq
\begin{pmatrix}
\lambda & -1
\\
1 & 0
\end{pmatrix}\,.
\end{equation*}
For $\lambda > 2$, the matrix $T(\lambda)$ has eigenvalues $\gamma(\lambda)$ and $1/\gamma(\lambda)$, where $\gamma(\lambda) \deq \frac{\lambda - \sqrt{\lambda^2 - 4}}{2} < 1$. Since the components of the eigenvector $u_k$ cannot grow exponentially, we require that the vector $\binom{u_3}{u_2}$ be collinear to the eigenvector associated with $\gamma(\lambda)$. Hence, we have $u_k = \gamma(\lambda) u_{k - 1}$ for $k \geq 3$. This already yields \ref{item:Z_evect} assuming $\lambda > 2$ from \ref{item:Z_eval}.

What remains is completing the proof of \ref{item:Z_eval}. 
Let $\alpha$ and $\beta$ satisfy the assumptions of Proposition \ref{prop:Lambda_ab}. 
As the uniqueness is taken care of by Corollary \ref{cor:uniqueness_lambda} and $\lambda = \Lambda(\alpha,\beta)> 2$ by Lemma~\ref{lem:Lambda_greater_2}, it suffices to 
show that $\lambda$ is an eigenvalue of $Z(\alpha,\beta)$. 
Together with $u_k = \gamma(\lambda) u_{k - 1}$ for $k \geq 3$, the conditions \eqref{u_conditions1} can be written as
\begin{equation} \label{u_conditions2}
\lambda u_0 = \sqrt{\alpha} u_1 \,, \qquad
\lambda u_1 = \sqrt{\alpha} u_0 + \sqrt{\beta} u_2\,, \qquad
\lambda u_2 = \sqrt{\beta} u_1 + u_3\,, \qquad u_3 = \gamma(\lambda) u_2\,.
\end{equation}
Thus, using the definition of $\Lambda(\alpha,\beta)$ in \eqref{eq:def_Lambda}, 
we see that $q(\alpha,\beta,\Lambda(\alpha,\beta)) = 0$, where we introduced 
 the quartic (biquadratic) polynomial
\begin{equation} \label{def_q}
q(\alpha, \beta, \lambda) \deq (1 - \beta) \lambda^4 + (\alpha \beta + \beta^2 - 2 \alpha) \lambda^2 + \alpha^2\,.
\end{equation}
Then a simple calculation shows the existence of $(u_i)_i \in \ell^2(\N)\setminus \{ 0 \}$ such that \eqref{u_conditions2} holds with $\lambda = \Lambda(\alpha,\beta)$, and hence $\Lambda(\alpha,\beta)$ is an eigenvalue of $Z(\alpha, \beta)$. 
\end{proof}

\begin{proof}[Proof of Lemma~\ref{lem:properties_Lambda}] 
By following the proof of Lemma~\ref{lem:Lambda_greater_2}, we obtain $\alpha - \frac{\beta}{2} ( \alpha + \beta) \gtrsim \alpha$ if $\beta \leq 6/5$.  
Thus, for $\mu$ defined in the proof of Lemma~\ref{lem:Lambda_greater_2}, we get $\mu \asymp \alpha$ if $\alpha \geq 2$ and $2 (\sqrt{2} - 1) \leq \beta \leq 6/5$. 
This proves \eqref{eq:approx_Lambda_alpha_x_beta_x_by_Lambda_alpha_x}. 

The identities in \eqref{Lambda_identities} follow by a simple computation from \eqref{eq:def_Lambda}. 
Owing to the well-definedness of $\Lambda$ shown in the proof of Lemma~\ref{lem:Lambda_greater_2}, 
$\Lambda$ is smooth in $\alpha$ and $\beta$ if $\alpha >2$ and $\beta \geq 2(\sqrt{2} - 1)$. 
Therefore, the positivity of the derivatives $\partial_\alpha \Lambda$ and $\partial_\beta \Lambda$ 
is a standard consequence of the Perron-Frobenius theorem as $\Lambda(\alpha,\beta)$ is 
the largest eigenvalue of $Z(\alpha,\beta)$ by Proposition~\ref{prop:Lambda_ab} \ref{item:Z_eval}. 

As $\mu \asymp \alpha$, we have $\mu \gtrsim 1$ and, thus, $\abs{\partial_\alpha \Lambda} + 
\abs{\partial_\beta \Lambda} + \abs{\partial_\alpha^2 \Lambda} +\abs{\partial_{\alpha\beta} \Lambda} +\abs{\partial_\beta^2 \Lambda} \lesssim 1$ for all $\alpha \in (2,C_*]$ and any constant $C_*$ satisfying $2 < C_* \lesssim 1$. 

Let $\f u = (u_i)_{i \in \N}$ be a normalized eigenvector of $Z(\alpha,\beta)$ corresponding to $\Lambda = \Lambda(\alpha,\beta)$. Note that 
it satisfies \eqref{Z_vect_components} and $\abs{u_i} \lesssim 1$ due to the normalization. 
Both properties will be used extensively in the following. 
By perturbation theory for eigenvalues, we obtain 
\begin{equation} \label{eq:derivatives_Lambda_in_terms_eigenvectors}
\partial_{\alpha}\Lambda=\frac{1}{\sqrt{\alpha}}u_{0}u_{1}=\frac{\Lambda}{\alpha}u_{0}^{2}\,, 
\qquad 
\partial_{\beta}\Lambda=\frac{1}{\sqrt{\beta}}u_{1}u_{2}=\frac{2}{\Lambda+\sqrt{\Lambda^{2}-4}}u_{1}^{2}\,. 
\end{equation}
As $\Lambda \asymp \sqrt\alpha$, this shows $\partial_\alpha \Lambda \lesssim \alpha^{-1/2}$ and $\partial_\beta 
\Lambda \lesssim \alpha^{-1/2}$. 
If $\Pi$ denotes the orthogonal projection onto $\f u^\perp$ then perturbation theory also yields 
\[
\partial_{\alpha}\f u=\frac{1}{2\sqrt{\alpha}}(\Pi (Z-\Lambda)\Pi)^{-1}(u_{1}\f e_{0}+u_{0}\f e_{1})\,, \quad \partial_{\beta}\f u=\frac{1}{2\sqrt{\beta}}(\Pi(Z-\Lambda)\Pi)^{-1}(u_{2}\f e_{1} + u_{1}\f e_{2})\,,
\]
where $\f e_i$ denotes the standard basis vector in $\R^\N$.

Hence, $\norm{\partial_\alpha \f u} \lesssim \alpha^{-1/2} (\Lambda - 2)^{-1}$ and $\norm{\partial_\beta \f u}
\lesssim (\Lambda - 2)^{-1}$ by Corollary~\ref{cor:uniqueness_lambda} and $\Lambda>2$ from Lemma~\ref{lem:Lambda_greater_2}. 
We now choose $C_*>2$ so large that $C_* \lesssim 1$ and $\Lambda -2 \asymp \sqrt{\alpha}$ for all $\alpha \geq C_*$. 
In particular, $\partial_\alpha \f u = O(\alpha^{-1})$ and $\partial_\beta \f u = O(\alpha^{-1/2})$ for all $\alpha \geq C_*$. 
Therefore, from \eqref{eq:derivatives_Lambda_in_terms_eigenvectors} and $\Lambda \asymp \sqrt\alpha$, we obtain 
\begin{align*} 
\partial_{\alpha}^{2}\Lambda& =-\frac{\Lambda u_{0}^{2}}{\alpha^{2}}+\frac{u_{0}^{2}}{ \alpha}\partial_{\alpha}\Lambda +\frac{2 \Lambda u_0}{\alpha}\partial_{\alpha}u_{0} = O(\alpha^{-3/2})\,, 
\\ 
\partial_{\alpha\beta}\Lambda& =\frac{4u_1 }{\Lambda+\sqrt{\Lambda^{2}-4}}\partial_{\alpha}u_{1}-\frac{2u_{1}^{2}(1+\Lambda (\Lambda^2-4)^{-1/2})}{(\Lambda+\sqrt{\Lambda^{2}-4})^2}\partial_{\alpha}\Lambda = O(\alpha^{-3/2})\,,
\\ 
\partial_{\beta}^{2}\Lambda & =-\frac{2u_{1}^{2}(1+\Lambda (\Lambda^2-4)^{-1/2})}{(\Lambda+\sqrt{\Lambda^{2}-4})^2}
\partial_{\beta}\Lambda+\frac{4u_1}{\Lambda+\sqrt{\Lambda^{2}-4}}\partial_{\beta}u_{1} = O(\alpha^{-1})\,.
\end{align*} 
This proves \eqref{Lambda_estimates}, which immediately implies \eqref{Lambda_expansion}. 
Since $\frac{\alpha}{\sqrt{\alpha - 1}} - 2 \gtrsim \frac{(\alpha-2)^2}{\alpha^{3/2}}$, 
the inequality in \eqref{Lambda_geq_2_est} follows from \eqref{Lambda_expansion} 
and the condition $\abs{\beta - 1} \leq c (\alpha - 2)$. 
\end{proof}

\section{Truncated inclusion-exclusion formula for point processes -- proof of Lemma~\ref{lem:incl_excl}} \label{sec:inclusion_exclusion}

By truncation of $\cal X$ and monotone convergence, it suffices to consider finite deterministic $\cal X$, which we order in some arbitrary fashion. 
In the following, the index $x$ ranges over $\cal X$, $i$ over $[n]$, and $A$ over (possibly empty) subsets of $\cal X$. We use the notation $\sum^*$ to denote a sum over disjoint subsets of $\cal X$, and $\sqcup$ to denote disjoint union. We find
\begin{multline} \label{P_exp_step1}
\P(\Phi(I_1) = k_1, \dots, \Phi(I_n) = k_n) = \E \qBB{\pBB{\prod_i \ind{\Phi(I_i) = k_i}} \prod_{x} \pBB{\sum_i \ind{Z_x \in I_i} + \pBB{1 - \sum_i \ind{Z_x \in I_i}}}}
\\
=\sum^*_{A_1, \dots, A_n} \pBB{\prod_i \ind{\abs{A_i} = k_i}} \E \qBB{\pBB{\prod_i \prod_{x \in A_i} \ind{Z_x \in I_i}} \pBB{\prod_{x \notin A_1 \sqcup \dots \sqcup A_n} \pBB{1 - \sum_i \ind{Z_x \in I_i}}}}\,.
\end{multline}
We expand the last product by expanding successively each factor, in increasing order of $x$, and stopping the expansion if $m +1$ terms $\ind{Z_x \in I_i}$ have been generated. Using the estimate $0 \leq 1 - \sum_i \ind{Z_x \in I_i} \leq 1$, we therefore find
\begin{multline*}
\prod_{x \notin A_1 \sqcup \dots \sqcup A_n} \pBB{1 - \sum_i \ind{Z_x \in I_i}} = \sum_{B_1, \dots, B_n \subset \cal X \setminus (A_1 \sqcup \cdots \sqcup A_n)}^* \ind{\sum_i \abs{B_i} \leq m} (-1)^{\sum_i \abs{B_i}} \pBB{\prod_i \prod_{x \in B_i} \ind{Z_x \in I_i}}
\\
+ O \pBB{\sum_{B_1, \dots, B_n \subset \cal X \setminus (A_1 \sqcup \cdots \sqcup A_n)}^* \ind{\sum_i \abs{B_i} = m+1} \pBB{\prod_i \prod_{x \in B_i} \ind{Z_x \in I_i}}}\,. 
\end{multline*}
Plugging this into \eqref{P_exp_step1}, we find
\begin{multline*}
\P(\Phi(I_1) = k_1, \dots, \Phi(I_n) = k_n)
\\
= \sum^*_{A_1, B_1,\dots, A_n, B_n} \pBB{\prod_i \ind{\abs{A_i} = k_i}} \ind{\sum_i \abs{B_i} \leq m} (-1)^{\sum_i \abs{B_i}} \E \pBB{\prod_i \prod_{x \in A_i \sqcup B_i} \ind{Z_x \in I_i}}
\\
+ O \pBB{\sum^*_{A_1, B_1,\dots, A_n, B_n} \pBB{\prod_i \ind{\abs{A_i} = k_i}} \ind{\sum_i  \abs{B_i} = m+1} \E \pBB{\prod_i \prod_{x \in A_i \sqcup B_i} \ind{Z_x \in I_i}}}\,.
\end{multline*}
Writing $\ell_i = \abs{B_i}$ and using the exchangeability of $(Z_x)_{x \in \cal X}$, we find that the first term is equal to
\begin{equation*}
\sum_{\ell_1, \dots, \ell_n \in \N } (-1)^{\sum_i \ell_i } \ind{\sum_i \ell_i \leq m}  \frac{1}{k_1 ! \ell_1! \cdots k_n ! \ell_n!} \, q_\Phi(I_1^{k_1 + \ell_1} \times \cdots \times I_n^{k_n + \ell_n})\,,
\end{equation*}
The error term is dealt with analogously. \qed

\section{Lipschitz continuity of the intensity measure $\rho$ -- proof of Lemma~\ref{lem:rho_Lipschitz}} \label{sec:lipschitz}

Recall the notations \eqref{def_Gg} as well as $\ang{x}$ from the proof of Lemma \ref{lem:kappa_condition}. Let $s \in \R$ be given. Define
\begin{equation*}
\ell_0 \deq \min\h{\ell \in \Z \col s + \theta(\fra u) (\ang{d \fra u} + \ell) \geq 0}\,.
\end{equation*}
Then for any $t$ satisfying
\begin{equation} \label{t_condition}
-\theta(\fra u) (\ang{d \fra u} + \ell_0) \leq t \leq s
\end{equation}
we have
\begin{equation} \label{small_l_est}
\sum_{\ell < \ell_0} \fra u^{\ang{d \fra u} + \ell} \, G \pb{t + \theta(\fra u) (\ang{d \fra u} + \ell)} \asymp
\sum_{\ell < \ell_0} \fra u^{\ang{d \fra u} + \ell} \asymp \fra u^{\ang{d \fra u} + \ell_0 - 1}\,,
\end{equation}
since $\frac{1}{2} \leq G(x) \leq 1$ for $x \leq 0$. Since the left-hand side of \eqref{small_l_est} for $t = s$  is bounded by $\rho(E_s)$, we conclude that
\begin{equation} \label{l_0_est}
\ell_0 \log \fra u \leq C \log \fra u + \log \rho(E_s)\,.
\end{equation}

To estimate the terms $\ell > \ell_0$, we use the elementary estimates
\begin{equation} \label{G_g_estimates}
G(x) \asymp \frac{1}{1+x} \, g(x)\,, \qquad g(x+y) \leq g(x) \, \ee^{-\frac{1}{2} y^2}
\end{equation}
for $x,y \geq 0$.
Thus we find, for any $t$ satisfying \eqref{t_condition},
\begin{equation} \label{large_l_est}
\sum_{\ell > \ell_0} \fra u^{\ang{d \fra u} + \ell} \, G \pb{t + \theta(\fra u) (\ang{d \fra u} + \ell)} \lesssim 
\fra u^{\ang{d \fra u} + \ell_0} \, G \pb{t + \theta(\fra u) (\ang{d \fra u} + \ell_0)}\,.
\end{equation}

From \eqref{small_l_est} and \eqref{large_l_est}, recalling $G(0) = \frac{1}{2}$, it is not hard to conclude that there exists $t_0$ satisfying \eqref{t_condition} such that
\begin{equation}\label{E_st_comp}
\fra u^{\ang{d \fra u} + \ell_0} \, G \pb{t_0 + \theta(\fra u) (\ang{d \fra u} + \ell_0)} \asymp \rho(E_s)\,.
\end{equation}
By using \eqref{G_g_estimates} and the definition of $g$, we conclude that
\begin{equation} \label{t_0_estimate}
t_0 + \theta(\fra u) (\ang{d \fra u} + \ell_0) \lesssim \sqrt{C + (\ang{d \fra u} + \ell_0) \log \fra u - \log \rho(E_s)} \lesssim \sqrt{\log \fra u}\,,
\end{equation}
where the last step follows from \eqref{l_0_est}.

We now have what we need to estimate the derivative of $\rho(E_s)$. We begin with
\begin{equation*}
-\frac{\dd}{\dd s} \sum_{\ell < \ell_0} \fra u^{\ang{d \fra u} + \ell} \, G \pb{s + \theta(\fra u) (\ang{d \fra u} + \ell)} 
= \sum_{\ell < \ell_0} \fra u^{\ang{d \fra u} + \ell} \, g \pb{s + \theta(\fra u) (\ang{d \fra u} + \ell)} \lesssim \sum_{\ell < \ell_0} \fra u^{\ang{d \fra u} + \ell} \lesssim \rho(E_s)\,,
\end{equation*}
since the left-hand side of \eqref{small_l_est} is bounded by $\rho(E_s)$ for $t = s$.
Moreover,
\begin{align*}
-\frac{\dd}{\dd s} \sum_{\ell > \ell_0} \fra u^{\ang{d \fra u} + \ell} \, G \pb{s + \theta(\fra u) (\ang{d \fra u} + \ell)} 
&= \sum_{\ell > \ell_0} \fra u^{\ang{d \fra u} + \ell} \, g \pb{s + \theta(\fra u) (\ang{d \fra u} + \ell)} 
\\
&\lesssim
\fra u^{\ang{d \fra u} + \ell_0} \, g \pb{s + \theta(\fra u) (\ang{d \fra u} + \ell_0)}\,,
\end{align*}
by the same argument as in \eqref{large_l_est}.

What remains, therefore, is to estimate
\begin{align*}
\fra u^{\ang{d \fra u} + \ell_0} \, g \pb{s + \theta(\fra u) (\ang{d \fra u} + \ell_0)} &\leq
\fra u^{\ang{d \fra u} + \ell_0} \, g \pb{t_0 + \theta(\fra u) (\ang{d \fra u} + \ell_0)}
\\
&\lesssim \fra u^{\ang{d \fra u} + \ell_0}\, \pb{1 + t_0 + \theta(\fra u) (\ang{d \fra u} + \ell_0)} G \pb{t_0 + \theta(\fra u) (\ang{d \fra u} + \ell_0)}
\\
&\lesssim \sqrt{\log \fra u} \, \rho(E_{s})\,,
\end{align*}
where in the first step we used that $g(x)$ is decreasing for $x \geq 0$, in the second step we used \eqref{G_g_estimates}, and in the last step we used \eqref{E_st_comp} and \eqref{t_0_estimate}. \qed

\section{Approximation of binomial random variables}

In this appendix we collect some standard quantitative approximation results for binomial and Poisson random variables.
We use the notation $\cal P_\mu$ to denote a Poisson random variable with parameter $\mu \geq 0$, and $\cal B_{n,p}$ to denote a binomial random variable with parameters $n \in \N^*$ and $p \in [0,1]$.

\begin{lemma}\label{lem:poisson_approx}
For $0 \leq v \leq \sqrt n$ and $0 \leq p \leq 1/\sqrt n$ we have 
\[ \P\big(\cal B_{n,p} = v \big) = \P(\cal P_{np} = v) \bigg( 1 + O \bigg( \frac{v^2}{n} + p^2 n \bigg) \bigg)\,. \] 
\end{lemma}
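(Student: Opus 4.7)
The plan is to establish this Poisson--binomial comparison by direct computation of the ratio of the two explicit probabilities, and then by taking logarithms and Taylor expanding.

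First I would write
\begin{equation*}
\frac{\P(\cal B_{n,p}=v)}{\P(\cal P_{np}=v)} = \frac{n!}{(n-v)!\,n^v}\cdot\frac{(1-p)^{n-v}}{e^{-np}} = \pBB{\prod_{k=0}^{v-1}\pB{1-\tfrac{k}{n}}} \cdot \exp\pb{(n-v)\log(1-p)+np}
\end{equation*}
and treat the two factors separately. For the combinatorial factor, since $k/n \leq v/n \leq n^{-1/2} \leq 1$, I would Taylor expand $\log(1-k/n) = -k/n + O(k^2/n^2)$ and sum to get
\begin{equation*}
\sum_{k=0}^{v-1}\log\pb{1-k/n} = -\frac{v(v-1)}{2n}+O\pb{v^3/n^2} = O(v^2/n),
\end{equation*}
where the last step uses $v^3/n^2 \leq (v/\sqrt n) \cdot v^2/n \leq v^2/n$. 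For the analytic factor, since $p \leq n^{-1/2} \leq 1/2$, Taylor expansion of $\log(1-p)$ gives
\begin{equation*}
(n-v)\log(1-p)+np = -\frac{np^2}{2} + vp + O(vp^2 + np^3).
\end{equation*}

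The second step is to show that all error terms are controlled by $v^2/n + np^2$. The only nontrivial bound is on the cross term $vp$: by the AM--GM inequality,
\begin{equation*}
vp \leq \tfrac{1}{2}\pB{v^2/n + np^2}\,.
\end{equation*}
The remaining terms $vp^2$ and $np^3$ are absorbed using $p \leq n^{-1/2}$, which gives $vp^2 \leq vp \cdot n^{-1/2}$ and $np^3 \leq np^2 \cdot n^{-1/2}$, both of which are dominated by $v^2/n + np^2$ via the AM--GM bound above.

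Finally, I would combine the two logarithmic estimates into an overall exponent which is $O(v^2/n+np^2)$. Since $v \leq \sqrt n$ forces $v^2/n \leq 1$ and $p \leq n^{-1/2}$ forces $np^2 \leq 1$, the exponent is bounded, so one may apply $e^x = 1+O(x)$ on a bounded interval to conclude that the ratio equals $1 + O(v^2/n+np^2)$, as desired. This proof is entirely routine; there is no real obstacle, the only minor subtlety being the AM--GM bound used to absorb the cross term $vp$ that appears in the Taylor expansion.
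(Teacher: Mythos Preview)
Your proof is correct. The paper does not actually prove this lemma; it simply cites \cite[Lemma~A.6]{ADK20}, so there is no in-paper argument to compare against, and your direct computation via the ratio of the explicit probabilities, logarithms, and Taylor expansion (with the AM--GM bound $vp \leq \tfrac{1}{2}(v^2/n + np^2)$ to absorb the cross term) is the standard and expected route.
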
 

\begin{proof} 
See \cite[Lemma~A.6]{ADK20}. 
\end{proof}

\begin{lemma}[Bennett's inequality] \label{lem:Bennett}
Recall the function $h$ from \eqref{eq:def_h}.
For $0 \leq \mu \leq n$ and  $a > 0$ we have
\begin{equation*}
\P(\cal B_{n,\mu/n} - \mu \geq a \mu) \leq \ee^{-\mu h(a)}\,, \qquad
\P(\cal B_{n,\mu/n} - \mu \leq - a \mu) \leq \ee^{-\mu a^2/2} \leq \ee^{-\mu h(a)}\,,
\end{equation*}
and $\frac{a^2}{2 (1 + a/3)}  \leq h(a) \leq \frac{a^2}{2}$. By taking $n \to \infty$, the same estimates hold with $\cal B_{n,\mu/n}$ replaced with $\cal P_\mu$.
\end{lemma}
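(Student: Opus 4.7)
The statement is the classical Bennett/Cramér–Chernoff tail bound, so the plan is to follow the standard exponential Markov inequality approach and then extract the two inequalities on $h(a)$ by elementary calculus. First I would handle the binomial case, and deduce the Poisson case at the end via $\cal B_{n,\mu/n} \todist \cal P_\mu$.

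For the upper tail, the plan is to apply Markov's inequality to $\ee^{s \cal B_{n,\mu/n}}$ for $s > 0$, giving
\begin{equation*}
\P(\cal B_{n,\mu/n} - \mu \geq a \mu) \leq \ee^{-s(1+a)\mu} \pB{1 - \mu/n + (\mu/n) \ee^s}^n \leq \exp \pB{-s(1+a)\mu + \mu (\ee^s - 1)}\,,
\end{equation*}
where the second step uses $1 + x \leq \ee^x$ applied to $x = (\mu/n)(\ee^s - 1)$. Optimizing in $s$ with $\ee^s = 1 + a$ (so $s = \log(1+a) \geq 0$) yields exactly $-\mu h(a)$ in the exponent. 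For the lower tail, the same Chernoff procedure with $s < 0$ and the choice $\ee^s = 1 - a$ (valid for $a \in (0,1)$; the case $a \geq 1$ is trivial since $\cal B_{n,\mu/n} \geq 0$) gives $\P(\cal B_{n,\mu/n} - \mu \leq -a\mu) \leq \ee^{-\mu h(-a)}$, so the task reduces to the pointwise estimate $h(-a) \geq a^2/2$ on $[0,1)$.

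For the two bounds on $h$, I would use the integral representation $h(a) = \int_0^a \log(1+t)\, \dd t$, which follows from $h(0)=0$ and $h'(a) = \log(1+a)$. The upper bound $h(a) \leq a^2/2$ then follows from $\log(1+t) \leq t$. For the Bennett lower bound $h(a) \geq \frac{a^2}{2(1+a/3)}$, I would verify that the function $\phi(a) \deq h(a) - \frac{a^2}{2(1+a/3)}$ satisfies $\phi(0) = 0$ and $\phi'(a) \geq 0$: computing $\phi'(a) = \log(1+a) - \frac{a(a/3+1) + a^2/6}{(1+a/3)^2} = \log(1+a) - \frac{a + a^2/2}{(1+a/3)^2}$, one reduces to an elementary one-variable inequality valid for $a \geq 0$, most cleanly checked by Taylor-expanding both sides or equivalently comparing their derivatives a second time. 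The corresponding lower bound $h(-a) \geq a^2/2$ on $[0,1)$ follows similarly by expanding $(1-a)\log(1-a) + a = \sum_{k \geq 2} \frac{a^k}{k(k-1)} \geq a^2/2$.

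The Poisson case is obtained by passing to the limit $n \to \infty$: the MGF $\pb{1 - \mu/n + (\mu/n)\ee^s}^n$ converges to $\ee^{\mu(\ee^s - 1)}$, and Chernoff optimization yields the same exponent $-\mu h(a)$ without needing the intermediate $\ee^x$ bound. No step is genuinely delicate; the only mild obstacle is the Bennett lower bound on $h$, which I expect to write out by a short monotonicity argument for $\phi$ rather than by a direct algebraic manipulation.
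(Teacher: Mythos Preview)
Your proposal is correct and follows the standard Chernoff/exponential Markov method. The paper does not actually prove this lemma; it simply cites \cite[Section 2.7]{BLM13}, which contains precisely the argument you sketch, so your approach coincides with the one implicitly invoked by the paper.
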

\begin{proof}
See \cite[Section 2.7]{BLM13}.
\end{proof}

\begin{lemma}\label{lem:poisson_approx2}
For $v \geq 0$ and $0 \leq p \leq n^{-3/4}$ we have
\begin{equation} \label{Poisson_approx}
\P(\cal B_{n,p} \geq v) = \P(\cal P_{pn} \geq v) (1 + O(n^{-1/2})) + O( \ee^{-n^{1/4}})\,.
\end{equation}
\end{lemma}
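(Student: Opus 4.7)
The plan is to compare $\mathcal{B}_{n,p}$ and $\mathcal{P}_{pn}$ pointwise using Lemma~\ref{lem:poisson_approx}, but only on a range $[0, M)$ where $M$ is chosen large enough that the complementary tails are exponentially small, yet small enough that the error $v^2/n$ in Lemma~\ref{lem:poisson_approx} is $O(n^{-1/2})$. The natural threshold is $M = Cn^{1/4}$ for some absolute constant $C \geq 10$.

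First, I would establish the tail bound: for this choice of $M$, both $\P(\mathcal{B}_{n,p} \geq M)$ and $\P(\mathcal{P}_{pn} \geq M)$ are $O(\ee^{-n^{1/4}})$. The assumption $p \leq n^{-3/4}$ gives $pn \leq n^{1/4}$, hence $M/(pn) \geq C$. Applying Bennett's inequality (Lemma~\ref{lem:Bennett}) with $a = M/(pn) - 1$ yields
\begin{equation*}
pn \cdot h(a) \;=\; M\log\bigl(M/(pn)\bigr) - M + pn \;\geq\; M(\log C - 1) \;\geq\; C(\log C - 1) \, n^{1/4} \;\geq\; n^{1/4}
\end{equation*}
as soon as $C(\log C -1) \geq 1$, which is satisfied for $C = 10$. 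The same calculation applies verbatim to $\mathcal{P}_{pn}$ via the second clause of Lemma~\ref{lem:Bennett}. (If $p = 0$ the statement is trivial, so I assume $p > 0$.)

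Next, in the regime $v \geq M$, the desired identity is trivial since both sides are $O(\ee^{-n^{1/4}})$ by the previous step. For $v < M$, I split
\begin{equation*}
\P(\mathcal{B}_{n,p} \geq v) \;=\; \sum_{k=v}^{M-1}\P(\mathcal{B}_{n,p} = k) \;+\; \P(\mathcal{B}_{n,p} \geq M).
\end{equation*}
Each term in the sum satisfies $k < M = Cn^{1/4} \leq \sqrt n$ (for $n$ large) and $p \leq n^{-3/4} \leq n^{-1/2}$, so Lemma~\ref{lem:poisson_approx} applies and gives $\P(\mathcal{B}_{n,p} = k) = \P(\mathcal{P}_{pn} = k)(1 + O(k^2/n + p^2 n)) = \P(\mathcal{P}_{pn} = k)(1 + O(n^{-1/2}))$ uniformly in $k$. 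Summing and using the tail bound,
\begin{equation*}
\sum_{k=v}^{M-1}\P(\mathcal{B}_{n,p} = k) \;=\; (1+O(n^{-1/2}))\bigl(\P(\mathcal{P}_{pn}\geq v) - \P(\mathcal{P}_{pn}\geq M)\bigr) \;=\; (1+O(n^{-1/2}))\P(\mathcal{P}_{pn}\geq v) + O(\ee^{-n^{1/4}}).
\end{equation*}
Combining this with $\P(\mathcal{B}_{n,p}\geq M) = O(\ee^{-n^{1/4}})$ gives \eqref{Poisson_approx}.

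There is no serious obstacle here; the proof is a textbook balancing argument. The only point requiring a bit of care is the calibration of the threshold $M$: it must simultaneously (i) satisfy $M^2/n = O(n^{-1/2})$ so that Lemma~\ref{lem:poisson_approx} produces a relative error no worse than $n^{-1/2}$, and (ii) exceed $pn$ by a sufficiently large multiplicative factor so that the Bennett cost $pn \cdot h(M/(pn)-1)$ dominates $n^{1/4}$. The choice $M = 10\, n^{1/4}$ meets both requirements under the hypothesis $p \leq n^{-3/4}$.
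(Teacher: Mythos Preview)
Your proof is correct and follows essentially the same approach as the paper's: apply Lemma~\ref{lem:poisson_approx} pointwise below the threshold $10\,n^{1/4}$ and control both tails above it via Bennett's inequality. The paper's own proof is just the one-line summary of exactly this argument, even using the same threshold constant.
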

\begin{proof}
The claim follows easily from Lemma \ref{lem:poisson_approx} combined with a tail estimate for $v \geq 10 \, n^{1/4}$ using Bennett's inequality (see Lemma~\ref{lem:Bennett}).
\end{proof}

\begin{lemma} \label{lem:Poisson_perturbation}
For $\abs{\epsilon} \leq 1/2$ and $v \geq 0$ we have
\begin{equation} \label{Poisson_perturbation}
\P(\cal P_{(1 + \epsilon)\mu} = v) = \P(\cal P_\mu = v) \ee^{O(\abs{\epsilon} (\mu + v))}\,.
\end{equation}
\end{lemma}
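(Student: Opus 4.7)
The proof is a direct computation. Writing out the Poisson probability mass function explicitly, I would form the ratio
\begin{equation*}
\frac{\P(\cal P_{(1 + \epsilon)\mu} = v)}{\P(\cal P_\mu = v)} = \frac{\ee^{-(1+\epsilon)\mu} ((1+\epsilon)\mu)^v / v!}{\ee^{-\mu} \mu^v / v!} = \ee^{-\epsilon \mu} (1+\epsilon)^v\,,
\end{equation*}
so that the logarithm of the ratio is $-\epsilon \mu + v \log(1+\epsilon)$.

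The plan is then to bound each of these two terms by $O(\abs{\epsilon}(\mu+v))$. The first is trivially $\abs{\epsilon}\mu$. For the second, the assumption $\abs{\epsilon} \leq 1/2$ puts $1+\epsilon$ in the compact interval $[1/2, 3/2]$, where the smooth function $\epsilon \mapsto \log(1+\epsilon)$ vanishes at the origin; a first-order Taylor expansion (or the elementary estimate $\abs{\log(1+\epsilon)} \leq 2\abs{\epsilon}$ on that interval) therefore gives $\abs{v \log(1+\epsilon)} \leq 2\abs{\epsilon} v$. Adding the two bounds yields $\abs{\log \frac{\P(\cal P_{(1+\epsilon)\mu}=v)}{\P(\cal P_\mu=v)}} \leq 3\abs{\epsilon}(\mu+v)$, which is exactly the claim \eqref{Poisson_perturbation}. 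There is no real obstacle here — the only care needed is in recording that the implicit constant in the $O(\cdot)$ depends only on the constant $1/2$ bounding $\abs{\epsilon}$, so that the estimate is uniform in $\mu \geq 0$ and $v \geq 0$.
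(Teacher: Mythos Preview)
Your proof is correct and is exactly the computation the paper has in mind: the paper's own proof simply reads ``This is immediate from the definition of a Poisson random variable.'' You have written out that immediate verification explicitly.
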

\begin{proof}
This is immediate from the definition of a Poisson random variable.
\end{proof}

The following result is a De Moivre-Laplace approximation of the Poisson distribution. We give a version with a quantitative error bound suitable for our needs. Recall the notations \eqref{def_Gg}.

\begin{lemma} \label{lem:ml_approx}
Let $0 \leq \xi \leq 1/6$. Then for $\mu \geq 1$ and $t \leq \mu^{\xi}$ we have
\begin{equation*}
\P(\cal P_\mu \geq \mu + \sqrt{\mu} t) = G(t) (1 + O(\mu^{3 \xi - 1/2}))\,.
\end{equation*}
\end{lemma}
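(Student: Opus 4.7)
The plan is to establish this quantitative De Moivre--Laplace estimate for the Poisson distribution in three stages: a Stirling-based pointwise expansion of the PMF, a Riemann-sum comparison with the Gaussian integral $G(t)$, and a tail truncation via Bennett's inequality (Lemma~\ref{lem:Bennett}). Throughout, I focus on $t \geq 0$; the case $t < 0$ reduces to the same estimate by writing $\P(\cal P_\mu \geq \mu + \sqrt\mu\, t) = 1 - \P(\cal P_\mu < \mu + \sqrt\mu\, t)$ and using the identity $G(t) = 1 - G(-t)$ together with an analogous argument for the left tail.

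First, I would parametrize $k = \mu + \sqrt\mu\, s$ and apply Stirling's formula $k! = \sqrt{2\pi k}\,(k/\ee)^k\, (1+O(1/k))$ to the Poisson PMF $\P(\cal P_\mu = k) = \ee^{-\mu}\mu^k/k!$ to obtain
\[
\P(\cal P_\mu = k) = \frac{1}{\sqrt{2\pi k}}\,\exp\bigl(-\mu\, h(s/\sqrt\mu)\bigr)\bigl(1 + O(1/\mu)\bigr),
\]
where $h$ is the function from \eqref{eq:def_h}; this is essentially the content of \eqref{Poisson_f}. For $|s| \leq \mu^\xi$ with $\xi \leq 1/6$, a Taylor expansion $h(u) = u^2/2 + O(|u|^3)$ gives $\mu\, h(s/\sqrt\mu) = s^2/2 + O(|s|^3/\sqrt\mu) = s^2/2 + O(\mu^{3\xi - 1/2})$, and similarly $1/\sqrt{2\pi k} = (1 + O(\mu^{\xi - 1/2}))/\sqrt{2\pi\mu}$. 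Collecting these yields the uniform pointwise estimate
\[
\P(\cal P_\mu = k) = \frac{g(s)}{\sqrt\mu}\,\bigl(1 + O(\mu^{3\xi - 1/2})\bigr)\qquad \text{for } |s| \leq \mu^\xi.
\]

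Second, I would decompose
\[
\P(\cal P_\mu \geq \mu + \sqrt\mu\, t) = \sum_{k \in I} \P(\cal P_\mu = k) + \P(\cal P_\mu > \mu + \mu^{1/2 + \xi}),
\]
where $I \deq \{k \in \N : \sqrt\mu\, t \leq k - \mu \leq \mu^{1/2 + \xi}\}$. By Bennett's inequality and $h(a) \asymp a^2$ for small $a$, the tail term is bounded by $\exp(-c\mu^{2\xi})$, which is negligible compared to $\mu^{3\xi - 1/2}\, G(t)$ for $t \leq \mu^\xi$. For the main sum, applying the pointwise bound reduces the problem to comparing $\sum_{s_k \geq t}\, g(s_k)/\sqrt\mu$ (with $s_k = (k - \mu)/\sqrt\mu$) with $\int_t^\infty g(s)\,\dd s = G(t)$. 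A standard Riemann-sum (equivalently, Euler--Maclaurin) estimate bounds the discretization error by $O(g(t)/\sqrt\mu)$, coming from the boundary of the sum at $s_k \approx t$.

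Third, to conclude I would absorb this Riemann-sum error into the target multiplicative error $\mu^{3\xi - 1/2}\, G(t)$ via the elementary bound $G(t) \asymp g(t)/(1+t)$ for $t \geq 0$ (compare \eqref{G_g_estimates}). The ratio of errors is
\[
\frac{g(t)/\sqrt\mu}{\mu^{3\xi - 1/2}\, G(t)} \asymp \frac{1+t}{\mu^{3\xi}},
\]
which is $\lesssim 1$ precisely because $t \leq \mu^\xi \leq \mu^{3\xi}$. This last matching is the main delicate point: the constraint $\xi \leq 1/6$ is used both to keep the cubic Taylor remainder $|s|^3/\sqrt\mu$ bounded by $\mu^{3\xi - 1/2} \leq 1$ and to ensure that the Riemann-sum error fits into the allowed multiplicative budget even at the upper boundary $t \approx \mu^\xi$, where $G(t)$ is exponentially small and so purely additive errors of size $1/\sqrt\mu$ would be too coarse.
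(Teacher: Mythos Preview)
Your proposal is correct and follows essentially the same approach as the paper: Stirling plus Taylor expansion for the pointwise PMF in the window $|s|\lesssim\mu^\xi$, truncation of the far tail via Bennett's inequality, Euler--Maclaurin/Riemann comparison of the sum with $\int_t^\infty g(s)\,\dd s$, and absorption of the boundary error $O(g(t)/\sqrt\mu)$ into the multiplicative budget using $G(t)\asymp g(t)/(1+t)$. The paper's proof is structured identically, differing only in that it does not separate out the case $t<0$ (the argument as written already covers it) and records the Euler--Maclaurin remainder slightly differently.
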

\begin{proof}
We estimate
\begin{multline*}
\P(\cal P_\mu \geq \mu + \sqrt{\mu} t) = \sum_{s \in (\Z - \mu) / \sqrt{\mu}} \ind{s \geq t} \, \P(\cal P_\mu = \mu + \sqrt{\mu} s)
\\
=\sum_{s \in (\Z - \mu) / \sqrt{\mu}} \ind{s \geq t} \ind{\abs{s} \leq 2\mu^\xi} \, \P(\cal P_\mu = \mu + \sqrt{\mu} s) + O\pb{\P(\abs{\cal P_\mu - \mu} >  2 \mu^{1/2 + \xi})}\,.
\end{multline*}
By Lemma \ref{lem:Bennett}, the error term is bounded by $O(\ee^{-\mu^{2 \xi}})$.
Moreover, for $\abs{s} \leq 2 \mu^{\xi}$, Stirling's approximation followed by a Taylor expansion yields
\begin{equation*}
\P(\cal P_\mu = \mu + \sqrt{\mu} s) = \frac{1 + O(1/\mu)}{\sqrt{2 \pi (\mu + \sqrt{\mu} s)}} \ee^{-s^2/2 + O(\abs{s}^3 / \sqrt{\mu})} = \frac{1 + O(\mu^{3 \xi - 1/2})}{\sqrt{2 \pi \mu}} \, \ee^{-s^2 / 2}\,.
\end{equation*}
Thus,
\begin{equation*}
\P(\cal P_\mu \geq \mu + \sqrt{\mu} t) = \frac{1 + O(\mu^{3 \xi - 1/2})}{\sqrt{\mu}} \sum_{s \in (\Z - \mu) / \sqrt{\mu}} \ind{s \geq t} \ind{\abs{s} \leq 2\mu^{\xi}} \, g(s) + O(\ee^{-\mu^{2 \xi}})\,.
\end{equation*}
By the Euler-Maclaurin formula of order one, we find
\begin{align*}
\P(\cal P_\mu \geq \mu + \sqrt{\mu} t) &= \pb{1 + O(\mu^{3 \xi - 1/2})} \int_{t}^\infty \dd s \, \ind{\abs{s} \leq 2\mu^{\xi}} \, g(s)
\\
&\qquad + O \pbb{\mu^{-1/2} \ee^{-t^2/2} + \mu^{\xi - 1/2} \int_{t}^\infty \dd s \, \ind{\abs{s} \leq 2\mu^{\xi}} \, g(s)} + O(\ee^{-\mu^{2 \xi}})
\\
&=\pb{1 + O(\mu^{3 \xi - 1/2})} \int_{t}^\infty \dd s \, \ind{\abs{s} \leq 2\mu^{\xi}} \, g(s)\,,
\end{align*}
where in the last step we used that all error terms can be absorbed into the factor $O(\mu^{3 \xi - 1/2})$, because $t \leq \mu^{\xi}$ and hence $ \int_{t}^\infty \dd s \, \ind{\abs{s} \leq 2\mu^{\xi}} \, g(s) \asymp \frac{1}{t} \ee^{-t^2/2}$ for $t \geq 1$. 
Now the claim easily follows by estimating the contribution of the integral over $\abs{s} > 2 \mu^{\xi}$.
\end{proof}

\section{Auxiliary estimates and computations} \label{app:proof_auxiliary_estimates}

\begin{proof}[Proof of Lemma~\ref{lem:u_a}] 
We set $t = \log N/d$. Note that by assumption $t \geq 1/3$, and for the first claim of \eqref{eq:scaling_alpha_max} 
we have to show $\am \asymp \frac{t}{\log 4t}$. 
If $t \leq C$ for any $C \asymp 1$ then it is easy to see from \eqref{eq:def_alpha_max} 
that $\am \asymp 1$ and $\frac{t}{\log 4t} \asymp 1$. 
For $t \geq C$, we write $\am = \gamma \frac{t}{\log 4t}$ for some $\gamma >0$. From \eqref{eq:def_alpha_max}, we conclude that $\gamma \asymp 1$ for $t \geq C$ if $C \asymp 1$ is chosen large enough. 
This proves $\am \asymp \frac{t}{\log 4t}$. Since $t \gtrsim \log 4t$ for $t \geq 1/3$, we conclude the proof of the first claim of \eqref{eq:scaling_alpha_max}.
The second claim of \eqref{eq:scaling_alpha_max} follows easily from a Taylor expansion of the function $f$ from \eqref{def_f}.
\end{proof}

\begin{proof}[Proof of \eqref{eq:lower_bound_degree_cal_V}] 
From $d \leq 3 \log N$, the definition of $\am$ in \eqref{eq:def_alpha_max} and the monotonicity of $h^{-1}$, we first conclude that 
\begin{equation} \label{eq:am_minus_1_gtrsim_1} 
 \am - 1 = h^{-1}\bigg(\frac{\log N}{d} \bigg)\gtrsim 1\,. 
\end{equation}
This, in particular, implies that $ \am \log \am = \frac{\log N}{d} + \am - 1 \geq \frac{\log N}{d}$. 
Hence, we obtain 
\[ \am - \frac{c_* \delta}{\log \am} \geq \am \bigg( 1 - \frac{c_* \delta d}{\log N} \bigg) 
\geq \am \big( 1 - c_* C \big) \geq (1 + c) \vee \bigg( \frac{1}{2} \am \bigg)\,, 
\] 
where, in the last step, we chose $c_*$ sufficiently small and used \eqref{eq:am_minus_1_gtrsim_1}. 
This proves the first inequality in \eqref{eq:lower_bound_degree_cal_V}. 

The second inequality in \eqref{eq:lower_bound_degree_cal_V} follows directly from 
\eqref{eq:scaling_alpha_max}. 
\end{proof}

\begin{proof}[Proof of \eqref{eq:decomposition_H_minus_Lambda_v}] 
Observe that $H^{(x,r)} \f v = H \f v$ since $\supp \f v \subset B_r(x)$ while $H^{(x,r)} = \op{Adj}(\mathbb{G}|_{B_{r+1}(x)})/\sqrt{d}$. 

Since $A$ is a tree around $x$, we have $ A \f 1_x = \f 1_{S_1}$ and $A \f 1_{S_i} = \f 1_{S_{i+1}} +  \sum_{y \in S_{i-1}} \f 1_y N_y$ for $i \geq 1$. 
Consequently, for $i\geq 2$ and $ H = A/\sqrt{d}$, we obtain 
\begin{equation} \label{eq:identities_A_s_i} 
H  \f 1_x = \sqrt{\alpha_x} \f s_1, \qquad 
H \f s_1= \sqrt{\beta_x} \f s_{2} + \sqrt{\alpha_x} \f 1_x, 
\qquad  H \f s_i = \frac{\sqrt{\abs{S_{i+1}}}}{\sqrt{d\abs{S_i}}} \f s_{i+1} + \frac{1}{\sqrt{d\abs{S_i}}} \sum_{y \in S_{i-1}} \f 1_y N_y\,. 
\end{equation}
Hence, the definition of $\f v$ in \eqref{eq:def_v_intermediate_rigidity} yields 
\[ H \f v = \sqrt{\alpha_x} u_0 \f s_1 +  \sqrt{\beta_x} u_1 \f s_2 
+ \sqrt{\alpha_x} u_1 \f 1_x 
+ \sum_{i=2}^r \frac{\sqrt{\abs{S_{i+1}}}}{\sqrt{d \abs{S_i}}} u_i \f s_{i+1}  
+ \sum_{i=2}^r \frac{u_i}{\sqrt{d \abs{S_i}}} \sum_{y \in S_{i-1}} \f 1_y N_y\,. 
\] 
Owing to the definitions of $Z(\alpha_x,\beta_x)$ and $\Lambda = \Lambda(\alpha_x,\beta_x)$, we have 
\begin{equation} \label{eq:identities_lambda_u_i} 
\Lambda u_0 = \sqrt{\alpha_x} u_1, \qquad \Lambda u_1 = \sqrt{\beta_x} u_2 + \sqrt{\alpha_x} u_0, 
\qquad \Lambda u_2 = \sqrt{\beta_x} u_1 + u_3, \qquad \Lambda u_{i} = u_{i-1} + u_{i+1}  
\end{equation}
for $i \geq 3$.  Hence, 
\[ \Lambda \f v = \sum_{i=0}^r \Lambda u_i \f s_i = \sqrt{\alpha_x} u_1 \f 1_x  + 
\f s_1 ( \sqrt{\beta_x} u_2 + \sqrt{\alpha_x} u_0) + \f s_2 (\sqrt{\beta_x} u_1 + u_3) + 
\sum_{i=3}^r \f s_i ( u_{i-1} + u_{i+1} )\,. 
\] 
Thus, we get 
\[ \begin{aligned} 
\big( H - \Lambda\big) \f v  = \, & - \f s_1 \sqrt{\beta_x} u_2 - 
\f s_2  u_3 + 
 \sum_{i=2}^r \frac{\sqrt{\abs{S_{i+1}}}}{\sqrt{d \abs{S_i}}} u_i \f s_{i+1}  
+ \sum_{i=2}^r \frac{u_i}{\sqrt{d \abs{S_i}}} \sum_{y \in S_{i-1}} \f 1_y N_y 
-  \sum_{i=3}^r \f s_i  ( u_{i-1} + u_{i+1} )  \\ 
  = \, & \sum_{i=2}^{r-1} \f s_{i +1} u_i \bigg( \frac{\sqrt{\abs{S_{i+1}}}}{\sqrt{d \abs{S_i}}} - 1  \bigg)
+ \frac{\sqrt{\abs{S_{r+1}}}}{\sqrt{d \abs{S_r}}} u_r \f s_{r+1}
+ \frac{u_2}{\sqrt{d \abs{S_2}}} \sum_{y \in S_1} \f 1_y \bigg( N_y - \frac{\abs{S_2}}{\abs{S_1}} \bigg) 
+ \f s_r  u_{r+1}  \\ 
 & + \sum_{i=3}^{r} \frac{u_i}{\sqrt{d \abs{S_i}}} \sum_{y \in S_{i-1}} \f 1_y \big( N_y  - d \big) 
+ \sum_{i=3}^r \f s_{i-1} u_i \bigg( \frac{\sqrt{d\abs{{S_{i-1}}}}}{\sqrt{\abs{{S_{i}}}}} - 1 \bigg) \\ 
 = \, & \f w_2 + \f w_3 + \f w_4\,, 
\end{aligned} 
\] 
which completes the proof of \eqref{eq:decomposition_H_minus_Lambda_v}.
\end{proof}

\begin{proof}[Proof of \eqref{eq:scalar_v_H_minus_Lambda_v}] 
Since $\supp \f v \subset B_r(x)$ and $H$ is symmetric, we have 
\[ \scalar{\f v }{(H^{(x,r)}-\Lambda) \f v} = \scalar{\f v }{(H-\Lambda) \f v} = \sum_{i=0}^r u_i^2 \scalar{\f s_i}{(H-\Lambda)\f s_i} + 2 \sum_{i=0}^{r-1} \sum_{j=i+1}^{r} u_i u_j \scalar{\f s_j}{(H-\Lambda) \f s_i}\,. \] 
We conclude that $\scalar{\f s_i}{(H-\Lambda)\f s_i} = - \Lambda$ and
from \eqref{eq:identities_A_s_i}, for $j > i$, that 
\[ \scalar{\f s_{j}}{(H-\Lambda) \f s_i} = \scalar{\f s_{j}}{H\f s_i}= \begin{cases} 
0 & \text{ if }j \geq i + 2\\ 
\sqrt{\alpha_x} & \text{ if }i = 0,~j = 1\\ 
\sqrt{\beta_x} & \text{ if } i = 1, ~ j=2\\ 
\frac{\sqrt{\abs{S_{i+1}}}}{\sqrt{d\abs{S_i}}} & \text{ if } i \geq 2, ~j = i+1\,. 
\end{cases} 
\] 
Thus, \eqref{eq:scalar_v_H_minus_Lambda_v} follows since \eqref{eq:identities_lambda_u_i} yields 
\begin{align*} 
 - \sum_{i=0}^r \Lambda u_i^2 & = -2 ( u_0u_1\sqrt{\alpha_x} + u_1 u_2 \sqrt{\beta_x}) - \mathfrak{d} u_2 u_3 - \sum_{i=3}^r u_i  ( u_{i-1} + u_{i+1}) \\ 
& =  -2 ( u_0u_1\sqrt{\alpha_x} + u_1 u_2 \sqrt{\beta_x}) 
- 2 \sum_{i=2}^{r-1} u_i u_{i+1} -  u_r u_{r+1}\,. \qedhere
\end{align*}
\end{proof}

\begin{proof}[Proof of Lemma~\ref{lem:beta_x_assumption_beta_pr_checked}] 
If $d > (\log N)^{3/4}$ then $\abs{S_1(x)} = \alpha_x d \geq 2 d$ for all $x \in \cal U$ and \eqref{eq:beta_x_equal_1_error_term_on_cal_U} 
follows from \cite[Lemma~5.4 (i)]{ADK19}. 
If $d \leq (\log N)^{3/4}$ then $\abs{S_1(x)} = \alpha_x d \gtrsim \am d$ for all $x \in \cal U$. 
Thus,  \cite[Lemma~5.4 (i)]{ADK19} and \eqref{eq:scaling_alpha_max} imply \eqref{eq:beta_x_equal_1_error_term_on_cal_U}. Finally, 
 \eqref{assumption_beta_pr} with $\alpha = \alpha_x$ and $\beta = \beta_x$ follows directly from 
 \eqref{eq:beta_x_equal_1_error_term_on_cal_U} and \eqref{eq:UsetDef}. 
\end{proof}

\medskip

\paragraph{Acknowledgements}
The authors acknowledge funding from the European Research Council (ERC) under the European Union’s Horizon 2020 research and innovation programme, grant agreement No.\ 715539\_RandMat and the Marie Sklodowska-Curie grant agreement No.\ 895698.  Funding from the Swiss National Science Foundation through the NCCR SwissMAP grant and the grant 200020--200400 is also acknowledged.

\bibliography{bibliography} 
\bibliographystyle{amsplain-nodash}

\noindent
Johannes Alt (\href{mailto:johannes.alt@unige.ch}{johannes.alt@unige.ch}) -- University of Geneva and New York University.
\\
Rapha\"el Ducatez (\href{mailto:raphael.ducatez@ens-lyon.fr}{raphael.ducatez@ens-lyon.fr}) -- ENS Lyon, Unité de Mathématiques Pures et Appliqués (UMPA).
\\
Antti Knowles (\href{mailto:antti.knowles@unige.ch}{antti.knowles@unige.ch}) -- University of Geneva.

\end{document}